\theoremstyle{plain}
\newcommand{\E}{\mathbb{E}}
\newcommand{\N}{\mathbb{N}}
\renewcommand{\P}{\mathbb{P}}
\newcommand{\R}{\mathbb{R}}
\newcommand{\Z}{\mathbb{Z}}
\renewcommand{\AA}{\mathcal{A}}
\newcommand{\CC}{\mathcal{C}}
\newcommand{\DD}{\mathcal{D}}
\newcommand{\FF}{\mathcal{F}}
\newcommand{\GG}{\mathcal{G}}
\newcommand{\LL}{\mathcal{L}}
\newcommand{\RR}{\mathcal{R}}
\renewcommand{\SS}{\mathcal{S}}
\newcommand{\TT}{\mathcal{T}}
\newcommand{\WW}{\mathcal{W}}
\newcommand{\AAA}{\mathscr{A}}
\newcommand{\GGG}{\mathscr{G}}
\newcommand{\PPP}{\mathscr{P}}
\newcommand{\XXX}{\mathscr{X}}
\newcommand{\given}{\,|\,}
\newcommand{\bgiven}{\,\big|\,}
\newcommand{\Bgiven}{\,\Big|\,}
\newcommand{\bbgiven}{\,\,\bigg|\,\,}
\newcommand{\BBgiven}{\,\,\Bigg|\,\,}
\newcommand{\supp}{\textnormal{supp}}
\newcommand{\eps}{\varepsilon}
\newcommand{\one}{\mathds{1}}
\newcommand{\bone}{\mathbf{1}}
\newcommand{\la}{\lambda}
\newcommand{\normal}{\mathsf{N}}
\def\sT{{\sf T}}
\newcommand{\pto}{\stackrel{p}{\longrightarrow}}
\newcommand{\dto}{\stackrel{d}{\longrightarrow}}
\newcommand{\ga}{\gamma}
\newcommand{\beq}{\begin{equation}}
\newcommand{\eeq}{\end{equation}}
\newcommand{\beqn}{\begin{equation*}}
\newcommand{\eeqn}{\end{equation*}}
\newcommand{\bpsi}{\boldsymbol{\psi}}
\newcommand{\bsigma}{\boldsymbol{\sigma}}
\newcommand{\bsigmas}{\boldsymbol{\sigma}^\star}
\newcommand{\ux}{\underline{x}}
\newcommand{\ui}{\underline{i}}
\newcommand{\uj}{\underline{j}}
\newcommand{\bsig}{\boldsymbol{\underline{\sigma}}}
\newcommand{\sig}{\underline{\sigma}}
\newcommand{\utau}{\underline{\tau}}
\newcommand{\iid}{\stackrel{i.i.d.}{\sim}}
\newcommand{\bG}{\boldsymbol{G}}
\newcommand{\wbG}{\boldsymbol{\widetilde{G}}}
\newcommand{\bGH}{\boldsymbol{G}^\star_{\sf HSBM}}
\newcommand{\bom}{\boldsymbol{\omega}}
\newcommand{\bm}{\boldsymbol{m}}
\newcommand{\wN}{\widetilde{N}}
\newcommand{\bN}{\Breve{N}}
\newcommand{\wnu}{\widetilde{\nu}}
\newcommand{\bnu}{\Breve{\nu}}
\newcommand{\bX}{\boldsymbol{X}}
\newcommand{\bZ}{\boldsymbol{Z}}
\newcommand{\bL}{\boldsymbol{L}}
\newcommand{\bR}{\boldsymbol{R}}
\newcommand{\ks}{{\tiny\textsf{\textup{KS}}}}
\newcommand{\DKL}{D_{\textup{KL}}}
\def\sT{{\mathsf T}}
\def\HSBM{{\sf HSBM}}
\def\ER{{\sf ER}}
\def\Unif{{\sf Unif}}
\def\bal{{\sf bal}}
\def\Poi{{\sf Poi}}
\def\Ber{{\sf Ber}}
\def\ind{{\sf ind}}
\def\co{{\sf cor}}
\def\de{{\rm d}}
\newcommand{\ot}{\otimes}
\newcommand{\wt}[1]{\widetilde{#1}}
\newcommand{\wh}[1]{\widehat{#1}}
\newcommand{\wb}[1]{\overline{#1}}
\newcommand{\pr}{M_0}
\DeclareMathOperator{\Var}{Var}
\DeclareMathOperator{\id}{Id}
\DeclareMathOperator{\tr}{tr}
\DeclareMathOperator{\diag}{diag}
\DeclareMathOperator{\Eig}{Eig}
\DeclareMathOperator{\proj}{proj}
\DeclareMathOperator*{\argmax}{arg\,max}
\DeclareMathOperator{\tv}{TV} % for "total variation"
\DeclareMathOperator{\Fr}{F}
\newtheorem{thm}{Theorem}[section]
\newtheorem*{thm*}{Theorem}
\newtheorem{prop}[thm]{Proposition}
\newtheorem*{prop*}{Proposition}
\newtheorem{cor}[thm]{Corollary}
\newtheorem{lemma}[thm]{Lemma}
\newtheorem*{lemma*}{Lemma}
\newtheorem{fact}[thm]{Fact}
\theoremstyle{definition}
\newtheorem{defn}[thm]{Definition}
\newtheorem{remark}[thm]{Remark}
\title{Weak recovery, hypothesis testing, and mutual information in stochastic block models and planted factor graphs}
\date{}
\author{
Elchanan Mossel\thanks{Department of Mathematics, Massachusetts Institute of Technology. Email: \textup{\tt elmos@mit.edu}} \and 
Allan Sly\thanks{Department of Mathematics, Princeton University. Email: \textup{\tt asly@math.princeton.edu}}\and 
Youngtak Sohn \thanks{Division of Applied Mathematics, Brown University. Email: \textup{\tt youngtak\_sohn@brown.edu}}}
\begin{document}

\maketitle

\begin{abstract}
The stochastic block model is a canonical model of communities in random graphs.
It was introduced in the social sciences and statistics as a model of communities, and in theoretical computer science as an average case model for graph partitioning problems under the name of the 
``planted partition model.'' Given a sparse stochastic block model, the two standard inference tasks are: (i) Weak recovery: can we estimate the communities with non-trivial overlap with the true communities? (ii) Detection/Hypothesis testing: can we distinguish if the sample was drawn from the block model or from a random graph with no community structure with probability tending to $1$ as the graph size tends to infinity?

In this work, we show that for sparse stochastic block models, the two inference tasks are equivalent except at a critical point. That is, weak recovery is information theoretically possible if and only if detection is possible.  We thus find a strong connection between these two notions of inference for the model. We further prove that when detection is impossible, an explicit hypothesis test based on low-degree polynomials in the adjacency matrix of the observed graph achieves the optimal statistical power. This low-degree test is efficient as opposed to the likelihood ratio test, which is not known to be efficient. Moreover, we prove that the asymptotic
mutual information between the observed network and the community structure exhibits a phase transition at the weak recovery threshold.

Our results are proven in much broader settings including the hypergraph stochastic block models and general planted factor graphs.
In these settings, we prove that the impossibility of weak recovery implies contiguity and provide a condition that guarantees the equivalence of weak recovery and detection.

\end{abstract}
\maketitle

\section{Introduction}\label{sec:intro}

The stochastic block model, or simply a block model, is a random graph model generalizing the famous Erdos-Renyi random graph~\cite{ErdosRenyi:60} and is a special case of 
inhomegnuous random graphs~\cite{BoJaRi:07}. 
It has been studied extensively in statistics and the social sciences as a model of communities
\cite{HoLaLe:83, SnijdersNowicki:97,BickelChen:09,RoChYu:11} 
 and in computer science as a model to study the average case behavior of clustering algorithms~\cite{DyerFrieze:89,JerrumSorkin:98,CondonKarp:01,McSherry:01,CojaOghlan:10}.  
 In the last decade starting with the work of Decelle et al. \cite{DKMZ:11}, the sparse block model has been extensively studied with fascinating connections to belief propagation and non-backtracking random walks~\cite{Krzakala_etal:13, BoLeMa:15}. 
Furthermore, the block model is one of the canonical models in high-dimensional statistics which is believed to exhibit a statistical-computational gap, meaning a gap between what is achievable information theoretically and what is achievable with known computationally efficient algorithms.

The two central inference tasks related to the sparse block models are {\em weak recovery} and {\em detection}. Weak recovery means estimating communities with non trivial overlap with the true communities. Detection refers to designing a statistical test that, from a single sample, achieves vanishing Type I and Type II errors in determining whether the sample is drawn from the block model or from a {\em null model} without community structure -- that is, a standard random graph model with the same average degree. Numerous papers including~\cite{MoNeSl:15, BMNN:16, CKPZ:18, CEJKK:18, CKM20} have studied 
the information theoretic thresholds for both weak recovery and detection. Notably, it has been shown that, for the two-community case~\cite{MoNeSl:15} and the disassortative case~\cite{CEJKK:18}, the thresholds for weak recovery and detection coincide, albeit with completely different proofs in each case. This naturally leads to the question:
\begin{quotation}\centering{\emph{Is weak recovery information theoretically equivalent to detection in general?}}
\end{quotation}
The first objective of this paper is to answer this question for sparse block models, and in greater generality, for {\em planted factor models}~\cite{CKPZ:18} which encompass labeled/weighted stochastic block models~\cite{HLM12,MLX15, XJL20} and hypergraph stochastic block models~\cite{ACKZ15,GD17a,GD17b, PZ21, SZ22}. We prove that the information theoretic possibility of detection implies the possibility of weak recovery, and find a condition under which the converse holds.

The second objective is to study {\em hypothesis testing}, a more classical task than detection, involving tests between two simple hypotheses. Notably, the impossibility of detection merely indicates that there is no sequence of statistical tests that can distinguish the block model from the null model with vanishing Type I and Type II errors. This leads us to the question:
\begin{quotation}
    \centering{\emph{Can we characterize the asymptotic power of the optimal (likelihood ratio) test at significance level $\alpha\in (0,1)$? If so, can the optimal power achieved efficiently?}}
\end{quotation}
In this paper, we explicitly characterize the asymptotic power of the likelihood ratio test. Moreover, we prove that it can be achieved efficiently by a test based on low-degree polynomials of the adjacency matrix when the weak recovery, or equivalently detection, is impossible. We also investigate the relationship between these statistical tasks and the mutual information between the observed graph and the community structure.

\subsection{Our results for symmetric block models}

To illustrate our general results, we first consider the well-studied setup of sparse symmetric block models with $q\geq 2$ communities. Let $V$ be a vertex set of size $n\geq 1$. For parameters $a,b>0$, let $\bG^\star \sim \GG(n,q,\frac{a}{n},\frac{b}{n})$ denote the model of random graphs in which each vertex $u\in V$ is assigned a label $\bsigma^\star_u\in \{1,2,...,q\}$ uniformly at random, and then each possible edge $(u,v)$ is included with probability $\frac{a}{n}$ if $\bsigma^\star_u=\bsigma^\star_v$ and with probability $\frac{b}{n}$ if $\bsigma^\star_u \neq \bsigma^\star_v$. The average degree of $\bG^\star$ equals $d\equiv \frac{a+(q-1)b}{q}$. The corresponding random graph with no community structure is $\bG\sim \GG_{\sf ER}(n,\frac{d}{n})$, where $\GG_{\sf ER}(n,\frac{d}{n})\equiv \GG(n,q,\frac{d}{n},\frac{d}{n})$ is the sparse Erdos-Renyi graph with average degree $d$. Observe that the parameters $(a,b)$ is in one-to-one correspondence with the average degree $d$ and the parameter 
\[
\la\equiv \frac{a-b}{a+(q-1)b}\in \Big(-\frac{1}{q-1}\,,\,1\Big)\,.
\]
Here, $\la$ corresponds to the second eigenvalue of a certain stochastic matrix, namely the transition matrix from the color of a vertex to its neighbor or child (cf. \eqref{eq:transition}). This parametrization is emphasized in several works on the block model~\cite{Massoulie:14,MoNeSl:18,BoLeMa:15,AbbeSandon:15,AbbeSandon:18} and on phylogeny reconstruction problem~\cite{Mossel:04,DaMoRo:11,MoRoSl:11,RochSly:17}. We are interested in high-dimensional setting where $n\to\infty$ as $q, d,\la$ are fixed.

A first observation is that by fixing $\la\in (-\frac{1}{q-1},1)$ and increasing $d$ from $0$ to $\infty$, the task of weak recovery is monotone in $d$ since larger $d$ amounts to more information. Indeed given a sample from the model with a certain $d$ by deleting each edge independently with probability $t\in [0,1]$ we obtain a perfect sample of the model with the same $\la$ and the new degree $t\cdot d$. Thus, there exists a critical point $d_{\ast}(q,\la)$, which we call the \emph{weak recovery threshold}, at which the weak recovery becomes information theoretically possible (see Definition~\ref{def:weak:recovery}). Our result for the block model shows that the weak recovery and detection is equivalent possibly except for the critical point $d_{\ast}(q,\la)$.

\begin{thm}\label{thm:SBM} \textnormal{(Special case of Theorem~\ref{thm:HSBM:contiguity})} Consider the sparse symmetric block model $\bG^\star\sim \GG(n,q,\frac{a}{n},\frac{b}{n})$ with $q\geq 2$ communities and parameters $a,b>0$. For average degree $d\equiv \frac{a+(q-1)b}{q}$ below the weak recovery threshold $d<d_{\ast}(q,\la)$, the following holds.
  \begin{enumerate}[label=\textup{(\arabic*)}]
       \item \label{item:contiguity} $\bG^\star\sim \GG(n,q,\frac{a}{n},\frac{b}{n})$ is mutually contiguous with the Erdos-Renyi graph $\bG\sim \GG_{\sf ER}(n,\frac{d}{n})$. That is, for a sequence of events $(\AAA_n)_{n\geq 1}$, $\P(\bG^\star\in \AAA_n)\to 0$ as $n\to\infty$ if and only if $\P(\bG\in \AAA_n)\to 0$ as $n\to\infty$. Thus, detection is impossible, i.e. there is no test that distinguishes $\bG^\star$ from $\bG$ with power tending to $1$ as $n\to\infty$. Moreover,  there is no consistent estimator for $a,b>0$.

       \item \label{item:mutual:info:below} The mutual information between $\bG^\star$ and the community structure $\bsig^\star$ per-vertex  $I(\bG^\star,\bsig^\star)/n$ converges to $\frac{d}{2}I_0(q,\la)$ as $n\to\infty$, where $I_0(q,\la)$ is the mutual information of $\bsigma_{\la}\in [q]$ and $\bsigma\in[q]$ defined in Eq.~\eqref{eq:mutual:single:vertex} below.
  \end{enumerate}
  On the contrary, above the weak recovery threshold $d>d_{\ast}(q,\la)$, 
  \begin{enumerate}[label=\textup{(\arabic*)}]\setcounter{enumi}{2}
    \item \label{item:mutual:orthogonal} $\bG^\star\sim \GG(n,q,\frac{a}{n},\frac{b}{n})$ is not mutually contiguous to the Erdos-Renyi graph $\bG\sim \GG_{\sf ER}(n,\frac{d}{n})$. Moreover, there exists a sequence of events $(\AAA^{\ast}_{n})_{n\geq 1}$ such that after passing to a subsequence, $\P(\bG\in \AAA^{\ast}_n)\leq 2e^{-\eta n}$ and $\P(\bG^\star\in \AAA^{\ast}_n)\geq 1-2e^{-\eta n}$ hold for $\eta>0$ not depending on $n$. Thus, detection is possible, and along a subsequence, there exists a statistical test that distinguishes $\bG^\star$ from $\bG$ with power tending to $1$ exponentially fast as $n\to\infty$.
    \item \label{item:mutual:info:above} $\liminf_{n\to\infty} \frac{1}{n}I(\bG^\star,\bsig^\star)<\frac{d}{2}I_0(q,\la)$ holds.
  \end{enumerate}
\end{thm}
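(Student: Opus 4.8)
The plan is to derive the two sub-threshold items~\ref{item:contiguity}--\ref{item:mutual:info:below} and the two super-threshold items~\ref{item:mutual:orthogonal}--\ref{item:mutual:info:above} from two common inputs. The first is local weak convergence together with its consequences for recovery: rooted at a uniform vertex and carrying its labels, $\bG^\star$ converges locally weakly to the $\Poi(d)$-offspring Galton--Watson tree that broadcasts labels through the channel \eqref{eq:transition} (second eigenvalue $\la$), while $\bG$ converges to the same tree without labels; consequently, via the usual second-moment/posterior-overlap reformulation of weak recovery (Definition~\ref{def:weak:recovery}), $d<d_{\ast}(q,\la)$ forces $\rho_n\to0$ and $d>d_{\ast}(q,\la)$ forces $\liminf_n\rho_n>0$, where $\rho_n:=n^{-2}\sum_{\{u,v\}}\E\big[(\P(\bsigma^\star_u=\bsigma^\star_v\mid\bG^\star)-q^{-1})^2\big]$. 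The second input is an exact chain-rule identity: revealing the $\binom n2$ edge-indicators $\bG^\star_e$ of $\bG^\star$ one at a time in any fixed order, conditional independence of edges given labels gives, for $e=\{u,v\}$, $I(\bG^\star_e;\bsig^\star\mid\bG^\star_{<e})=I(\bG^\star_e;(\bsigma^\star_u,\bsigma^\star_v))-I(\bG^\star_e;\bG^\star_{<e})$, hence
\[
I(\bG^\star,\bsig^\star)=\sum_e I\big(\bG^\star_e;(\bsigma^\star_u,\bsigma^\star_v)\big)-\sum_e I(\bG^\star_e;\bG^\star_{<e})=\tfrac{nd}{2}I_0(q,\la)+O(1)-\sum_e I(\bG^\star_e;\bG^\star_{<e}),
\]
where $I(\bG^\star_e;(\bsigma^\star_u,\bsigma^\star_v))=\tfrac dn I_0(q,\la)+O(n^{-2})$ is the elementary single-edge computation behind \eqref{eq:mutual:single:vertex}; equivalently $\DKL(\bG^\star\,\|\,\bG)=\sum_e I(\bG^\star_e;\bG^\star_{<e})=\tfrac{nd}{2}I_0(q,\la)+O(1)-I(\bG^\star,\bsig^\star)$.

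\emph{Item~\ref{item:mutual:info:below}.} Since $\bG^\star_e\perp\bG^\star_{<e}$ given $(\bsigma^\star_u,\bsigma^\star_v)$, a $\chi^2$ bound yields $I(\bG^\star_e;\bG^\star_{<e})\le\tfrac nd\Var\!\big(\P(\bG^\star_e=1\mid\bG^\star_{<e})\big)(1+o(1))=\tfrac{(a-b)^2}{dn}\E\big[(\P(\bsigma^\star_u=\bsigma^\star_v\mid\bG^\star_{<e})-q^{-1})^2\big](1+o(1))$, and the tower property bounds the last expectation by its analogue with $\bG^\star$; summing, $\sum_e I(\bG^\star_e;\bG^\star_{<e})\lesssim\tfrac{(a-b)^2}{d}\,n\,\rho_n=o(n)$ below $d_{\ast}(q,\la)$. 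Combined with the trivial upper bound $I(\bG^\star,\bsig^\star)\le\sum_e I(\bG^\star_e;(\bsigma^\star_u,\bsigma^\star_v))=\tfrac{nd}2 I_0(q,\la)+O(1)$ coming from the same identity, this gives $\tfrac1n I(\bG^\star,\bsig^\star)\to\tfrac d2 I_0(q,\la)$. \emph{Item~\ref{item:mutual:info:above}.} Here I would instead invoke the area theorem / I-MMSE relation for planted models: $\tfrac1n I(\bG^\star,\bsig^\star)$ equals, up to $o(1)$, the integral over $d'\in[0,d]$ of $\tfrac12 I_0(q,\la)$ minus a fixed positive multiple of $\lim_n\rho_n(d')$ for the degree-$d'$ model; monotonicity of weak recovery in the degree makes $\liminf_n\rho_n(d')$ bounded away from $0$ throughout a subinterval of $(d_{\ast}(q,\la),d]$, so the integral is strictly below $\tfrac d2 I_0(q,\la)$, i.e.\ $\liminf_n\tfrac1n I(\bG^\star,\bsig^\star)<\tfrac d2 I_0(q,\la)$.

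\emph{Item~\ref{item:contiguity}.} I would establish mutual contiguity by small-subgraph conditioning (as in~\cite{MoNeSl:15} for $q=2$) applied to the likelihood ratio $L_n=\tfrac{\de\P_{\bG^\star}}{\de\P_{\bG}}=q^{-n}Z_n$, where $Z_n(G)=\sum_{\sigma\in[q]^V}\prod_{\{u,v\}\in E}\tfrac{p_{\sigma_u\sigma_v}}{p}\prod_{\{u,v\}\notin E}\tfrac{1-p_{\sigma_u\sigma_v}}{1-p}$ is the planted partition function. Since $d<d_{\ast}(q,\la)\le d_{\ks}=\la^{-2}$ (the Kesten--Stigum upper bound on the tree reconstruction/recovery threshold), we have $d\la^2<1$, so the $k$-cycle ``twist factors'' $1+(q-1)\la^k$ are square-summable against the densities $d^k/(2k)$ and the hypotheses of small-subgraph conditioning reduce to showing that $\E_{\bG}[L_n^2]$, after factoring out the short-cycle contribution, stays bounded. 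Expanding $\E_{\bG}[L_n^2]$ as a sum over pairs $(\sigma^1,\sigma^2)\in([q]^V)^2$ weighted by $\exp\!\big(n\,\psi(\text{overlap matrix})+o(n)\big)$, the task is to prove that the balanced overlap $q^{-2}\bone\bone^{\sT}$ is the global maximizer with $\psi=0$ there; this is precisely where impossibility of weak recovery enters, and I expect it to be the central obstacle of the whole theorem: it amounts to analyzing the two-replica broadcast recursion on the Galton--Watson tree and ruling out any non-trivial fixed point, turning ``$\rho_n\to0$'' into ``no competing overlap'' (in the general factor-graph setting of Theorem~\ref{thm:HSBM:contiguity} the short-cycle bookkeeping is more involved but the mechanism is the same). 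Granting this, $L_n$ converges in law to an a.s.\ positive finite limit with $\sup_n\E_{\bG}[L_n^2]<\infty$, which yields mutual contiguity of $\bG^\star$ and $\bG$; impossibility of detection is immediate, and, since two sub-threshold parameter pairs sharing the same $d$ are each contiguous to $\GG_{\ER}(n,\tfrac dn)$ hence to each other, no consistent estimator of $(a,b)$ can exist.

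\emph{Item~\ref{item:mutual:orthogonal}.} Weak recovery is possible for $d>d_{\ast}(q,\la)$ by the very definition of the threshold (and the edge-deletion monotonicity recalled in the introduction). For the exponentially strong separating events take $\AAA^\star_n=\{G:\tfrac1n\log L_n(G)\ge\eps\}$: Markov's inequality and $\E_{\bG}[L_n]=1$ give $\P(\bG\in\AAA^\star_n)\le e^{-\eps n}$. On the other hand $\tfrac1n\log L_n=\tfrac1n\log Z_n-\log q$, and $\tfrac1n\log Z_n$ concentrates around its mean $\tfrac1n\DKL(\bG^\star\|\bG)+\log q$ with deviation probability $e^{-\eta n}$: revealing the edge-indicators one at a time, the Doob-martingale increment is $O(1)$ when the revealed edge is present and $O(n^{-1})$ when it is absent, so on the probability-$(1-e^{-\Omega(n)})$ event that $\bG^\star$ has $O(n)$ edges the sum of squared increments is $O(n)$ and Azuma's inequality applies. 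By item~\ref{item:mutual:info:above}, $\limsup_n\tfrac1n\DKL(\bG^\star\|\bG)=\limsup_n\big(\tfrac d2 I_0(q,\la)-\tfrac1n I(\bG^\star,\bsig^\star)\big)>0$, so along a subsequence $\tfrac1n\DKL(\bG^\star\|\bG)\ge2\eps$ for a suitable $\eps>0$; combining the two estimates gives $\P(\bG^\star\in\AAA^\star_n)\ge1-e^{-\eta n}$ on that subsequence, and the passage to a subsequence is forced precisely because only the $\limsup$ is under control. Non-contiguity of $\bG^\star$ and $\bG$ is then immediate, so this item is the exact converse of item~\ref{item:contiguity}.
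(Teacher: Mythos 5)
Your reductions for items~\ref{item:mutual:info:below}, \ref{item:mutual:orthogonal}, \ref{item:mutual:info:above} are in the right spirit and largely parallel the paper's own route: the paper also uses a derivative/I-MMSE decomposition (Proposition~\ref{prop:derivative}) plus Azuma--Hoeffding concentration of $\tfrac1n\log L(\bG^\star)$ (Fact~\ref{fact:concentration}) and a Markov bound on $L(\bG)$. Your chain-rule-over-edges decomposition of $I(\bG^\star,\bsig^\star)$ in item~\ref{item:mutual:info:below} is a genuinely different and somewhat more elementary bookkeeping than the paper's Lemma~\ref{lem:KL:mutual:info:free:energy}, and the identity you derive is correct.

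The gap is in item~\ref{item:contiguity}, and it is fatal as stated. You plan to run small-subgraph conditioning on $L_n$ and, after factoring out short cycles, to \emph{prove} that $\E_{\bG}[L_n^2]$ stays bounded by showing that the balanced overlap $q^{-2}\bone\bone^{\sT}$ is the global maximizer of the exponential rate $\psi$. This is false for $q\geq3$: as the paper emphasizes (citing~\cite{BMNN:16}), there is a nontrivial window $d\in(d_{\textsf{sec}}(q,\la),d_{\ast}(q,\la))$ in which $\E_{\bG}[L_n^2]=e^{\Omega(n)}$, i.e.\ the rate is maximized at an unbalanced overlap even though weak recovery is impossible. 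Impossibility of weak recovery (equivalently, triviality of the \emph{posterior} overlap $\langle\|R_{\sig^1,\sig^2}-\pi\pi^{\sT}\|_1\rangle_{\bG^\star}\to0$, Theorem~\ref{thm:equiv}) does not constrain the overlap landscape that governs the annealed second moment, which ranges over all pairs $(\sig^1,\sig^2)$ weighted by the annealed measure, not by the posterior. Ruling out nontrivial fixed points of a two-replica tree recursion attacks the wrong object: it would control the typical posterior overlap, which you already have, but says nothing about the dominant overlap sector in $\E_{\bG}[L_n^2]$.

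The paper's resolution is the ingredient your plan is missing: do not bound $\E_{\bG}[L_n^2]$ at all, but instead \emph{truncate} the likelihood ratio by the event that the posterior overlap given the observed graph is near-trivial,
\[
L^\ast_n(G)=L_n(G)\,\one\Big\{\big\langle\|R_{\sig^1,\sig^2}-\pi\pi^{\sT}\|_1\big\rangle_G\leq\eps_n\Big\},
\]
and then show (Lemma~\ref{lem:m:n:overlap:trivial} and Proposition~\ref{prop:sec:moment:LR}) that $\E L^\ast_n(\bG)=1-o(1)$ while $\E[L^\ast_n(\bG)^2]$ is bounded by the cycle product for \emph{all} $d<d_\ast\wedge d_\ks$. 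The truncation converts the statement ``posterior overlap is trivial w.h.p.\ under $\bG^\star$'' (which is equivalent to $d<d_\ast$) into a restriction of the second-moment sum to near-orthogonal pairs (Lemma~\ref{lem:cor:negligible}), which is what kills the exponential blowup. Small-subgraph conditioning is then applied to $L^\ast_n$, not $L_n$, and the conclusion is transferred back. Without this truncation step, your item~\ref{item:contiguity} argument cannot close for $q\geq 3$, and since the statement of Theorem~\ref{thm:SBM} is for general $q\geq2$ this is a genuine missing idea rather than a cosmetic difference.
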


Our results significantly advance the understanding of stochastic block models by unifying and extending previous findings. While earlier studies have established mutual contiguity in specific parameter regimes such as the disassortative case where $a < b$ through combined results in \cite{CKPZ:18, CEJKK:18, CKM20}, the case of $q = 2$ communities \cite{MoNeSl:15}, and where $d$ is sufficiently small \cite{BMNN:16}. These works each have their own limitations as they rely on methods like the "interpolation technique," which fails for the assortative case $a > b$, or the "second moment method," which does not hold when $d$ is sufficiently close to $d_{\ast}(q,\lambda)$. Our theorems overcome these limitations by covering the maximal range of parameters. Furthermore, we extend our results to a broader context by applying them to hypergraph stochastic block models, accommodating interactions of order $k \geq 2$ among communities and allowing for non-symmetric connection probabilities between different communities. For the most general formulations of our results within the framework of planted factor models, we refer to Section~\ref{sec:general:results}.

The proof of Theorem~\ref{thm:SBM} hinges on a novel equivalence between the impossibility of weak recovery and the \textit{near-orthogonality} of the samples drawn from the posterior (see Theorem~\ref{thm:equiv}). Specifically, the key insight in proving Theorem~\ref{thm:SBM}-\ref{item:contiguity} is that below the weak recovery threshold $d<d_{\ast}(q,\la)$, appropriately truncating the likelihood ratio based on \textit{overlaps} of posterior samples reduces the second moment of the likelihood ratio by a multiplicative factor of $e^{\Omega(n)}$, while affecting the first moment by only $1-o(1)$. We integrate this insight with other techniques such as small subgraph conditioning from combinatorics~\cite{Janson95random, Wormald99models} and I-MMSE relation from information theory~\cite{guo2011estimation}. We believe our proof techniques illuminate the relationship between weak recovery and detection and have broad applicability in studying the equivalence of these two tasks in high-dimensional statistics. For detailed proofs, we refer to Section~\ref{sec:proof}.

Note that Theorem~\ref{thm:SBM} does not cover the critical case $d=d_{\ast}(q,\la)$. This is not a coincidence: there are cases where the equivalence of detection and weak recovery fails for $d=d_{\ast}(q,\la)$! For example, $d_{\ast}(q,\la)$ matches the \emph{Kesten-Stigum threshold} $d_{\ks}=\la^{-2}$ when $q=2$~\cite{MoNeSl:15} or when $q\in \{3,4\}$ and $d$ is large enough~\cite{mossel23exact}. In these cases, weak recovery is impossible at the Kesten-Stigum threshold~\cite{MoNeSl:15, mossel23exact}. In contrast, it is possible to distinguish the block model from the Erdos Renyi graph with probability approaching $1$ by counting short cycles as the asymptotic distribution of the number of cycles of fixed length is Poisson with known parameters~\cite{BoJaRi:07}. 

The quantity $I_0(q,\la)$ in Theorem~\ref{thm:SBM} can be characterized as follows. Let $\bsigma\in \Unif(\{1,\ldots,q\})$ be a uniform community assignment. Given $\bsigma$, let $\bsigma_{\la}$ a sample from the conditional distribution
\begin{equation}\label{eq:transition}
    \P(\bsigma_\la=j\given \bsigma=i)= 
    \begin{cases}
    \la + \frac{1-\la}{q} &\text{if $i=j$}\\
    \frac{1-\la}{q} &\text{otherwise}.
    \end{cases}
\end{equation}
Then, $I_0(q,\la)\equiv I(\bsigma_{\la},\bsigma)$ is the mutual information between $\bsigma_{\la}$ and $\bsigma$:
\beq\label{eq:mutual:single:vertex}
I_0(q,\la)\equiv I(\bsigma_{\la},\bsigma)= \frac{1+(q-1)\la}{q}\log\big(1+(q-1)\la\big)+\frac{(q-1)(1-\la)}{q}\log(1-\la)\,, 
\eeq
Thus, Theorem~\ref{thm:SBM}-\ref{item:mutual:info:below} establishes that for all $q\geq 2$ and $\la\in (-\frac{1}{q-1},1)$, below the weak recovery threshold $d<d_{\ast}(q,\la)$, the asymptotic per-vertex mutual information between $\bG^\star$ and $\bsig^\star$ admits a ``single-letter'' characterization. Informally, this means that the mutual information is fully determined by local interactions between neighboring vertices. In contrast, when $d>d_{\ast}(q,\la)$, global correlations in $\bsigma$ given $\bG^\star$ emerge, adding an extra term to the mutual information. For a more formal statement, we refer to Lemma~\ref{lem:KL:mutual:info:free:energy} and Proposition~\ref{prop:free:energy} below. In particular, in the case of $q=2$ communities with $d\la^2<1$, our result improves upon that of \cite{DAM16}, where they required the average degree diverge $d\to\infty$. We further prove in Theorem~\ref{thm:HSBM:contiguity} that such single letter characterization holds for (possibly non-symmetric) hypergraph stochastic block model below the weak recovery threshold. Moreover, we provide a general condition under which the phase transition of per-vertex mutual information occurs at the weak recovery threshold.
\subsubsection{Hypothesis testing for the existence of communities}

Theorem~\ref{thm:SBM} establishes that the weak recovery threshold $d_{\ast}(q,\la)$ marks the sharp phase transition of the following hypothesis testing from a single observation $G$
\beq\label{eq:hypothesis:test}
{\sf H_0}: G\sim \GG_{\sf ER}\Big(n,\frac{d}{n}\Big)\;\;\;\;\textnormal{vs.}\;\;\;\;{\sf H_1}: G\sim \GG\Big(n,q,\frac{a}{n},\frac{b}{n}\Big)
\eeq
In particular, if $d<d_{\ast}(q,\la)$, then there is no perfect sequence of statistical tests (i.e. tests have asymptotically vanishing Type~I and Type~II errors), whereas if $d>d_{\ast}(q,\la)$ there exists a test which achieves vanishing Type~I and Type~II error exponentially fast at least along a subsequence. 

However, for $d<d_{\ast}(q,\la)$, Theorem~\ref{thm:SBM} does not provide any information on the largest power achieved by the likelihood ratio (LR) test. We next characterize the asymptotic power obtained by LR test below the weak recovery threshold. Let us denote the likelihood ratio of the block model $ \bG^\star\sim  \GG(n,q,\frac{a}{n},\frac{b}{n})$ with respect to Erdos Renyi graph $\bG\sim \GG_{\sf ER}(n,\frac{d}{n})$ by
\begin{equation*}
    \LL_n(G)=\frac{\P(\bG^\star=G)}{\P(\bG=G)}\,.
\end{equation*}
\begin{thm}\label{thm:optimal:power} \textnormal{(Special case of Theorem~\ref{thm:optimal:power:HSBM})}
For average degree $d\equiv \frac{a+(q-1)b}{q}$ below the weak recovery threshold $d<d_{\ast}(q,\la)$, the likelihood ratio under null $\bG\sim \GG_{\sf ER}(n,\frac{d}{n})$ converges in distribution
\begin{equation}\label{eq:thm:likelihood:conv}
    \LL_n(\bG)\dto \boldsymbol{\LL}_{\infty}
    :=\prod_{\ell \geq 3}\Big\{\big(1+(q-1)\la^{\ell}\big)^{\bX_{\ell}}e^{-\frac{(q-1)(d\lambda)^{\ell}}{2\ell}}\Big\}\,,
\end{equation}
where $(\bX_{\ell})_{\ell\geq 3}$ are independent Poisson random variables with mean $\E \bX_{\ell}=\frac{1}{2\ell}d^{\ell}$. Here, the random variable $\boldsymbol{\LL}_{\infty}\equiv \boldsymbol{\LL}_{\infty}(d,\la)$ satisfies the following.
\begin{enumerate}[label=\textup{(\arabic*)}]
\item For $d\in (0,\la^{-2})$, which is a superset of $(0,d_{\ast}(q,\la))$, $\boldsymbol{\LL}_{\infty}$ is well-defined (i.e. the infinite product converges a.s.) and has finite second moment.
\item If $1\leq d<\la^{-2}$ holds, then the random variable $\boldsymbol{\LL}_{\infty}$ does not have a point mass. In particular, in the regime $1\leq d<d_{\ast}(q,\la)$, the likelihood ratio test for the hypothesis testing task~\eqref{eq:hypothesis:test} at significance level $\alpha\in (0,1)$ has the asymptotic power $\beta_{\ast}(\alpha)\equiv \beta_{\ast}(\alpha;q,d,\la)\in (0,1)$ defined by
 \beq\label{eq:def:power:1}
      \beta_{\ast}(\alpha)=\E\big[\boldsymbol{\LL}_{\infty}\one\{\boldsymbol{\LL}_{\infty}\geq C_{\alpha}\}\big]\,.
     \eeq
     Here, $C_{\alpha}>0$ is an arbitrary constant satisfying $\P(\boldsymbol{\LL}_{\infty}\geq C_{\alpha})=\alpha$,
     whose existence is guaranteed.
\end{enumerate}
\end{thm}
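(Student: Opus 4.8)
The plan is to prove the three assertions separately; throughout write $\lambda_\ell:=\frac{d^\ell}{2\ell}$ and $\delta_\ell:=(q-1)\la^\ell$, so that the $\ell$-th factor of $\boldsymbol{\LL}_\infty$ is $(1+\delta_\ell)^{\bX_\ell}e^{-\lambda_\ell\delta_\ell}$, and $1+\delta_\ell>0$ for all $\ell\ge3$ (clear when $\la\ge0$; when $\la<0$ it follows from $(q-1)|\la|<1$). For assertion (1), write $\boldsymbol{\LL}_\infty=e^{S_\infty}$ with $S_\infty=\sum_{\ell\ge3}V_\ell$, $V_\ell:=\bX_\ell\log(1+\delta_\ell)-\lambda_\ell\delta_\ell$, the $V_\ell$ independent; using $\E s^{\bX_\ell}=e^{\lambda_\ell(s-1)}$, the partial products $W_k:=\prod_{\ell=3}^k(1+\delta_\ell)^{\bX_\ell}e^{-\lambda_\ell\delta_\ell}$ form a nonnegative martingale with $\E[W_k]=1$ and $\E[W_k^2]=\exp\!\big(\sum_{\ell=3}^k\lambda_\ell\delta_\ell^2\big)$. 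Since
\[
\sum_{\ell\ge3}\lambda_\ell\delta_\ell^2=\frac{(q-1)^2}{2}\sum_{\ell\ge3}\frac{(d\la^2)^\ell}{\ell}<\infty\iff d\la^2<1,
\]
for $d<\la^{-2}$ the martingale $(W_k)$ is bounded in $L^2$, hence converges a.s.\ and in $L^2$ to $\boldsymbol{\LL}_\infty$, which is therefore well defined with $\E[\boldsymbol{\LL}_\infty^2]=\exp\!\big(\sum_{\ell\ge3}\lambda_\ell\delta_\ell^2\big)<\infty$; and $(0,d_\ast(q,\la))\subseteq(0,\la^{-2})$ because $d_\ast(q,\la)\le d_{\ks}=\la^{-2}$.

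For the convergence in distribution I would use Janson's small subgraph conditioning method, with short cycles as the distinguished small subgraphs. Let $C_\ell(G)$ be the number of $\ell$-cycles of $G$. Classically, under $\bG\sim\GG_{\sf ER}(n,\tfrac dn)$ the counts $(C_\ell(\bG))_{3\le\ell\le k}$ converge jointly to independent Poisson variables with means $\lambda_\ell$; and a direct computation gives the ``tilt'': a fixed $\ell$-cycle is present in $\bG^\star$ with probability $(\tfrac dn)^\ell\tr(\bar M^\ell)$, where $\bar M$ is the transition matrix of \eqref{eq:transition}, whose eigenvalues are $1$ and $\la$ (the latter with multiplicity $q-1$), so $\tr(\bar M^\ell)=1+(q-1)\la^\ell=1+\delta_\ell$, $\E[C_\ell(\bG^\star)]\sim\lambda_\ell(1+\delta_\ell)$, and the same computation controls the joint factorial moments, so $C_\ell(\bG^\star)\dto\Poi(\lambda_\ell(1+\delta_\ell))$ independently. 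The key input is the second-moment estimate: from the truncated-second-moment analysis in the proof of Theorem~\ref{thm:SBM}-\ref{item:contiguity}, for $d<d_\ast(q,\la)$ there is an event $\EE_n$ with $\P(\bG\in\EE_n)\to1$, $\P(\bG^\star\in\EE_n)\to1$, on which the truncated likelihood ratio $\widetilde{\LL}_n:=\LL_n\one_{\EE_n}$ satisfies $\E_{\bG}[\widetilde{\LL}_n^2]\to\exp\!\big(\sum_{\ell\ge3}\lambda_\ell\delta_\ell^2\big)$. Set $W_k(G):=\prod_{\ell=3}^k(1+\delta_\ell)^{C_\ell(G)}e^{-\lambda_\ell\delta_\ell}$; expanding the square and using the change of measure $\P(\bG^\star=G)=\LL_n(G)\P(\bG=G)$ for the cross term, the cycle-count asymptotics under $\bG$ and under $\bG^\star$ together with the displayed constant give $\E_{\bG}\big[(\widetilde{\LL}_n-W_k(\bG))^2\big]\to\exp\!\big(\sum_{\ell\ge3}\lambda_\ell\delta_\ell^2\big)-\exp\!\big(\sum_{\ell=3}^k\lambda_\ell\delta_\ell^2\big)$, which tends to $0$ as $k\to\infty$. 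Since $W_k(\bG)\dto W_k$ and $W_k\to\boldsymbol{\LL}_\infty$ a.s.\ by the previous paragraph, a three-$\eps$ argument yields $\widetilde{\LL}_n\dto\boldsymbol{\LL}_\infty$; and $\P_{\bG}(\LL_n\ne\widetilde{\LL}_n)\le\P(\bG\notin\EE_n)\to0$ then gives $\LL_n(\bG)\dto\boldsymbol{\LL}_\infty$.

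For the first half of assertion (2), recall $\boldsymbol{\LL}_\infty=e^{S_\infty}$ with $S_\infty=\sum_{\ell\ge3}V_\ell$ an a.s.-convergent sum of independent discrete random variables, each genuinely non-degenerate since $\la\ne0$ forces $\log(1+\delta_\ell)\ne0$. By the classical theory of infinite convolutions of discrete laws (Jessen--Wintner purity and Lévy's criterion), the law of $S_\infty$ has no atom whenever $\prod_{\ell\ge3}\alpha_\ell=0$, where $\alpha_\ell:=\max_{m\ge0}\P(\bX_\ell=m)$ is the mass of the largest atom of $\bX_\ell\sim\Poi(\lambda_\ell)$. When $d>1$ one has $\lambda_\ell\to\infty$ and $\alpha_\ell=\Theta(\lambda_\ell^{-1/2})\to0$; when $d=1$ one has $\alpha_\ell=e^{-\lambda_\ell}$ with $\sum_{\ell\ge3}\lambda_\ell=\sum_{\ell\ge3}\tfrac1{2\ell}=\infty$; in either case $\prod_{\ell\ge3}\alpha_\ell=0$. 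Hence for $1\le d<\la^{-2}$ the law of $\boldsymbol{\LL}_\infty$ has no atom, so its distribution function $F$ is continuous; in particular for each $\alpha\in(0,1)$ there is $C_\alpha\in(0,\infty)$ with $\P(\boldsymbol{\LL}_\infty\ge C_\alpha)=\alpha$, and $\E[\boldsymbol{\LL}_\infty\one\{\boldsymbol{\LL}_\infty\ge C_\alpha\}]$ does not depend on which such $C_\alpha$ is chosen (since $\boldsymbol{\LL}_\infty$ places no mass between two valid choices).

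Finally, for the statement about the power, fix $\alpha\in(0,1)$ and $1\le d<d_\ast(q,\la)$, so that both $\LL_n(\bG)\dto\boldsymbol{\LL}_\infty$ and the no-atom property hold (using $d_\ast\le\la^{-2}$). Consider the test that rejects ${\sf H_0}$ when $\LL_n(G)\ge C_\alpha$. Its Type~I error is $\P_{\bG}(\LL_n(\bG)\ge C_\alpha)\to\P(\boldsymbol{\LL}_\infty\ge C_\alpha)=\alpha$ by continuity of $F$ at $C_\alpha$, so it is asymptotically of level $\alpha$; its power is, by the change of measure,
\[
\P_{\bG^\star}\!\big(\LL_n(\bG^\star)\ge C_\alpha\big)=\E_{\bG}\!\big[\LL_n(\bG)\,\one\{\LL_n(\bG)\ge C_\alpha\}\big].
\]
Since $\E_{\bG}[\LL_n(\bG)\one\{\bG\notin\EE_n\}]=\P(\bG^\star\notin\EE_n)\to0$, the right side equals $\E_{\bG}[\widetilde{\LL}_n\one\{\widetilde{\LL}_n\ge C_\alpha\}]+o(1)$, and this converges to $\E[\boldsymbol{\LL}_\infty\one\{\boldsymbol{\LL}_\infty\ge C_\alpha\}]=\beta_\ast(\alpha)$ by $\widetilde{\LL}_n\dto\boldsymbol{\LL}_\infty$, the uniform integrability coming from $\sup_n\E_{\bG}[\widetilde{\LL}_n^2]<\infty$, and the absence of an atom of $\boldsymbol{\LL}_\infty$ at $C_\alpha$. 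Moreover $\E[\boldsymbol{\LL}_\infty]=1$ (from $\E_{\bG}[\widetilde{\LL}_n]=\P(\bG^\star\in\EE_n)\to1$ and the same uniform integrability), and since $\boldsymbol{\LL}_\infty>0$ is non-constant and $\alpha\in(0,1)$ we obtain $0<\beta_\ast(\alpha)<1$. By the Neyman--Pearson lemma the likelihood ratio test is most powerful at each finite $n$, so the optimal asymptotic power at level $\alpha$ equals $\beta_\ast(\alpha)$. I expect the main obstacle to be the second-moment estimate underlying the convergence in distribution, namely that truncating $\LL_n$ on the high-probability good-overlap event collapses $\E_{\bG}[\LL_n^2]$ to the annealed value $\exp\!\big(\sum_{\ell\ge3}\lambda_\ell\delta_\ell^2\big)$; this is precisely the content of the proof of Theorem~\ref{thm:SBM}, while the martingale, purity, and Neyman--Pearson steps are comparatively routine.
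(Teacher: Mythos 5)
Your proof follows the same general architecture as the paper's (truncated second moment combined with small subgraph conditioning, together with Poisson convergence of cycle counts under both the null and the planted model), so the main ideas are aligned. Two observations on the differences.

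First, your treatment of the no-atom claim in assertion (2) via the Jessen--Wintner / Lévy purity criterion (the product of the largest atom masses $\alpha_\ell$ of the $\bX_\ell$ must vanish) is genuinely cleaner than the paper's argument (Lemma~\ref{lem:dist:L:infty}), which constructs a Poisson point process and derives a differential inequality for the total mass of the $K$ largest atoms. The convolution inequality $\P(\sum_n X_n = a)\le\prod_n\max_m\P(X_n=m)$ for independent $X_n$ is all you need, and both cases $d>1$ (with $\alpha_\ell=\Theta(\lambda_\ell^{-1/2})\to0$) and $d=1$ (with $\alpha_\ell=e^{-\lambda_\ell}$ and $\sum_\ell\lambda_\ell=\infty$) are immediate. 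Your argument is also more robust than the paper's: the paper's Poisson-process formulation tacitly assumes the weight function $f(t)$ (i.e.\ $\log(1+\delta_\zeta)$) is strictly positive, which fails for $\lambda<0$ and odd $\ell$, whereas yours only needs $\log(1+\delta_\ell)\ne0$, i.e.\ $\lambda\ne0$.

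Second, there is a real but bridgeable gap in how you invoke the second-moment estimate. Proposition~\ref{prop:sec:moment:LR} (which is the ``truncated second-moment analysis'' you refer to) is proved for the planted factor model $\bG(n,m_n)$ with a Poissonized clause count, where clauses are ordered $k$-tuples drawn with replacement so that self-loops and multi-edges occur; it is not directly an estimate for the likelihood ratio $\LL_n(G)=\P(\bG^\star_{\HSBM}=G)/\P(\bG_{\ER}=G)$ in the Bernoulli block model that your $\bG$ and $W_k(\bG)$ live in. The paper bridges this in Appendix~\ref{sec:appendix:HSBM}: Lemma~\ref{lem:HSBM:planted:contiguity} shows that the Bernoulli block model and the factor model conditioned on the event $\GGG_n$ (simple hypergraph) are asymptotically total-variation close, Lemma~\ref{lem:identical:threshold:conditioning} shows the thresholds are unchanged by this conditioning, Proposition~\ref{prop:L:tilde:conv} restates Theorem~\ref{thm:likelihood:conv} conditionally on $\GGG_n$ (absorbing the $\ell=1$ and, when $k=2$, $\ell=2$ Poisson factors into the normalizing constant -- which is why the product in~\eqref{eq:thm:likelihood:conv} starts at $\ell=3$), and the proof of Theorem~\ref{thm:optimal:power:HSBM} then uses the chain rule~\eqref{eq:chain:rule} to carry the convergence of $\wt{L}(\wbG)$ across to $\LL_n(\bG_{\ER})$. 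Your write-up should either (i) rerun the second-moment computation directly for the Bernoulli model, in which case you must re-derive Lemma~\ref{lem:main:approx}-style expansions for the hypergeometric/Bernoulli combinatorics rather than the multinomial ones, or (ii) explicitly invoke the Appendix~\ref{sec:appendix:HSBM} transfer. As it stands the sentence ``from the truncated-second-moment analysis in the proof of Theorem~\ref{thm:SBM}-\ref{item:contiguity}'' papers over this step.

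One more minor point: your hand-unrolled small subgraph conditioning via the decomposition $\E_{\bG}[(\widetilde{\LL}_n-W_k(\bG))^2]$ is fine, but note that Proposition~\ref{prop:sec:moment:LR} only gives an upper bound $\E L^\ast(\bG(n,m_n))^2\le(1+o(1))\exp(\sum\lambda_\zeta\delta_\zeta^2)$, not a two-sided limit as you wrote. The upper bound suffices, since then $\limsup_n\E_{\bG}[(\widetilde{\LL}_n-W_k(\bG))^2]\le e^{\sum_{\ell\geq3}\lambda_\ell\delta_\ell^2}-e^{\sum_{\ell=3}^k\lambda_\ell\delta_\ell^2}$, still tending to zero with $k$; but writing ``$\to$'' where the paper proves ``$\le(1+o(1))$'' is a small imprecision.
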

The threshold $d_{\ks}:=\la^{-2}$ is called the {\em Kesten-Stigum} (KS) threshold, which was first discovered in the context of the Markov processes on trees in the seminal work of Kesten and Stigum~\cite{KestenStigum:66}. In the context of sparse block model, it was first conjectured in~\cite{DKMZ:11} that above the KS threshold $d\la^2>1$, the weak recovery can be done in a computationally efficient way. This conjecture was proved for symmetric block model with $q=2$ communities by~\cite{Massoulie:14,MoNeSl:18,BoLeMa:15} and for general block model by~\cite{AbbeSandon:18}. In particular, the fact that the weak recovery threshold $d_{\ast}(q,\la)$ is at most the KS threshold $d_{\ks}$ for any $q\geq 2$ and $\la\in (-\frac{1}{q-1},1)$ follows from the result~\cite{AbbeSandon:18}.

In the subcritical regime $d<1$, the random variable $\boldsymbol{\LL}_{\infty}$ is a discrete random variable supported on the countable set 
    \beqn
    \Big\{z: z=(1-d\la)^{\frac{q-1}{2}}\prod_{\ell \geq 1}\big(1+(q-1)\la^{\ell}\big)^{x_{\ell}} \textnormal{ for some $x=(x_{\ell})_{\ell\geq 1}$ with finitely many non-zero elements.}\Big\}\,.
    \eeqn
    Thus, for $d<1$ the weak convergence of the likelihood ratio obtained in Theorem~\ref{thm:optimal:power} only characterizes the asymptotic power of the \textit{non-randomized} LR test for the hypothesis test~\eqref{eq:hypothesis:test} (see also Corollary~\ref{cor:power:factor}). This is because the LR test might require randomization since the law of $\boldsymbol{\LL}_{\infty}$ is atomic for $d<1$. We refer to \cite[Section 3.2]{lehmann2005testing} for the description of randomization in LR tests.

\subsubsection{Computational aspects of hypothesis testing}
We next discuss the computational aspects in the hypothesis testing \eqref{eq:hypothesis:test}. Although the LR test achieves largest power for hypothesis test~\eqref{eq:hypothesis:test} at any significance level by the Neyman-Pearson lemma, it is far from clear how to efficiently approximate the likelihood ratio $\LL_n(G)$. In fact, $\LL_n(G)$ can be interpreted as a certain {\em{partition function}} from statistical physics, and it is often NP-hard (or even \#P-hard) to approximate partition functions, 
see e.g.~\cite{JerrumSinclair:93, Istrail00, GoldbergJerrum:07}. 
Note that given an assignment of the communities $\sig=(\sigma_v)_{v\in V}\in [q]^V$ and $G=(V,E)$, respectively denote the number of monochromatic edges and non-edges by
\beqn
e_{0}(G,\sig):=\big|\big\{(u,v)\in E: \sigma_u=\sigma_v\big\}\big|\,,\;\;\;\;\;\; \wt{e}_{0}(G,\sig):=\big|\big\{(u,v)\notin E: \sigma_u=\sigma_v\big\}\big|\,.
\eeqn
Then, it is straightforward to calculate
\beqn
\begin{split}
\LL_n(G)
% =q^{-n}\sum_{\sig\in [q]^V}\frac{\P(\bG^\star=G\given \bsig^\star=\sig)}{\P(\bG=G)}
&=q^{-n}\sum_{\sig\in [q]^V}\bigg(\frac{n-a}{n-(a+(q-1)b/q}\bigg)^{\wt{e}_0(G,\sig)}\bigg(\frac{n-b}{n-(a+(q-1)b/q}\bigg)^{\binom{n}{2}-|E|-\wt{e}_0(G,\sig)}\\
&~~~~~~~~~~~~~~~~~~~~~~~~~~~~~~~~~~~~~~~~\bigg(\frac{qa}{a+(q-1)b}\bigg)^{e_0(G,\sig)}\bigg(\frac{qb}{a+(q-1)b}\bigg)^{|E|-e_0(G,\sig)}\,,
\end{split}
\eeqn
and in particular, summing over $\sig \in [q]^V$ makes the naive computation of $\LL_n(G)$ intractable.

In contrast, we construct a {\em computationally efficient} test achieving  the (asymptotically) largest power. Given a graph $G$ and an integer $\ell\geq 3$, let $X_{\ell}(G)$ be the number of cycles of length $\ell$. Then, consider the statistic
\beqn
    \TT_{n}(G) :=\prod_{3\leq \ell \leq K_n}\big(1+(q-1)\la^{\ell}\big)^{X_{\ell}(G)}\,,
\eeqn
where the truncation parameter $K_n$ is chosen to diverge slowly to infnity, say $1\ll K_n=O(\log\log n)$.

We note that if $G$ is drawn from the sparse block model or the sparse Erdos Renyi graph, then the condition $K_n=O(\log\log n)$ guarantees that the computation of $\TT_n(G)$ can be done in nearly linear time (up to log factors) with high probability. This is because when $G$ is sparse, the number of cycles up to length $\log\log n$ can be counted by looking at depth $\log\log n$ neighborhood of each nodes, each of which has at most $(Cd)^{\log\log n}$ neighbors with high probability.

In addition, note that $\TT_n(G)$ is a degree $K_n$ polynomial in the indicator of the edges of $G$. Our next theorem establishes that a test based on this low-degree polynomial $\TT_n(G)$ is asymptotically most powerful below the weak recovery threshold.
\begin{thm}\label{thm:optimal:cycle:test} \textnormal{(Special case of Theorem~\ref{thm:optimal:cycle:test:HSBM})}
Let $K_n=O(\log\log n)$ and $K_n\to\infty $ as $n\to\infty$. For a significance level $\alpha\in (0,1)$, consider the test $\phi_{n,\alpha}(\cdot)$ which rejects the null ${\sf H_0}$ in Eq.~\eqref{eq:hypothesis:test} with probability 
\beqn
\phi_{n,\alpha}(G):=
\begin{cases}
    1 & \;\;\;\;\textnormal{if}\;\;\;\;\; \TT_n(G)> C'_{n,\alpha}\,;\\
    % \ga'_{n,\alpha} & \;\;\;\;\textnormal{if}\;\;\;\;\; \TT_n(G)= C'_{n,\alpha}\,;\\
    0 & \;\;\;\;\textnormal{otherwise}\,.
\end{cases}
\eeqn
Here, the constants $C_{n,\alpha}'>0$ is chosen so that we have 
$\P(\TT_n(\bG)>C^\prime_{n,\alpha})\leq \alpha\leq \P(\TT_n(\bG)\geq C^\prime_{n,\alpha})$, where $\bG\sim \GG_{\sf ER}(n,\frac{d}{n})$. Then, for $1\leq d<\la^{-2}$, the test $\phi_{n,\alpha}$ achieves the power $\beta_{\ast}(\alpha)$ defined in Eq.~\eqref{eq:def:power:1}. That is, under the alternative $\bG^\star\sim \GG(n,q,\frac{a}{n},\frac{b}{n})$, we have
\beqn
\E\phi_{n,\alpha}(\bG^\star)\to \beta_{\ast}(\alpha)\;\;\;\;\textnormal{as}\;\;\;\; n\to\infty\,.
\eeqn
In particular, in the regime $1\leq d <d_{\ast}(q,\la)$, the test $\phi_{n,\alpha}(\cdot)$ is asymptotically most powerful for the hypothesis test~\eqref{eq:hypothesis:test}.
\end{thm}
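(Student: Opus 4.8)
The strategy is to show that the test statistic $\TT_n(\bG)$ converges in distribution, under both hypotheses, to a limit built from the same Poisson variables $(\bX_\ell)_{\ell\ge 3}$ as the truncated likelihood ratio $\boldsymbol{\LL}_\infty$ in Theorem~\ref{thm:optimal:power}, and then to identify the test $\phi_{n,\alpha}$ with the (randomization-free) likelihood ratio test at level $\alpha$ in the limit. Concretely, first I would recall that for sparse $\bG\sim\GG_{\sf ER}(n,\frac dn)$ the cycle counts $(X_\ell(\bG))_{3\le\ell\le K_n}$ converge jointly to independent Poissons $(\bX_\ell)$ with $\E\bX_\ell=\tfrac{1}{2\ell}d^\ell$; this is the classical short-cycle result of Bollob\'as (see \cite{BoJaRi:07}), and it extends to a slowly growing number of cycle lengths $K_n=O(\log\log n)$ by a moment computation showing the total number of cycles of length $\le K_n$ has bounded expectation and the joint factorial moments still match those of the Poisson product. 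Hence $\TT_n(\bG)=\prod_{3\le\ell\le K_n}(1+(q-1)\la^\ell)^{X_\ell(\bG)}\dto \prod_{\ell\ge3}(1+(q-1)\la^\ell)^{\bX_\ell}$, which is exactly $\boldsymbol{\LL}_\infty$ up to the deterministic constant $\prod_{\ell\ge3}e^{-(q-1)(d\la)^\ell/(2\ell)}$; call this limit $\TT_\infty$, so $\TT_\infty = c\,\boldsymbol{\LL}_\infty$ with $c>0$ fixed. Since by Theorem~\ref{thm:optimal:power}(2) the law of $\boldsymbol{\LL}_\infty$ has no atoms when $1\le d<\la^{-2}$, the same holds for $\TT_\infty$, so the cutoff $C'_{n,\alpha}$ chosen so that $\P(\TT_n(\bG)>C'_{n,\alpha})\le\alpha\le\P(\TT_n(\bG)\ge C'_{n,\alpha})$ converges to the unique $C'_\infty$ with $\P(\TT_\infty\ge C'_\infty)=\alpha$, and $\E\phi_{n,\alpha}(\bG)\to\alpha$: the test has asymptotic size exactly $\alpha$.

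Next I would analyze the power. Under the alternative, the change of measure is governed precisely by the likelihood ratio: $\E\phi_{n,\alpha}(\bG^\star)=\E[\LL_n(\bG)\,\phi_{n,\alpha}(\bG)]$. The key point is that below the weak recovery threshold (indeed throughout $d<\la^{-2}$), Theorem~\ref{thm:optimal:power} gives $\LL_n(\bG)\dto\boldsymbol{\LL}_\infty$ \emph{jointly with} the cycle counts, and moreover $\LL_n(\bG)$ is uniformly integrable (its second moment is bounded, which is the content of part (1)). I would argue the joint convergence $(\LL_n(\bG),\TT_n(\bG))\dto(\boldsymbol{\LL}_\infty, c\,\boldsymbol{\LL}_\infty)$ — in fact the limit is a deterministic function of the shared Poisson variables, so this reduces to the joint Poisson convergence of cycle counts together with convergence of $\LL_n(\bG)$ in the same coupling, which is exactly how $\boldsymbol{\LL}_\infty$ is produced. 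Combining this with uniform integrability of $\LL_n(\bG)$, we get $\E[\LL_n(\bG)\one\{\TT_n(\bG)>C'_{n,\alpha}\}]\to\E[\boldsymbol{\LL}_\infty\one\{c\,\boldsymbol{\LL}_\infty>C'_\infty\}] = \E[\boldsymbol{\LL}_\infty\one\{\boldsymbol{\LL}_\infty\ge C_\alpha\}]=\beta_{\ast}(\alpha)$, where $C_\alpha=C'_\infty/c$ and the last equality uses that $\boldsymbol{\LL}_\infty$ has no atoms so the boundary event is null. This establishes $\E\phi_{n,\alpha}(\bG^\star)\to\beta_{\ast}(\alpha)$.

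Finally, for the "asymptotically most powerful" claim when $1\le d<d_{\ast}(q,\la)$: I would invoke that in this regime $\boldsymbol{\LL}_\infty$ is atomless and, by the Neyman--Pearson lemma applied at finite $n$, no level-$\alpha$ test can beat the likelihood ratio test, whose asymptotic power I would show equals $\beta_{\ast}(\alpha)$ as well. For the LR test this is the same argument: $\LL_n(\bG^\star)$ has asymptotic power $\E[\LL_n(\bG)\one\{\LL_n(\bG)>\tilde C_{n,\alpha}\}]\to\E[\boldsymbol{\LL}_\infty\one\{\boldsymbol{\LL}_\infty\ge C_\alpha\}]=\beta_{\ast}(\alpha)$ using the uniform integrability and weak convergence of $\LL_n(\bG)$, with the atomless limit ensuring the cutoffs converge and boundary terms vanish. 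Hence $\limsup_n \E\phi_{n,\alpha}(\bG^\star)\le\liminf_n(\text{LR power})=\beta_{\ast}(\alpha)=\lim_n\E\phi_{n,\alpha}(\bG^\star)$, so $\phi_{n,\alpha}$ matches the optimum asymptotically.

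\textbf{Main obstacle.} The delicate step is justifying the \emph{joint} convergence $(\LL_n(\bG),\TT_n(\bG))\dto(\boldsymbol{\LL}_\infty,c\,\boldsymbol{\LL}_\infty)$ together with the uniform integrability needed to pass $\E[\LL_n\,\phi_{n,\alpha}]$ to the limit; since both sides become deterministic functions of the same Poisson field, this amounts to a small-subgraph-conditioning-type argument controlling the truncated likelihood ratio by short cycles up to length $K_n$, and to verifying that the error from the truncation at $K_n=O(\log\log n)$ (versus the full likelihood ratio, which also involves longer structures when $d\ge1$) is negligible in $L^1$ — this is where the condition $d<\la^{-2}$ and the bounded second moment from Theorem~\ref{thm:optimal:power}(1) are essential, and where one must be careful that $K_n$ grows slowly enough that the efficiently-computable truncated statistic still captures all the asymptotic power.
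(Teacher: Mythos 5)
Your core strategy---passing the power computation through the finite-$n$ change of measure $\E\phi_{n,\alpha}(\bG^\star)=\E[\LL_n(\bG)\,\phi_{n,\alpha}(\bG)]$ and then invoking weak convergence and uniform integrability of $\LL_n(\bG)$---cannot reach the full range of $d$ in the statement. Theorem~\ref{thm:optimal:power} establishes $\LL_n(\bG)\dto\boldsymbol{\LL}_\infty$ only for $d<d_{\ast}(q,\la)$; parts (1)--(2) of that theorem are statements about the limiting random variable $\boldsymbol{\LL}_\infty$ itself (well-defined, finite second moment, atomless), not about convergence of $\LL_n(\bG)$. But Theorem~\ref{thm:optimal:cycle:test} is asserted for all $1\le d<\la^{-2}$, which, for $q\ge5$ and small $|\la|$, strictly contains the regime $d_\ast(q,\la)<d<\la^{-2}$. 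In that intermediate regime the models are \emph{not} mutually contiguous (Theorem~\ref{thm:SBM}(3)), the likelihood ratio does not converge to $\boldsymbol{\LL}_\infty$, and $\LL_n(\bG)$ is certainly not uniformly integrable, so the step $\E[\LL_n(\bG)\one\{\TT_n(\bG)>C'_{n,\alpha}\}]\to\E[\boldsymbol{\LL}_\infty\one\{\cdots\}]$ has no justification. Indeed this regime is precisely where the theorem carries its content: $\phi_{n,\alpha}$ has limited power $\beta_\ast(\alpha)<1$ even though the LR test has power tending to $1$ along a subsequence, exhibiting the computational gap.

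The paper sidesteps $\LL_n$ entirely when computing the power: it establishes a separate weak limit for $\TT_n$ under the alternative, namely $T_n(\bG^\star)\dto\bL^\star_\infty$ built from independent Poissons with tilted means $\la^\star_\zeta=\la_\zeta(1+\delta_\zeta)$ (Fact~\ref{fact:cycle}(2) and Lemma~\ref{lem:T:conv}), and then uses the elementary change-of-measure identity at the level of the \emph{limits}, $\E[\bL_\infty\one\{\bL_\infty\in A\}]=\P(\bL^\star_\infty\in A)$ (Eq.~\eqref{eq:L:infty:change}), which needs nothing more than the definitions of $\la_\zeta,\la^\star_\zeta$. Since Poisson convergence of cycle counts under the planted model holds for every $d<d_{\ks}$ irrespective of $d_\ast$, this route covers the whole stated range. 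The only ingredients from the hard part of the paper that enter are the atomlessness of $\bL_\infty$ (Lemma~\ref{lem:dist:L:infty}), used to ensure the thresholds $C'_{n,\alpha}$ stabilize, and the summability $\sum_\zeta\la_\zeta\delta_\zeta^2<\infty$ for $d<d_{\ks}$ (Lemma~\ref{lem:lambda:equality}), used to control the tail of the product over $K_n'<\ell\le K_n$ and justify the truncation at $K_n=O(\log\log n)$.

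One secondary error: you write that uniform integrability of $\LL_n(\bG)$ follows because ``its second moment is bounded, which is the content of part (1).'' Part (1) of Theorem~\ref{thm:optimal:power} gives $\E\boldsymbol{\LL}_\infty^2<\infty$ for the \emph{limit}, not $\sup_n\E\LL_n(\bG)^2<\infty$; in fact for $q\ge3$ the paper emphasizes that $\E\LL_n(\bG)^2=e^{\Omega(n)}$ already for $d$ in a nonempty subinterval of $(0,d_\ast)$, which is exactly why the truncated likelihood ratio $L^\ast$ is introduced. When $d<d_\ast$ one can still conclude UI of $\LL_n(\bG)$ from $\LL_n(\bG)\dto\boldsymbol{\LL}_\infty$, $\E\LL_n(\bG)=1=\E\boldsymbol{\LL}_\infty$, and Scheff\'e's lemma in the Skorokhod coupling, but not via a second-moment bound, and not at all for $d\ge d_\ast$.
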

For $q\geq 5$ communities, it is known that for small enough $\la$, there is a gap between the weak recovery threshold $d_{\ast}(q,\la)$ and the KS threshold $d_{\ks}\equiv \la^{-2}$~\cite{BMNN:16, AbbeSandon:16}. In this case, Theorems~\ref{thm:SBM} and~\ref{thm:optimal:cycle:test} uncover an important phenomenon: the weak recovery threshold $d_{\ast}(q,\la)$ marks the {\em local to global phase transition} of the block model. Specifically, $d<d_{\ast}(q,\la)$, the existence of the community structure is fully captured by the {\em local} structure of the graph, namely the number of short cycles. On the contrary, in the regime $d_{\ast}(q,\la)<d<d_{\ks}$, a {\em global} structure emerges, which cannot be described by the number of short cycles. Indeed, Theorem~\ref{thm:SBM}-\ref{item:mutual:orthogonal} shows that in the latter regime, the asymptotic power of the LR test must equal $1$ (cf. Neyman-Pearson lemma) along a subsequence. However,  Theorem~\ref{thm:optimal:cycle:test} implies that the test $\phi_{n,\alpha}$ based on a low-degree polynomial achieves limited power $\beta_{\ast}(\alpha)<1$ in the intermediate regime $d\in (d_{\ast}(q,\la), d_{\ks})$. This discrepancy thus demonstrates {\em statistical-computational gap} in hypothesis testing. A natural question would be to see if the low-degree test $\phi_{n,\alpha}$ achieves the optimal power among all computationally efficient tests in the regime $d\in (d_{\ast}(q,\la), d_{\ks})$ under a suitable complexity-theoretic conjecture, see e.g. \cite{MW23} for recent related work.

On the contrary, for $q=2$ communities and $q\in \{3,4\}$ communities with large enough average degree $d$, it is known that $d_{\ast}(q,\la)=d_{\ks}$ holds~\cite{MoNeSl:15, mossel23exact}. Thus, in these cases, our results yield that there is no statistical-computational gap in hypothesis testing at any significance level. Indeed, if $d<d_{\ast}(q,\la)=d_{\ks}$ holds, then the low-degree-polynomial test $\phi_{n,\alpha}(\cdot)$ is most powerful by Theorem~\ref{thm:SBM}. Moreover, it is known that above the KS threshold $d>d_{\ks}$, the detection is solvable by efficient algorithms (e.g. by counting short cycles~\cite{Massoulie:13,MoNeSl:18,abbe18survey} or by a semidefinite programming relaxation~\cite{BMR21}). Specifically, we obtain the following corollary by combining our results with the results of \cite{MoNeSl:15, mossel23exact}.

\begin{cor}\label{cor:q:3:4}
For $q=2$ communities or $q\in \{3,4\}$ and small enough $|\lambda|\leq \lambda_0$, where $\lambda_0>0$ is a universal constant, the results of Theorems~\ref{thm:SBM}, \ref{thm:optimal:power}, \ref{thm:optimal:cycle:test} hold with $d_{\ast}(q,\la)\equiv \la^{-2}$. Thus, in this case, for any average degree $d$ such that $d\geq 1$ and $d\neq \la^{-2}$ hold and any significance level $\alpha\in (0,1)$, there exists an efficient sequence of statistical tests $\varphi_{n,\alpha}(\cdot)$ that is asymptotically most powerful at level $\alpha$.
\end{cor}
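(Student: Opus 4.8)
The plan is to obtain the corollary by assembling Theorems~\ref{thm:SBM}, \ref{thm:optimal:power} and \ref{thm:optimal:cycle:test} with the threshold identities from prior work, and then treating separately the two regimes into which $\{d\geq 1,\ d\neq\la^{-2}\}$ is split by the Kesten--Stigum point $d_{\ks}=\la^{-2}$. The first step is the only genuinely external input: by \cite{MoNeSl:15} one has $d_\ast(2,\la)=\la^{-2}$ for every $\la\in(-1,1)$, and by \cite{mossel23exact} one has $d_\ast(q,\la)=\la^{-2}$ for $q\in\{3,4\}$ and $|\la|\leq\la_0$ with a universal constant $\la_0>0$. Substituting these identities makes the conclusions of Theorems~\ref{thm:SBM}, \ref{thm:optimal:power} and \ref{thm:optimal:cycle:test} hold verbatim with $d_\ast(q,\la)$ everywhere replaced by $\la^{-2}$; in particular the hypothesis ``$1\leq d<d_\ast(q,\la)$'' of Theorem~\ref{thm:optimal:cycle:test} becomes ``$1\leq d<\la^{-2}$''.

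For the subcritical regime $1\leq d<\la^{-2}=d_\ast(q,\la)$, I would simply take $\varphi_{n,\alpha}:=\phi_{n,\alpha}$, the short-cycle test of Theorem~\ref{thm:optimal:cycle:test} with, say, $K_n=\lfloor\log\log n\rfloor$. As explained after the definition of $\TT_n$, this test is computable in nearly linear time with high probability under either $\GG_{\sf ER}(n,\frac{d}{n})$ or $\GG(n,q,\frac{a}{n},\frac{b}{n})$; the choice of $C'_{n,\alpha}$ makes it asymptotically of level $\alpha$; and Theorem~\ref{thm:optimal:cycle:test} gives that its power tends to $\beta_\ast(\alpha)$. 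To see that this is optimal, note that for each $n$ the Neyman--Pearson lemma identifies the largest power of any level-$\alpha$ test with that of the level-$\alpha$ likelihood ratio test, and Theorem~\ref{thm:optimal:power}\,(2)---which applies exactly because $1\leq d<\la^{-2}$, so that $\boldsymbol{\LL}_\infty$ has no atom and no randomization is needed in the limit---shows this largest power also converges to $\beta_\ast(\alpha)$. Hence $\varphi_{n,\alpha}$ is asymptotically most powerful at level $\alpha$; the endpoint $d=1$ is included because Theorem~\ref{thm:optimal:power}\,(2) covers it.

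For the supercritical regime $d>\la^{-2}=d_{\ks}$, I would instead invoke the fact that above the Kesten--Stigum threshold detection is efficiently solvable: thresholding a suitable count of short cycles \cite{Massoulie:13,MoNeSl:18,abbe18survey} or the value of an SDP relaxation \cite{BMR21} produces a polynomial-time test whose Type~I error can be calibrated to tend to $0$ (hence $\leq\alpha$ asymptotically) and whose power tends to $1$. Since no test can have power exceeding $1$, such a test is automatically asymptotically most powerful at level $\alpha$, and we take it for $\varphi_{n,\alpha}$.

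I do not expect a hard new estimate anywhere: the content is a synthesis, and the load-bearing point is the threshold identity $d_\ast(q,\la)=\la^{-2}$ of \cite{MoNeSl:15,mossel23exact}, which is precisely what allows the ``subcritical'' analysis of Theorem~\ref{thm:optimal:cycle:test} to reach all the way up to the computational threshold $d_{\ks}$, leaving no gap between the two regimes. The only care required is bookkeeping of parameter ranges: the power $\beta_\ast(\alpha)$ is characterized (and lies in $(0,1)$) only for $d\geq 1$, which is why the statement is restricted to $d\geq 1$, and the single value $d=\la^{-2}$---where Theorem~\ref{thm:SBM} itself has nothing to say---must be excluded, consistent with the remark in the text that at $d=d_\ast(q,\la)$ detection and weak recovery can genuinely differ.
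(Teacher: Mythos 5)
Your proposal is correct and follows exactly the synthesis the paper itself sketches in the paragraph preceding Corollary~\ref{cor:q:3:4}: substitute the threshold identity $d_\ast(q,\la)=\la^{-2}$ from \cite{MoNeSl:15,mossel23exact} into Theorems~\ref{thm:SBM}, \ref{thm:optimal:power}, \ref{thm:optimal:cycle:test}, use the low-degree cycle test for $1\leq d<\la^{-2}$, and invoke efficient above-KS detection (short cycles / SDP) for $d>\la^{-2}$. The paper gives no separate proof beyond this, so your argument is essentially identical, with the minor improvement that you correctly attribute the subcritical optimality to Theorem~\ref{thm:optimal:cycle:test} rather than Theorem~\ref{thm:SBM} as the paper's prose informally does.
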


\subsection{Proof techniques}

We now summarize the difficulties in proving one of our main results -- the convergence of the likelihood ratio $\LL_n(\bG)$ for $d<d_\ast(q,\la)$ in Theorem~\ref{thm:optimal:power} -- and explain the key insights that we use to overcome them. The implications from Theorem~\ref{thm:optimal:power} to Theorem~\ref{thm:SBM}-\ref{item:contiguity}, \ref{item:mutual:info:below} is standard. The results for $d>d_{\ast}(q,\la)$ in Theorem~\ref{thm:SBM}-\ref{item:mutual:orthogonal}, \ref{item:mutual:info:above} rely on a different technique, which involves differentiating $\E \log \LL_n(\bG^\star)$ with respect to $d>0$ (see Section~\ref{subsec:proof:mutual:info}), which we shall not focus here.

A standard approach to proving the convergence of $\LL_n(\bG)$ is through the second moment method supplemented with the small subgraph conditioning method by~\cite{Janson95random, Wormald99models}. Since $\E \LL_n(\bG)=1$ holds by change of measures, the second moment method requires that $\E\big(\LL_n(\bG)\big)^2=O(1)$. Indeed, \cite{BMNN:16} used this approach to show the convergence of $\LL_n(\bG)$. However, the second moment explodes exponentially fast, i.e.  $\E\big(\LL_n(\bG)\big)^2=e^{\Omega(n)}$, beyond the \emph{second moment threshold} $d_{\textsf{sec}}(q,\la)$ (see \cite[Proposition 1]{BMNN:16} which is strictly below $d_{\ast}(q,\la)$ except when $q=2$, where $d_{\textsf{sec}}(2,\la)=d_{\ast}(2,\la)=\la^{-2}$. Thus, the naive second moment approach fails to cover all $d<d_{\ast}(q,\la)$ when $q\geq 3$.

Our main insight is to relate the impossibility of weak recovery to the \emph{orthogonality} of two i.i.d. samples drawn from the posterior $\P(\bsig^\star=\cdot\given \bG^\star)$. To be precise,  given two samples $\sig^{\ell}\equiv (\sigma^{\ell}_v)_{v\in V}\in [q]^V, \ell=1,2$, define the \textit{overlap matrix} $R_{\sig^1,\sig^1}\equiv \big(R_{\sig^1,\sig^2}(i,j)\big)_{i,j\leq q} \in \R^{q\times q}$ as
\[
R_{\sig^1,\sig^2}(i,j)=\frac{1}{n}\sum_{v\in V}\one\big\{\sigma^1_v=i\,,\, \sigma^2_v=j\big\}\,.
\]
Concisely, the overlap matrix $R_{\sig^1,\sig^2}$ represents the empirical distribution of $(\sigma_v^1,\sigma_v^2)_{v\in V}$. In particular, the overlap matrix $R_{\sig^1,\sig^2}$ being close to $\pi\pi^{\sT}$, where $\pi=\Unif([q])$ denotes the prior of the communities, indicates that the two samples $\sig^1,\sig^2$ are {\em near-orthogonal}, i.e. they appear as if they are drawn independently from $\pi^{\otimes V}$. With this notation, we establish in Theorem~\ref{thm:equiv} that if the weak recovery is impossible ($d<d_{\ast}(q,\la)$) then as $n\to\infty$
\begin{equation}\label{eq:intro:orthogonal}
\E \big\langle \big\|R_{\sig^1,\sig^2}-\pi\pi^{\sT}\big\|_1\big\rangle_{\bG^\star} \to 0,
\end{equation}
where $\langle f(\sig^1,\sig^2) \rangle_{G}$ the expectation with respect to i.i.d. samples from the posterior $(\sigma^{\ell})_{\ell\geq 1}\iid \P(\sigma^\star=\cdot\given \bG^\star=G)$ (cf.~Eq.~\eqref{eq:def:expectation:posterior}). Intriguingly, we prove the reverse implication where \eqref{eq:intro:orthogonal} implies the impossibility of weak recovery, which may be of independent interest.

Armed with the orthogonality of posterior samples from Eq.~\eqref{eq:intro:orthogonal}, the crucial step is to truncate $\LL_n(G)$ to ensure that the two samples drawn from the posterior given $G$ are near-orthogonal. Namely, for a sequence of truncation parameters $(\eps_n)_{n\geq 1}$, we consider the truncated likelihood function $\LL^{\ast}_n(G)\equiv \LL^{\ast}_n(G;\eps_n)$ defined by
\[
\LL^{\ast}_n(G):=\LL_n(G)\one\Big\{\big\langle \big\|R_{\sig^1,\sig^2}-\pi\pi^{\sT}\big\|_1\big\rangle_{G}\leq \eps_n\Big\}\,.
\]
Observe that by a change of measure, the truncation affects the first moment by
\[
 \begin{split}
 \E\big| \LL_n^{\ast}(\bG)-\LL_n(\bG)\big|
&=\E \Big[\LL_n(\bG)\one\big\{\big\langle \big\|R_{\sig^1,\sig^2}-\pi\pi^{\sT}\big\|_1\big\rangle_{\bG}>\eps_n\big\}\Big]\\
&=\P\big(\big\langle \big\|R_{\sig^1,\sig^2}-\pi\pi^{\sT}\big\|_1\big\rangle_{\bG^\star}>\eps_n\big)\,,
 \end{split}
\]
which tends to $0$ by Eq.~\eqref{eq:intro:orthogonal} and Markov's inequality for $\eps_n$ that tends to $0$ sufficiently slowly. Thus, it follows that $\E \LL_n^\ast(\bG)=1-o(1)$.

In contrast to the first moment, the truncation drastically reduces the second moment. We prove in Proposition~\ref{prop:sec:mo:bound} that $\E \big[\LL_n^\ast(\bG)^2\big]=O(1)$ in the \textit{entire regime} $d<d_\ast(q,\la)$. Note that this contrasts with $\E \big[\LL_n(\bG)^2\big]=e^{\Omega(n)}$ for $d\in (d_{\textsf{sec}}(q,\la), d_{\ast}(q,\la))$ except for $q=2$.

We emphasize that although similar truncation techniques appear in \cite{CKPZ:18, CEJKK:18}, which study the disassortative case ($\lambda < 0$), there are two major differences in our approach. First, for the assortative case, the conditions \textbf{BAL} and \textbf{POS} used in those works fail, conditions that were crucial for characterizing $d_\ast(q,\lambda)$ via a variational formula. In particular, the condition \textbf{POS} enables the ``interpolation technique'' developed by \cite{guerra03} for spin glasses. Indeed, it was recently shown that the interpolation technique \textit{provably fails} for (a variant of) assortative stochastic block model in \cite{DM24}, and determining $d_{\ast}(q,\la)$ for $\la>0$ is a well-known open problem~\cite{DominiguezMourrat22}. Therefore, one cannot hope to modify the techniques of~\cite{CKPZ:18, CEJKK:18} in the assortative case. Second, the random variable under consideration in the works~\cite{CKPZ:18, CEJKK:18} is not the likelihood ratio $\LL_n(\bG)$, and it is rather the \textit{partition function} from statistical physics. In the notation introduced in Definition~\ref{def:planted} below, this is $Z_n(G)=\sum_{\sig\in [q]^V}\psi_G(\sig)\P(\bsig^\star=\sig)$ in the block model. While the partition function is more natural from the statistical physics point of view, for the assortative block model, it is far from clear how to relate $Z_n(G)$ with weak recovery threshold $d_{\ast}(q,\la)$. While the partition function is natural in statistical physics, for the assortative block model it is unclear how to relate $Z_n(G)$ to the weak recovery threshold $d_\ast(q,\lambda)$. One complication for $\lambda > 0$ is that $\sigma \mapsto \psi_G(\sigma)$ is maximized at configurations where all spins are equal, making it difficult to argue that the dominant contributions to quantities like the second moment come from \emph{near-orthogonal} configurations.

Instead, the equivalence of \textit{near-orthogonality} of the posterior samples and the impossibility of weak recovery we establish allows us to relate $d_{\ast}(q,\la)$ with the likelihood ratio function. From a high-level, in the expression of the likelihood ratio function $\LL_n(G)=\sum_{\sig\in [q]^V} \frac{\psi_{G}(\sig)}{\E[\psi_{\bG}(\sig)]}$ in the notations of Definition~\ref{def:planted}, the term $\E[\psi_{\bG}(\sig)]$ ``balances'' the $\psi_G(\sig)$ appropriately, enabling the second moment calculations to succeed. For this reason, likelihood ratio function $\LL_n(G)$ turns out to be more suitable than the partition function $Z_n(G)$ in second moment computations.

Finally, our proof incorporates additional ideas, such as establishing mutual orthogonality above the weak recovery threshold $d > d_\ast(q,\lambda)$ (without relying on the conditions \textbf{BAL} and \textbf{POS} from \cite{CEJKK:18}), and a resampling technique to show that contiguity implies the absence of consistent estimators for $a, b > 0$ (see Section~\ref{subsec:contiguity}).

\subsection{Further related work}
\label{sec:related}

\textbf{Stochastic block models and their hypergraph analogs:}~The stochastic block model, first introduced in \cite{HoLaLe:83}, has been studied extensively in statistics~\cite{SnijdersNowicki:97,BickelChen:09,RoChYu:11} and in computer science~\cite{DyerFrieze:89,JerrumSorkin:98,CondonKarp:01,McSherry:01,CojaOghlan:10}. The block model in the sparse regime became a major object of research due to the landmark paper~\cite{DKMZ:11}. In particular,~\cite{DKMZ:11} conjectured a different set of phase transitions within the sparse block model, where the Kesten-Stigum (KS) threshold~\cite{KestenStigum:66} plays a crucial role. Later, it was shown in a series of works~\cite{Massoulie:14,MoNeSl:18,BoLeMa:15,AbbeSandon:15,AbbeSandon:18} that above the KS threshold, there is an efficient algorithm that achieves both weak recovery and detection. The tightness/non-tightness of KS threshold for weak recovery was also studied by a number of papers~\cite{MoNeSl:15, AbbeSandon:16, BMNN:16, RiSeZd:19, mossel23exact}. We refer to the survey by Abbe~\cite{abbe18survey} for the developments of the block model for more references.

More recently, community detection in sparse hypergraph has also gained significant interest. When the average degree is at least of order $\Omega (\log^2 n)$, Ghoshdastidar and Dukkipati~\cite{GD17a} proved that a spectral algorithm achieves near-perfect recovery of the community structure. When the average degree is of order constant, \cite{ACKZ15} conjectured that a phase transition occurs at the KS threshold - above this threshold, efficient weak recovery is possible, and impossible below the threshold. Efficient recovery above the KS threshold was proved for hypergraph models for 2 communities by Pal and Zhu~\cite{PZ21} and general case by Stephan and Zhu~\cite{SZ22}. Gu and Polyanskiy~\cite{GP23} recently showed that the information theoretic threshold for weak recovery is the KS threshold within the $r$ uniform hypergraph stochastic block model for $r=3,4$ while the same is not true for $r \geq 7$.\\

\noindent\textbf{General factor models:}~The framework for factor models capture many standard  models in theoretical computer science such as k-SAT models and in statistical physics such as Potts models (see e.g. Chapter 9 in \cite{MezardMontanari:09}). The planted factor models was initially introduced to study the phase transitions of random constraint satisfaction problems~\cite{AC08, KZ09}. Later, Coja-Oghlan, Krzakala, Perkins, Zdeborov\'{a}~\cite{CKPZ:18} studied the information theoretic thresholds for planted factor models that satisfy certain convexity and balanced conditions. In particular,~\cite{CKPZ:18} obtained an explicit variational principle for the weak recovery threshold for the disassortative block models. Subsequently, Coja-Oghlan, Efthymiou, Jaafari, Kang, Kapetanopoulos~\cite{CEJKK:18} proved that in the disassortative case, the planted and the null models are mutually contiguous below the weak recovery threshold. The work~\cite{CEJKK:18} also introduced the Kesten-Stigum bound for planted factor models in the uniform prior $\pi=\Unif([q])$ case. The analysis of~\cite{CKPZ:18, CEJKK:18} was generalized to factor models with hard constraints by~\cite{CKM20}. The planted models also encompass certain high-dimensional Bayesian inference problems as studied in~\cite{BPS21, BP22}.\\

\noindent\textbf{Statistical Computational Gaps:}~In studying the computational complexity of statistical tasks such as weak recovery and 
detection, it is natural to question the average case complexity of the task. Many recent works attempt to answer this question using a number of different perspectives.
These include average case reduction to a widely-believed statistically hard problems such as the hidden-clique problem~\cite{J92,K95}, see e.g.~\cite{BB20}. 
A different approach is to study a restricted class of algorithms such as local algorithms (see e.g.~\cite{GS14} and follow up work), SQ algorithms~\cite{K98} 
or low-degree polynomials. In particular, a recent line of work (see e.g. 
\cite{hopkins2017efficient,hopkins2018statistical,kunisky2019notes, bandeira2019computational,gamarnik2020low,holmgren2020counterexamples,bresler2021algorithmic,wein2020optimal}) uses a ``low-degree heuristic'' to predict computational-statistical gaps for a variety of problems. In this context, Hopkins and Steurer~\cite{hopkins2017efficient} showed that below the KS threshold, functions that can determine if two vertices are in the same community better than random have to be of degree at least $\Omega(n)$.

\section{Main results in the general case}
\label{sec:general:results}
Our results in the most general setting are stated in the framework of planted factor models, which is a generalization of block models. In Sections~\ref{subsec:def:factor},\ref{subsec:results:factor}, we define and state our main results for the planted factor model and the associated inference tasks: weak recovery and detection. In Section~\ref{sec:application}, we apply these results to hypergraph stochastic block models (HSBM).

\subsection{Planted factor models}
\label{subsec:def:factor}
Recall that the sparse symmetric block model is defined by by first ``planting'' a community structure $\bsig^\star\equiv (\bsigma^\star_v)_{v\in V}\iid \Unif(\{1,\ldots, q\})$. Subsequently, based on this planted structure, a linear number of edges are drawn. Likewise, in the planted factor models, communities (also termed as spins) are represented as $\bsig^\star\equiv (\bsigma^\star_v)_{v\in V}\iid \pi$, where $\pi$ encodes the community's prior. Using $\bsig^\star$, we create a linear number of clauses, with each clause connecting $k$ nodes. When $k=2$, these clauses are analogous to the edges of the block model. However, allowing $k\geq 3$ enables us to consider hypergraph analogs of the block models called the hypergraph stochastic block models (HSBM) considered in~\cite{GD17a,GD17b,ACKZ15}. In addition, a notable extension in planted factor models is the consideration of clause connectivity probabilities as random variables. Such extension ensures that the planted factor models encompass planted constraint satisfaction problems studied in statistical physics and computer science~\cite{AJM05, JMS07,KMZ14reweighted,FPV15}.

We first define the necessary notations that will be used throughout the paper. Given $q\geq 2$, we denote by $[q] \equiv \{1,\ldots, q\}$ the finite set of spins or communities. We let $\pi \in \PPP([q])$ be a probability measure on $[q]$ which represents the prior of different communities.

We let $k\geq 2$ denote the number of interaction between different nodes. Further, we let $\Psi$ be a finite set of \textit{weight functions} descirbed by
\begin{equation*}
    \psi:\{1,\ldots,q\}^k \to \R_{>0}\,.
\end{equation*}
Here, we assumed the weight functions to have positive values, which corresponds to \textit{positive temperature} models in statistical physics language. Let $p\in \PPP(\Psi)$ be a prior distribution of the weight functions. Without loss of generality we let $\Psi=\supp(p)$ since otherwise we can reduce the set $\Psi$.

Throughout, we assume the following: for any permutation $\theta\in S_k$, let $\psi^{\theta}:[q]^{k}\to \R_{>0}$ defined by $\psi^{\theta}(\sigma_1,\ldots, \sigma_k)=\psi(\sigma_{\theta(1)},\ldots, \sigma_{\theta(k)})$. Then, for any $\psi \in \Psi$ and $\theta\in S_{k}$, we assume that
\beq\label{eq:psi:symm}
\psi^{\theta}\in \Psi\quad\textnormal{and}\quad p(\psi^{\theta})=p(\psi)>0\,,
\eeq
which reflects the exchangeability of the model with respect to the different variables. 

A (bipartite) factor graph $G= (V,F, E, (\psi_a)_{a\in F})$ consists
of the following.

\begin{itemize}
    \item The set of variables $V:=\{v_1,\ldots, v_n\}$ and the set of clauses (a.k.a. function nodes) $F:=\{a_1,\ldots, a_m\}$.
    \item The set of edges $E$, where $e=(a v)\in E$ connects $a\in F$ and $v\in V$. Here, each clause $a\in F$ is connected to $k$ variables, and we denote its neighborhood by $\delta a:= \{v\in V: a\sim v\}$ with the convention that $\delta a=(v_1,\ldots, v_k)$ is ordered. We let $\delta_i a\equiv v_i$ be the $i$'th variable adjacent to $a$ for $1\leq i \leq k$.
    \item The set of weight function $(\psi_a)_{a\in F}$. Here, $\psi_a\in \Psi$ is the weight function assigned to a clause $a\in F$. Since $\delta a$ is ordered, for an assignment of communities $\ux= (x_v)_{v\in V}\in [q]^V$, the expression $\psi_a(\ux_{\delta a})\equiv \psi_a(x_{v_1},\ldots, x_{v_k})$ is well-defined.
\end{itemize}  

Given the prior $p$ on $\Psi$, the random graph model with no community (a.k.a. planted) structure corresponds to the \textit{null model} defined below.
\begin{defn}\label{def:null}
(\textit{The null model}) Given $V=\{v_1,\ldots, v_n\}$, $F=\{a_1,\ldots, a_m\}$, and $p\in \mathscr{P}(\Psi)$, the null model $\bG(n,m)\equiv \bG(n,m,p)$ is a factor graph $(V,F, E, (\psi_a)_{a\in F})$ defined as follows. For each $a\in F$, its neighborhood $\delta a\in V^k$ is drawn independently and uniformly at random from $V^k$. The weight function $\psi_a$ is drawn i.i.d. from $p$, i.e. $(\psi_a)_{a\in F}\iid p$.
\end{defn}
Given a factor graph $G= (V,F, E, (\psi_a)_{a\in F})$ and $\sig \in [q]^V$, let
\beq\label{def:psi:G}
\psi_G(\sig):=\prod_{a\in F}\psi_a(\sig_{\delta a})\,.
\eeq
Subsequently, the random graph model with a community structure corresponds to the \textit{planted model}.
\begin{defn}\label{def:planted}
(\textit{The planted model}) Given $V=\{v_1,\ldots, v_n\}$, $F=\{a_1,\ldots, a_m\}$, and priors $\pi \in \PPP([q])$ and $p\in \PPP(\Psi)$, the planted model $\bG^\star(n,m,\bsig^\star)\equiv \bG^\star(n,m,\bsig^\star, p,\pi)$ is a factor graph defined as follows. First draw a planted/community structure $\bsig^\star\equiv (\bsigma^\star_1,\ldots, \bsigma^\star_n)$ by $\bsigma^\star_i\iid \pi$. Given $\bsig^\star$, draw $\bG^\star(n,m,\bsig^\star)$ from the distribution 
\[
\P\Big(\bG^\star(n,m,\bsig^\star)=G\bgiven \bsig^\star=\sig\Big)=\P(\bG(n,m)=G)\cdot\frac{\psi_{G}(\sig)}{\E[\psi_{\bG(n,m)}(\sig)]}\,,
\]
where $\E$ denotes the expectation with respect to the null model $\bG(n,m)$. Equivalently, given $\bsig^\star=(\sigma_v)_{v\in V}$, independently draw for each clause $a\in F$ the neighborhood $\delta a$ and the weight function $\psi_a$ from the distribution
\beq\label{eq:clause:law:planted}
\P\big(\delta a= (v_1,\ldots, v_k)\,,\,\psi_a=\psi\big)=\frac{p(\psi)\psi(\sigma_{v_1},\ldots, \sigma_{v_k})}{\sum_{\psi^\prime \in \Psi}p(\psi^\prime)\sum_{v^\prime_1,\ldots, v^\prime_k\in V}\psi^\prime(\sigma_{v^\prime_1},\ldots \sigma_{v^\prime_k})}=\frac{1}{n^k}\cdot \frac{p(\psi)\psi(\sigma_{v_1},\ldots, \sigma_{v_k})}{\E_{p,u}\big[\bpsi(\sig_{\bom})\big]}\,,
\eeq
where $\E_{p,u}$ denotes the expectation with respect to $\bpsi \sim p$ and $\bom \sim u:=\Unif(V^k)$.
\end{defn}
The planted model can be specialized to sparse symmetric block model by taking $k=2$, $\pi=\Unif([q])$, and $p$ to put all of its mass on a specific weight function (see Eq.~\eqref{eq:prior:weight:HSBM}). Within this context, a clause $a\in F$ corresponds to an edge connecting $\delta_1 a$ and $\delta_2 a$ in the block model. Thus $m$ corresponds to the number of edges in the block model, which is approximately Poisson  with mean $dn/2$. Therefore, in the broader planted factor model, we let the clause count $m$ to follow a Poisson distribution. Using a concise notation, we let

\begin{equation}\label{eq:def:planted}
    \bG^\star \equiv \bG^\star(n,\bm,\bsig^\star,p,\pi)\,,\;\;\;\;\; \bG\equiv \bG(n,\bm,p)\,,\;\;\;\;\;\textnormal{where}\;\;\bm\sim \Poi(dn/k)\,.
\end{equation}
Further, we respectively denote the probability distribution of $\bG^\star$ and $\bG$ by
\begin{equation}
    \GG_{\sf plant}(n,d,p,\pi):={\sf Law}(\bG^\star)\,,\;\;\;\;\GG_{\sf null}(n,d,p):={\sf Law}(\bG)\,.
\end{equation}
Given a single observation $G$, the two central tasks in statistical inference are \emph{weak recovery} and \emph{detection}.

\begin{defn}(\emph{Weak recovery})\label{def:weak:recovery}
Consider the planted model $\bG^\star\sim \GG_{\sf plant}(n,d,p,\pi)$. We say that {\em weak recovery is possible} at $d$, if
there exists an $\eps > 0$ and (sequence of) estimators $\hat{\sig}\equiv \hat{\sig}_n(\bG^\star)$ 
that takes as an input the factor graph $\bG^\star$ and returns 
$\hat{\sig}\equiv (\hat{\sigma}_v)_{v\in V}\in [q]^{V}$, such that
\beq\label{eq:overlap:nontrivial}
\limsup_{n \to \infty} \E\left[A(\bsig^\star, \hat{\sig})\right] \geq \frac{1}{q} + \eps\,,
\eeq
where the \textit{overlap} between $\sig^1\equiv (\sigma^1_v)_{v\in V}\in [q]^{V}$ and $\sig^2\equiv (\sigma^2_v)_{v\in V}\in [q]^V$ is defined by
\beq\label{eq:def:overlap}
A(\sig^1, \sig^2):=\max_{\Gamma \in S_q}\Bigg\{
\frac{1}{q}\sum_{i=1}^{q} \frac{\big|\{v\in V: \sigma^1_v = i, \sigma^2_v = \Gamma(i)\}\big|}{\big|\{v\in V: \sigma^1_v =i \}\big|}
\Bigg\}\,.
\eeq
Here, the maximum is taken with respect to permutations $\Gamma$ among the set of communities $[q]$. By fixing $p,\pi$ and varying $d>0$, the weak recovery threshold $d_{\ast}\equiv d_{\ast}(p,\pi)$ is defined by
\[
d_{\ast}\equiv d_{\ast}(p,\pi):=\inf\Big\{d>0: \textnormal{weak recovery is possible at $d$ for $\bG^\star\sim \GG_{\sf plant}(n,d,p,\pi)$}\Big\}\,.
\]
\end{defn}
We remark that Definition~\ref{def:weak:recovery} appeared previously in \cite{AbbeSandon:18, mossel23exact}. In particular, the maximum over $\Gamma\in S_q$ is taken in the definition of the overlap $A(\sig^1,\sig^2)$ to account for symmetries, if any, of the labels of the communities. For example, if $\pi=\Unif([q])$, inherent symmetries between communities imply that communities can only be recovered up to a permutation. For an equivalent, but slightly different formulation, see~\cite[Definition 2.3]{AbbeSandon:18}. 

We further remark that in Theorem~\ref{thm:equiv} below, we give equivalent descriptions of the weak recovery that involve two-point correlations and orthogonality of the samples drawn from the posterior. Furthermore, it is crucial to note that in Proposition~\ref{prop:nontrivial:threshold}, we characterize a simple condition ${\sf (SYM)}$ which ensures that the weak recovery threshold $d_{\ast}(p,\pi)$ is non-trivial, lying in the interval $d_{\ast}(p,\pi)\in [\frac{1}{k-1},\infty)$.

Before proceeding further, we make two elementary observations. First, note that the task of weak recovery is monotone in $d>0$. Specifically, if weak recovery is impossible at $d$, then weak recovery is impossible at $d'$ for all $0<d'<d$. This is validated by deleting the clauses of $\bG^\star\sim \GG_{\sf plant}(n,d,p,\pi)$ independently with probability $d'/d$ yields a sample drawn from $\GG_{\sf plant}(n,d',p,\pi)$. In particular, for all $d>d_{\ast}(p,\pi)$, weak recovery is possible at $d>0$. Second, for arbitrary $\sig^1,\sig^2\in [q]^V$, the overlap satisfies $A(\sig^1,\sig^2)\geq \frac{1}{q}$ by taking average over $\Gamma\in S_q$ instead of taking maximum as in \eqref{eq:def:overlap}. Thus, the impossibility of weak recovery is equivalent to 
\beq\label{eq:def:weak:recovery:impossible}
\lim_{n \to \infty}
\E\left[A(\bsig^\star, \hat{\sig})\right] = \frac{1}{q}\,,\quad\textnormal{for any (sequence of) estimators $\hat{\sig}\equiv \hat{\sig}_n(\bG^\star)$\,.}
\eeq

\begin{defn}(\emph{Detection})\label{def:detection}
   Consider the following hypothesis testing task given a single observation $G$:
   \beq\label{eq:hypothesis:test:facotr}
{\sf H_0}: G\sim \GG_{\sf null}(n,d,p)\;\;\;\;\textnormal{vs.}\;\;\;\;{\sf H_1}: G\sim \GG_{\sf plant}(n,d,p,\pi)
\eeq
  We say that {\em detection is possible} at $d$, if there exists a sequence of test $\big(\phi_n(G)\big)_{n\geq 1}$, which rejects the null ${\sf H_0}$ with probability $\phi_n(G)\in [0,1]$, such that it achieves vanishing Type $1$ and Type $2$ errors. That is,
  \beq\label{eq:vanishing:errors}
  \E\big[\phi_n(\bG)\big]+\E\big[1-\phi_n(\bG^\star)\big]\to 0\,,\;\;\;\;\textnormal{as}\;\;\;\; n\to\infty\,,
  \eeq
  where $\bG\sim \GG_{\sf null}(n,d,p)$ and $\bG^\star\sim \GG_{\sf plant}(n,d,p,\pi)$. If there does not exist such sequence of tests, we say {\em detection is impossible} at $d$.
\end{defn}

\subsection{Main results for the factor models}
\label{subsec:results:factor}
In our first result for the planted factor model, we prove that below the weak recovery threshold $d_{\ast}$ and the (generalized) Kesten-Stigum (KS) threshold $d_{\ks}\equiv d_{\ks}(\pi,p)$ defined in Section~\ref{subsec:contiguity}, the detection is impossible.
\begin{thm}\label{thm:factor:contiguity}
Consider the planted factor model $\bG^\star\sim \GG_{\sf plant}(n,d,p,\pi)$ in the regime $d<d_{\ast}\wedge d_{\ks}$. Then, the planted model $\bG^\star$ is mutually contiguous with the null model $\bG\sim \GG_{\sf null}(n,d,p)$. That is, for any sequence of events $(\AAA_n)_{n\geq 1}$, $\P_{\bG^\star}(\AAA_n)\to 0$ if and only if $\P_{\bG}(\AAA_n)\to 0$. In particular, the detection is impossible at $d$.
\end{thm}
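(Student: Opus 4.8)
The plan is to prove Theorem~\ref{thm:factor:contiguity} via a truncated second moment argument combined with the small subgraph conditioning method, closely following the strategy outlined in the introduction but carried out in the general planted factor model. Throughout, I write $\LL_n(G) = \P_{\bG^\star}(G)/\P_{\bG}(G) = \sum_{\sig\in[q]^V}\psi_G(\sig)/\E[\psi_{\bG}(\sig)]\cdot\pi^{\otimes V}(\sig)$ for the likelihood ratio, which satisfies $\E_{\bG}[\LL_n(\bG)]=1$ by change of measure. Mutual contiguity is equivalent to: (i) $\LL_n(\bG)$ is uniformly integrable (giving $\P_{\bG}(\AAA_n)\to 0 \Rightarrow \P_{\bG^\star}(\AAA_n)\to 0$), and (ii) $\LL_n(\bG) \to \boldsymbol{\LL}_\infty$ in distribution with $\boldsymbol{\LL}_\infty > 0$ a.s. and $\E[\boldsymbol{\LL}_\infty]=1$ (which, combined with convergence of moments, yields the reverse implication and hence impossibility of detection via the second Borel--Cantelli/Le Cam contiguity criterion).

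First I would introduce the truncated likelihood ratio $\LL_n^\ast(G) := \LL_n(G)\,\one\{\langle \|R_{\sig^1,\sig^2}-\pi\pi^{\sT}\|_1\rangle_G \leq \eps_n\}$ for a sequence $\eps_n\to 0$ chosen to decay slowly. By the change-of-measure identity spelled out in the introduction, $\E_{\bG}|\LL_n^\ast(\bG) - \LL_n(\bG)| = \P_{\bG^\star}(\langle\|R_{\sig^1,\sig^2}-\pi\pi^{\sT}\|_1\rangle_{\bG^\star} > \eps_n)$, which tends to $0$ by Theorem~\ref{thm:equiv} (equivalence of impossibility of weak recovery and near-orthogonality of posterior samples, applicable since $d < d_\ast$) together with Markov's inequality, provided $\eps_n$ decays slowly enough. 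Hence $\E_{\bG}\LL_n^\ast(\bG) = 1 - o(1)$. The core technical step is then Proposition~\ref{prop:sec:mo:bound}: $\E_{\bG}[\LL_n^\ast(\bG)^2] = O(1)$ in the entire regime $d < d_\ast \wedge d_{\ks}$. Expanding the square, $\E_{\bG}[\LL_n^\ast(\bG)^2]$ becomes a sum over pairs $(\sig^1,\sig^2)$ of a quantity governed by the overlap matrix $R_{\sig^1,\sig^2}$; the truncation restricts attention to near-orthogonal pairs $R \approx \pi\pi^{\sT}$, where a local expansion of the exponent shows the dominant contribution is $O(1)$, while the exponentially large contributions that would come from atypical overlaps (responsible for $\E[\LL_n(\bG)^2] = e^{\Omega(n)}$ above the second moment threshold) are excised. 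The condition $d < d_{\ks}$ enters here to control the Hessian of the exponent at $R = \pi\pi^{\sT}$, ensuring $\pi\pi^{\sT}$ is a strict local maximum after truncation.

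Next I would invoke the small subgraph conditioning method (\cite{Janson95random, Wormald99models}): identify the relevant small subgraph statistics --- here the cycle counts (more precisely, the counts of short ``unicyclic'' structures in the factor graph, tracking the weight functions along them) $\bX_\ell$ --- show they are asymptotically independent Poisson under $\bG$, compute the limiting ``tilts'' $\lambda_\ell, \delta_\ell$ governing their behavior under $\bG^\star$, and verify the variance-matching identity $\limsup_n \E_{\bG}[\LL_n^\ast(\bG)^2] \leq \prod_\ell (1+\delta_\ell)\exp(\lambda_\ell\delta_\ell^2) \cdot (\text{the predicted }\E[\boldsymbol{\LL}_\infty^2])$, i.e. that the truncated second moment is explained entirely by fluctuations in the cycle counts. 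Combined with $\E_{\bG}\LL_n^\ast(\bG) = 1-o(1)$ this gives $\LL_n^\ast(\bG) \to \boldsymbol{\LL}_\infty$ in distribution with $\boldsymbol{\LL}_\infty$ the explicit infinite product of Poisson factors, and $\E[\boldsymbol{\LL}_\infty] = 1$. Since $\E_{\bG}|\LL_n^\ast - \LL_n| \to 0$, the untruncated $\LL_n(\bG)$ has the same distributional limit and is uniformly integrable. Finally, $\boldsymbol{\LL}_\infty > 0$ a.s. (it is a product of strictly positive factors, convergent a.s. in the regime $d < d_{\ks}$), so Le Cam's first lemma gives mutual contiguity, and impossibility of detection follows since no test can have both errors vanishing when the two sequences of laws are contiguous.

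The main obstacle I expect is Proposition~\ref{prop:sec:mo:bound} --- the uniform boundedness of the truncated second moment across the \emph{entire} range $d < d_\ast \wedge d_{\ks}$, rather than just the second-moment-threshold range. The delicate point is that this requires genuinely using the near-orthogonality conclusion of Theorem~\ref{thm:equiv} to justify that, after truncation, the second moment sum localizes around $R = \pi\pi^{\sT}$; one must quantitatively trade off the truncation radius $\eps_n$ (small enough that the local quadratic approximation of the exponent controls everything, using $d < d_{\ks}$ for negativity of the Hessian) against the first-moment loss (large enough that $\P_{\bG^\star}(\ldots > \eps_n) \to 0$ still holds). Carefully choosing $\eps_n$ so that both constraints are simultaneously satisfiable, and making the local expansion uniform in $n$, is the technical heart of the argument; the small subgraph conditioning bookkeeping, while lengthy, is comparatively routine once the truncated second moment is under control.
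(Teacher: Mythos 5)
Your proposal is correct and follows essentially the same route as the paper: truncate the likelihood ratio by near-orthogonality of posterior samples, use Theorem~\ref{thm:equiv} to show the truncation costs only $o(1)$ in the first moment, bound the truncated second moment using $d<d_{\ks}$ (this is the paper's Proposition~\ref{prop:sec:moment:LR} / \ref{prop:sec:mo:bound}), apply small subgraph conditioning with the $\zeta$-cycle counts of Fact~\ref{fact:cycle} to pass from a bounded second moment ratio to weak convergence of $L(\bG)$, and finish with Le Cam's first lemma. One small remark on the ``main obstacle'' you flag: the anticipated tension in choosing $\eps_n$ does not actually materialize. The second moment bound in Proposition~\ref{prop:sec:moment:LR} holds for \emph{every} sequence $\eps_n\to 0$ (and in fact the truncated second moment is monotone increasing in $\eps_n$, so one may freely assume $n^{2/3}\eps_n\to\infty$ inside that proof), while Lemma~\ref{lem:m:n:overlap:trivial} produces some sequence $\eps_n\to 0$ making the first-moment loss vanish; that single sequence is then automatically admissible for the second moment estimate. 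Also, you do not need to condition on the clause count $\bm = m_n$ being typical in your sketch, but the paper does this first (the Poisson randomness of $\bm$ otherwise inflates the second moment), then removes it at the end — a routine but necessary step worth spelling out.
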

In some special cases, it is known that the weak recovery threshold $d_{\ast}\equiv d_{\ast}(p,\pi)$ is at most the KS threshold $d_{\ks}\equiv d_{\ks}(p,\pi)$ in which case the condition $d<d_{\ast}\wedge d_{\ks}$ in Theorem~\ref{thm:factor:contiguity} can be simplified to $d<d_{\ast}$. This is the case for sparse (non-symmetric) block model, which corresponds to $k=2$ and $p$ being Dirac measure, by~\cite{AbbeSandon:18}. The same holds when $k\geq 3$, $p$ is a Dirac measure, and $\pi$ is the uniform measure $\Unif([q])$ by~\cite{SZ22}. It would be interesting to show that for general planted factor models $d_{\ast}\leq d_{\ks}$ holds.

In order to prove Theorem~\ref{thm:factor:contiguity}, we will prove a stronger result (cf. Theorem \ref{thm:likelihood:conv}) which characterizes the asymptotic power of the likelihood ratio test. Since such characterization requires extra technical notations regarding {\em $\zeta$-cycles}, we defer the statement to Section~\ref{sec:proof}. The proof of Theorem~\ref{thm:factor:contiguity} is in Section~\ref{subsec:contiguity}.

\subsubsection{Consequences of contiguity in point estimation}
As a consequence of the mutual contiguity in Theorem~\ref{thm:factor:contiguity}, we establish that it is impossible to consistently estimate the set of weight functions $\Psi\equiv \supp(p)$. Specifically, we consider the following notion of consistency.
\begin{defn}\label{def:local:consistent}
 Given $T\geq 1$, consider the parametric family of planted factor models $\GG_{\sf plant}(n,d,p,\pi)$ with fixed size of the support $T\equiv|\supp(p)|$. Let $\widehat{\Psi}_n(G)$ be an estimator that takes as an input a factor graph $G$ with $n$ variables and outputs a set of weight functions $\{\wh{\psi}_1,\ldots, \wh
 {\psi}_T\}$, where $\wh{\psi}_t:[q]^k\to \R_{>0}$ for $1\leq t\leq T$. For a fixed $\eps>0$, we say that the sequence of estimators $(\wh{\Psi}_n(G))_{n\geq 1}$ is {\em $\eps$-locally consistent at $(d,p,\pi)$} if the following holds. Let $\Psi=\{\psi_1,\ldots, \psi_T\}$. Then, for any $\Psi^\prime=\{\psi_1^\prime,\ldots, \psi_T^\prime\}$ such that $\|\psi^\prime_t-\psi_t\|_{\infty}\leq \eps$ holds for all $1\leq t\leq T$, we have as $n\to\infty$
 \beqn
 \wh{\psi}_t(\bG^\star)\pto \psi^\prime\,,\;\;\;\;\;\textnormal{for}\;\;\;\;\bG^\star\sim \GG_{\sf plant}(n,d,p^\prime,\pi)\;\;\;\textnormal{and}\;\;\; 1\leq t\leq T\,.
 \eeqn
Here, $p^\prime \sim \PPP(\Psi^\prime)$ is defined by letting $p^\prime(\psi^\prime_t)=p(\psi_t)$ for $1\leq t \leq T$.
\end{defn}

To clarify the definition, the non-existence of $\eps$-locally consistent estimator describes circumstances in which, despite having access to the information of the prior of the communities $\pi\in \PPP([q])$ and the mass of each weight functions $(p(\psi))_{\psi\in \Psi}\in \R^{T}$, it remains infeasable to estimate the set of weight functions $\Psi\equiv \supp(p)$ even within its $\eps$ neighborhood. Consequently, our definition of an $\eps$-locally consistent estimator is notably weaker than the usual definition of consistent estimator, which imposes no parameter restriction. Our subsequent corollary whose proof is in Section~\ref{subsec:contiguity} demonstrate that, even within this weaker notion of consistency, estimating $\Psi$ below the weak recovery threshold and the KS threshold is impossible.
\begin{cor}\label{cor:no:consistent}
    If $d<d_{\ast}\wedge d_{\ks}$ holds, then for any $\eps>0$, there does not exist $\eps$-locally consistent estimator of $\Psi$ at $(d,p,\pi)$.
\end{cor}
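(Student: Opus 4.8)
The plan is to derive Corollary~\ref{cor:no:consistent} from the mutual contiguity established in Theorem~\ref{thm:factor:contiguity} by a two-pronged argument: first reduce the estimation problem to a detection-type problem between two planted models with \emph{different} weight functions, and then use contiguity to transport a distinguishing event back to the null model, where the two planted models become indistinguishable. Concretely, suppose toward a contradiction that for some $\eps>0$ there is an $\eps$-locally consistent estimator $(\wh\Psi_n)_{n\geq1}$ at $(d,p,\pi)$ in the regime $d<d_\ast\wedge d_{\ks}$. Pick two distinct parameter sets $\Psi^{(1)}=\{\psi^{(1)}_t\}$ and $\Psi^{(2)}=\{\psi^{(2)}_t\}$, both within the $\eps$-ball of $\Psi$ in the $\|\cdot\|_\infty$ sense (for each $t$), with $\|\psi^{(1)}_t-\psi^{(2)}_t\|_\infty>0$ for at least one $t$; this is possible after shrinking $\eps$, and I must make sure the symmetry condition~\eqref{eq:psi:symm} is preserved, which can be arranged by perturbing in a symmetric direction. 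Let $p^{(j)}\in\PPP(\Psi^{(j)})$ have $p^{(j)}(\psi^{(j)}_t)=p(\psi_t)$. Since these are $\eps$-admissible perturbations, local consistency gives $\wh\psi_{t}(\bG^{\star,j})\pto \psi^{(j)}_t$ under $\bG^{\star,j}\sim\GG_{\sf plant}(n,d,p^{(j)},\pi)$ for $j=1,2$.

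Second, I would build the separating event. Because for some coordinate $t_0$ the target values $\psi^{(1)}_{t_0}$ and $\psi^{(2)}_{t_0}$ are at positive $\|\cdot\|_\infty$-distance, say $2\delta>0$, define $\AAA_n:=\{G:\|\wh\psi_{t_0}(G)-\psi^{(1)}_{t_0}\|_\infty<\delta\}$. Then $\P(\bG^{\star,1}\in\AAA_n)\to1$ and $\P(\bG^{\star,2}\in\AAA_n)\to0$. The key step now is to move this to the null model. Note that $d<d_\ast(p^{(j)},\pi)\wedge d_{\ks}(p^{(j)},\pi)$ continues to hold for both $j=1,2$ if $\eps$ is small enough, because $d_\ast$ and $d_{\ks}$ are continuous (indeed $d_{\ks}$ is given by an explicit spectral formula in Section~\ref{subsec:contiguity}, continuous in the weights, and $d_\ast$ is lower semicontinuous by the monotone "edge-deletion" argument — I may need a brief lemma here, or simply assume $\eps$ chosen small enough that the strict inequality $d<d_\ast\wedge d_{\ks}$ is stable, which is legitimate since these thresholds are bounded away from $d$ at the fixed parameter $(p,\pi)$). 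Hence Theorem~\ref{thm:factor:contiguity} applies to each: $\bG^{\star,j}$ is contiguous with respect to the \emph{same} null model $\bG\sim\GG_{\sf null}(n,d,p)$ — crucially, the null model depends only on the \emph{masses} $p(\psi_t)$ and on the \emph{structure} of the clause-neighborhood distribution, but the law of a random factor graph with i.i.d. weight-labels drawn according to a measure with atoms of sizes $p(\psi_t)$ is the same whether the atoms sit at $\psi_t$, $\psi^{(1)}_t$ or $\psi^{(2)}_t$, up to relabeling the finite weight set; so $\GG_{\sf null}(n,d,p^{(1)})$ and $\GG_{\sf null}(n,d,p^{(2)})$ are literally the same distribution after identifying the abstract weight labels. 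Contiguity of $\bG^{\star,1}$ w.r.t.\ this null then forces $\P(\bG\in\AAA_n)\not\to0$, while contiguity of $\bG^{\star,2}$ w.r.t.\ this null forces $\P(\bG\in\AAA_n)\to0$ (since $\P(\bG^{\star,2}\in\AAA_n)\to0$ and contiguity preserves $o(1)$ events). This is a contradiction.

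The technical care — and I expect this to be the main obstacle — lies in the bookkeeping that makes the null models for the two perturbed planted families coincide, and more subtly in making sure the estimator is being fed graphs from genuinely distinct laws while the \emph{null} reference stays fixed. The weight functions $\psi$ are abstract elements of a finite set $\Psi$, and the null model $\bG(n,m,p)$ only sees the prior $p$ as a distribution over labels together with the symmetrization constraint~\eqref{eq:psi:symm}; what varies between $\Psi^{(1)}$ and $\Psi^{(2)}$ is only the numerical value of $\psi_a(\sig_{\delta a})$, which never enters the null law. So one should phrase the argument with a single abstract index set $\{1,\dots,T\}$ for the weight atoms, a fixed mass vector $(p_t)_{t\le T}$, and let the "identity" of the weight functions be the estimation target; then $\GG_{\sf null}$ is manifestly independent of which of $\Psi,\Psi^{(1)},\Psi^{(2)}$ we use, and Theorem~\ref{thm:factor:contiguity} gives contiguity of each planted law to this common null. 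A secondary point to handle cleanly is the stability of the strict inequality $d<d_\ast\wedge d_{\ks}$ under $\eps$-perturbation: for $d_{\ks}$ this is immediate from its explicit formula, and for $d_\ast$ I would invoke (or quickly prove) that $d<d_\ast(p,\pi)$ plus a continuity/monotonicity argument — or, cleanest, just take $\eps$ small enough that the conclusion of Theorem~\ref{thm:factor:contiguity} is assumed to hold for the perturbed models, noting this is exactly what local consistency is asked to contradict. Once these identifications are in place the contradiction is a one-line application of "$\P_{\bG^{\star,1}}(\AAA_n)\to1$, $\P_{\bG^{\star,2}}(\AAA_n)\to0$, both contiguous to the same $\bG$."
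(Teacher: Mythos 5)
The overall strategy you propose — produce two perturbed planted laws $\GG_{\sf plant}(n,d,p^{(1)},\pi)$ and $\GG_{\sf plant}(n,d,p^{(2)},\pi)$, both contiguous to a common null, and observe that a consistent estimator would have to separate them — is the right shape, and your observation that the null model depends on the weight functions only through the abstract label masses $(p_t)_t$ is correct and important. But there is a genuine gap in the step that establishes contiguity of the perturbed planted laws to the null.

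You apply Theorem~\ref{thm:factor:contiguity} directly at the perturbed parameters, which requires $d < d_\ast(p^{(j)},\pi)\wedge d_{\ks}(p^{(j)},\pi)$. For $d_{\ks}$ this is fine: it is the reciprocal of $(k-1)$ times the spectral radius of the explicitly defined operator $\Xi_\ast$, which is continuous in the weight functions, so strict inequality survives a small $\|\cdot\|_\infty$-perturbation. For $d_\ast$, however, there is no such argument. The weak recovery threshold is an asymptotic quantity with no explicit formula, and nothing in the paper (or in the general literature) establishes lower semicontinuity of $p\mapsto d_\ast(p,\pi)$. The hypothesis $d<d_\ast(p,\pi)$ at the base point does not by itself exclude the possibility that $d_\ast(p',\pi)$ drops below $d$ for $p'$ arbitrarily close to $p$. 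Your suggested fix — "take $\eps$ small enough that the strict inequality is stable, which is legitimate since these thresholds are bounded away from $d$ at the fixed parameter" — is circular: being bounded away at $(p,\pi)$ says nothing about nearby $(p',\pi)$ without a continuity input. The closing sentence "noting this is exactly what local consistency is asked to contradict" does not close the loop either, since the definition of $\eps$-local consistency makes no assumption about thresholds at the perturbed parameters.

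The paper circumvents this exact obstacle with a resampling construction. Rather than prove contiguity of a perturbed planted model to the null directly, it constructs $\bG^\star_\gamma$ by independently replacing each clause of the original $\bG^\star\sim\GG_{\sf plant}(n,d,p,\pi)$ with a fresh null clause with probability $\gamma$. Two facts then do the work: (i) by Poisson thinning, the likelihood ratio of $\bG^\star_\gamma$ against the null equals that of $\GG_{\sf plant}(n,(1-\gamma)d,p,\pi)$ against $\GG_{\sf null}(n,(1-\gamma)d,p)$, so Theorem~\ref{thm:factor:contiguity} applies at the \emph{original} $(p,\pi)$ with the lower degree $(1-\gamma)d<d<d_\ast\wedge d_{\ks}$, establishing contiguity of $\bG^\star_\gamma$ to the null with no continuity needed; and (ii) Lemma~\ref{lem:resampling} shows $\bG^\star_\gamma$ is mutually contiguous to the planted model with $\gamma$-modified weights $\psi_{t,\gamma}=(1-\gamma)\psi_t+\gamma\xi$. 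Chaining gives contiguity of $\GG_{\sf plant}(n,d,p_\gamma,\pi)$ to the null for all small $\gamma$, and then two distinct $\gamma\neq\gamma'$ furnish exactly the two perturbed planted models you want. To repair your argument you would need either a continuity lemma for $d_\ast$ (which would be new and nontrivial) or to specialize your perturbations to ones that can be realized by resampling — which is precisely what the paper does.
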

\subsubsection{Asymptotic mutual information}
We next consider the mutual information $I(\bG^\star,\bsig^\star)$ between the planted factor model $\bG^\star\sim \GG_{\sf plant}(n,d,p,\pi)$ and the community structure $\bsig^\star\in [q]^V$, and the \textit{Kuller-Leibler divergence} $\DKL(\bG^\star\,\|\,\bG)$ between the planted model $\bG^\star$ and the null model $\bG\sim \GG_{\sf null}(n,d,p)$. In the regime $d>d_{\ast}$, our results are stated under the following assumption:

\begin{itemize}
    \item {\sf (MIN):} For a probability vector $\pi \in \R^{q}$, define the set 
    \beq\label{eq:def:RR:pi}
    \RR_{\pi}:=\Big\{R\in [0,1]^{q\times q}: \sum_{i\in [q]}R(i,j)=\pi_j\textnormal{ for }j\in [q]\,,\textnormal{ and }\sum_{j\in [q]}R(i,j)=\pi_i\textnormal{ for }i\in [q]\Big\}\,.
    \eeq
    Let the function $\FF:\RR_{\pi}\to \R_{\geq 0}$ be defined by
    \beq\label{eq:def:FF}
    \FF(R):=\sum_{\sig, \utau\in [q]^k}\E_{p}[\bpsi(\sig)\bpsi(\utau)]\prod_{s=1}^{k}R(\sigma_s,\tau_s)\,.
    \eeq
    Then, the $R\mapsto \FF(R)$ is uniquely minimized at $R=\pi\pi^{\sT}$.
\end{itemize}
We remark that in the case $\pi=\Unif([q])$, the assumption {\sf (MIN)} was first considered in~\cite{CEJKK:18}. In addition, it is straightforward to verify that the sparse symmetric block model satisfies such assumption (see Eq.~\eqref{eq:symmetric:satisfies:MIN} below). The following result shows that under such condition, the normalized mutual information $I(\bG^\star,\bsig^\star)/n$ exhibits a phase transition at the weak recovery threshold $d_{\ast}(p,\pi)$ defined in Definition~\ref{def:weak:recovery}.
\begin{thm}\label{thm:mutual:info}
We have the following.
\begin{enumerate}[label=\textup{(\arabic*)}]
    \item Suppose that $d<d_{\ast}$ holds. Then as $n\to\infty$, we have
\beq\label{eq:KL:zero}
\frac{1}{n}\DKL(\bG^\star\,\|\,\bG)\to 0\,,\quad\textnormal{and}\quad
\frac{1}{n}I(\bG^\star,\bsig^\star)\to \frac{d}{k}\cdot\E_{p,\pi}\bigg[\frac{\bpsi(\bsig)}{\xi}\log\Big(\frac{\bpsi(\bsig)}{\xi}\Big)\bigg]\,,
\eeq
where $\E_{p,\pi}$ denotes the expectation with respect to $\bpsi\sim p$ and $\bsig \sim \pi^{\otimes k}$, and we denoted $\xi:=\E_{p,\pi}[\bpsi(\bsig)]$.
\item Conversely, suppose that $d>d_{\ast}$ and assume that the condition {\sf (MIN)} holds. Then, we have
\beqn
\limsup_{n\to\infty}\frac{1}{n}\DKL(\bG^\star\,\|\,\bG)> 0\,,\quad\textnormal{and}\quad
\liminf_{n\to\infty}\frac{1}{n}I(\bG^\star,\bsig^\star)< \frac{d}{k}\cdot\E_{p,\pi}\bigg[\frac{\bpsi(\bsig)}{\xi}\log\Big(\frac{\bpsi(\bsig)}{\xi}\Big)\bigg]\,.
\eeqn
\item Assuming the condition {\sf (MIN)}, the following holds for any $d_0>0$. For any $\eps>0$, there exists $\eta\equiv \eta(\eps)>0$ not depending on $n$ such that if $\E\left[A(\bsig^\star, \hat{\sig})\right]\geq \frac{1}{q}+\eps$ holds for some estimator $\hat{\sig}\equiv \hat{\sig}_n(\bG_0^\star)$ where $\bG_0^\star \sim \GG_{\sf plant}(n,d_0,p,\pi)$, then
for all $d > d_0$, there is an event $\AAA_n^{\ast}$ satisfying
\begin{equation}\label{eq:exp:orthogonal}
    \P(\bG\in \AAA_n^{\ast})\leq 2e^{-\eta n}\,,\quad\textnormal{and}\quad \P(\bG^\star\in \AAA_n^{\ast})\geq 1-2e^{-\eta n}\,,
\end{equation}
where $\bG\sim \GG_{\sf null}(n,d,p)$ and $\bG^\star \sim \GG_{\sf plant}(n,d,p,\pi)$. In particular, if $d>d_{\ast}$ and the condition {\sf (MIN)} holds, then there exists a sequence of events $(\AAA_n^\ast)_{n\geq 1}$, a subsequence $(n_{\ell})_{\ell \geq 1}$, and a constant $\eta>0$ such that $\P(\bG\in \AAA_{n_{\ell}}^\ast)\leq 2e^{-\eta n_{\ell}}$ while $\P(\bG^\star\in \AAA_{n_{\ell}}^{\ast})\geq 1-2e^{-\eta n_{\ell}}$. Thus, $\bG$ and $\bG^\star$ are mutually orthogonal along a subsequence.
\end{enumerate}
\end{thm}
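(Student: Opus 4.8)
\textbf{Proof proposal for Theorem~\ref{thm:mutual:info}.}

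The plan is to prove the three parts in order, since part (3) supplies the key exponential-orthogonality estimate that drives part (2), while part (1) is the ``easy'' direction relying only on the second-moment control of the truncated likelihood ratio already established below the weak recovery threshold.

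For part (1), I would start from the identity $\frac1n\DKL(\bG^\star\,\|\,\bG)=\frac1n\E\log\LL_n(\bG^\star)$ and the I-MMSE--type decomposition $I(\bG^\star,\bsig^\star)=\DKL(\bG^\star\,\|\,\bG)+\big(\textnormal{annealed free energy term}\big)$, more precisely the relation (to be recorded as Lemma~\ref{lem:KL:mutual:info:free:energy}) that writes $\frac1n I(\bG^\star,\bsig^\star)$ as $\frac1n\E\log\big(\psi_{\bG^\star}(\bsig^\star)/\E[\psi_{\bG}(\bsig^\star)]\big)$ minus $\frac1n\E\log Z_n$ for the appropriate partition function, and then observe that the first, ``planted single-clause'' term is exactly $\frac{d}{k}\E_{p,\pi}[\tfrac{\bpsi(\bsig)}{\xi}\log\tfrac{\bpsi(\bsig)}{\xi}]$ after using $\bm\sim\Poi(dn/k)$ and linearity over clauses. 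It then remains to show $\frac1n\E\log Z_n\to 0$ and $\frac1n\E\log\LL_n(\bG^\star)\to 0$; both follow from contiguity / the convergence of the truncated likelihood ratio $\LL_n^\ast(\bG)$ (Theorem~\ref{thm:factor:contiguity} and its quantitative version), together with uniform integrability to upgrade convergence in distribution of $\LL_n(\bG)$ to convergence of $\E\log\LL_n(\bG^\star)=\E[\LL_n(\bG)\log\LL_n(\bG)]$. Standard care is needed because $\log\LL_n$ is not bounded; the $O(1)$ second-moment bound on $\LL_n^\ast$ plus a crude a.s. upper bound on $\LL_n$ handles this.

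For part (3), which I expect to be the technical heart, I would fix $d_0$ with an estimator achieving overlap $\geq \frac1q+\eps$, invoke Theorem~\ref{thm:equiv} to deduce that the posterior two-point overlap does \emph{not} concentrate at $\pi\pi^\sT$ at level $d_0$, i.e. $\E\langle\|R_{\sig^1,\sig^2}-\pi\pi^\sT\|_1\rangle_{\bG_0^\star}\geq c(\eps)>0$. Then use the thinning coupling: a sample of $\bG^\star\sim\GG_{\sf plant}(n,d,p,\pi)$ with $d>d_0$ contains, after deleting clauses independently, a sample of $\bG_0^\star$, so the posterior at level $d$ is at least as ``aligned.'' The event $\AAA_n^\ast$ should be built from the observable statistic $\langle\FF(R_{\sig^1,\sig^2})\rangle_G$ (or a Fourier/cycle-count surrogate that is a function of $G$ alone): under {\sf (MIN)}, $\FF(R)-\FF(\pi\pi^\sT)\geq 0$ with equality only at $R=\pi\pi^\sT$, so a persistent mass of non-orthogonal overlap pairs forces $\langle\FF(R)\rangle_{\bG^\star}$ to exceed $\FF(\pi\pi^\sT)$ by a constant, whereas under the null the same quantity equals $\E[\psi_{\bG}(\sig^1)\psi_{\bG}(\sig^2)]$-type expression concentrating at $\FF(\pi\pi^\sT)$. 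The exponential bounds $2e^{-\eta n}$ come from bounded-differences / McDiarmid concentration of the relevant clause-additive functionals of $\bG$ and $\bG^\star$ around their means (each clause changes the statistic by $O(1/n)$, there are $\Theta(n)$ clauses), combined with the constant mean gap; passing to a subsequence in the last assertion handles the possibility that the overlap lower bound only holds along a subsequence of $n$.

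Finally, part (2) follows by combining (1)-style identities with (3): if $d>d_\ast$ then weak recovery is possible at some $d_0\in(d_\ast,d)$ (monotonicity), so part (3) yields mutual orthogonality along a subsequence, hence $\limsup\frac1n\DKL(\bG^\star\,\|\,\bG)=\limsup\frac1n\E\log\LL_n(\bG^\star)>0$ — indeed on the orthogonality subsequence $\LL_n(\bG^\star)\geq e^{\eta n/2}$ with probability $1-o(1)$, giving a linear lower bound on $\E\log\LL_n(\bG^\star)$ after truncating away the (exponentially unlikely) bad event. For the mutual information strict inequality, I would again use $\frac1n I(\bG^\star,\bsig^\star)=\frac{d}{k}\E_{p,\pi}[\cdots]-\frac1n\DKL(\bG^\star\,\|\,\bG)-\big(\text{a vanishing term}\big)$ — more carefully, the exact decomposition shows $\frac{d}{k}\E_{p,\pi}[\tfrac{\bpsi(\bsig)}{\xi}\log\tfrac{\bpsi(\bsig)}{\xi}]-\frac1n I(\bG^\star,\bsig^\star)=\frac1n\DKL(\bG^\star\,\|\,\bG)-o(1)$, so a positive $\limsup$ of the KL term yields the claimed strict inequality for $\liminf\frac1n I$. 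The main obstacle throughout is making the observable event $\AAA_n^\ast$ genuinely a function of $G$ alone while still certifying non-orthogonality of posterior samples; the resolution is that $\langle\FF(R_{\sig^1,\sig^2})\rangle_G$ is itself a (bounded, clause-additive-in-expectation) functional of $G$, so no knowledge of $\bsig^\star$ is needed, and {\sf (MIN)} is exactly the hypothesis that makes this functional separate the planted and null models.
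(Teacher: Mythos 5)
Your overall architecture (reduce to a decomposition lemma, then control the free energy $\frac1n\E\log L(\bG^\star)$, and for orthogonality build an exponentially separating event) matches the paper, and part (2) derived from part (3) plus the decomposition is sound in principle. But two of your key steps have genuine gaps.

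For part (1), you propose to show $\frac1n\E\log L(\bG^\star)\to 0$ by appealing to mutual contiguity and the second-moment bound on the truncated likelihood ratio, upgrading $L(\bG)\Rightarrow \bL_\infty$ to $\E[L(\bG)\log L(\bG)]\to \E[\bL_\infty\log\bL_\infty]=O(1)$. The obstruction is that both the contiguity statement (Theorem~\ref{thm:factor:contiguity}) and the $O(1)$ second-moment bound (Proposition~\ref{prop:sec:moment:LR}) require $d<d_\ast\wedge d_\ks$, while Theorem~\ref{thm:mutual:info}(1) is claimed for all $d<d_\ast$, and the inequality $d_\ast\le d_\ks$ is not established in the generality of planted factor models. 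The paper instead proves $\frac1n\E\log L(\bG^\star)\to 0$ via the I-MMSE--type derivative formula (Proposition~\ref{prop:derivative}) and the overlap-triviality equivalence (Theorem~\ref{thm:equiv}): below $d_\ast$ the derivative $\frac1n\frac{\partial}{\partial d}\E\log L(\bG^\star)$ vanishes pointwise, and one integrates from $0$ to $d$ (Proposition~\ref{prop:free:energy}(2)). That route never needs $d<d_\ks$, so your contiguity-based argument does not cover the stated range of $d$.

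For part (3), your proposed separating event is built from the posterior observable $\langle\FF(R_{\sig^1,\sig^2})\rangle_G$, and you invoke McDiarmid on the grounds that the functional is ``clause-additive in expectation''. It is not, in any useful sense: the posterior $\mu_G$ is a normalized Gibbs measure, and resampling a single clause can shift $\mu_G$ by a constant in total variation (the per-clause ratio $\psi_a'(\sig_{\delta a'})/\psi_a(\sig_{\delta a})$ is only bounded in $[c,C]$), so a single clause can move $\langle\FF(R)\rangle_G$ by $\Theta(1)$ --- which is the entire scale of the statistic, since $\FF$ is bounded. Bounded-differences then only gives deviations of order $\sqrt n$, which is useless for a statistic with $O(1)$ range, and no exponential separation follows. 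The paper chooses instead $\AAA_n^\ast=\{G:L(G)\ge e^{\eta_d n/2}\}$, exploiting that $\frac1n\log L(G)$ \emph{is} $O(1)$-Lipschitz per clause (Lemma~\ref{lem:L:deterministic} and Fact~\ref{fact:concentration}) so that Azuma--Hoeffding yields deviations of size $\Theta(n)$ with probability $e^{-\Omega(n)}$, and on the null side one needs no concentration at all, just Markov with $\E L(\bG)=1$. The role of ${\sf (MIN)}$ and Theorem~\ref{thm:equiv} in the paper is not to define the observable, but to lower-bound the derivative of the expected free energy (Proposition~\ref{prop:free:energy}(3)), which after integration in $d$ gives the linear-in-$n$ mean gap that makes the Azuma/Markov argument bite.
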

\begin{remark}
We note that in general it is not known that if there exists a subsequence $(n_{\ell})_{\ell\geq 1}$ such that the weak recovery is possible at $d$ along $(n_{\ell})_{\ell\geq 1}$, i.e. there exists $\eps>0$ such that for all $\ell\geq 1$, $\E\left[A(\bsig^\star, \hat{\sig})\right]\geq \frac{1}{q}+\eps$ holds for $\hat{\sig}\equiv \hat{\sig}_{n_{\ell}}(\bG^\star), \bG^\star\sim \GG_{\sf plant}(n_{\ell},d,p,\pi)$, then the weak recovery is possible at $d$ (or even for $d'> d$) along every subsequence. Thus, if we replace $\limsup$ with $\liminf$ in Eq.~\eqref{eq:overlap:nontrivial} in Definition~\ref{def:weak:recovery} we may in principle obtain a different threshold for weak recovery. The fact that we do not know that the `$\limsup$' and `$\liminf$' thresholds are the same is the reason that the statements in Theorem~\ref{thm:SBM}-(3) and Theorem~\ref{thm:mutual:info}-(3) use subsequences. However, like in many other random graph and spin-glass models~\cite{CKPZ:18} we believe that these thresholds should be the same for weak recovery, and similarly for detection. Moreover, as seen in Theorem~\ref{thm:mutual:info}-(3), if we let $d_{\ast}'$ be the `$\liminf$' threshold where we replace $\limsup$ with $\liminf$ in Eq.~\eqref{eq:overlap:nontrivial}, then we have mutual orthogonality for the entire sequence above this (possibly different) threshold $d>d_{\ast}'$.
\end{remark}

\subsubsection{Equivalent notions of weak recovery}
The proof of Theorem \ref{thm:factor:contiguity} is based on a novel equivalence between the impossibility of weak recovery and near-orthogonality of the samples drawn from the posterior. To state the equivalence, we introduce more notations. Given a factor graph $G$ with $n$ variables and $m$ clauses, we denote the posterior $\mu_G\in \PPP([q]^V)$ by
\beq\label{def:posterior}
\mu_G(\sig)= \P\big(\bsig^\star=\sig \bgiven \bG^\star(n,m,\bsig^\star)=G\big)\,,\quad \sig \in [q]^V\,.
\eeq
We denote by $\langle \cdot \rangle_{G}$ the expectation with respect to samples $(\sigma^{\ell})_{\ell\geq 1}\iid \mu_G$ from from the posterior. That is, for any $L\geq 1$ and $f:([q]^V)^{L}\to \R$, we let 
\beq\label{eq:def:expectation:posterior}
\big\langle f\big(\sig^1,\ldots, \sig^L\big)\big\rangle_{G}:=\sum_{\sig^1,\ldots, \sig^{L}\in [q]^V} f(\sig^1,\ldots, \sig^{L})\prod_{\ell=1}^{L}\mu_G(\sig^{\ell})\,.
\eeq
Given two samples $\sig^{\ell}\equiv (\sigma^{\ell}_v)_{v\in V}\in [q]^V, \ell=1,2$, the \textit{overlap matrix} $R_{\sig^1,\sig^1}\equiv \big(R_{\sig^1,\sig^2}(i,j)\big)_{i,j\leq q} \in \R^{q\times q}$ is defined by
\beq\label{eq:R:def}
R_{\sig^1,\sig^2}(i,j)=\frac{1}{n}\sum_{v\in V}\one\big\{\sigma^1_v=i\,,\, \sigma^2_v=j\big\}\,.
\eeq
Concisely, the overlap matrix $R_{\sig^1,\sig^2}$ represents the empirical distribution of $(\sigma_v^1,\sigma_v^2)_{v\in V}$. In particular, the overlap matrix $R_{\sig^1,\sig^2}$ being close to $\pi\pi^{\sT}$ indicates that the two samples $\sig^1,\sig^2$ are {\em near-orthogonal}, i.e. $\sig^1$ and $\sig^2$ appear as if they are drawn independently from $\pi^{\otimes V}$. In our subsequent result, we establish that the impossibility of weak recovery is {\em equivalent} to the near-orthogonality of two samples drawn from the posterior. It plays a crucial role in deriving Theorem~\ref{thm:factor:contiguity}, but it might also be of independent interest. 
\begin{thm}\label{thm:equiv}
Given $k\geq 2,d>0, \pi, p$, the following are equivalent:
\begin{enumerate}[label=(\alph*)]
    \item Weak recovery is impossible at $d$. That is, \eqref{eq:def:weak:recovery:impossible} holds.
    \item\label{item:b} For any $2$ distinct variables $u\neq v$ and $i,j\in [q]$, we have that as $n\to\infty$,
    \beqn
    \P(\bsigma^\star_u=i, \bsigma^\star_v=j\given \bG^\star)\pto \pi_i\pi_j\,.
    \eeqn
    \item \label{item:c} For any $2$ distinct variables $u\neq v$, we have
    \beqn
    \P(\bsigma^\star_u=\bsigma^\star_v\given \bG^\star)\pto \sum_{i=1}^{q}\pi_i^2\,,\quad\textnormal{and}\quad \sum_{i=1}^{q}\pi_i \P(\bsigma^\star_u=i \given \bG^\star)\pto \sum_{i=1}^{q}\pi_i^2\,.
    \eeqn
    
    \item \label{item:d} The overlap matrix $R_{\sig^1,\sig^2}$ of the samples $\sig^1,\sig^2$ drawn from the posterior is trivial. That is, as $n\to\infty$,
    \beqn
    \E\Big\langle \big\|R_{\sig^1,\sig^2}-\pi\pi^{\sT}\big\|_1\Big\rangle_{\bG^\star} \to 0\,.
    \eeqn
\end{enumerate}
\end{thm}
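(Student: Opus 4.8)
The plan is to prove the cycle of implications $(a)\Leftrightarrow(b)\Leftrightarrow(c)\Leftrightarrow(d)$ by establishing $(a)\Rightarrow(b)\Rightarrow(c)\Rightarrow(d)\Rightarrow(a)$, with the bulk of the work in the two ``outer'' implications $(a)\Rightarrow(b)$ and $(d)\Rightarrow(a)$, since the middle equivalences are essentially bookkeeping.

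\textbf{Easy directions first.} The implication $(b)\Rightarrow(c)$ is immediate: the first identity in $(c)$ follows by writing $\P(\bsigma^\star_u=\bsigma^\star_v\given\bG^\star)=\sum_i \P(\bsigma^\star_u=i,\bsigma^\star_v=i\given\bG^\star)$ and applying $(b)$ with $j=i$ together with boundedness (so convergence in probability is preserved under the finite sum); the second identity follows similarly by summing $\pi_i\P(\bsigma^\star_u=i\given\bG^\star)$ and noting $\P(\bsigma^\star_u=i\given\bG^\star)=\sum_j\P(\bsigma^\star_u=i,\bsigma^\star_v=j\given\bG^\star)\pto\sum_j\pi_i\pi_j=\pi_i$. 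For $(c)\Rightarrow(d)$: by exchangeability of the posterior samples and of the vertices, $\E\langle R_{\sig^1,\sig^2}(i,j)\rangle_{\bG^\star}=\E[\P(\bsigma^\star_u=i\given\bG^\star)\P(\bsigma^\star_u=j\given\bG^\star)]$ for any fixed $u$ (Nishimori-type identity: one posterior sample has the same law as the planted configuration given $\bG^\star$). More usefully, one shows $\E\langle\|R_{\sig^1,\sig^2}-\pi\pi^{\sT}\|_1\rangle$ is controlled by the variance-type quantity $\E\langle\sum_v(\one\{\sigma^1_v=\sigma^2_v\}-\sum_i\pi_i^2)\rangle/n$ plus terms measuring the deviation of the single-site marginals from $\pi$; both are made small by $(c)$ after a second-moment/concentration argument exploiting that the overlap concentrates around its mean (the relevant fluctuations are $o(n)$ because correlations between distinct vertices vanish). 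A clean way: expand $\E\langle\|R-\pi\pi^{\sT}\|_2^2\rangle$, which is a sum over pairs $u,v$ of $\E[\P(\sigma^\star_u=\sigma^\star_v\mid\bG^\star)-\ldots]$-type terms, use $(c)$ plus the trivial diagonal ($u=v$) contribution being $O(1/n)$, and then pass from $\ell_2$ to $\ell_1$ on the $q\times q$ matrix.

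\textbf{The two hard directions.} For $(a)\Rightarrow(b)$: the point is that if some two-point function $\P(\bsigma^\star_u=i,\bsigma^\star_v=j\given\bG^\star)$ does \emph{not} converge to $\pi_i\pi_j$, then it has nontrivial correlation with $\one\{\bsigma^\star_u=i\}$ (this is exactly the Nishimori identity $\E[\P(\bsigma^\star_u=i,\bsigma^\star_v=j\given\bG^\star)^2]\ne (\pi_i\pi_j)^2 +o(1)$ translating to $\E[\one\{\bsigma^\star_u=i,\bsigma^\star_v=j\}\P(\cdots\given\bG^\star)]$ being bounded away from $\pi_i\pi_j$). One then builds an estimator of $\bsigma^\star_v$ from the posterior marginals that achieves overlap $>1/q$: roughly, use $\hat\sigma_v=\argmax_j \hat P_v(j)$ where $\hat P_v$ is the posterior marginal at $v$ possibly ``boosted'' by conditioning on the value at a correlated vertex $u$; the subtlety is the permutation symmetry in the definition of overlap $A(\cdot,\cdot)$, which one handles by exploiting that the correlation structure is permutation-covariant. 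The cleanest route, and I expect this is the one taken, is: vanishing of \emph{all} two-point functions is equivalent to $\E\langle\|R-\pi\pi^{\sT}\|_1\rangle\to0$ (this is essentially $(b)\Leftrightarrow(d)$), and a nonvanishing overlap matrix forces a nontrivial estimator via a rounding/thresholding of posterior marginals, arguing about the symmetrized overlap. For $(d)\Rightarrow(a)$: assume weak recovery is possible, so some estimator $\hat\sig_n(\bG^\star)$ has $\limsup\E[A(\bsig^\star,\hat\sig)]\ge 1/q+\eps$. The Bayes-optimal estimator does at least as well, so WLOG $\hat\sig$ is a (symmetrized) function of the posterior marginals; then a second independent posterior sample $\sig^2$ has, conditionally on $\bG^\star$, the same law as $\bsig^\star$, so $\E\langle A(\sig^1,\sig^2)\rangle_{\bG^\star}\ge 1/q+\eps-o(1)$. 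But $A(\sig^1,\sig^2)\le 1/q + C\|R_{\sig^1,\sig^2}-\pi\pi^{\sT}\|_1$ for a constant $C=C(q,\pi)$ (since if the overlap matrix equals $\pi\pi^{\sT}$ then $A=1/q$, and $A$ is Lipschitz in $R$), so $\E\langle\|R_{\sig^1,\sig^2}-\pi\pi^{\sT}\|_1\rangle_{\bG^\star}\ge\eps/C-o(1)>0$, contradicting $(d)$.

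\textbf{Main obstacle.} I expect the genuinely delicate point is the equivalence between vanishing two-point functions / trivial overlap matrix and the impossibility of a \emph{nontrivial estimator in the permutation-quotiented overlap $A$}. The max over $\Gamma\in S_q$ in \eqref{eq:def:overlap} means one cannot naively say ``nontrivial posterior marginals $\Rightarrow$ nontrivial overlap''; one must argue that the posterior correlation structure respects the symmetry group of $(\pi,p)$ in a way that lets a single permutation $\Gamma$ (depending on $\bG^\star$) align $\hat\sig$ with $\bsig^\star$ on a $(1/q+\eps)$-fraction — or, going the other way, that an estimator beating $1/q$ in $A$ can be ``lifted'' to break the two-point function. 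Handling this cleanly — probably via the two-sample formulation $\E\langle A(\sig^1,\sig^2)\rangle$ and the Lipschitz bound $A(\sig^1,\sig^2)-1/q\lesssim\|R_{\sig^1,\sig^2}-\pi\pi^{\sT}\|_1$, which sidesteps choosing $\Gamma$ explicitly — is the crux, and also the reason the paper states the result as a clean four-way equivalence rather than a chain of one-directional bounds.
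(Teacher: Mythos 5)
Your overall structure is the right one and the easy legs are fine: $(b)\Rightarrow(c)$ is immediate, $(c)\Rightarrow(d)$ via a second-moment/Cauchy--Schwarz expansion of $\E\langle\|R_{\sig^1,\sig^2}-\pi\pi^{\sT}\|_{\Fr}^2\rangle$ matches the paper's Lemma~\ref{lem:overlap:posterior}, and your contrapositive sketch of $(a)\Rightarrow(b)$ (a nontrivial posterior two-point function yields a marginal-based estimator beating $1/q$) is in spirit the same as Lemma~\ref{lem:two:point}, which first forces one-point marginals to $\pi$ via an $\arg\max$ estimator and then conditions on a single vertex.

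The gap is in $(d)\Rightarrow(a)$, and it is exactly at the step you yourself flag as the crux. You write
\[
\E[A(\bsig^\star,\hat\sig)]=\E\big\langle A(\sig^1,\hat\sig)\big\rangle_{\bG^\star}\quad\Longrightarrow\quad \E\big\langle A(\sig^1,\sig^2)\big\rangle_{\bG^\star}\geq \tfrac1q+\eps-o(1),
\]
by ``replacing the deterministic $\hat\sig$ by a second posterior sample.'' There is no such implication: the Nishimori identity lets you replace $\bsig^\star$ by $\sig^1$, but $\hat\sig$ is a fixed $\bG^\star$-measurable configuration, and the Bayes overlap $\E[A(\bsig^\star,\hat\sig)]$ and the two-replica overlap $\E\langle A(\sig^1,\sig^2)\rangle$ are not comparable in general — a deterministic estimator can lock onto one pure state of the posterior that an independent posterior draw would miss. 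Your Lipschitz bound $\wt A(\sig^1,\sig^2)-\tfrac1q\le C\|R_{\sig^1,\sig^2}-\pi\pi^{\sT}\|_1$ is correct, but it lives entirely on the two-replica side and never touches $\hat\sig$.

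The paper does not attempt this shortcut. It proves $(d)\Rightarrow(c)$ first, using the \emph{pinning lemma} (Lemma~\ref{lem:pinning}, a non-uniform-prior version of \cite[Cor.~2.2]{Bapst16harness}): if $\langle\|R_{\sig^1,\sig^2}-\pi\pi^{\sT}\|_{\Fr}^2\rangle$ is small, the posterior decomposes into a small bad set and finitely many pieces on each of which the two-vertex marginals $\mu_{u,v}$ are nearly product of one-vertex marginals, which in turn are nearly $\pi$. Only then does it prove $(c)\Rightarrow(a)$ (Lemma~\ref{lem:two:point:conv}) by a direct Cauchy--Schwarz on $|\E[\wt A(\bsig^\star,\hat\sig)]-\tfrac1q|$, expanding the square over pairs $u,v$, and using the tower property with $\hat\sig$ being $\bG^\star$-measurable to reduce everything to the one-replica, two-vertex quantities $\P(\bsigma^\star_u=\bsigma^\star_v\mid\bG^\star)$ and $\P(\bsigma^\star_u=i\mid\bG^\star)$. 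The conceptual point you are missing is that $(d)$ is a two-replica statement (the overlap $R_{\sig^1,\sig^2}$ pairs $\sigma^1_v$ with $\sigma^2_v$ at a common vertex), whereas the estimator bound $(a)$ and condition $(c)$ are naturally one-replica, two-vertex statements; the pinning lemma is precisely what converts one into the other, and bypassing it as you propose leaves a genuine hole.
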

Theorem~\ref{thm:equiv} is actually established within a more general framework, where $\bG^\star$ can be replaced by any random variable $\bX^\star\equiv \bX^\star(\sig^\star)$ that satisfy a certain condition {\sf (EXG)} (cf. Proposition~\ref{prop:equiv}).

\subsection{Main results for hypergraph stochastic block models}
\label{sec:application}
In this section, we apply Theorems~\ref{thm:factor:contiguity}, \ref{thm:mutual:info}, \ref{thm:likelihood:conv} in the setting of hypergraph stochastic block models (HSBM). Notably, HSBM significantly generalizes the symmetric block model considered in Section~\ref{sec:intro}. In particular, the results stated in this section immediately imply Theorems~\ref{thm:SBM}, \ref{thm:optimal:power}, \ref{thm:optimal:cycle:test} whose proofs are proved at the end of this section. To begin with, we define HSBM and the associated inference tasks.

\begin{defn}(\textit{Hypergraph Stochastic Block Model})
\label{def:HSBM}
Given $k,q\geq 2$, let $M\equiv \big(M(i_1,\ldots, i_k)\big)_{1\leq i_1,\ldots i_k\leq q}$ be a symmetric tensor of order $k\geq 2$ with positive entries. That is, for any permutation $\omega\in S_{q}$, $M(i_1,\ldots, i_k)=M(\omega(i_1),\ldots, \omega(i_k))>0$ holds. Also, let $\pi =(\pi_i)_{i\leq q} \in [0,1]^q$ be a probability vector, which encodes the prior of different communities. Given $n\geq 1$, let $\bG^\star_{\sf HSBM}\sim \GG^{\sf H}(n,M,\pi)$ be a random $k$-uniform hypergraph model with $n$ vertices defined as follows. Every vertex $v\in V$ is assigned a community $\bsigma_v^\star \in \{1,2\ldots,q\}$ independently according to $\bsigma_v^\star \iid \pi$. Given the community structure $\bsig^\star$, each possible hyperedge $(v_1,\ldots, v_k)$ consisting of distinct vertices $v_1,\ldots v_k\in V$ is included independently with probability 
$M(\bsigma^\star_{v_1},\ldots ,\bsigma^\star_{v_k})/\binom{n}{k-1}$.
\end{defn}
Note that by restricting to $k=2$, HSBM specializes to stochastic block models with prior $\pi \in \PPP([q])$ and connection probabilities encoded by an arbitrary symmetric matrix $M\in \R_{>0}^{q\times q}$. Let the average degree of HSBM denoted by
\begin{equation}\label{eq:degree:HSBM}
    d:=\sum_{i_1,\ldots,i_k\in [q]}M(i_1,\ldots,i_k)\prod_{s=1}^{k}\pi_{i_s}\,.
\end{equation}
Observe that the normalized tensor $\pr\equiv \big(\pr(i_1,\ldots, i_k)\big)_{1\leq i_1,\ldots,i_k\leq q} :=M/d$ must satisfy
\begin{equation}\label{eq:normalization}
    1=\sum_{i_1,\ldots,i_k\in [q]}\pr(i_1,\ldots,i_k)\prod_{s=1}^{k}\pi_{i_s}\,.
\end{equation}
By fixing such $M_0$ and varying $d$, the weak recovery threshold $d_{\ast}^{\sf H}\equiv d_{\ast}^{\sf H}\big(\pr, \pi \big)$ is defined by
\beqn
d_{\ast}^{\sf H}\equiv d_{\ast}^{\sf H}\big(\pr,\pi \big):=\inf\Big\{d>0: \textnormal{weak recovery is possible at $d$ for $\bG^\star_{\HSBM}\sim \GG^{\sf H}\big(n,d\pr,\pi \big)$}\Big\}\,.
\eeqn
Here, the weak recovery in HSBM is defined analogously to Definition~\ref{def:weak:recovery}. That is, we say that the weak recovery is possible at $d$ for $\GG^{\sf H}\big(n,d\pr,\pi\big)$ if there exists an $\eps>0$ and (sequence of) estimators $\hat{\sig}\equiv \hat{\sig}_n(\bG^\star_\HSBM)$ such that Eq.~\eqref{eq:overlap:nontrivial} is satisfied.

It is crucial to observe that the weak recovery threshold is trivial if the average degree around every node is not the same. That is, $d_{\ast}^{\sf H}\big(\pr,\pi \big)>0$ holds only if the following is satisfied:
\begin{equation}\label{eq:degree:condition}
    1=\sum_{i_1,\ldots,i_{k-1}\in [q]}M_0(i_1,\ldots,i_{k-1},i)\prod_{s=1}^{k-1}\pi_{i_s}\,,\quad\textnormal{for any}\quad 1\leq i \leq q\,.
\end{equation}
To see this, note that weak recovery is possible at $d$ just by assigning the communities to each vertex based on their degree. Thus, statements such as ``For $d<d_{\ast}^{\sf H}(M_0,\pi)$, \ldots'' are not vacant only if Eq.~\eqref{eq:degree:condition} is satisfied. We further note that if Eq.~\eqref{eq:degree:condition} is satisfied, Proposition~\ref{prop:nontrivial:threshold} below shows that $d_{\ast}^{\sf H}\big(\pr,\pi \big)\geq \frac{1}{k-1}$ holds. 

In the case $\pr=\bone_{k,q}$, the all-$1$-tensor, the random graph model $\GG_{\sf ER}^{\sf H} (n,d,k)\equiv \GG^{\sf H}(n,d\cdot \bone_{k,q},\pi)$ does not have community structure, and does not depend on $\pi$. This is the hypergraph analog of sparse Erdos-Renyi graphs with average degree $d$. Thus, the hypothesis test corresponding to Eq.~\eqref{eq:hypothesis:test} which determines the existence of a community structure for a k-uniform hypergraph $G$ is given by
   \beq\label{eq:hypothesis:test:HSBM}
{\sf H_0}: G\sim \GG_{\sf ER}^{\sf H} (n,d,k)\;\;\;\;\textnormal{vs.}\;\;\;\;{\sf H_1}: G\sim \GG^{\sf H} (n,dM_0,\pi)
\eeq
Analogously to Definition~\ref{def:detection}, we say that detection is possible at $d$ if there exists a sequence of tests $\big(\phi_n(G)\big)_{n\geq 1}$ which achieves vanishing Type 1 and Type 2 errors (cf. Eq.~ \eqref{eq:vanishing:errors}).

\subsubsection{Contiguity, point estimation, and mutual information in HSBM}
We now specialize Theorems~\ref{thm:factor:contiguity}, \ref{thm:mutual:info}, and Corollary~\ref{cor:no:consistent} to HSBM. To this end, we first define the KS threshold for HSBM, which is simple to state. Given $M_0$ and $\pi$, define the matrix $B\equiv \big(B(i,j)\big)_{i,j\leq q}\in \R^{q\times q}$ by
\beq\label{eq:def:B}
B(i,j)=\sum_{i_1,\ldots, i_{k-2}\in [q]}M_0(i_1,\ldots, i_{k-2},i,j)\pi_j\prod_{s=1}^{k-2}\pi_{i_s}\,.
\eeq
Note that under the degree condition in Eq.~\eqref{eq:degree:condition}, the matrix $B$ is a stochastic matrix. Thus, if we let $\lambda_1,\ldots, \lambda_q$ be the eigenvalues of $B$ ordered in decreasing absolute values, then by Perron-Frobenius theorem, we have 
\beqn
1=\la_1>|\la_2|\geq\ldots \geq |\la_q|\,.
\eeqn
Then, the KS threshold for HSBM is defined by
\beq\label{eq:def:KS:HSBM}
d_{\ks}^{\sf H} \equiv d_{\ks}^{\sf H}(M_0,\pi):=|\la_2|^{-2}\,.
\eeq
The following definition is the translation of local consistent estimators (cf. Definition~\ref{def:local:consistent}) within HSBM.
\begin{defn}
   Given $k,q, \pi, M$, consider an estimator $\widehat{M}(G)$, which takes as an input a $k$-uniform hypergraph $G$, and outputs a symmetric tensor of order $k\geq 2$ with positive entries. For $\eps>0$, we say that $\widehat{M}(\cdot)$ is \textit{$\eps$-locally-consistent for $M$ at $(d,\pi,M)$} if for all symmetric tensor $M^\prime$ of order $k$ such that $\|M^\prime-M\|_1\leq \eps$ holds, we have that as $n\to\infty$, 
   \beqn
   \widehat{M}(\bG^\star_{\HSBM})\pto M^\prime \,,\;\;\;\;\textnormal{for}\;\;\;\bG^\star_{\HSBM}\sim \GG^{\sf H}(n,M^\prime,\pi)\,.
   \eeqn
\end{defn}
We reiterate that $\eps$-locally consistency is a weaker notion than standard consistency since the parameter space is restricted to be the $\eps$ neighborhood of $M$. Our results establish that below the weak recovery threshold, even this weaker notion of parameter estimation cannot be achieved.
\begin{thm}\label{thm:HSBM:contiguity}
Let $\pr$ be a symmetric tensor of order $k\geq 2$ with positive entries, and $\pi$ be a $q$-dimensional probability vector for $q\geq 2$. For average degree $d$ below the weak recovery threshold and the KS threshold $d<d_{\ast}^{\sf H}\big(\pr,\pi \big)\wedge d_{\textsf{\textup{KS}}}^{\sf H}(M_0,\pi)$, the following holds.
  \begin{enumerate}[label=\textup{(\arabic*)}]
       \item \label{item:contiguity:HSBM} $\bG^\star_{\HSBM}\sim \GG^{\sf H}\big(n,d\pr,\pi \big)$ is mutually contiguous with the Erdos-Renyi hypergraph $\bG_{\ER}\sim \GG_{\sf ER}^{\sf H} (n,d,k)$. Thus, detection is impossible. Moreover, for any fixed $\eps>0$, there exists no $\eps$-locally-consistent estimator for $M$.
       \item \label{item:mutual:info:below:HSBM} The asymptotic per-vertex mutual information between $\bG^\star_{\HSBM}$ and the community structure $\bsig^\star$ is given by
    \beqn
    \lim_{n\to\infty}\frac{1}{n}I(\bG^\star_{\HSBM},\bsig^\star)=\frac{d}{k}\cdot \sum_{i_1,\ldots i_k=1}^{q}\pr(i_1,\ldots, i_k)\log\big(\pr(i_1,\ldots,i_k)\big) \prod_{s=1}^{k}\pi_{i_s}\,.
    \eeqn
  \end{enumerate}
  On the other hand, suppose the average degree is above the weak recovery threshold $d>d_{\ast}^{\sf H}(M_0,\pi)$ and the condition ${\sf (MIN)}$ holds. Then, we have 
  \begin{enumerate}[label=\textup{(\arabic*)}]\setcounter{enumi}{2}
    \item \label{item:mutual:orthogonal:HSBM}  There exists a sequence of events $(\AAA^{\ast}_n)_{n\geq 1}$, a subsequence $(n_{\ell})_{\ell \geq 1}$, and a positive constant $\eta>0$ not depending on $\ell$ such that for any $\ell \geq 1$,
    \beqn
    \P(\bG_{\ER}\in \AAA^{\ast}_{n_{\ell}})\leq 2e^{-\eta n_{\ell}}\,,\;\;\;\;\textnormal{and}\;\;\;\; \P(\bG^\star_{\HSBM}\in \AAA^{\ast}_{n_{\ell}})\geq 1-2e^{-\eta n_{\ell}}\,,
    \eeqn
    where $\bG_{\ER}\sim \GG_{\sf ER}^{\sf H} (n_{\ell},d,k)$ and $\bG^\star_{\HSBM}\sim \GG^{\sf H}\big(n_{\ell},d\pr,\pi \big)$. Thus, detection is possible along the subsequence $(n_{\ell})_{\ell \geq 1}$.
    \item \label{item:mutual:info:above:HSBM} We have that
    \beqn
    \liminf_{n\to\infty}\frac{1}{n}I(\bG^\star_{\HSBM},\bsig^\star)<\frac{d}{k}\cdot \sum_{i_1,\ldots i_k=1}^{q}\pr(i_1,\ldots, i_k)\log\big(\pr(i_1,\ldots,i_k)\big) \prod_{s=1}^{k}\pi_{i_s}\,.
    \eeqn
  \end{enumerate}
\end{thm}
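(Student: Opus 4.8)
\textbf{Reduction to the factor-model results.}
The plan is to realize the HSBM as a special case of the planted factor model and then quote Theorems~\ref{thm:factor:contiguity}, \ref{thm:mutual:info}, and Corollary~\ref{cor:no:consistent}. First I would set $k$ equal to the hyperedge size, $\pi$ equal to the community prior, and choose the weight-function prior $p$ so that a single weight function $\psi$ carries the tensor $M_0$; concretely one takes $\Psi$ to be the orbit of $\psi = M_0$ (rescaled so that $\E_{p,\pi}[\bpsi(\bsig)]=1$, using Eq.~\eqref{eq:normalization}) under the symmetric group $S_k$, with $p$ uniform on that orbit, so that the symmetry condition \eqref{eq:psi:symm} holds. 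Under this identification the clause count $\bm\sim\Poi(dn/k)$ of the factor model matches (up to the standard Poissonization and the negligible difference between sampling hyperedges with repetition from $V^k$ versus distinct $k$-tuples) the $\Binom(\binom{n}{k},\,\cdot\,)$ number of hyperedges of the HSBM with per-hyperedge inclusion probability $M_0(\sigma)/\binom{n}{k-1}$; a short coupling argument, of the kind already used to pass between the symmetric SBM and its factor-model encoding in Section~\ref{subsec:def:factor}, shows $\GG^{\sf H}(n,dM_0,\pi)$ and $\GG_{\sf plant}(n,d,p,\pi)$ are mutually contiguous (in fact total-variation close after removing multi-hyperedges), and likewise $\GG^{\sf H}_{\sf ER}(n,d,k)$ matches $\GG_{\sf null}(n,d,p)$. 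The weak-recovery thresholds then coincide, $d^{\sf H}_\ast(M_0,\pi)=d_\ast(p,\pi)$, since the overlap functional in Definition~\ref{def:weak:recovery} is unchanged and weak recovery is invariant under the coupling.

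\textbf{Matching the KS thresholds and the single-letter formula.}
Next I would check that the factor-model KS threshold $d_{\ks}(\pi,p)$ from Section~\ref{subsec:contiguity} specializes to $d^{\sf H}_{\ks}(M_0,\pi)=|\lambda_2|^{-2}$ with $B$ as in \eqref{eq:def:B}: this is a direct computation showing that the relevant ``broadcast'' operator whose spectral radius governs $d_{\ks}$ is, in the HSBM encoding, precisely $d$ times the matrix $B$, whose second eigenvalue is $\lambda_2$ (the degree condition \eqref{eq:degree:condition} makes $B$ stochastic, so $\lambda_1=1$). Parts \ref{item:contiguity:HSBM} and \ref{item:mutual:info:below:HSBM} then follow: contiguity and impossibility of detection from Theorem~\ref{thm:factor:contiguity}, non-existence of $\eps$-locally-consistent estimators for $M$ from Corollary~\ref{cor:no:consistent} (an $\eps$-locally-consistent estimator of $M_0$ yields one of $\Psi=\supp(p)$ under the encoding), and the single-letter mutual information from Theorem~\ref{thm:mutual:info}(1) after substituting $\bpsi(\bsig)/\xi = M_0(\sigma_1,\dots,\sigma_k)$ into $\E_{p,\pi}[(\bpsi/\xi)\log(\bpsi/\xi)]$, which is exactly $\sum_{i_1,\dots,i_k}M_0(i_1,\dots,i_k)\log M_0(i_1,\dots,i_k)\prod_s\pi_{i_s}$.

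\textbf{The regime above threshold.}
For parts \ref{item:mutual:orthogonal:HSBM} and \ref{item:mutual:info:above:HSBM} I would verify that the HSBM satisfies condition {\sf (MIN)}: here $\E_p[\bpsi(\sig)\bpsi(\utau)]$ is (an average over $S_k$ of) $M_0(\sig)M_0(\utau)$, so $\FF(R)=\sum_{\sig,\utau}M_0(\sig)M_0(\utau)\prod_s R(\sigma_s,\tau_s)$, and one argues uniqueness of the minimizer at $R=\pi\pi^{\sT}$ over $\RR_\pi$—this is the point where I expect the main work, and it is an assumption of the theorem statement rather than something to re-prove in general, so I would simply carry {\sf (MIN)} as a hypothesis and invoke Theorem~\ref{thm:mutual:info}(2)--(3) directly, translated through the encoding. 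The only genuine obstacle in the whole argument is the Poissonization/coupling step: one must confirm that the small discrepancies (multi-edges, self-loops, $V^k$ versus ordered distinct $k$-tuples, $\Binom$ versus $\Poi$ clause counts, and the $\binom{n}{k-1}$ versus $n^{k-1}$ normalization) are $o(1)$ in total variation or at least contiguity-preserving and do not shift any threshold; since all these corrections are $O(1/n)$ per clause and there are $O(n)$ clauses, a second-moment or direct coupling bound handles it, exactly as in the $k=2$ case already treated in the paper. Everything else is bookkeeping: matching parameters, substituting into the formulas, and reading off the conclusions.
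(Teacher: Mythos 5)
Your high-level plan — encode HSBM as the planted factor model with $p=\delta_{M_0}$, check that the thresholds correspond, and then quote Theorems~\ref{thm:factor:contiguity}, \ref{thm:mutual:info} and Corollary~\ref{cor:no:consistent} — is the right strategy and is exactly what the paper does. You also correctly identify the single-letter computation $\E_{p,\pi}[(\bpsi/\xi)\log(\bpsi/\xi)]=\sum M_0\log M_0\prod\pi_{i_s}$ and the KS-threshold specialization to $|\lambda_2|^{-2}$.

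However, there is a genuine gap in the passage ``a short coupling argument \ldots shows $\GG^{\sf H}(n,dM_0,\pi)$ and $\GG_{\sf plant}(n,d,p,\pi)$ are mutually contiguous \ldots since all these corrections are $O(1/n)$ per clause and there are $O(n)$ clauses, a second-moment or direct coupling bound handles it.'' The planted factor model $\bG^\star_M$ produces self-loops and multi-edges a Poisson-$O(1)$ number of times, so the event $\GGG_n$ that the graph is a simple hypergraph has probability bounded \emph{away} from $1$. Consequently $\bG^\star_M$ and the HSBM $\bG^\star_{\HSBM}$ (equivalently, the conditioned graph $\wbG^\star_M$) are \emph{not} mutually contiguous: one direction holds (events rare under $\bG^\star_M$ are rare under $\wbG^\star_M$), but the reverse fails because $\GGG_n^c$ itself has constant probability. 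Your assertion that ``weak recovery is invariant under the coupling'' is precisely the delicate point: given an estimator achieving nontrivial overlap on $\bG^\star_M$, one cannot directly conclude it achieves nontrivial overlap on $\wbG^\star_M$, since the ``good'' mass might be concentrated on $\GGG_n^c$. The paper handles this (Lemma~\ref{lem:identical:threshold:conditioning}) with a separate idea: starting from $\wbG^\star_M$, it \emph{adds} clauses sampled independently of $\bsig^\star$ so as to produce a graph $\Breve{\bG}^\star_M$ that \emph{is} mutually contiguous with $\bG^\star_M$; because the added clauses carry no information about $\bsig^\star$, the weak-recovery threshold of $\Breve{\bG}^\star_M$ equals that of $\wbG^\star_M$, closing the loop. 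This construction and the accompanying $L^2$ contiguity estimate for the per-tuple clause counts $\{N_{\uj}\}$ are the real work, and your proposal does not supply an alternative. Similarly, for the conclusions above threshold you would also need Lemma~\ref{lem:HSBM:mutual:info:relation} and Proposition~\ref{prop:HSBM:free:energy} (or their equivalents) to transport the free-energy lower bound through the conditioning via the chain-rule identity~\eqref{eq:chain:rule} and the Azuma concentration of $n^{-1}\log L$; these follow the same pattern of careful bookkeeping rather than a one-line coupling.
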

We remark the condition~{\sf (MIN)} within HSBM is translated to the following.
\begin{itemize}
    \item  {\sf (MIN):} Recall the set $\RR_{\pi}$ in \eqref{eq:def:RR:pi}. Consider the function $\FF_{M_0}:\RR_{\pi}\to \R_{\geq 0}$ defined by 
    \beqn
    \FF_{M_0}(R):=\sum_{\sig, \utau\in [q]^k}M_{0}(\sig)M_{0}(\utau)\prod_{s=1}^{k}R(\sigma_s,\tau_s)\,.
    \eeqn
    Then, the $\FF_{M_0}(R)$ for $R\in \RR_{\pi}$ is uniquely minimized at $R=\pi\pi^{\sT}$.
\end{itemize}
Symmetric HSBM corresponds to the uniform prior $\pi=\Unif([q])$ and the tensor $M_0$ taking at most two values $a,b>0$, where $M_0(i_1,\ldots, i_k)=a$ if $i_1=\ldots =i_k$ and $M_0(i_1,\ldots, i_k)=b$, otherwise. The symmetric HSBM cleary satisfies {\sf (MIN)} since for $R\in \RR_{q^{-1}\bone}$, it is straightforward to compute
\beq\label{eq:symmetric:satisfies:MIN}
\FF_{M_0}(R)=1+(a-b)^2\sum_{i=1}^{q}R(i,i)^k\,,
\eeq
which is uniquely minimized at $R=q^{-2}\bone\bone^{\sT}$.
\subsubsection{Hypothesis testing in HSBM}
Our next result concerns the asymptotic power of the liklihood ratio of the hypothesis test~\eqref{eq:hypothesis:test:HSBM}. To this end, consider the likelihood ratio
\begin{equation*}
    \LL_n(G)=\frac{\P(\bG^\star_{\HSBM}=G)}{\P(\bG_{\ER}=G)}\,,
\end{equation*}
where $\bG_{\ER}\sim \GG^{\sf H}_{\ER}(n,d,k)$ and $\bG^\star_{\HSBM}\sim \GG^{\sf H}(n,dM_0,\pi)$. 
\begin{thm}\label{thm:optimal:power:HSBM}
Let $\pr$ be a symmetric tensor of order $k\geq 2$ with positive entries, and $\pi$ be a $q$-dimensional probability vector for $q\geq 2$. For average degree $d$ below the weak recovery threshold and the KS threshold $d<d_{\ast}^{\sf H}\big(\pr,\pi \big)\wedge d_{\ks}^{\sf H}\big(\pr,\pi \big)$, the likelihood ratio under the null $\bG_{\sf ER}\sim \GG^{\sf H}_{\ER}(n,d,k)$ converges in distribution to
\begin{equation*}
    \LL_n(\bG_{\ER})\dto \boldsymbol{\LL}_{\infty}
    :=\prod_{\ell=2+\one\{k=2\}}^{\infty}\frac{(1+\alpha_{\ell})^{\bX_{\ell}}}{\E(1+\alpha_{\ell})^{\bX_{\ell}}}\,.
\end{equation*}
Here, $(\bX_{\ell})_{\ell\geq 2}$ are independent Poisson random variables with mean $\E \bX_{\ell}=\frac{1}{2\ell}\big((k-1)d\big)^{\ell}$, and the constants  $(\alpha_{\ell})_{\ell\geq 2}$ are defined by
\beq\label{eq:def:alpha}
\alpha_{\ell}:=\tr(B^{\ell})-1=\sum_{i=2}^{q}\la_i^{\ell}\,,
\eeq
where $B\in \R^{q\times q}$ is defined in Eq.~\eqref{eq:def:B}. Moreover, the random variable $\boldsymbol{\LL}_{\infty}\equiv \boldsymbol{\LL}_{\infty}(d,M_0,\pi)$ satisfies the following.
\begin{enumerate}[label=\textup{(\arabic*)}]
\item Below the Kesten-Stigum threshold $d<d_{\ks}^{\sf H}(M_0,\pi)$, $\boldsymbol{\LL}_{\infty}$ is well-defined (i.e. the infinite product converges a.s.) and has finite second moment $\E \boldsymbol{\LL}_{\infty}^2<\infty$ .
\item For $\frac{1}{k-1}\leq d<d_{\ks}^{\sf H}(M_0,\pi)$, the random variable $\boldsymbol{\LL}_{\infty}$ does not have a point mass. In particular, in the regime $\frac{1}{k-1}\leq d<d_{\ast}^{\sf H}(M_0,\pi)\wedge d_{\ks}^{\sf H}(M_0,\pi)$, the asymptotic power of the likelihood ratio test for the hypothesis test~\eqref{eq:hypothesis:test} at significance level $\alpha\in (0,1)$ is given by $\beta_{\ast}(\alpha)\in (0,1)$, where
 \beq\label{eq:def:power:HSBM}
      \beta_{\ast}(\alpha)=\E\big[\boldsymbol{\LL}_{\infty}\one\{\boldsymbol{\LL}_{\infty}\geq C_{\alpha}\}\big]\,.
     \eeq
     Here, $C_{\alpha}>0$ is an arbitrary constant satisfying $\P(\boldsymbol{\LL}_{\infty}\geq C_{\alpha})=\alpha$,
     whose existence is guaranteed.
\end{enumerate}
\end{thm}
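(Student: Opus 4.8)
The plan is to obtain Theorem~\ref{thm:optimal:power:HSBM} as a specialization of the general planted-factor-model result Theorem~\ref{thm:likelihood:conv} (which, for $d<d_\ast\wedge d_{\ks}$, gives the distributional limit of $\LL_n$ in terms of $\zeta$-cycle counts), followed by a self-contained analysis of the explicit limit $\boldsymbol{\LL}_\infty$. First I would encode $\GG^{\sf H}(n,dM_0,\pi)$ as the planted factor model $\GG_{\sf plant}(n,d,p,\pi)$ with $k$-ary clauses carrying weight functions $\psi(x_1,\dots,x_k)=M_0(x_1,\dots,x_k)$; up to Poissonizing the clause count, deleting clauses with repeated variables, and enforcing the simple-hypergraph constraint, the two laws differ by $o(1)$ in total variation, so the likelihood-ratio limit and contiguity transfer. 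Under this dictionary the $\zeta$-cycles of Theorem~\ref{thm:likelihood:conv} become loose cycles of the hypergraph --- $\ell$ hyperedges $e_1,\dots,e_\ell$ threaded by distinct link vertices $v_1,\dots,v_\ell$ with $\{v_{i-1},v_i\}\subseteq e_i$ and otherwise disjoint --- and the sum starts at $\ell=2+\one\{k=2\}$ (a loose $2$-cycle already carries signal when $k\ge 3$, whereas for a graph one needs honest cycles of length $\ge 3$).

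Next come the two first-moment computations feeding Theorem~\ref{thm:likelihood:conv}. Under the null, each link vertex contributes a factor $n$ and each hyperedge a factor $\binom{n}{k-2}\cdot d(k-1)!/n^{k-1}\sim (k-1)d/n$, and the factor $(2\ell)^{-1}$ accounts for cyclic rotations and reflection, giving $\E^{\rm null}[\bX_\ell]\to\lambda_\ell:=\frac{1}{2\ell}((k-1)d)^\ell$; the classical Poisson paradigm for subgraph counts then yields joint convergence of $(\bX_\ell)_\ell$ to independent $\Poi(\lambda_\ell)$. Under the planted model, summing the $k-2$ private vertices of each hyperedge against the prior $\pi$ and invoking the symmetry of $M_0$ together with the degree condition~\eqref{eq:degree:condition} (which makes the matrix $B$ of~\eqref{eq:def:B} stochastic with top eigenvalue $1$) collapses each hyperedge into the transfer matrix $B$, so $\E^{\rm plant}[\bX_\ell]\to\lambda_\ell\tr(B^\ell)=\lambda_\ell(1+\alpha_\ell)$ with $\alpha_\ell=\tr(B^\ell)-1=\sum_{i\ge 2}\lambda_i^\ell$. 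Plugging $(\lambda_\ell,\alpha_\ell)$ into Theorem~\ref{thm:likelihood:conv} gives $\LL_n(\bG_{\ER})\dto\prod_\ell(1+\alpha_\ell)^{\bX_\ell}e^{-\lambda_\ell\alpha_\ell}=\prod_\ell(1+\alpha_\ell)^{\bX_\ell}/\E(1+\alpha_\ell)^{\bX_\ell}=\boldsymbol{\LL}_\infty$, using $\E t^{\Poi(\mu)}=e^{\mu(t-1)}$.

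For item (1): since $\boldsymbol{\LL}_\infty$ is an explicit infinite product of independent mean-$1$ factors $W_\ell=(1+\alpha_\ell)^{\bX_\ell}/\E(1+\alpha_\ell)^{\bX_\ell}$, I would compute $\E[W_\ell^2]=e^{\lambda_\ell\alpha_\ell^2}$, hence $\E[\boldsymbol{\LL}_\infty^2]=\exp(\sum_\ell\lambda_\ell\alpha_\ell^2)$; expanding $\alpha_\ell^2=\sum_{i,j\ge 2}(\lambda_i\lambda_j)^\ell$ rewrites the exponent as $\sum_{i,j\ge 2}\sum_\ell\frac{((k-1)d\,\lambda_i\lambda_j)^\ell}{2\ell}$, which converges precisely below the Kesten--Stigum threshold $d_{\ks}^{\sf H}$. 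Finiteness of the second moment makes $(\prod_{\ell\le L}W_\ell)_L$ an $L^2$-bounded nonnegative martingale, so it converges a.s. and in $L^1$ to $\boldsymbol{\LL}_\infty$ with $\E\boldsymbol{\LL}_\infty=1$ (a.s. convergence of the product also follows from Kolmogorov's three-series theorem applied to $\sum_\ell\log W_\ell$). For item (2): when $d\ge\frac1{k-1}$ we have $\sum_\ell\lambda_\ell=\infty$, so by Borel--Cantelli infinitely many of the independent Poissons $\bX_\ell$ are positive a.s., and $\log\boldsymbol{\LL}_\infty=\sum_\ell(\bX_\ell\log(1+\alpha_\ell)-\lambda_\ell\alpha_\ell)$ is a convergent sum of infinitely many non-degenerate independent summands (since $M_0>0$ forces $B>0$ entrywise, so $1+\alpha_\ell=\tr(B^\ell)>0$, and a genuine community structure forces $\mathrm{rank}(B)\ge 2$, so $\alpha_\ell\ne 0$ for all large $\ell$). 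By the pure-types criterion, the law of such a sum is atomless as soon as $\sum_\ell(1-\sup_x\P(\bX_\ell=x))=\infty$, which holds here because $1-\sup_x\P(\bX_\ell=x)\asymp\min(1,\lambda_\ell)$ and $\sum_\ell\lambda_\ell=\infty$; hence $\boldsymbol{\LL}_\infty$ has no point mass, $c\mapsto\P(\boldsymbol{\LL}_\infty\ge c)$ is continuous and strictly decreasing from $1$ to $0$ on its (interval) support, and $C_\alpha$ exists. Combining $\LL_n(\bG_{\ER})\dto\boldsymbol{\LL}_\infty$ with the uniform integrability of $\{\LL_n(\bG_{\ER})\}$ from item (1), the change of measure $\E^{\rm plant}[\one_A]=\E^{\rm null}[\LL_n\one_A]$ for $A=\{\LL_n>C_{n,\alpha}\}$, the convergence $C_{n,\alpha}\to C_\alpha$ of $(1-\alpha)$-quantiles (atomless limit), and the vanishing of the test's randomization, the Neyman--Pearson optimal level-$\alpha$ test has asymptotic power $\E[\boldsymbol{\LL}_\infty\one\{\boldsymbol{\LL}_\infty\ge C_\alpha\}]=\beta_\ast(\alpha)$, which lies in $(0,1)$ since $\boldsymbol{\LL}_\infty$ is a.s.\ positive and atomless.

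The main obstacle is not the bookkeeping above but the ingredient Theorem~\ref{thm:likelihood:conv}: getting the distributional limit of $\LL_n$ uniformly over the whole range $d<d_\ast^{\sf H}\wedge d_{\ks}^{\sf H}$ --- well past the smaller second-moment regime where $\E[\LL_n^2]=O(1)$ --- requires truncating $\LL_n$ by the event that two posterior samples are near-orthogonal, using Theorem~\ref{thm:equiv} to show this truncation costs only a $1-o(1)$ factor in the first moment while collapsing the second moment of the truncated likelihood to $O(1)$, and then running small-subgraph conditioning on the truncated likelihood. A secondary, HSBM-specific nuisance is making the reduction to the planted factor model rigorous at the level of laws, so that both the $\zeta$-cycle limit and the quantitative second-moment bounds descend to the hypergraph.
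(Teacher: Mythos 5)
Your plan tracks the paper's own proof closely: encode $\GG^{\sf H}(n,dM_0,\pi)$ as a planted factor model with Dirac prior $p_{M_0}$, feed it into the general likelihood-ratio limit Theorem~\ref{thm:likelihood:conv}, translate $\zeta$-cycles into hypergraph cycles to get the explicit $\lambda_\ell$ and $\alpha_\ell$, then verify finite second moment below KS (paper's Lemma~\ref{lem:lambda:equality} / Remark~\ref{rmk:L:infty:well:defined}) and atomlessness for $d\geq 1/(k-1)$ (paper's Lemma~\ref{lem:dist:L:infty}), and conclude via Neyman--Pearson with uniform integrability (paper's Corollary~\ref{cor:power:factor}). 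Two points, one substantive and one minor.

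\textbf{Threshold identification is a genuine gap.} You write that ``up to Poissonizing the clause count, deleting clauses with repeated variables, and enforcing the simple-hypergraph constraint, the two laws differ by $o(1)$ in total variation, so the likelihood-ratio limit and contiguity transfer.'' This conflates two quite different reductions. The paper's Lemma~\ref{lem:HSBM:planted:contiguity} shows $\tv\big((\bG^\star_{\HSBM},\bsig^\star),(\wbG^\star_M,\bsig^\star)\big)=o(1)$, where $\wbG^\star_M$ is the planted factor model $\bG^\star_M$ \emph{conditioned on} the simplicity event $\GGG_n$. But Theorem~\ref{thm:likelihood:conv} is stated for $d$ below $d_\ast(p_{M_0},\pi)$, the weak-recovery threshold of the \emph{unconditioned} model $\bG^\star_M$, and conditioning on $\GGG_n$ removes a constant fraction of the mass, so $\bG^\star_M$ and $\wbG^\star_M$ are \emph{not} TV-close and not mutually contiguous. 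Thus the equality $d_\ast^{\sf H}(M_0,\pi)=d_\ast(p_{M_0},\pi)$ does not ``transfer'' by TV closeness; it is a separate, nontrivial statement. The paper proves it in Lemma~\ref{lem:identical:threshold:conditioning} by a non-obvious device: starting from $\wbG^\star_M$, one \emph{adds} independent clauses (drawn without any knowledge of $\bsig^\star$) to construct a graph $\Breve{\bG}^\star_M$ that \emph{is} mutually contiguous with $\bG^\star_M$; since adding $\bsig^\star$-independent noise cannot help recovery, the two thresholds coincide. Without some argument of this kind you would only obtain the theorem for $d$ below $\wt d_\ast(p_{M_0},\pi)\wedge d_\ast(p_{M_0},\pi)$, and you would not know this equals $d_\ast^{\sf H}(M_0,\pi)$.

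\textbf{Minor: the claim ``$\alpha_\ell\neq0$ for all large $\ell$'' from rank$(B)\ge 2$ is false as stated.} Since $B$ is self-adjoint in $\langle\cdot,\cdot\rangle_{\pi,1}$ its nontrivial eigenvalues are real, but one can have, e.g., $\lambda_2=-\lambda_3=r$ and $\lambda_j=0$ otherwise, giving $\alpha_\ell=0$ for every odd $\ell$. Your Lévy pure-types argument survives because what is actually needed is $\sum_{\ell:\alpha_\ell\neq 0}\lambda_\ell=\infty$, and for even $\ell$ one has $\alpha_\ell\geq\lambda_2^\ell>0$, so $\sum_{\ell\text{ even}}\lambda_\ell=\infty$ already suffices when $d\geq 1/(k-1)$. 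You should replace the eventual-nonvanishing claim with this observation; as written, it is a false sublemma even though the conclusion stands.

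Everything else (first-moment bookkeeping, the second-moment identity $\E\boldsymbol{\LL}_\infty^2=\exp\big(\sum_\ell\lambda_\ell\alpha_\ell^2\big)$ converging iff $d<d_{\ks}^{\sf H}$, the $L^2$-martingale argument, and the power identification $\beta_\ast(\alpha)=\E[\boldsymbol{\LL}_\infty\one\{\boldsymbol{\LL}_\infty\geq C_\alpha\}]$ using atomlessness and uniform integrability) matches the paper.
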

We next construct a computationally efficient and most powerful test based on a low-degree polynomial of the adjacency matrix. Given a $k$-uniform hypergraph $G$ with $n$ nodes, let $X_{\ell}(G)$ count the number of cycles of length $\ell$. Here, a cycle of length $\ell$ is a set of vertices $\{v_1,\ldots, v_{\ell}, v_{\ell+1}\equiv v_1\}$, where $v_i$ and $v_{i+1}$ are connected by an hyperedge for all $1\leq i \leq \ell$. Then, consider the following statistic based on the cycle counts of $G$:
\beqn
\TT_n(G):=\prod_{\ell=2+\one\{k=2\}}^{K_n}(1+\alpha_{\ell})^{X_{\ell}(G)}\,,
\eeqn
where the truncation parameter is chosen so that $1\ll K_n=O(\log\log n)$.
\begin{thm}\label{thm:optimal:cycle:test:HSBM}
 Let $\pr$ be a symmetric tensor of order $k\geq 2$ with positive entries, and $\pi$ be a $q$-dimensional probability vector for $q\geq 2$. Also, let $K_n=O(\log\log n)$ and $K_n\to\infty $ as $n\to\infty$. For a significance level $\alpha\in (0,1)$, consider the test $\phi_{n,\alpha}(\cdot)$ which rejects the null ${\sf H_0}$ in~\eqref{eq:hypothesis:test:HSBM} with probability 
\beqn
\phi_{n,\alpha}(G):=
\begin{cases}
    1 & \;\;\;\;\textnormal{if}\;\;\;\;\; \TT_n(G)> C'_{n,\alpha}\,;\\
    % \ga'_{n,\alpha} & \;\;\;\;\textnormal{if}\;\;\;\;\; \TT_n(G)= C'_{n,\alpha}\,;\\
    0 & \;\;\;\;\textnormal{otherwise}\,.
\end{cases}
\eeqn
Here, the constants $C_{n,\alpha}'>0$ is chosen so that we have 
\beqn
\P(\TT_n(\bG_{\ER})>C^\prime_{n,\alpha})\leq \alpha\leq \P(\TT_n(\bG_{\ER})\geq C^\prime_{n,\alpha})\,,
\eeqn
where $\bG_{\ER}\sim \GG^{\sf H}_{\ER}(n,d,k)$. Then, for $\frac{1}{k-1}\leq d<d_{\ks}^{\sf H}(M_0,\pi)$, the test $\phi_{n,\alpha}$ achieves the power $\beta_{\ast}(\alpha)$ in Eq.~\eqref{eq:def:power:HSBM}. That is, under the alternative $\bG^\star_{\HSBM}\sim\GG^{\sf H}(n,dM_0,\pi)$, we have
\beqn
\E\phi_{n,\alpha}(\bG^\star_{\HSBM})\to \beta_{\ast}(\alpha)\;\;\;\;\textnormal{as}\;\;\;\; n\to\infty\,.
\eeqn
In particular, in the regime $\frac{1}{k-1}\leq d <d_{\ast}^{\sf H}(M_0,\pi)\wedge d_{\ks}^{\sf H}(M_0,\pi)$, the test $\phi_{n,\alpha}(\cdot)$ is asymptotically most powerful for the hypothesis test~\eqref{eq:hypothesis:test}.
\end{thm}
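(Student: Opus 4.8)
The plan is to express $\TT_n$ as a deterministically renormalized product of short-cycle counts, determine its distributional limit under the null and under the planted model from the classical Poisson approximation of cycle counts, and then extract the power from the atomlessness of $\boldsymbol{\LL}_\infty$ furnished by Theorem~\ref{thm:optimal:power:HSBM}. Concretely, write $\ell_0:=2+\one\{k=2\}$ and, for a $k$-uniform hypergraph $G$ and $K\geq\ell_0$,
\[
\mathcal{Y}_{n,K}(G):=\prod_{\ell=\ell_0}^{K}\frac{(1+\alpha_\ell)^{X_\ell(G)}}{\E\big[(1+\alpha_\ell)^{X_\ell(\bG_{\ER})}\big]}\,,\qquad
c_{n,K}:=\prod_{\ell=\ell_0}^{K}\E\big[(1+\alpha_\ell)^{X_\ell(\bG_{\ER})}\big]\,,
\]
so that $\TT_n(G)=c_{n,K_n}\,\mathcal{Y}_{n,K_n}(G)$ with $c_{n,K_n}>0$ deterministic, and the rejection region of $\phi_{n,\alpha}$ is $\{\mathcal{Y}_{n,K_n}>\widetilde C_{n,\alpha}\}$ with $\widetilde C_{n,\alpha}:=C'_{n,\alpha}/c_{n,K_n}$ still obeying $\P(\mathcal{Y}_{n,K_n}(\bG_{\ER})>\widetilde C_{n,\alpha})\leq\alpha\leq\P(\mathcal{Y}_{n,K_n}(\bG_{\ER})\geq\widetilde C_{n,\alpha})$. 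Working with $\mathcal{Y}_{n,K_n}$ rather than $\TT_n$ is important because $c_{n,K_n}$ need not stay bounded; the per-factor normalizations already carry the renormalization built into $\boldsymbol{\LL}_\infty$.

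\emph{Distributional limits.} For each fixed $K$, the counts $(X_\ell(\bG_{\ER}))_{\ell_0\leq\ell\leq K}$ converge jointly to independent $\Poi(\mu_\ell)$, $\mu_\ell:=\tfrac{1}{2\ell}\big((k-1)d\big)^\ell$, by the Poisson approximation for cycle counts in sparse inhomogeneous random graphs~\cite{BoJaRi:07}, and the normalizers satisfy $\E[(1+\alpha_\ell)^{X_\ell(\bG_{\ER})}]\to e^{\mu_\ell\alpha_\ell}$; hence $\mathcal{Y}_{n,K}(\bG_{\ER})\dto\prod_{\ell=\ell_0}^{K}\big\{(1+\alpha_\ell)^{\bX_\ell}e^{-\mu_\ell\alpha_\ell}\big\}\to\boldsymbol{\LL}_\infty$ as $K\to\infty$, where $\boldsymbol{\LL}_\infty$ is exactly the random variable of Theorem~\ref{thm:optimal:power:HSBM}, well defined and finite because $d<d_{\ks}^{\sf H}$. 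Since $K_n=O(\log\log n)\to\infty$, a diagonal argument using standard first- and second-moment bounds on short-cycle counts shows that the tail factor $\mathcal{Y}_{n,K_n}/\mathcal{Y}_{n,K}$ tends to $1$ in probability, uniformly in $K_n\geq K$, as $K\to\infty$ — here one only uses $\sum_\ell\mu_\ell\alpha_\ell^2<\infty$, which holds since $\boldsymbol{\LL}_\infty$ is well defined — so $\mathcal{Y}_{n,K_n}(\bG_{\ER})\dto\boldsymbol{\LL}_\infty$. The same argument under the planted model yields $\mathcal{Y}_{n,K_n}(\bG^\star_{\HSBM})\dto\boldsymbol{\LL}_\infty^{\star}:=\prod_{\ell\geq\ell_0}\big\{(1+\alpha_\ell)^{\bX_\ell^\star}e^{-\mu_\ell\alpha_\ell}\big\}$ with $(\bX_\ell^\star)$ independent $\Poi\big(\mu_\ell(1+\alpha_\ell)\big)$ — the shifted mean being $\mu_\ell\tr(B^\ell)$ by a direct count of colored cycles that uses the symmetry of $M_0$ and the degree condition~\eqref{eq:degree:condition}. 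Since exponentially tilting $\Poi(\mu_\ell)$ by the weight $(1+\alpha_\ell)^{X}$ turns it into $\Poi(\mu_\ell(1+\alpha_\ell))$ while the factor $e^{-\mu_\ell\alpha_\ell}$ is unchanged, taking products identifies $\boldsymbol{\LL}_\infty^\star$ as $\boldsymbol{\LL}_\infty$ size-biased by itself: $\P(\boldsymbol{\LL}_\infty^\star\in\mathrm{d}x)=x\,\P(\boldsymbol{\LL}_\infty\in\mathrm{d}x)$, legitimate as $\E\boldsymbol{\LL}_\infty=1$. In particular, for $d\geq\tfrac{1}{k-1}$ Theorem~\ref{thm:optimal:power:HSBM} gives that $\boldsymbol{\LL}_\infty$ has no point mass, and then neither does $\boldsymbol{\LL}_\infty^\star$.

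\emph{Conclusion.} As $\mathcal{Y}_{n,K_n}(\bG_{\ER})\dto\boldsymbol{\LL}_\infty$ with $\boldsymbol{\LL}_\infty$ atomless, the calibration inequalities force $\widetilde C_{n,\alpha}\to C_\alpha$: any subsequential limit $t$ of $\widetilde C_{n,\alpha}$ satisfies $\P(\boldsymbol{\LL}_\infty\geq t)=\alpha=\P(\boldsymbol{\LL}_\infty\geq C_\alpha)$, so $\boldsymbol{\LL}_\infty$ (hence also $\boldsymbol{\LL}_\infty^\star$) puts no mass strictly between $t$ and $C_\alpha$. Combining with $\mathcal{Y}_{n,K_n}(\bG^\star_{\HSBM})\dto\boldsymbol{\LL}_\infty^\star$, Slutsky's theorem and the atomlessness of $\boldsymbol{\LL}_\infty^\star$ give
\[
\E\phi_{n,\alpha}(\bG^\star_{\HSBM})=\P\big(\mathcal{Y}_{n,K_n}(\bG^\star_{\HSBM})>\widetilde C_{n,\alpha}\big)\;\longrightarrow\;\P\big(\boldsymbol{\LL}_\infty^\star>C_\alpha\big)=\E\big[\boldsymbol{\LL}_\infty\one\{\boldsymbol{\LL}_\infty\geq C_\alpha\}\big]=\beta_{\ast}(\alpha)\,,
\]
using the size-biasing identity and then the absence of an atom at $C_\alpha$. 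Finally, if in addition $d<d_{\ast}^{\sf H}(M_0,\pi)$, weak recovery is impossible, so by Theorem~\ref{thm:HSBM:contiguity}\ref{item:contiguity:HSBM} detection is impossible and, by Theorem~\ref{thm:optimal:power:HSBM} together with the Neyman--Pearson lemma and monotonicity of the optimal power in the level, no asymptotically level-$\alpha$ test has asymptotic power exceeding $\beta_{\ast}(\alpha)$; since $\phi_{n,\alpha}$ is asymptotically level $\alpha$ (by construction $\E\phi_{n,\alpha}(\bG_{\ER})\leq\alpha$) and attains $\beta_{\ast}(\alpha)$, it is asymptotically most powerful.

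\emph{Main obstacle.} The only genuinely technical point is the passage from a fixed truncation $K$ to the slowly diverging $K_n=O(\log\log n)$: one must control the normalized tail cycle-count product $\prod_{K<\ell\leq K_n}(1+\alpha_\ell)^{X_\ell}$ uniformly in $n$ under both models, which rests on first- and second-moment estimates for the number of $\ell$-cycles of length up to $\log\log n$ in the sparse (hyper)graph together with the summability $\sum_\ell\mu_\ell\alpha_\ell^2<\infty$ that already underlies the definition of $\boldsymbol{\LL}_\infty$; the $\log\log n$ slack makes this routine. Verifying that the planted-model cycle-count means equal $\mu_\ell\tr(B^\ell)$ is a secondary, routine computation.
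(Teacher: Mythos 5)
Your proposal is correct and closely mirrors the paper's argument: for each fixed truncation the cycle statistic has a Poisson limit under the null and a tilted Poisson limit under the alternative, a diagonal argument handles $K_n\to\infty$, atomlessness of $\boldsymbol{\LL}_\infty$ (from the paper's Theorem~\ref{thm:optimal:power:HSBM}, itself resting on Lemma~\ref{lem:dist:L:infty}) converts distributional convergence into convergence of rejection probabilities, and Neyman--Pearson together with the asymptotic LR power from Theorem~\ref{thm:optimal:power:HSBM} gives the optimality claim. This is exactly the content of Lemma~\ref{lem:T:conv} and Corollary~\ref{cor:cycle:test:factor}, which the paper invokes wholesale. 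The one genuine point of difference in route: the paper proves everything for the planted factor model (with its $\zeta$-cycles and Fact~\ref{fact:cycle}) and then transfers to HSBM through the total-variation estimate of Lemma~\ref{lem:HSBM:planted:contiguity}, whereas you work directly with hypergraph cycle counts of $\bG_{\ER}$ and $\bG^\star_{\HSBM}$. The direct route is cleaner for this specific corollary, but it does require you to actually supply the joint Poisson approximation for hypergraph cycle counts under both models rather than cite it: \cite{BoJaRi:07} covers sparse inhomogeneous \emph{graphs}, not hypergraphs, and the planted-model means $\mu_\ell(1+\alpha_\ell)=\mu_\ell\,\tr(B^\ell)$ are precisely what the paper imports as Fact~\ref{fact:cycle}-(2) from \cite{CEJKK:18}; you flag this as routine, which is fair, but it should be acknowledged as the place where the counting work lives. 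A second minor gap: you normalize by the finite-$n$ moments $\E[(1+\alpha_\ell)^{X_\ell(\bG_{\ER})}]$ rather than the paper's deterministic limit constants $e^{-\lambda_\zeta\delta_\zeta}$, so you implicitly need $\E[(1+\alpha_\ell)^{X_\ell(\bG_{\ER})}]\to e^{\mu_\ell\alpha_\ell}$ for fixed $\ell$; this is a consequence of the factorial-moment convergence behind the Poisson limit (plus a uniform bound to dominate the tail of the binomial expansion), but it is a step you invoke without comment, and the paper side-steps it entirely by baking the deterministic normalization into $T_n$.
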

For symmetric HSBM, it was shown in \cite{GP23} that the weak recovery threshold equals the KS threshold for $q=2$ and $k\in \{3,4\}$. Thus, combining this with our results give the following corollary.
\begin{cor}
Consider symmetric HSBM with $q=2$ communities and $k\in \{3,4\}$ interactions. Then, Theorems~\ref{thm:HSBM:contiguity}, \ref{thm:optimal:power:HSBM}, \ref{thm:optimal:cycle:test:HSBM} hold with $d_{\ast}^{H}(M_0,\pi)\equiv d_{\ks}^{\sf H}(M_0,\pi)$. Thus, in these cases, for any average degree $d$ such that $d\geq \frac{1}{k-1}$ and $d\neq d_{\ks}^{\sf H}(M_0,\pi)$ hold, and any significance level $\alpha\in (0,1)$, there exists an efficient sequence of statistical tests $\varphi_{n,\alpha}(\cdot)$ that is asymptotically most powerful at level $\alpha$.
\end{cor}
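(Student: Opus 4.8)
The plan is to combine the three HSBM results of this section with the known coincidence of the weak recovery and Kesten--Stigum thresholds in this special case. The first thing I would do is invoke the theorem of Gu and Polyanskiy~\cite{GP23}, which shows that for the symmetric $k$-uniform HSBM with $q=2$ communities and $k\in\{3,4\}$ the information-theoretic weak recovery threshold coincides with the Kesten--Stigum threshold, i.e.\ $d_{\ast}^{\sf H}(M_0,\pi)=d_{\ks}^{\sf H}(M_0,\pi)$. The symmetric HSBM satisfies the degree condition \eqref{eq:degree:condition} automatically (by the $S_q$-symmetry of $M_0$) and condition ${\sf (MIN)}$, as recorded in \eqref{eq:symmetric:satisfies:MIN}; hence Theorems~\ref{thm:HSBM:contiguity}, \ref{thm:optimal:power:HSBM} and \ref{thm:optimal:cycle:test:HSBM} all apply, and the hypothesis $d<d_{\ast}^{\sf H}\wedge d_{\ks}^{\sf H}$ appearing in them collapses to $d<d_{\ks}^{\sf H}$. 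This already yields the first assertion of the corollary, that those theorems hold with $d_{\ast}^{\sf H}\equiv d_{\ks}^{\sf H}$.

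For the existence of an efficient asymptotically most powerful test $\varphi_{n,\alpha}$, I would split into two regimes. In the regime $\frac{1}{k-1}\le d<d_{\ks}^{\sf H}=d_{\ast}^{\sf H}$, Theorem~\ref{thm:optimal:cycle:test:HSBM} supplies exactly what is wanted: the statistic $\TT_n(G)$ is a polynomial of degree $K_n=O(\log\log n)$ in the hyperedge indicators, hence computable (with high probability over sparse instances) in near-linear time by exploring the depth-$O(\log\log n)$ neighborhood of each vertex, and the induced test $\phi_{n,\alpha}$ attains the power $\beta_{\ast}(\alpha)$, which by Theorem~\ref{thm:optimal:power:HSBM} is precisely the asymptotic power of the likelihood ratio test at level $\alpha$. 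So one sets $\varphi_{n,\alpha}:=\phi_{n,\alpha}$.

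In the complementary regime $d>d_{\ks}^{\sf H}$, the test $\phi_{n,\alpha}$ no longer suffices, since cycles of length only $O(\log\log n)$ carry too little signal above the Kesten--Stigum threshold; here I would instead invoke an efficient algorithm for detection above $d_{\ks}^{\sf H}$. The efficient weak recovery algorithm for two-community HSBM of Pal and Zhu~\cite{PZ21} (see also Stephan and Zhu~\cite{SZ22}), combined with the routine reduction from weak recovery to detection (e.g.\ sample splitting followed by thresholding a Potts-type agreement statistic on held-out hyperedges), yields a polynomial-time test $\psi_n$ with $\E[\psi_n(\bG_{\ER})]+\E[1-\psi_n(\bG^\star_{\HSBM})]\to 0$; equivalently one may count cycles up to length $\Theta(\log n)$ or use a non-backtracking spectral statistic. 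In particular $\psi_n$ has Type~I error tending to $0\le\alpha$ and power tending to $1$; since a vanishing-error test is, for all large $n$, a level-$\alpha$ test, the Neyman--Pearson lemma forces the level-$\alpha$ likelihood ratio test to have power tending to $1$ as well, so the optimal asymptotic power at level $\alpha$ equals $1$ and $\varphi_{n,\alpha}:=\psi_n$ is efficient and asymptotically most powerful. Together the two regimes cover $\{d\ge \tfrac{1}{k-1}:\, d\neq d_{\ks}^{\sf H}\}$ since $d_{\ks}^{\sf H}=|\la_2|^{-2}>1\ge \tfrac{1}{k-1}$.

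The only content beyond bookkeeping is the external input: one must check that \cite{GP23} establishes $d_{\ast}^{\sf H}=d_{\ks}^{\sf H}$ exactly for $q=2,\ k\in\{3,4\}$, and that the cited algorithms above $d_{\ks}^{\sf H}$ (\cite{PZ21,SZ22}) cover these parameters and run in the claimed time on sparse hypergraphs, with the weak-recovery-to-detection reduction carried out correctly. I expect this to be the main (and really the only) obstacle; the remaining steps --- verifying ${\sf (MIN)}$ and \eqref{eq:degree:condition} for the symmetric HSBM, and the elementary observation that a test with vanishing Type~I and Type~II errors is automatically asymptotically optimal in power --- are routine.
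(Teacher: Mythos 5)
Your proposal is correct and follows the same route as the paper, which states the corollary immediately after Theorem~\ref{thm:optimal:cycle:test:HSBM} with the one-line justification that \cite{GP23} gives $d_{\ast}^{\sf H}=d_{\ks}^{\sf H}$ for $q=2$, $k\in\{3,4\}$, and this is to be combined with Theorems~\ref{thm:HSBM:contiguity}--\ref{thm:optimal:cycle:test:HSBM}. The one place where you elaborate more than strictly necessary is the regime $d>d_{\ks}^{\sf H}$: the sample-splitting/agreement-statistic reduction from weak recovery to detection is valid but indirect; the paper (in the analogous Corollary~\ref{cor:q:3:4} for the graph case) simply cites efficient detection algorithms above the Kesten--Stigum threshold directly (counting $\Theta(\log n)$-length cycles, spectral/non-backtracking statistics, or SDP), and for hypergraphs \cite{PZ21,SZ22} provide the analogous spectral detection guarantees; either way, once one has any polynomial-time test with vanishing Type~I and Type~II errors, your observation that it is automatically asymptotically most powerful at any fixed level $\alpha$ closes the argument.
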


\section{Proofs}
\label{sec:proof}
This section provides the proof of Theorems~\ref{thm:factor:contiguity}, \ref{thm:mutual:info}, and Corollary~\ref{cor:no:consistent} while deferring some of the technical steps to the appendices. Theorems~\ref{thm:equiv}, \ref{thm:HSBM:contiguity}, \ref{thm:optimal:power:HSBM}, and \ref{thm:optimal:cycle:test:HSBM} are derived from our results for planted factor models and their proof is deferred to Appendix~\ref{sec:appendix:HSBM}.
\vspace{2mm}

\noindent \textbf{Notations:}

 The notation $\E_{\pi}$ is used for the expectation with respect to $\bsig\equiv (\bsigma_1,\ldots, \bsigma_k)\sim \pi^{\otimes k}$, while $\E_{p}$ is used for the expectation with respect to $\bpsi \sim p$. Additionally $u=\Unif(V^k)$ denotes the uniform distribution among $k$ tuples of variables, and $\E_{u}$ denotes the expectation with respect to $\bom \sim u$. These notations can also be used jointly. For instance, $\E_{p,\pi}$ denotes the expectation with respect to $\bpsi\sim \pi$ and $\bsig\sim \pi^{\otimes k}$. We reserve the greek letter $\xi$ to denote 
 \[
 \xi:=\E_{p,\pi}[\bpsi(\bsig)]\,.
 \]
 We denote by $\PPP(\Omega)$ the set of probability measures on a finite set $\Omega$.

\subsection{Proof of Theorem~\ref{thm:factor:contiguity} and Corollary~\ref{cor:no:consistent}}\label{subsec:contiguity}
In this section, we prove Theorem~\ref{thm:factor:contiguity} and its consequences in hypothesis testing (see Corollaries~\ref{cor:power:factor}, \ref{cor:cycle:test:factor}) and point estimation in Corollary~\ref{cor:no:consistent}. The proof of Theorem~\ref{thm:factor:contiguity} and Corollary~\ref{cor:no:consistent} is at the end of Section~\ref{subsubsec:SSG} and \ref{subsubsec:cor} respectively. To begin with, we consider the following property, which guarantees that the task of weak recovery is non-trivial.

\begin{itemize}
    \item ${\sf (SYM):}$ For any $\psi \in \Psi$, $1\leq s \leq k$, and $\tau\in [q]$, $\E_{\pi}[\psi(\bsig)\given \bsigma_s=\tau]=\E_{p,\pi}[\bpsi(\bsig)]$ holds.
\end{itemize}
Indeed, we prove that if ${\sf (SYM)}$ is violated, then weak recovery is possible for any $d>0$ based on estimating communities based on the degree of each variables. On the contrary, if ${\sf (SYM)}$ is satisfied, then we prove that the task of weak recovery is non-trivial. Specifically, the following result is proven in Section~\ref{sec:nontriviality}.
\begin{prop}\label{prop:nontrivial:threshold}
If ${\sf (SYM)}$ does not hold for $p,\pi$, then weak recovery is possible for any $d>0$. That is, the violation of ${\sf (SYM)}$ implies $d_{\ast}(p,\pi)=0$. On the other hand, if ${\sf (SYM)}$ is satisfied for $p,\pi$ and at least one of the weight function $\psi\in \Psi$ are not constant functions, then we have $\frac{1}{k-1}\leq d_{\ast}(p,\pi)<\infty$.
\end{prop}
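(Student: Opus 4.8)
\textbf{The "only if" direction (violation of ${\sf (SYM)}$).} Suppose ${\sf (SYM)}$ fails, so there exist $\psi\in\Psi$, an index $s$, and a spin $\tau\in[q]$ with $\E_{\pi}[\psi(\bsig)\mid\bsigma_s=\tau]\neq\xi=\E_{p,\pi}[\bpsi(\bsig)]$. The plan is to exhibit a degree-based estimator. For a variable $v$ and its planted spin $\bsigma^\star_v=i$, the expected number of clauses $a$ with $\delta_s a=v$ (the $s$-th slot) is, by \eqref{eq:clause:law:planted}, proportional to $\E_{p,\pi}[\bpsi(\bsig)\mid\bsigma_s=i]=:\mu_i$. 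Under ${\sf (SYM)}$ all $\mu_i$ equal $\xi$; its violation means $\mu_i\neq\mu_j$ for some $i\neq j$ (after averaging over $\psi\sim p$, using exchangeability \eqref{eq:psi:symm} to symmetrize over which slot we test; one must check the averaging does not wash out the inequality, which follows because ${\sf (SYM)}$ as stated is a per-$\psi$, per-$s$ condition and its failure can be detected after the $p$-average for at least one slot configuration, or else one refines by also recording a statistic sensitive to $\psi_a$). Then the (per-slot, per-weight) degree of each variable is asymptotically Poisson with a spin-dependent mean, these Poisson variables are asymptotically independent across $v$, and a simple threshold/plug-in classifier applied vertex-by-vertex achieves overlap $\geq 1/q+\eps$ for some $\eps>0$ and every $d>0$. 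This uses only local (neighborhood-of-radius-one) information, so monotonicity in $d$ is irrelevant—it works at all $d>0$, giving $d_\ast(p,\pi)=0$.

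\textbf{The "if" direction, lower bound $d_\ast\geq \tfrac1{k-1}$.} Assume ${\sf (SYM)}$ holds. The plan is to invoke Theorem~\ref{thm:equiv}: weak recovery is impossible at $d$ iff the two-point posterior marginals are asymptotically trivial, iff two posterior samples are near-orthogonal. I would instead establish impossibility directly for $d<\tfrac1{k-1}$ via a contiguity/first-moment argument on the local structure. Under ${\sf (SYM)}$, the factor graph $\bG^\star$ restricted to a radius-$r$ neighborhood of a typical variable is, in the $n\to\infty$ limit, distributed as a multi-type Galton–Watson-like object; because each clause touches $k$ variables and a variable's spin influences a neighbor's spin through the single-step kernel, the relevant branching number for \emph{information propagation} is $(k-1)d$ (each clause of size $k$ gives $k-1$ "children" variables). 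When $(k-1)d<1$ this branching process is subcritical, so the tree is a.s. finite and the root spin is asymptotically independent of the leaf spins; quantitatively, the mutual information between $\bsigma^\star_\rho$ and the radius-$r$ neighborhood (minus $\rho$ itself) decays, hence two-point correlations vanish and part~\ref{item:b} of Theorem~\ref{thm:equiv} holds. Thus weak recovery is impossible for $d<\tfrac1{k-1}$, i.e. $d_\ast(p,\pi)\geq\tfrac1{k-1}$.

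\textbf{The upper bound $d_\ast<\infty$ when some $\psi$ is non-constant.} Here the plan is a second-moment / non-contiguity argument showing detection—and in fact weak recovery—become possible for $d$ large. Since some $\psi\in\Psi$ is non-constant, the planted measure genuinely biases clause placement toward spin-correlated tuples, so for large $d$ the empirical distribution of clause-types concentrates away from its null value; equivalently, $\frac1n\DKL(\bG^\star\,\|\,\bG)$ can be made large, breaking contiguity. To upgrade non-contiguity to weak recovery one can use that for $d$ large, a spectral or count-based statistic on pairs of variables (e.g. co-occurrence in clauses weighted by $\log\psi$) has a spin-dependent bias of order $d$ while fluctuations are $O(\sqrt d)$, so a rounding of the leading eigenvector of the associated matrix correlates with $\bsig^\star$; alternatively cite \cite{AbbeSandon:18}-type arguments adapted to factor models. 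The key point is only finiteness, so crude bounds suffice.

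\textbf{Main obstacle.} I expect the delicate step to be the "only if" direction: turning the \emph{qualitative} failure of ${\sf (SYM)}$ into a \emph{quantitative} statistical advantage that survives the average over $\psi\sim p$ and the symmetrization forced by \eqref{eq:psi:symm}. One must choose the right local statistic—degree in a fixed slot is not well-defined after symmetrization, so the correct object is something like the vector of counts $\big(\#\{a: \delta a\ni v,\ \psi_a=\psi,\ v \text{ in slot } s\}\big)_{\psi,s}$, and one must argue its law depends non-trivially on $\bsigma^\star_v$ precisely when ${\sf (SYM)}$ fails. Everything else (Poissonization of local counts, asymptotic independence across vertices, the subcriticality computation for the lower bound) is standard local-weak-convergence bookkeeping.
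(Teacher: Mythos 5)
The broad plan is close to the paper's, but the details diverge in ways that matter, especially for the two bounds on $d_\ast$.

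For the ${\sf (SYM)}$-failure direction you and the paper both use a degree-based local classifier. The paper is concrete: it picks a single $\psi,s,\tau,\tau'$ with $\E_{\pi}[\psi(\bsig)\mid\bsigma_s=\tau]>\xi>\E_{\pi}[\psi(\bsig)\mid\bsigma_s=\tau']$, lets $d^\psi_v$ be the number of clauses of type $\psi$ touching $v$, and thresholds this Poisson count. Your proposed finer statistic (the vector of per-$(\psi,s)$ slot counts) is a reasonable safety net against slot/weight averaging, but in fact the paper's crude count already works because of the symmetrization \eqref{eq:psi:symm}, so you have slightly over-estimated the difficulty here.

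For the lower bound $d_\ast\geq \tfrac1{k-1}$, you and the paper both reach the same place via subcriticality, but the mechanism you describe is subtly off. You argue that the branching process for ``information propagation'' has rate $(k-1)d$, so information from the leaves decays. The paper's argument is different and sharper: under ${\sf (SYM)}$ the conditional mean number of attached clauses of every type is $dp(\psi)$, \emph{independent} of the root's spin, so the law of the observed radius-$r$ factor-graph neighbourhood is \emph{exactly} spin-independent at every $r$—this has nothing to do with decay. What subcriticality buys is that for $d<1/(k-1)$ almost every component is a finite tree, so the local structure is the \emph{entire} observable for each vertex, and the posterior root spin is exactly $\pi$ on each tree. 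Your ``mutual information decay'' framing would also seem to kick in for $d$ between $1/(k-1)$ and the KS threshold, which is clearly too strong; the argument must use that the graph is a forest, not that a BP operator contracts.

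The genuine gap is the upper bound $d_\ast<\infty$. You propose showing non-contiguity via a first-moment/KL-divergence estimate (which gives detection) and then ``upgrading'' to weak recovery via a spectral eigenvector argument or citing \cite{AbbeSandon:18}-type results. Detection does not imply weak recovery in general, and spectral arguments for $k\geq 3$ planted factor models with general weight functions are not off-the-shelf; this step is essentially asserted, not proved. The paper's argument is elementary and entirely different: it takes the \emph{maximum-likelihood}-style estimator $\hat\bsig=\argmax\Psi(\hat\bsig)$, where $\Psi(\hat\bsig)=\sum_a\psi_a(\hat\bsig_{\delta a})$, restricted to $\hat\bsig$ with empirical distribution close to $\pi$. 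It observes that $\E[\Psi(\utau)]\approx\frac{dn}{Z}\FF(W(\bsig^\star,\utau))$ where $W$ is the joint empirical matrix, that $\FF(\diag(\pi))>\FF(\pi\pi^\sT)$ precisely because some $\psi$ is non-constant (this is a variance lower bound), and then uses Azuma concentration with a $q^n$ union bound to show that for $d$ large the maximizer cannot have $W$ close to $\pi\pi^\sT$; a simple permutation-switching computation shows that any $W$ in $\RR_\pi$ bounded away from $\pi\pi^\sT$ has overlap strictly above $1/q$. This is self-contained and avoids spectral machinery entirely. You should replace the detection-plus-spectral-upgrade plan with a union-bound/concentration argument of this type; as written your proposal does not close this step.
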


\noindent Note that Theorem \ref{thm:factor:contiguity} and Corollary~\ref{cor:no:consistent} is trivial if $d_{\ast}(p,\pi)=0$. Therefore, by Proposition \ref{prop:nontrivial:threshold}, we may safely assume ${\sf (SYM)}$ throughout the proof.

\subsubsection{Generalized Kesten-Stigum threshold} 
\label{subsec:GKS}
This section defines the KS threshold for the planted factor model, which is a generalization of the uniform prior case $\pi=\Unif([q])$ considered in~\cite{CEJKK:18} and sparse (non-symmetric) block model case considered in~\cite{AbbeSandon:18}. For $\psi\in \Psi$, let the matrix $\Phi_{\psi}\equiv \Phi_{\psi}(i,j)_{i,j\in [q]}\in \R^{q\times q}$ be defined by
\beq\label{def:Phi}
\Phi_{\psi}(i,j):=\xi^{-1}\cdot \E_{\pi}\big[\psi(\bsig)\bgiven \bsigma_1=i, \bsigma_2=j\big]\cdot\pi_{j}\,.
\eeq

With a slight abuse of notation, we often identify a matrix $A\equiv \big(A(\tau_1,\tau_2)\big)_{\tau_1,\tau_2\in [q]}$ with the linear map $x\in \R^{q}\to Ax\in \R^{q}$. With this convention, let the linear operator $\Xi\equiv \Xi_{p,\pi}$ on $\R^{q}\otimes \R^{q}$ defined by
\beqn
\Xi\equiv \Xi_p:=\E_{p}\big[\Phi_{\bpsi}\otimes \Phi_{\bpsi}\big]\,,
\eeqn
where $\otimes$ denotes the tensor product. In addition, define the linear operator $\Xi_{\ast}\equiv \Xi_{\ast,p,\pi}$ on $\R^{q}\otimes \R^{q}$ by
\beqn
\Xi_{\ast}\equiv \Xi_{\ast,p,\pi}:= \E_{p}\bigg[\Big(\Phi_{\bpsi}-\bone  \pi^{\sT}\Big)\otimes \Big(\Phi_{\bpsi}-\bone  \pi^{\sT}\Big)\bigg]\,,
\eeqn 
where $\bone \in \R^{q}$ denote the all-$1$-vector. Here, we identified the matrix $\Phi_{\bpsi}-1\pi^{\sT}\in \R^{q}$ with the corresponding linear map on $\R^{q}$. Let $\langle \cdot\,,\,\cdot \rangle_{\pi}$ denote the unique inner product on $\R^{q}\otimes \R^{q}$ that satisfies
\beq\label{def:inner:product}
\langle x_1\otimes x_2 \,,\,y_1\otimes y_2\rangle_{\pi}:= x_1^{\sT}\diag(\pi) y_1\cdot x_2^{\sT}\diag(\pi) y_2\,,
\eeq
where $x_1,x_2,y_1,y_2\in \R^{q}$ and $\diag(\pi)\equiv \diag\big((\pi)_{i\leq q}\big)$. Further, let $\SS \subset \R^{q}\otimes \R^{q}$ be the subspace defined by
\beqn
\SS:=\big\{x\in \R^{q}\otimes \R^{q}: \langle x, w\otimes \bone\rangle_{\pi}=\langle x, \bone \otimes w \rangle_{\pi}=0\,,~\textnormal{for all $w\in \R^{q}$}\big\}\,.
\eeqn
We denote the projection operator onto the subspace $\SS$ by $\proj_{\SS}$. In order to define the KS threshold, we need the following lemma whose proof is deferred to Section~\ref{sec:proof:contiguity}.
\begin{lemma}\label{lem:KS}
Assuming ${\sf (SYM)}$, the following holds for the linear operator $\Xi$ on $\R^{q}\otimes \R^{q}$.
\begin{enumerate}[label=\textup{(\arabic*)}]
    \item $\Xi$ is self-adjoint on the inner product space $\big(\R^{q}\otimes \R^{q},\langle \cdot\,,\,\cdot \rangle_{\pi}\big)$.
    \item $\Xi(\SS)\subset \SS$ and $\Xi(\SS^{\perp})\subset \SS^{\perp}$ hold, where $\SS^{\perp}$ is the orthogonal subspace of $\SS$ with respect to the inner product $\langle \cdot\,,\,\cdot \rangle_{\pi}$.
    \item The composition of $\Xi$ and $\proj_{\SS}$ is given by $\Xi\circ \proj_{\SS}=\Xi_{\ast}$.
\end{enumerate}
\end{lemma}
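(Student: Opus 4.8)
The plan is to extract from the two standing hypotheses the only two structural facts that are needed: (i) condition ${\sf (SYM)}$ says precisely that $\bone\in\R^q$ is a right eigenvector of every $\Phi_\psi$, and (ii) the exchangeability built into \eqref{eq:psi:symm} identifies the $\langle\cdot,\cdot\rangle_\pi$-adjoint of each $\Phi_\psi$ with another matrix of the same family. Concretely, for the first point: $(\Phi_\psi\bone)(i)=\xi^{-1}\sum_j\pi_j\,\E_\pi[\psi(\bsig)\mid\bsigma_1=i,\bsigma_2=j]=\xi^{-1}\E_\pi[\psi(\bsig)\mid\bsigma_1=i]=\xi^{-1}\xi=1$ by ${\sf (SYM)}$ with $s=1$; dually, $\Phi_\psi^{\sT}\pi=\pi$ by ${\sf (SYM)}$ with $s=2$. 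For the second point, a direct computation from $\langle x,y\rangle_\pi=x^{\sT}\diag(\pi)y$ gives that the adjoint of $\Phi_\psi$ is $\diag(\pi)^{-1}\Phi_\psi^{\sT}\diag(\pi)$, whose $(i,j)$ entry works out to $\xi^{-1}\pi_j\E_\pi[\psi(\bsig)\mid\bsigma_1=j,\bsigma_2=i]=\Phi_{\psi^\theta}(i,j)$, where $\theta=(1\,2)\in S_k$ transposes the first two coordinates; by \eqref{eq:psi:symm}, $\psi^\theta\in\Psi$ and $p(\psi^\theta)=p(\psi)$.

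Granting these, the three parts follow quickly. For (1): the $\langle\cdot,\cdot\rangle_\pi$ on $\R^q\otimes\R^q$ is the tensor square of the one on $\R^q$, so the adjoint of $\Phi_\psi\otimes\Phi_\psi$ is $\Phi_{\psi^\theta}\otimes\Phi_{\psi^\theta}$, and hence $\Xi^*=\E_p[\Phi_{\bpsi^\theta}\otimes\Phi_{\bpsi^\theta}]$; since $p(\psi^\theta)=p(\psi)$ means $\bpsi^\theta$ has the same law as $\bpsi$, this equals $\E_p[\Phi_{\bpsi}\otimes\Phi_{\bpsi}]=\Xi$. For (2): because $\Phi_\psi\bone=\bone$, each $\Phi_\psi\otimes\Phi_\psi$ maps $\R^q\otimes\bone$ into itself and $\bone\otimes\R^q$ into itself, so $\Xi$ preserves $\SS^{\perp}=\operatorname{span}\{w\otimes\bone,\ \bone\otimes w:w\in\R^q\}$; then self-adjointness from (1) forces $\Xi(\SS)\subset\SS$ as well (for $x\in\SS$, $y\in\SS^\perp$ one has $\langle\Xi x,y\rangle_\pi=\langle x,\Xi y\rangle_\pi=0$). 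For (3): I claim $\proj_{\SS}=(I-\bone\pi^{\sT})\otimes(I-\bone\pi^{\sT})$. Indeed $P:=\bone\pi^{\sT}$ satisfies $P^2=P$ and $\diag(\pi)P=\pi\pi^{\sT}$ is symmetric, so $P$ is the $\langle\cdot,\cdot\rangle_\pi$-orthogonal projection onto $\operatorname{span}(\bone)$; writing $\R^q=\operatorname{span}(\bone)\oplus W$ with $W=\{x:\pi^{\sT}x=0\}$ orthogonally, the decomposition of $\R^q\otimes\R^q$ into the four tensor blocks is orthogonal, the three blocks touching $\operatorname{span}(\bone)$ span exactly $\SS^{\perp}$, and so $\SS=W\otimes W$ with orthogonal projection $(I-P)\otimes(I-P)$. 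Finally, using $(A\otimes A)(B\otimes B)=(AB)\otimes(AB)$ and $\Phi_\psi\bone=\bone$,
\[
\Xi\circ\proj_{\SS}=\E_p\big[(\Phi_{\bpsi}(I-\bone\pi^{\sT}))\otimes(\Phi_{\bpsi}(I-\bone\pi^{\sT}))\big]=\E_p\big[(\Phi_{\bpsi}-\bone\pi^{\sT})\otimes(\Phi_{\bpsi}-\bone\pi^{\sT})\big]=\Xi_{\ast}.
\]

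There is no deep obstacle here; the lemma is a structural bookkeeping statement, and the only thing requiring genuine care is keeping the inner-product identifications consistent. The one mildly subtle point worth flagging in the write-up is that an individual $\Phi_\psi$ is \emph{not} self-adjoint for $\langle\cdot,\cdot\rangle_\pi$ (its adjoint is $\Phi_{\psi^\theta}\neq\Phi_\psi$ in general); self-adjointness is recovered only after averaging the tensor square over $p$, which is exactly where the exchangeability \eqref{eq:psi:symm} enters. Likewise, the explicit form of $\proj_{\SS}$ hinges on the (easily checked) fact that $\bone\pi^{\sT}$ happens to be $\langle\cdot,\cdot\rangle_\pi$-orthogonal, which is the single computation I would present in full.
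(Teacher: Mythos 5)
Your proof is correct, and it takes a genuinely leaner route than the paper's for parts (2) and (3). The paper introduces the averaged matrix $\Phi=\E_p[\Phi_{\bpsi}]$, proves it is self-adjoint on $(\R^q,\langle\cdot,\cdot\rangle_{\pi,1})$ with $\Phi\bone=\bone$, diagonalizes it to get an orthonormal eigenbasis $\{w_1=\bone,w_2,\ldots,w_q\}$, uses these to build an explicit eigenbasis of $\Xi\rvert_{\SS^{\perp}}$, and only then recovers $\Xi\circ\proj_{\SS}$ by subtracting $\Xi\circ\proj_{\SS^\perp}$. You bypass the diagonalization entirely: from $\Phi_\psi\bone=\bone$ you get directly that each $\Phi_\psi\otimes\Phi_\psi$ preserves the span of $\{w\otimes\bone,\bone\otimes w\}$, so $\Xi(\SS^\perp)\subset\SS^\perp$, and then self-adjointness (part (1)) gives $\Xi(\SS)\subset\SS$ for free. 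For (3), instead of writing $\Phi=\bone\pi^{\sT}+\sum_{i\geq 2}\lambda_i\proj_{w_i}$ you observe that $\proj_{\SS}=(I-\bone\pi^{\sT})\otimes(I-\bone\pi^{\sT})$ in closed form (the tensor-block orthogonal decomposition), after which $\Xi\circ\proj_\SS=\Xi_\ast$ is a one-line computation using $\Phi_\psi(I-\bone\pi^\sT)=\Phi_\psi-\bone\pi^\sT$. What your route buys is economy — you never need the eigenvalues $\lambda_i$ of $\Phi$, which the paper's proof computes but then does not use for this lemma. What the paper's route buys is the explicit eigendecomposition of $\Xi\rvert_{\SS^\perp}$ recorded in its Lemma~\ref{lem:SS:perp}, which is harmless extra information here but not strictly needed. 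Your identification of the adjoint of $\Phi_\psi$ (w.r.t.\ $\langle\cdot,\cdot\rangle_{\pi,1}$) as $\Phi_{\psi^\theta}$ with $\theta=(1\,2)$, and the observation that $p$-invariance under $\theta$ is exactly what restores self-adjointness after the $\otimes^2$ and averaging, is the same mechanism the paper packages as the distributional identity $\diag(\pi)\Phi_{\bpsi}\stackrel{d}{=}\Phi_{\bpsi}^{\sT}\diag(\pi)$; you simply make the transposition $\theta$ explicit. One cosmetic slip: near the end you describe $\bone\pi^{\sT}$ as $\langle\cdot,\cdot\rangle_\pi$-orthogonal, but since it acts on $\R^q$ the relevant inner product is $\langle\cdot,\cdot\rangle_{\pi,1}$; the claim itself (idempotent plus $\diag(\pi)\,\bone\pi^{\sT}=\pi\pi^{\sT}$ symmetric) is exactly right.
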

Having Lemma~\ref{lem:KS} in hand, we denote the eigenspace of $\Xi$ restricted to the subspace $\SS$ by
\beq\label{def:eig:space}
\Eig_{\SS}(\Xi):=\big\{\lambda\in \R: \Xi x = \lambda x~\textnormal{for some $x\in \SS\setminus \{0\}$}\big\}\,.
\eeq
By Lemma~\ref{lem:KS} and the spectral theorem, $\Eig_{\SS}(\Xi)$ is non-empty whose non-zero elements equal the non-zero elements of $\Eig(\Xi_{\ast})$, the set of eigenvalues of $\Xi_{\ast}$. The KS threshold is defined as follows.
\begin{defn}\label{def:KS:factor}
\textit{(Kesten-Stigum threshold)} Let $\lambda_{\ks}\equiv \lambda_{\ks}(p,\pi):=\max_{\lambda \in \Eig_{\SS}(\Xi)}|\lambda|=\max_{\lambda\in \Eig(\Xi_{\ast})}|\la|$. 
The Kesten-Stigum (KS) threshold is defined by $d_{\ks}\equiv \big((k-1) \lambda_{\ks}\big)^{-1}$.
\end{defn}

Notably, the KS threshold for HSBM in Eq.~\eqref{eq:def:KS:HSBM} is a special case of Definition~\ref{def:KS:factor}, where the prior $p$ puts all of its mass on a single weight function (or equivalently a tensor) on $M$.
\subsubsection{Second moment of the truncated likelihood ratio} \label{subsec:truncated}
For a factor graph $G$ with $n$ variables and $m$ clauses, the likelihood ratio (LR) for the hypothesis test~\eqref{eq:hypothesis:test:facotr} evaluated at $G$ is given by
\beq\label{def:LR}
L(G)\equiv L(G;p,\pi):=\frac{\P\big(\bG^\star(n,m,\bsig^\star)=G\big)}{\P\big(\bG(n,m)=G\big)}\,,
\eeq
where $\bG(n,m)$ is the null model in Definition~\ref{def:null} and $\bG^\star(n,m,\bsig^\star)$ is the planted factor model in Definition~\ref{def:planted}. To show that $\bG\equiv \bG(n,\bm)$ and $\bG^\star\equiv \bG^\star(n,\bm)$ are mutually contiguous for $d<d_{\ast}\wedge d_{\ks}$, we apply the second moment method to a suitable truncation of the likelihood ratio supplemented with the small subgraph conditioning method~\cite{Janson95random, Wormald99models}.

A standard observation is that because the randomness of the number of clauses $\bm\sim \Poi (dn/k)$ causes the second moment $\E L(\bG)^2$ to diverge, it is necessary to first condition on $\bm=m_n$ for typical values of $(m_n)_{n\geq 1}$. In addition, note that $\E L(\bG(n,m))=1$ holds for $n,m\geq 1$ by a simple change of measure, thus the second moment method requires to prove that $\E L(\bG(n,m_n))^2=O(1)$ holds. 

However, even after conditioning on $\bm=m_n$, $\E L(\bG(n,m_n)) ^2=O(1)$ fails to hold in general for all range of $d<d_{\ast}\wedge d_{\ks}$. Indeed, by the results of~\cite{BMNN:16}, even restricted to symmetric block model with $3$ or more communities, there is a non-trivial regime below $d_{\ast}$ such that the second moment is exponentially large, i.e. $\E L(\bG(n,m_n))^2=e^{\Omega(n)}$. 

To overcome such difficulty, we truncate the likelihood ratio $L(\bG(n,m_n))$ to ensure that the two samples drawn from the posterior is near-orthogonal. Such truncation is motivated by Theorem~\ref{thm:equiv}, which is detailed as follows. For a sequence of truncation parameters $(\eps_n)_{n\geq 1}$, let the truncated LR be defined by
 \beq\label{def:truncated:LR}
L^{\ast}(G)\equiv L^{\ast}(G;\eps_n):=\frac{\P\big(\bG^\star(n,m,\bsig^\star)=G\big)}{\P\big(\bG(n,m)=G\big)}\one\Big\{\big\langle \big\|R_{\sig^1,\sig^2}-\pi\pi^{\sT}\big\|_1\big\rangle_{G}\leq \eps_n\Big\}\,,
 \eeq
 where we recall that $\langle \cdot \rangle_G$ denotes the expectation taken w.r.t. the sampled $\sig^1,\sig^2$ drawn from the posterior (see Eq.~\eqref{eq:def:expectation:posterior}). A crucial observation is that by a change of measure, the truncation affects the first moment by
 \beq\label{eq:diff:L:L:star}
 \begin{split}
 \E\big| L\big(\bG(n,m_n)\big)-L^{\ast}\big(\bG(n,m_n))\big|
&=\E L\big(\bG(n,m_n)\big)\one\big\{\big\langle \big\|R_{\sig^1,\sig^2}-\pi\pi^{\sT}\big\|_1\big\rangle_{\bG(n,m_n)}>\eps_n\big\}\\
&=\P\Big(\big\langle \big\|R_{\sig^1,\sig^2}-\pi\pi^{\sT}\big\|_1\big\rangle_{\bG^\star(n,m_n)}>\eps_n\Big)\,.
 \end{split}
 \eeq
 Note that if $d<d_{\ast}(p,\pi)$ and $m_n$ is sufficiently close to $dn/k$, Theorem~\ref{thm:equiv} suggests that the right hand side tends to $0$ by choosing $\eps_n=o_n(1)$ appropriately. Indeed, the following lemma guarantees that the truncation does not affect the first moment below the weak recovery threshold for some $\eps_n=o_n(1)$. Its proof is deferred to Section~\ref{sec:proof:contiguity}.
\begin{lemma}\label{lem:m:n:overlap:trivial}
Let $d<d_{\ast}$ and $|m_n-dn/k|\leq n^{2/3}$ for $n\geq 1$. Then, there exists a sequence $(\eps_n)_{n\geq 1}$ such that as $n\to\infty$, $\eps_n\to 0$ and the following convergence is satisfied.
\beqn
\E \left| L\big(\bG(n,m_n)\big)-L^{\ast}\big(\bG(n,m_n))\right|\to 0\,.
\eeqn
\end{lemma}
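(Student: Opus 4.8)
\emph{Proof plan.} By the identity~\eqref{eq:diff:L:L:star} it suffices to produce a sequence $\eps_n\to 0$ with $\P\big(\big\langle \|R_{\sig^1,\sig^2}-\pi\pi^{\sT}\|_1\big\rangle_{\bG^\star(n,m_n)}>\eps_n\big)\to 0$, and by Markov's inequality this follows as soon as
\[
h(m_n):=\E\big\langle \big\|R_{\sig^1,\sig^2}-\pi\pi^{\sT}\big\|_1\big\rangle_{\bG^\star(n,m_n)}\longrightarrow 0\qquad (n\to\infty),
\]
upon taking e.g.\ $\eps_n:=\max\{h(m_n)^{1/2},n^{-1/4}\}$. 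Theorem~\ref{thm:equiv} (equivalence of (a) and (d)) already gives the statement ``$h(\cdot)\to 0$'' for the \emph{Poissonized} planted model, so the real task is to de-Poissonize: to transfer the conclusion from a $\Poi(d'n/k)$ number of clauses to a deterministic count $m_n$ with $|m_n-dn/k|\le n^{2/3}$. The difficulty is that $n^{2/3}\gg\sqrt n$, so the window is far wider than the fluctuation scale of $\bm$ and one cannot just condition on $\{\bm=m_n\}$. The plan is to (i) replace $h$ by a genuinely \emph{monotone} surrogate, and (ii) work at a strictly larger degree $d'\in(d,d_{\ast})$ so that the whole window lies in the bulk of $\Poi(d'n/k)$.

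For the surrogate, fix distinct variables $u\neq v$ and set $T_{ij}(m):=\Var\big(\P(\bsigma^\star_u=i,\bsigma^\star_v=j\given\bG^\star(n,m))\big)$, which by exchangeability of the variables does not depend on the chosen pair and satisfies $\E[\P(\bsigma^\star_u=i,\bsigma^\star_v=j\given\bG^\star(n,m))]=\pi_i\pi_j$; put $\SSS(m):=\sum_{i,j\in[q]}T_{ij}(m)$. The key point is monotonicity $\SSS(m)\le\SSS(m+1)$: couple $\bG^\star(n,m)$ to $\bG^\star(n,m+1)$ by deleting one uniformly chosen clause, which is legitimate because the $m+1$ clauses are i.i.d.\ given $\bsig^\star$ and hence the deletion index is independent of $\bsig^\star$ given $\bG^\star(n,m+1)$; then the posterior two-point marginal given $\bG^\star(n,m)$ is the conditional expectation of the one given $\bG^\star(n,m+1)$, and conditional Jensen yields $T_{ij}(m)\le T_{ij}(m+1)$. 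Second, $h(m)\lesssim \SSS(m)^{1/4}+n^{-1/2}$: since $\|\cdot\|_1\le q\|\cdot\|_2$ and by Jensen one has $h(m)^2\lesssim \E\big\langle\|R_{\sig^1,\sig^2}-\pi\pi^{\sT}\|_2^2\big\rangle$, and expanding $\langle R_{\sig^1,\sig^2}(i,j)\rangle$ and $\langle R_{\sig^1,\sig^2}(i,j)^2\rangle$ via independence of the two replicas rewrites the latter, up to an $O(1/n)$ error from diagonal ($u=v$) terms, in terms of the one- and two-point posterior marginals of $\bsig^\star$; the tower property and Cauchy--Schwarz then bound the deviation of each of these marginals from its trivial value by $O(\SSS(m)^{1/2})$.

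Now de-Poissonize. Since $d<d_{\ast}$, pick $d'\in(d,d_{\ast})$; as weak recovery is monotone in the degree and $d'<d_{\ast}$, weak recovery is impossible at $d'$, so Theorem~\ref{thm:equiv} (equivalence of (a) and \ref{item:b}) applied at $d'$ gives $\P(\bsigma^\star_u=i,\bsigma^\star_v=j\given\bG^\star(n,\bm'))\pto\pi_i\pi_j$ for $\bm'\sim\Poi(d'n/k)$; this quantity being bounded, conditioning on $\bm'$ yields $\E[\SSS(\bm')]=\sum_{i,j}\E_{\bm'}[T_{ij}(\bm')]\to 0$. Set $m^{\dagger}:=\lceil dn/k+n^{2/3}\rceil$, so that $m_n\le m^{\dagger}$ for all $n$. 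Because $d'>d$ we have $d'n/k-m^{\dagger}\ge \tfrac12(d'-d)n/k$ for large $n$, so a Poisson lower-tail bound gives $\P(\bm'\ge m^{\dagger})\to 1$; combined with monotonicity of $\SSS$ this gives $\SSS(m^{\dagger})\le \E[\SSS(\bm')]/\P(\bm'\ge m^{\dagger})\to 0$, hence $\SSS(m_n)\le\SSS(m^{\dagger})\to 0$, and finally $h(m_n)\lesssim\SSS(m_n)^{1/4}+n^{-1/2}\to 0$, which proves the lemma. I expect the main obstacle to be the second step: the overlap functional $\langle\|R_{\sig^1,\sig^2}-\pi\pi^{\sT}\|_1\rangle_G$ is \emph{bilinear} in the posterior and so is not itself visibly monotone in the number of clauses, so one must isolate the two-point posterior correlation $\SSS(m)$ as the truly monotone quantity and then carefully sandwich $h$ between it and $o(1)$ error terms; by comparison the coupling/Jensen monotonicity argument and the use of a strictly larger $d'$ to embed the window in the Poisson bulk are routine.
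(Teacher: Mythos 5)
Your proposal is correct, but it takes a genuinely different and somewhat heavier route than the paper. The paper proves directly that \emph{weak recovery is impossible for the fixed-$m$ sequence} $(\bG^\star(n,m_n))_{n\geq 1}$ in the sense of Definition~\ref{def:weak:recovery:general}: picking $\eta>0$ with $d+\eta<d_\ast$ and $\bm\sim\Poi((d+\eta)n/k)$, one has $\bm>m_n$ w.h.p.\ by a Chernoff bound, and conditional on $\bm\geq m_n$ the subgraph obtained by deleting $\bm-m_n$ clauses from $\bG^\star(n,\bm)$ is exactly $\bG^\star(n,m_n)$, so any estimator on $\bG^\star(n,m_n)$ lifts to an estimator on $\bG^\star(n,\bm)$ whose overlap must tend to $1/q$. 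Then the paper invokes Lemma~\ref{lem:EXG:CONV} (which gives {\sf (EXG)} for the fixed-$m$ model) together with the \emph{general} equivalence Proposition~\ref{prop:equiv}, applied to the sequence $\bG^\star(n,m_n)$ itself, to conclude $\delta_n:=\E\langle\|R_{\sig^1,\sig^2}-\pi\pi^{\sT}\|_1\rangle_{\bG^\star(n,m_n)}\to 0$, after which $\eps_n=\delta_n^{1/2}$ and Markov finish the job. You instead de-Poissonize at the level of a hand-built monotone surrogate $\SSS(m)$: your Jensen-coupling monotonicity argument, your sandwiching inequality $\SSS(m^\dagger)\le\E[\SSS(\bm')]/\P(\bm'\ge m^\dagger)$, and your bound $h(m)\lesssim\SSS(m)^{1/4}+n^{-1/2}$ are all correct (and the last one is in fact a rederivation of the computation inside Lemma~\ref{lem:overlap:posterior}). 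The trade-off is that the paper's argument is shorter because the hard monotonicity work is already encapsulated in the statement of Proposition~\ref{prop:equiv} (applicable to any exchangeable sequence of observations, Poissonized or not), whereas your argument is more self-contained but must establish monotonicity of $\SSS$ and the relation $h\lesssim\SSS^{1/4}$ from scratch. Both use the same essential ``embed the window in the bulk of a Poisson at a strictly larger degree $d'<d_\ast$'' idea. One small stylistic point: the impossibility at $d'<d_\ast$ follows directly from the definition of $d_\ast$ as an infimum rather than from ``monotonicity in the degree,'' though the latter also works.
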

On the other hand, the truncation in $L^\ast(G)$ reduces the second moment tremendously. In particular, the following second moment estimate is at the heart of the proof of Theorem~\ref{thm:factor:contiguity}. Its proof is the most technical piece of the paper and it is deferred to Section~\ref{sec:proof:contiguity}.
\begin{prop}\label{prop:sec:moment:LR}
    Let $d<d_{\ks}$ and assume the condition {\sf (SYM)}. Then, for any sequences $(m_n)_{n\geq 1}$ and $(\eps_n)_{n\geq 1}$ such that $|m_n-dn/k|\leq n^{2/3}$ and $\eps_n\to 0$ as $n\to\infty$, we have
    \beq
    \E L^{\ast}\big(\bG(n,m_n)\big)^2\leq \big(1+o_n(1)\big) \prod_{\lambda\in \Eig_{\SS}(\Xi)}\frac{1}{\sqrt{1-(k-1)d\lambda}}\,.
    \eeq
\end{prop}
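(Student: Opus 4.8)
The plan is to compute $\E L^\ast\big(\bG(n,m_n)\big)^2$ by a change of measure to a \emph{doubly planted} model, and then reduce the sum over pairs of configurations to a local computation governed by the operator $\Xi$. First I would record the standard identity: writing $L^\ast(G) = L(G)\one\{T(G)\le\eps_n\}$ with $T(G):=\big\langle\|R_{\sig^1,\sig^2}-\pi\pi^{\sT}\|_1\big\rangle_G$, one has
\beqn
\E L^\ast\big(\bG(n,m_n)\big)^2 \le \E\Big[L\big(\bG(n,m_n)\big)^2\,\one\big\{T(\bG(n,m_n))\le \eps_n\big\}\Big].
\eeqn
Now $L(G)^2 = \big(\E[\psi_{\bG}(\bsig^\star)\mid\bsig^\star]\big)^{-2}\,\E_{\sig^1}\E_{\sig^2}\big[\psi_G(\sig^1)\psi_G(\sig^2)\big]$ where $\sig^1,\sig^2\iid\pi^{\otimes V}$ and $\psi_{\bG}(\sig) = \prod_{a}\psi_a(\sig_{\delta a})$; taking $\E$ over the $m_n$ i.i.d. clauses of the null model and using the clause law, the expectation factorizes over clauses, yielding
\beqn
\E L\big(\bG(n,m_n)\big)^2\;\one\{\cdots\}
= \sum_{\sig^1,\sig^2}\pi^{\otimes V}(\sig^1)\pi^{\otimes V}(\sig^2)\,\one\{\cdots\}\;\Big(\frac{\E_{p,u}[\bpsi(\sig^1_{\bom})\bpsi(\sig^2_{\bom})]}{\xi^2}\Big)^{m_n},
\eeqn
and the clause-level ratio depends on $(\sig^1,\sig^2)$ only through the overlap matrix $R = R_{\sig^1,\sig^2}\in\RR_\pi$. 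Concretely $\E_{p,u}[\bpsi(\sig^1_{\bom})\bpsi(\sig^2_{\bom})]/\xi^2 = \FF(R)/\xi^2$ in the notation of \eqref{eq:def:FF} (up to a harmless $O(1/n)$ from sampling $k$-tuples with vs. without repetition), which by definition of $\Phi_\psi$ and $\Xi$ equals $\langle \xi^{\otimes\,?},\Xi^{(k-1)}\cdots\rangle$-type bilinear forms; the upshot is that $\FF(R)/\xi^2 = 1 + (\text{bilinear form of }R-\pi\pi^{\sT}\text{ under }\Xi_\ast\text{-type operator})$ plus higher order, so that $(\FF(R)/\xi^2)^{m_n}\approx \exp\big((k-1)d\cdot n\cdot \tfrac{1}{2}\langle \Delta,\Xi_\ast\Delta\rangle/\|\Delta\|^2\cdot\|\Delta\|^2 + \cdots\big)$ with $\Delta = R-\pi\pi^{\sT}\in\SS$.

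Second, I would bound the configuration sum. Writing $\sum_{\sig^1,\sig^2}\pi^{\otimes V}\pi^{\otimes V}\,(\cdot)$ as $\E_R[\,\cdot\,]$ where $R$ is the (random) empirical overlap of two independent $\pi^{\otimes V}$-samples, large deviations for $R$ gives density $\exp(-n I_\pi(R) + o(n))$ with rate function $I_\pi(R)=\sum_{i,j}R(i,j)\log\frac{R(i,j)}{\pi_i\pi_j}$, and $I_\pi(\pi\pi^{\sT}+\Delta) = \tfrac12\|\Delta\|_{\pi^{-1}}^2 + O(\|\Delta\|^3)$ where $\|\cdot\|_{\pi^{-1}}$ is the natural weighted norm. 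Combining, the summand near $R=\pi\pi^{\sT}$ behaves like $\exp\big(-\tfrac{n}{2}[\,\|\Delta\|_{\pi^{-1}}^2 - (k-1)d\,\langle\Delta,\Xi_\ast\Delta\rangle\,] + o(n)\|\Delta\|^2\big)$. Here the truncation $\one\{T(G)\le\eps_n\}$ does the essential work: it is exactly what lets me restrict the sum to $\|\Delta\|=o(1)$ (this requires translating $T(G)\le\eps_n$, which controls \emph{posterior} overlaps, into a bound on the overlap appearing in the second moment — this is where Theorem~\ref{thm:equiv} and a Nishimori-type symmetry between the two independent $\pi$-samples and the planted-posterior samples enter). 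On the small-$\Delta$ region, a Laplace/Gaussian integral over the quadratic form $\tfrac12\langle\Delta,(\id - (k-1)d\,\Xi_\ast)\Delta\rangle$ on $\SS$ (using that $\Xi$ is self-adjoint on $\SS$ with $\Xi\circ\proj_\SS = \Xi_\ast$, Lemma~\ref{lem:KS}) produces the determinant factor $\prod_{\lambda\in\Eig_\SS(\Xi)}(1-(k-1)d\lambda)^{-1/2}$; the hypothesis $d<d_{\ks}=((k-1)\lambda_{\ks})^{-1}$ guarantees $\id-(k-1)d\,\Xi_\ast\succ 0$ on $\SS$, so the quadratic form is positive definite and the integral is finite. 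The contribution away from $\Delta\approx 0$ is subexponential and absorbed into $1+o_n(1)$ precisely because of the $\eps_n\to0$ truncation.

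Third, the Poisson randomness of $\bm$ around $dn/k$ within $|m_n - dn/k|\le n^{2/3}$ only perturbs $m_n$ by $O(n^{2/3})$, which changes $(\FF(R)/\xi^2)^{m_n}$ by a factor $\exp(O(n^{2/3}\|\Delta\|^2 + \cdots))$; on the relevant scale $\|\Delta\| = O(n^{-1/2})$ this is $1+o(1)$, so the bound is uniform over such $m_n$. I would also need to handle the $O(1/n)$ discrepancy between sampling $\delta a$ as an ordered $k$-tuple from $V^k$ (allowing repeated variables) versus $k$ distinct variables — this contributes a bounded multiplicative constant to the per-clause ratio that is independent of $\Delta$ and cancels against the same correction in the first-moment normalization $\E[\psi_{\bG}(\bsig^\star)\mid\bsig^\star]$, or is otherwise $1+o(1)$.

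The main obstacle I anticipate is making the truncation-to-small-$\Delta$ reduction fully rigorous: the indicator $\one\{T(G)\le\eps_n\}$ is a statement about the \emph{posterior} measure $\mu_G$, whereas the second moment sum ranges over two \emph{prior} samples $\sig^1,\sig^2\sim\pi^{\otimes V}$ weighted by $L(G)^2$. Converting one into the other requires a careful Nishimori/change-of-measure argument showing that under the weighting by $L(G)^2$ the pair $(\sig^1,\sig^2)$ has (conditionally on $G$) the law of two independent posterior samples, so that $T(G)\le\eps_n$ genuinely forces $\E\langle\|\Delta\|_1\rangle\le\eps_n$ along the dominant part of the sum; controlling the non-dominant part (large $\|\Delta\|$, where $\FF(R)/\xi^2$ could in principle be large for assortative $\psi$) and showing it is $e^{-\Omega(n)}$ or at least $o(1)$ after truncation is the technically delicate step, and is presumably where the bulk of the deferred Section~\ref{sec:proof:contiguity} argument lies.
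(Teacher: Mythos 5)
Your plan follows the paper's route: a change of measure to a per-clause factor $\FF(R)/\xi^2$, restriction to near-orthogonal overlaps, and a Gaussian computation yielding $\prod_\lambda(1-(k-1)d\lambda)^{-1/2}$. The obstacle you flag at the end---converting the posterior-overlap truncation into a restriction on the overlap $R_{\sig,\utau}$ of the two \emph{prior} samples in the second-moment sum---is the right place to worry, but the resolution is cleaner and more local than you anticipate. Writing the summand as $L^2_{\sig,\utau}(G)=(L^\ast(G))^2\,\mu_G(\sig)\mu_G(\utau)$, the ``correlated'' part $L^2_{\co}(G):=\sum_{\|R_{\sig,\utau}-\pi\pi^{\sT}\|_1>\sqrt{\eps_n}}L^2_{\sig,\utau}(G)$ equals $(L^\ast(G))^2\big\langle\one\{\|R_{\sig^1,\sig^2}-\pi\pi^{\sT}\|_1>\sqrt{\eps_n}\}\big\rangle_G\le\sqrt{\eps_n}\,(L^\ast(G))^2$ by Markov's inequality and the very definition of $L^\ast$, \emph{pointwise in $G$}. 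This gives the self-referential bound $\E L^\ast(\bG)^2\le(1-\sqrt{\eps_n})^{-1}\E L^2_{\ind}(\bG)$ where $L^2_{\ind}$ ranges only over small-overlap pairs. In particular Theorem~\ref{thm:equiv} plays no role here (it is used in Lemma~\ref{lem:m:n:overlap:trivial}, to show the truncation preserves the first moment when $d<d_\ast$); the second-moment estimate requires only $d<d_{\ks}$.

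One further point where your sketch is too optimistic: the claimed ``$o(n)\|\Delta\|^2$'' error in the exponent is not what the Taylor expansion gives. The cubic term is $O(n\|\Delta\|_\infty^3)$, which at scale $\|\Delta\|_1\le\sqrt{\eps_n}$ can be of order $n\eps_n^{3/2}$ and is not automatically negligible. What saves the argument is that the cubic is at most $O(\sqrt{\eps_n})$ times the quadratic (since $\|\Delta\|_\infty\le\sqrt{\eps_n}$), i.e.\ a vanishing \emph{multiplicative} perturbation of the quadratic form. After rescaling to $\bX_n=\sqrt{n}(R-\pi\pi^{\sT})$, the paper proves uniform integrability of the resulting exponential using Sanov's theorem together with the strict slack in $\id-(k-1)d\,\Xi_\ast\succ 0$ on $\SS$ to absorb the $\sqrt{\eps_n}$ perturbation; then the CLT and continuous mapping give the stated limit. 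Your Laplace heuristic is correct in spirit but needs this uniform-integrability step to close the argument.
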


\subsubsection{Cycles and small subgraph conditioning}
\label{subsubsec:SSG}
Proposition \ref{prop:sec:moment:LR} guarantees that $L^{\ast}\big(\bG(n,m_n)\big)$ is bounded away from zero with uniformly positive probability by the Payley-Zygmund inequality (a.k.a. second moment method). To boost this probability to close to one, we use the second moment method conditioned on the number of small cycles. This is formalized by the {\em small subgraph conditioning method} developed by~\cite{Janson95random, Wormald99models}. In particular, we have the following theorem from \cite{Wormald99models} (see also \cite[Theorem 1]{Janson95random} and \cite[Theorem 9.12]{Janson00random}).
\begin{thm}\label{thm:SSG}
\textup{\cite[Theorems 4.3]{Wormald99models}, \cite[Theorem 9.12]{Janson00random}}
Let $(X_{in})_{i\geq 1}$ be a set of non-negative integer valued random variables indexed by $n\geq 1$, and let $Y_n$ be another non-negative random variable on the same probability space as $(X_{in})_{i\geq 1}$. Suppose that $\E Y_n>0$ for large enough $n$, and there exist $\lambda_i>0$ and $\delta_i\geq -1$ such that the following hold.
\begin{enumerate}[label=\textup{(A\arabic*)}]
    \item \label{item:A1} $X_{in}\dto X_{i\infty}$ as $n\to\infty$, jointly for all $i$, where $X_{i\infty}$ are independent $\Poi(\lambda_i)$ distributed random variables.
    \item \label{item:A2} For any finite sequence $x_1,\ldots, x_{L}$ of non-negative integers, as $n\to\infty$,
    \begin{equation*}
    \frac{\E[Y_n\one\{X_{1n}=x_1,\ldots, X_{L n}=x_L\}]}{\E Y_n}\longrightarrow \prod_{i=1}^{L}\frac{\big((1+\delta_i)\lambda_i\big)^{x_i}}{x_i!}e^{-(1+\delta_i)\lambda_i}\,.
    \end{equation*}
    \item \label{item:A3} $\sum_{i\geq 1}\lambda_i\delta_i^2<\infty$.
    \item \label{item:A4} $\frac{\E Y_n^2}{(\E Y_n)^2}\leq \exp\big(\sum_{i\geq 1}\lambda_i\delta_i^2\big)+o_n(1)$ as $n\to\infty$.
\end{enumerate}
Then, we have as $n\to\infty$ that
\begin{equation*}
\frac{Y_n}{\E Y_n}\dto W\equiv \prod_{i=1}^{\infty}(1+\delta_i)^{X_{i\infty}}e^{-\lambda_i \delta_i}\,.
\end{equation*}
Moreover, this and the convergence in \textup{(A1)} hold jointly.
\end{thm}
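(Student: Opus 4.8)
Since Theorem~\ref{thm:SSG} is the classical small subgraph conditioning lemma, the plan is to reproduce the standard argument of Janson and Wormald, organized as a double limit. For a fixed truncation level $L$ introduce the conditional expectation
\[
Z_{Ln}:=\E\Big[\tfrac{Y_n}{\E Y_n}\,\Big|\,X_{1n},\ldots,X_{Ln}\Big],
\]
the natural finite-dimensional approximation to $Y_n/\E Y_n$. Hypotheses \ref{item:A1} and \ref{item:A2} together identify the joint law of $(X_{1n},\ldots,X_{Ln})$ under the $Y_n$-tilted measure (equivalently, $Z_{Ln}$ is the likelihood ratio of the laws of $(X_{1n},\ldots,X_{Ln})$ before and after tilting), so that $Z_{Ln}\dto W_L:=\prod_{i=1}^{L}(1+\delta_i)^{X_{i\infty}}e^{-\lambda_i\delta_i}$ as $n\to\infty$, with $X_{i\infty}$ as in \ref{item:A1}. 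A direct Poisson moment-generating-function computation gives $\E W_L=1$ and $\E W_L^2=\exp\big(\sum_{i=1}^{L}\lambda_i\delta_i^2\big)$. By \ref{item:A3} the partial products $W_L$ form an $L^2$-bounded martingale in $L$, hence converge almost surely and in $L^2$ to $W=\prod_{i\ge1}(1+\delta_i)^{X_{i\infty}}e^{-\lambda_i\delta_i}$, with $\E W^2=\exp\big(\sum_{i\ge1}\lambda_i\delta_i^2\big)$.

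The crux is to compare $Y_n/\E Y_n$ with $Z_{Ln}$. Because $Z_{Ln}$ is a conditional expectation, the residual is orthogonal to it, giving the Pythagorean identity
\[
\E\Big[\big(\tfrac{Y_n}{\E Y_n}\big)^2\Big]=\E\big[Z_{Ln}^2\big]+\E\Big[\big(\tfrac{Y_n}{\E Y_n}-Z_{Ln}\big)^2\Big].
\]
Hypothesis \ref{item:A4} bounds the left-hand side by $\exp\big(\sum_{i\ge1}\lambda_i\delta_i^2\big)+o_n(1)$, while $Z_{Ln}\dto W_L$ together with Fatou's lemma gives $\liminf_{n}\E[Z_{Ln}^2]\ge\E W_L^2=\exp\big(\sum_{i=1}^{L}\lambda_i\delta_i^2\big)$. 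Subtracting,
\[
\limsup_{n\to\infty}\E\Big[\big(\tfrac{Y_n}{\E Y_n}-Z_{Ln}\big)^2\Big]\le\exp\Big(\sum_{i\ge1}\lambda_i\delta_i^2\Big)-\exp\Big(\sum_{i=1}^{L}\lambda_i\delta_i^2\Big),
\]
and the right-hand side tends to $0$ as $L\to\infty$, again by \ref{item:A3}. To conclude, fix any metric for weak convergence (say bounded--Lipschitz) and split the distance between $Y_n/\E Y_n$ and $W$ by the triangle inequality into three pieces: the distance to $Z_{Ln}$, dominated by the $L^1\le L^2$ residual just controlled; the distance from $Z_{Ln}$ to $W_L$, which vanishes as $n\to\infty$ for each fixed $L$; and the distance from $W_L$ to $W$, which vanishes as $L\to\infty$. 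An $\eps/3$ choice of $L$ and then of $n$ yields $Y_n/\E Y_n\dto W$. Applying the same computation to test functions of $(X_{1n},\ldots,X_{Ln},\,Y_n/\E Y_n)$ simultaneously gives the asserted joint convergence with \ref{item:A1}.

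The one genuinely delicate point is the interchange of the $n\to\infty$ and $L\to\infty$ limits: it succeeds only because the second-moment hypothesis \ref{item:A4} is calibrated to the \emph{exact} value $\E W_L^2=\exp\big(\sum_{i\le L}\lambda_i\delta_i^2\big)$, so that the excess $\exp\big(\sum_{i\ge1}\lambda_i\delta_i^2\big)-\exp\big(\sum_{i\le L}\lambda_i\delta_i^2\big)$ is exactly what controls the residual, and this excess is summable-to-zero precisely by \ref{item:A3}. Were the constant in \ref{item:A4} not matched to $\E W_L^2$ one would obtain at best tightness of $Y_n/\E Y_n$ rather than the identification of the limit. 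Everything else---the Poisson mgf identity, the $L^2$-martingale convergence, and the $\eps/3$ argument---is routine.
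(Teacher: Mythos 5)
The paper does not prove Theorem~\ref{thm:SSG} itself; it quotes it directly from Wormald \cite[Theorem 4.3]{Wormald99models} and Janson--\L{}uczak--Ruci\'{n}ski \cite[Theorem 9.12]{Janson00random}, so there is no ``paper's own proof'' to compare against. Your reconstruction is the standard small-subgraph-conditioning argument and is, as far as I can see, correct: the introduction of $Z_{Ln}=\E[Y_n/\E Y_n\mid X_{1n},\ldots,X_{Ln}]$ as the tilted-versus-untilted likelihood ratio, the identification of its limit $W_L$ from \ref{item:A1} and \ref{item:A2}, the Poisson moment identities $\E W_L=1$ and $\E W_L^2=\exp\big(\sum_{i\le L}\lambda_i\delta_i^2\big)$, the $L^2$-martingale convergence $W_L\to W$ under \ref{item:A3}, the Pythagorean split of $\E[(Y_n/\E Y_n)^2]$, the Fatou lower bound $\liminf_n\E Z_{Ln}^2\ge\E W_L^2$, and the $\eps/3$ argument interchanging $n$ and $L$ all match the argument in those references in both substance and structure.

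Two small points you elided that are worth making explicit if you were to write this up carefully. First, the passage from \ref{item:A1} and \ref{item:A2} to $Z_{Ln}\dto W_L$ is not a pure continuous-mapping step, since the function giving $Z_{Ln}$ depends on $n$: you are using that, on the discrete space $\Z_{\ge0}^L$, pointwise convergence of the probabilities $P_n[\vec X_n=\vec x]\to P[\vec X_\infty=\vec x]$ together with total mass $1$ upgrades to total-variation convergence (Scheff\'e), and that the likelihood-ratio values $g_n(\vec x)\to g_\infty(\vec x)$ at each lattice point; together these give the weak convergence of $Z_{Ln}=g_n(\vec X_n)$. Second, in the final triangle inequality one needs to bound the bounded-Lipschitz distance between $Y_n/\E Y_n$ and $Z_{Ln}$ via a coupling; it is important that they live on the same probability space so that $d_{\mathrm{BL}}\le\E|Y_n/\E Y_n-Z_{Ln}|\le\big(\E|Y_n/\E Y_n-Z_{Ln}|^2\big)^{1/2}$ is available. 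Neither is a real gap, just details the references spell out.
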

\begin{remark}\label{rmk:L:infty:well:defined}
    An implicit conclusion of Theorem~\ref{thm:SSG} is that $W$ is well-defined as the almost-sure-limit of $W_m\equiv \prod_{i\leq m}(1+\delta_i)^{X_{i\infty}}e^{-\la_i\delta_i}$ as $m\to\infty$. This can be easily seen by martingale convergence theorem since $(W_m)_{m\geq 1}$ is a martingale with mean $1$ and bounded variance by \ref{item:A3}.
\end{remark}
In random graph theory, it is common to take $(X_{in})_{i\geq 1}$ in Theorem \ref{thm:SSG} as the number of cycles in the random graph with $n$ nodes. Indeed, the works~\cite{CO18number, CEJKK:18, nss2} applied Theorem~\ref{thm:SSG} for various factor graphs by setting $(X_{in})_{i\geq 1}$ as the number of cycles with specific signatures. We consider the following specific notion of cycles with signature $\zeta$, which is essentially the same as the one considered in \cite{CEJKK:18}.

\begin{defn}($\zeta$-cycle) A signature of order $\ell\geq 1$ is defined by a family 
\beqn
\zeta=(\psi_1,\ldots, \psi_{\ell}, s_1,t_1,\ldots, s_{\ell}, t_{\ell})\,,
\eeqn
where $\psi_1,\ldots, \psi_{\ell} \in \Psi$, and $s_1,t_1,\ldots, s_{\ell}, t_{\ell}\in [k]$ that satisfy $s_i\neq t_i$ for $i\leq \ell$. In a factor graph $G=(V,F, E, (\psi_a)_{a\in F})$, for variables $v_{i_1},\ldots, v_{i_{\ell}}\in V$ and clauses $a_{j_1},\ldots, a_{j_{\ell}}\in F$, we call $\{v_{i_1},a_{j_1},\ldots, v_{i_{\ell}}, a_{j_{\ell}}\}$ a $\zeta$-cycle if it satisfies the following conditions.
\begin{itemize}
    \item The indices $i_1,\ldots, i_{\ell}\in [n]$ are distinct such that $i_1=\min\{i_1,\ldots, i_{\ell}\}$. Similarly, $j_1,\ldots, j_{\ell}\in [m]$ are distinct and $j_1=\min \{j_1,\ldots, j_{\ell}\}$.
    \item For $1\leq h\leq \ell$, the weight function $\psi_{a_{j_h}}$ assigned to the clause $a_{j_h}$ in $G$ is given by $\psi_{a_{j_h}}=\psi_h$. 
    \item For $1\leq h \leq \ell$, the $s_h$'th variable adjacent to the clause $a_{j_h}$ in $G$ is $v_{i_h}$, i.e. $\delta_{s_h}a_{j_h}=v_{i_h}$. Similarly, for $1\leq h\leq \ell-1$, $\delta_{t_h}a_{j_h}=v_{i_{h+1}}$ holds, and $\delta_{t_{\ell}}a_{j_{\ell}}=v_{i_1}$.
\end{itemize}
We let $X_{\zeta}(G)$ be the number of $\zeta$-cycles in a factor graph $G$. Also, we let $S_{\ell}$ denote the set of signatures $\zeta$ of order $\ell$, and let $S:=\cup_{\ell\geq 1} S_{\ell}$ be the set of signatures.
\end{defn}
The first condition $i_1=\min\{i_1,\ldots, i_{\ell}\}, j_1=\min \{j_1,\ldots, j_{\ell}\}$ is to avoid overcounting. Also, note that the set $S_{\ell}$ is finite for each $\ell\geq 1$ because we assumed that $\Psi$ is finite. Thus, the set $S$ is countable.

We will take $(X_{in})_{i\geq 1}=(X_{\zeta}\big(\bG(n,m_n)\big))_{\zeta\in S}$ and $Y_n=L\big(\bG(n,m_n)\big)$ in Theorem~\ref{thm:SSG}. To verify the assumptions \ref{item:A1} and \ref{item:A2}, we describe the asymptotic distribution of $X_{\zeta}(\bG(n,m_n))\big)$ and $X_{\zeta}(\bG^\star(n,m_n))\big)$. To this end, we introduce additional notations. Let the matrix $\Phi_{\psi,s,t}\equiv \big(\Phi_{\psi,s,t}(i,j)\big)_{i,j\in [q]}\in \R^{q\times q}$ for $\psi\in \Psi$ and $s,t\in[k]$ be defined by
\beq\label{eq:Phi:s:t}
\Phi_{\psi,s,t}(i,j):=\xi^{-1}\cdot \E_{\pi}\big[\psi(\bsig)\bgiven \bsigma_s=i, \bsigma_t=j\big]\cdot\pi_j\,.
\eeq
Notably, $\Phi_{\psi,1,2}=\Phi_{\psi}$ holds by definition. Moreover, for a signature $\zeta=(\psi_1,\ldots, \psi_{\ell}, s_1,t_1,\ldots, s_{\ell}, t_{\ell})\in S_{\ell}$, we let $\Phi_{\zeta}:=\prod_{i=1}^{\ell}\Phi_{\psi_i,s_i,t_i}\in \R^{q\times q}$ and define the constants $\la_{\zeta}, \la^\star_{\zeta}$ by
\beq\label{def:lambda:zeta}
\lambda_{\zeta}\equiv \lambda_{\zeta}(k,d,p):=\frac{1}{2\ell}\left(\frac{d}{k}\right)^{\ell}\prod_{i=1}^{\ell}p(\psi_i)\,,\quad\quad \lambda^\star_{\zeta}\equiv \lambda^\star_{\zeta}(k,d,\pi,p):=\lambda_{\zeta}\cdot\tr\Big(\Phi_{\zeta}\Big)\,.
\eeq
In order to obtain our results for hypergraph stochastic block models, we condition the planted factor model on a specific event $\GGG_n$ defined as follows. Let the event $\GGG_n$ consist of factor graphs $G=(V,F,E,(\psi_a)_{a\in F})$ with $|V|=n$ variables such that
\begin{enumerate}[label=(H\arabic*)]
    \item\label{item:GGG:a} Every clauses is connected to $k$ different variables, i.e. for any $a\in F$, $|\delta a|=k$.
    \item\label{item:GGG:b} For any $a,a'\in F$ with $a\neq a'$, the set of neighbors of $a$ is distinct from $a'$, i.e. $\delta a$ does not equal any permutation of $\delta a'$.
\end{enumerate}
By using the moment method, \cite[Proposition 3.12]{CEJKK:18} deduced the following result.
\begin{fact}\label{fact:cycle}
\textup{\cite[Proposition 3.12]{CEJKK:18}} Assume the condition ${\sf (SYM)}$. Then, for $|m_n-dn/k|\leq n^{2/3}$, we have the following.
\begin{enumerate}[label=\textup{(\arabic*)}]
    \item $X_{\zeta}(\bG(n,m_n))\dto X_{\zeta, \infty}$ as $n\to\infty$, jointly for all $\zeta \in \cup_{\ell\geq 1}S_{\ell}$, where $X_{\zeta,\infty}$ are independent $\Poi(\la_{\zeta})$ distributed random variables. 
    \item $X_{\zeta}(\bG^\star(n,m_n))\dto X^\star_{\zeta, \infty}$ as $n\to\infty$, jointly for all $\zeta \in \cup_{\ell\geq 1}S_{\ell}$, where $X^\star_{\zeta,\infty}$ are independent $\Poi(\la^\star_{\zeta})$ distributed random variables. 
    \item Let $C(G):=\sum_{\zeta\in S_1}X_{\zeta}(G)+\one\{k=2\}\sum_{\zeta\in S_2}X_{\zeta}(G)$. Then, note that for $k=2$, the event $\GGG_n$ equals $\{C(G)=0\}$, and in general $\GGG_n\subseteq \{C(G)=0\}$ holds. For $k\geq3$, the probability of $\bG(n,m_n)$ and $\bG^\star(n,m_n)$ satisfying \ref{item:GGG:b} is $O(1/n)$, and we have
    \begin{equation*}
    \begin{split}
        \P\big(C(\bG(n,m_n))=0\big)
        &=\P\big(\bG(n,m_n)\in \GGG_n\big)+O(1/n)\,,\\
\P\big(C(\bG^\star(n,m_n))=0\big)&=\P\big(\bG^\star(n,m_n)\in \GGG_n\big)+O(1/n)\,.
    \end{split}
    \end{equation*}
\end{enumerate}

\end{fact}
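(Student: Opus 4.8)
The plan is to prove (1) and (2) by the method of moments and to read off (3) from the definitions. For a countable family of non-negative integer valued random variables, convergence in distribution to independent Poissons follows once all joint factorial moments converge to those of the limit: Poisson laws are moment-determined and the limiting family of independent Poissons is automatically consistent, so joint convergence over $S$ reduces to convergence of finite-dimensional marginals. Writing $(x)_r=x(x-1)\cdots(x-r+1)$, for $\zeta_1,\dots,\zeta_L\in S$ and $r_1,\dots,r_L\ge 1$ I would read
\[
\E\Big[\prod_{i=1}^{L}\big(X_{\zeta_i}(G)\big)_{r_i}\Big]
\]
as the expected number of ordered $(r_1+\cdots+r_L)$-tuples of pairwise distinct cycles, $r_i$ of them of signature $\zeta_i$, and split these by intersection pattern. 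A tuple that shares a variable or a clause contributes a factor $O(1/n)$ less than a vertex- and clause-disjoint one (each identified variable or clause costs a $\Theta(n)$ in the embedding count without improving the realization probability), so it suffices to compute the first moments $\E X_{\zeta}$ and to check that the disjoint contributions factorize as $\prod_i(\E X_{\zeta_i})^{r_i}$.

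For (1), in the null model $\bG(n,m_n)$ the pairs $(\delta a,\psi_a)_{a\in F}$ are independent. A $\zeta$-cycle of order $\ell$ is a choice of $\ell$ ordered distinct variables and $\ell$ ordered distinct clauses fitting the incidence pattern of $\zeta$, and there are $(1+o(1))\frac{1}{2\ell}n^{\ell}m_n^{\ell}$ admissible labelings --- the factor $\frac1{2\ell}$ quotienting the $n^{\ell}m_n^{\ell}$ free labelings by the dihedral symmetry of a length-$\ell$ cycle, which the $\min$-index conventions encode. Each fixed labeling is realized with probability $n^{-2\ell}\prod_{h=1}^{\ell}p(\psi_h)$, since clause $a_{j_h}$ must carry weight $\psi_h$ and have its $s_h$-th and $t_h$-th coordinates pinned to the two prescribed variables. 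With $m_n/n\to d/k$ this gives $\E X_{\zeta}(\bG(n,m_n))\to\frac1{2\ell}(d/k)^{\ell}\prod_h p(\psi_h)=\lambda_{\zeta}$, and disjoint clause sets being independent yields the factorization of the higher moments; this is (1).

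For (2), I would condition on $\bsig^\star$, under which the clauses remain independent with law \eqref{eq:clause:law:planted}. Fix a labeled $\zeta$-cycle and set $x_h=\bsigma^\star_{v_{i_h}}$. Summing \eqref{eq:clause:law:planted} over the $k-2$ unpinned coordinates of $a_{j_h}$, and using that the empirical distribution of $\bsig^\star$ is within $o(1)$ of $\pi$ (so the denominator of \eqref{eq:clause:law:planted} is $\xi(1+o(1))$, and, via ${\sf (SYM)}$, the one-slot marginals of each $\psi$ are balanced), the probability that $a_{j_h}$ realizes its role is $(1+o(1))\,n^{-2}p(\psi_h)\,\xi^{-1}\E_{\pi}[\psi_h(\bsig)\mid\bsigma_{s_h}=x_h,\,\bsigma_{t_h}=x_{h+1}]$. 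Summing over the variables, whose spins are asymptotically i.i.d.\ $\pi$, attaches the weight $\prod_h\pi_{x_h}$; writing $\xi^{-1}\E_{\pi}[\psi_h\mid\cdots]=\Phi_{\psi_h,s_h,t_h}(x_h,x_{h+1})/\pi_{x_{h+1}}$ as in \eqref{eq:Phi:s:t}, the $\pi$-factors telescope around the cycle and leave $\sum_{x_1,\dots,x_\ell}\prod_{h=1}^{\ell}\Phi_{\psi_h,s_h,t_h}(x_h,x_{h+1})=\tr(\Phi_{\zeta})$. Hence $\E X_{\zeta}(\bG^\star(n,m_n))\to\lambda_{\zeta}\tr(\Phi_{\zeta})=\lambda^\star_{\zeta}$; after conditioning on $\bsig^\star$ the disjoint higher moments factorize since disjoint clause sets are conditionally independent and the conditional probabilities concentrate around these deterministic limits. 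This is (2).

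Part (3) is immediate: a $\zeta$-cycle with $\zeta\in S_1$ is a single clause two of whose slots hit the same variable, so $\sum_{\zeta\in S_1}X_{\zeta}(G)=0$ is exactly \ref{item:GGG:a}; for $k=2$ a $\zeta$-cycle with $\zeta\in S_2$ is a pair of distinct clauses on the same two variables, whence $\{C(G)=0\}=\GGG_n$. For $k\ge 3$, $C(G)=\sum_{\zeta\in S_1}X_{\zeta}(G)$ so $\{C(G)=0\}$ is \ref{item:GGG:a}, while a union bound over pairs of clauses and the $\le k!$ orderings of a $k$-set bounds the probability that \ref{item:GGG:b} fails by $\binom{m_n}{2}k!\,n^{-k}=O(n^{2-k})=O(1/n)$ --- for $\bG^\star(n,m_n)$ using that $\Psi$ is finite and positive so \eqref{eq:clause:law:planted} has bounded density relative to the uniform law --- which gives the stated $O(1/n)$ corrections. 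The main obstacle is the planted-model step in (2): one must control the fluctuations of the empirical distribution of $\bsig^\star$ (and of quantities such as $\E_{p,u}[\bpsi(\bsig^\star_{\bom})]$) uniformly enough to replace them by their $\pi$-limits inside every factorial moment, and carry out the disjoint-tuple factorization after conditioning on $\bsig^\star$ rather than under genuine independence; this is where ${\sf (SYM)}$ is needed, and is the technical heart of \cite[Proposition 3.12]{CEJKK:18}.
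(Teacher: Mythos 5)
Your proposal is correct in outline and takes exactly the approach the paper intends (the paper itself gives no proof but defers to \cite[Proposition~3.12]{CEJKK:18}, stating only that the argument "works verbatim" in the present setting): factorial moments, first-moment computations yielding $\lambda_\zeta$ and $\lambda_\zeta^\star$, overlap-tuple suppression, and a clean bookkeeping reduction of (3). Two small caveats are worth flagging. First, the $\frac{1}{2\ell}$ in the labeling count does not fall out of the definition in the paper as literally written: imposing both $i_1=\min\{i_1,\ldots,i_\ell\}$ and $j_1=\min\{j_1,\ldots,j_\ell\}$ would cut the $n^\ell m^\ell$ free labelings by $1/\ell^2$, not $1/(2\ell)$ --- the factor $1/(2\ell)$ is what one gets from ``fix the minimal vertex, fix one orientation,'' which is evidently what is intended (and is what makes $\lambda_\zeta$ consistent with the rest of the paper, e.g.\ Lemma~\ref{lem:lambda:equality} and the $\frac{1}{2\ell}d^\ell$ mean for $k=2$ cycle counts), but your justification by ``dihedral symmetry encoded by the $\min$-index conventions'' glosses over this mismatch and would not survive a literal reading of the definition. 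Second, your step from the first moments to the joint factorial moments needs the overlap-count lemma in both models and, for the planted model, the replacement of $\E_{p,u}[\bpsi(\bsig^\star_{\bom})]$ and the empirical spin distribution by their $\pi$-limits uniformly over all the (conditionally dependent) clause probabilities appearing in a fixed factorial moment; you correctly identify this as the technical heart, where ${\sf (SYM)}$ enters (to kill the $O(n^{-1/2})$ fluctuation in the normalizing constant that would otherwise accumulate over $m\sim n$ clauses), and correctly defer it to \cite{CEJKK:18}, which is also what the paper does.
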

\begin{remark}
We remark that the results of \cite[Proposition 3.12]{CEJKK:18} are only stated for $\wh{\bG}(n,m_n)$ instead of $\bG^\star(n,m_n)$, $\pi=\Unif([q])$, and when $p$ satisfies an extra condition called ${\sf (BAL)}$. Here, $\wh{\bG}(n,m_n)$ is a certain `tweaked version'
of $\bG^\star(n,m_n)$ defined in Section 3.2 therein. In particular, the condition ${\sf (BAL)}$ therein guarantees that $\wh{\bG}(n,m)$ is mutually contiguous with $\bG^\star(n,m)$ (see \cite[Lemma 3.2]{CEJKK:18}). However, a cursory examination of \cite[Proposition 3.12 and Lemma 8.2]{CEJKK:18} reveals that the claim was proven in two stages: first, they calculate the joint moments of $\big(X_{\zeta} (\bG^\star(n,m_n))\big)_{\zeta\in \cup_{\ell\geq 1}S_{\ell}}$ up to $o(1)$ error (see Eq.~(8.10)) to determine its asymptotic distribution and then transalte the result to $\wh{\bG}(n,m_n)$ using the condition ${\sf (BAL)}$. Since the argument of \cite[Proposition 3.12]{CEJKK:18} works verbatim to show Fact~\ref{fact:cycle}, we omit its proof.
\end{remark}
To verify the assumptions \ref{item:A3} and \ref{item:A4}, we need the following lemma which relates the quantities $\lambda_{\zeta},\lambda^\star_{\zeta}$ in \eqref{def:lambda:zeta} and $\lambda\in \Eig_{\SS}(\Xi\circ \Lambda^{\otimes 2})$ in \eqref{def:eig:space}. Its proof is deferred to Section~\ref{sec:proof:contiguity}.
\begin{lemma}\label{lem:lambda:equality}
 For a signature $\zeta$, define the constant
\beqn
\delta_{\zeta}:=\lambda^{\star}_{\zeta}/\lambda_{\zeta}-1=\tr(\Phi_{\zeta})-1\,.
\eeqn
Assuming ${\sf (SYM)}$ and $d<d_{\ks}$, we have that
 \beqn
 \exp\bigg(\sum_{\ell\geq 1}\sum_{\zeta\in S_{\ell}}\lambda_{\zeta}\delta_{\zeta}^2\bigg)=\prod_{\lambda\in \Eig_{\SS}(\Xi)}\frac{1}{\sqrt{1-(k-1)d\lambda}}\,.
 \eeqn
\end{lemma}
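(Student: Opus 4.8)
The plan is to take logarithms of both sides and match the resulting power series in $d$ coefficient by coefficient.

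\textbf{Reducing the right-hand side.} Since $d<d_{\ks}$, Definition~\ref{def:KS:factor} gives $|(k-1)d\lambda|<1$ for every $\lambda\in\Eig_\SS(\Xi)$, so
\[
\log\prod_{\lambda\in\Eig_\SS(\Xi)}\frac1{\sqrt{1-(k-1)d\lambda}}
=\frac12\sum_{\ell\geq1}\frac{((k-1)d)^\ell}{\ell}\sum_{\lambda\in\Eig_\SS(\Xi)}\lambda^\ell ,
\]
the inner sum being over eigenvalues counted with multiplicity. By Lemma~\ref{lem:KS}, $\Xi$ is self-adjoint for $\langle\cdot,\cdot\rangle_\pi$, preserves $\SS$ and $\SS^\perp$, and $\Xi\circ\proj_\SS=\Xi_\ast$; hence $\Xi_\ast$ agrees with $\Xi|_\SS$ on $\SS$ and vanishes on $\SS^\perp$, so $\sum_{\lambda\in\Eig_\SS(\Xi)}\lambda^\ell=\tr\big((\Xi|_\SS)^\ell\big)=\tr(\Xi_\ast^\ell)$ for all $\ell\geq1$. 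It therefore suffices to prove that the left side of the lemma converges absolutely and that
\[
\sum_{\ell\geq1}\sum_{\zeta\in S_\ell}\lambda_\zeta\delta_\zeta^2=\frac12\sum_{\ell\geq1}\frac{((k-1)d)^\ell}{\ell}\tr(\Xi_\ast^\ell).
\]

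\textbf{A matrix identity for $\delta_\zeta$.} For $\psi\in\Psi$ and $s\neq t$ put $\bar\Phi_{\psi,s,t}:=\Phi_{\psi,s,t}-\bone\pi^\sT$. From ${\sf (SYM)}$, $\sum_j\Phi_{\psi,s,t}(i,j)=\xi^{-1}\E_\pi[\psi(\bsig)\mid\bsigma_s=i]=1$ and $\sum_i\pi_i\Phi_{\psi,s,t}(i,j)=\xi^{-1}\pi_j\E_\pi[\psi(\bsig)\mid\bsigma_t=j]=\pi_j$, i.e. $\Phi_{\psi,s,t}\bone=\bone$ and $\pi^\sT\Phi_{\psi,s,t}=\pi^\sT$; combined with $(\bone\pi^\sT)^2=\bone\pi^\sT$, this makes $\bone\pi^\sT$ absorbing on either side by any $\Phi_{\psi,s,t}$, so by induction $\bar\Phi_{\psi_1,s_1,t_1}\cdots\bar\Phi_{\psi_\ell,s_\ell,t_\ell}=\Phi_\zeta-\bone\pi^\sT$ for any signature $\zeta=(\psi_i,s_i,t_i)_{i\le\ell}$. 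Taking traces, $\tr\big(\bar\Phi_{\psi_1,s_1,t_1}\cdots\bar\Phi_{\psi_\ell,s_\ell,t_\ell}\big)=\tr(\Phi_\zeta)-1=\delta_\zeta$, and hence, using $\tr(A)^2=\tr(A\otimes A)$ and $(AB)\otimes(AB)=(A\otimes A)(B\otimes B)$,
\[
\delta_\zeta^2=\tr\Big(\prod_{i=1}^\ell\big(\bar\Phi_{\psi_i,s_i,t_i}\otimes\bar\Phi_{\psi_i,s_i,t_i}\big)\Big).
\]

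\textbf{Summing over signatures and collapsing the ports.} Inserting $\lambda_\zeta=\tfrac1{2\ell}(d/k)^\ell\prod_ip(\psi_i)$ and summing over $\zeta\in S_\ell$, multilinearity of the matrix product and of the trace factorizes the sum over the triple $(\psi_i,s_i,t_i)$ attached to each of the $\ell$ positions, giving $\sum_{\zeta\in S_\ell}\lambda_\zeta\delta_\zeta^2=\tfrac1{2\ell}(d/k)^\ell\tr(M^\ell)$ with $M:=\sum_{\psi\in\Psi}p(\psi)\sum_{s\neq t}\bar\Phi_{\psi,s,t}\otimes\bar\Phi_{\psi,s,t}$. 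For each ordered pair $s\neq t$ pick $\theta=\theta_{s,t}\in S_k$ with $\theta(s)=1,\theta(t)=2$; renaming the free coordinates in the conditional expectation shows $\Phi_{\psi,s,t}=\Phi_{\psi^\theta,1,2}=\Phi_{\psi^\theta}$, hence $\bar\Phi_{\psi,s,t}=\Phi_{\psi^\theta}-\bone\pi^\sT$. Since $\psi\mapsto\psi^\theta$ is a bijection of $\Psi$ preserving $p$ by \eqref{eq:psi:symm}, $\sum_\psi p(\psi)\,\bar\Phi_{\psi,s,t}\otimes\bar\Phi_{\psi,s,t}=\E_p\big[(\Phi_{\bpsi}-\bone\pi^\sT)\otimes(\Phi_{\bpsi}-\bone\pi^\sT)\big]=\Xi_\ast$, independent of $(s,t)$, so $M=k(k-1)\Xi_\ast$ and therefore $\sum_{\zeta\in S_\ell}\lambda_\zeta\delta_\zeta^2=\tfrac1{2\ell}(d/k)^\ell(k(k-1))^\ell\tr(\Xi_\ast^\ell)=\tfrac12\cdot\tfrac{((k-1)d)^\ell}{\ell}\tr(\Xi_\ast^\ell)$. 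Summing over $\ell$ and exponentiating yields the claim.

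\textbf{Main obstacle.} The argument is essentially bookkeeping; the two places that actually need care are (i) the absorbing identity $\bar\Phi_{\psi_1,s_1,t_1}\cdots\bar\Phi_{\psi_\ell,s_\ell,t_\ell}=\Phi_\zeta-\bone\pi^\sT$, which genuinely uses ${\sf (SYM)}$ (without it $\Phi_{\psi,s,t}$ need not fix $\bone$ or $\pi^\sT$ and no such factorization holds), and (ii) justifying the interchange of the sums over $\ell$, over $\zeta\in S_\ell$, and over eigenvalues, which is legitimate because $|\tr(\Xi_\ast^\ell)|\le q^2\lambda_{\ks}^\ell$ and $(k-1)d\lambda_{\ks}<1$ — precisely the hypothesis $d<d_{\ks}$ — and which simultaneously re-establishes assumption~\ref{item:A3} of Theorem~\ref{thm:SSG}.
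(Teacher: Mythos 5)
Your proof is correct. You arrive at the same key intermediate identity $\sum_{\zeta\in S_\ell}\lambda_\zeta\delta_\zeta^2=\frac{((k-1)d)^\ell}{2\ell}\sum_{\lambda\in\Eig_\SS(\Xi)}\lambda^\ell$ as the paper, but by a slightly different algebraic route. The paper first uses permutation symmetry to drop the port indices $(s,t)$, then expands the square $\big(\tr(\prod\Phi_{\bpsi_i})-1\big)^2 = \tr(\Xi^\ell)-2\tr(\Phi^\ell)+1$ and invokes Lemma~\ref{lem:SS:perp} (the explicit eigenstructure of $\Xi$ on $\SS^\perp$, which contributes $1+2\sum_{i\geq 2}\lambda_i^\ell$) to recombine the three terms into $\sum_{\lambda\in\Eig_\SS(\Xi)}\lambda^\ell$. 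You instead center each factor to $\bar\Phi_{\psi,s,t}=\Phi_{\psi,s,t}-\bone\pi^\sT$, prove the absorbing identity $\prod_i\bar\Phi_{\psi_i,s_i,t_i}=\Phi_\zeta-\bone\pi^\sT$ from ${\sf (SYM)}$, and use $\tr(A)^2=\tr(A\otimes A)$ together with Kronecker multiplicativity to land directly on $\tr(\Xi_\ast^\ell)$, only using Lemma~\ref{lem:KS} in the weaker form $\Xi_\ast=\Xi\circ\proj_\SS$. Your version avoids the explicit diagonalization of $\Xi|_{\SS^\perp}$, and keeping the ports $(s,t)$ in play until the final collapse is fine since the permutation-invariance argument you give there is the same one the paper applies up front. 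Both the absorbing identity and the bound $|\tr(\Xi_\ast^\ell)|\le q^2\lambda_{\ks}^\ell$ that justifies the Fubini interchange are correct.
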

By combining our second moment estimate in Proposition \ref{prop:sec:moment:LR} and small subgraph conditioning method in Theorem~\ref{thm:SSG}, we obtain the following result.

\begin{thm}\label{thm:likelihood:conv}
    For average degree $d$ below the weak recovery threshold and the Kesten-Stigum threshold $d<d_{\ast}\wedge d_{\ks}$, the likelihood ratio evaluated at the null model $\bG\equiv\bG(n,\bm)$ converges in distribution to 
    \beq\label{eq:likelihood:conv}
    L(\bG)\dto \boldsymbol{L}_{\infty}:= \prod_{\ell \geq 1}\prod_{\zeta\in S_{\ell}}\Big\{(1+\delta_{\zeta})^{X_{\zeta, \infty}}e^{-\lambda_{\zeta}\delta_{\zeta}}\Big\}\,,
\eeq
where $X_{\zeta,\infty}\sim \Poi(\la_{\zeta})$ are independent Poisson random variables and $\delta_{\zeta}\equiv \la_{\zeta}^{\star}/\la_{\zeta}-1$. Moreover, the convergence in \eqref{eq:likelihood:conv} holds jointly with the convergence in Fact~\ref{fact:cycle}-(1).
\end{thm}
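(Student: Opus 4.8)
The plan is to apply the small subgraph conditioning theorem (Theorem~\ref{thm:SSG}) with the choices $(X_{in})_{i\ge 1}=\big(X_{\zeta}(\bG(n,\bm))\big)_{\zeta\in S}$ and $Y_n=L(\bG(n,m_n))$, after first conditioning on the number of clauses $\bm=m_n$ for a typical value $|m_n-dn/k|\le n^{2/3}$, and then integrating over $\bm\sim\Poi(dn/k)$ at the end. The four hypotheses \ref{item:A1}--\ref{item:A4} need to be verified in turn.

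First, \ref{item:A1} is exactly Fact~\ref{fact:cycle}-(1): the $\zeta$-cycle counts under the null converge jointly to independent $\Poi(\lambda_\zeta)$ variables. For \ref{item:A2}, I would write the conditional expectation $\E[L(\bG(n,m_n))\one\{X_{\zeta_1n}=x_1,\dots\}]/\E L(\bG(n,m_n))$ as $\P\big(X_{\zeta_j}(\bG^\star(n,m_n))=x_j,\ j\le L\big)$ by the defining change of measure $\E[L(\bG)f(\bG)]=\E[f(\bG^\star)]$; Fact~\ref{fact:cycle}-(2) then identifies the limit as the product of $\Poi(\lambda^\star_{\zeta_j})$ point masses, and since $\lambda^\star_\zeta=(1+\delta_\zeta)\lambda_\zeta$ this is precisely the required form. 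Hypothesis \ref{item:A3}, $\sum_\zeta\lambda_\zeta\delta_\zeta^2<\infty$, and the key inequality \ref{item:A4} both follow by combining Lemma~\ref{lem:lambda:equality}, which evaluates $\exp\!\big(\sum_\zeta\lambda_\zeta\delta_\zeta^2\big)=\prod_{\lambda\in\Eig_\SS(\Xi)}(1-(k-1)d\lambda)^{-1/2}$ (finite and well-defined for $d<d_{\ks}$), with Proposition~\ref{prop:sec:moment:LR}: the truncated second moment $\E L^\ast(\bG(n,m_n))^2$ is bounded by $(1+o(1))$ times exactly this same product. The one subtlety is that \ref{item:A4} is a statement about $\E Y_n^2=\E L(\bG(n,m_n))^2$, not about the truncated version $\E L^\ast(\bG(n,m_n))^2$. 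I would bridge this gap using Lemma~\ref{lem:m:n:overlap:trivial}: choosing $\eps_n\to 0$ appropriately, $\E|L-L^\ast|\to 0$, so $L^\ast(\bG(n,m_n))$ and $L(\bG(n,m_n))$ have the same distributional limit, and one applies Theorem~\ref{thm:SSG} with $Y_n=L^\ast(\bG(n,m_n))$ (for which \ref{item:A2} still holds because the truncation event has vanishing $L$-mass and hence vanishing $\bG^\star$-probability), concluding $L^\ast(\bG(n,m_n))\dto\boldsymbol{L}_\infty$, and then transfer back to $L(\bG(n,m_n))$ via $\E|L-L^\ast|\to 0$.

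Having established the limit $L(\bG(n,m_n))\dto\boldsymbol{L}_\infty$ for every admissible deterministic sequence $(m_n)$, the final step is to remove the conditioning on $\bm$. Since $\bm\sim\Poi(dn/k)$ concentrates, $\P(|\bm-dn/k|>n^{2/3})\to 0$, and on the complementary event the conditional law of $L(\bG(n,\bm))$ converges to $\law(\boldsymbol{L}_\infty)$ uniformly over the relevant range of $m_n$ (uniformity follows since the $o(1)$ errors in Fact~\ref{fact:cycle}, Lemma~\ref{lem:m:n:overlap:trivial} and Proposition~\ref{prop:sec:moment:LR} are uniform over $|m_n-dn/k|\le n^{2/3}$); hence $L(\bG(n,\bm))\dto\boldsymbol{L}_\infty$ unconditionally. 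The joint convergence with the cycle counts in Fact~\ref{fact:cycle}-(1) is part of the conclusion of Theorem~\ref{thm:SSG} and is preserved under the same de-conditioning argument. Finally, well-definedness of the infinite product $\boldsymbol{L}_\infty=\prod_{\ell\ge1}\prod_{\zeta\in S_\ell}(1+\delta_\zeta)^{X_{\zeta,\infty}}e^{-\lambda_\zeta\delta_\zeta}$ follows from Remark~\ref{rmk:L:infty:well:defined} together with \ref{item:A3}.

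I expect the main obstacle to be the bookkeeping needed to legitimately replace $Y_n=L$ by $Y_n=L^\ast$ when checking \ref{item:A4} — i.e.\ making sure that the small subgraph conditioning machinery, which nominally requires control of $\E Y_n^2$, can be run with the truncated likelihood and that the resulting limit is genuinely the distributional limit of the untruncated $L(\bG(n,m_n))$; this is where Lemma~\ref{lem:m:n:overlap:trivial} and the uniformity of all error terms do the real work. The verification of \ref{item:A1}--\ref{item:A3} is, by contrast, essentially a citation of Fact~\ref{fact:cycle} and Lemma~\ref{lem:lambda:equality}, while the hard analytic content — the $e^{\Omega(n)}$ reduction of the second moment by overlap truncation — is entirely contained in Proposition~\ref{prop:sec:moment:LR}, which is assumed here.
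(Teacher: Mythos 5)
Your proposal is correct and follows essentially the same route as the paper: condition on $\bm=m_n$, apply Theorem~\ref{thm:SSG} with $Y_n=L^\ast(\bG(n,m_n))$ rather than $L$, verify \ref{item:A1}--\ref{item:A2} via Fact~\ref{fact:cycle} and a change of measure, verify \ref{item:A3}--\ref{item:A4} via Lemma~\ref{lem:lambda:equality} and Proposition~\ref{prop:sec:moment:LR}, transfer back from $L^\ast$ to $L$ via Lemma~\ref{lem:m:n:overlap:trivial} (which is exactly how the paper bridges the fact that \ref{item:A4} nominally concerns $\E L^2$), and finally integrate out $\bm$ using Poisson concentration. The only cosmetic omission is the paper's appeal to Proposition~\ref{prop:nontrivial:threshold} to assume ${\sf (SYM)}$ without loss of generality (the statement being vacuous otherwise); everything else, including the joint convergence and the role of Remark~\ref{rmk:L:infty:well:defined}, matches the paper's argument.
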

\begin{proof}
By Proposition~\ref{prop:nontrivial:threshold}, we assume ${\sf (SYM)}$ without loss of generality. Fix any sequence $(m_n)_{n\geq 1}$ such that $|m_n-dn/k|\leq n^{2/3}$ holds. We use Theorem~\ref{thm:SSG} by setting $Y_n\equiv L^{\ast}\big(\bG(n,m_n)\big)$ and $(X_{in})_{i\geq 1}\equiv \big(X_{\zeta}\big(\bG(n,m_n)\big)\big)_{\zeta\in S}$. To this end, we check the assumptions \ref{item:A1}-\ref{item:A4}. Firs, Fact~\ref{fact:cycle}-(1) implies the first assumption \ref{item:A1}. In addition, Fact~\ref{fact:cycle}-(2) and a simple change of measure guarantee that for a finite sequence of non-negative integers $x_1,\ldots x_{L}$, we have
\begin{equation*}
\E\Big[L\big(\bG(n,m_n)\big)\one\big\{X_{\zeta_1}=x_1,\ldots, X_{\zeta_L}=x_L\big\}\Big]\longrightarrow \prod_{i=1}^{L}\P\big(X^\star_{\zeta_i \infty}=x_{i}\big)\,.
\end{equation*}
Meanwhile, Lemma~\ref{lem:m:n:overlap:trivial} shows that the LHS equals $\E\big[L^{\ast}\big(\bG(n,m_n)\big)\one\big\{X_{\zeta_1}=x_1,\ldots, X_{\zeta_L}=x_L\big\}\big]$ up to $o_n(1)$ error. Moreover, $\E L^{\ast}\big(\bG(n,m_n)\big)=1-o_n(1)$ holds by Lemma~\ref{lem:m:n:overlap:trivial} since $\E L\big(\bG(n,m_n)\big)=1$. Thus, it follows that
\begin{equation*}
\frac{\E\Big[L^{\ast}\big(\bG(n,m_n)\big)\one\big\{X_{\zeta_1}=x_1,\ldots, X_{\zeta_L}=x_L\big\}\Big]}{\E L^{\ast}\big(\bG(n,m_n)\big)}\longrightarrow \prod_{i=1}^{L}\P\big(X^\star_{\zeta_i \infty}=x_{i}\big)\,,
\end{equation*}
which verifies~\ref{item:A2}. \ref{item:A3} is immediate from Lemma~\ref{lem:lambda:equality}. Furthermore, for $d<d_{\ks}$, Proposition~\ref{prop:sec:moment:LR} shows that
\beqn
\frac{\E\Big(L^{\ast}\big(\bG(n,m_n)\big)\Big)^2}{\Big(\E L^{\ast}\big(\bG(n,m_n)\big)\Big)^2}\leq \big(1+o_n(1)\big)\E\Big(L^{\ast}\big(\bG(n,m_n)\big)\Big)^2\leq \big(1+o_n(1)\big)\prod_{\lambda\in \Eig_{\SS}(\Xi\circ \Lambda^{\otimes 2})}\frac{1}{\sqrt{1-(k-1)d\lambda}}\,,
\eeqn
where the first inequality holds since $\E L^{\ast}\big(\bG(n,m_n)\big)=1-o_n(1)$. Since Lemma~\ref{lem:lambda:equality} shows that the final product equals $(1+o_n(1))\exp\big(\sum_{\ell\geq 1}\sum_{\zeta\in S_{\ell}}\lambda_{\zeta}\delta_{\zeta}^2\big)$, \ref{item:A4} also holds. Therefore, by Theorem \ref{thm:SSG}, we have for $d<d_{\ks}$ that
\beq\label{eq:truncated:LR:convergence}
\frac{L^{\ast}\big(\bG(n,m_n)\big)}{\E L^{\ast}\big(\bG(n,m_n)\big)}\dto \boldsymbol{L}_{\infty}\equiv \prod_{\ell \geq 1}\prod_{\zeta\in S_{\ell}}\Big\{(1+\delta_{\zeta})^{X_{\zeta \infty}}e^{-\lambda_{\zeta}\delta_{\zeta}}\Big\}\,,
\eeq
and this convergence holds jointly with the convergence of $X_{\zeta}\big(\bG(n,m_n)\big)$ in Fact~\ref{fact:cycle}-(1). For $d<d_{\ast}\wedge d_{\ks}$, Lemma~\ref{lem:m:n:overlap:trivial} allows us to translate this convergence to the convergence of $L(\bG(n,m_n))$: 
\beqn
L\big(\bG(n,m_n)\big)\dto \boldsymbol{L}_{\infty}\,,
\eeqn
where the convergence is also joint with the convergence in Fact~\ref{fact:cycle}-(1). Finally, since $|\bm-dn/k|\leq n^{2/3}$ holds with probability tending to one by a Chernoff bound, this concludes the proof.
\end{proof}

\begin{proof}[Proof of Theorem \ref{thm:factor:contiguity}]
Let $d<d_{\ast}\wedge d_{\ks}$. First, observe that the likelihood ratio of $\bG^{\star}\equiv \bG^\star(n,\bm)$ and $\bG\equiv \bG(n,\bm)$ is given $L(\cdot)$ since for a factor graph $G$ with $n$ variables and $m$ clauses, we have
\beq\label{eq:proof:mutual:info:tech}
\frac{\P(\bG^\star=G)}{\P(\bG=G)}=\frac{\P(\bG^\star(n,m)=G)\P(\bm=m)}{\P(\bG(n,m)=G)\P(\bm=m)}=L(G)\,.
\eeq
Moreover, Theorem~\ref{thm:likelihood:conv} guarantees that for $d<d_{\ast}\wedge d_{\ks}$, the convergence $L(\bG)\dto \bL_{\infty}$ holds as $n\to\infty$. Note that $\E \bL_{\infty}=1$ holds since $X_{\zeta,\infty}\sim \Poi(\la_{\zeta})$ are independent Poisson random variables. In addition, $\bL_{\infty}>0$ holds  a.s.. Therefore, Le Cam's first lemma (see e.g.~\cite[Lemma 6.4]{Vaart98} or \cite[Proposition 9.49]{Janson00random}) implies that $\bG\equiv \bG(n,\bm)$ and $\bG^\star\equiv \bG^\star(n,\bm)$ are mutually contiguous.
\end{proof}

\subsubsection{Consequences in hypothesis testing}
We next state the consequences of likelihood ratio convergence in hypothesis testing. To do so, we need the the following result, which shows that the distribution of $\bL_{\infty}$ contains a point mass depending on the average degree $d>0$. Its proof is deferred to Section~\ref{sec:proof:hypothesis:estimation}.
\begin{lemma}
\label{lem:dist:L:infty}
    In the subcritical regime $d\in (0,\frac{1}{k-1})$, the support of the random variable $\bL_{\infty}$ equals the countable set
    \begin{equation}\label{eq:support.set}
          \Big\{z: z=\prod_{\ell \geq 1}\prod_{\zeta\in S_{\ell}}(1+\delta_{\zeta})^{x_\zeta}e^{-\lambda_{\zeta}\delta_{\zeta}}~\textnormal{ for some $x=(x_{\zeta})_{\ell\geq 1, \zeta\in S_{\ell}}$ with finitely many non-zero elements.}\Big\}\,.
    \end{equation}
    On the contrary, in the critical or subcritical regime $d\in [\frac{1}{k-1}, d_{\ks})$, the random variable $\bL_{\infty}$ does not have a point mass.
\end{lemma}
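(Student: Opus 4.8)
The plan is to pass to logarithms: write $\log \bL_\infty = \sum_{\zeta\in S} Z_\zeta$, where $Z_\zeta := X_{\zeta,\infty}\log(1+\delta_\zeta) - \lambda_\zeta\delta_\zeta$ is a sum of \emph{independent} random variables. The series converges almost surely since $\bL_\infty$ is the a.s.\ limit of the $L^2$-bounded martingale of partial products over signatures of order at most $N$ (cf.\ Remark~\ref{rmk:L:infty:well:defined}); the variance bound $\prod_\zeta e^{\lambda_\zeta\delta_\zeta^2}<\infty$ is exactly Lemma~\ref{lem:lambda:equality} in the range $d<d_{\ks}$. Two elementary facts are used throughout. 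First, under {\sf (SYM)} each matrix $\Phi_{\psi,s,t}$ of~\eqref{eq:Phi:s:t} is a genuine stochastic matrix (nonnegative entries, unit row sums), hence so is every product $\Phi_\zeta$; therefore $\delta_\zeta=\tr(\Phi_\zeta)-1\in[-1,q-1]$, and in fact $\delta_\zeta>-1$, since otherwise $\bL_\infty$ would vanish with positive probability. Second, the count $\sum_{\zeta\in S_\ell}\lambda_\zeta=\tfrac1{2\ell}\big((k-1)d\big)^\ell$ (the weights contribute total mass $1$ per coordinate and there are $k(k-1)$ choices of each pair $s_i\ne t_i$), so $\sum_\zeta\lambda_\zeta<\infty$ iff $d<\tfrac1{k-1}$.

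Subcritical case $d<\tfrac1{k-1}$. Here $\sum_\zeta\lambda_\zeta<\infty$, so Borel--Cantelli gives that almost surely only finitely many $X_{\zeta,\infty}$ are nonzero; since also $\sum_\zeta\lambda_\zeta|\delta_\zeta|\le(q-1)\sum_\zeta\lambda_\zeta<\infty$, we get $\bL_\infty=e^{-\sum_\zeta\lambda_\zeta\delta_\zeta}\prod_{\zeta:\,X_{\zeta,\infty}>0}(1+\delta_\zeta)^{X_{\zeta,\infty}}$ almost surely, a point of~\eqref{eq:support.set} with $x_\zeta:=X_{\zeta,\infty}$. Conversely, for any finitely supported $x$ the event $\{X_{\zeta,\infty}=x_\zeta\text{ for all }\zeta\}$ has probability $\big(\prod_{\zeta\in\supp x}\P(\Poi(\lambda_\zeta)=x_\zeta)\big)e^{-\sum_{\zeta\notin\supp x}\lambda_\zeta}>0$, on which $\bL_\infty$ takes the corresponding value; hence every point of~\eqref{eq:support.set} is an atom. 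This identifies $\bL_\infty$ as purely atomic with atom set~\eqref{eq:support.set}.

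Critical/supercritical case $d\in[\tfrac1{k-1},d_{\ks})$. I would invoke the classical purity theorem for a.s.-convergent infinite convolutions of discrete laws (Lévy; Jessen--Wintner): $\law(\log\bL_\infty)=\ast_\zeta\law(Z_\zeta)$ is atomless iff $\sum_\zeta\big(1-\sup_x\P(Z_\zeta=x)\big)=\infty$. When $\delta_\zeta=0$ the summand is $0$ (the corresponding factor of $\bL_\infty$ is $\equiv1$); when $\delta_\zeta\ne0$, $Z_\zeta$ is an injective affine image of $X_{\zeta,\infty}$, so $\sup_x\P(Z_\zeta=x)=\sup_k\P(\Poi(\lambda_\zeta)=k)$, and the elementary Poisson bound $1-\sup_k\P(\Poi(\lambda)=k)\gtrsim\min(\lambda,1)$ reduces the claim to $\sum_{\zeta:\,\delta_\zeta\ne0}\min(\lambda_\zeta,1)=\infty$, equivalently (as $\sum_n\min(a_n,1)=\infty\iff\sum_n a_n=\infty$) to $\sum_{\zeta:\,\delta_\zeta\ne0}\lambda_\zeta=\infty$. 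Since $\sum_\zeta\lambda_\zeta=\infty$, this follows once one shows that, along a set of orders $\ell$ of positive lower density, a bounded-below fraction of the weight $\sum_{\zeta\in S_\ell}\lambda_\zeta$ is carried by signatures with $\delta_\zeta\ne0$ — because $\sum_\ell\tfrac1{2\ell}\big((k-1)d\big)^\ell$ already diverges for $d\ge\tfrac1{k-1}$. One obtains this by restricting to signatures $\zeta$ built from a single non-degenerate weight function $\psi$, i.e.\ one for which $\Phi_\psi$ has an eigenvalue besides its simple Perron eigenvalue $1$; such a $\psi$ exists whenever the model is non-degenerate ($\Xi_\ast\ne0$), the alternative being the trivial case $\bL_\infty\equiv1$ which must be read out of the statement. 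For these $\zeta$ one has $\delta_\zeta=\tr(\Phi_\psi^{\,\ell})-1=\sum_{i\ge2}\nu_i^\ell$ with $|\nu_i|<1$ and not all $\nu_i=0$, a nonzero linear recurrence in $\ell$, so by the Skolem--Mahler--Lech theorem its zero set is a finite union of arithmetic progressions plus finitely many exceptions; hence $\{\ell:\delta_\zeta\ne0\}$ has positive lower density. Feeding $\sum_{\zeta:\,\delta_\zeta\ne0}\lambda_\zeta=\infty$ back into the purity criterion shows $\bL_\infty$ has no point mass.

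The main obstacle is precisely this last step: ruling out, over the whole range $[\tfrac1{k-1},d_{\ks})$, the scenario in which the divergent mass $\sum_\zeta\lambda_\zeta=\infty$ is carried almost entirely by signatures with $\delta_\zeta=0$ (which would make $\bL_\infty$ atomic, contradicting the assertion). For a single weight function this is the clean Skolem--Mahler--Lech argument above; passing to a general finite family $\Psi$ requires quantifying, order by order, how much $\lambda$-weight the vanishing-trace signatures can absorb, which is the delicate point and likely needs more careful spectral/combinatorial bookkeeping than indicated here. The remaining ingredients — the subcritical case, the reduction to the purity theorem, and the Poisson estimates — are routine.
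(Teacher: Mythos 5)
Your subcritical-case argument (Borel--Cantelli to conclude that only finitely many $X_{\zeta,\infty}$ are nonzero, then direct identification of the atom set) is essentially the paper's argument and is correct. For the critical/supercritical regime $d\in[\tfrac{1}{k-1},d_{\ks})$, you take a genuinely different route: you invoke the Jessen--Wintner/L\'evy purity criterion for a.s.-convergent infinite convolutions of independent discrete laws and reduce atomlessness to the divergence criterion $\sum_{\zeta:\,\delta_\zeta\ne 0}\lambda_\zeta=\infty$. The paper instead runs a bespoke, self-contained argument: it recasts $\log\bL_\infty$ as $\lim_{T\to\infty}\int_0^T f(t)\,dN_t-g(T)$ for a standard Poisson process $N_t$, introduces $R(K,t)$ as the maximum probability that $\int_0^t f\,dN$ puts on $K$ distinct points, derives from the Poisson jump structure the differential inequality
\begin{equation*}
\frac{d}{dt}R(K,t)\bigg|_{t=T}\le -\big(R(K,T)-R(K-1,T)\big)+\big(R(K+1,T)-R(K,T)\big)\,,
\end{equation*}
and telescopes to force $R(1,t)\to 0$. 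Both routes are legitimate; the paper's trades the classical purity theorem for an elementary ad hoc calculation on atom sizes.

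The more substantive point is that you have correctly isolated the load-bearing step shared by both approaches: one needs $\sum_{\zeta:\,\delta_\zeta\ne 0}\lambda_\zeta=\infty$, i.e.\ the divergent total mass $\sum_\zeta\lambda_\zeta=\sum_\ell\tfrac{((k-1)d)^\ell}{2\ell}$ is not carried almost entirely by signatures with $\delta_\zeta=0$ (for which the corresponding factor of $\bL_\infty$ is identically $1$, so $f\equiv 0$ on the associated time intervals and the paper's differential inequality degenerates). The paper posits this implicitly --- it writes ``$f(t)$ is strictly positive'' and ``the sum of whose means is infinite'' --- without verification; note also that $f$ can be negative when $\delta_\zeta<0$, though this is harmless since the argument only needs $f$ nonzero. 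Your Skolem--Mahler--Lech sketch settles the issue when $\Psi$ is a singleton (all $\zeta\in S_\ell$ then share $\delta_\zeta=\tr(\Phi_\psi^{\ell})-1$, a nonzero linear recurrence whose nonvanishing set has positive density in $\ell$, and the tail $\sum_{\ell\in A}\tfrac{((k-1)d)^\ell}{2\ell}$ diverges for any $A$ of positive density once $(k-1)d\ge 1$), but, as you flag, the mixed-signature case needs further work, since naively restricting to a single weight function throws in the factor $p(\psi)^\ell<1$ and can kill the divergence exactly at $d=\tfrac{1}{k-1}$. So your proof is incomplete precisely where the paper is terse; this is a fair observation about the argument rather than a structural flaw of your approach relative to the paper's.
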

As an immediate consequence of Theorem~\ref{thm:likelihood:conv} and Lemma~\ref{lem:dist:L:infty}, we have the following corollary.
\begin{cor}\label{cor:power:factor}
    In the subcritical regime $d\in (0,\frac{1}{k-1})$, the asymptotic power of non-randomized likelihood ratio test for the hypothesis test~\eqref{eq:hypothesis:test:facotr} at significance level $\alpha\in (0,1)$ is given by
    \beq\label{eq:def:power:factor}
     \beta_{\ast}(\alpha,d,p,\pi):=\E\big[\bL_{\infty}\one\{\bL_{\infty}> C_{\alpha}\}\big]\,.
    \eeq
     Here, $C_{\alpha}>0$ is an arbitrary constant that satisfy
     \beqn
     \P(\bL_{\infty}>C_{\alpha})\leq \alpha \leq \P(\bL_{\infty}\geq C_{\alpha})\,.
     \eeqn
     In the critical or supercriticial regime $d\in [\frac{1}{k-1},d_{\ks})$, the asymptotic power of (possibly randomized) likelihood ratio test for the hypothesis test~\eqref{eq:hypothesis:test:facotr} at significance level $\alpha\in (0,1)$ is given by $\beta_{\ast}(\alpha,d,p,\pi)$.
\end{cor}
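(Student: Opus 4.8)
The plan is to read off the asymptotic power from the convergence in distribution $L(\bG)\dto\boldsymbol{L}_{\infty}$ of Theorem~\ref{thm:likelihood:conv}, via a change of measure, a uniform integrability argument, and the point-mass dichotomy of Lemma~\ref{lem:dist:L:infty}. Recall from \eqref{eq:proof:mutual:info:tech} that $L(\cdot)$ is the likelihood ratio of $\bG^\star\sim\GG_{\sf plant}(n,d,p,\pi)$ against $\bG\sim\GG_{\sf null}(n,d,p)$, so by a change of measure the power of any test $\phi_n$ equals $\E[L(\bG)\,\phi_n(\bG)]$, the expectation under the null. Write $\beta_n$ for the power of the non-randomized LR test at level $\alpha$, which rejects on $\{L(\bG)>c_n\}$ with $c_n:=\inf\{c\ge0:\P(L(\bG)>c)\le\alpha\}$ (so $\P(L(\bG)>c_n)\le\alpha$ by right-continuity); thus $\beta_n=\E[L(\bG)\one\{L(\bG)>c_n\}]$, and a randomized version that attains size exactly $\alpha$ changes the power by at most $c_n\,\P(L(\bG)=c_n)$. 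The existence of $C_\alpha$ with $\P(\boldsymbol{L}_{\infty}>C_\alpha)\le\alpha\le\P(\boldsymbol{L}_{\infty}\ge C_\alpha)$ is immediate since $c\mapsto\P(\boldsymbol{L}_{\infty}>c)$ is right-continuous and decreases from $1$ to $0$ (as $\boldsymbol{L}_{\infty}\in(0,\infty)$ a.s.); take $C_\alpha:=\inf\{c:\P(\boldsymbol{L}_{\infty}>c)\le\alpha\}$, and one readily checks that $\E[\boldsymbol{L}_{\infty}\one\{\boldsymbol{L}_{\infty}>C_\alpha\}]$ takes the same value for every $C_\alpha$ obeying the two-sided bound, so $\beta_{\ast}(\alpha)$ is well defined. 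The crucial analytic input is that $\{L(\bG)\}_n$ is uniformly integrable: this follows from $\E[L(\bG)]=1$ for all $n$, $L(\bG)\ge0$, $L(\bG)\dto\boldsymbol{L}_{\infty}$, and $\E[\boldsymbol{L}_{\infty}]=1$ (the last since $\boldsymbol{L}_{\infty}$ is the a.s.\ limit of the mean-one bounded-variance martingale of Remark~\ref{rmk:L:infty:well:defined}), together with the standard fact that nonnegative random variables converging in distribution with convergent means are uniformly integrable. Hence, for any bounded $g$ that is continuous almost everywhere with respect to the law of $\boldsymbol{L}_{\infty}$, the continuous mapping theorem and uniform integrability give $\E[L(\bG)\,g(L(\bG))]\to\E[\boldsymbol{L}_{\infty}\,g(\boldsymbol{L}_{\infty})]$; in particular $\P(L(\bG)>c)\to\P(\boldsymbol{L}_{\infty}>c)$ and $\E[L(\bG)\one\{L(\bG)>c\}]\to\E[\boldsymbol{L}_{\infty}\one\{\boldsymbol{L}_{\infty}>c\}]$ at any continuity point $c$ of the law of $\boldsymbol{L}_{\infty}$.

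In the critical/supercritical regime $d\in[\tfrac{1}{k-1},d_{\ks})$, Lemma~\ref{lem:dist:L:infty} shows $\boldsymbol{L}_{\infty}$ has no atom, so $C_\alpha$ satisfies $\P(\boldsymbol{L}_{\infty}>C_\alpha)=\alpha$ and is a continuity point; moreover $\P(L(\bG)=c)\to0$ for every $c$, so $c_n\to C_\alpha$ and the randomization correction vanishes. Both the randomized and non-randomized LR tests therefore have size $\to\alpha$ and power $\beta_n\to\E[\boldsymbol{L}_{\infty}\one\{\boldsymbol{L}_{\infty}>C_\alpha\}]=\beta_{\ast}(\alpha)$, optimal by the Neyman--Pearson lemma. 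In the subcritical regime $d\in(0,\tfrac{1}{k-1})$, $\boldsymbol{L}_{\infty}$ is purely atomic with the countable support of Lemma~\ref{lem:dist:L:infty}, the randomization fraction of the size-exactly-$\alpha$ test need not converge, and so we work with the non-randomized test. Since the non-continuity points of $\boldsymbol{L}_{\infty}$ form a countable set, they are dense; bracketing $c_n$ between continuity points straddling $C_\alpha$ yields $\liminf_n c_n\ge C_\alpha$, and testing against a continuity-point threshold $c_+\downarrow C_\alpha$ with $\P(\boldsymbol{L}_{\infty}>c_+)\le\alpha$ gives the lower bound $\liminf_n\beta_n\ge\beta_{\ast}(\alpha)$.

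The matching upper bound $\limsup_n\beta_n\le\beta_{\ast}(\alpha)$ is the only genuinely delicate point, arising precisely when $C_\alpha$ is itself an atom of $\boldsymbol{L}_{\infty}$: bracketing between continuity points alone only gives $\limsup_n\beta_n\le\E[\boldsymbol{L}_{\infty}\one\{\boldsymbol{L}_{\infty}\ge C_\alpha\}]=\beta_{\ast}(\alpha)+C_\alpha\P(\boldsymbol{L}_{\infty}=C_\alpha)$, leaving a gap equal to the mass of the atom. To close it I would strengthen the weak convergence, in the subcritical regime, to setwise convergence of the laws along the support of $\boldsymbol{L}_{\infty}$: for $d<\tfrac{1}{k-1}$ the null graph $\bG$ is with high probability a forest together with $O_p(1)$ short cycles, and on that event one can show (using the condition {\sf (SYM)}, which makes tree pieces contribute trivially to the likelihood ratio) that $L(\bG)$ coincides, up to a factor $1+o_p(1)$, with the fixed function $\prod_{\zeta}(1+\delta_\zeta)^{X_\zeta(\bG)}e^{-\lambda_\zeta\delta_\zeta}$ of the short-cycle counts; these counts converge jointly to independent Poissons by Fact~\ref{fact:cycle}-(1), and since each preimage of a support point is a countable union of cylinder events whose probabilities converge (with a uniform tail bound on long cycles controlling the remainder), dominated convergence gives convergence of the atomic masses, hence of the full distribution function. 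With this, the constraint $\P(L(\bG)>c_n)\le\alpha$ forces the rejection event to have limiting null-mass at most $\alpha$, so it cannot contain the atom at $C_\alpha$ (adding it would raise the limiting null-mass to $\P(\boldsymbol{L}_{\infty}\ge C_\alpha)$, strictly above $\alpha$ in the non-degenerate case), whence the rejection event captures exactly $\{\boldsymbol{L}_{\infty}>C_\alpha\}$ in the limit and $\beta_n\to\beta_{\ast}(\alpha)$. Combining the two bounds completes the proof; I expect this last step — the bookkeeping around the (possibly dense) atoms of $\boldsymbol{L}_{\infty}$ — to be the main technical burden.
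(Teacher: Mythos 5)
The paper offers no proof of this corollary — it declares it an ``immediate consequence'' of Theorem~\ref{thm:likelihood:conv} and Lemma~\ref{lem:dist:L:infty} — so you are filling in details the authors left implicit. Your treatment of the critical/supercritical regime $d\in[\tfrac1{k-1},d_{\ks})$ is correct and standard: uniform integrability of $\{L(\bG)\}_n$ (from $\E L(\bG)=1$, $L(\bG)\geq 0$, $L(\bG)\dto\bL_\infty$ and $\E\bL_\infty=1$), together with the absence of atoms given by Lemma~\ref{lem:dist:L:infty}, let you pass to the limit in both the size and power integrals, and the paper's own proof of the companion Corollary~\ref{cor:cycle:test:factor} (stated only for $d\geq 1$) rests on exactly this no-atom property.

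The subcritical regime $d\in(0,\tfrac1{k-1})$ is where your argument has a genuine gap, and you essentially admit as much. The obstacle is that $L(\bG)=T_n(\bG)\,(1+o_p(1))$ gives only \emph{multiplicative} closeness, which is entirely compatible with the law of $L(\bG)$ smearing each atom $a$ of $\bL_\infty$ over a shrinking interval $(a(1-\eps_n),a(1+\eps_n))$. If $C_\alpha$ is an atom of mass $p$ and $\alpha=\P(\bL_\infty>C_\alpha)+\theta p$ for some $\theta\in(0,1)$, then the optimal non-randomized threshold $c_n=\inf\{c:\P(L(\bG)>c)\le\alpha\}$ can sit inside the smear and capture a fraction $\theta$ of the atom's null mass, driving $\lim_n\beta_n=\beta_\ast(\alpha)+\theta C_\alpha p$ rather than $\beta_\ast(\alpha)$. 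Setwise (indeed total-variation) convergence of laws, which is what you invoke to rule this out, would force $L(\bG)$ to take values exactly in the countable support set~\eqref{eq:support.set}; this fails at any finite $n$, and the joint convergence of cycle counts (Fact~\ref{fact:cycle}) controls only $T_n(\bG)$, not the multiplicative error factor relating it to $L(\bG)$. Weak convergence plus the SSG machinery gives no information about the sign, shape, or symmetry of that factor near an atom, so the ``bookkeeping around the atoms'' that you defer is not a routine technical step — as proposed, it does not close the upper bound $\limsup_n\beta_n\le\beta_\ast(\alpha)$.

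Two smaller points. First, ``the non-continuity points \dots are dense'' should read that the \emph{continuity} points are dense (the complement of a countable set); what you use is the latter, so this is a slip. Second, the assertion that $\E[\bL_\infty\one\{\bL_\infty>C_\alpha\}]$ is independent of the admissible choice of $C_\alpha$ is not unconditionally true: if $\alpha=\P(\bL_\infty\geq a)$ for an atom $a$, then both $a$ and any non-atom $c$ in the gap immediately below $a$ satisfy $\P(\bL_\infty>C_\alpha)\le\alpha\le\P(\bL_\infty\geq C_\alpha)$, yet the two choices give values of $\beta_\ast(\alpha)$ that differ by $a\,\P(\bL_\infty=a)$. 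This occurs only for countably many $\alpha$, but the well-posedness claim as stated is false.
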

Next, we consider computationally efficient hypothesis using the following statistic based on $\zeta$-cycles:
\beqn
T_n(G):=\prod_{1\leq \ell \leq K_n}\prod_{\zeta\in S_{\ell}}\Big\{(1+\delta_{\zeta})^{X_{\zeta}(G)}e^{-\lambda_{\zeta}\delta_{\zeta}}\Big\}\,,
\eeqn
where the truncation parameter is taken so that $1\ll K_n=O(\log\log n)$. Then, we prove the following lemma in Section~\ref{sec:proof:hypothesis:estimation} by utilizing Fact~\ref{fact:cycle}.
\begin{lemma}\label{lem:T:conv}
    Under the condition~{\sf (SYM)}, let $d<d_{\ks}$ and $1\ll K_n=O(\log\log n)$. Then, under the null model $\bG\sim \GG_{\sf null}(n,d,p)$,
    \beqn
    T_n(\bG)\dto \bL_{\infty}\equiv \prod_{\ell \geq 1}\prod_{\zeta\in S_{\ell}}\Big\{(1+\delta_{\zeta})^{X_{\zeta,\infty}}e^{-\lambda_{\zeta}\delta_{\zeta}}\Big\}\,,
    \eeqn
   where $X_{\zeta,\infty}\sim \Poi(\la_{\zeta})$ are independent Poisson random variables. Under the planted model $\bG^\star\sim \GG_{\sf plant}(n,d,p,\pi)$,
   \beqn
   T_n(\bG^\star)\dto \bL^\star_{\infty}\equiv \prod_{\ell\geq 1} \prod_{\zeta\in S_{\ell}}\Big\{(1+\delta_{\zeta})^{X^\star_{\zeta,\infty}}e^{-\lambda_{\zeta}\delta_{\zeta}}\Big\}\,,
   \eeqn
   where $X^\star_{\zeta, \infty}\sim \Poi(\la^\star_{\zeta})$ are independent Poisson random variables. 
\end{lemma}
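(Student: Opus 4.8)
\noindent The plan is to combine the joint Poisson convergence of the $\zeta$-cycle counts from Fact~\ref{fact:cycle} with a truncation argument controlling the tail of the (growing) product defining $T_n$; this follows the small-subgraph-conditioning template of \cite{CEJKK:18, CO18number}, the new ingredient being that the cutoff $K_n=O(\log\log n)$ is still low enough for the relevant cycle-count moment asymptotics to apply. By Proposition~\ref{prop:nontrivial:threshold} we may assume ${\sf (SYM)}$. Exactly as in the proof of Theorem~\ref{thm:likelihood:conv}, since $\bm\sim\Poi(dn/k)$ concentrates we have $|\bm-dn/k|\le n^{2/3}$ with probability $1-o_n(1)$, so it suffices to prove $T_n(\bG(n,m_n))\dto\bL_{\infty}$ and $T_n(\bG^\star(n,m_n))\dto\bL^\star_{\infty}$ for an arbitrary deterministic sequence with $|m_n-dn/k|\le n^{2/3}$.

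\noindent For $L\ge 1$ write $T_n^{(L)}(G):=\prod_{\ell\le L}\prod_{\zeta\in S_\ell}(1+\delta_\zeta)^{X_\zeta(G)}e^{-\lambda_\zeta\delta_\zeta}$. Since $\bigcup_{\ell\le L}S_\ell$ is finite and $(x_\zeta)_\zeta\mapsto\prod_\zeta(1+\delta_\zeta)^{x_\zeta}e^{-\lambda_\zeta\delta_\zeta}$ is continuous, Fact~\ref{fact:cycle}(1) and the continuous mapping theorem give $T_n^{(L)}(\bG(n,m_n))\dto\bL_{\infty}^{(L)}:=\prod_{\ell\le L}\prod_{\zeta\in S_\ell}(1+\delta_\zeta)^{X_{\zeta,\infty}}e^{-\lambda_\zeta\delta_\zeta}$ under the null, and Fact~\ref{fact:cycle}(2) gives the analogous statement $T_n^{(L)}(\bG^\star(n,m_n))\dto\bL_{\infty}^{\star,(L)}$ with $X^\star_{\zeta,\infty}\sim\Poi(\lambda^\star_\zeta)$ replacing $X_{\zeta,\infty}$. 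Since $d<d_{\ks}$, Lemma~\ref{lem:lambda:equality} gives $\sum_{\ell\ge1}\sum_{\zeta\in S_\ell}\lambda_\zeta\delta_\zeta^2<\infty$; by Remark~\ref{rmk:L:infty:well:defined} the sequence $(\bL_{\infty}^{(L)})_L$ is an $L^2$-bounded martingale converging a.s.\ and in $L^2$ to $\bL_{\infty}$, and the same summability --- using also $\lambda^\star_\zeta=\lambda_\zeta(1+\delta_\zeta)$ and the bound $|\delta_\zeta|=|\tr\Phi_\zeta-1|\le q+1$, valid because each $\Phi_{\psi,s,t}$ is a stochastic matrix under ${\sf (SYM)}$ --- gives $\bL_{\infty}^{\star,(L)}\to\bL^\star_{\infty}$ a.s.\ and in $L^2$ as $L\to\infty$.

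\noindent It remains to check that the remainder $R_n^{(L)}:=T_n(\bG(n,m_n))/T_n^{(L)}(\bG(n,m_n))=\prod_{L<\ell\le K_n}\prod_{\zeta\in S_\ell}(1+\delta_\zeta)^{X_\zeta(\bG(n,m_n))}e^{-\lambda_\zeta\delta_\zeta}$ is negligible, i.e.\ $\lim_{L\to\infty}\limsup_{n\to\infty}\P(|R_n^{(L)}-1|>\eps)=0$ for each $\eps>0$, and likewise for the planted remainder $R_n^{\star,(L)}$. Granting this, tightness of $T_n^{(L)}$ and the factorization $T_n-T_n^{(L)}=T_n^{(L)}(R_n^{(L)}-1)$ show $T_n-T_n^{(L)}\to0$ in probability uniformly in $n$ as $L\to\infty$, and a standard approximation argument for weak convergence then upgrades the fixed-$L$ limits above to $T_n(\bG(n,m_n))\dto\bL_{\infty}$ and $T_n(\bG^\star(n,m_n))\dto\bL^\star_{\infty}$. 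To prove negligibility we invoke the factorial-moment computations underlying Fact~\ref{fact:cycle} (i.e.\ \cite[Proposition~3.12, Lemma~8.2]{CEJKK:18}), which for $K_n=O(\log\log n)$ hold uniformly over signatures of order at most $K_n$ with summable $o_n(1)$ errors; they yield $\E R_n^{(L)}\to\prod_{\ell>L}\prod_{\zeta\in S_\ell}\E[(1+\delta_\zeta)^{X_{\zeta,\infty}}]e^{-\lambda_\zeta\delta_\zeta}=1$ and $\E(R_n^{(L)})^2\to\prod_{\ell>L}\prod_{\zeta\in S_\ell}e^{\lambda_\zeta\delta_\zeta^2}$, so $\E[(R_n^{(L)}-1)^2]\to\exp\!\big(\sum_{\ell>L}\sum_{\zeta\in S_\ell}\lambda_\zeta\delta_\zeta^2\big)-1\to0$ as $L\to\infty$; Chebyshev's inequality concludes. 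For the planted model the identical computation with $\Poi(\lambda^\star_\zeta)$-limits and $\lambda^\star_\zeta-\lambda_\zeta=\lambda_\zeta\delta_\zeta$ gives $\E R_n^{\star,(L)}\to\exp\!\big(\sum_{\ell>L}\sum_\zeta\lambda_\zeta\delta_\zeta^2\big)$ and $\E(R_n^{\star,(L)})^2\to\exp\!\big(\sum_{\ell>L}\sum_\zeta\lambda_\zeta(3\delta_\zeta^2+\delta_\zeta^3)\big)$, both tending to $1$ as $L\to\infty$ since $|\delta_\zeta|$ is bounded and $\sum_\zeta\lambda_\zeta\delta_\zeta^2<\infty$.

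\noindent The main obstacle is the uniform-in-$n$ moment control on $R_n^{(L)}$: for a single signature the factorial moments of $X_\zeta(\bG(n,m_n))$ converge to those of $\Poi(\lambda_\zeta)$, but here one needs these asymptotics \emph{simultaneously and with summable errors} across all cycle lengths up to $K_n$, which breaks down once $K_n$ is of order $\log n$ (moments of cycle counts of such lengths blow up). The hypothesis $K_n=O(\log\log n)$ is precisely what keeps every relevant cycle length inside the regime where the moment estimates of \cite{CEJKK:18} apply and sum to $o_n(1)$.
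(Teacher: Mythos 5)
Your proposal is correct and follows essentially the same route as the paper: truncate the product at a finite level, use the joint Poisson convergence from Fact~\ref{fact:cycle} on the truncated part, and kill the tail via the summability $\sum_{\zeta}\lambda_\zeta\delta_\zeta^2<\infty$ guaranteed by Lemma~\ref{lem:lambda:equality} together with the observation that $K_n=O(\log\log n)$ keeps the signature count at $(\log n)^{O(1)}$, so the per-signature moment errors of order $\log n/\sqrt n$ remain summable. The organizational differences from the paper are minor but worth noting: the paper replaces your fixed-$L$ double-limit argument with a diagonal argument producing a single slowly growing cutoff $K_n'$, and it controls the tail by bounding $\E[(\log R_n^{(K_n')})^2]$ rather than $\E[(R_n^{(L)}-1)^2]$; working with the logarithm means it only needs first and second moments of the $X_\zeta$'s (via a Taylor expansion of $\log(1+\delta_\zeta)$, which requires Lemma~\ref{lem:MC:fundamental} to keep $\sup_{\zeta\in S_\ell}|\delta_\zeta|$ small uniformly for large $\ell$), whereas your direct product computations of $\E R_n^{(L)}$ and $\E(R_n^{(L)})^2$ implicitly call on the full joint-MGF strength of the factorial-moment estimates in \cite{CEJKK:18}, not just first and second moments. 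Both routes are valid; the paper's is marginally cleaner at the step of matching the asymptotic moments to those of independent Poissons, while yours trades that for avoiding the Taylor remainder bookkeeping and Lemma~\ref{lem:MC:fundamental} entirely. One small imprecision: you describe $(\bL^{\star,(L)}_\infty)_L$ as a martingale, but since $\E\bL^{\star,(L)}_\infty=\prod_{\ell\le L}\prod_\zeta e^{\lambda_\zeta\delta_\zeta^2}$ grows in $L$, it is not one; the a.s.\ and $L^2$ convergence still holds (by a normalized martingale or, as the paper does, by the change-of-measure identity in Eq.~\eqref{eq:L:infty:change}), so this does not affect the conclusion.
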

We note that the random variable $\bL^\star_{\infty}$ is well-defined for $d<d_{\ks}$, i.e. the infinite product over $\ell \geq 1$ converges almost surely, since $\bL_{\infty}$ is well-defined (cf. Remark~\ref{rmk:L:infty:well:defined}) with mean $1$ from which $\bL^\star_{\infty}$ can be obtained by a change of measure (see Eq.~\eqref{eq:L:infty:change} below). As a corollary, we have the following result.
\begin{cor}\label{cor:cycle:test:factor}
    Under the condition~{\sf (SYM)}, let $1\leq d<d_{\ks}$ and $1\ll K_n=O(\log\log n)$. At significance level $\alpha\in (0,1)$, consider the non-randomized test $\varphi_{n,\alpha}(\cdot)$ which rejects the null ${\sf H_0}$ in Eq.~\eqref{eq:hypothesis:test:facotr} with probability 
\beqn
\varphi_{n,\alpha}(G):=
\begin{cases}
    1 & \;\;\;\;\textnormal{if}\;\;\;\;\; T_n(G)> C_{n,\alpha}\,;\\
    % \ga'_{n,\alpha} & \;\;\;\;\textnormal{if}\;\;\;\;\; \TT_n(G)= C'_{n,\alpha}\,;\\
    0 & \;\;\;\;\textnormal{otherwise}\,.
\end{cases}
\eeqn
Here, the constants $C_{n,\alpha}>0$ is chosen so that we have
\beq\label{eq:C:n:alpha}
\P(T_n(\bG)>C_{n,\alpha})\leq \alpha\leq \P(T_n(\bG)\geq C_{n,\alpha})\,,
\eeq
where $\bG\sim  \GG_{\sf null}(n,d,p)$. Then, for $1\leq d<d_{\ks}$, the test $\varphi_{n,\alpha}$ achieves the power $\beta_{\ast}(\alpha,d,p,\pi)$ defined in Eq.~\eqref{eq:def:power:factor}. That is, under the alternative $\bG^\star\sim \GG_{\sf plant}(n,d,p,\pi)$, we have
\beqn
\E\varphi_{n,\alpha}(\bG^\star)\to \beta_{\ast}(\alpha)\;\;\;\;\textnormal{as}\;\;\;\; n\to\infty\,.
\eeqn
In particular, in the regime $1\leq d <d_{\ast}\wedge d_{\ks}$, the test $\varphi_{n,\alpha}(\cdot)$ is asymptotically most powerful for the hypothesis test~\eqref{eq:hypothesis:test}.
\end{cor}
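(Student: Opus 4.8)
The plan is to deduce the statement from the two weak--convergence conclusions of Lemma~\ref{lem:T:conv}, namely $T_n(\bG)\dto\bL_\infty$ and $T_n(\bG^\star)\dto\bL^\star_\infty$, together with the atomlessness of $\bL_\infty$ from Lemma~\ref{lem:dist:L:infty}; the only real work is a quantile--convergence argument for the data--dependent cutoff $C_{n,\alpha}$. Note first that $1\le d$ forces $d\ge\tfrac1{k-1}$ since $k\ge2$, so $\bL_\infty$ and $\bL^\star_\infty$ are well defined, $\E\bL_\infty=1$, and by Lemma~\ref{lem:dist:L:infty} the law of $\bL_\infty$ has no atom.

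The first step is to show that $C_{n,\alpha}$ converges along every subsequence to an admissible value of $C_\alpha$ in \eqref{eq:def:power:factor}. The defining inequalities of the corollary say exactly that $C_{n,\alpha}$ is an upper $\alpha$--quantile of $T_n(\bG)$, i.e.\ $\P(T_n(\bG)\le C_{n,\alpha})\ge 1-\alpha$ and $\P(T_n(\bG)<C_{n,\alpha})\le 1-\alpha$. Since $T_n(\bG)\dto\bL_\infty$ with $\bL_\infty>0$ a.s.\ and atomless, we have $\P(T_n(\bG)\le\eps)\to\P(\bL_\infty\le\eps)$, and choosing $\eps$ small, $M$ large so that $\P(\bL_\infty\le\eps)<1-\alpha<\P(\bL_\infty\le M)$, one concludes $C_{n,\alpha}\in[\eps,M]$ for all large $n$; hence $(C_{n,\alpha})$ is bounded away from $0$ and $\infty$. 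Passing to any subsequence along which $C_{n,\alpha}\to c\in(0,\infty)$ and feeding $C_{n,\alpha}<c+\eps$ into $\P(T_n(\bG)>C_{n,\alpha})\le\alpha$ and $C_{n,\alpha}>c-\eps$ into $\alpha\le\P(T_n(\bG)\ge C_{n,\alpha})$, then using continuity of the law of $\bL_\infty$ and letting $\eps\downarrow0$, yields $\P(\bL_\infty>c)=\P(\bL_\infty\ge c)=\alpha$, so $c$ is an admissible $C_\alpha$. Moreover any two admissible choices of $C_\alpha$ differ only on a $\bL_\infty$--null set, so $\beta_\ast(\alpha,d,p,\pi)=\E[\bL_\infty\one\{\bL_\infty>C_\alpha\}]$ does not depend on the choice.

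Next I would compute the power. We have $\E\varphi_{n,\alpha}(\bG^\star)=\P(T_n(\bG^\star)>C_{n,\alpha})$ and $T_n(\bG^\star)\dto\bL^\star_\infty$ by Lemma~\ref{lem:T:conv}. The change--of--measure identity \eqref{eq:L:infty:change}---tilting the independent $X_{\zeta,\infty}\sim\Poi(\la_\zeta)$ by $(1+\delta_\zeta)^{X_{\zeta,\infty}}e^{-\la_\zeta\delta_\zeta}$ turns them into the $\Poi(\la^\star_\zeta)$ governing $\bL^\star_\infty$, and $\E\bL_\infty=1$---gives $\E[g(\bL^\star_\infty)]=\E[\bL_\infty\,g(\bL_\infty)]$ for bounded $g$; in particular $\P(\bL^\star_\infty=x)=x\,\P(\bL_\infty=x)=0$ for every $x$, so $\bL^\star_\infty$ is atomless, and $\P(\bL^\star_\infty>C_\alpha)=\E[\bL_\infty\one\{\bL_\infty>C_\alpha\}]=\beta_\ast(\alpha)$. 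Combining $T_n(\bG^\star)\dto\bL^\star_\infty$, atomlessness of $\bL^\star_\infty$ at $C_\alpha$, and $C_{n,\alpha}\to C_\alpha$ along the subsequence---sandwiching $\P(T_n(\bG^\star)>C_{n,\alpha})$ between $\P(T_n(\bG^\star)>C_\alpha+\eps)$ and $\P(T_n(\bG^\star)>C_\alpha-\eps)$ and letting $\eps\downarrow0$---gives $\P(T_n(\bG^\star)>C_{n,\alpha})\to\beta_\ast(\alpha)$; since the limit is the same along every subsequence, $\E\varphi_{n,\alpha}(\bG^\star)\to\beta_\ast(\alpha)$. Finally, for $1\le d<d_\ast\wedge d_{\ks}$, Corollary~\ref{cor:power:factor} gives that the likelihood--ratio test for \eqref{eq:hypothesis:test:facotr} has asymptotic power $\beta_\ast(\alpha)$; by Neyman--Pearson it is most powerful at level $\alpha$ for each $n$, so no sequence of level--$\alpha$ tests can have $\limsup$ of its power exceeding $\beta_\ast(\alpha)$, and since $\varphi_{n,\alpha}$ has level $\le\alpha$ (by the choice of $C_{n,\alpha}$) and power tending to $\beta_\ast(\alpha)$, it is asymptotically most powerful.

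I expect the only delicate point to be the cutoff step: keeping $C_{n,\alpha}$ bounded away from $0$ and $\infty$ and identifying every subsequential limit with an admissible $C_\alpha$; after that, everything follows from Lemmas~\ref{lem:T:conv} and \ref{lem:dist:L:infty} and the tilting identity \eqref{eq:L:infty:change}. One remark on why the argument must go through $T_n(\bG^\star)\dto\bL^\star_\infty$ rather than the seemingly shorter route $\E\varphi_{n,\alpha}(\bG^\star)=\E[L(\bG)\,\varphi_{n,\alpha}(\bG)]$ combined with $L(\bG)\dto\bL_\infty$: the latter convergence is available only for $d<d_\ast$ (it fails in the second--moment--unstable regime), whereas the power formula in the corollary must hold on the entire range $1\le d<d_{\ks}$, so one genuinely needs the statistic $T_n$ (not the full likelihood ratio) on both the null and the planted side.
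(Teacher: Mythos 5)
Your proposal is correct and follows essentially the same route as the paper's own proof: both extract a subsequential limit of the cutoff $C_{n,\alpha}$, use atomlessness of $\bL_\infty$ from Lemma~\ref{lem:dist:L:infty} to identify that limit as an admissible $C_\alpha$, transfer to the planted side via the change-of-measure identity~\eqref{eq:L:infty:change} to get atomlessness of $\bL^\star_\infty$ and $\P(\bL^\star_\infty>C_\alpha)=\beta_\ast(\alpha)$, and then invoke Corollary~\ref{cor:power:factor} for the optimality statement. The only cosmetic differences are that you use an $\eps$-sandwiching argument where the paper invokes Skorohod embedding to upgrade $T_n(\bG)\dto\bL_\infty$ to almost-sure convergence, and you explicitly establish that $(C_{n,\alpha})$ is bounded away from $0$ and $\infty$ before extracting subsequential limits (a detail the paper leaves implicit by allowing $\tilde C_\alpha\in[-\infty,\infty]$); both of these are valid and yield the same conclusion. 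Your closing remark about why one must argue through $T_n(\bG^\star)\dto\bL^\star_\infty$ rather than via $L(\bG)\dto\bL_\infty$ on the range $d_\ast\le d<d_{\ks}$ is a correct and useful observation not spelled out in the paper.
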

\begin{proof}
Let $1\leq d<d_{\ks}$. By passing to a subsequence, let $\tilde{C}_{\alpha}\in [-\infty,\infty]$ be the limit of $(C_{n,\alpha})_{n\geq 1}$. We first claim that $\P(\bL_{\infty}>\tilde{C}_{\alpha})=\alpha$. To see this, note that by Lemma~\ref{lem:T:conv} and Skohorod embedding, there exists a coupling such that $T_n(\bG)$ converges to $\bL_{\infty}$ almost surely. Moreover, $\P(\bL_{\infty}= \tilde{C}_{\alpha})=0$ holds by Lemma~\ref{lem:dist:L:infty}. Thus, it follows that
\begin{equation*}
    \one\{T_n(\bG)>C_{n,\alpha}\}\to \one\{\bL_{\infty}>\tilde{C}_{\alpha}\}\;\;\;\;\textnormal{as}\;\;\;\;n\to\infty\;\;\;\textnormal{almost surely.}
\end{equation*}
Similarly, $\one\{T_n(\bG)\geq C_{n,\alpha}\}$ converges to $\one\{\bL_{\infty}>\tilde{C}_{\alpha}\}=\one\{\bL_{\infty}\geq \tilde{C}_{\alpha}\}$ a.s.. Hence, by dominated convergence theorem, sending $n\to\infty$ in \eqref{eq:C:n:alpha} yield that
\begin{equation}\label{eq:thresholds:conv}
    \alpha =\lim_{n\to\infty}\P(T_n(\bG)>C_{n,\alpha})=\lim_{n\to\infty}\P(T_n(\bG)\geq C_{n,\alpha})= \P(\bL_{\infty}>\tilde{C}_{\alpha})\,,
\end{equation}
which proves our first claim. Next, note that for any Borel measurable set $A\subset \R$, we have
\beq\label{eq:L:infty:change}
\begin{split}
\E\big[\bL_{\infty}\one\{\bL_{\infty}\in A\}\big]
&\equiv \sum_{x=(x_{\zeta})_{\zeta}}\one\Big\{\prod_{\zeta}(1+\delta_{\zeta})^{x_{\zeta}}e^{-\la_{\zeta}\delta_{\zeta}}\in A\Big\}\prod_{\zeta}\Big\{(1+\delta_{\zeta})^{x_{\zeta}}e^{-\la_{\zeta}\delta_{\zeta}}\P(X_{\zeta,\infty}=x_{\zeta})\Big\}\\
&=\sum_{x=(x_{\zeta})_{\zeta}}\one\Big\{\prod_{\zeta}(1+\delta_{\zeta})^{x_{\zeta}}e^{-\la_{\zeta}\delta_{\zeta}}\in A\Big\}\prod_{\zeta}\frac{(\la^\star_{\zeta})^{x_{\zeta}}}{x_{\zeta}!}e^{-\la^\star_{\zeta}}\\
&=\P\big(\bL^\star_{\infty}\in A\big)\,,
\end{split}
\eeq
where the second equality holds since $1+\delta_{\zeta}\equiv \la^\star_{\zeta}/\la_{\zeta}$. In particular, the distribution of $\bL^\star_{\infty}$ does not have a point mass by Lemma~\ref{lem:dist:L:infty}. Moreover, by repeating the argument in deriving \eqref{eq:thresholds:conv}, Lemma~\ref{lem:T:conv} shows that
\beqn
\lim_{n\to\infty}\E \varphi_{n,\alpha}(\bG^\star) =\lim_{n\to\infty}\P(T_n(\bG^\star)>C_{n,\alpha})= \P(\bL_{\infty}^\star >\tilde{C}_{\alpha})\,.
\eeqn
Meanwhile, the final term equals $\P(\bL_{\infty}^\star >\tilde{C}_{\alpha})=\E[\bL_{\infty}\one\{\bL_{\infty}>\tilde{C}_{\alpha}\}]$ by \eqref{eq:L:infty:change}. Since $\P(\bL_{\infty}>\tilde{C}_{\alpha})=\alpha$ holds by our first claim, the final term further equals $\beta_{\ast}(\alpha,d,p,\pi)$, which concludes the proof. 
\end{proof}
\subsubsection{Proof of Corollary~\ref{cor:no:consistent}}
\label{subsubsec:cor}
We establish the impossibility of point estimation in Corollary~\ref{cor:no:consistent} based on the mutual contiguity in Theorem~\ref{thm:factor:contiguity} and a {\em resampling argument}. Given a factor graph $G=(V,F, E, \{\psi_a\}_{a\in F})$ and $\ga\in [0,1]$, consider the $\ga$-resampling procedure as follows. For each clause $a\in F$, toss a coin independently with probability $\gamma$ and let $(Z_a)_{a\in F}\iid \Ber(\gamma)$. If $Z_a=1$, then independently resample the neighborhood $\delta a$ and the factor $\psi_a$ of the clause $a$ from the ``null distribution'':
\beq\label{eq:clause:null}
\P\big(\delta a= (v_1,\ldots, v_k)\,,\,\psi_a=\psi\big)=\frac{p(\psi)}{n^k}\,,
\eeq
and if $Z_a=0$, keep $(\delta a, \psi_a)$ unchanged. We denote by $G_{\ga}$ the resulting factor graph obtained from $G$ by the $\ga$-resampling procedure. For instance, $\bG^\star_{\ga}$ denotes the factor graph obtained from the planted model $\bG^\star\sim \GG_{\sf plant}(n,d,p,\pi)$ by the $\ga$-resampling procedure. Note that since $\ga$ fraction of clauses are resampled according to the null distribution, the probability distribution of $\bG^\star_{\ga}$ only gets closer to that of the null model $\bG$ as $\ga$ becomes large. Indeed, in the extreme case $\ga=1$, $\bG^\star_1$ is distributed the same as $\bG$. Thus, the family $(\bG^\star_{\ga})_{\ga\in [0,1]}$ is a continuous interpolation of the planted and the null model. 

Our next result, which is used for the proof of Corollary~\ref{cor:no:consistent}, shows that $\bG^\star_{\ga}$ is contiguous to another planted model described as follows.  Let $\Psi=\{\psi_1,\ldots, \psi_T\}$ and $p\in \PPP(\Psi)$. Then, let $\Psi_{\ga}$ and $p_{\ga}\sim \PPP(\Psi_{\ga})$ be their $\ga$-modifications defined by
\beq\label{eq:modified:weights}
\Psi_{\ga}:=\{\psi_{1,\ga},\ldots,\psi_{T,\ga}\}\,,\;\;\;\;\textnormal{where}\;\;\;\;\psi_{t,\ga}(\cdot ):= (1-\ga)\psi_t(\cdot )+\ga \xi\,,
\eeq
and $p_{\ga}(\psi_{t,\ga}):= p(\psi_t)$ for $1\leq t\leq T$. Here, we recall the notation $\xi\equiv \E_{p,\pi}[\bpsi(\bsig)]$. The proof of the following lemma is deferred to Section~\ref{sec:proof:hypothesis:estimation}.

\begin{lemma}\label{lem:resampling}
    Suppose that the priors $p\in \PPP(\Psi)$ and $\pi\in \PPP([q])$ satisfy ${\sf (SYM)}$, and consider the planted model $\bG^\star\sim \GG_{\sf plant}(n,d,p,\pi)$. Then, for $\ga\in [0,1]$, the $\ga$-resampled factor graph $\bG^\star_{\ga}$ is mutually contiguous to the planted distribution with $\ga$-modified weight functions $\GG_{\sf plant}(n,d,p_{\ga},\pi)$.   
\end{lemma}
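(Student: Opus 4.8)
The plan is to prove the stronger statement that the \emph{joint} laws ${\sf Law}(\bsig^\star,\bG^\star_\ga)$ and ${\sf Law}(\bsig^\star,\bG^\star_{p_\ga})$ of the pair (planted structure, factor graph) are mutually contiguous; since contiguity of joint laws passes to the marginals, this yields the lemma. Identifying $\psi_t$ with its $\ga$-modification $\psi_{t,\ga}=(1-\ga)\psi_t+\ga\xi$, the two models are built from the same clause count $\bm\sim\Poi(dn/k)$ and the same null model $\bG(n,\bm)$ (because $p_\ga(\psi_{t,\ga})=p(\psi_t)$), so conditionally on $\bsig^\star=\sig$ and $\bm=m$ their $m$ clauses are i.i.d., with densities $\rho_{1,\sig}(\bom,\psi_t)=\frac{p(\psi_t)}{n^k}\big((1-\ga)\frac{\psi_t(\sig_\bom)}{Z(\sig)}+\ga\big)$ and $\rho_{2,\sig}(\bom,\psi_t)=\frac{p(\psi_t)}{n^k}\frac{\psi_{t,\ga}(\sig_\bom)}{Z_\ga(\sig)}$ respectively, where $Z(\sig)=\E_{p,u}[\bpsi(\sig_\bom)]=\sum_t p(\psi_t)\E_{\hat\pi(\sig)^{\otimes k}}[\psi_t]$ ($\hat\pi(\sig)$ the empirical color distribution of $\sig$) and $Z_\ga(\sig)=(1-\ga)Z(\sig)+\ga\xi$. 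All these densities are strictly positive, so the two joint laws are mutually absolutely continuous for each $n$, and a short manipulation of $\rho_{1,\sig}/\rho_{2,\sig}$ gives the exact Radon--Nikodym derivative
\beqn
D_\ga(\sig,G):=\frac{d\,{\sf Law}(\bsig^\star,\bG^\star_\ga)}{d\,{\sf Law}(\bsig^\star,\bG^\star_{p_\ga})}=\prod_{a\in F}g_\sig(\bom_a,\psi_a),\qquad g_\sig(\bom,\psi_t)=1+\frac{\ga(1-\ga)\,\big(Z(\sig)-\xi\big)\big(Z(\sig)-\psi_t(\sig_\bom)\big)}{Z(\sig)\,\big[(1-\ga)\psi_t(\sig_\bom)+\ga\xi\big]}.
\eeqn

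The heart of the argument is to show $D_\ga\to1$ in probability under ${\sf Law}(\bsig^\star,\bG^\star_{p_\ga})$, and this rests on two cancellations. First, ${\sf (SYM)}$ forces the first-order Taylor term of the map $\hat\pi\mapsto\E_{\hat\pi^{\otimes k}}[\psi]$ at $\hat\pi=\pi$ to vanish for every $\psi\in\Psi$: its derivative along a mean-zero direction $\delta$ equals $\sum_{s=1}^{k}\sum_{\tau}\delta_\tau\,\E_{\pi^{\otimes k}}[\psi\mid\bsigma_s=\tau]=\xi\sum_{s=1}^{k}\sum_\tau\delta_\tau=0$ by ${\sf (SYM)}$. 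Since $\|\hat\pi(\bsig^\star)-\pi\|=O_p(n^{-1/2})$, this upgrades $Z(\bsig^\star)-\xi$ from the naive $O_p(n^{-1/2})$ to $O_p(n^{-1})$; writing $x_a:=g_{\bsig^\star}(\bom_a,\psi_a)-1$, it follows that $\max_a|x_a|\le C\,|Z(\bsig^\star)-\xi|=O_p(n^{-1})$ with $C$ depending only on $\min_{\psi,\ux}\psi(\ux)$ and $\max_{\psi,\ux}\psi(\ux)$. Second, under $\rho_{2,\sig}$ the weighting $\psi_{t,\ga}(\sig_\bom)=(1-\ga)\psi_t(\sig_\bom)+\ga\xi$ exactly cancels the denominator of $g_\sig-1$, so $\E_{\rho_{2,\sig}}[g_\sig-1]$ is a positive multiple of $\sum_{\bom,t}\frac{p(\psi_t)}{n^k}\big(Z(\sig)-\psi_t(\sig_\bom)\big)=Z(\sig)-Z(\sig)=0$; hence, conditionally on $\bsig^\star$, the $x_a$ are i.i.d.\ with mean zero. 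Combining, $\sum_a x_a$ has conditional mean $0$ and conditional variance $O(\bm)\cdot O_p(n^{-2})=o_p(1)$, while $\sum_a x_a^2\le\bm\max_a x_a^2=o_p(1)$ and $\sum_a|x_a|^3\le\max_a|x_a|\cdot\sum_a x_a^2=o_p(1)$; therefore $\log D_\ga=\sum_a\log(1+x_a)=\sum_a x_a-\tfrac12\sum_a x_a^2+o_p(1)\to0$ in probability.

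To conclude, note that $\E D_\ga=1$ for every $n$ by mutual absolute continuity, so $D_\ga\to1$ in probability together with $\E D_\ga\to1$ yields, by Scheff\'e's lemma, $\E|D_\ga-1|\to0$ and hence uniform integrability of $(D_\ga)$; this gives that ${\sf Law}(\bsig^\star,\bG^\star_\ga)$ is contiguous with respect to ${\sf Law}(\bsig^\star,\bG^\star_{p_\ga})$. Running the same argument with the two models interchanged (equivalently, with $1/D_\ga$ in place of $D_\ga$, using $\E[1/D_\ga]=1$ under ${\sf Law}(\bsig^\star,\bG^\star_\ga)$) gives the reverse contiguity, so the joint laws -- and therefore the marginals $\bG^\star_\ga$ and $\GG_{\sf plant}(n,d,p_\ga,\pi)$ -- are mutually contiguous. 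The main obstacle is the pair of cancellations in the middle paragraph: recognizing that ${\sf (SYM)}$ is exactly what annihilates the $\Theta(n^{-1/2})$ fluctuation of $Z(\bsig^\star)$ about $\xi$ (without this $D_\ga$ would only be tight, and could lose mass for intermediate $\ga$, so contiguity would fail), and spotting the exact cancellation under $\rho_{2,\sig}$ that makes the log-increments $x_a$ conditionally centered; the remaining steps are routine moment estimates and an appeal to Scheff\'e's lemma.
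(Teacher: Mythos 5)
Your proof is correct, and in fact establishes a strictly stronger conclusion than the lemma requires. The essential mechanism is the same as the paper's: ${\sf (SYM)}$ kills the first-order Taylor term of $\hat\pi\mapsto\E_{\hat\pi^{\otimes k}}[\psi]$ at $\pi$, upgrading $|Z(\bsig^\star)-\xi|$ from $O_p(n^{-1/2})$ to $O_p(n^{-1})$ (the paper obtains this via Lemma~\ref{lem:main:approx}), so that each log-increment $x_a$ is $O_p(n^{-1})$ and $\log D_\ga$ is tight. At this point the paper stops: on the event $\{\|\sqrt{n}(R_{\bsig^\star}-\pi)\|\leq K\}$, which has probability $1-o_K(1)$ under both models, the likelihood ratio is $\exp(O_{k,q,\Psi}(K^2))$, and tightness of the likelihood ratio together with its being bounded away from zero in probability already gives mutual contiguity (the paper cites Janson's Proposition~9.47). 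You go further: you compute the exact Radon--Nikodym derivative and observe a second cancellation, namely that $\E_{\rho_{2,\sig}}[x_a]=0$, which centers $\sum_a x_a$ conditionally and yields $\log D_\ga\to0$ in probability rather than merely $O_p(1)$; the Scheffé step then gives $\E|D_\ga-1|\to0$, i.e.\ total-variation convergence of the joint laws, which of course implies mutual contiguity.

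One imprecision worth flagging in your closing commentary: you describe ``spotting the exact cancellation under $\rho_{2,\sig}$'' as part of ``the main obstacle,'' suggesting it is needed for the conclusion. It is not. The paper proves the lemma without it; only the ${\sf (SYM)}$ cancellation is essential, and the conditional centering buys you the strictly stronger TV statement, which is overkill here. (Your reasoning that without ${\sf (SYM)}$ ``$D_\ga$ would only be tight'' is also not quite accurate in the other direction: without ${\sf (SYM)}$, $\sum_a x_a$ would typically have $\Theta(1)$ variance and a nondegenerate limit, and whether contiguity survives would depend on the mean--variance balance, not on tightness alone.) Finally, your explicit likelihood-ratio formula — with $+\ga$ (not $\ga/\xi$) in $\rho_{1,\sig}$ and $Z_\ga(\sig)$ (not $\xi$) as the normalizer in $\rho_{2,\sig}$ — is the internally consistent one; it differs from the paper's displayed expressions, which appear to carry minor normalization slips that wash out in the approximation but matter if one wants the exact product form $g_\sig$ you use.
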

\begin{proof}[Proof of Corollary~\ref{cor:no:consistent}]
Fix $\eps>0$ and $d<d_{\ast}\wedge d_{\ks}$. By Proposition~\ref{prop:nontrivial:threshold}, w.l.o.g. we assume ${\sf (SYM)}$.

We first argue that the $\ga$-resampled graph $\bG^\star_{\ga}$ of the planted model $\bG^\star\equiv \bG^\star(n,\bm,\bsig^\star)\sim \GG_{\sf plant}(n,d,p,\pi)$ is mutually contiguous to the null model $\bG\sim \GG_{\sf null}(n,d,p)$ for any fixed $\ga\in [0,1]$. To see this, recall that $\bG^\star$ has $\bm\sim \Poi(dn/k)$ clauses. By Poisson thinning, $\bG^\star_{\ga}$ has $\bm_{1}\sim \Poi((1-\ga)dn/k)$ clauses distributed as Eq.~\eqref{eq:clause:law:planted} and $\bm_2$  clauses distributed as Eq.~\eqref{eq:clause:null}. Thus, the likelihood ratio (a.k.a. Radon-Nikodym derivative) of $\bG^\star_{\ga}$ and $\bG\sim \GG_{\sf null}(n,d,p)$ is the same as the likelihood ratio of $(\bG^\star)^\prime \sim \GG_{\sf plant}(n,(1-\ga)d, p,\pi)$ and $\bG^\prime\sim \GG_{\sf null}(n,(1-\ga)d,p)$. Noting that $(1-\ga)d<d_{\ast}(p,\pi)\wedge d_{\ks}$ holds, combining this equality between likelihood ratios with Theorem~\ref{thm:factor:contiguity} proves that $\bG^\star_{\ga}$ is mutually contiguous to $\bG$.

Hence, by Lemma~\ref{lem:resampling}, the planted distribution with $\ga$-modified weight functions $\GG_{\sf plant}(n,d,p_{\ga},\pi)$ is mutually contiguous to the null model $\bG$ for any $\ga\in [0,1]$. Note that by taking $\ga>0$ small enough compared to $\eps$, the weights $\psi_{t,\ga}$ defined in \eqref{eq:modified:weights} satisfy $\|\psi_{t,\ga}-\psi_t\|_{\infty}\leq\eps$ for any $1\leq t \leq T$. Therefore, the mutual contiguity of $\GG_{\sf plant}(n,d,p_{\ga},\pi)$ and $\GG_{\sf plant}(n,d,p_{\ga'},\pi)$ for sufficiently small $\ga\neq \ga'$ imply the non-existence of $\eps$-locally consistent estimator of $\Psi$ at $(d,p,\pi)$ (cf. Definition~\ref{def:local:consistent}).
\end{proof}

\subsection{Proof of Theorem \ref{thm:mutual:info}}
\label{subsec:proof:mutual:info}
The key ingredient to prove Theorem \ref{thm:mutual:info} is to analyze the (normalized) log likelihood ratio evaluated at the planted model $\frac{1}{n}\log L\big(\bG^\star\big)$. Its relationship between the KL divergence and the mutual information is given by the following lemma.

\begin{lemma}\label{lem:KL:mutual:info:free:energy}
We have the equality $\DKL(\bG^\star\,\|\,\bG)=\E\log L(\bG^\star)$. Moreover, the normalized mutual information can be approximated by
\beqn
\frac{1}{n}I(\bG^\star,\bsig^\star)=-\frac{1}{n}\E\log L(\bG^\star)+\frac{d}{k}\cdot\E_{p,\pi}\bigg[\frac{\bpsi(\bsig)}{\xi}\log\Big(\frac{\bpsi(\bsig)}{\xi}\Big)\bigg]+o_n(1)\,.
\eeqn
\end{lemma}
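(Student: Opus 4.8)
The plan is to deduce both statements from the defining change of measure between the planted and null models. The first identity is essentially a tautology: by \eqref{eq:proof:mutual:info:tech}, $L(\cdot)$ is the likelihood ratio of $\law(\bG^\star)$ with respect to $\law(\bG)$ (in particular $\law(\bG^\star)\ll\law(\bG)$, since all weight functions are strictly positive), so $\DKL(\bG^\star\,\|\,\bG)=\E_{\bG^\star}[\log L(\bG^\star)]$ straight from the definition of relative entropy. Finiteness of this quantity I would get from the crude bound $L(G)\le(M/\mu)^{m}$ — with $m$ the number of clauses of $G$, $M:=\max_{\psi\in\Psi,\ux}\psi(\ux)$ and $\mu:=\min_{\psi\in\Psi,\ux}\psi(\ux)>0$ — together with $\E\bm=dn/k$.

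For the second identity, I would expand $I(\bG^\star,\bsig^\star)=\E\log\frac{\P(\bG^\star\given\bsig^\star)}{\P(\bG^\star)}$ and insert the null marginal,
\[
I(\bG^\star,\bsig^\star)=\E\log\frac{\P\big(\bG^\star=\bG^\star\given\bsig^\star=\bsig^\star\big)}{\P(\bG=\bG^\star)}-\E\log L(\bG^\star)\,,
\]
all evaluations being at the jointly drawn pair $(\bG^\star,\bsig^\star)$. By Definition~\ref{def:planted}, for any factor graph $G$ with $m$ clauses and any $\sig$ one has $\P(\bG^\star(n,m,\bsig^\star)=G\given\bsig^\star=\sig)/\P(\bG(n,m)=G)=\psi_G(\sig)/\xi_\sig^{\,m}$ with $\xi_\sig:=\E_{p,u}[\bpsi(\sig_{\bom})]$; since $\bm\sim\Poi(dn/k)$ is independent of $\bsig^\star$, the first term above collapses to $\E\big[\log\psi_{\bG^\star}(\bsig^\star)-\bm\log\xi_{\bsig^\star}\big]$.

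Next I would evaluate this expectation by conditioning on $(\bsig^\star,\bm)=(\sig,m)$: under the planted clause law \eqref{eq:clause:law:planted} the $m$ clauses are i.i.d., so $\E[\log\psi_{\bG^\star}(\bsig^\star)\given\bsig^\star=\sig,\bm=m]=m\,\xi_\sig^{-1}\E_{p,u}[\bpsi(\sig_{\bom})\log\bpsi(\sig_{\bom})]$; averaging over $\bm$ and combining the two terms (using $\E_{p,u}[\bpsi(\sig_{\bom})]=\xi_\sig$) gives
\[
\E\big[\log\psi_{\bG^\star}(\bsig^\star)-\bm\log\xi_{\bsig^\star}\big]=\frac{dn}{k}\,\E\big[F(\wh{\pi}_{\bsig^\star})\big]\,,\qquad F(\nu):=\frac{1}{\xi_\nu}\,\E_{\bpsi\sim p,\,\bsig\sim\nu^{\otimes k}}\!\Big[\bpsi(\bsig)\log\tfrac{\bpsi(\bsig)}{\xi_\nu}\Big]\,,
\]
where $\wh{\pi}_\sig\in\PPP([q])$ is the empirical distribution of $\sig$ and $\xi_\nu:=\E_{\bpsi\sim p,\,\bsig\sim\nu^{\otimes k}}[\bpsi(\bsig)]$; note that $F(\pi)$ is exactly $\E_{p,\pi}[\frac{\bpsi(\bsig)}{\xi}\log\frac{\bpsi(\bsig)}{\xi}]$. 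Because all weights lie in $[\mu,M]$ with $\mu>0$, the map $\nu\mapsto F(\nu)$ is smooth, hence Lipschitz, on the simplex, and $\bsig^\star\iid\pi$ yields $\E\|\wh{\pi}_{\bsig^\star}-\pi\|_1=O(n^{-1/2})$; therefore $\E[F(\wh{\pi}_{\bsig^\star})]=F(\pi)+O(n^{-1/2})$, and dividing through by $n$ produces the claimed identity with error $O(n^{-1/2})=o_n(1)$.

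The computation is largely bookkeeping; the one place I expect to need genuine care is this last step, where an $O(n^{-1/2})$ fluctuation of the empirical label distribution is amplified by the $\Theta(n)$ number of clauses, so one must pair the Lipschitz bound for $F$ (which rests on positivity and boundedness of the weight functions) with the $\sqrt n$-concentration of $\wh{\pi}_{\bsig^\star}$. Along the way one should also record that each expectation appearing above is finite — again via the $\mu\le\psi\le M$ bounds and $\E\bm<\infty$ — so that the subtraction isolating $-\E\log L(\bG^\star)$ is legitimate.
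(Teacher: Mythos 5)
Your proposal is correct and follows essentially the same route as the paper's proof: both decompose the mutual information as $-\E\log L(\bG^\star)$ plus the KL term $\E\log\frac{\P(\bG^\star\given\bsig^\star)}{\P(\bG)}$, reduce the latter to a per-clause computation using the factorization $\psi_G(\sig)/\xi_\sig^m$ and the Poisson number of i.i.d.\ planted clauses, and then replace the empirical label distribution $\wh{\pi}_{\bsig^\star}$ by $\pi$ at the end. The only stylistic difference is your error bookkeeping: you phrase the final step via a Lipschitz map $F(\nu)$ on the simplex together with $\E\|\wh{\pi}_{\bsig^\star}-\pi\|_1=O(n^{-1/2})$, whereas the paper conditions on the high-probability event that $\|\wh{\pi}_{\bsig^\star}-\pi\|$ is $O(n^{-1/3})$ and bounds the rare event separately using that the weight functions are bounded away from $0$ and $\infty$. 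One minor note: the ``amplification'' you worry about is a non-issue once you divide by $n$, because the $\frac1n$ normalization cancels the $\frac{dn}{k}$ factor exactly, so the residual is just the $O(n^{-1/2})$ deviation of $\E F(\wh{\pi}_{\bsig^\star})$ from $F(\pi)$; you in fact handle this correctly.
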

Thus, to prove Theorem~\ref{thm:mutual:info}, it suffices to analyze $\frac{1}{n}\E \log L\big(\bG^\star\big)$. By borrowing terminology from statistical physics, we call $\frac{1}{n} \log L\big(\bG^\star\big)$ the \textit{free energy} since $L(G)$ can be interpreted as a partition function by the equality
 \beqn
 L(G)\equiv \frac{\P\big(\bG^\star(n,m,\bsig^\star)=G\big)}{\P\big(\bG(n,m)=G\big)}=\sum_{\sig \in [q]^V}\frac{\psi_G(\sig)}{\E[\psi_{\bG(n,m)}(\sig)]}\cdot \P(\bsig^\star=\sig)\,.
 \eeqn

Because the weight functions $\psi\in \Psi$ are strictly positive, it is well-known that the free energies concentrate tightly around their expectation  (see e.g. \cite[Lemma 3.3]{CKPZ:18}). 
\begin{fact}\label{fact:concentration}
    There exists a constant $c=c(\Psi)>0$ that only depends on the set of weight functions $\Psi$ such that the following hold. For any $\eps>0$, we have
    \beqn
    \P\Bigg(\bigg|\frac{1}{n}\log L(\bG^\star)-\frac{1}{n}\E\log L(\bG^\star)\bigg|\geq \eps\Bigg)\leq e^{-cn\eps^2}\,.
    \eeqn
\end{fact}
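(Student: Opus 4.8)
The plan is to prove the bound by the bounded-differences (Azuma--McDiarmid) method applied to the Doob martingale that reveals the planted configuration $\bsig^\star$ one coordinate at a time and then the clause data one clause at a time, after first peeling off the randomness of the clause count $\bm\sim\Poi(dn/k)$. Everything rests on boundedness of the weight functions: since $\Psi$ is finite, $\psi_{\min}:=\min_{\psi\in\Psi}\min_{x}\psi(x)>0$ and $\psi_{\max}:=\max_{\psi\in\Psi}\max_{x}\psi(x)<\infty$; set $\kappa:=\log(\psi_{\max}/\psi_{\min})$. Using $\E[\psi_{\bG(n,m)}(\sig)]=\zeta_\sig^{\,m}$ with $\zeta_\sig:=\E_{p,u}[\bpsi(\sig_{\bom})]\in[\psi_{\min},\psi_{\max}]$, one has $L(G)=\sum_{\sig\in[q]^V}\big(\prod_{v}\pi_{\sigma_v}\big)\prod_{a}\big(\psi_a(\sig_{\delta a})/\zeta_\sig\big)$. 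The basic observation is that replacing the data $(\delta a_0,\psi_{a_0})$ of a single clause --- and likewise adding or deleting one clause --- multiplies \emph{every} summand by a factor in $[\psi_{\min}/\psi_{\max},\psi_{\max}/\psi_{\min}]$, hence multiplies $L(G)$ by such a factor; so any single-clause modification perturbs $\log L$ by at most $\kappa$.

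Next I would run two bounded-differences inequalities at a fixed clause count $m$. Conditioned on $\bsig^\star=\sig$ the $m$ clauses are i.i.d., so by the preceding observation $\log L(\bG^\star)$ is a $\kappa$-bounded-differences function of them, and McDiarmid gives $\P\big(|\log L(\bG^\star)-g_\sig(m)|\ge t\,\big|\,\bsig^\star=\sig,\bm=m\big)\le 2\exp(-2t^2/(m\kappa^2))$, with $g_\sig(m):=\E[\log L(\bG^\star)\mid\bsig^\star=\sig,\bm=m]$. The second, more delicate step is that $g_\cdot(m)$ is an $O(\kappa)$-bounded-differences function of the $n$ i.i.d.\ coordinates of $\bsig^\star$: if $\sig,\sig'\in[q]^V$ differ at a single vertex $v$, the planted law \eqref{eq:clause:law:planted} of one clause changes by only $O(1/n)$ in total variation --- for a $k$-tuple avoiding $v$ only the normalizer $\zeta$ shifts, by a factor $1+O(1/n)$, while the $k$-tuples through $v$ carry probability $O(1/n)$ --- so a maximal coupling of $\bG^\star(n,m,\sig)$ and $\bG^\star(n,m,\sig')$ leaves all but $O(1)$ clauses untouched in expectation, whence $|g_\sig(m)-g_{\sig'}(m)|=O(\kappa)$ by the single-clause bound. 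A second application of McDiarmid over $\bsig^\star\iid\pi$ then gives $\P\big(|g_{\bsig^\star}(m)-g(m)|\ge t\,\big|\,\bm=m\big)\le 2\exp(-\Omega(t^2/n))$ with $g(m):=\E[\log L(\bG^\star)\mid\bm=m]$. Combining, uniformly over $m\le 2dn/k$, $\P\big(|\log L(\bG^\star)-g(m)|\ge 2t\,\big|\,\bm=m\big)\le 4\exp(-\Omega(t^2/n))$.

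To remove the conditioning on $\bm$, note that $m\mapsto g(m)$ is $\kappa$-Lipschitz by the add/delete-a-clause bound, so with $m_0:=\lfloor dn/k\rfloor$ one has $|\E\log L(\bG^\star)-g(m_0)|\le\kappa\,\E|\bm-m_0|=O(\sqrt n)$; on the event $\{|\bm-m_0|\le\eps n/(3\kappa)\}$ (which lies in $\{\bm\le 2dn/k\}$ for $n$ large) one further has $|g(\bm)-g(m_0)|\le\eps n/3$, while its complement has probability $2\exp(-\Omega(n\eps^2))$ by a Poisson tail. A union bound then upgrades the conditional estimate of the previous paragraph to $\P\big(|\tfrac1n\log L(\bG^\star)-\tfrac1n\E\log L(\bG^\star)|\ge\eps\big)\le\exp(-cn\eps^2)$ for a suitable $c=c(\Psi)>0$ (the edge regimes of very small $n$ or very large $\eps$ being disposed of by crude bounds such as $|\log L(\bG^\star)|\le\bm\kappa+O(n)$ and by adjusting $c$).

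I expect the only genuine obstacle to be the total-variation estimate used in the second paragraph --- the stability of the planted clause law \eqref{eq:clause:law:planted} under a single-coordinate change of $\bsig^\star$, which controls the martingale increments produced by exposing the spins; the rest is a routine assembly of bounded-differences and Poisson concentration. This is in essence the argument behind \cite[Lemma~3.3]{CKPZ:18}.
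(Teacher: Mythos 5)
Your proof is correct and follows the same route the paper sketches (single-clause bounded differences plus Azuma--Hoeffding/McDiarmid applied to a clause-revealing martingale), but your write-up is substantially more complete. The paper's two-sentence proof conditions on $\bsig^\star$ and invokes Azuma--Hoeffding over the clause filtration, which only yields concentration of $\log L(\bG^\star)$ around the \emph{conditional} mean $\E[\log L(\bG^\star)\mid\bsig^\star,\bm]$; it never addresses (i) the fluctuation of that conditional mean as $\bsig^\star$ varies, nor (ii) the Poisson fluctuation of $\bm$. You supply exactly the missing pieces: the observation that a single-clause edit multiplies every summand of $L(G)$ by a factor in $[\psi_{\min}/\psi_{\max},\psi_{\max}/\psi_{\min}]$; the total-variation/maximal-coupling argument showing that a one-coordinate change of $\sig$ perturbs the clause law \eqref{eq:clause:law:planted} by $O(1/n)$ in TV and hence shifts $g_\sig(m)$ by $O(\kappa)$ (note this is a bound on a difference of two \emph{deterministic} quantities, so the coupling argument is legitimate even though McDiarmid requires a.s.\ bounded differences); and the Lipschitz-in-$m$ plus Poisson-tail step that removes the conditioning on $\bm$. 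One small caveat worth recording: the ``crude bound'' $|\log L(\bG^\star)|\le\bm\kappa+O(n)$ combined with Poisson tails only gives a decay rate $\exp(-\Theta(n\eps\log\eps))$ for $\eps$ large, which is \emph{weaker} than $\exp(-cn\eps^2)$ once $\eps$ exceeds a constant threshold; so simply ``adjusting $c$'' does not dispose of the large-$\eps$ regime, and the sub-Gaussian form of the Fact should really be understood as holding for $\eps$ bounded (which is all that the paper ever uses, e.g.\ in the proof of Theorem~\ref{thm:mutual:info}). This imprecision is already present in the paper's own statement, so it is not a defect introduced by your proof.
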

\begin{proof}
    Note that if we have two factor graphs $G$ and $G^\prime$, which differ only in a single clause, then $\big|n^{-1}\log L(G)-n^{-1}\log L(G^\prime)\big|\leq C$ holds, where $C\equiv C(\Psi)>0$. Since conditional on $\bsig^\star$, the clauses in $\bG^\star$ are independent, the desired inequality is immediate from Azuma-Hoeffding inequality applied to to the Doob martingale w.r.t. the clause revealing filtration.
\end{proof}
The next step is to calculate derivative of the expected free energy w.r.t. $d$. Recall that $\langle \cdot \rangle_{G}$ denotes the expectation with respect to samples $(\sig^{\ell})_{\ell\geq 1} \iid \mu_{G}$ from the posterior. If there are only one sample, i.e. $\ell=1$, we omit the superscript and write $\langle f(\sig^1)\rangle_G\equiv \langle f(\sig)\rangle_G$ for simplicity.
\begin{prop}\label{prop:derivative}
    For any $d>0$ and $n\geq 1$, the derivative of the expected free energy is given by
    \beq\label{eq:prop:derivative:first}
    \frac{1}{n}\frac{\partial}{\partial d}\E\log L(\bG^\star)=\frac{1}{k}\cdot\E_{\bG^\star,p,u}\bigg[\bigg\langle \frac{\bpsi(\sig_{\bom})}{\E_{p,u}\big[\bpsi^\prime(\sig_{\bom^\prime})\big]}\bigg\rangle_{\bG^\star}\log\bigg\langle \frac{\bpsi(\sig_{\bom})}{\E_{p,u}\big[\bpsi^\prime(\sig_{\bom^\prime})\big]}\bigg\rangle_{\bG^\star}\bigg]\,,
    \eeq
    where the expectation $\E_{\bG^\star,p,u}$ is with respect to $\bG^\star, \bpsi \sim p$, and $\bom \sim u\equiv \Unif(V^k)$. The expectation $\E_{p,u}$ in the denominator is with respect to $\bpsi^\prime \sim p$ and $\bom^\prime \sim u$.
    Furthermore, we can approximate
    \beq\label{eq:prop:derivative}
     \frac{1}{n}\frac{\partial}{\partial d}\E\log L(\bG^\star)=\frac{1}{k}\cdot\E_{\bG^\star,p,u}\Bigg[ \frac{\big\langle \bpsi(\sig_{\bom})\big\rangle_{\bG^\star}}{\xi}\cdot \log\bigg(\frac{\big\langle \bpsi(\sig_{\bom})\big\rangle_{\bG^\star}}{\xi}\bigg)\Bigg]+O_{k,q,\Psi}(n^{-1/3})\,,
    \eeq
    where $f=O_{k,q,\Psi}(g)$ if there exists a constant $C$ that only depends on $k,q,\Psi$ such that $|f|\leq Cg$.
\end{prop}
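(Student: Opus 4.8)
The plan is to exploit that the free energy $F(d):=\E\log L(\bG^\star)$ depends on the degree $d$ only through the law $\bm\sim\Poi(dn/k)$ of the clause count, since the planted clause distribution \eqref{eq:clause:law:planted} is itself $d$-free. Writing $g(m):=\E\log L\big(\bG^\star(n,m,\bsig^\star)\big)$ we have $F(d)=\sum_{m\ge 0}e^{-dn/k}\tfrac{(dn/k)^m}{m!}g(m)$, and differentiating the Poisson weights (the standard ``Poisson derivative'' identity) gives $\tfrac1n F'(d)=\tfrac1k\,\E_\bm\big[g(\bm+1)-g(\bm)\big]$. Term-by-term differentiation is justified because $0\le g(m)\le m\log(\psi_{\max}/\psi_{\min})$, where $\psi_{\min}:=\min_{\psi\in\Psi,\,\sig}\psi(\sig)>0$ and $\psi_{\max}:=\max_{\psi\in\Psi,\,\sig}\psi(\sig)$ — the lower bound from $g(m)=\DKL\big(\bG^\star(n,m)\,\|\,\bG(n,m)\big)$, the upper bound from $L(G)\le(\psi_{\max}/\psi_{\min})^m$ — so $g$ grows at most linearly.

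The core step is to evaluate the one-clause increment $g(m+1)-g(m)$. Set $\bG_0:=\bG^\star(n,m,\bsig^\star)$ and let $a^\star$ be one further clause drawn from \eqref{eq:clause:law:planted} with the same $\bsig^\star$, so that $\bG_0$ and $a^\star$ are conditionally independent given $\bsig^\star$ and the factor graph $\bG_0+a^\star$ obtained by appending $a^\star$ satisfies $\bG_0+a^\star\stackrel{d}{=}\bG^\star(n,m+1)$ jointly with $\bsig^\star$. Writing $Z_0(\sig):=\E_{p,u}[\bpsi(\sig_\bom)]$, so that $\E[\psi_{\bG(n,m)}(\sig)]=Z_0(\sig)^m$ and $L(G)=\sum_\sig\P(\bsig^\star=\sig)\,\psi_G(\sig)/Z_0(\sig)^m$, the relation $\P(\bsig^\star=\sig)\psi_{\bG_0}(\sig)/Z_0(\sig)^m=\mu_{\bG_0}(\sig)L(\bG_0)$ yields the multiplicative identity $L(\bG_0+a^\star)=L(\bG_0)\big\langle \psi_{a^\star}(\sig_{\delta a^\star})/Z_0(\sig)\big\rangle_{\bG_0}$, hence $g(m+1)-g(m)=\E\log\big\langle \psi_{a^\star}(\sig_{\delta a^\star})/Z_0(\sig)\big\rangle_{\bG_0}$. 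Next I would substitute the planted clause law \eqref{eq:clause:law:planted}, which tilts $(\delta a^\star,\psi_{a^\star})$ by $\bpsi(\bsig^\star_\bom)/Z_0(\bsig^\star)$ relative to $\Unif(V^k)\otimes p$, and then apply the Nishimori identity in the conditioning form $\E_{\bsig^\star}\E_{\bG_0\mid\bsig^\star}[f(\bsig^\star,\bG_0)]=\E_{\bG_0}\langle f(\sig,\bG_0)\rangle_{\bG_0}$ to push the tilt $\bpsi(\bsig^\star_\bom)/Z_0(\bsig^\star)$ into the posterior bracket. Since the leftover factor $\log\langle\,\cdot\,\rangle_{\bG_0}$ carries the same $(\bpsi,\bom)$ and is independent of the spin variable being averaged, the two brackets combine, and after averaging over $\bm$ (which is absorbed into $\bG^\star=\bG^\star(n,\bm,\bsig^\star)$) one obtains exactly $\tfrac1k\E_{\bG^\star,p,u}\big[X\log X\big]$ with $X=\big\langle\bpsi(\sig_\bom)/\E_{p,u}[\bpsi'(\sig_{\bom'})]\big\rangle_{\bG^\star}$, i.e.\ \eqref{eq:prop:derivative:first}.

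For the approximation \eqref{eq:prop:derivative} I would replace $Z_0(\sig)$ by the constant $\xi=\E_{p,\pi}[\bpsi(\bsig)]$. Both $X$ and $Y:=\langle\bpsi(\sig_\bom)\rangle_{\bG^\star}/\xi$ lie in the fixed interval $[\psi_{\min}/\psi_{\max},\psi_{\max}/\psi_{\min}]$ (all weights, and $\xi$, are strictly positive), on which $t\mapsto t\log t$ is Lipschitz, and $|X-Y|\le(\psi_{\max}/\psi_{\min}^2)\big\langle|Z_0(\sig)-\xi|\big\rangle_{\bG^\star}$. Denoting by $\wh{\rho}_\sig$ the empirical spin distribution of $\sig$, one has $Z_0(\sig)=\sum_{\psi}p(\psi)\sum_{i_1,\dots,i_k}\psi(i_1,\dots,i_k)\prod_{s=1}^{k}\wh{\rho}_\sig(i_s)$, a degree-$k$ polynomial in $\wh{\rho}_\sig$ with coefficients in $[\psi_{\min},\psi_{\max}]$ whose value at $\pi$ is $\xi$, so $|Z_0(\sig)-\xi|\le L_{k,q,\Psi}\|\wh{\rho}_\sig-\pi\|_1$. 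By Nishimori again, $\E_{\bG^\star}\big\langle|Z_0(\sig)-\xi|\big\rangle_{\bG^\star}=\E\,|Z_0(\bsig^\star)-\xi|\le L_{k,q,\Psi}\,\E\|\wh{\rho}_{\bsig^\star}-\pi\|_1=O_{k,q,\Psi}(n^{-1/2})$ since $\bsig^\star\sim\pi^{\otimes n}$. Chaining the three estimates gives $\E_{\bG^\star,p,u}\big|X\log X-Y\log Y\big|=O_{k,q,\Psi}(n^{-1/2})$, which is $O_{k,q,\Psi}(n^{-1/3})$, establishing \eqref{eq:prop:derivative}.

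I expect the main obstacle to be the Nishimori ``un-tilting'' step: one must keep straight that $\bG_0$ and the extra clause are only conditionally independent given $\bsig^\star$, and that after moving the weight $\bpsi(\bsig^\star_\bom)/Z_0(\bsig^\star)$ into the posterior average it genuinely re-pairs with the $\log\langle\,\cdot\,\rangle$ factor (same $\bpsi,\bom$) to produce $X\log X$, rather than a log of an average of a differently tilted quantity — the bookkeeping of which spin each $Z_0$ and each $\bpsi(\,\cdot\,_\bom)$ refers to is the delicate point. The approximation step is comparatively routine once strict positivity of the weight functions is used.
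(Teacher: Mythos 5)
Your proposal is correct. The first part — the Poisson derivative identity, the one-clause coupling, the multiplicative update $L(\bG_0 + a^\star) = L(\bG_0)\langle\psi_{a^\star}(\sig_{\delta a^\star})/Z_0(\sig)\rangle_{\bG_0}$, and the Nishimori/tower-property step that re-pairs the planted tilt with the $\log\langle\cdot\rangle$ factor — is essentially identical to the paper's proof (Eq.~\eqref{eq:derivative:poisson} together with Lemma~\ref{lem:express:ratio:L}). The bookkeeping you flagged as delicate is handled exactly as you describe: the $\log\langle\cdot\rangle_{\bG_0}$ factor is $\bG_0$-measurable, so the inner conditional expectation over $\bsig^\star\mid\bG_0$ turns the tilt $\bpsi(\bsig^\star_\bom)/Z_0(\bsig^\star)$ into the posterior average $\langle\bpsi(\sig_\bom)/Z_0(\sig)\rangle_{\bG_0}$, producing the clean $X\log X$ form.

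For the approximation step your route differs from the paper's and is somewhat cleaner. The paper (Lemma~\ref{lem:bal:approx}) argues conditionally: it introduces a high-probability event $\AA_{\bal}(n,m)$ on which the conditional probability of $\|R_{\bsig^\star}-\pi\|_\infty \geq n^{-1/3}$ given the graph is exponentially small, then splits the posterior average accordingly. You instead observe that the error term $\langle|Z_0(\sig)-\xi|\rangle_{\bG^\star}$ has a spin-only integrand, so one more application of Nishimori collapses $\E_{\bG^\star}\langle|Z_0(\sig)-\xi|\rangle_{\bG^\star}$ directly to $\E_{\bsig^\star}|Z_0(\bsig^\star)-\xi|$, which is controlled by $\E\|\hat\rho_{\bsig^\star}-\pi\|_1 = O(n^{-1/2})$ under the prior $\pi^{\otimes n}$. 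This is a more direct argument, avoids the conditional tail event, and in fact yields the sharper rate $O(n^{-1/2})$; the stated $O(n^{-1/3})$ follows a fortiori. The boundedness-and-Lipschitz reduction ($X,Y$ valued in $[\psi_{\min}/\psi_{\max},\psi_{\max}/\psi_{\min}]$ on which $t\mapsto t\log t$ is Lipschitz) is the same as the paper uses implicitly via Eq.~\eqref{eq:approx:conseq}. Both approaches are correct; yours is a mild simplification of the second half.
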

The first equality~\eqref{eq:prop:derivative} can be interpreted as the analog of I-MMSE relation from information theory~\cite{guo2011estimation} within the planted factor model. By appealing to Theorem \ref{thm:equiv}, we prove that below the weak recovery threshold $d<d_{\ast}$, the RHS of \eqref{eq:prop:derivative} tends to $0$ as $n\to\infty$, whereas for $d>d_{\ast}$ it is uniformly positive along a subsequence under the ${\sf (MIN)}$ condition. As a consequence, we prove the following Proposition, which plays a crucial role in the proof of Theorem~\ref{thm:mutual:info}.
\begin{prop}\label{prop:free:energy}
The following holds.
\begin{enumerate}[label=\textup{(\arabic*)}]
    \item For any $n\geq 1$ and $d>0$, we have $\frac{\partial}{\partial d}\E \log L(\bG^\star)\geq 0$ and $\E\log L(\bG^\star)\geq 0$. 
    \item If $d<d_{\ast}$, then we have
    \beqn
    \frac{1}{n}\E\log L(\bG^\star) \longrightarrow 0\quad\textnormal{as}\quad n\to\infty\,.
    \eeqn
    \item Assume that the condition {\sf (MIN)} holds. Then, for any $\eps>0$, there exists $\eta\equiv \eta(\eps)>0$ and $n_0\equiv n_0(\eps)$ not depending on $n$ nor $d>0$ such that if $\E\big[A(\bsig^\star, \hat{\sig})\big]\geq \frac{1}{q}+\eps$ holds for some estimator $\hat{\sig}\equiv \hat{\sig}_n(\bG^\star)$ and $n\geq n_0$, then $\frac{1}{n}\frac{\partial}{\partial d}\E\log L(\bG^\star)\geq \eta$ holds. 
    \item If $d>d_{\ast}$ and {\sf (MIN)} holds, then there exists a constant $\eta>0$ such that
    \beqn
    \limsup_{n\to\infty} \frac{1}{n}\E \log L(\bG^\star)\geq \eta>0
    \eeqn

\end{enumerate}
\end{prop}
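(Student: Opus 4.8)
Throughout we may assume the condition ${\sf (SYM)}$: if it fails then $d_{\ast}=0$ by Proposition~\ref{prop:nontrivial:threshold}, which makes parts (2) and (4) vacuous, and part (1) (resp. (3)) holds with no change (resp. is argued identically).

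\textbf{Parts (1) and (2).} For (1), use the exact formula \eqref{eq:prop:derivative:first}: writing $Y\equiv Y(\sig,\bpsi,\bom)=\bpsi(\sig_{\bom})/\E_{p,u}[\bpsi'(\sig_{\bom'})]$ and noting that $\E_{p,u}[Y]=1$ for every fixed $\sig$ (the denominator depends on $\sig$ only), we get $\E_{\bG^\star,p,u}[\langle Y\rangle_{\bG^\star}]=\E_{\bG^\star}\langle \E_{p,u}[Y]\rangle_{\bG^\star}=1$, so by convexity of $t\mapsto t\log t$ on $(0,\infty)$ and Jensen, $\E_{\bG^\star,p,u}[\langle Y\rangle\log\langle Y\rangle]\ge 0$; hence $\partial_d\E\log L(\bG^\star)\ge 0$. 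The bound $\E\log L(\bG^\star)\ge 0$ is immediate from Lemma~\ref{lem:KL:mutual:info:free:energy}, since it equals $\DKL(\bG^\star\,\|\,\bG)$. For (2), using part (1) and the crude bound $\tfrac1n|\log L(\bG^\star)|\le\tfrac{d'}{k}\log(\max\Psi/\min\Psi)$ we may write $\tfrac1n\E\log L(\bG^\star_{d})=\int_0^d \tfrac1n\partial_{d'}\E\log L(\bG^\star_{d'})\,\de d'$, and it suffices to show the integrand tends to $0$ for every $d'<d_{\ast}$; since $t\mapsto t\log t$ is bounded on the fixed compact interval $[\min\Psi/\xi,\max\Psi/\xi]$ and the error in \eqref{eq:prop:derivative} is $O(n^{-1/3})$, the integrand is uniformly bounded and bounded convergence on $[0,d]$ finishes. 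Fix $d'<d_{\ast}$ and $\psi\in\Psi$. Because $\bom\sim u=\Unif(V^k)$ has i.i.d.\ coordinates, for fixed posterior samples $\sig^1,\sig^2$ the pairs $(\sigma^1_{v_s},\sigma^2_{v_s})_{s\le k}$ are i.i.d.\ from the empirical law $R_{\sig^1,\sig^2}$ on $[q]^2$, which gives the identity
\beqn
\E_{\bG^\star,p,u}\big[\langle\bpsi(\sig_{\bom})\rangle_{\bG^\star}^2\big]=\E_{\bG^\star}\big\langle\FF(R_{\sig^1,\sig^2})\big\rangle_{\bG^\star},
\eeqn
where $\FF$ is extended off $\RR_{\pi}$ by the same polynomial \eqref{eq:def:FF}; this polynomial is Lipschitz and $\FF(\pi\pi^{\sT})=\xi^2$ under ${\sf (SYM)}$, so Theorem~\ref{thm:equiv} (trivial overlap) yields $\E_{\bG^\star,p,u}[\langle\bpsi(\sig_{\bom})\rangle^2]=\xi^2+o_n(1)$. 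Similarly $\E_{\bG^\star,p,u}[\langle\bpsi(\sig_{\bom})\rangle]=\xi+O(1/n)$, the only correction coming from $k$-tuples with a repeated coordinate. Hence $\E_{\bG^\star,p,u}[(\langle\bpsi(\sig_{\bom})\rangle-\xi)^2]=o_n(1)$, and since $|t\log t-(t-1)|\le C(t-1)^2$ on the compact interval above, $\E_{\bG^\star,p,u}[\tfrac{\langle\bpsi(\sig_{\bom})\rangle}{\xi}\log\tfrac{\langle\bpsi(\sig_{\bom})\rangle}{\xi}]=o_n(1)$, so the integrand vanishes.

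\textbf{Part (3).} By \eqref{eq:prop:derivative}, the inequality $t\log t\ge(t-1)+c(t-1)^2$ on $[\min\Psi/\xi,\max\Psi/\xi]$ with $c=c(\Psi)>0$, and $\E_{\bG^\star,p,u}[\langle\bpsi(\sig_{\bom})\rangle]=\xi+O(1/n)$ uniformly in $d$, it suffices to produce $\delta>0$ depending only on $\eps$ such that $\E_{\bG^\star,p,u}[\langle\bpsi(\sig_{\bom})\rangle^2]-\xi^2\ge\delta$ whenever $\E[A(\bsig^\star,\hat\sig)]\ge\tfrac1q+\eps$, uniformly in $d$ and in $n\ge n_0(\eps)$; the $O(n^{-1/3})$ and $O(1/n)$ errors are then absorbed into the choice of $n_0$. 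By the identity of part (2) this is the bound $\E_{\bG^\star}\langle\FF(R_{\sig^1,\sig^2})\rangle\ge\xi^2+\delta$. We establish it from two inputs. First, the \emph{quantitative} form of Theorem~\ref{thm:equiv} (the content of Proposition~\ref{prop:equiv}): a weak-recovery margin $\E[A(\bsig^\star,\hat\sig)]\ge\tfrac1q+\eps$ forces $\E_{\bG^\star}\langle\|R_{\sig^1,\sig^2}-\pi\pi^{\sT}\|_1\rangle\ge\delta_1(\eps)>0$ for all large $n$, uniformly in $d$. Second, ${\sf (MIN)}$ and compactness of $\RR_{\pi}$ yield a convex, strictly increasing modulus $\varpi$ with $\varpi(0)=0$ and $\FF(R)\ge\xi^2+\varpi(\|R-\pi\pi^{\sT}\|_1)$ for all $R\in\RR_{\pi}$; since the row and column marginals of $R_{\sig^1,\sig^2}$ lie within $O(n^{-1/2})$ of $\pi$ in $\E_{\bG^\star}\langle\cdot\rangle$-expectation (uniformly in $d$, by a short second-moment computation using $\E_{\bG^\star}\langle R_{\sig^1,\sig^2}(i,j)\rangle=\pi_i\pi_j+O(1/n)$), Lipschitz continuity of $\FF$ plus Jensen applied to $\varpi$ give $\E_{\bG^\star}\langle\FF(R_{\sig^1,\sig^2})\rangle\ge\xi^2+\varpi(\delta_1(\eps))-o_n(1)$. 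Taking $\delta=\tfrac12\varpi(\delta_1(\eps))$ and tracing constants, the resulting $\eta(\eps)$ and $n_0(\eps)$ depend only on $\eps$ (and $k,q,\Psi,\pi$), not on $d$.

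\textbf{Part (4).} Let $d>d_{\ast}$ and fix $d_0\in(d_{\ast},d)$. By monotonicity of weak recovery, weak recovery is possible at $d_0$, so there exist $\eps>0$, an estimator $\hat\sig$, and a subsequence $(n_\ell)$ with $\E[A(\bsig^\star,\hat\sig_{n_\ell})]\ge\tfrac1q+\eps$ along $(n_\ell)$. For any $d'\in[d_0,d]$, deleting each clause of $\bG^\star_{d'}$ independently with probability $1-d_0/d'$ yields a sample of $\bG^\star_{d_0}$, so the composed estimator achieves $\E[A(\bsig^\star,\cdot)]\ge\tfrac1q+\eps$ at $d'$ along the \emph{same} subsequence with the \emph{same} margin. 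Applying part (3) at each $d'\in[d_0,d]$ gives $\tfrac1{n_\ell}\partial_{d'}\E\log L(\bG^\star_{n_\ell,d'})\ge\eta(\eps)$ once $n_\ell\ge n_0(\eps)$, and combining with part (1) (non-negativity of the derivative on $[0,d_0]$) and integrating,
\beqn
\tfrac1{n_\ell}\E\log L(\bG^\star_{n_\ell,d})\ \ge\ \int_{d_0}^{d}\tfrac1{n_\ell}\partial_{d'}\E\log L(\bG^\star_{n_\ell,d'})\,\de d'\ \ge\ \eta(\eps)\,(d-d_0)\ >\ 0 ,
\eeqn
and taking $\limsup$ over $n$ along $(n_\ell)$ proves the claim with $\eta:=\eta(\eps)(d-d_0)$. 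The main obstacle is part (3): one needs the equivalence of Theorem~\ref{thm:equiv} in a \emph{quantitative, $d$-uniform} form, and ${\sf (MIN)}$ is used precisely to convert an $\ell_1$-gap of the overlap matrix from $\pi\pi^{\sT}$ into a strictly positive gap of $\FF$, hence of the free-energy derivative; ensuring every error term stays uniform in the average degree $d$ is what makes $n_0(\eps),\eta(\eps)$ independent of $d$, which in turn is what the integration argument in part (4) relies on.
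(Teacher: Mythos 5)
Your proof follows essentially the same route as the paper: part (1) by Jensen through the derivative formula of Proposition~\ref{prop:derivative} and the change-of-measure identity; part (2) by the identity $\E_{\bG^\star,p,u}\big[\langle\bpsi(\sig_{\bom})\rangle^2\big]=\E\langle\FF(R_{\sig^1,\sig^2})\rangle$, triviality of overlaps from Theorem~\ref{thm:equiv}, and bounded convergence; part (3) via strong convexity of $t\log t$, a quantitative form of the overlap equivalence, and {\sf (MIN)}; part (4) by Poisson thinning and integration of the $d$-uniform bound from (3). This matches the structure of the paper's argument (parts (2) and (3) encapsulate what the paper factors out into Lemma~\ref{lem:psi:R}).

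A few technical remarks. The "convex modulus $\varpi$" in part (3) is not needed and is in fact questionable: there is no reason that $\min\{\FF(R)-\xi^2:\|R-\pi\pi^{\sT}\|_1\geq s\}$ admits a convex, strictly positive lower envelope vanishing only at $0$, so Jensen in the direction you need cannot be invoked. The standard fix — and what the paper does — is a Markov-type threshold argument: $\E\langle\|R-\pi\pi^{\sT}\|_1\rangle\geq\delta_1$ together with $\|R-\pi\pi^{\sT}\|_1\leq q$ forces $\E\langle\one\{\|R-\pi\pi^{\sT}\|_1>\delta_1/2\}\rangle\geq\delta_1/(2q)$, and on this event {\sf (MIN)} (with row/column marginals close to $\pi$) gives $\FF(R)\geq\xi^2+\epsilon'$, while off it $\FF(R)\geq\xi^2-o_n(1)$. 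Also, your justification for the row/column marginals being close to $\pi$ cites $\E_{\bG^\star}\langle R_{\sig^1,\sig^2}(i,j)\rangle=\pi_i\pi_j+O(1/n)$, which is false in general (that expectation involves $\E[\P(\bsigma^\star_v=i|\bG^\star)\P(\bsigma^\star_v=j|\bG^\star)]$, which is not near $\pi_i\pi_j$ when recovery is possible); what you actually need, and what is true, is that $\sum_j R_{\sig^1,\sig^2}(i,j)=R_{\sig^1}(i)$ and $\E_{\bG^\star}\langle(R_{\sig^1}(i)-\pi_i)^2\rangle=\E[(R_{\bsig^\star}(i)-\pi_i)^2]=O(1/n)$ uniformly in $d$. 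Finally, the parenthetical in part (2) about "the only correction coming from $k$-tuples with a repeated coordinate" is off: the correction in $\E_{p,u}[\bpsi(\sig_{\bom})]-\xi$ comes from the empirical distribution of $\sig$ deviating from $\pi$, not from collisions in $\bom$. None of these affect the overall correctness, since the right estimates are available and the architecture of the argument is sound.
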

The proofs of Lemma~\ref{lem:KL:mutual:info:free:energy} and Propositions~\ref{prop:derivative},~\ref{prop:free:energy} are deferred to Section~\ref{sec:mutual:info}. Here, we prove that they imply Theorem~\ref{thm:mutual:info}.

\begin{proof}[Proof of Theorem \ref{thm:mutual:info}]
The first statement is an immediate consequence of Lemma~\ref{lem:KL:mutual:info:free:energy} and Proposition~\ref{prop:free:energy}-(2) while the second statement follows from Lemma~\ref{lem:KL:mutual:info:free:energy} and Proposition~\ref{prop:free:energy}-(4).

To prove the third statement, assume the condition ${\sf (MIN)}$. Suppose that for some $d_0>0$ and $\eps>0$, $\E\big[A(\bsig^\star,\hat{\sig})\big]\geq \frac{1}{q}+\eps$ holds for some estimator $\hat{\sig}\equiv \hat{\sig}_n(\Breve{\bG^\star})$ where $\Breve{\bG^\star}\sim \GG_{\sf plant}(n,d_0,p,\pi)$. Then, note that for all $d>d_0$, $\E\big[A(\bsig^\star,\hat{\sig})\big]\geq \frac{1}{q}+\eps$ holds for some estimator $\hat{\sig}\equiv \hat{\sig}_n(\bG^\star)$ where $\bG^\star\sim \GG_{\sf plant}(n,d,p,\pi)$. This is because starting from $\bG^\star$, subsampling each clause independently with probability $d_0/d$ yields a sample drawn from $\GG_{\sf plant}(n,d_0,p,\pi)$ by Poisson thinning. Thus, if we let $\eta\equiv \eta(\eps)$ and $n_0\equiv n_0(\eps)$ be as in Proposition~\ref{prop:free:energy}-(3), then $\frac{1}{n}\frac{\partial}{\partial d} \E \log L(\bG^\star)\geq \eta$ holds for all $d>d_0$ and $n\geq n_0$. Since in general $\frac{1}{n}\frac{\partial}{\partial d} \E \log L(\bG^\star)\geq 0$ holds by Proposition~\ref{prop:free:energy}-(1) and $\eta$ does not depend on $d$, it follows that for all $n\geq n_0$,
\[
\frac{1}{n}\E \log L(\bG^\star)\geq \eta(d-d_0)\equiv \eta_{d}>0\,.
\]
Then, consider the event
\beqn
\AAA_n^{\ast}:=\big\{G:L(G)\geq e^{\eta_{d} n/2}\big\}\,.
\eeqn
Then by Fact~\ref{fact:concentration}, there exists a constant $c>0$ such that for $n\geq n_0$,
\beqn
\P(\bG^\star\in \AAA^{\ast}_n)\geq \P\bigg(\bigg|\frac{1}{n}\log L(\bG^\star)-\frac{1}{n}\E\log L(\bG^\star)\bigg|\leq \frac{\eta}{2}\bigg)\geq 1-e^{-cn\eta_d^2}\,.
\eeqn
Meanwhile, note that $\E L(\bG)=1$ holds since $L(\cdot)$ is Radon-Nikodym derivative of the law of $\bG^\star$ and $\bG$. Thus, we have by Markov's inequality that for any $n\geq 1$,
\begin{equation*}
\P(\bG\in \AAA^{\ast}_n)=\P\big(L(\bG)\geq e^{\eta_d n/2}\big)\leq e^{-\eta_d n/2}\,.
\end{equation*}
Therefore, by adjusting $\eta$ appropriately to account for small $n<n_0$, the desired claim~\eqref{eq:exp:orthogonal} holds for any $n\geq 1$ under the stated assumptions, concluding the proof.
\end{proof}

\section{Equivalent notions of weak recovery}
\label{sec:proof:equiv}
In this section, we prove Theorem~\ref{thm:equiv}. We will prove a generalized statement, where $\bG^\star\equiv \bG^\star(n,\bm,\bsig^\star,p,\pi)$ is replaced by an arbitrary random variable $\bX^\star\equiv \bX^\star_n$ defined on the same probability space as $\bsig^\star$ and satisfy a certain condition ${\sf (EXG)}$. This generalization is stated in Proposition~\ref{prop:equiv}, which will be useful for the proof of Lemma~\ref{lem:m:n:overlap:trivial}, but it is interesting in its own right. We first specify the task of weak recovery with respect to $\bX^\star_n$, extending Definition~\ref{def:weak:recovery}. Recall the definition of $A(\sig^1,\sig^2)$ in \eqref{eq:def:overlap}.
\begin{defn}\label{def:weak:recovery:general}
    Let $(\bX^\star_n)_{n\geq 1}$ be a sequence of random variables which are defined on the same probability space as $\bsig^\star$ and take values in a probability space $\XXX$. We say that weak recovery is possible for $(\bX^\star_n)_{n\geq 1}$ if there exists $\eps>0$ and an estimator (i.e. a measurable function) $\hat{\sig}\equiv \hat{\sig}(\bX^\star_n)$ that takes as an input $\bX^\star_n$ and returns $\hat{\sig}\in [q]^V$ such that $\E[A(\bsig^\star,\hat{\sig})]\geq \frac{1}{q}+\eps$ for large enough $n\geq 1$. Otherwise, we say that the weak recovery is impossible for $(\bX^\star_n)_{n\geq 1}$.
\end{defn}

Given `data' $\bX^\star_n$, we define the `posterior' $\mu_X\in \PPP([q]^V)$ as 
\beq\label{eq:posterior:X}
\mu_X(\sig)= \P\big(\bsig^\star=\sig \bgiven \bX^\star=X\big)\,,\quad \sig \in [q]^V\,.
\eeq
We denote by $\langle \cdot \rangle_{X}$ the expectation with respect to samples $(\sigma^{\ell})_{\ell\geq 1}\iid \mu_X$ from from the posterior.

The condition we impose on the data $(\bX^\star_n)_{n\geq 1}$ is
\begin{itemize}
    \item Exchangeability: for any $n\geq 1$ and any pairs of distinct variables $(u_1,v_1)$ and $(u_2,v_2)$ with $u_i\neq v_i, i=1,2$, 
\beq\label{eq:exg}
{\sf (EXG):}~\Big(\P\big(\bsigma^\star_{u_1}=i, \bsigma^\star_{v_1}=j\bgiven \bX^\star_n\big)\Big)_{i,j\in [q]}\stackrel{d}{=}\Big(\P\big(\bsigma^\star_{u_2}=i, \bsigma^\star_{v_2}=j\bgiven \bX^\star_n\big)\Big)_{i,j\in [q]}
\eeq
\end{itemize}
By the symmetry of variables, the planted factor model satisfies the property {\sf (EXG)} as verified by the next lemma.
\begin{lemma}\label{lem:EXG:CONV}
        For any sequence of $(m_n)_{n\geq 1}$, if we let $\bX^\star_n=\bG^\star(n,m_n)$, then the sequence of random variables $(\bX^\star_n)$ satisfy {\sf (EXG)}. In particular, $\bX^\star_n=\bG^\star$ satisfies {\sf (EXG)}.
\end{lemma}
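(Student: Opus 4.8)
The plan is to obtain {\sf (EXG)} from the fact that the planted factor model is exchangeable under relabelings of the variable set $V$. For a bijection $\theta:V\to V$, let $\theta$ act on configurations by $(\theta\cdot\sig)_v:=\sigma_{\theta^{-1}(v)}$ and on factor graphs by relabeling the variable nodes (each ordered neighborhood $\delta a=(v_1,\dots,v_k)$ becomes $(\theta(v_1),\dots,\theta(v_k))$, the weight functions $\psi_a$ being left unchanged). The first step is to verify that $(\bsig^\star,\bG^\star(n,m_n))\stackrel{d}{=}(\theta\cdot\bsig^\star,\theta\cdot\bG^\star(n,m_n))$ for every such $\theta$. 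This holds because the prior $\pi^{\otimes V}$ is permutation invariant, and conditionally on $\bsig^\star=\sig$ the law \eqref{eq:clause:law:planted} of each clause is permutation equivariant: its numerator $p(\psi)\psi(\sigma_{v_1},\dots,\sigma_{v_k})$ depends only on the restriction of $\sig$ to the neighborhood, and its normalizing constant $\E_{p,u}[\bpsi(\sig_{\bom})]$ is unchanged under $\sig\mapsto\theta\cdot\sig$ since it is a symmetric sum over all ordered $k$-tuples of variables. As the clauses are conditionally i.i.d., this per-clause identity upgrades to the joint law of the whole factor graph.

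Next I would translate this into a statement about two-point posterior marginals. For distinct variables $u\neq v$, write $\rho_{u,v}(X):=\big(\P(\bsigma^\star_u=i,\bsigma^\star_v=j\mid\bX^\star_n=X)\big)_{i,j\in[q]}$, i.e.\ the $(u,v)$-marginal of the posterior $\mu_X$ from \eqref{eq:posterior:X}. Given two pairs $(u_1,v_1)$ and $(u_2,v_2)$ of distinct variables, choose a bijection $\theta:V\to V$ with $\theta(u_1)=u_2$ and $\theta(v_1)=v_2$; such $\theta$ exists because $u_1\neq v_1$, $u_2\neq v_2$, and $V$ is finite. The distributional identity $(\bsig^\star,\bX^\star_n)\stackrel{d}{=}(\theta\cdot\bsig^\star,\theta\cdot\bX^\star_n)$ means that the conditional law of $\bsig^\star$ given $\bX^\star_n=X$ coincides, for a.e.\ $X$, with the $\theta$-pushforward of the conditional law of $\bsig^\star$ given $\bX^\star_n=\theta^{-1}\cdot X$. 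Reading off the two-point marginal at coordinates $(u_2,v_2)$ and using $(\theta\cdot\bsig^\star)_{u_2}=\bsigma^\star_{u_1}$, $(\theta\cdot\bsig^\star)_{v_2}=\bsigma^\star_{v_1}$ yields $\rho_{u_2,v_2}(X)=\rho_{u_1,v_1}(\theta^{-1}\cdot X)$ for a.e.\ $X$, hence $\rho_{u_2,v_2}(\bX^\star_n)=\rho_{u_1,v_1}(\theta^{-1}\cdot\bX^\star_n)$ almost surely. Since the marginal $\bX^\star_n\stackrel{d}{=}\theta^{-1}\cdot\bX^\star_n$ also follows from the exchangeability, the right-hand side has the same law as $\rho_{u_1,v_1}(\bX^\star_n)$, which is precisely {\sf (EXG)}.

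Finally, for the ``in particular'' assertion with $\bX^\star_n=\bG^\star=\bG^\star(n,\bm,\bsig^\star)$ and $\bm\sim\Poi(dn/k)$ drawn independently, the argument is unchanged: relabeling the variables does not affect the number of clauses, so the exchangeability $(\bsig^\star,\bG^\star)\stackrel{d}{=}(\theta\cdot\bsig^\star,\theta\cdot\bG^\star)$ still holds, and everything goes through verbatim. I expect no genuine obstacle in this lemma; the only points needing care are the bookkeeping of the $\theta$-action on ordered neighborhoods and the elementary fact that conditioning on $\bX^\star_n$ and on $\theta\cdot\bX^\star_n$ generate the same $\sigma$-algebra, both of which are routine.
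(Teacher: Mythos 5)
Your proposal is correct and follows essentially the same route as the paper's proof: both rest on the exchangeability $(\bsig^\star,\bG^\star(n,m))\stackrel{d}{=}(\theta\cdot\bsig^\star,\theta\cdot\bG^\star(n,m))$ for a relabeling $\theta$ of the variable set, which in turn follows from the permutation invariance of the prior and the permutation equivariance of the clause distribution in \eqref{eq:clause:law:planted}. Your formulation via $\rho_{u_2,v_2}(X)=\rho_{u_1,v_1}(\theta^{-1}\cdot X)$ combined with $\bX^\star_n\stackrel{d}{=}\theta^{-1}\cdot\bX^\star_n$ is just a rephrasing of the paper's test-function computation in \eqref{eq:lem:EXG:goal}, not a different argument.
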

\begin{proof}
Fix two sets of distinct variables $(u_i,v_i)$, $i=1,2$. We show that for a continuous and bounded test function $f:\R^{q\times q}\to \R$,
\beq\label{eq:lem:EXG:goal}
\E f\Big(\Big\{\P\big(\bsigma^\star_{u_1}=i, \bsigma^\star_{v_1}=j\given \bG^\star(n,m)\big)\Big\}_{i,j\leq q}\Big)=\E f\Big(\Big\{\P\big(\bsigma^\star_{u_2}=i, \bsigma^\star_{v_2}=j\given \bG^\star(n,m)\big)\Big\}_{i,j\leq q}\Big)\,.
\eeq
Since $u_i\neq v_i$, note that there exists a permutation $\kappa: V\to V$ such that $\kappa(u_1)=u_2$ and $\kappa(v_1)=v_2$. With abuse of notation, denote $\kappa\circ \bsig^\star\equiv (\bsigma^\star_{\kappa(v)})_{v\in V}$ and for a factor graph $G$, let $\kappa\circ G$ be the factor graph defined by mapping $G$ by a graph isomorphism that permutes the labels of the variables according to $\kappa$. That is, $\kappa(v) \sim a$ in $\kappa\circ G$ if and only if $v\sim a$ in $G$. Then, the key observation is as follows. Recalling the definition of the planted model (cf. Definition~\ref{def:planted}) and our assumption that $p(\psi^{\theta})=p(\psi)$ holds for $\theta\in S_{\kappa}$, we have
\beqn
\big(\kappa\circ \bsig^\star, \kappa\circ \bG^\star(n,m)\big)\stackrel{d}{=} \big(\bsig^\star, \bG^\star(n,m)\big)\,.
\eeqn
Thus, $\P\big(\bsigma^\star_{u_1}=i, \bsigma^\star_{v_1}=j\bgiven \bG^\star(n,m)=G\big)=\P\big(\bsigma^\star_{u_2}=i, \bsigma^\star_{v_2}=j\bgiven \kappa\circ \bG^\star(n,m)=G\big)$ holds for any factor graph $G$ and $i,j\in [q]$. As a consequence, the LHS of \eqref{eq:lem:EXG:goal} equals
\beqn
\begin{split}
&\E f\Big(\Big\{\P\big(\bsigma^\star_{u_1}=i, \bsigma^\star_{v_1}=j\given \bG^\star(n,m)\big)\Big\}_{i,j\leq q}\Big)\\
&=\sum_{G} f\Big(\Big\{\P\big(\bsigma^\star_{u_1}=i, \bsigma^\star_{v_1}=j\given \bG^\star(n,m)=G\big)\Big\}_{i,j\leq q}\Big)\P\big(\bG^\star(n,m)=G\big)\\
&=\sum_{G} f\Big(\Big\{\P\big(\bsigma^\star_{u_2}=i, \bsigma^\star_{v_2}=j\given \kappa\circ \bG^\star(n,m)=G\big)\Big\}_{i,j\leq q}\Big)\P\big(\kappa\circ \bG^\star(n,m)= G\big)\\
&=\E f\Big(\Big\{\P\big(\bsigma^\star_{u_2}=i, \bsigma^\star_{v_2}=j\given \bG^\star(n,m)\big)\Big\}_{i,j\leq q}\Big)\,,
\end{split}
\eeqn
which concludes the proof of \eqref{eq:lem:EXG:goal}. 
\end{proof}
Under the condition ${\sf (EXG)}$, we then establish the analog of Theorem~\ref{thm:equiv}. 
\begin{prop}\label{prop:equiv}
    Assume that the sequence of random variables $(\bX^\star_n)_{n\geq 1}$ satisfies the condition ${\sf (EXG)}$. Then, the impossibility of weak recovery for $(\bX^\star_n)$ is equivalent to the (analog of) conditions in Theorem~\ref{thm:equiv}-\ref{item:b}, \ref{item:c}, \ref{item:d}, where one replaces $\bG^\star$ therein with $\bX^\star_n$.
\end{prop}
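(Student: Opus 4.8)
The plan is to prove the cycle of implications $(a)\Leftrightarrow(d)$, $(d)\Rightarrow(b)\Rightarrow(c)\Rightarrow(a)$ (or some convenient ordering), where throughout $\bG^\star$ is replaced by an abstract $\bX^\star_n$ satisfying ${\sf (EXG)}$. The notational backbone is the \emph{Nishimori identity}: a single sample $\sig^1$ from the posterior $\mu_{\bX^\star}$ is distributed jointly with $\bX^\star$ exactly as the true signal $\bsig^\star$, so $\langle f(\sig^1)\rangle_{\bX^\star}$ and $\E_{\bsig^\star}[f(\bsig^\star)\mid \bX^\star]$ have the same law; consequently two i.i.d.\ posterior samples $(\sig^1,\sig^2)$ have the same joint law as $(\bsig^\star,\sig^1)$ where $\sig^1$ is a fresh posterior sample. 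This lets us translate all statements about posterior-sample overlaps into statements about the two-point correlations $\P(\bsigma^\star_u=i,\bsigma^\star_v=j\mid \bX^\star)$, which is precisely what ${\sf (EXG)}$ controls.

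First I would record two reductions. (i) By ${\sf (EXG)}$, the random matrix $P_{uv}:=\big(\P(\bsigma^\star_u=i,\bsigma^\star_v=j\mid\bX^\star_n)\big)_{i,j}$ has a law not depending on the pair $(u,v)$; and writing $R_{\sig^1,\sig^2}(i,j)=\frac1n\sum_v \one\{\sigma^1_v=i,\sigma^2_v=j\}$, the Nishimori identity gives $\E\langle R_{\sig^1,\sig^2}(i,j)\rangle_{\bX^\star}=\E[P_{vv}^{\sf diag}]$ and, more usefully, $\E\langle R_{\sig^1,\sig^2}(i,j)^2\rangle$ and $\E\langle \|R-\pi\pi^{\sT}\|_1\rangle$ can be expanded into sums over pairs $(u,v)$ of $\E$ of entries of $P_{uv}$. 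Thus $\E\langle\|R_{\sig^1,\sig^2}-\pi\pi^{\sT}\|_1\rangle_{\bX^\star}\to 0$ is equivalent, up to the negligible diagonal $u=v$ terms (which contribute $O(1/n)$), to $\E\|P_{uv}-\pi\pi^{\sT}\|_1\to 0$ for a single (equivalently every) pair $u\ne v$, i.e.\ to $(b)$ after noting $L^1$-convergence of a bounded sequence to a constant is the same as convergence in probability. This gives $(b)\Leftrightarrow(d)$ essentially for free. (ii) The implication $(b)\Rightarrow(c)$ is immediate by summing: $\P(\bsigma^\star_u=\bsigma^\star_v\mid\bX^\star)=\sum_i P_{uv}(i,i)\pto\sum_i\pi_i^2$, and $\sum_i\pi_i\P(\bsigma^\star_u=i\mid\bX^\star)=\sum_{i,j}\pi_i P_{uv}(i,j)\pto\sum_i\pi_i^2$ using that $\sum_j P_{uv}(i,j)=\P(\bsigma^\star_u=i\mid\bX^\star)$.

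The substantive content is $(a)\Leftrightarrow(c)$, or more precisely the two directions ``weak recovery impossible $\Rightarrow$ two-point functions trivial'' and its converse. For the direction $(c)\Rightarrow(a)$: suppose some estimator $\hat\sig(\bX^\star)$ achieves $\E[A(\bsig^\star,\hat\sig)]\ge \frac1q+\eps$ along a subsequence. Using the Nishimori identity one replaces $\hat\sig$ by the \emph{Bayes-optimal} estimator — e.g.\ coordinatewise MAP, $\hat\sigma_v=\argmax_i \P(\bsigma^\star_v=i\mid\bX^\star)$ (with the permutation in $A(\cdot,\cdot)$ handled as in \cite{AbbeSandon:18, mossel23exact}) — without decreasing the expected overlap. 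Nontrivial overlap of the Bayes estimator with $\bsig^\star$ forces, via a second-moment / averaging argument over pairs of coordinates, that $\E\,\Var$-type quantity $\E\big[\sum_i(\P(\bsigma^\star_v=i\mid\bX^\star)-\pi_i)^2\big]$ is bounded away from $0$; then ${\sf (EXG)}$ upgrades this single-coordinate fluctuation to a genuine \emph{two-coordinate} correlation, contradicting $(c)$. Concretely: $\E\big[\P(\bsigma^\star_u=i,\bsigma^\star_v=i\mid\bX^\star)\big]=\E\big[\P(\bsigma^\star_u=i\mid\bX^\star)\P(\bsigma^\star_v=i\mid\bX^\star)\big]$ by conditional independence of distinct coordinates given $\bX^\star$; if these products were all $\approx\pi_i^2$ in $L^1$ then by Cauchy–Schwarz and exchangeability each $\P(\bsigma^\star_v=i\mid\bX^\star)$ concentrates at $\pi_i$, killing the overlap. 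For $(a)\Rightarrow(c)$ (or directly $(a)\Rightarrow(d)$): one shows the contrapositive, that non-trivial two-point correlation yields an estimator — this is the classical observation that if $\P(\bsigma^\star_u=\bsigma^\star_v\mid\bX^\star)$ is non-trivially correlated with the indicator $\one\{\bsigma^\star_u=\bsigma^\star_v\}$ (again Nishimori), one can extract a labeling with nontrivial overlap, e.g.\ by a spectral or correlation-clustering rounding of the matrix $\big(\P(\bsigma^\star_u=\bsigma^\star_v\mid\bX^\star)\big)_{u,v}$; alternatively, and more cleanly, define $\hat\sig$ as a single posterior sample $\sig^1$ and compute $\E[A(\bsig^\star,\sig^1)]=\E\langle A(\sig^1,\sig^2)\rangle$, which by the Nishimori identity and the definition of $A$ is lower-bounded by $\frac1q+c$ whenever $\E\langle\|R_{\sig^1,\sig^2}-\pi\pi^{\sT}\|_1\rangle$ stays bounded away from $0$.

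The main obstacle I anticipate is the $(a)\Rightarrow(d)$ direction, i.e.\ building an \emph{actual estimator} from failure of orthogonality: the overlap $A(\sig^1,\sig^2)$ contains a maximum over the symmetric group $S_q$, so merely knowing $\|R-\pi\pi^{\sT}\|_1$ is non-negligible does not immediately say which permutation aligns the two samples, and one must argue that for \emph{some} permutation the diagonal mass of $R$ exceeds $1/q+c$ — this requires ruling out the possibility that $R-\pi\pi^{\sT}$ is ``large but uniformly spread off-permutations.'' The fix is a clean combinatorial lemma: for any doubly-stochastic-type matrix $R$ with row/column sums close to $\pi$, $\|R-\pi\pi^{\sT}\|_1\ge c$ implies $\max_{\Gamma\in S_q}\sum_i R(i,\Gamma(i))\ge (\text{trivial value})+c'$ (Birkhoff–von Neumann: $R$ restricted appropriately is a mixture of permutation matrices, and being far from the ``flat'' doubly stochastic matrix forces some vertex of the Birkhoff polytope to carry weight). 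Handling the prior $\pi$ not being uniform, and keeping all the error terms ($u=v$ diagonal, fluctuations of row sums) under control uniformly in $n$, is the bookkeeping-heavy part; everything else is a routine consequence of Nishimori plus ${\sf (EXG)}$.
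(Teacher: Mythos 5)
The cycle you outline is the same one the paper uses, and your treatment of $(a)\Rightarrow(b)$ (build an estimator from a non-trivial one- or two-point marginal, handling the permutation by just taking $\Gamma=\mathrm{id}$ in the lower bound), $(b)\Rightarrow(c)$ (summing), $(c)\Rightarrow(a)$ (Cauchy--Schwarz bound on the overlap of an arbitrary estimator) and $(c)\Rightarrow(d)$ (expand $\E\langle\|R-\pi\pi^{\sT}\|_{\Fr}^2\rangle_{\bX^\star}$ and use bounded convergence) all match the paper up to cosmetic differences (the paper argues directly from ${\sf (EXG)}$ and the posterior definition rather than invoking the Nishimori identity by name, and works with the $\pi$-normalized overlap $\wt{A}$).

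The real gap is your claim that $(b)\Leftrightarrow(d)$ is ``essentially for free.'' The forward direction $(b)\Rightarrow(d)$ (equivalently $(c)\Rightarrow(d)$) is indeed cheap, but the reverse $(d)\Rightarrow(b)$ is not, and the computation you sketch cannot deliver it. Expanding $\E\langle\|R_{\sig^1,\sig^2}-\pi\pi^{\sT}\|_{\Fr}^2\rangle_{\bX^\star}$ and using ${\sf (EXG)}$ gives, up to $O(1/n)$, exactly
\beqn
\E\big[A^2\big]-2\,\E\big[B^2\big]+c^2, \quad A:=\P(\bsigma^\star_u=\bsigma^\star_v\given\bX^\star),\ \ B:=\sum_i\pi_i\P(\bsigma^\star_u=i\given\bX^\star),\ \ c:=\sum_i\pi_i^2\,,
\eeqn
and since $\E A=\E B=c$ identically (prior marginals), hypothesis $(d)$ only tells you $\Var(A)-2\Var(B)\to 0$. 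That is a single linear relation between two non-negative quantities; it does \emph{not} force $\Var(A)\to 0$ and $\Var(B)\to 0$ separately, which is what $(c)$ demands. In other words, the overlap functional only tests the posterior against one particular quadratic form, whereas $(b)$/$(c)$ is a statement about the full two-point marginal matrix. The paper closes this gap with Lemma~\ref{lem:pinning}, a non-trivial structural result built on the Bapst--Coja-Oghlan ``harness'' decomposition (\cite[Corollary 2.2]{Bapst16harness}): it decomposes an arbitrary measure $\mu\in\PPP([q]^n)$ into pieces on which coordinates are nearly pairwise independent, uses the small overlap variance to show each piece has nearly $\pi$-marginals, and then reassembles. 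You cannot bypass this with an expectation computation plus ${\sf (EXG)}$ alone; some such pinning/decomposition input is needed, and it is the genuinely new ingredient of $(d)\Rightarrow(c)$.

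A secondary point: the difficulty you anticipate in $(a)\Rightarrow(d)$ around the $\max_{\Gamma\in S_q}$ and Birkhoff--von Neumann does not arise in the paper's route. One never goes directly from $(a)$ to $(d)$; one goes $(a)\Rightarrow(b)\Rightarrow(c)\Rightarrow(d)$. For $(a)\Rightarrow(b)$ (proven by contraposition), you are constructing an estimator and only need a \emph{lower} bound on the overlap, so choosing $\Gamma=\mathrm{id}$ suffices. For $(c)\Rightarrow(a)$ (an \emph{upper} bound on any estimator's overlap), the paper bounds the $\max_\Gamma$ by a sum of absolute values, which again avoids any Birkhoff-type argument. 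The alignment-of-permutations issue you flag is a phantom difficulty created by trying to prove $(a)\Rightarrow(d)$ directly.
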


\begin{proof}[Proof of Theorem~\ref{thm:equiv}]
This is immediate from Lemma~\ref{lem:EXG:CONV} and Proposition~\ref{prop:equiv}.
\end{proof}

For the rest of this section, we prove Proposition~\ref{prop:equiv}. In Section \ref{subsec:twopoint}, we show that $(a)\Rightarrow(b) \Rightarrow(c)\Rightarrow (a)$ holds. In Section \ref{subsec:overlap}, we show that $(c)\Leftrightarrow (d)$ holds. Throughout, we abbreviate $\bX^\star\equiv \bX^\star_n$. We recall that $\langle \cdot \rangle_X$ denotes the expectation taken w.r.t. the posterior $\mu_X$.

\subsection{Triviality of two point correlation}
\label{subsec:twopoint}
We first substitute the quantity $A(\bsig^\star,\hat{\sig})$ in Definition~\ref{def:weak:recovery} by $\wt{A}(\bsig^\star,\hat{\sig})$, which will be more convenient for the proof of $(a)\Rightarrow (b)$ and $(c)\Rightarrow (a)$. For $\sig^{\ell}\equiv (\sigma^{\ell}_u)_{u\in V}\in [q]^V, \ell=1,2$, define
\beq\label{eq:def:wt:A}
\wt{A}(\sig^1,\sig^2)\equiv \wt{A}(\sig^1,\sig^2;\pi):=\max_{\Gamma\in S_q}\frac{1}{q}\sum_{i=1}^{q}\frac{1}{n\pi_i}\sum_{u\in V}\one\{\sigma^1_u=i, \sigma^2_u = \Gamma(i)\}\,.
\eeq
That is $\wt{A}(\sig^1,\sig^2)$ is defined by replacing $|\{v\in V: \sigma_v^1=i\}|$ term in \eqref{eq:def:overlap} by $n\pi_i$. If $\sig^1=\bsig^\star$, then such replacement is valid by the concentration of $|\{v\in V: \bsigma^\star_v=i\}|$ around $n\pi_i$ as shown in the lemma below.
\begin{lemma}\label{lem:equiv:weak:recovery}
There exists a constant $C\equiv C_{q,\pi}>0$ such that for any estimator $\hat{\sig}\equiv \hat{\sig}(\bX^\star)$, we have
\beqn
\big|\E[A(\bsig^\star, \hat{\sig})]-\E[\wt{A}(\bsig^\star, \hat{\sig})]\big|\leq Cn^{-1/3}\,.
\eeqn
\end{lemma}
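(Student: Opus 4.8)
The plan is to compare $A$ and $\wt A$ term by term after fixing the relabelling permutation, and to control the discrepancy on the typical event that the empirical class sizes of $\bsig^\star$ are close to $n\pi$. For $\Gamma\in S_q$ set
\[
a_\Gamma:=\frac1q\sum_{i=1}^q\frac{|\{v:\bsigma^\star_v=i,\ \hat{\sigma}_v=\Gamma(i)\}|}{N_i}\,,\qquad
\wt a_\Gamma:=\frac1q\sum_{i=1}^q\frac{|\{v:\bsigma^\star_v=i,\ \hat{\sigma}_v=\Gamma(i)\}|}{n\pi_i}\,,
\]
where $N_i:=|\{v\in V:\bsigma^\star_v=i\}|$ and we use the convention $0/0=0$. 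Since $A(\bsig^\star,\hat{\sig})=\max_\Gamma a_\Gamma$ and $\wt A(\bsig^\star,\hat{\sig})=\max_\Gamma\wt a_\Gamma$, the elementary inequality $|\max_\Gamma a_\Gamma-\max_\Gamma\wt a_\Gamma|\le\max_\Gamma|a_\Gamma-\wt a_\Gamma|$ reduces everything to bounding $|a_\Gamma-\wt a_\Gamma|$ for each $\Gamma$.

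Next I would record the deterministic bound: because each numerator is at most $N_i$, on the event $\{N_i>0\ \forall i\}$ we have
\[
|a_\Gamma-\wt a_\Gamma|\ \le\ \frac1q\sum_{i=1}^q N_i\,\Big|\frac1{N_i}-\frac1{n\pi_i}\Big|\ =\ \frac1q\sum_{i=1}^q\frac{|N_i-n\pi_i|}{n\pi_i}\,,
\]
and crucially this estimate is uniform over all estimators $\hat{\sig}$. Hence on the event $\EE_n:=\bigcap_{i\in[q]}\{|N_i-n\pi_i|\le n^{2/3}\}$ we obtain $|A-\wt A|\le \pi_{\min}^{-1}n^{-1/3}$ with $\pi_{\min}:=\min_i\pi_i>0$, and for $n$ large enough that $n\pi_{\min}>n^{2/3}$ this event also forces $N_i>0$. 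Since $\bsigma^\star_v\iid\pi$, each $N_i$ is $\mathrm{Bin}(n,\pi_i)$, so Hoeffding's inequality together with a union bound gives $\P(\EE_n^c)\le 2q\,e^{-2n^{1/3}}$.

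Finally, on $\EE_n^c$ I would use only the crude bounds $0\le A\le1$ (an average of ratios lying in $[0,1]$) and $0\le\wt A\le\pi_{\min}^{-1}$ (each numerator is at most $N_i\le n$), so that $|A-\wt A|\le 1+\pi_{\min}^{-1}$ pointwise. Combining the two regimes,
\[
\big|\E[A(\bsig^\star,\hat{\sig})]-\E[\wt A(\bsig^\star,\hat{\sig})]\big|\ \le\ \E\big|A-\wt A\big|\ \le\ \frac{n^{-1/3}}{\pi_{\min}}+\Big(1+\frac1{\pi_{\min}}\Big)2q\,e^{-2n^{1/3}}\,,
\]
and absorbing the super-polynomially small tail term and the finitely many small values of $n$ into a constant $C=C_{q,\pi}$ yields the claim. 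I do not anticipate a genuine obstacle here: the only mildly delicate points are the $0/0$ convention for empty classes, which is harmless because that event is contained in $\EE_n^c$, and the requirement that the bound hold for \emph{every} estimator simultaneously, which is automatic since $\hat{\sig}$ enters the estimate only through counts dominated by the class sizes $N_i$.
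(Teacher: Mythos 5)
Your proposal is correct and follows essentially the same route as the paper: concentrate the class sizes $N_i$ around $n\pi_i$ via Hoeffding on a high-probability event, bound the discrepancy $|A-\wt A|$ by $O(n^{-1/3})$ on that event, and use the crude a priori bounds $A\le 1$, $\wt A\le\pi_{\min}^{-1}$ on the complement. The only (cosmetic) difference is that you bound $|a_\Gamma-\wt a_\Gamma|$ additively term by term, yielding $|A-\wt A|\le\pi_{\min}^{-1}n^{-1/3}$ on the good event, whereas the paper first controls the ratio $A/\wt A$ and then multiplies by $\wt A\le\pi_{\min}^{-1}$, producing the slightly weaker $\pi_{\min}^{-2}n^{-1/3}$; both give the stated conclusion.
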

\begin{proof}
Note that we can write $A(\bsig^\star, \hat{\sig})$ by
\beqn
A(\bsig^\star,\hat{\sig})=\max_{\Gamma \in S_q}\frac{1}{q}\sum_{i=1}^{q}\frac{\frac{1}{n\pi_i}\sum_{u\in V}\one\{\bsigma^\star_u=i, \hat{\sigma}_u=\Gamma(i)\}}{\frac{1}{n\pi_i}\sum_{u\in V}\one\{\bsigma^\star_u=i\}}\,.
\eeqn
We show that on a w.h.p. event, the denominator above is close to $1$: since $\bsig^\star\equiv (\bsigma^\star_v)_{v\in V}\iid \pi$, we have by Hoeffding's inequality and a union bound that 
\begin{equation}\label{eq:bal:event}
\P(\mathcal{A}_{\bal})\geq 1-q\exp(-2n^{1/3})\,,~~\textnormal{where}~~\mathcal{A}_{\bal}:=\bigg\{\bigg|\frac{1}{n}\sum_{u\in V}\one\big\{\bsigma^\star_u=i\big\}-\pi_i \bigg|\leq n^{-1/3},~\textnormal{for all}~i \in [q]\bigg\}\,.
\end{equation}
Thus, it follows that on the event $\AA_{\bal}$, for any estimator $\hat{\sig}$, 
\beqn
1-(\min_i \pi_i)^{-1}\cdot n^{-1/3}\leq \frac{A(\bsig^\star,\hat{\sig})}{\wt{A}(\bsig^\star,\hat{\sig})}\leq 1+(\min_i \pi_i)^{-1}\cdot n^{-1/3}\,.
\eeqn
Further, $\wt{A}(\sig^1,\sig^2)\in [0,(\min_i \pi_i)^{-1}]$ holds, so it follows that on the event $\AA_{\bal}$,
\beqn
\big|A(\bsig^\star, \hat{\sig})-\wt{A}(\bsig^\star,\hat{\sig})\big|\leq (\min_i \pi_i)^{-2}\cdot n^{-1/3}\,.
\eeqn
Combining with \eqref{eq:bal:event} concludes the proof.
\end{proof}
With Lemma \ref{lem:equiv:weak:recovery} in hand, we prove $(a)\Rightarrow (b)\Rightarrow (c)\Rightarrow (a)$. We first start with $(a)\Rightarrow (b)$.
\begin{lemma}\label{lem:two:point}
Suppose that weak recovery is impossible for $\bX^\star$ at $d$. Then, for any $u\neq v$ and $i,j\in [q]$, $\P(\bsigma^\star_u=i,\bsigma^\star_v=j\given \bX^\star)\pto \pi_i\pi_j$ holds as $n\to\infty$.
\end{lemma}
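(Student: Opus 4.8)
The statement to prove is Lemma~\ref{lem:two:point}: if weak recovery is impossible for $\bX^\star$ at $d$, then for all distinct $u,v$ and all $i,j\in[q]$, $\P(\bsigma^\star_u=i,\bsigma^\star_v=j\given \bX^\star)\pto \pi_i\pi_j$. Let me sketch how I'd prove it.

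The plan is to argue by contrapositive: if the two-point correlations do \emph{not} concentrate at $\pi_i\pi_j$, I will build an estimator that achieves overlap bounded away from $1/q$. First, note that by {\sf (EXG)}, the distribution of the random matrix $(\P(\bsigma^\star_u=i,\bsigma^\star_v=j\given \bX^\star))_{i,j}$ does not depend on the choice of distinct pair $(u,v)$; call this common distribution $\nu_n$ on $\R^{q\times q}$. Since each such matrix has nonnegative entries summing to $1$, the family $(\nu_n)$ is tight, and along a subsequence $\nu_n \Rightarrow \nu_\infty$. If the conclusion fails, then $\nu_\infty$ is not the point mass at $(\pi_i\pi_j)_{i,j}$, so there is some bounded continuous test function witnessing this — concretely, $\E|\P(\bsigma^\star_u=i,\bsigma^\star_v=j\given \bX^\star) - \pi_i\pi_j|$ stays bounded below by some $\delta>0$ along the subsequence, for some fixed $i,j$. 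I would like to promote this to a statement about $\E[(\P(\bsigma^\star_u=\bsigma^\star_v\given\bX^\star)-\sum_i\pi_i^2)]$ or a similar scalar functional that I can harness; but it's cleaner to work directly with the estimator.

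The key construction: to estimate $\bsigma^\star_v$, I use the posterior marginal $\hat\sigma_v := \argmax_{i} \P(\bsigma^\star_v=i\given \bX^\star)$ (the Bayes-optimal single-site estimator, ties broken arbitrarily). The overlap $\E[\wt A(\bsig^\star,\hat\sig)]$ — which by Lemma~\ref{lem:equiv:weak:recovery} differs from $\E[A(\bsig^\star,\hat\sig)]$ by $O(n^{-1/3})$ — is, up to the maximization over $\Gamma\in S_q$ which only helps, at least $\frac{1}{q}\sum_i \frac{1}{n\pi_i}\sum_v \P(\bsigma^\star_v = \hat\sigma_v, \hat\sigma_v = i\, |\,\text{some event})$; more transparently, $\E[\wt A]\ge \frac1q \sum_i \frac{1}{n\pi_i}\E\sum_v \one\{\bsigma^\star_v=i\}\one\{\hat\sigma_v=i\}$ when $\hat\sigma$ is correct, and the Bayes estimator maximizes $\P(\bsigma^\star_v=\hat\sigma_v\given\bX^\star)\ge \pi_{\hat\sigma_v}$ pointwise; so $\E[$ per-site success probability $]\ge \E[\max_i \P(\bsigma^\star_v=i\given\bX^\star)]$. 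The point is that $\max_i \P(\bsigma^\star_v=i\given\bX^\star) \ge \sum_i \P(\bsigma^\star_v=i\given\bX^\star)^2$ always, and the latter has expectation $\sum_i \E[\P(\bsigma^\star_v=i\given\bX^\star)^2] \ge \sum_i \pi_i^2$ by Jensen, with \emph{equality iff} $\P(\bsigma^\star_v=i\given\bX^\star)$ is a.s.\ constant $=\pi_i$. So if weak recovery is impossible, the single-site posterior marginal must concentrate at $\pi$: $\E\sum_i|\P(\bsigma^\star_v=i\given\bX^\star)-\pi_i|^2\to 0$, hence $\P(\bsigma^\star_v=i\given\bX^\star)\pto\pi_i$.

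Now to get the two-point statement I need to upgrade from single-site to pairs. The natural move: condition on $\bX^\star$ and one extra spin. Observe that $\P(\bsigma^\star_u=i, \bsigma^\star_v=j\given \bX^\star) = \E[\one\{\bsigma^\star_v=j\}\P(\bsigma^\star_u=i\given \bX^\star,\bsigma^\star_v)\given \bX^\star]$; if I feed the estimator the \emph{augmented} data $(\bX^\star,\bsigma^\star_v)$ — which, being more informative, also has weak recovery impossible (deleting the extra coordinate can only lose information — formally, any estimator from $\bX^\star$ is an estimator from $(\bX^\star,\bsigma^\star_v)$; and conversely if weak recovery were possible from $(\bX^\star,\bsigma^\star_v)$ one could... this direction needs care since one vertex of side information shouldn't enable recovery of a linear fraction — I'd argue that revealing a single spin changes the overlap by $O(1/n)$). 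Then applying the single-site concentration just proven to the augmented data gives $\P(\bsigma^\star_u=i\given\bX^\star,\bsigma^\star_v)\pto\pi_i$ for $u\ne v$, and integrating against $\one\{\bsigma^\star_v=j\}$ (using boundedness and dominated convergence, together with the fact that conditionally on $\bX^\star$ the law of $\bsigma^\star_v$ concentrates) yields $\P(\bsigma^\star_u=i,\bsigma^\star_v=j\given\bX^\star)\pto\pi_i\pi_j$.

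\textbf{Main obstacle.} The delicate point is the augmentation step: I need that impossibility of weak recovery for $\bX^\star$ implies impossibility for $(\bX^\star,\bsigma^\star_v)$ — i.e.\ that one extra revealed spin cannot unlock weak recovery. Intuitively obvious (you recover at most $o(n)$ extra correctly-labeled vertices from one spin), but making it rigorous requires comparing $\E[A(\bsig^\star,\hat\sig(\bX^\star,\bsigma^\star_v))]$ to $\E[A(\bsig^\star,\hat\sig'(\bX^\star))]$ for a modified estimator $\hat\sig'$ that "guesses" the missing spin, and controlling the effect of the maximization over $\Gamma$. An alternative that sidesteps this: work purely with the scalar identity $\E\langle\one\{\sigma^1_u=\sigma^2_v\}\rangle_{\bX^\star}$-type quantities and the equivalence $(c)\Leftrightarrow(d)$ — but since this lemma is precisely the $(a)\Rightarrow(b)$ step, I should prove it directly; I expect the cleanest route is the Bayes-estimator / Jensen-equality argument above for single sites, then the augmentation trick, with the one-spin-side-information bound handled by an explicit $O(1/n)$ coupling estimate.
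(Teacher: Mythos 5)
Your proposal takes a genuinely different route from the paper, and both main steps have issues worth flagging.

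\textbf{Step 1 (single-site concentration).} You use the unnormalized Bayes estimator $\hat\sigma_v = \argmax_i \P(\bsigma^\star_v=i\given\bX^\star)$ and the chain $\E[\max_i p_i] \geq \E[\sum_i p_i^2] \geq \sum_i\pi_i^2$, with equality in the Jensen step iff the posterior is a.s.\ $\pi$. This squeezes correctly only when $\pi$ is uniform, since then $\sum_i\pi_i^2 = 1/q$ exactly. For non-uniform $\pi$, the $\Gamma=\id$ bound from $\wt A$ produces $\E[\wt A]\geq\frac{1}{q}\E\big[\max_i p_i/\pi_{\hat\sigma_v}\big]$, which is $\geq 1/q$ always; impossibility of weak recovery then forces $\max_i p_i/\pi_{\hat\sigma_v}\to 1$, but that does \emph{not} imply $p=\pi$. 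For instance $p=(0.9,0.1,0)$ and $\pi=(0.9,0.05,0.05)$ give $\max_i p_i/\pi_{\argmax p}=1$ with $p\neq\pi$. The paper instead uses the $\pi$-normalized estimator $\hat\sigma_u:=\argmax_i\frac{\P(\bsigma^\star_u=i\given\bX^\star)}{\pi_i}$, for which $\max_i p_i/\pi_i\geq 1$ \emph{with equality iff} $p=\pi$ (because $p_i\le\pi_i$ for all $i$ together with $\sum p_i=\sum\pi_i$ forces $p=\pi$). The quantity driven to $1$ is then exactly the right one, and the conclusion follows for arbitrary $\pi$.

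\textbf{Step 2 (upgrading to two-point).} You correctly identify the obstacle in your route: you would need to show that revealing the single random spin $\bsigma^\star_v$ as side information cannot enable weak recovery, and this is not a simple $O(1/n)$ stability statement, precisely because the overlap maximizes over global permutations $\Gamma\in S_q$ and the conditional estimators $\hat\sig(\cdot,j)$ for different $j$ could differ by permutations in an uncontrolled way. The paper avoids this obstacle entirely: it conditions the posterior on a \emph{fixed, deterministic label} $j_0\in[q]$, not on the random variable $\bsigma^\star_v$. Concretely, since step~1 gives $\P(\bsigma^\star_{v_0}=j_0\given\bX^\star)\pto\pi_{j_0}>0$, the failure of two-point convergence forces $\P(\bsigma^\star_{u_0}=i_0\given\bX^\star,\bsigma^\star_{v_0}=j_0)\not\pto\pi_{i_0}$, and the paper then feeds into the step~1 argument the estimator
\[
\hat\sigma'_u(X) := \argmax_{i\in[q]}\frac{\P(\bsigma^\star_u=i\given\bX^\star=X,\,\bsigma^\star_v=j_0)}{\pi_i}\,,
\]
which \emph{is a function of $X$ alone} (the constant $j_0$ is hard-wired into the definition). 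Hence it is a legitimate estimator in the sense of Definition~\ref{def:weak:recovery:general}, and no ``side information cannot help'' lemma is needed. This is the structural idea your proposal is missing; absorbing the fixed label into the estimator both closes the gap you flagged and makes step~2 a direct repetition of step~1 with the conditional posterior replacing the marginal.
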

\begin{proof}
We first prove that the impossibility of weak recovery implies $\P(\bsigma^\star_u=i\given \bX^\star)\pto \pi_i$ for any $u\in V$ and $i\in [q]$. By {\sf (EXG)}, this implies that for every $u\in V$, $\P(\bsigma^\star_{u}=i_0\given \bX^\star)$  does not converge in probability to $\pi_{i_0}$. Now, note that $\max_{i\leq q}\frac{\P(\bsigmas_u=i\given \bX^\star)}{\pi_i}\geq 1$, almost surely, since $\sum_{i=1}^{q}\P(\bsigmas_u=i\given \bX^\star)=1$ holds. Hence, it follows that
\beq\label{eq:lem:two:point:crucial:1}
\limsup_{n\to\infty} \E\Bigg[\max_{i\in [q]}\bigg\{\frac{\P(\bsigmas_u=i\given \bX^\star)}{\pi_i}\bigg\}\Bigg]>1\,,
\eeq
since otherwise, $\max_{i\leq q}\frac{\P(\bsigmas_u=i)}{\pi_i}\pto 1$ holds, which implies that $\P(\bsigmas_u=i\given \bX^\star)\pto \pi_i$ for every $i\in [q]$. We now show that \eqref{eq:lem:two:point:crucial:1} implies that weak recovery is possible. To this end, for $X\in \XXX$, consider the estimator $\hat{\sig}(X)\equiv (\hat{\sigma}_u(X))_{u\in V}$ defined by
\beqn
\hat{\sigma}_u(X):=\argmax_{i\in [q]}\left\{\frac{\P(\bsigma^\star_u=i\given \bX^\star = X)}{\pi_i}\right\}\,.
\eeqn
Then, by taking $\Gamma= \id$ in the definition of $\wt{A}(\bsig^\star,\hat{\sig})$ (cf. \eqref{eq:def:wt:A}), and using ${\sf (EXG)}$ (cf. \eqref{eq:exg}), we have
\beq\label{eq:lem:two:poing:tower:1}
\E\Big[\wt{A}\big(\bsig^\star,\hat{\sig}(\bX^\star)\big)\Big]\geq \frac{1}{q}\sum_{i=1}^{q}\frac{1}{\pi_i}\P\big(\bsigmas_u=i, \hat{\sigma}_u(\bX^\star)=i\big)=\frac{1}{q}\sum_{i=1}^{q}\E\Bigg[\one(\hat{\sigma}_u(\bX^\star)=i)\cdot\frac{\P\big(\bsigmas_u=i \bgiven \bX^\star\big)}{\pi_i}\Bigg]\,,
\eeq
where the last equality is due to tower property and the fact that $\hat{\sigma}_u(\bX^\star)$ is $\bX^\star$-measurable. Note that the RHS equals $q^{-1}\E\big[\max_{i\leq q} \pi_i^{-1}\P(\bsigma^\star_u=i\given \bX^\star)\big]$. Thus, combining with \eqref{eq:lem:two:point:crucial:1}, we have
\beqn
\limsup_{n\to\infty}\E\Big[\wt{A}\big(\bsig^\star,\hat{\sig}(\bX^\star)\big)\Big]\geq \frac{1}{q}\limsup_{n\to\infty} \E\Bigg[\max_{i\in [q]}\bigg\{\frac{\P(\bsigmas_u=i\given \bX^\star)}{\pi_i}\bigg\}\Bigg]>\frac{1}{q}\,.
\eeqn
Therefore, by Lemma \ref{lem:equiv:weak:recovery}, weak recovery is possible at $d$, which contradicts our assumption. We thus conclude that $\P(\bsigma^\star_u=i\given \bX^\star)\pto \pi_i$ for any $u\in V$ and $i\in [q]$.

Next, we show that the impossibility of weak recovery implies $\P(\bsigma^\star_u=i, \bsigma^\star_v=j\given \bX^\star)\pto \pi_i\pi_j$ for any $u\neq v$ and $i,j\in [q]$ by using a similar argument as above. Assume by contradiction that there exists $u_0\neq v_0$ and $i_0, j_0\in [q]$ such that $\P(\bsigma^\star_{u_0}=i_0, \bsigma^\star_{v_0}=j_0\given \bX^\star)$ does not converge in probability to $\pi_{i_0}\pi_{j_0}$. Since we have shown that we must have $\P(\bsigma^\star_{v_0}=j_0\given \bX^\star)\pto \pi_{j_0}$, this implies that $\P(\bsigma^\star_{u_0}=i_0\given \bX^\star, \bsigma^\star_{v_0}=j_0)$ does not converge in probability to $\pi_{i_0}$. Then, by the same argument as above, the property ${\sf (EXG)}$ in \eqref{eq:exg} shows that for any two district variables $u\neq v$, 
\beq\label{eq:lem:two:point:crucial:2}
\limsup_{n\to\infty} \E\Bigg[\max_{i\in [q]}\bigg\{\frac{\P(\bsigmas_u=i\given \bX^\star, \bsigmas_v=j_0)}{\pi_i}\bigg\}\Bigg]>1\,,
\eeq
since otherwise, $\max_{i\in [q]}\frac{\P(\bsigmas_{u_0}=i\given \bX^\star, \bsigmas_{v_0}=j_0)}{\pi_i}\pto 1$ holds, which implies that $\P(\bsigma^\star_{u_0}=i\given \bX^\star, \bsigmas_{v_0}=j_0)\pto \pi_i$ holds for any $i\in [q]$. Now, for $X\in \XXX$, consider the estimator $\hat{\sig}^\prime(X)\equiv \big(\hat{\sigma}_u^\prime(X)\big)_{u\in V}$ defined by
\beqn
\hat{\sigma}_u^\prime(X):=\argmax_{i\in [q]}\left\{\frac{\P(\bsigma^\star_u=i\given \bX^\star = X, \bsigma^\star_v=j_0)}{\pi_i}\right\}\,.
\eeqn
Then, taking $\Gamma =\id$ in the definition of $\wt{A}\big(\bsig^\star, \hat{\sig}^\prime(\bX^\star)\big)$ and using a tower property as in \eqref{eq:lem:two:poing:tower:1}, we have
\beqn
\limsup_{n\to\infty}\E\Big[\wt{A}\big(\bsig^\star,\hat{\sig}^\prime(\bX^\star)\big)\Big]\geq \frac{1}{q}\limsup_{n\to\infty} \E\Bigg[\max_{i\in [q]}\bigg\{\frac{\P(\bsigmas_u=i\given \bX^\star, \bsigmas_v=j_0)}{\pi_i}\bigg\}\Bigg]>\frac{1}{q}\,,
\eeqn
where the last inequality is by \eqref{eq:lem:two:point:crucial:2}. Therefore, by Lemma \ref{lem:equiv:weak:recovery}, weak recovery is possible at $d$, which contradicts our assumption. We thus conclude that $\P(\bsigma^\star_u=i, \bsigma^\star_v=j\given \bX^\star)\pto \pi_i\pi_j$ holds for any $u\neq v$ and $i,j\in [q]$.
\end{proof}
The step $(b)\Rightarrow (c)$ is trivial, thus we next prove $(c)\Rightarrow (a)$
\begin{lemma}\label{lem:two:point:conv}
    Suppose that $\P(\bsigma^\star_u=\bsigma^\star_v \given \bX^\star)\pto \sum_{i=1}^{q}\pi_i^2$ and $\sum_{i=1}^{q}\pi_i\P(\bsigma^\star_u=i\given \bX^\star) \pto \sum_{i=1}^{q}\pi_i^2$ hold as $n\to\infty$. Then, weak recovery is impossible at $d$.
\end{lemma}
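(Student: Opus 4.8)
The plan is to show directly that $\E\big[A(\bsig^\star,\hat\sig)\big]\to \tfrac1q$ for every sequence of estimators $\hat\sig\equiv\hat\sig(\bX^\star)$; since $A(\sig^1,\sig^2)\geq\tfrac1q$ for all configurations, this is precisely the impossibility of weak recovery in the sense of \eqref{eq:def:weak:recovery:impossible} (for the factor model the $\limsup$ and ``large $n$'' formulations then both follow). Fix an estimator and set $V_j:=\{u:\hat\sigma_u=j\}$, $m^j:=|V_j|$, $m_i:=|\{u:\bsigma^\star_u=i\}|$, and $m_{ij}:=|V_j\cap\{u:\bsigma^\star_u=i\}|$. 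On the event $\AA_{\bal}$ of \eqref{eq:bal:event}, which holds with probability $1-o(1)$, one has $m_i=n\pi_i\big(1+O(n^{-1/3})\big)$ for all $i$. Write $c:=\sum_i\pi_i^2$, and for $m^j\geq1$ let $q_j:=(m_{ij}/m^j)_{i\in[q]}\in\PPP([q])$ be the empirical distribution of true labels inside the estimated class $j$; note the exact bookkeeping identity $\sum_j (m^j/n)\,q_j=\pi_{\rm emp}$, where $\pi_{\rm emp}:=(m_i/n)_i\to\pi$ by the law of large numbers (recall $\bsigma^\star_u\iid\pi$).

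The heart of the argument is an ``agreement‑matrix'' estimate showing that the (linearly large) estimated classes are asymptotically \emph{representative}: for each $\eps'>0$,
\[
\E\Big[\textstyle\sum_{j:\,m^j\geq\eps' n}(m^j/n)\,\|q_j-\pi\|_2^2\Big]\;\leq\; C\eps'+o_{\eps'}(1),
\]
where $o_{\eps'}(1)\to0$ as $n\to\infty$ for fixed $\eps'$. To prove this, apply, for the $\bX^\star$‑measurable weights $W_{uv}:=\one\{\hat\sigma_u=\hat\sigma_v,\ m^{\hat\sigma_u}\geq\eps' n\}/(n\,m^{\hat\sigma_u})$, the identity $\E\big[W_{uv}\one\{\bsigma^\star_u=\bsigma^\star_v\}\big]=\E\big[W_{uv}\,\P(\bsigma^\star_u=\bsigma^\star_v\given\bX^\star)\big]$ (conditioning on $\bX^\star$). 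Since $W_{uv}\leq 1/(\eps' n^2)$ and, by {\sf (EXG)}, the law of $\P(\bsigma^\star_u=\bsigma^\star_v\given\bX^\star)$ is the same for every pair $u\neq v$ so that $\E\big|\P(\bsigma^\star_u=\bsigma^\star_v\given\bX^\star)-c\big|=o(1)$ \emph{uniformly} in the pair (this uses the hypothesis $\P(\bsigma^\star_u=\bsigma^\star_v\given\bX^\star)\pto c$), the error term $\E\sum_{u\neq v}W_{uv}\big(\P(\bsigma^\star_u=\bsigma^\star_v\given\bX^\star)-c\big)$ is at most $\eps'^{-1}n^{-2}\cdot n(n-1)\cdot o(1)=o_{\eps'}(1)$. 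On the other hand $\sum_{u\neq v}W_{uv}\one\{\bsigma^\star_u=\bsigma^\star_v\}=\sum_{j:m^j\geq\eps'n}(m^j/n)\|q_j\|_2^2+O(n^{-1})$ and $\sum_{u\neq v}W_{uv}=1-O(\eps')$, so $\E\big[\sum_{j:m^j\geq\eps'n}(m^j/n)\|q_j\|_2^2\big]=c+O(\eps')+o_{\eps'}(1)$. Combining this with the elementary convexity identity $\sum_j\lambda_j\|q_j-\bar q\|_2^2=\sum_j\lambda_j\|q_j\|_2^2-\|\bar q\|_2^2$ (valid for $\lambda_j\geq0$, $\sum_j\lambda_j=1$, $\bar q=\sum_j\lambda_j q_j$), the bookkeeping identity $\sum_j(m^j/n)q_j=\pi_{\rm emp}$ (the at most $q$ classes with $m^j<\eps' n$ shift this by $O(\eps')$ in $\ell^1$), and $\pi_{\rm emp}\to\pi$, gives the displayed bound.

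Granting the representativeness estimate, the conclusion follows quickly. Contributions to $A$ from classes with $m^j<\eps' n$ are $O(\eps'/\pi_{\min})$, since each such class has $m_{i\Gamma(i)}\leq m^{\Gamma(i)}<\eps' n$ while $m_i\gtrsim n\pi_{\min}$ on $\AA_{\bal}$. For the large classes, writing $m_{i\Gamma(i)}=m^{\Gamma(i)}\big(\pi_i+(q_{\Gamma(i)}(i)-\pi_i)\big)$ and using $m_i=n\pi_i(1+o(1))$ and $\sum_j m^j=n$, the ``main'' part $\tfrac1q\sum_i\tfrac{m^{\Gamma(i)}}{m_i}\pi_i$ equals $\tfrac1q+o(1)$, while the remainder is bounded, via Cauchy--Schwarz, by $\tfrac{\sqrt q}{q\pi_{\min}}\big(\sum_{j:m^j\geq\eps'n}(m^j/n)\|q_j-\pi\|_2^2\big)^{1/2}+O(\eps')$; as this bound is independent of $\Gamma$, it bounds $A$ on $\AA_{\bal}$. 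Taking expectations (with $A\leq\pi_{\min}^{-1}$ off $\AA_{\bal}$ and Jensen for the square root), then $n\to\infty$ for fixed $\eps'$, then $\eps'\to0$, yields $\limsup_n\E[A(\bsig^\star,\hat\sig)]\leq\tfrac1q$, which completes the proof. I expect the main obstacle to be exactly the agreement‑matrix estimate: one needs the convergence $\P(\bsigma^\star_u=\bsigma^\star_v\given\bX^\star)\pto c$ to be quantified \emph{uniformly over the $\Theta(n^2)$ pairs} (furnished by {\sf (EXG)}) and simultaneously the reciprocal weights $1/m^{\hat\sigma_u}$ to be kept of order $O(1/n^2)$, which forces the truncation to estimated classes of linear size and the separate handling of the sublinear classes.
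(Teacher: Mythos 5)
Your proof is correct, and it takes a genuinely different route from the paper's.

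The paper's argument works with the modified overlap $\wt A$ from Lemma~\ref{lem:equiv:weak:recovery}, bounds $|\E\wt A - 1/q|$ by a triangle inequality over $(i,j)\in[q]^2$, applies Cauchy--Schwarz to the $\ell^1$ sum, expands the square, pushes the conditional expectation through by $\bX^\star$-measurability of $\hat\sig$, and lands directly on the quantity $\P(\bsigma^\star_{u}=\bsigma^\star_{v}\given\bX^\star)-2\sum_i\pi_i\P(\bsigma^\star_{u}=i\given\bX^\star)+\sum_i\pi_i^2$, after which both hypotheses finish the proof. It never involves the sizes of the estimated classes. Your proof instead introduces the agreement matrix $m_{ij}$ and the in-class empirical distributions $q_j$, and proves a \emph{representativeness} bound for the linearly large estimated classes, using the $\bX^\star$-measurable weights $W_{uv}=\one\{\hat\sigma_u=\hat\sigma_v,\,m^{\hat\sigma_u}\geq\eps'n\}/(nm^{\hat\sigma_u})$, the tower identity, and {\sf (EXG)} for uniformity over pairs. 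The division by $m^{\hat\sigma_u}$ is what forces the truncation to linear-size classes and the separate treatment of the small ones; the paper's route avoids any such truncation. Both proofs hinge on the same two engines, Cauchy--Schwarz and {\sf (EXG)}, but the intermediate objects are different: the paper bounds a scalar deviation directly, whereas you first establish a structural fact (large estimated classes have in-class label frequencies close to $\pi$) and then read off the bound on $A$. One noteworthy byproduct of your route: you invoke only the first hypothesis $\P(\bsigma^\star_u=\bsigma^\star_v\given\bX^\star)\pto\sum_i\pi_i^2$, never the second. This is fine — under {\sf (EXG)}, averaging the tower identity over all pairs and appealing to the LLN for $\pi_{\rm emp}$ shows the first hypothesis already forces $\sum_i\pi_i\P(\bsigma^\star_u=i\given\bX^\star)\pto\sum_i\pi_i^2$ — but it is an implicit extra claim that your writeup could flag. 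Minor imprecisions that do not affect correctness: the deterministic bound is $A\leq 1$, not $\pi_{\min}^{-1}$ (each ratio $m_{i\Gamma(i)}/m_i\leq 1$), and the ``main part'' for the large classes is $\tfrac1q+O(\eps')+o(1)$ rather than $\tfrac1q+o(1)$, though this is absorbed by your subsequent $\eps'\to0$ limit.
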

\begin{proof}
By Lemma \ref{lem:equiv:weak:recovery}, it suffices to show that $\E\big[\wt{A}(\bsig^\star,\hat{\sig}(\bX^\star)\big]= q^{-1}+o_n(1)$ holds for any estimator $\hat{\sig}(\bX^\star)$. For simplicity, denote $\hat{\sig}(\bX^\star)\equiv (\hat{\sigma}_u)_{u\in V}$. Then, note that we can express
\beqn
\E\big[\wt{A}(\bsig^\star,\hat{\sig}(\bX^\star)\big]-\frac{1}{q}=\E \max_{\Gamma\in S_q}\Bigg\{\frac{1}{q}\sum_{i=1}^{q}\frac{1}{n\pi_i}\sum_{u\in V}\bigg(\one\big(\bsigma^\star_u=i, \hat{\sigma}_u=\Gamma(i)\big)-\pi_i\one \big(\hat{\sigma}_u=\Gamma(i)\big)\bigg)\Bigg\}\,,
\eeqn
Thus, by triangle inequality, we can bound
\beq\label{eq:conv:tech:2}
\begin{split}
\left|\E\big[\wt{A}(\bsig^\star,\hat{\sig}(\bX^\star)\big]-\frac{1}{q}\right|
&\leq \big(q\min_{i\leq q} \pi_i\big)^{-1} \E \max_{\Gamma\in S_q}\Bigg\{\sum_{i=1}^{q}\frac{1}{n}\bigg|\sum_{u\in V}\big(\one(\bsigma^\star_u=i)-\pi_i\big)\cdot\one\big(\hat{\sigma}_u=\Gamma(i)\big)\bigg|\Bigg\}\\
&\leq \big(q\min_{i\leq q} \pi_i\big)^{-1} \frac{1}{n}\E \sum_{i,j=1}^{q}\bigg|\sum_{u\in V}\big(\one(\bsigma^\star_u=i)-\pi_i\big)\cdot\one\big(\hat{\sigma}_u=j\big)\bigg|\,.
\end{split}
\eeq
By Cauchy-Schwarz, we can bound the right most term by
\beq\label{eq:conv:tech}
\frac{1}{n}\sum_{i,j=1}^{q}\E\bigg|\sum_{u\in V}\big(\one(\bsigma^\star_u=i)-\pi_i\big)\cdot\one\big(\hat{\sigma}_u=j\big)\bigg|\leq \frac{q^2}{n}\Bigg(\sum_{i,j=1}^{q}\E\bigg(\sum_{u\in V}\big(\one(\bsigma^\star_u=i)-\pi_i\big)\cdot\one\big(\hat{\sigma}_u=j\big)\bigg)^2\Bigg)^{1/2}\,.
\eeq
Note that we can expand the sum inside the square root by
\beqn
\sum_{i,j=1}^{q}\E\bigg(\sum_{u\in V}\big(\one(\bsigma^\star_u=i)-\pi_i\big)\cdot\one\big(\hat{\sigma}_u=j\big)\bigg)^2=\sum_{i=1}^{q}\E\sum_{u,v\in V}\one(\hat{\sigma}_u=\hat{\sigma}_v)\big(\one(\bsigma^\star_u=i)-\pi_i\big)\big(\one(\bsigma^\star_v=i)-\pi_i\big)\,.
\eeqn
Since $\hat{\sig}(\bX^\star)\equiv (\hat{\sigma}_u)_{u\in V}$ is $\bX^\star$-measurable, tower property shows that the RHS above equals
\beqn
\begin{split}
&\E\Bigg[\sum_{u,v\in V}\one(\hat{\sigma}_u=\hat{\sigma}_v)\E\bigg[\sum_{i=1}^{q}\big(\one(\bsigma^\star_u=i)-\pi_i\big)\big(\one(\bsigma^\star_v=i)-\pi_i\big)\bbgiven\bX^\star\bigg]\Bigg]\\
&=\E\sum_{u,v\in V}\one(\hat{\sigma}_u=\hat{\sigma}_v)\Big(\P(\bsigma^\star_u=\bsigma^\star_v\given \bX^\star)-\sum_{i=1}^{q}\pi_i\P(\bsigma^\star_u=i\given\bX^\star)-\sum_{i=1}^{q}\pi_i\P(\bsigma^\star_v=i\given\bX^\star)+\sum_{i=1}^{q}\pi_i^2\Big)\,.
\end{split}
\eeqn
By dividing the sum over $u,v\in V$ into the cases $u=v$ and $u\neq v$, and using triangle inequality, we can bound
\beqn
\begin{split}
&\sum_{i,j=1}^{q}\E\bigg(\sum_{u\in V}\big(\one(\bsigma^\star_u=i)-\pi_i\big)\cdot\one\big(\hat{\sigma}_u=j\big)\bigg)^2\\
&\leq 2n+n(n-1)\cdot\E\Big|\P(\bsigma^\star_{u_1}=\bsigma^\star_{v_1}\given \bX^\star)-2\sum_{i=1}^{q}\pi_i\P(\bsigma^\star_{u_1}=i\given\bX^\star)+\sum_{i=1}^{q}\pi_i^2\Big|\,,
\end{split}
\eeqn
where we fixed two distinct variables $u_1\neq v_1$. Plugging the inequality above into \eqref{eq:conv:tech}, we have 
\beq\label{eq:conv:tech:3}
\begin{split}
&\frac{1}{n}\sum_{i,j=1}^{q}\E\bigg|\sum_{u\in V}\big(\one(\bsigma^\star_u=i)-\pi_i\big)\cdot\one\big(\hat{\sigma}_u=j\big)\bigg|\\
&\leq q^2\cdot\bigg(\E\Big|\P(\bsigma^\star_{u_1}=\bsigma^\star_{v_1}\given \bX^\star)-2\sum_{i=1}^{q}\pi_i\P(\bsigma^\star_{u_1}=i\given\bX^\star)+\sum_{i=1}^{q}\pi_i^2\Big|+\frac{2}{n}\bigg)^{1/2}\,.
\end{split}
\eeq
Since $\P\big(\bsigma^\star_{u_1}=\bsigma^\star_{v_1}\bgiven \bX^\star\big)\pto \sum_{i=1}^{q}\pi_i^2$ and $\sum_{i=1}^{q}\pi_i \P\big(\bsigma^\star_{u_1}=i\bgiven \bX^\star\big)\pto \sum_{i=1}^{q}\pi_i^2$ hold by our assumption, the RHS of \eqref{eq:conv:tech:3} converges to $0$ as $n\to\infty$ since the convergent random variables are bounded. This further implies that the RHS of \eqref{eq:conv:tech:2} converges to $0$. We thus conclude that $\E\big[\wt{A}(\bsig^\star,\hat{\sig}(\bX^\star)\big]= q^{-1}+o_n(1)$ holds, which implies that the weak recovery is impossible at $d$ by Lemma \ref{lem:equiv:weak:recovery}. 
\end{proof}
\subsection{Triviality of the overlap}
\label{subsec:overlap}
Next, we establish $(c)\Leftrightarrow (d)$. The proof of $(c)\Rightarrow (d)$ resembles the proof of Lemma \ref{lem:two:point:conv}.
\begin{lemma}\label{lem:overlap:posterior}
 Suppose that $\P(\bsigma^\star_u=\bsigma^\star_v \given \bX^\star)\pto \sum_{i=1}^{q}\pi_i^2$ and $\sum_{i=1}^{q}\pi_i\P(\bsigma^\star_u=i\given \bX^\star) \pto \sum_{i=1}^{q}\pi_i^2$ hold as $n\to\infty$. Then, $\E\big\langle \big\|R_{\sig^1,\sig^2}-\pi\pi^{\sT}\big\|_1\big\rangle_{\bX^\star} \to 0$ holds as $n\to\infty$.
\end{lemma}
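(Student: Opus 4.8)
The plan is to reduce the $\ell^1$ statement to a second-moment estimate and then feed in the one- and two-point triviality that condition $(c)$ already entails. Write $p_v(i):=\P(\bsigma^\star_v=i\given\bX^\star)$ and $p_{uv}(i,j):=\P(\bsigma^\star_u=i,\bsigma^\star_v=j\given\bX^\star)$. Since $\sig^1,\sig^2$ are conditionally independent given $\bX^\star$, one has $\langle\one\{\sigma^1_v=i,\sigma^2_v=j\}\rangle_{\bX^\star}=p_v(i)p_v(j)$ and, for $u\neq v$, $\langle\one\{\sigma^1_u=\sigma^1_v=i\}\one\{\sigma^2_u=\sigma^2_v=j\}\rangle_{\bX^\star}=p_{uv}(i,i)\,p_{uv}(j,j)$. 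Expanding the definition of $R_{\sig^1,\sig^2}(i,j)$ and squaring, this yields, for each $i,j\in[q]$,
\[
\langle R_{\sig^1,\sig^2}(i,j)\rangle_{\bX^\star}=\frac1n\sum_{v\in V}p_v(i)p_v(j),\qquad
\langle R_{\sig^1,\sig^2}(i,j)^2\rangle_{\bX^\star}=\frac1{n^2}\sum_{v\in V}p_v(i)p_v(j)+\frac1{n^2}\sum_{u\neq v}p_{uv}(i,i)p_{uv}(j,j).
\]

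Next I would invoke exchangeability: by ${\sf (EXG)}$ the law of $(p_{uv}(i,j))_{i,j}$ is the same for every ordered pair of distinct variables, hence (summing out a coordinate) the law of $(p_v(i))_i$ is the same for every $v$; so $\E[p_v(i)p_v(j)]$ does not depend on $v$ and $\E[p_{uv}(i,i)p_{uv}(j,j)]$ does not depend on the distinct pair $(u,v)$. Fixing any $u_1\neq v_1$, taking $\E$ in the two displays gives $\E\langle R_{\sig^1,\sig^2}(i,j)\rangle_{\bX^\star}=\E[p_{v_1}(i)p_{v_1}(j)]$ and $\E\langle R_{\sig^1,\sig^2}(i,j)^2\rangle_{\bX^\star}=\tfrac1n\E[p_{v_1}(i)p_{v_1}(j)]+(1-\tfrac1n)\E[p_{u_1v_1}(i,i)p_{u_1v_1}(j,j)]$. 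By Lemma~\ref{lem:two:point:conv} and Lemma~\ref{lem:two:point}, condition $(c)$ forces weak recovery to be impossible for $\bX^\star$, hence $p_v(i)\pto\pi_i$ and $p_{uv}(i,j)\pto\pi_i\pi_j$ as $n\to\infty$; since all these quantities lie in $[0,1]$, bounded convergence gives $\E[p_{v_1}(i)p_{v_1}(j)]\to\pi_i\pi_j$ and $\E[p_{u_1v_1}(i,i)p_{u_1v_1}(j,j)]\to\pi_i^2\pi_j^2$. Plugging in,
\[
\E\big\langle\big(R_{\sig^1,\sig^2}(i,j)-\pi_i\pi_j\big)^2\big\rangle_{\bX^\star}=\E\langle R_{\sig^1,\sig^2}(i,j)^2\rangle_{\bX^\star}-2\pi_i\pi_j\,\E\langle R_{\sig^1,\sig^2}(i,j)\rangle_{\bX^\star}+\pi_i^2\pi_j^2\longrightarrow\pi_i^2\pi_j^2-2\pi_i^2\pi_j^2+\pi_i^2\pi_j^2=0
\]
for every $i,j\in[q]$.

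To conclude, I would apply Jensen's inequality inside $\langle\cdot\rangle_{\bX^\star}$ and then under $\E$ to obtain $\E\big\langle\big\|R_{\sig^1,\sig^2}-\pi\pi^{\sT}\big\|_1\big\rangle_{\bX^\star}=\sum_{i,j\in[q]}\E\big\langle\big|R_{\sig^1,\sig^2}(i,j)-\pi_i\pi_j\big|\big\rangle_{\bX^\star}\le\sum_{i,j\in[q]}\big(\E\big\langle(R_{\sig^1,\sig^2}(i,j)-\pi_i\pi_j)^2\big\rangle_{\bX^\star}\big)^{1/2}\to0$. The calculations are routine; the only delicate points are the conditional-independence identities for $\langle\cdot\rangle_{\bX^\star}$ (the Nishimori-type bookkeeping) and the use of ${\sf (EXG)}$ to collapse the sums over $v$ and over $u\neq v$, so that the $1/n$ and $1/n^2$ weights actually produce the stated limits, together with correctly quoting the one-point marginal triviality $p_v(i)\pto\pi_i$ from the proof of Lemma~\ref{lem:two:point} rather than merely from its statement. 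I do not expect a serious obstacle beyond this bookkeeping.
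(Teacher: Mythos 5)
Your proof is correct, and the bookkeeping is essentially the same (reduce to a per-entry second-moment bound via Jensen/Cauchy--Schwarz, expand $R_{\sig^1,\sig^2}$, exploit conditional independence of $\sig^1,\sig^2$ and exchangeability to collapse the sums over variables). The genuine difference is in how you feed the hypothesis into the estimate. You take a detour through the chain $(c)\Rightarrow(a)\Rightarrow(b)$, invoking Lemma~\ref{lem:two:point:conv} and Lemma~\ref{lem:two:point}, to upgrade condition~$(c)$ to the stronger entry-wise triviality $p_{uv}(i,j)\pto\pi_i\pi_j$ for all $i,j$ (from which $p_v(i)\pto\pi_i$ follows by marginalizing, so you do not in fact need to peer into the proof of Lemma~\ref{lem:two:point} rather than its statement). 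The paper instead stays entirely within the hypothesis: it never passes to $(b)$, but observes that after the expansion and exchangeability step, the relevant sums over $(i,j)$ group into exactly the two scalar quantities $\P(\bsigma^\star_u=\bsigma^\star_v\given\bX^\star)$ and $\sum_i\pi_i\P(\bsigma^\star_u=i\given\bX^\star)$ appearing in $(c)$, whose bounded convergence in probability then gives $L^2$ convergence and the limit $\big(\sum\pi_i^2\big)^2-2\big(\sum\pi_i^2\big)^2+\big(\sum\pi_i^2\big)^2=0$. Your route buys a cleaner per-coordinate calculation at the cost of making Lemma~\ref{lem:overlap:posterior} depend on the two preceding lemmas; the paper's route is self-contained and uses exactly the assumed condition, nothing stronger. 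Both are valid and there is no circularity in your invocations, since the lemmas you cite precede this one and neither depends on $(d)$.
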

\begin{proof}
By Cauchy-Schwarz inequality, we can bound
\beq\label{eq:overlap:posterior:CS}
\E\Big\langle \big\|R_{\sig^1,\sig^2}-\pi\pi^{\sT}\big\|_1\Big\rangle_{\bX^\star}\leq q\cdot\bigg(\sum_{i,j=1}^{q}\E\Big\langle \big(R_{\sig^1,\sig^2}(i,j)-\pi_i\pi_j\big)^2\Big\rangle_{\bX^\star}\bigg)^{1/2}\,.
\eeq
Recalling the definition of $R_{\sig^1,\sig^2}(i,j)$ in \eqref{eq:R:def}, we can expand
\beqn
\begin{split}
\sum_{i,j=1}^{q}\E\Big\langle \big(R_{\sig^1,\sig^2}(i,j)-\pi_i\pi_j\big)^2\Big\rangle_{\bX^\star}=
&\frac{1}{n^2}
\sum_{u,v\in V}\sum_{i,j=1}^{q}\E\Big\langle\one\big(\sigma^{1}_u=i, \sigma^1_v=j\,,\,\sigma^{2}_u=i, \sigma^{2}_v=j \big)\Big\rangle_{\bX^\star}\\
&~-\frac{2}{n}\sum_{u\in V}\sum_{i,j=1}^{q}\pi_i\pi_j\E\Big\langle\one\big(\sigma^{1}_u=i\,,\,\sigma^{2}_u=j \big)\Big\rangle_{\bX^\star}+\sum_{i,j=1}^{q}\pi_i^2\pi_j^2\,.
\end{split}
\eeqn
Recalling that $\sig^1, \sig^2$ are independent samples from the posterior $\mu_{\bX^\star}$, we can simplify the RHS above by using the property ${\sf (EXG)}$ in \eqref{eq:exg} as
\beqn
\frac{1}{n}+\frac{n-1}{n}\cdot \E\Big(\P\big(\bsigma^\star_u=\bsigma^\star_v\bgiven \bX^\star\big)\Big)^2-2\cdot\E\Big(\sum_{i=1}^{q}\pi_i \P\big(\bsigma^\star_u=i\bgiven \bX^\star\big)\Big)^2+\Big(\sum_{i=1}^{q}\pi_i^2\Big)^2\,,
\eeqn 
where $u\neq v$ are distinct variables. By our assumption, $\P(\bsigma^\star_u=\bsigma^\star_v\given \bX^\star)$ and $\sum_{i=1}^{q}\pi_i\P(\bsigma^\star_u=i\given \bX^\star)$ converge in probability to $\sum_{i=1}^{q}\pi_i^2$, which further implies $L^2$ convergence since they are bounded. Therefore, we have
$\sum_{i,j=1}^{q}\E\big\langle \big(R_{\sig^1,\sig^2}(i,j)-\pi_i\pi_j\big)^2\big\rangle_{\bX^\star}\to 0$, and combining with \eqref{eq:overlap:posterior:CS} concludes the proof.
\end{proof}
The implication $(d)\Rightarrow (c)$ is a consequence of the following lemma, which is a generalization of \cite[Lemma 4.8]{CKPZ:18} to non-uniform priors and is built on \cite[Corollary 2.2]{Bapst16harness}.
\begin{lemma}\label{lem:pinning}
For all $\eps>0$ and $q\geq 2$, there exists $\delta\equiv \delta(\eps,q)>0$ and $n_0\equiv n_0(\eps,q)$ such that if $n\geq n_0(\eps,q)$, the following holds. Suppose that a probability measure $\mu\in \PPP([q]^n)$ and a probability vector on $[q]$, $\pi \in \R^{q}, \sum_{i=1}^{q}\pi_i=1, \pi_i\geq 0$, satisfy 
\beq\label{eq:lem:pinning}
\Big\langle \big\|R_{\sig^1,\sig^2}-\pi\pi^{\sT}\big\|_{\Fr}^2\Big\rangle_{\mu}<\delta\,,
\eeq
where $\big\langle \cdot \big \rangle_{\mu}$ is the expectation with respect to independent samples $\sig^1,\sig^2\sim \mu$, $R_{\sig^1,\sig^2}$ is defined in \eqref{eq:R:def}, and $\|\,\cdot\,\|_{\Fr}$ denotes the Frobenius norm. Then, we have
\beqn
\frac{1}{n^2}\sum_{u,v\in [n]} \big\| \mu_{u,v}-\pi^{\otimes 2}\big\|_{\tv}<\eps\,,
\eeqn
where $\mu_{u,v}$ denotes the marginal distribution of $(\sigma_u,\sigma_v)$ when $\sig\equiv (\sig_u)_{u\in [n]}\sim \mu$.
\end{lemma}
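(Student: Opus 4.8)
The plan is to split the target quantity through the product measures $\mu_u\otimes\mu_v$ and treat the two resulting pieces separately. Since total variation of product measures satisfies $\|\alpha\otimes\gamma-\beta\otimes\gamma\|_{\tv}=\|\alpha-\beta\|_{\tv}$, the triangle inequality gives
\[
\big\|\mu_{u,v}-\pi^{\otimes 2}\big\|_{\tv}\;\le\;\big\|\mu_{u,v}-\mu_u\otimes\mu_v\big\|_{\tv}+\big\|\mu_u-\pi\big\|_{\tv}+\big\|\mu_v-\pi\big\|_{\tv}\,,
\]
so after averaging over $u,v\in[n]$ it suffices to prove (i) a \emph{marginal bound} $\tfrac1n\sum_{u}\|\mu_u-\pi\|_{\tv}=o_\delta(1)$, and (ii) a \emph{decoupling bound} $\tfrac1{n^2}\sum_{u,v}\|\mu_{u,v}-\mu_u\otimes\mu_v\|_{\tv}=o_\delta(1)$ valid for $n\ge n_0(\eps,q)$, where $o_\delta(1)$ denotes a quantity tending to $0$ as $\delta\to0$ with $q$ fixed; one then picks $\delta=\delta(\eps,q)$ small.

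For the marginal bound, apply Jensen's inequality to the convex map $R\mapsto\|R-\pi\pi^{\sT}\|_{\Fr}^2$ to get $\|\langle R_{\sig^1,\sig^2}\rangle_{\mu}-\pi\pi^{\sT}\|_{\Fr}^2<\delta$, where $\langle R_{\sig^1,\sig^2}\rangle_{\mu}(i,j)=\tfrac1n\sum_v\mu_v(i)\mu_v(j)$. Writing $g_i(v):=\mu_v(i)-\pi_i$, $\bar g_i:=\tfrac1n\sum_v g_i(v)$, and $\langle a,b\rangle_n:=\tfrac1n\sum_v a(v)b(v)$, one checks $\langle R_{\sig^1,\sig^2}\rangle_{\mu}(i,j)-\pi_i\pi_j=\langle g_i,g_j\rangle_n+\pi_i\bar g_j+\pi_j\bar g_i$. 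Since $\bar g_i$ is a row sum of $\langle R_{\sig^1,\sig^2}\rangle_{\mu}-\pi\pi^{\sT}$, Cauchy--Schwarz gives $\sum_i\bar g_i^2\le q\delta$, hence $\sum_{i,j}\langle g_i,g_j\rangle_n^2=O_q(\delta)$. The matrix $\Gamma:=(\langle g_i,g_j\rangle_n)_{i,j\le q}$ is positive semidefinite of rank $\le q$, so $\tfrac1n\sum_v\|\mu_v-\pi\|_2^2=\tr\Gamma\le\sqrt q\,\|\Gamma\|_{\Fr}=O_q(\sqrt\delta)$, and converting $\ell^2$ to $\ell^1$ over the $q$-point alphabet yields $\tfrac1n\sum_v\|\mu_v-\pi\|_{\tv}=O_q(\delta^{1/4})$. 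This is the only place where the non-uniformity of $\pi$ matters, and it is the point of departure from \cite{CKPZ:18}.

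For the decoupling bound, a second application of Jensen shows the overlap concentrates about its mean:
\[
\Big\langle\big\|R_{\sig^1,\sig^2}-\langle R_{\sig^1,\sig^2}\rangle_{\mu}\big\|_{\Fr}^2\Big\rangle_{\mu}
=\Big\langle\big\|R_{\sig^1,\sig^2}-\pi\pi^{\sT}\big\|_{\Fr}^2\Big\rangle_{\mu}-\big\|\langle R_{\sig^1,\sig^2}\rangle_{\mu}-\pi\pi^{\sT}\big\|_{\Fr}^2<\delta\,.
\]
This is exactly the hypothesis of the pinning / ``$\delta$-symmetry'' estimate of Bapst and Coja-Oghlan: concentration of the pairwise overlap matrix about its mean forces $\tfrac1{n^2}\sum_{u,v}\|\mu_{u,v}-\mu_u\otimes\mu_v\|_{\tv}$ to be small once $n\ge n_0$, which is \cite[Corollary 2.2]{Bapst16harness} (equivalently, by Pinsker and Cauchy--Schwarz it suffices to control $\tfrac1{n^2}\sum_{u,v}I_\mu(\sigma_u;\sigma_v)$, the quantity bounded by the underlying pinning argument). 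Combining (i) and (ii) with the triangle inequality above and taking $\delta$ small completes the proof.

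The only genuinely delicate point is the decoupling step: one must verify that \cite[Corollary 2.2]{Bapst16harness} applies for a general alphabet $[q]$ — it compares $\mu_{u,v}$ with $\mu_u\otimes\mu_v$ and is insensitive to $\pi$, so this should transfer verbatim — and track the $n$-dependence it imposes, which is the source of the requirement $n\ge n_0(\eps,q)$. The marginal bound and the finite-alphabet norm-conversion bookkeeping are elementary.
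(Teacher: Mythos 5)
Your decomposition into a marginal bound (i) and a decoupling bound (ii), followed by the product‐measure triangle inequality, is a reasonable and in fact cleaner organizational scheme than the paper's; and your part (i) — Jensen to pass to $\langle R\rangle_\mu$, the decomposition $\langle R\rangle_\mu(i,j)-\pi_i\pi_j=\langle g_i,g_j\rangle_n+\pi_i\bar g_j+\pi_j\bar g_i$, the row-sum bound $\sum_i\bar g_i^2\le q\delta$, the rank-$\le q$ PSD Gram-matrix estimate $\tr\Gamma\le\sqrt q\,\|\Gamma\|_{\Fr}$, and the $\ell^2$-to-$\ell^1$ conversion — is correct and a nice, self-contained derivation of $\tfrac1n\sum_v\|\mu_v-\pi\|_{\tv}=O_q(\delta^{1/4})$.

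However, step (ii) has a genuine gap. You attribute the implication ``concentration of the pairwise overlap matrix about its mean $\Longrightarrow$ $\tfrac1{n^2}\sum_{u,v}\|\mu_{u,v}-\mu_u\otimes\mu_v\|_{\tv}$ small'' to \cite[Corollary 2.2]{Bapst16harness}, but that corollary (as the paper restates and uses it) is an \emph{unconditional} structural result: for any $\mu\in\PPP([q]^n)$ it produces a decomposition $S_0,S_1,\ldots,S_N$ of $[q]^n$ with $\mu(S_0)\le\eps_0$, $\mu(S_k)\ge\eta$ for $k\ge1$, and with \emph{each conditional measure} $\mu(\cdot\mid S_k)$ satisfying the pairwise decoupling $\tfrac1{n^2}\sum_{u,v}\|\mu_{u,v}(\cdot\mid S_k)-\mu_u(\cdot\mid S_k)\otimes\mu_v(\cdot\mid S_k)\|_{\tv}<\eps_0$. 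It says nothing directly about $\mu_{u,v}$ itself. Passing from the within-cluster decoupling to decoupling of the unconditioned $\mu$ is exactly the nontrivial content of the lemma: one must use the overlap-concentration hypothesis to show that the within-cluster and cross-cluster overlap signatures $\tfrac1n\sum_v\mu_v(i\mid S_k)\mu_v(j\mid S_\ell)$ all agree, which (via $\|x-y\|^2=\langle x,x\rangle-2\langle x,y\rangle+\langle y,y\rangle$ applied cluster-by-cluster) forces the conditional marginals $\mu_v(\cdot\mid S_k)$ to essentially coincide across $k$, and only then does $\mu_{u,v}=\sum_k\mu(S_k)\mu_{u,v}(\cdot\mid S_k)$ approximately factorize. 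This ``gluing'' step is precisely what the paper's proof spends its effort on (the passage from \eqref{eq:lem:pinning} through to \eqref{eq:lem:pinning:goal} and \eqref{eq:lem:pinning:goal:2}), and it is absent from your proposal. The parenthetical remark about controlling $\tfrac1{n^2}\sum_{u,v}I_\mu(\sigma_u;\sigma_v)$ doesn't rescue this: the pinning arguments bound that mutual information for the \emph{pinned} measure, not for $\mu$ itself. If you supply the missing cluster-matching argument (which your own computation in (i) already anticipates, since it's the same Gram-matrix idea applied to the conditional marginals rather than to $\mu_v$), the proof closes; as written, it does not.
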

\begin{proof}
The following is a restatement of \cite[Corollary 2.2]{Bapst16harness}: for all $\eps_0>0$, there exists $\eta \equiv \eta(\eps_0,q)$ and $n_0=n_0(\eps_0,q)$ such that for every $n\geq n_0$ and $\mu \in \PPP([q]^n)$, the following holds. There exists a decomposition $S_0,S_1,\ldots, S_N$ of $[q]^n$ such that it satisfies
\begin{itemize}
    \item {\sf (C1):} $\mu(S_0)\leq \eps_0$ and for any $1\leq k \leq N$, $\mu(S_k)\geq \eta$.
    \item {\sf (C2):} For $1\leq k\leq N$ and $u,v\in [n]$, denote by $\mu_{u,v}(\cdot \given S_k)$ the marginal distribution of $(\sigma_u,\sigma_v)$, where $\sig=(\sigma_u)_{u\in V}$ is drawn from the conditional distribution $\mu(\cdot \given S_k)\equiv \mu(\cdot \cap S_k)/\mu(S_k)$. Similarly, denote by $\mu_u(\cdot\given S_k)$ the marginal distribution of $\sigma_u$, where $\sig \sim \mu(\cdot \given S_k)$. Then, for any $1\leq k \leq N$,
    \beqn
    \frac{1}{n^2}\sum_{u,v\in [n]}\big\| \mu_{u,v}(\cdot \given S_k)-\mu_u(\cdot \given S_k)\otimes \mu_v(\cdot \given S_k)\big\|_{\tv}<\eps_0\,.
    \eeqn
\end{itemize}
We proceed by taking advantage of such decomposition guaranteed by \cite[Corollary 2.2]{Bapst16harness}. Note that it suffices to prove the lemma for small enough $\eps>0$, so we fix $\eps \in (0,1/100q)$. Given such $\eps>0$, we take $\eps_0$ and $\delta$ small enough depending only on $\eps$ and $q$ whose precise value is determined below. Suppose that for a probability measure $\mu\in \PPP([q]^n)$ satisfies \eqref{eq:lem:pinning}. Given $\mu$, consider the decomposition $S_0,S_1,\ldots, S_N$ of $[q]^n$, which satisfies ${\sf (C1), (C2)}$ above, and denote by $\langle\,\cdot\,\rangle_k$ the average w.r.t. $\sig^1,\sig^2\iid \mu(\cdot\given S_k)$. Then, by our assumption \eqref{eq:lem:pinning} and since $\mu(S_k)\geq \eta\equiv \eta(\eps_0,q)$ holds by {\sf (C1)}, we have for any $1\leq k \leq N$,
\beq\label{eq:RS:k}
\Big\langle \big\|R_{\sig^1,\sig^2}-\pi\pi^{\sT}\big\|_{\Fr}^2\Big\rangle_{k}<\frac{\delta}{\eta^2}\,.
\eeq
Recalling the definition of $R_{\sig^1,\sig^2}$ in \eqref{eq:R:def}, we can expand the LHS by
\beq\label{eq:RS:expand}
\Big\langle \big\|R_{\sig^1,\sig^2}-\pi\pi^{\sT}\big\|_{\Fr}^2\Big\rangle_{k}=\sum_{i,j=1}^{q}\Bigg(\frac{1}{n^2}\sum_{u,v\in [n]}\mu_{u,v}(i,i\given S_k)\mu_{u,v}(j,j\given S_k)-\frac{2\pi_i\pi_j}{n}\sum_{u\in [n]}\mu_u(i\given S_k)\mu_u(j\given S_k)+\pi_i^2\pi_j^2\Bigg)\,.
\eeq
Note that by {\sf (C2)}, we can replace $\mu_{u,v}(\cdot\given S_k)$ with $\mu_u(\cdot \given S_k)\mu_v(\cdot \given S_k)$ with error depending on $\eps_0$:
\beqn
\frac{1}{n^2}\sum_{i,j=1}^{q}\sum_{u,v\in [n]}\mu_{u,v}(i,i\given S_k)\mu_{u,v}(j,j\given S_k) \geq \frac{1}{n^2}\sum_{i,j=1}^{q}\sum_{u,v\in [n]}\mu_{u}(i\given S_k)\mu_v(i\given S_k)\mu_{u}(j\given S_k)\mu_v(j\given S_k)-4\eps_0\,,
\eeqn
thus plugging the above bound into \eqref{eq:RS:expand} and combining with \eqref{eq:RS:k} gives
\beqn
\sum_{i,j=1}^{q}\bigg(\frac{1}{n}\sum_{u\in [n]}\mu_u(i\given S_k)\mu_v(j\given S_k)-\pi_i\pi_j\bigg)^2<\frac{\delta}{\eta^2}+4\eps_0\,.
\eeqn
Thus, by Cauchy-Schwarz, we have that
\beq\label{eq:pure:decomp}
\sum_{i,j=1}^{q}\bigg|\frac{1}{n}\sum_{u\in [n]}\mu_u(i\given S_k)\mu_v(j\given S_k)-\pi_i\pi_j\bigg|\leq q^2\cdot\bigg(\frac{\delta}{\eta^2}+4\eps_0\bigg)^{1/2}\leq \eps^3\,,
\eeq
where in the last inequality we took $\eps_0\equiv \frac{\eps^6}{8q^4}$ and $\delta\equiv \frac{\eta^2 \eps^6}{2q^4}$ for $\eta\equiv \eta(\eps_0,q)$. Observe that restricting the sum over $i,j$ in \eqref{eq:pure:decomp} to $i=j$, and pushing the sum over $j$ inside the absolute value by triangle inequality shows that
\beqn
\bigg(\sum_{i=1}^{q}\bigg|\frac{1}{n}\sum_{u\in [n]}\mu_u(i\given S_k)^2-\pi_i^2\bigg|\bigg)\vee \bigg(\sum_{i=1}^{q}\bigg|\frac{1}{n}\sum_{u\in [n]}\mu_u(i\given S_k)-\pi_i\bigg|\bigg)\leq \eps^3\,.
\eeqn
Thus, by triangle inequality,
\beqn
\sum_{i=1}^{q}\frac{1}{n}\sum_{u\in [n]}\big(\mu_u(i\given S_k)-\pi_i\big)^2=\sum_{i=1}^{q}\bigg(\frac{1}{n}\sum_{u\in [n]}\mu_u(i\given S_k)^2-\pi_i^2\bigg)-2\sum_{i=1}^{q}\pi_i\bigg(\frac{1}{n}\sum_{u\in [n]}\mu_u(i\given S_k)-\pi_i\bigg)\leq 3 \eps^3\,.
\eeqn
Then, it follows from Cauchy-Schwarz that
\beq\label{eq:lem:pinning:goal}
\frac{1}{n}\sum_{i=1}^{q}\sum_{u\in [n]}\Big|\mu_u(i\given S_k)-\pi_i\Big|\leq (3q\eps^3)^{1/2}<\frac{\eps}{4}\,,
\eeq
where the last inequality holds since $\eps <(100q)^{-1}$. Consequently, combining with {\sf (C2)}, we have
\beq\label{eq:lem:pinning:goal:2}
\begin{split}
\frac{1}{n^2}\sum_{u,v\in [n]}\big\|\mu_{u,v}(\cdot \given S_k)-\pi\pi^{\sT}\big\|_{\tv}
&\leq \eps_0+\frac{1}{n^2}\sum_{u,v\in [n]}\big\|\mu_u(\cdot \given S_k)\otimes \mu_v(\cdot \given S_k)-\pi\pi^{\sT}\big\|_{\tv}\\
&\leq \eps_0+\frac{2}{n}\sum_{u\in [n]}\big\|\mu_u(\cdot \given S_k)-\pi\big\|_{\tv}<\frac{\eps}{2}\,,
\end{split}
\eeq
where the last inequality is due to \eqref{eq:lem:pinning:goal}. Therefore, by triangle inequality, we have that
\beqn
\begin{split}
\frac{1}{n^2}\sum_{u,v\in[n]}\big\| \mu_{u,v}-\pi^{\otimes 2}\big\|_{\tv}
&\leq \frac{1}{n^2}\sum_{u,v\in [n]}\sum_{k=0}^{N}\big\|\mu_{u,v}(\cdot\given S_k)-\pi^{\otimes 2}\big\|_{\tv}\cdot \mu(S_k)\\
&\stackrel{(a)}{\leq} \mu(S_0)+\frac{\eps}{2}\sum_{k=1}^{N}\mu(S_k)\stackrel{(b)}{<}\eps_0+\frac{\eps}{2}<\eps\,,
\end{split}
\eeqn
where $(a)$ is due to \eqref{eq:lem:pinning:goal:2} and $(b)$ is due to {\sf (C1)}. This concludes the proof.
\end{proof}
With Lemma \ref{lem:pinning} in hand, the step $(d)\Rightarrow (c)$ is straightforward.
\begin{lemma}\label{lem:posterior:triviality:to:two:point}
Suppose that $\E\big\langle \big\|R_{\sig^1,\sig^2}-\pi\pi^{\sT}\big\|_1\big\rangle_{\bX^\star} \to 0$ as $n\to\infty$. Then, $\P(\bsigma^\star_u=\bsigma^\star_v \given \bX^\star)\pto \sum_{i=1}^{q}\pi_i^2$ and $\sum_{i=1}^{q}\pi_i\P(\bsigma^\star_u=i\given \bX^\star) \pto \sum_{i=1}^{q}\pi_i^2$ hold as $n\to\infty$.
\end{lemma}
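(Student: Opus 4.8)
The plan is to feed the hypothesis into the pinning lemma (Lemma~\ref{lem:pinning}) and then use the exchangeability property {\sf (EXG)} to descend from an average over pairs of variables to a single fixed pair. First I would perform a trivial norm comparison: every entry of $M:=R_{\sig^1,\sig^2}-\pi\pi^{\sT}$ lies in $[-1,1]$, so $\|M\|_{\Fr}^2\le\|M\|_1$, and therefore the hypothesis $\E\big\langle\|R_{\sig^1,\sig^2}-\pi\pi^{\sT}\|_1\big\rangle_{\bX^\star}\to0$ implies $\E\big\langle\|R_{\sig^1,\sig^2}-\pi\pi^{\sT}\|_{\Fr}^2\big\rangle_{\bX^\star}\to0$, which is the quantity controlled in the assumption of Lemma~\ref{lem:pinning}.

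Next, fix $\eps>0$ and let $\delta=\delta(\eps,q)>0$ be the constant supplied by Lemma~\ref{lem:pinning}. By Markov's inequality, $\P\big(\big\langle\|R_{\sig^1,\sig^2}-\pi\pi^{\sT}\|_{\Fr}^2\big\rangle_{\bX^\star}\ge\delta\big)\to0$, so with probability $1-o(1)$ the (random) posterior $\mu_{\bX^\star}$ satisfies hypothesis \eqref{eq:lem:pinning}; on that event Lemma~\ref{lem:pinning} yields $\tfrac1{n^2}\sum_{u,v\in V}\big\|(\mu_{\bX^\star})_{u,v}-\pi^{\otimes2}\big\|_{\tv}<\eps$, where $(\mu_{\bX^\star})_{u,v}(i,j)=\P(\bsigma^\star_u=i,\bsigma^\star_v=j\given\bX^\star)$. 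Since $\eps$ is arbitrary and this average is at most $1$, it follows that $\tfrac1{n^2}\sum_{u,v\in V}\big\|(\mu_{\bX^\star})_{u,v}-\pi^{\otimes2}\big\|_{\tv}\pto0$, hence by bounded convergence its expectation tends to $0$. The $O(1/n)$-fraction of diagonal pairs $u=v$ contributes at most $1/n$ to this average, so $\tfrac1{n^2}\sum_{u\neq v}\E\big\|(\mu_{\bX^\star})_{u,v}-\pi^{\otimes2}\big\|_{\tv}\to0$; and by {\sf (EXG)} the random matrix $\big(\P(\bsigma^\star_u=i,\bsigma^\star_v=j\given\bX^\star)\big)_{i,j}$ has a law not depending on the distinct ordered pair $(u,v)$, so every off-diagonal summand equals the common value $\E\big\|(\mu_{\bX^\star})_{u_1,v_1}-\pi^{\otimes2}\big\|_{\tv}$ for a fixed distinct pair $u_1\neq v_1$. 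Therefore $\E\big\|(\mu_{\bX^\star})_{u_1,v_1}-\pi^{\otimes2}\big\|_{\tv}\to0$, which (nonnegativity) forces $\big\|(\mu_{\bX^\star})_{u_1,v_1}-\pi^{\otimes2}\big\|_{\tv}\pto0$.

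Finally I would translate this single-pair total-variation bound into the two asserted convergences. Writing $(\mu_{\bX^\star})_{u_1,v_1}$ for the conditional law of $(\bsigma^\star_{u_1},\bsigma^\star_{v_1})$, one has $\P(\bsigma^\star_{u_1}=\bsigma^\star_{v_1}\given\bX^\star)=\sum_{i=1}^q(\mu_{\bX^\star})_{u_1,v_1}(i,i)$ and $\sum_{i=1}^q\pi_i\P(\bsigma^\star_{u_1}=i\given\bX^\star)=\sum_{i,j=1}^q\pi_i\,(\mu_{\bX^\star})_{u_1,v_1}(i,j)$, and both corresponding functionals of $\pi^{\otimes2}$ equal $\sum_i\pi_i^2$. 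Since any linear functional $\sum_{i,j}c_{ij}\,(\mu_{\bX^\star})_{u_1,v_1}(i,j)$ with $|c_{ij}|\le1$ differs from its value at $\pi^{\otimes2}$ by at most $2\big\|(\mu_{\bX^\star})_{u_1,v_1}-\pi^{\otimes2}\big\|_{\tv}$, both differences are $\pto0$; since $u_1\neq v_1$ was an arbitrary distinct pair, this gives $\P(\bsigma^\star_u=\bsigma^\star_v\given\bX^\star)\pto\sum_i\pi_i^2$ and $\sum_i\pi_i\P(\bsigma^\star_u=i\given\bX^\star)\pto\sum_i\pi_i^2$. The only genuinely non-routine point is the passage from the pair-averaged total-variation bound produced by Lemma~\ref{lem:pinning} to a bound for a single fixed pair of variables, which is precisely where {\sf (EXG)} is invoked; the rest is Markov's inequality, bounded convergence, and bookkeeping, so I expect no further obstacle.
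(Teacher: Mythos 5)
Your proposal is correct and follows essentially the same route as the paper: reduce the $\ell_1$-hypothesis to the Frobenius-squared quantity, apply Lemma~\ref{lem:pinning} to obtain the pair-averaged total-variation bound, then use {\sf (EXG)} to drop from the average over $(u,v)$ to a single fixed distinct pair. The only cosmetic difference is that you use the direct entrywise inequality $\|M\|_{\Fr}^2\le\|M\|_1$, whereas the paper passes through convergence in probability of each entry and then to $L^2$ by boundedness; both yield the same input to Lemma~\ref{lem:pinning}.
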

\begin{proof}
By Markov's inequality, $\big\|R_{\sig^1,\sig^2}(i,j)-\pi_i\pi_j\big\|\pto 0$ under $\E \mu_{\bX^\star}^{\otimes 2}$ for any $i,j\in [q]$. Since $\big\|R_{\sig^1,\sig^2}(i,j)-\pi_i\pi_j\big\|\leq 1$, convergence in probability further implies $L^2$ convergence. Summing over $i,j\in [q]$, we have $\E\big\langle\big\|R_{\sig^1,\sig^2}-\pi\pi^{\sT}\big\|_{\Fr}^2\big\rangle_{\bX^\star}\to 0$. Hence, Lemma \ref{lem:pinning} further implies that 
\beqn
\frac{1}{n^2}\sum_{u,v\in V}\Big\|\P\big(\bsigma^\star_u=\cdot\,,\, \bsigma^\star_v=\cdot\bgiven \bX^\star\big)-\pi^{\otimes 2}\Big\|_{\tv}\pto 0\,.
\eeqn
Note that the LHS above is at most $1$, thus above further implies $L^1$ convergence. Moreover,
\beq\label{eq:conseq:exg}
\begin{split}
\frac{1}{n^2}\sum_{u,v\in V}\E\Big\|\P\big(\bsigma^\star_u=\cdot\,,\, \bsigma^\star_v=\cdot\bgiven \bX^\star\big)-\pi^{\otimes 2}\Big\|_{\tv}=\sum_{i,j=1}^{q}\E\Big|\P\big(\bsigma^\star_{u_1}=i,\bsigma^\star_{v_1}=j\bgiven \bX^\star\big)-\pi_i\pi_j\Big|+O(n^{-1})\,,
\end{split}
\eeq
where we used the property ${\sf (EXG)}$ in the equality above for distinct variables $u_1\neq v_1$. Therefore, we have that $\P(\bsigma^\star_u=\bsigma^\star_v \given \bX^\star)\pto \sum_{i=1}^{q}\pi_i^2$ and $\sum_{i=1}^{q}\pi_i\P(\bsigma^\star_u=i\given \bX^\star) \pto \sum_{i=1}^{q}\pi_i^2$.
\end{proof}
Finally, we prove a non-asymptotic version of the implication $(d)\Rightarrow (a)$, which is used for the proof of Proposition~\ref{prop:free:energy} (see Lemma~\ref{lem:psi:R}).
\begin{lemma}\label{lem:nonasymptotic}
    For any $\eps>0$, there exists $\eta\equiv \eta(\eps,q,\pi)>0$ and $n_0\equiv n_0(\eps,q,\pi)$ such that the following holds. Suppose that $n\geq n_0$ and there exists an estimator $\hat{\sig}\equiv \hat{\sig}(\bX^\star)$ such that $\E\big[A(\bsig^\star, \hat{\sig})\big]\geq \frac{1}{q}+\eps$ holds. Then, we have $\E\big\langle \big\|R_{\sig^1,\sig^2}-\pi\pi^{\sT}\big\|_1\big\rangle_{\bX^\star}\geq \eta$. 
\end{lemma}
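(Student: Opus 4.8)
The plan is to argue by contradiction, chaining together the quantitative content of Lemmas~\ref{lem:equiv:weak:recovery}, \ref{lem:two:point:conv} and \ref{lem:pinning}, with a Markov truncation inserted to cope with the fact that the posterior $\mu_{\bX^\star}$ is a \emph{random} measure. So suppose that for some estimator $\hat{\sig}=\hat{\sig}(\bX^\star)$ we have $\E[A(\bsig^\star,\hat{\sig})]\ge \frac1q+\eps$, yet $\E\langle \|R_{\sig^1,\sig^2}-\pi\pi^{\sT}\|_1\rangle_{\bX^\star}<\eta$, for a small $\eta=\eta(\eps,q,\pi)$ to be fixed at the end. By Lemma~\ref{lem:equiv:weak:recovery}, once $n$ exceeds a threshold depending only on $\eps,q,\pi$, this forces $\E[\wt{A}(\bsig^\star,\hat{\sig})]\ge \frac1q+\frac\eps2$ for $\wt{A}$ as in \eqref{eq:def:wt:A}; this is the inequality I will eventually contradict.

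First I would convert the overlap bound into a bound on posterior two-point marginals. Since the entries of $R_{\sig^1,\sig^2}-\pi\pi^{\sT}$ lie in $[-1,1]$, one has $\|R_{\sig^1,\sig^2}-\pi\pi^{\sT}\|_{\Fr}^2\le \|R_{\sig^1,\sig^2}-\pi\pi^{\sT}\|_1$, so $Y:=\langle \|R_{\sig^1,\sig^2}-\pi\pi^{\sT}\|_{\Fr}^2\rangle_{\bX^\star}$ satisfies $\E Y<\eta$ and hence $\P(Y\ge \sqrt\eta)<\sqrt\eta$ by Markov. Fix a small $\eps'=\eps'(\eps,q,\pi)$, let $\delta=\delta(\eps',q)$ and $n_0=n_0(\eps',q)$ be the constants of Lemma~\ref{lem:pinning}, and take $\eta$ small enough that $\sqrt\eta<\delta$. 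On the event $\{Y<\sqrt\eta\}\subseteq\{Y<\delta\}$, Lemma~\ref{lem:pinning} applied pointwise to $\mu=\mu_{\bX^\star}$ (for $n\ge n_0$) gives $\frac1{n^2}\sum_{u,v}\|\mu_{\bX^\star;u,v}-\pi^{\otimes 2}\|_{\tv}<\eps'$, while on the complement this average is at most $1$; taking expectations,
\[
\frac1{n^2}\sum_{u,v\in V}\E\big\|\mu_{\bX^\star;u,v}-\pi^{\otimes 2}\big\|_{\tv}<\eps'+\sqrt\eta .
\]
Discarding the $n$ nonnegative diagonal terms and invoking ${\sf (EXG)}$ (so all $n(n-1)$ off-diagonal summands coincide), this gives $\E\|\mu_{\bX^\star;u_1,v_1}-\pi^{\otimes 2}\|_{\tv}\le \tfrac{n}{n-1}(\eps'+\sqrt\eta)\le 2(\eps'+\sqrt\eta)$ for any fixed pair $u_1\ne v_1$, i.e. $\E\sum_{i,j}|\P(\bsigma^\star_{u_1}=i,\bsigma^\star_{v_1}=j\mid\bX^\star)-\pi_i\pi_j|\le 4(\eps'+\sqrt\eta)$.

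Next I would feed this into the computation inside the proof of Lemma~\ref{lem:two:point:conv}, which produces the explicit estimate $|\E[\wt{A}(\bsig^\star,\hat{\sig})]-\tfrac1q|\le C_1(q,\pi)\big(\Theta_n+\tfrac2n\big)^{1/2}$ with $C_1(q,\pi)=q^2(q\min_i\pi_i)^{-1}$ and $\Theta_n=\E\big|\P(\bsigma^\star_{u_1}=\bsigma^\star_{v_1}\mid\bX^\star)-2\sum_i\pi_i\P(\bsigma^\star_{u_1}=i\mid\bX^\star)+\sum_i\pi_i^2\big|$. Rewriting each of the three terms of $\Theta_n$ through the joint marginals gives $\Theta_n\le 3\,\E\sum_{i,j}|\P(\bsigma^\star_{u_1}=i,\bsigma^\star_{v_1}=j\mid\bX^\star)-\pi_i\pi_j|\le 12(\eps'+\sqrt\eta)$, hence $|\E[\wt{A}(\bsig^\star,\hat{\sig})]-\tfrac1q|\le C_1(q,\pi)\big(12(\eps'+\sqrt\eta)+\tfrac2n\big)^{1/2}$. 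Choosing $\eps'$ small, then $\eta$ small, then $n_0$ large — all as functions of $\eps,q,\pi$ only — makes the right-hand side strictly below $\frac\eps2$, contradicting $\E[\wt{A}(\bsig^\star,\hat{\sig})]\ge \frac1q+\frac\eps2$ from the first paragraph. This yields an explicit $\eta(\eps,q,\pi)>0$ and $n_0$ equal to the maximum of the finitely many thresholds invoked.

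The main obstacle is structural rather than conceptual: Lemma~\ref{lem:pinning} is a deterministic, threshold-form statement about a \emph{fixed} probability measure, whereas we only control $\E\langle\|R_{\sig^1,\sig^2}-\pi\pi^{\sT}\|_1\rangle_{\bX^\star}$, an average over the random posterior $\mu_{\bX^\star}$. The Markov truncation onto the high-probability event $\{Y<\sqrt\eta\}$ is exactly what lets us invoke Lemma~\ref{lem:pinning} realization-by-realization, at the cost of an additive $\sqrt\eta$ loss, and the only remaining care is to order the choices of $\eps'$, then $\delta$, then $\eta$, then $n_0$ so that no circular dependence arises. Everything else is a quantitative rereading of the already-established implications $(d)\Rightarrow(c)\Rightarrow(a)$ of Proposition~\ref{prop:equiv}.
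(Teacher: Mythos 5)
Your proposal is correct and is essentially the contrapositive rephrasing of the paper's own proof: both arguments chain Lemma~\ref{lem:equiv:weak:recovery}, the explicit estimates from the proof of Lemma~\ref{lem:two:point:conv}, Lemma~\ref{lem:pinning}, the bound $\|R_{\sig^1,\sig^2}-\pi\pi^{\sT}\|_{\Fr}^2\le\|R_{\sig^1,\sig^2}-\pi\pi^{\sT}\|_1$, and a Markov/thresholding step to handle the randomness of the posterior. The paper argues forward (lower-bounding $\P(\langle\|R\|_{\Fr}^2\rangle\ge\delta)$ via the contrapositive of Lemma~\ref{lem:pinning}) while you argue by contradiction (applying Lemma~\ref{lem:pinning} directly on the high-probability event), but this is a bookkeeping difference, not a genuinely different route.
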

\begin{proof}
   Throughout, we write $C\equiv C(q,\pi)>0$ for a positive constant that only depends on $q,\pi$ that may differ from line to line. Also, we fix $\eps>0$ and write $n_0\equiv n_0(\eps,q,\pi)$ for an integer that only depends on $\eps,q,\pi$. Suppose that $\E\big[A(\bsig^\star, \hat{\sig})\big]\geq \frac{1}{q}+\eps$ holds for some $\hat{\sig}\equiv \hat{\sig}(\bX^\star)$. By Lemma~\ref{lem:equiv:weak:recovery}, we have $\E\big[\wt{A}(\bsig^\star, \hat{\sig})\big]\geq \frac{1}{q}+\frac{\eps}{2}$ for $n\geq n_0$. Subsequently, by the inequalities \eqref{eq:conv:tech:2} and \eqref{eq:conv:tech:3} in the proof of Lemma~\ref{lem:two:point:conv}, it follows that for $n\geq n_0$
   \beqn
C\eps^2\leq \E\Big|\P(\bsigma^\star_{u_1}=\bsigma^\star_{v_1}\given \bX^\star)-2\sum_{i=1}^{q}\pi_i\P(\bsigma^\star_{u_1}=i\given\bX^\star)+\sum_{i=1}^{q}\pi_i^2\Big|\,,
   \eeqn
   where $u_1\neq v_1$ are two distinct variables. By triangle inequality, the RHS is upper bounded by
   \beqn
   \begin{split}
&\E\Big|\sum_{i=1}^{q}\big(\P(\bsigma^\star_{u_1}=i, \bsigma^\star_{v_1}=i\given \bX^\star)-\pi_i^2\big)-2 \sum_{i=1}^{q}\pi_i\big(\P(\bsigma^\star_{u_1}=i\given\bX^\star)-\pi_i\big)\Big|\\
&\leq 3\E\big\|\P\big(\bsigma^\star_{u_1}=\cdot\,,\, \bsigma^\star_{v_1}=\cdot\bgiven \bX^\star\big)-\pi^{\otimes 2}\big\|_{\tv}\,.
   \end{split}
   \eeqn
   Thus, combining the two inequalities above, $\E\big\|\P\big(\bsigma^\star_{u_1}=\cdot\,,\, \bsigma^\star_{v_1}=\cdot\bgiven \bX^\star\big)-\pi^{\otimes 2}\big\|_{\tv}\geq C\eps^2$ holds for $n\geq n_0$. Recalling the equality \eqref{eq:conseq:exg} and using that the total variation distance is at most $1$, we have 
   \beqn
  \P\bigg(\frac{1}{n^2}\sum_{u,v\in V}\big\|\P\big(\bsigma^\star_u=\cdot\,,\, \bsigma^\star_v=\cdot\bgiven \bX^\star\big)-\pi^{\otimes 2}\big\|_{\tv}\geq C\eps^2\bigg)\geq C\eps^2\,.
   \eeqn
   Then by Lemma~\ref{lem:pinning}, there exists a constant $\delta\equiv \delta(\eps,q,\pi)>0$ such that 
   \beqn
  \P\bigg(\Big\langle\big\|R_{\sig^1,\sig^2}-\pi\pi^{\sT}\big\|_{\Fr}^2\Big\rangle_{\bX^\star}\geq \delta\bigg)\geq C\eps^2\,.
   \eeqn
   Finally, note that $\|R_{\sig^1,\sig^2}-\pi\pi^{\sT}\big\|_{\Fr}^2\leq \|R_{\sig^1,\sig^2}-\pi\pi^{\sT}\big\|_{1}$ holds since the entries of $R_{\sig^1,\sig^2}-\pi\pi^{\sT}$ have absolute values at most $1$. Therefore, setting $\eta\equiv C\delta \eps^2$ concludes the proof.
\end{proof}
\begin{proof}[Proof of Proposition~\ref{prop:equiv}]
   This is immediate from combining Lemmas \ref{lem:two:point}, \ref{lem:two:point:conv},  \ref{lem:overlap:posterior}, and \ref{lem:posterior:triviality:to:two:point}.
\end{proof}
\section{Contiguity below the weak recovery threshold}
\label{sec:proof:contiguity}
In this section, we prove Lemma~\ref{lem:KS}, Proposition \ref{prop:sec:moment:LR} and Lemma \ref{lem:lambda:equality}, which were crucial for the proof of Theorem~\ref{thm:factor:contiguity}. In Section~\ref{subsec:prop:sec:moment:LR}, we prove Proposition~\ref{prop:sec:moment:LR} by appealing to a central limit theorem. In Section~\ref{subsec:lem:lambda:equality}, we prove Lemmas \ref{lem:KS} and \ref{lem:lambda:equality}.
\subsection{Proof of Lemma~\ref{lem:KS} and Lemma~\ref{lem:lambda:equality}}
\label{subsec:lem:lambda:equality}
Throughout, we assume ${\sf (SYM)}$. We start with the first statement of Lemma~\ref{lem:KS}.
\begin{lemma}\label{lem:Xi:self}
    The linear operator $\Xi$ is self-adjoint on the inner product space $(\R^{q}\otimes \R^{q},\langle \cdot,\cdot\rangle_{\pi})$.
\end{lemma}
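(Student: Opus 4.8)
\textbf{Proof plan for Lemma~\ref{lem:Xi:self}.}
The plan is to reduce the self-adjointness of $\Xi$ on $\R^{q}\otimes\R^{q}$ to an identity for the single-copy operators $\Phi_{\psi}$ on $\R^{q}$. Write $G:=\diag(\pi)$ and let $\langle x,y\rangle_{G}:=x^{\sT}Gy$ be the corresponding inner product on $\R^{q}$ (we may assume $\pi_{i}>0$ for all $i$, else one restricts to $\supp(\pi)$). By the definition \eqref{def:inner:product}, $\langle\cdot,\cdot\rangle_{\pi}$ is exactly the tensor-product inner product on $\R^{q}\otimes\R^{q}$ induced by $\langle\cdot,\cdot\rangle_{G}$, i.e. $\langle x_{1}\otimes x_{2},y_{1}\otimes y_{2}\rangle_{\pi}=\langle x_{1},y_{1}\rangle_{G}\,\langle x_{2},y_{2}\rangle_{G}$. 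A standard consequence (immediate by bilinearity on simple tensors, which span $\R^{q}\otimes\R^{q}$) is that for linear maps $A,B$ on $\R^{q}$, the adjoint of $A\otimes B$ with respect to $\langle\cdot,\cdot\rangle_{\pi}$ is $A^{\ast}\otimes B^{\ast}$, where $A^{\ast},B^{\ast}$ denote adjoints with respect to $\langle\cdot,\cdot\rangle_{G}$. Hence it suffices to identify $\Phi_{\psi}^{\ast}$.

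First I would compute $\Phi_{\psi}^{\ast}$. Since $\langle Ax,y\rangle_{G}=x^{\sT}A^{\sT}Gy=x^{\sT}G\,(G^{-1}A^{\sT}G)\,y$, we have $A^{\ast}=G^{-1}A^{\sT}G$, so $\Phi_{\psi}^{\ast}=\diag(\pi)^{-1}\Phi_{\psi}^{\sT}\diag(\pi)$. Plugging in $\Phi_{\psi}(i,j)=\xi^{-1}\,\E_{\pi}[\psi(\bsig)\mid\bsigma_{1}=i,\bsigma_{2}=j]\,\pi_{j}$ from \eqref{def:Phi}, a direct entrywise computation gives
\[
\Phi_{\psi}^{\ast}(i,j)=\pi_{i}^{-1}\,\Phi_{\psi}(j,i)\,\pi_{j}=\xi^{-1}\,\E_{\pi}\big[\psi(\bsig)\mid\bsigma_{1}=j,\bsigma_{2}=i\big]\,\pi_{j}.
\]
Letting $\theta=(1\ 2)\in S_{k}$ be the transposition of the first two coordinates, so $\psi^{\theta}(\sigma_{1},\sigma_{2},\sigma_{3},\dots,\sigma_{k})=\psi(\sigma_{2},\sigma_{1},\sigma_{3},\dots,\sigma_{k})$, the right-hand side is precisely $\Phi_{\psi^{\theta}}(i,j)$ by \eqref{def:Phi} applied to $\psi^{\theta}$. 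Thus $\Phi_{\psi}^{\ast}=\Phi_{\psi^{\theta}}$, and therefore $(\Phi_{\psi}\otimes\Phi_{\psi})^{\ast}=\Phi_{\psi^{\theta}}\otimes\Phi_{\psi^{\theta}}$.

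Finally I would average over $p$. Since taking adjoints commutes with the $p$-expectation, $\Xi^{\ast}=\E_{p}\big[(\Phi_{\bpsi}\otimes\Phi_{\bpsi})^{\ast}\big]=\E_{p}\big[\Phi_{\bpsi^{\theta}}\otimes\Phi_{\bpsi^{\theta}}\big]$. By the exchangeability hypothesis \eqref{eq:psi:symm}, the map $\psi\mapsto\psi^{\theta}$ is a bijection of $\Psi=\supp(p)$ onto itself with $p(\psi^{\theta})=p(\psi)$, i.e. it preserves the law $p$; hence $\E_{p}\big[\Phi_{\bpsi^{\theta}}\otimes\Phi_{\bpsi^{\theta}}\big]=\E_{p}\big[\Phi_{\bpsi}\otimes\Phi_{\bpsi}\big]=\Xi$, so $\Xi^{\ast}=\Xi$. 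I do not expect a genuine obstacle here: the only points requiring care are checking that $\langle\cdot,\cdot\rangle_{\pi}$ is indeed the tensor-product inner product (and hence that $(A\otimes B)^{\ast}=A^{\ast}\otimes B^{\ast}$), and correctly tracking the transpose so that $\Phi_{\psi}^{\ast}$ is identified with $\Phi_{\psi^{\theta}}$ rather than with $\Phi_{\psi}$ itself — it is the average over $p$, not the symmetry of an individual $\psi$, that closes the argument. In particular, the condition ${\sf (SYM)}$ is not needed for this part.
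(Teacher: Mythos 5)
Your proof is correct and matches the paper's argument in substance: both rest on the single-copy identity $\Phi_{\psi}^{\ast}=\Phi_{\psi^{\theta}}$ for the transposition $\theta=(1\,2)$ (equivalently $\diag(\pi)\Phi_{\psi}=\Phi_{\psi^{\theta}}^{\sT}\diag(\pi)$, which is exactly what the paper's Eq.~\eqref{eq:diag:Phi:symmetric} packages as an equality in distribution), combined with the $p$-invariance of $\psi\mapsto\psi^{\theta}$ from~\eqref{eq:psi:symm}. Your organization — first computing the single-copy adjoint and then using $(A\otimes B)^{\ast}=A^{\ast}\otimes B^{\ast}$ before averaging over $p$ — is somewhat cleaner than the paper's entrywise verification on $e_i\otimes e_j$, and your aside that ${\sf (SYM)}$ is not actually needed here (only the exchangeability condition~\eqref{eq:psi:symm}) is also accurate.
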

\begin{proof}
    By linearity, it suffices to check $\big\langle \Xi (e_i\ot e_j), e_s\ot e_t\big\rangle_{\pi}=\big\langle e_i\ot e_j, \Xi(e_s\ot e_t)\big\rangle_{\pi}$ for $i,j,s,t\in[q]$, where $(e_i)_{i\in [q]}$ are standard basis of $\R^q$. Note that by definition of $\Xi$ and $\langle \cdot, \cdot\rangle_{\pi}$, we have
    \beqn
    \begin{split}
    \big\langle \Xi (e_i\ot e_j), e_s\ot e_t\big\rangle_{\pi}
    =\sum_{\psi\in \Psi}p(\psi)\cdot \big\langle \Phi_{\psi} e_i\ot \Phi_{\psi} e_j, e_s\ot e_t \big\rangle_{\pi}=\E_{p}\Big[ e_i^{\sT}\Phi_{\bpsi}^{\sT}\diag(\pi) e_s  \cdot e_j^{\sT}\Phi_{\bpsi}^{\sT}\diag(\pi) e_t\Big]\,.
    \end{split}
    \eeqn
    Recalling the condition \eqref{eq:psi:symm} and the definition of $\Phi_{\psi}$ in \eqref{def:Phi}, note that we have 
    \beq\label{eq:diag:Phi:symmetric}
    \diag(\pi)\Phi_{\bpsi}\stackrel{d}{=}\Phi_{\bpsi}^{\sT}\diag(\pi)\quad\textnormal{for}\quad\bpsi\sim p\,.
    \eeq 
    Thus, it follows that $\big\langle \Xi (e_i\ot e_j), e_s\ot e_t\big\rangle_{\pi}=\big\langle e_i\ot e_j, \Xi(e_s\ot e_t)\big\rangle_{\pi}$ holds for $i,j,s,t\in[q]$.
\end{proof}
Consider the matrix $\Phi\equiv \Phi_p\in \R^{q\times q}$ defined by the average of $\Phi_{\bpsi},\bpsi\sim p$.
\beqn
\Phi\equiv \Phi_p:=\E_{p}\big[\Phi_{\bpsi}\big]\,.
\eeqn
As before, we often identify the matrix $\Phi$ with the linear map $x\to \Phi x$. Further, consider the inner product $\langle \cdot, \cdot \rangle_{\pi,1}$ on $\R^{q}$ defined by $\langle x,y\rangle_{\pi,1}:=x^{\sT}\diag(\pi) y$. The notation $1$ is to distinguish $\langle \cdot, \cdot \rangle_{\pi,1}$ from the inner product $\langle \cdot,\cdot\rangle_{\pi}$ in \eqref{def:inner:product}, which is the tensor product of $\langle \cdot, \cdot \rangle_{\pi,1}$ with itself. Then, we have the following lemma.
\begin{lemma}\label{lem:Phi:self}
    The linear operator $\Phi$ is self-adjoint on the inner product space $(\R^{q}, \langle \cdot, \cdot \rangle_{\pi,1})$ such that $\Phi\bone=\bone$. Further, for $w\in \R^{q}$, we have that $\Xi (\bone \otimes w)= \bone \otimes \Phi w$.
\end{lemma}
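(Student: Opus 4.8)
The plan is to verify all three assertions by direct computation from the definitions, the only substantive inputs being the exchangeability hypothesis \eqref{eq:psi:symm} (already distilled into \eqref{eq:diag:Phi:symmetric} in the proof of Lemma~\ref{lem:Xi:self}) and the condition ${\sf (SYM)}$.

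\emph{Self-adjointness of $\Phi$.} First I would take the expectation over $\bpsi\sim p$ in the distributional identity \eqref{eq:diag:Phi:symmetric}, which yields the matrix identity $\diag(\pi)\Phi=\Phi^{\sT}\diag(\pi)$. Since $\langle\Phi x,y\rangle_{\pi,1}=x^{\sT}\Phi^{\sT}\diag(\pi)y$ and $\langle x,\Phi y\rangle_{\pi,1}=x^{\sT}\diag(\pi)\Phi y$ for all $x,y\in\R^{q}$, this identity is precisely the statement that $\Phi$ is self-adjoint on $(\R^{q},\langle\cdot,\cdot\rangle_{\pi,1})$.

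\emph{The common eigenvector $\bone$.} I would prove the stronger fact that $\Phi_{\psi}\bone=\bone$ for every $\psi\in\Psi$, which gives $\Phi\bone=\E_{p}[\Phi_{\bpsi}\bone]=\bone$. Using the definition \eqref{def:Phi} and the law of total expectation over $\bsigma_2\sim\pi$,
\[
(\Phi_{\psi}\bone)_i=\sum_{j\in[q]}\xi^{-1}\,\E_{\pi}\big[\psi(\bsig)\bgiven\bsigma_1=i,\bsigma_2=j\big]\pi_j=\xi^{-1}\,\E_{\pi}\big[\psi(\bsig)\bgiven\bsigma_1=i\big]=1,
\]
where the last step uses ${\sf (SYM)}$ with $s=1$, $\tau=i$, together with $\xi=\E_{p,\pi}[\bpsi(\bsig)]$; thus each $\Phi_{\psi}$ is row-stochastic. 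Then, using $\Xi=\E_{p}[\Phi_{\bpsi}\otimes\Phi_{\bpsi}]$, the multiplicativity $(A\otimes B)(x\otimes y)=(Ax)\otimes(By)$, and $\Phi_{\bpsi}\bone=\bone$ a.s.,
\[
\Xi(\bone\otimes w)=\E_{p}\big[(\Phi_{\bpsi}\bone)\otimes(\Phi_{\bpsi}w)\big]=\E_{p}\big[\bone\otimes\Phi_{\bpsi}w\big]=\bone\otimes\E_{p}[\Phi_{\bpsi}w]=\bone\otimes\Phi w,
\]
as claimed.

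There is no genuine obstacle here; the argument is essentially bookkeeping. The two points that deserve a moment of care are applying \eqref{eq:diag:Phi:symmetric} in the correct direction so as to land on $\diag(\pi)\Phi=\Phi^{\sT}\diag(\pi)$ rather than its transpose, and recognizing that it is exactly ${\sf (SYM)}$ that makes each $\Phi_{\psi}$ row-stochastic — this is what forces $\bone$ to be a common eigenvector of all the $\Phi_{\bpsi}$ and hence produces the block structure $\Xi(\bone\otimes w)=\bone\otimes\Phi w$, which in turn (combined with self-adjointness of $\Xi$) underlies the invariance statements for $\SS$ and $\SS^{\perp}$ in Lemma~\ref{lem:KS}.
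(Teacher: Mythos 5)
Your proof is correct and takes essentially the same route as the paper: take expectations in \eqref{eq:diag:Phi:symmetric} to get $\diag(\pi)\Phi=\Phi^{\sT}\diag(\pi)$ and unwind the inner product, use ${\sf (SYM)}$ to show each $\Phi_\psi$ is row-stochastic, and then propagate $\Phi_{\bpsi}\bone=\bone$ through the tensor product to get $\Xi(\bone\otimes w)=\bone\otimes\Phi w$. (The paper's statement of the intermediate identity contains a typo, writing $\diag(\pi)\Phi=\Phi\diag(\pi)$; the version you derived, $\diag(\pi)\Phi=\Phi^{\sT}\diag(\pi)$, is the correct one and is what the paper actually uses in the following line.)
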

\begin{proof}
    Note that since $\diag(\pi)\Phi_{\bpsi}\stackrel{d}{=}\Phi_{\bpsi}^{\sT}\diag(\pi)$ holds for $\bpsi \sim p$ (cf.~\eqref{eq:diag:Phi:symmetric}), $\diag(\pi)\Phi=\Phi\diag(\pi)$ holds. Thus, for any $x,y\in \R^{q}$, we have that
    \beqn
    \big\langle \Phi x , y\big\rangle_{\pi,1}= x^{\sT}\Phi^{\sT} \diag(\pi) y =x^{\sT}\diag(\pi)\Phi y=\big\langle x , \Phi  y\big\rangle_{\pi,1}\,.
    \eeqn
    Further, note that by ${\sf (SYM)}$, for any $\psi \in \Psi$, we have the equality $\Phi_{\psi} \bone= \bone$. Thus, $\Phi \bone=\bone$ holds and we have for any $w\in \R^{q}$ that
    \beqn
    \Xi(\bone \otimes w)=\sum_{\psi\in \Psi}p(\psi)\cdot \Phi_{\psi} \bone \ot \Phi_{\psi}w = \sum_{\psi\in \Psi}p(\psi)\cdot  \bone \ot \Phi_{\psi} w =\bone \ot \Phi w\,,
    \eeqn
    which concludes the proof.
\end{proof}
Observe that by Lemma~\ref{lem:Phi:self}, there exists an orthonormal basis $\{w_1=1, w_2,\ldots, w_{q}\}$ of the inner product space $(\R^{q}, \langle \cdot, \cdot \rangle_{\pi,1})$ such that $w_i$'s are eigenvectors of $\Phi$. We denote by $\lambda_i$ the eigenvalue corresponding to $w_i$, where $\lambda_1=1$. Then, we have the following lemma. Below, $\proj_{\SS^{\perp}}$ and $\proj_{w_i\ot w_j}$ for $i,j\in [q]$ respectively denotes the projection operator in $(\R^{q}\otimes \R^{q},\langle \cdot,\cdot \rangle_{\pi})$ onto the subspace $\SS^{\perp}$ and the subspace spanned by the vector $w_i\ot w_j$.
\begin{lemma}\label{lem:SS:perp}
    The subspaces $\SS$ and $\SS^{\perp}$ is invariant under $\Xi$. Further, we have that
    \beqn
\Xi\circ \proj_{\SS^{\perp}}=\proj_{\bone\ot \bone}+\sum_{i=2}^{q}\lambda_i \proj_{\bone\ot w_i}+\sum_{i=2}^{q}\lambda_i \proj_{w_i \ot \bone}\,.
\eeqn
Thus, $\Xi\rvert_{S^{\perp}}$ has eigenvalues $\{1,\lambda_2,\lambda_2,\ldots, \lambda_q,\lambda_q\}$ counting multiplicities.
\end{lemma}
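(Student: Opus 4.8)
The plan is to diagonalize $\Xi$ explicitly on $\SS^{\perp}$ using the orthonormal eigenbasis $\{w_1=\bone,w_2,\dots,w_q\}$ of $\Phi$ on $(\R^q,\langle\cdot,\cdot\rangle_{\pi,1})$ constructed just above the lemma, and then to deduce the invariance of $\SS$ for free from the self-adjointness of $\Xi$ (Lemma~\ref{lem:Xi:self}). Concretely: identify $\SS^{\perp}$ and write down $\proj_{\SS^{\perp}}$ in the tensor basis, show that $\Xi$ sends each generator of $\SS^{\perp}$ to a scalar multiple of itself, and read off both the displayed formula and the eigenvalue list.

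First I would note that by \eqref{def:inner:product} we have $\langle x_1\otimes x_2,y_1\otimes y_2\rangle_{\pi}=\langle x_1,y_1\rangle_{\pi,1}\langle x_2,y_2\rangle_{\pi,1}$, so $\{w_i\otimes w_j:i,j\in[q]\}$ is an orthonormal basis of $(\R^q\otimes\R^q,\langle\cdot,\cdot\rangle_{\pi})$; in particular each $w_i\otimes\bone$, $\bone\otimes w_j$ is a unit vector (recall $\|\bone\|_{\pi,1}^2=\sum_i\pi_i=1$, consistent with $w_1=\bone$). By definition of $\SS$ its orthogonal complement is $\SS^{\perp}=\mathrm{span}\{w\otimes\bone,\bone\otimes w:w\in\R^q\}$; expanding $w$ in the basis $\{w_i\}$ and using $w_1=\bone$ shows $\SS^{\perp}$ is spanned by the $2q-1$ pairwise-orthogonal unit vectors $\bone\otimes\bone$, $\{w_i\otimes\bone\}_{i\ge2}$, $\{\bone\otimes w_j\}_{j\ge2}$, whence
\[
\proj_{\SS^{\perp}}=\proj_{\bone\otimes\bone}+\sum_{i=2}^{q}\proj_{w_i\otimes\bone}+\sum_{i=2}^{q}\proj_{\bone\otimes w_i}.
\]
Next I would compute $\Xi$ on these generators. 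Lemma~\ref{lem:Phi:self} already gives $\Xi(\bone\otimes w)=\bone\otimes\Phi w$, and the same argument applied to the first slot — using $\Phi_{\psi}\bone=\bone$ for every $\psi\in\Psi$, a consequence of ${\sf (SYM)}$, together with $\Xi=\E_{p}[\Phi_{\bpsi}\otimes\Phi_{\bpsi}]$ — gives $\Xi(w\otimes\bone)=\E_{p}[\Phi_{\bpsi}w]\otimes\bone=\Phi w\otimes\bone$. Since $\Phi w_i=\lambda_i w_i$, the vectors $w_i\otimes\bone$ and $\bone\otimes w_i$ are $\Xi$-eigenvectors of eigenvalue $\lambda_i$, and $\bone\otimes\bone$ is fixed ($\lambda_1=1$).

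Composing $\Xi$ with the projector formula above, and using that $\Xi$ multiplies each rank-one projector $\proj_{w_i\otimes\bone}$, $\proj_{\bone\otimes w_i}$ by the corresponding eigenvalue, yields
\[
\Xi\circ\proj_{\SS^{\perp}}=\proj_{\bone\otimes\bone}+\sum_{i=2}^{q}\lambda_i\proj_{w_i\otimes\bone}+\sum_{i=2}^{q}\lambda_i\proj_{\bone\otimes w_i},
\]
which is exactly the claimed identity, and makes the spectrum of $\Xi|_{\SS^{\perp}}$ visibly equal to $\{1,\lambda_2,\lambda_2,\dots,\lambda_q,\lambda_q\}$ with multiplicities. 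The invariance $\Xi(\SS^{\perp})\subseteq\SS^{\perp}$ is immediate from this display, and $\Xi(\SS)\subseteq\SS$ then follows from self-adjointness: for $x\in\SS$ and $y\in\SS^{\perp}$, $\langle\Xi x,y\rangle_{\pi}=\langle x,\Xi y\rangle_{\pi}=0$ since $\Xi y\in\SS^{\perp}$. I do not expect a real obstacle here; the only points requiring a little care are checking the $w_i\otimes w_j$ are genuinely orthonormal (so $\proj_{\SS^{\perp}}$ carries no stray normalization constants) and the observation that ${\sf (SYM)}$ forces $\Phi_{\psi}\bone=\bone$ for each individual $\psi$, which is what extends Lemma~\ref{lem:Phi:self} to the first tensor factor and hence makes $\Xi(w\otimes\bone)=\Phi w\otimes\bone$.
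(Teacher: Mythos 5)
Your proposal is correct and follows essentially the same route as the paper: identify $\SS^{\perp}$ as the span of $\{w\otimes\bone, \bone\otimes w : w\in\R^q\}$, exhibit the $2q-1$ orthonormal generators $\bone\otimes\bone$, $\{w_i\otimes\bone\}_{i\ge 2}$, $\{\bone\otimes w_i\}_{i\ge 2}$ as $\Xi$-eigenvectors with eigenvalues $1,\lambda_i,\lambda_i$, read off the formula for $\Xi\circ\proj_{\SS^{\perp}}$, and get invariance of $\SS$ from self-adjointness of $\Xi$. The one place you are a bit more explicit than the paper is in deriving $\Xi(w\otimes\bone)=\Phi w\otimes\bone$ directly from $\Phi_{\psi}\bone=\bone$ (a consequence of {\sf (SYM)}) applied inside $\Xi=\E_p[\Phi_{\bpsi}\otimes\Phi_{\bpsi}]$; the paper invokes Lemma~\ref{lem:Phi:self}, which only records the $\bone\otimes w$ case, and leaves the symmetric case implicit.
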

\begin{proof}
Since $(w_i)_{i\leq q}$ are eigenvectors of $\Phi$, Lemma~\ref{lem:Phi:self} implies that $1\otimes w_i$ and $w_i \otimes 1$ are eignevectors of $\Xi$ with $\lambda_i$ the corresponding eigenvalue. Meanwhile, since $\R^{q}\ot \R^{q}$ is finite dimensional, $\SS^{\perp}$ is given by
\beqn
\SS^{\perp}=\{x\in \R^{q}\ot \R^{q}: x=w\ot \bone~\textnormal{or}~x=\bone\ot w~\textnormal{for some}~w\in \R^{q}\}\,.
\eeqn
Thus, the set $\WW:=\{\bone\ot \bone, \bone\ot w_2,\ldots, \bone\ot w_q,w_2\ot \bone,\ldots, w_q\ot \bone\}$ is an orthonormal basis of the inner product space $(\R^{q}\otimes \R^{q},\langle \cdot,\cdot \rangle_{\pi})$, which are also eigenvectors of $\Xi$. Hence, the subspace of $\SS^{\perp}$ is invariant under $\Xi$, which further implies that $\SS$ is invariant since $\Xi$ is self-adjoint by Lemma~\ref{lem:Xi:self}. Moreover, since $\WW$ is the eigenbasis of $\Xi\rvert_{\SS^{\perp}}$, the rest of the claims follow by the spectral theorem.
\end{proof}
Having Lemmas~\ref{lem:Xi:self}, \ref{lem:Phi:self}, and \ref{lem:SS:perp} in hand, we now prove Lemma~\ref{lem:KS} and Lemma~\ref{lem:lambda:equality}.
\begin{proof}[Proof of Lemma~\ref{lem:KS}]
The first and second statements follow immediately from Lemma~\ref{lem:Xi:self} and Lemma~\ref{lem:SS:perp}, thus it remains to prove the third statement. Note that applying the spectral theorem on the self-adjoint operator $\Phi$ on $(\R^{q}, \langle \cdot, \cdot \rangle_{\pi,1})$ (cf. Lemma~\ref{lem:Phi:self}), we have
\beqn
\Phi= \sum_{i=1}^{q}\lambda_i \proj_{w_i}=\bone \pi^{\sT}+\sum_{i=2}^{q}\la_i \proj_{w_i}\,,
\eeqn
where $\proj_{w_i}$ denotes the projection operator in $(\R^{q}, \langle \cdot, \cdot \rangle_{\pi,1})$ onto the subspace spanned by the vector $w_i$. In particular, the last equality follows since $\proj_{\bone}=\bone \pi^{\sT}$. Thus, combining with Lemma~\ref{lem:SS:perp}, we have
\beqn
\Xi\circ \proj_{\SS^{\perp}}=\big(\bone \pi^{\sT}\big)\ot\Phi+\Phi\ot \big(\bone \pi^{\sT}\big)-\big(\bone \pi^{\sT}\big)\ot\big(\bone \pi^{\sT}\big)\,.
\eeqn
Since $\SS$ and $\SS^{\perp}$ are invariant subspaces of $\Xi$ by Lemma~\ref{lem:SS:perp}, $\Xi=\Xi\circ\proj_{\SS}+\Xi\circ\proj_{\SS^{\perp}}$ holds by the spectral theorem. Thus, it follows that
\beqn
\Xi\circ \proj_{\SS}=\Xi-\big(\bone \pi^{\sT}\big)\ot\Phi-\Phi\ot \big(\bone \pi^{\sT}\big)+\big(\bone \pi^{\sT}\big)\ot\big(\bone \pi^{\sT}\big)=\E_{p}\bigg[\Big(\Phi-\bone  \pi^{\sT}\Big)^{\otimes 2}\bigg]\equiv \Xi_{\ast}\,,
\eeqn
which concludes the proof.
\end{proof}
\begin{proof}[Proof of Lemma~\ref{lem:lambda:equality}]
    For each $\ell\geq 1$, we have by definition,
    \beqn
    \sum_{\zeta\in S_{\ell}}\lambda_{\zeta}\delta_{\zeta}^2=\sum_{\psi_1,\ldots, \psi_{\ell}\in \Psi}\sum_{\substack{s_1,t_1,\ldots, s_{\ell},t_{\ell}\in [k]\\ s_i\neq t_i, 1\leq i\leq \ell}}\frac{1}{2\ell}\left(\frac{d}{k}\right)^{\ell}\bigg(\tr\Big(\prod_{i=1}^{\ell}\Phi_{\psi_i,s_i,t_i}\Big)-1\bigg)^2\prod_{i=1}^{\ell}p(\psi_i)\,.
    \eeqn
   Note that given $s,t\in [k]$ with $s\neq t$, there exists a permutation $\theta$ in $[k]$ such that $\theta(1)=s, \theta(2)=t$, and for such $\theta$, we have $\Phi_{\psi,s,t}=\Phi_{\psi^{\theta}}$, where $\psi^{\theta}(\sigma_1,\ldots, \sigma_k)\equiv \psi(\sigma_{\theta(1)},\ldots, \sigma_{\theta(k)})$. Thus, by our assumption that $p(\psi^{\theta})=p(\psi)$ in \eqref{eq:psi:symm}, we can express the equation above by
   \beqn
   \sum_{\zeta\in S_{\ell}}\lambda_{\zeta}\delta_{\zeta}^2=\sum_{\psi_1,\ldots, \psi_{\ell}\in \Psi}\frac{\big((k-1)d\big)^{\ell}}{2\ell}\bigg(\tr\Big(\prod_{i=1}^{\ell}\Phi_{\psi_i}\Big)-1\bigg)^2\prod_{i=1}^{\ell}p(\psi_i)=\frac{\big((k-1)d\big)^{\ell}}{2\ell}\E\Bigg[\bigg(\tr\Big(\prod_{i=1}^{\ell}\Phi_{\bpsi_i}\Big)-1\bigg)^2\Bigg]\,,
   \eeqn
   where $(\bpsi_i)_{i\leq \ell}\iid p$. Note that we can expand the rightmost term by
   \beqn
\E\Bigg[\bigg(\tr\Big(\prod_{i=1}^{\ell}\Phi_{\bpsi_i}\Big)-1\bigg)^2\Bigg]=\E\bigg[\tr\bigg(\prod_{i=1}^{\ell}\Big(\Phi_{\bpsi_i}\otimes \Phi_{\bpsi_i}\Big)\bigg)\bigg]-2\E\bigg[\tr\Big(\prod_{i=1}^{\ell}\Phi_{\bpsi_i}\Big)\bigg]+1=\tr\big(\Xi^{\ell}\big)-2\tr\big(\Phi^{\ell}\big)+1\,,
   \eeqn
   where the last equality holds since the expectation and trace is exchangeable and $\bpsi_i$'s are independent. Since $\Xi$ and $\Phi$ are self-adjoint by Lemma~\ref{lem:Xi:self} and Lemma~\ref{lem:Phi:self}, we have
   \beqn
   \tr\big(\Xi^{\ell}\big)-2\tr\big(\Phi^{\ell}\big)+1=\sum_{\la\in \Eig(\Xi)}\la^{\ell}-\sum_{\la^\prime\in \Eig(\Phi)}(\la^\prime)^{\ell}+1=\sum_{\la\in \Eig_{\SS}(\Xi)}\la^{\ell}\,,
   \eeqn
   where the last equality follows from Lemma~\ref{lem:SS:perp}. Thus, combining the $3$ equations in the displays above and summing over $\ell\geq 1$, we have
   \beqn
   \sum_{\ell\geq 1}\sum_{\zeta\in S_{\ell}}\la_{\zeta}\delta_{\zeta}^2=\sum_{\ell\geq 1}\sum_{\la\in \Eig_{\SS}(\Xi)}\frac{\big((k-1)d\big)^{\ell}}{2\ell}\la^{\ell}=\sum_{\la\in \Eig_{\SS}(\Xi)}\frac{1}{2}\log\big(1-(k-1)d\la\big)\,,
   \eeqn
   where the last equality holds for $d<d_{\ks}$ since $\sum_{\ell\geq 1}\frac{x^{\ell}}{\ell}=\log(1-x)$ holds for for $|x|<1$. Therefore, exponentiating the equation above concludes the proof.
\end{proof}

\subsection{Proof of Lemma~\ref{lem:m:n:overlap:trivial}}
Fix $d<d_{\ast}$. We first claim that for any $(m_n)_{n\geq 1}$ such that $\big|m_n-dn/k\big|\leq n^{2/3}$, weak recovery is impossible for $\bG^\star(n,m_n)$. We refer to Definition~\ref{def:weak:recovery:general} for a general definition of weak recovery. To this end, fix such $(m_n)_{n\geq 1}$ and $\eta>0$ such that $d<d+\eta<d_{\ast}$ holds. Then, weak recovery is impossible at $d+\eta$ by definition of $d_{\ast}$. That is, for any estimator $\hat{\sig}\equiv \hat{\sig}(\bG^\star(n,\bm)\big)\in [q]^V$, where $\bm\sim \Poi\big((d+\eta)n/k\big)$, we have 
\begin{equation}\label{eq:impossibility:weak:recovery}
\lim_{n\to\infty} \E\big[A(\bsig^\star,\hat{\sig})]=\frac{1}{q}\,.
\end{equation}
Note that by definition of the planted model (cf. Definition \ref{def:planted}), conditional on the event $\bm\geq m_n$, the subgraph of $\bG^\star(n,\bm)$ formed by excluding $\bm-m_n$ clauses is distributed the same as $\bG^\star(n,m_n)$. Moreover, since $(d+\eta)n/k\geq m_n+\Omega(n)$, it follows from Chernoff bound that $\bm>m_n$ holds with high probability. As a consequence, for any estimator $\hat{\sig}\equiv \hat{\sig}\big(\bG^\star(n,m_n)\big)$ of $\bG^\star(n,m_n)$, \eqref{eq:impossibility:weak:recovery} must also hold. Therefore, for any $(m_n)_{n\geq 1}$ such that $\big|m_n-dn/k\big|\leq n^{2/3}$, weak recovery is impossible for $\bG^\star(n,m_n)$.

Note that since $m_n\to\infty$ as $n\to\infty$, Lemma~\ref{lem:EXG:CONV} implies that $\big(\bG^\star(n,m_n)\big)_{n\geq 1}$ satisfies the properties {\sf (EXG)}. Thus, it follows from Proposition~\ref{prop:equiv} that as $n\to\infty$,
\begin{equation*}
\delta_n:=\E\Big\langle \big\|R_{\sig^1,\sig^2}-\pi\pi^{\sT}\big\|_1\Big\rangle_{\bG^\star(n,m_n)}\to 0\,.
\end{equation*}
Let $\eps_n:=\delta_n^{1/2}$ and consider $L^{\ast}\big(\bG(n,m_n))\equiv L^{\ast}\big(\bG(n,m_n);\eps_n)$. Then, we have by a change of measure (cf. \eqref{eq:diff:L:L:star}) that
\beqn
\begin{split}
\E\bigg[\Big| L\big(\bG(n,m_n)\big)-L^{\ast}\big(\bG(n,m_n))\Big|\bigg]
&=\P\Big(\big\langle \big\|R_{\sig^1,\sig^2}-\pi\pi^{\sT}\big\|_1\big\rangle_{\bG^\star(n,m_n)}>\eps_n\Big)\\
&\leq \eps_n^{-1} \E\big\langle \big\|R_{\sig^1,\sig^2}-\pi\pi^{\sT}\big\|_1\big\rangle_{\bG^\star(n,m_n)}=\delta_n^{1/2}\,,
\end{split}
\eeqn
where the second equality is due to the change of measure and the inequality is by Markov's inequality. Since $\delta_n\to 0$ as $n\to\infty$, this concludes the proof.

\subsection{Proof of Proposition~\ref{prop:sec:moment:LR}}
\label{subsec:prop:sec:moment:LR}
Throughout this subsection, we fix $d< d_{\ks}$ and assume the condition ${\sf (SYM)}$. For any factor graph $G$ with $n$ variables and $m$ clauses, note that we can express the square of the truncated likelihood ratio $L^{\ast}(G)^2$ as
\beq\label{eq:decompose:L:square}
\begin{split}
&L^{\ast}(G)^2=\sum_{\sig,\utau\in [q]^V}L^2_{\sig,\utau}(G)\,,\quad\textnormal{where}\\
&L^2_{\sig,\utau}(G):=\frac{\P\big(\bsig^\star=\sig, \bG^\star(n,m)=G\big)\cdot \P\big(\bsig^\star=\utau, \bG^\star(n,m)=G\big)}{\P\big(\bG(n,m)=G\big)^2}\one\Big\{\big\langle \big\|R_{\sig^1,\sig^2}-\pi\pi^{\sT}\big\|_1\big\rangle_{G}\leq \eps_n\Big\}\,.
\end{split}
\eeq
We divide the sum above into the \textit{near-independent} regime where $\sig, \tau\in [q]^V$ satisfies $\big\|R_{\sig,\utau}-\pi\pi^{\sT}\big\|_1\leq \sqrt{\eps_n}$, and \textit{correlated} regime where $\big\|R_{\sig,\utau}-\pi\pi^{\sT}\big\|_1>\sqrt{\eps_n}$. Namely, we have $L^{\ast}(G)^2=L^2_{\ind}(G)+L^2_{\co}(G)$, where
\beq\label{eq:L:decompose}
\begin{split}
L^2_{\ind}(G)=\sum_{\sig,\utau\in [q]^V:\|R_{\sig,\utau}-\pi\pi^{\sT}\|_1\leq \sqrt{\eps_n}}L^2_{\sig,\utau}(G)\,, \quad\quad L^2_{\co}(G)=\sum_{\sig,\utau\in [q]^V:\|R_{\sig,\utau}-\pi\pi^{\sT}\|_1> \sqrt{\eps_n}}L^2_{\sig,\utau}(G)\,.
\end{split}
\eeq
An important consequence of the truncation $\one\big\{\big\langle \|R_{\sig^1,\sig^2}-\pi\pi^{\sT}\|_1\big\rangle_{G}\leq \eps_n\big\}$ in $L^{\ast}(G)$ is that the contribution to its second moment from the correlated regime is negligible compared to the contribution from the near-independence regime as seen by the following lemma.
\begin{lemma}\label{lem:cor:negligible}
    For any factor graph $G$, $L^2_{\co}(G)\leq \sqrt{\eps_n}\cdot \big
    (L^{\ast}(G)\big)^2$ holds.
\end{lemma}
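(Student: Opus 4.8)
The plan is to rewrite every summand $L^2_{\sig,\utau}(G)$ in terms of the posterior $\mu_G$, after which the claim becomes a one-line application of Markov's inequality. If $\P(\bG(n,m)=G)=0$ or $\P(\bG^\star(n,m,\bsig^\star)=G)=0$ (equivalently $L(G)=0$), then both sides of the asserted inequality vanish, so assume $L(G)>0$. By Bayes' rule, for every $\sig\in[q]^V$,
\[
\frac{\P\big(\bsig^\star=\sig,\,\bG^\star(n,m)=G\big)}{\P\big(\bG(n,m)=G\big)}=L(G)\,\mu_G(\sig)\,,
\]
since $\P(\bG^\star(n,m)=G)=L(G)\,\P(\bG(n,m)=G)$ and $\mu_G(\sig)=\P(\bsig^\star=\sig,\bG^\star(n,m)=G)/\P(\bG^\star(n,m)=G)$ by \eqref{def:posterior}. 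Substituting into the definition of $L^2_{\sig,\utau}(G)$ in \eqref{eq:decompose:L:square} gives
\[
L^2_{\sig,\utau}(G)=L(G)^2\,\mu_G(\sig)\,\mu_G(\utau)\,\one\Big\{\big\langle \|R_{\sig^1,\sig^2}-\pi\pi^{\sT}\|_1\big\rangle_{G}\le \eps_n\Big\}\,.
\]

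Next I would sum over the correlated regime $\{\,(\sig,\utau): \|R_{\sig,\utau}-\pi\pi^{\sT}\|_1>\sqrt{\eps_n}\,\}$. The truncation indicator does not depend on $(\sig,\utau)$, and $L(G)^2\,\one\{\langle\cdot\rangle_G\le\eps_n\}=\big(L^{\ast}(G)\big)^2$, so
\[
L^2_{\co}(G)=\big(L^{\ast}(G)\big)^2\!\!\sum_{\sig,\utau:\,\|R_{\sig,\utau}-\pi\pi^{\sT}\|_1>\sqrt{\eps_n}}\!\!\mu_G(\sig)\,\mu_G(\utau)=\big(L^{\ast}(G)\big)^2\cdot\big\langle \one\big\{\|R_{\sig^1,\sig^2}-\pi\pi^{\sT}\|_1>\sqrt{\eps_n}\big\}\big\rangle_{G}\,,
\]
where the last double sum is recognized as the probability, under $\sig^1,\sig^2\iid \mu_G$, that $\|R_{\sig^1,\sig^2}-\pi\pi^{\sT}\|_1>\sqrt{\eps_n}$.

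Finally I would apply Markov's inequality: $\big\langle \one\{\|R_{\sig^1,\sig^2}-\pi\pi^{\sT}\|_1>\sqrt{\eps_n}\}\big\rangle_{G}\le \eps_n^{-1/2}\,\big\langle \|R_{\sig^1,\sig^2}-\pi\pi^{\sT}\|_1\big\rangle_{G}$. If $L^{\ast}(G)\neq 0$ then the truncation event holds, so $\big\langle \|R_{\sig^1,\sig^2}-\pi\pi^{\sT}\|_1\big\rangle_{G}\le\eps_n$ and hence the right-hand side is at most $\sqrt{\eps_n}$; if $L^{\ast}(G)=0$ the inequality $L^2_{\co}(G)\le\sqrt{\eps_n}\,(L^{\ast}(G))^2$ is trivial. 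Combining the two displays yields $L^2_{\co}(G)\le\sqrt{\eps_n}\,\big(L^{\ast}(G)\big)^2$, as claimed. There is no real obstacle here: the argument is pure bookkeeping, the only mild care being the degenerate case $L(G)=0$ (where $\mu_G$ is undefined but both sides vanish) and the interplay between the truncation indicator and the Markov bound, both of which are immediate.
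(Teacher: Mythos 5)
Your proof is correct and follows essentially the same route as the paper's: rewrite $L^2_{\sig,\utau}(G)=(L^{\ast}(G))^2\mu_G(\sig)\mu_G(\utau)$ via Bayes' rule, recognize $L^2_{\co}(G)$ as $(L^{\ast}(G))^2$ times the posterior probability of the correlated event, and close with Markov's inequality combined with the truncation constraint. The only difference is that you spell out the degenerate case $L(G)=0$ explicitly, which the paper leaves implicit.
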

\begin{proof}
Recalling the posterior $\mu_{G}(\sig)$ (cf. \eqref{def:posterior}), note that $L_{\sig,\utau}(G)=\big(L^{\ast}(G)\big)^2\mu_{G}(\sig)\mu_{G}(\utau)$ holds. Thus, it follows that
\beqn
L^2_{\co}(G)=\big(L^{\ast}(G)\big)^2\cdot \bigg\langle \one\Big\{\big\|R_{\sig^1,\sig^2}-\pi\pi^{\sT}\big\|_1>\sqrt{\eps_n}\Big\}\bigg\rangle_{G}\leq \eps_n^{-1/2}\big(L^{\ast}(G)\big)^2\Big\langle \big\|R_{\sig^1,\sig^2}-\pi\pi^{\sT}\big\|_1\Big\rangle_{G}\,.
\eeqn
where the inequality is due to Markov's inequality. Since $L^{\ast}(G)=0$ holds if $\big\langle\big\|R_{\sig^1,\sig^2}-\pi\pi^{\sT}\big\|\big\rangle_{G}>\eps_n$, the RHS can be further bounded by $L^2_{\co}(G)\leq \sqrt{\eps_n}\big(L^{\ast}(G)\big)^2$, which concludes the proof.
\end{proof}
Having Lemma~\ref{lem:cor:negligible} in hand, we now compute $\E L^2_{\ind}\big(\bG(n,m_n)\big)$. The first step is to compute $\E L^2_{\sig,\utau}\big(\bG(n,m)\big)$.
\begin{lemma}\label{lem:L:sig:utau:moment}
    For any $\sig, \utau\in [q]^V$ and $n,m\geq 1$, we have
    \beqn
    \E L^2_{\sig,\utau}\big(\bG(n,m)\big)\leq \left(\frac{\E_{p,u}\big[\bpsi(\sig_{\bom})\bpsi(\utau_{\bom})\big]}{\E_{p,u}\big[\bpsi(\sig_{\bom}\big]\cdot\E_{p,u}\big[\bpsi(\utau_{\bom}\big]}\right)^m \cdot \P(\bsig^\star=\sig)\cdot\P(\bsig^\star=\utau)\,.
    \eeqn
\end{lemma}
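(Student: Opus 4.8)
The proof will be a direct second-moment computation via Bayes' rule and the independence of clauses in the null model. First I would discard the truncation indicator, using $\one\{\langle\|R_{\sig^1,\sig^2}-\pi\pi^{\sT}\|_1\rangle_G\le\eps_n\}\le 1$ in the definition of $L^2_{\sig,\utau}(G)$ in \eqref{eq:decompose:L:square}. This shows $\E L^2_{\sig,\utau}(\bG(n,m))$ is at most the same quantity with the indicator removed, and hence reduces the claim to controlling $\sum_G \P(\bsig^\star=\sig,\bG^\star(n,m)=G)\,\P(\bsig^\star=\utau,\bG^\star(n,m)=G)\big/\P(\bG(n,m)=G)$, the sum being over factor graphs $G$ on $V$ with $m$ clauses.

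Next I would substitute the defining relation of the planted model from Definition~\ref{def:planted}, namely $\P(\bsig^\star=\sig,\bG^\star(n,m)=G)=\P(\bsig^\star=\sig)\,\P(\bG(n,m)=G)\,\psi_G(\sig)\big/\E[\psi_{\bG(n,m)}(\sig)]$, and likewise with $\utau$. In the resulting expression the two numerator factors $\P(\bG(n,m)=G)$ cancel against the denominator up to one surviving power $\P(\bG(n,m)=G)$, and the constants $\P(\bsig^\star=\sig)$, $\P(\bsig^\star=\utau)$, $\E[\psi_{\bG(n,m)}(\sig)]$, $\E[\psi_{\bG(n,m)}(\utau)]$ come out of the sum, leaving $\sum_G \P(\bG(n,m)=G)\,\psi_G(\sig)\psi_G(\utau)=\E\big[\psi_{\bG(n,m)}(\sig)\,\psi_{\bG(n,m)}(\utau)\big]$.

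Finally I would evaluate the two null expectations using that $\bG(n,m)$ has $m$ independent clauses (Definition~\ref{def:null}) with $\delta a\sim u$ and $\psi_a\sim p$, together with $\psi_G(\sig)=\prod_{a\in F}\psi_a(\sig_{\delta a})$: this gives $\E[\psi_{\bG(n,m)}(\sig)]=\big(\E_{p,u}[\bpsi(\sig_{\bom})]\big)^m$ and, since a single clause uses the same $\bpsi$ and $\bom$ in both factors, $\E[\psi_{\bG(n,m)}(\sig)\psi_{\bG(n,m)}(\utau)]=\big(\E_{p,u}[\bpsi(\sig_{\bom})\bpsi(\utau_{\bom})]\big)^m$. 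Collecting the factors yields exactly the stated bound. There is essentially no obstacle in this lemma; the only points needing care are the change-of-measure bookkeeping (tracking which powers of $\P(\bG(n,m)=G)$ cancel) and observing that both null expectations factorize over clauses. The genuinely delicate ingredient in the surrounding argument — the overlap truncation — is here simply bounded by $1$ and plays no role.
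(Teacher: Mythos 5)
Your proof is correct and follows essentially the same route as the paper: drop the truncation indicator, apply the change-of-measure identity from Definition~\ref{def:planted} to express the sum over $G$ in terms of null-model expectations of $\psi_G(\sig)\psi_G(\utau)$, and then factorize over the $m$ i.i.d.\ clauses. You spell out the cancellation of powers of $\P(\bG(n,m)=G)$ slightly more explicitly than the paper does, but the argument is the same.
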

\begin{proof}
Recall that by definition of the planted model (cf. Definition~\ref{def:planted}), $\frac{\P(\bG^\star(n,m)=G\given \bsig^\star=\sig)}{\P(\bG(n,m)=G)}=\frac{\psi_{G}(\sig)}{\E[\psi_{\bG(n,m)}(\sig)]}$ holds for a factor graph $G$ with $n$ variables and $m$ clauses. Thus, by dropping the indicator in the definition of $L^2_{\sig,\utau}(G)$ in \eqref{eq:decompose:L:square}, we have
\beqn
\E L^2_{\sig,\utau}\big(\bG(n,m)\big)\leq \frac{\E\big[\psi_{\bG(n,m)}(\sig)\psi_{\bG(n,m)}(\tau)\big]}{\E\big[\psi_{\bG(n,m)}(\sig)\big]\cdot \E\big[\psi_{\bG(n,m)}(\tau)\big]}\cdot \P(\bsig^\star=\sig)\cdot\P(\bsig^\star=\utau)\,.
\eeqn
Meanwhile, for the null model $\bG(n,m)$, its weight functions $(\psi_a)_{a\in F}$ and the neighborhoods $(\delta a)_{a\in F}$ are drawn i.i.d. from $p$ and $u:=\Unif(V^k)$ respectively. Thus, we have
\beqn
\frac{\E\big[\psi_{\bG(n,m)}(\sig)\psi_{\bG(n,m)}(\tau)\big]}{\E\big[\psi_{\bG(n,m)}(\sig)\big]\cdot \E\big[\psi_{\bG(n,m)}(\tau)\big]}=\left(\frac{\E_{p,u}\big[\bpsi(\sig_{\bom})\bpsi(\utau_{\bom})\big]}{\E_{p,u}\big[\bpsi(\sig_{\bom}\big]\cdot\E_{p,u}\big[\bpsi(\utau_{\bom}\big]}\right)^m\,,
\eeqn
which concludes the proof.
\end{proof}
Lemma~\ref{lem:L:sig:utau:moment} shows that in order to bound $\E L^2_{\ind}\big(\bG(n,m_n)\big)$, it suffices to compute the expected value of $\Big(\frac{\E_{p,u}[\bpsi(\sig_{\bom})\bpsi(\utau_{\bom})]}{\E_{p,u}[\bpsi(\sig_{\bom}]\cdot\E_{p,u}[\bpsi(\utau_{\bom}]}\Big)^m$ under $\sig,\utau\iid \pi^{\otimes V}$ conditioned on the event $\|R_{\sig,\utau}-\pi\pi^{\sT}\|_1\leq \sqrt{\eps_n}$, which happens w.h.p. since we assumed $n^{-1}\eps_n\to\infty$. As we will see next, $\E_{p,u}[\bpsi(\sig_{\bom})\bpsi(\utau_{\bom})]$ and $\E_{p,u}[\bpsi(\sig_{\bom})]$ can be computed in terms of the normalized overlap matrix $X\equiv X_{\sig,\utau}\equiv \big(X(i,j)\big)_{i,j\leq q} \in \R^{q\times q}$, where
\beq\label{def:X}
X\equiv X_{\sig,\utau}:= \sqrt{n}\left(R_{\sig,\utau}-\pi\pi^{\sT}\right)\,.
\eeq
Note that such normalization guarantees that $\sum_{i,j=1}^{q}X(i,j)=0$ and $X$ has $O(1)$ fluctuations: by viewing $X$ as a $q^2$ dimensional vector and denoting $\bX_n\equiv X_{\sig,\utau}$ for $\sig,\utau\sim \pi^{\otimes n}$, the central limit theorem shows that 
\beq\label{eq:clt}
\bX_n \dto \normal\big(0, \diag(\pi)^{\otimes 2}-(\pi\pi^{\sT})^{\otimes 2}\big)\,.
\eeq
Here, for two matrices $A=(A_{i,j})_{i,j\leq q}$ and $B=(B_{i,j})_{i,j\leq q}$, we recall that their tensor product $A\otimes B$ is defined by the $q^2\times q^2$ matrix with $\big((i_1,j_1), (i_2,j_2)\big)$ entry $(A\otimes B)_{(i_1,j_1),(i_2,j_2)}\equiv A_{i_1,i_2}B_{j_1,j_2}$.

In addition, for $\psi \in \Psi$, we define the matrix $\wh{\Phi}_{\psi}\equiv \big(\wh{\Phi}_{\psi}(i,j)\big)_{i,j\leq q}$ by
\beqn
\wh{\Phi}_{\psi}(i,j):=\xi^{-1}\cdot\E_{\pi}\big[\psi(\bsig)\bgiven \bsigma_1=i, \bsigma_2=j\big]\,.
\eeqn
Here, we note that $\wh{\Phi}_{\psi}\diag(\pi)=\Phi_{\psi}$ holds by definition. We then define the the matrix $\wh{\Phi}\in \R^{q\times q}$ by $\wh{\Phi}:=\E_{p}\big[\wh{\Phi}_{\bpsi}\big]$ and define the matrix $\wh{\Xi}\in \R^{q^2\times q^2}$ by the averaged tensor product of the matrix $\wh{\Phi}_{\bpsi}$ with itself:
\beqn
\wh{\Xi}:=\E_{p}\Big[\wh{\Phi}_{\bpsi}\otimes \wh{\Phi}_{\bpsi}\Big]\,.
\eeqn
Note that viewing $X$ as a $q^2$ dimensional vector, quantities such as $\big\langle \wh{\Xi}X, X\big\rangle$ is well-defined, where
\beqn
\Big\langle \wh{\Xi}X, X\Big\rangle\equiv \sum_{(i_1,j_1),(i_2,j_2)\in [q]^2}\wh{\Xi}\big((i_1,j_1),(i_2,j_2)\big) X(i_1,j_1)X(i_2,j_2)\,.
\eeqn
We also recall that $\bone\in \R^{q}$ denoted all $1$-vector.
\begin{lemma}\label{lem:main:approx}
For $\sig,\utau\in [q]^V$, we have for $X\equiv X_{\sig,\utau}$ that
\beq\label{eq:lem:main:approx:first}
\E_{p,u}\big[\bpsi(\sig_{\bom})\bpsi(\utau_{\bom})\big]=\xi^2\cdot \exp\bigg(\frac{k(k-1)}{2n}\Big\langle \wh{\Xi}X\,,\,X\Big\rangle+O_{k,q,\Psi}\bigg(\frac{\|X\|_{\infty}^3}{n^{3/2}}\bigg)\bigg)\,,
\eeq
where we used the notation $f=O_{k,q,\Psi}\Big(\frac{\|X\|_{\infty}^3}{n^{3/2}}\Big)$ to indicate that there exists a constant $C\equiv C_{k,q,\Psi}$ which only depends on $k,q,\Psi$ such that $|f|\leq C\frac{\|X\|_{\infty}^3}{n^{3/2}}$ holds. Further, we have
\beq\label{eq:lem:main:approx:second}
\begin{split}
&\E_{p,u}\big[\bpsi(\sig_{\bom})\big]=\xi\cdot \exp\bigg(\frac{k(k-1)}{2n}\Big\langle \big(\wh{\Phi}\otimes\bone\bone^{\sT}\big) X\,,\,X\Big\rangle+O_{k,q,\Psi}\bigg(\frac{\|X\|_{\infty}^3}{n^{3/2}}\bigg)\bigg)\,,\\
&\E_{p,u}\big[\bpsi(\utau_{\bom})\big]=\xi\cdot \exp\bigg(\frac{k(k-1)}{2n}\Big\langle \big(\bone\bone^{\sT}\ot \wh{\Phi}\big) X\,,\,X\Big\rangle+O_{k,q,\Psi}\bigg(\frac{\|X\|_{\infty}^3}{n^{3/2}}\bigg)\bigg)\,.
\end{split}
\eeq
\end{lemma}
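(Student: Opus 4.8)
\textbf{Proof plan for Lemma~\ref{lem:main:approx}.} The plan is to expand all three expectations as polynomials in the entries of $X$ and to read off the first two orders. The structural starting point is that, since $\bom=(\omega_1,\dots,\omega_k)$ has i.i.d.\ $\Unif(V)$ coordinates, the pairs $(\sigma_{\omega_s},\tau_{\omega_s})$ are, for fixed $\sig,\utau$, i.i.d.\ draws from the empirical joint law $R_{\sig,\utau}$ on $[q]^2$; hence for each fixed $\psi\in\Psi$,
\[
\E_u\big[\psi(\sig_{\bom})\psi(\utau_{\bom})\big]=\sum_{\vec a,\vec b\in[q]^k}\psi(\vec a)\,\psi(\vec b)\prod_{s=1}^{k}R_{\sig,\utau}(a_s,b_s),
\]
and likewise $\E_u[\psi(\sig_{\bom})]=\sum_{\vec a}\psi(\vec a)\prod_s R_{\sig}(a_s)$, where $R_{\sig}(a):=\sum_b R_{\sig,\utau}(a,b)$ is the empirical law of $\sig$. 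First I would substitute $R_{\sig,\utau}=\pi\pi^{\sT}+X/\sqrt n$ (resp.\ $R_{\sig}=\pi+X_{\sig}/\sqrt n$ with $X_{\sig}(a):=\sum_b X(a,b)$, the row sum of $X$), expand each factor of the product, and group the $2^k$ resulting terms by the size $t=|T|$ of the subset $T\subseteq[k]$ of factors contributing the $X$-term, obtaining $\sum_{t=0}^{k}n^{-t/2}\Sigma_t$ with $\Sigma_t$ a sum over $|T|=t$. Then I average over $\bpsi\sim p$.

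Next I would evaluate $\Sigma_0,\Sigma_1,\Sigma_2$ using ${\sf (SYM)}$ and the exchangeability \eqref{eq:psi:symm} of $\Psi,p$. For $t=0$ the term is $\big(\E_\pi[\psi(\bsig)]\big)^2$, and ${\sf (SYM)}$ (averaging its conclusion over the conditioned coordinate) forces $\E_\pi[\psi(\bsig)]=\xi$, so $\Sigma_0=\xi^2$. For $t=1$ the sum over the $k-1$ unmarked coordinates of each copy of $\psi$ collapses to $\E_\pi[\psi(\bsig)\mid\bsigma_{s_0}=\cdot]\equiv\xi$ by ${\sf (SYM)}$, so $n^{-1/2}\Sigma_1=\xi^2 n^{-1/2}\sum_{a,b}X(a,b)=0$ since the entries of $X$ sum to zero; the same cancellation occurs in the single-replica expansion because $\sum_a X_{\sig}(a)=0$. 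For $t=2$, a subset $\{s_1,s_2\}$ produces $n^{-1}$ times a quadratic form in $X$ whose kernel, after collapsing the unmarked coordinates to $\E_\pi[\psi(\bsig)\mid\bsigma_{s_1}=i,\bsigma_{s_2}=j]=\xi\,\wh\Phi_{\psi^{\theta}}(i,j)$ for a permutation $\theta$ with $\{\theta(1),\theta(2)\}=\{s_1,s_2\}$ and then averaging over $\bpsi\sim p$ (using $p(\psi^{\theta})=p(\psi)$ to replace $\bpsi^{\theta}$ by $\bpsi$ under the expectation), equals exactly $\xi^2 n^{-1}\langle\wh\Xi X,X\rangle$ by the definition of $\wh\Xi$; summing over the $\binom k2$ subsets gives $n^{-1}\Sigma_2=\tfrac{k(k-1)}{2n}\xi^2\langle\wh\Xi X,X\rangle$. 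The single-replica version is identical with $X$ replaced by its row (resp.\ column) sums, and the identity $\langle\wh\Phi X_{\sig},X_{\sig}\rangle=\langle(\wh\Phi\otimes\bone\bone^{\sT})X,X\rangle$ --- contracting one tensor leg against $\bone\bone^{\sT}$ is exactly the row sum --- puts it in the stated form. Each $t\ge 3$ term is bounded crudely by $O_{k,q,\Psi}(\|X\|_\infty^{t}n^{-t/2})$, and since $|X(a,b)|=\sqrt n\,|R_{\sig,\utau}(a,b)-\pi_a\pi_b|\le\sqrt n$ always, $\|X\|_\infty^{t}n^{-t/2}\le\|X\|_\infty^{3}n^{-3/2}$ for all $t\ge 3$. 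Collecting the orders yields $\E_{p,u}[\bpsi(\sig_{\bom})\bpsi(\utau_{\bom})]=\xi^2\big(1+\tfrac{k(k-1)}{2n}\langle\wh\Xi X,X\rangle+O_{k,q,\Psi}(\|X\|_\infty^{3}n^{-3/2})\big)$ and its two single-replica analogues.

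The final step is to pass to the exponential form uniformly in $X$. Since every $\psi\in\Psi$ is bounded above and below by positive constants, the ratios $A$ of the left-hand sides to $\xi^2$ (resp.\ $\xi$) lie in a fixed compact subinterval of $(0,\infty)$ depending only on $\Psi$, and the candidate exponents $z$ satisfy $|z|\le C_{k,q,\Psi}$ because $\langle\wh\Xi X,X\rangle=O_{k,q,\Psi}(\|X\|_\infty^2)$ with $\|X\|_\infty^2\le n$. I would then split on a small constant $\epsilon_0=\epsilon_0(k,q,\Psi)$: if $\|X\|_\infty>\epsilon_0\sqrt n$, then $\log A$ and $z$ are both $O_{k,q,\Psi}(1)$ while $\|X\|_\infty^{3}n^{-3/2}\ge\epsilon_0^{3}$, so $|\log A-z|\le C_{k,q,\Psi}\epsilon_0^{-3}\|X\|_\infty^{3}n^{-3/2}$ and the claim is trivial; if $\|X\|_\infty\le\epsilon_0\sqrt n$, then $A-1=z+O(\|X\|_\infty^{3}n^{-3/2})$ is small (as $z=O(\epsilon_0^2)$) and $\log A=(A-1)+O((A-1)^2)=z+O(\|X\|_\infty^{3}n^{-3/2})$, using $z^2=O(\|X\|_\infty^{4}n^{-2})\le\|X\|_\infty^{3}n^{-3/2}$ again by $\|X\|_\infty\le\sqrt n$. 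Exponentiating gives \eqref{eq:lem:main:approx:first} and \eqref{eq:lem:main:approx:second}. I expect the two genuinely careful points to be (i) the symmetrization in the $t=2$ term --- tracking which weight function $\psi^{\theta}$ appears and invoking the $p$-invariance \eqref{eq:psi:symm} to identify the quadratic kernel with $\wh\Xi$ (and the analogous $\wh\Phi\otimes\bone\bone^{\sT}$, $\bone\bone^{\sT}\otimes\wh\Phi$ for the single-replica cases) --- and (ii) this uniform-in-$X$ exponentiation, where the elementary bound $\|X\|_\infty\le\sqrt n$ is exactly what allows the error to be expressed solely through $\|X\|_\infty^{3}n^{-3/2}$; everything else is routine Taylor bookkeeping.
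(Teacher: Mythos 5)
Your proposal is correct and takes essentially the same route as the paper: the same expansion in subsets $T\subseteq[k]$ of factors contributing an $X$-term, the same use of ${\sf (SYM)}$ to kill the $t=0$ (normalization to $\xi^2$ or $\xi$) and $t=1$ (cancellation via $\sum_{i,j}X(i,j)=0$) orders, the same use of the exchangeability hypothesis \eqref{eq:psi:symm} to reduce the $t=2$ term to $\wh\Xi$ (resp.\ $\wh\Phi\otimes\bone\bone^{\sT}$), the same crude bound on $t\geq 3$ terms via $\|X\|_\infty\leq\sqrt n$, and the same passage to exponential form via uniform boundedness of the ratio. The only stylistic difference is that the paper first argues the $p$-expectation of the $|S|=\ell$ summand is independent of $S$ and then fixes $S=[\ell]$, whereas you explicitly carry the permutation $\theta$ along and absorb it under $\E_p$ at the end; your split on $\|X\|_\infty\lessgtr\epsilon_0\sqrt n$ for exponentiating is a slightly more explicit version of the paper's one-line $e^{x-Cx^2}\leq 1+x\leq e^x$ estimate, but the mechanism (the bound $\|X\|_\infty\leq\sqrt n$ ensuring $z^2\lesssim\|X\|_\infty^3 n^{-3/2}$) is identical.
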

\begin{proof}
 We start with the proof of \eqref{eq:lem:main:approx:first}. By first calculating the expecation w.r.t. $\bom\sim u:=\Unif(V^k)$, we can express $\E_{p,u}[\bpsi(\sig_{\bom})\bpsi(\utau_{\bom})]$ by summing the following over $\ui\equiv (i_1,\ldots, i_k),\uj\equiv (j_1,\ldots, j_k)\in [q]^k$:
    \beqn
    \begin{split}
    \E_{p,u}\big[\bpsi(\sig_{\bom})\bpsi(\utau_{\bom})\big]
    &=\sum_{\ui, \uj\in [q]^k}\E_p\big[\bpsi(\ui)\bpsi(\uj)\big]\prod_{s=1}^{k}\Big(\pi_{i_s}\pi_{j_s}+n^{-1/2}X(i_s,j_s)\Big)\\
    &=\sum_{\ell=0}^{k}\sum_{S\subseteq[k]:|S|=\ell}n^{-\ell/2}\sum_{\ui, \uj\in [q]^k}\E_p\big[\bpsi(\ui)\bpsi(\uj)\big]\prod_{t\in [k]\setminus S}\pi_{i_{t}}\pi_{j_t}\prod_{s\in S}X(i_{s}, j_{s})\,.
    \end{split}
    \eeqn
    Given $\ell$ and $S$ with $|S|=\ell$, we divide the sum $\ui\in [q]^k$ into the sum over $\ui_1\equiv (i_s)_{s\in S}\in [q]^{S}$ and $\ui_2\equiv (i_t)_{t\notin S}\in [q]^{[k]\setminus S}$, and similarly for the sum $\uj\in [q]^k$. Note that given $\ui_1,\uj_1\in [q]^{S}$, we have
    \beqn
    \sum_{\ui_2,\uj_2\in [q]^{[k]\setminus S}}\E_p\big[\bpsi(\ui)\bpsi(\uj)\big]\prod_{t\in [k]\setminus S}\pi_{i_{t}}\pi_{j_t}=\E_{p}\Big[\E_{\pi}\big[\bpsi(\bsig)\bgiven \bsig_{S}=\ui_1\big]\cdot\E_{\pi}\big[\bpsi(\bsig)\bgiven \bsig_{S}=\uj_1\big]\Big]\,,
    \eeqn
    where $\E_{\pi}$ is taken w.r.t. $\bsig\sim \pi^{\otimes k}$ conditional on $\bpsi$ and $\bsig_{S}\equiv (\bsigma_s)_{s\in S}$. By our assumption that $\bpsi\stackrel{d}{=}\bpsi^{\theta}$ for $\psi\sim p$ and any permutation $\theta$ in \eqref{eq:psi:symm}, the RHS above does not depend $S$ given $\ui_1,\uj_1$. Thus, by fixing $S=[\ell]$, we can express $  \E_{p,u}\big[\bpsi(\sig_{\bom})\bpsi(\utau_{\bom})\big]$ by
    \beq\label{eq:lem:main:approx:simplify}
    \E_{p,u}\big[\bpsi(\sig_{\bom})\bpsi(\utau_{\bom})\big]=\sum_{\ell=0}^{k}\binom{k}{\ell}n^{-\ell/2}\sum_{\ui,\uj\in [q]^{\ell}}\E_{p}\Big[\E_{\pi}\big[\bpsi(\bsig)\bgiven \bsig_{[\ell]}=\ui\big]\cdot\E_{\pi}\big[\bpsi(\bsig)\bgiven \bsig_{[\ell]}=\uj\big]\Big]\cdot\prod_{s=1}^{\ell}X(i_s,j_s)\,.
    \eeq
    We now divide the summand above into the cases $\ell\in \{0,1,2\}$ and $\ell\geq 3$. For $\ell=0$, by ${\sf (SYM)}$, 
    \beq\label{eq:lem:main:approx:0}
\E_{p}\Big(\E_{\pi}\big[\bpsi(\bsig)\big]\Big)^2=\sum_{\psi\in \Psi}p(\psi)\Big(\E_{\pi}\big[\psi(\bsig)\big]\Big)^2=\xi^2\,.
    \eeq
    For $\ell=1$, again by ${\sf (SYM)}$, we have
    \beq\label{eq:lem:main:approx:1}
  k n^{-1/2}\sum_{i, j=1}^{q} X(i,j)\E_{p}\Big[\E_{\pi}\big[\bpsi(\bsig)\bgiven \bsig_1=i\big]\cdot\E_{\pi}\big[\bpsi(\bsig)\bgiven \bsig_1=j\big]\Big]= kn^{-1/2}\sum_{i, j=1}^{q} X(i,j) \xi^2=0\,,
    \eeq
    where the last equality follows since $\sum_{i,j=1}^{q}X(i,j)=0$. For $\ell=2$, we have by definition of $\wh{\Xi}$ that
    \beq\label{eq:lem:main:approx:2}
    \binom{k}{2}n^{-1}\sum_{\ui,\uj\in [q]^2}\E_{p}\Big[\E_{\pi}\big[\bpsi(\bsig)\bgiven \bsig_{\{1,2\}}=\ui\big]\cdot\E_{\pi}\big[\bpsi(\bsig)\bgiven  \bsig_{\{1,2\}}=\uj\big]\Big]\cdot\prod_{s=1}^{2}X(i_s,j_s)=\binom{k}{2}\frac{\xi^2}{n}\Big\langle \wh{\Xi}X\,,\, X\Big\rangle\,.
    \eeq
    Finally, for $\ell \geq 3$, we can crudely bound
    \beq\label{eq:lem:main:approx:3}
    \begin{split}
    &\sum_{\ell\geq 3}\binom{k}{\ell}n^{-\ell/2}\sum_{\ui,\uj\in [q]^{\ell}}\E_{p}\Big[\E_{\pi}\big[\bpsi(\bsig)\bgiven \bsig_{[\ell]}=\ui\big]\cdot\E_{\pi}\big[\bpsi(\bsig)\bgiven \bsig_{[\ell]}=\uj\big]\Big]\cdot\prod_{s=1}^{\ell}X(i_s,j_s)\\
    &\leq \max_{\psi\in \Psi}\|\psi\|_{\infty}^2 \cdot \sum_{\ell\geq 3}\binom{k}{\ell}n^{-\ell/2}q^{2\ell}\cdot\|X\|_{\infty}^{\ell}\leq  \max_{\psi\in \Psi}\|\psi\|_{\infty}^2\cdot (1+q)^{2k}\cdot n^{-3/2}\|X\|_{\infty}^3\,,
    \end{split}
    \eeq
    where the last inequality follows since $\|X\|_{\infty}\leq \sqrt{n}$ by definition of $X$ in \eqref{def:X}. Therefore, plugging in the equalities \eqref{eq:lem:main:approx:0}-\eqref{eq:lem:main:approx:2} and the bound \eqref{eq:lem:main:approx:3} into \eqref{eq:lem:main:approx:simplify}, we have
    \beq
    \E_{p,u}\big[\bpsi(\sig_{\bom})\bpsi(\utau_{\bom})\big]=\xi^2\cdot \bigg(1+\frac{k(k-1)}{2n}\Big\langle \wh{\Xi}X\,,\,X\Big\rangle+O_{k,q,\Psi}\bigg(\frac{\|X\|_{\infty}^3}{n^{3/2}}\bigg)\bigg)\,.
    \eeq
    Note that since $\psi(\cdot)>0$ for any $\psi\in \Psi$, there exists a constant $c_i\equiv c_{i,\Psi}$ for $i=1,2$, such that $\E_{p,u}\big[\bpsi(\sig_{\bom})\bpsi(\utau_{\bom})\big]\in [c_1\xi^2,c_2\xi^2]$. Moreover, there exists a constant $C\equiv C_{c_1,c_2}>0$ such that for $1+x\in [c_1,c_2]$, $e^{x-Cx^2}\leq 1+x\leq e^{x}$ holds. Applying this inequality for $x=k(k-1)\big\langle \wh{X},X\big\rangle/n +O_{k,q,\Psi}\big(\|X|\|_{\infty}^3/n^{3/2}\big)$ and using the fact that $\|X\|_{\infty}\leq \sqrt{n}$, we obtain the estimate \eqref{eq:lem:main:approx:first}.
    
    Next, we prove \eqref{eq:lem:main:approx:second} by a similar argument. Denote by $X(i,\cdot ):=\sum_{j=1}^{q}X(i,j)$. Then, proceeding in the same manner as in \eqref{eq:lem:main:approx:simplify}, we can express $\E_{p,u}\big[\bpsi(\sig_{\bom})\big]$ by
    \beq\label{eq:lem:main:approx:simplify:2}
    \E_{p,u}\big[\bpsi(\sig_{\bom})\big]=\sum_{\ell=0}^{k}\binom{k}{\ell}n^{-\ell/2}\sum_{\ui\in [q]^{\ell}}\E_{p}\Big[\E_{\pi}\big[\bpsi(\bsig)\bgiven \bsig_{[\ell]}=\ui\big]\Big]\prod_{s=1}^{\ell}X(i_s,\cdot)=:\sum_{\ell=0}^{k}G(\ell)\,.
    \eeq
    Then, by the same calculations done in \eqref{eq:lem:main:approx:0} and \eqref{eq:lem:main:approx:1}, $G(0)=\xi$ and $G(1)=0$ hold by ${\sf (SYM)}$. For $\ell=2$, we have
    \beqn
    \begin{split}
    G(2)
    &\equiv \binom{k}{2}n^{-1}\sum_{\ui\in [q]^{2}}\E_{p}\Big[\E_{\pi}\big[\bpsi(\bsig)\bgiven \bsig_{\{1,2\}}=\ui\big]\Big]\prod_{s=1}^{2}X(i_s,\cdot)\\
    &= \binom{k}{2}\frac{\xi}{n}\sum_{i_1,i_2,j_1,j_2\in [q]}\Phi(i_1,i_2)\prod_{s=1}^{2}X(i_s,j_s)=\binom{k}{2}\frac{\xi}{n}\Big\langle \big(\wh{\Phi}\otimes\bone\bone^{\sT}\big) X\,,\,X\Big\rangle\,.
    \end{split}
    \eeqn
    For $\ell \geq 3$, proceeding in the same manner as done in \eqref{eq:lem:main:approx:3}, we can bound
    \beqn
    \sum_{\ell \geq 3}G(3)\equiv \sum_{\ell\geq 3}\binom{k}{\ell}n^{-\ell/2}\sum_{\ui,\uj\in [q]^{\ell}}\E_{p}\Big[\E_{\pi}\big[\bpsi(\bsig)\bgiven \bsig_{[\ell]}=\ui\big]\Big]\prod_{s=1}^{\ell}X(i_s,j_s)\leq \max_{\psi\in \Psi}\|\psi\|_{\infty}^2\cdot (1+q)^{2k}\cdot n^{-3/2}\|X\|_{\infty}^3
    \eeqn
    By plugging in the obtained bounds into \eqref{eq:lem:main:approx:simplify:2}, and using the inequality $e^{x-O(x^2)} \leq 1+x\leq e^{x}$ for $1+x$ bounded away from $0$ and $\infty$ as before, we obtain the first inequality of \eqref{eq:lem:main:approx:second}. The second inequality of \eqref{eq:lem:main:approx:second} then follows from the first by exchanging the role of $\sig$ and $\utau$.
\end{proof}
Define the matrix $\wh{\Xi}_{\ast}\in \R^{q^2\times q^2}$ by \beqn
\wh{\Xi}_{\ast}=\wh{\Xi}-\wh{\Phi}\otimes \bone\bone^{\sT}-\bone\bone^{\sT}\ot \wh{\Phi}+\bone\bone^{\sT}\ot\bone \bone^{\sT}\,.
\eeqn
We note that the matrix $\wh{\Xi}_{\ast}$ is related with the linear operator $\Xi_{\ast}$ on $\R^{q}\otimes \R^{q}$ as follows. By identifying the vector spaces $\R^{q}\otimes \R^{q}$ and $\R^{q^2}$ by the unique isomorphism that maps $e_i\otimes e_j\to e_{(i,j)}$ for $i,j\in [q]$, where $(e_i)_{i\leq q}$ is the standard basis in $\R^{q}$ and $(e_{(i,j)})_{i,j\leq q}$ is the standard basis in $\R^{q^2}$, $\Xi$ can be identified with the $q^2\times q^2$ matrix $\Xi\big((i,j),(s,t)\big)_{i,j,s,t\leq q}\in \R^{q^2\times q^2}$, where
\beqn
\Xi\big((i,j),(s,t)\big)\equiv \E_{p}\Big[\Phi_{\bpsi}(i,s)\cdot\Phi_{\bpsi}(j,t)\Big]\,.
\eeqn
Similarly, $\Xi_{\ast}$ can be identified with a $q^2\times q^2$ matrix. With such identification, we have that $\Xi=\wh{\Xi}\diag(\pi)^{\otimes 2}$, thus
\beq\label{eq:relationship:hat:Xi}
\wh{\Xi}_{\ast}\diag(\pi)^{\otimes 2}=\Xi-\Phi\otimes \bone\pi^{\sT}-\bone\pi^{\sT}\otimes \Phi+\big(\bone\pi^{\sT}\big)\otimes \big(\bone\pi^{\sT}\big)=\Xi_{\ast}.
\eeq
As a consequence of Lemmas~\ref{lem:cor:negligible}, \ref{lem:L:sig:utau:moment}, and \ref{lem:main:approx}, we have the following proposition. We recall the random variable $\bX_n\equiv X_{\sig,\utau}\in \R^{q\times q}\cong \R^{q^2}$, where $\sig, \utau\sim \pi^{\otimes n}$.
\begin{prop}\label{prop:sec:mo:bound}
There exists a constant $C\equiv C_{k,q,\Psi}>0$ which only depends on $k,q,\Psi$ such that the following holds. For $d<d_{\ks}$ and any $(m_n)_{n\geq 1}, (\eps_n)_{n\geq 1}$ such that $|m_n-dn/k|\leq n^{2/3}$ and $\eps_n\to 0$, $n^{2/3}\eps_n\to\infty$ as $n\to\infty$, we have for large enough $n$ that
    \beqn
\E\Big(L^{\ast}\big(\bG(n,m_n)\big)\Big)^2\leq \big(1-\sqrt{\eps_n}\big)^{-1}\E\Bigg[\exp\bigg(\frac{(k-1)d}{2}\Big(\Big\langle \wh{\Xi}_{\ast}\bX_n\,,\,\bX_n\Big\rangle +C\sqrt{\eps_n}\|\bX_n\|_{\infty}^2\Big)\bigg)\one\Big\{\|\bX_n\|_1\leq \sqrt{n\eps_n}\Big\}\Bigg] 
    \eeqn
\end{prop}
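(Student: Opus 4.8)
The plan is to combine Lemmas~\ref{lem:cor:negligible}, \ref{lem:L:sig:utau:moment} and~\ref{lem:main:approx}, and then to recognize the resulting sum over planted pairs $(\sig,\utau)$ as an expectation over $\sig,\utau\iid\pi^{\otimes n}$. First I would use the decomposition $\big(L^{\ast}(G)\big)^2=L^2_{\ind}(G)+L^2_{\co}(G)$ from~\eqref{eq:L:decompose} together with Lemma~\ref{lem:cor:negligible}: since $L^2_{\co}(G)\le\sqrt{\eps_n}\big(L^{\ast}(G)\big)^2$ pointwise, one has $\big(1-\sqrt{\eps_n}\big)\big(L^{\ast}(G)\big)^2\le L^2_{\ind}(G)$, and because $\eps_n\to 0$ (so $1-\sqrt{\eps_n}>0$ for large $n$) this reduces the proposition to an upper bound on $\E L^2_{\ind}\big(\bG(n,m_n)\big)$ by the displayed expectation, up to the factor $\big(1-\sqrt{\eps_n}\big)^{-1}$.

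Next, for each \emph{admissible} pair $(\sig,\utau)$, i.e. one with $\|R_{\sig,\utau}-\pi\pi^{\sT}\|_1\le\sqrt{\eps_n}$ --- equivalently $\|X\|_1\le\sqrt{n\eps_n}$ for $X\equiv X_{\sig,\utau}$ as in~\eqref{def:X} --- I would apply Lemma~\ref{lem:L:sig:utau:moment} and then divide the three asymptotics of Lemma~\ref{lem:main:approx}. Since $\xi^2/(\xi\cdot\xi)=1$, this gives
\[
\frac{\E_{p,u}\big[\bpsi(\sig_{\bom})\bpsi(\utau_{\bom})\big]}{\E_{p,u}\big[\bpsi(\sig_{\bom})\big]\,\E_{p,u}\big[\bpsi(\utau_{\bom})\big]}=\exp\!\bigg(\frac{k(k-1)}{2n}\big\langle\wh{\Xi}_{\ast}X,X\big\rangle+O_{k,q,\Psi}\!\Big(\tfrac{\|X\|_\infty^3}{n^{3/2}}\Big)\bigg),
\]
where I use $\sum_{i,j}X(i,j)=0$ to replace the quadratic form of $\wh{\Xi}-\wh{\Phi}\otimes\bone\bone^{\sT}-\bone\bone^{\sT}\otimes\wh{\Phi}$ by that of $\wh{\Xi}_{\ast}$ (the extra term $\bone\bone^{\sT}\otimes\bone\bone^{\sT}$ contributes $(\sum_{i,j}X(i,j))^2=0$). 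Raising to the $m_n$-th power and using $|m_n-dn/k|\le n^{2/3}$, the exponent becomes $\tfrac{(k-1)d}{2}\langle\wh{\Xi}_{\ast}X,X\rangle$ plus a remainder of order $n^{-1/3}\|X\|_\infty^2+\|X\|_\infty^3/n^{1/2}$, the first coming from the $m_n$-versus-$dn/k$ discrepancy against $\langle\wh{\Xi}_{\ast}X,X\rangle=O_{k,q,\Psi}(\|X\|_\infty^2)$, the second from the cubic term amplified by the number of clauses $m_n$, which is of order $n$.

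The third step absorbs this remainder into $C\sqrt{\eps_n}\|X\|_\infty^2$ on the truncation event: there $\|X\|_\infty\le\|X\|_1\le\sqrt{n\eps_n}$, so $\|X\|_\infty^3/n^{1/2}\le\sqrt{\eps_n}\|X\|_\infty^2$, and since $n^{2/3}\eps_n\to\infty$ one has $n^{-1/3}\le\sqrt{\eps_n}$ for large $n$, whence $n^{-1/3}\|X\|_\infty^2\le\sqrt{\eps_n}\|X\|_\infty^2$; this gives a constant $C\equiv C_{k,q,\Psi}$ with
\[
\E L^2_{\sig,\utau}\big(\bG(n,m_n)\big)\le\exp\!\bigg(\frac{(k-1)d}{2}\Big(\big\langle\wh{\Xi}_{\ast}X_{\sig,\utau},X_{\sig,\utau}\big\rangle+C\sqrt{\eps_n}\|X_{\sig,\utau}\|_\infty^2\Big)\bigg)\P(\bsig^\star=\sig)\,\P(\bsig^\star=\utau)
\]
for every admissible $(\sig,\utau)$. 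Finally I would sum this over admissible pairs: since $\P(\bsig^\star=\sig)=\prod_{v\in V}\pi_{\sigma_v}$, the sum equals $\E\big[(\cdots)\,\one\{\|\bX_n\|_1\le\sqrt{n\eps_n}\}\big]$ with $\bX_n\equiv X_{\sig,\utau}$, $\sig,\utau\iid\pi^{\otimes n}$, and combining with the first-step reduction yields exactly the asserted inequality.

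The main obstacle is precisely the bookkeeping in the third step --- verifying that \emph{all} the lower-order contributions produced by Lemma~\ref{lem:main:approx} (notably the $O_{k,q,\Psi}(\|X\|_\infty^3/n^{3/2})$ term amplified over $\sim n$ clauses) and by replacing $m_n$ with $dn/k$ are genuinely dominated, uniformly over the truncation event, by $\sqrt{\eps_n}\|\bX_n\|_\infty^2$. This is the only place where the quantitative hypotheses $\eps_n\to 0$ and $n^{2/3}\eps_n\to\infty$ are used; the rest is routine change-of-measure accounting.
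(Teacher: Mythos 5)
Your proposal is correct and follows the same structure as the paper's own proof: Lemma~\ref{lem:cor:negligible} to eliminate the correlated regime, Lemma~\ref{lem:L:sig:utau:moment} for the per-pair bound, Lemma~\ref{lem:main:approx} plus $\sum_{i,j}X(i,j)=0$ to produce the $\wh{\Xi}_{\ast}$ quadratic form, absorption of the cubic and $m_n$-versus-$dn/k$ errors into $C\sqrt{\eps_n}\|X\|_\infty^2$ using $\|X\|_\infty\leq\|X\|_1\leq\sqrt{n\eps_n}$ and $n^{-1/3}\ll\sqrt{\eps_n}$, and finally recognizing the sum over admissible pairs as the truncated expectation. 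Your remainder bookkeeping in the third step is somewhat more explicit than what the paper writes out, but the estimates and their use are identical.
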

\begin{proof}
    Recall the decomposition $L^\star(G)^2=L^2_{\ind}(G)+L^2_{\co}(G)$ in \eqref{eq:L:decompose}. By Lemma~\ref{lem:cor:negligible}, we have
    \beq\label{eq:lem:sec:mo:bound:first}
\E\Big(L^{\ast}\big(\bG(n,m_n)\big)\Big)^2
\leq(1-\sqrt{\eps_n})^{-1}\cdot \E L^2_{\ind}\big(\bG(n,m_n)\big)\,.
    \eeq
    Meanwhile, by Lemma~\ref{lem:L:sig:utau:moment}, $\E L^2_{\ind}\big(\bG(n,m_n)\big)$ can be upper bounded by
    \beqn
   \sum_{\sig,\utau\in [q]^V:\|R_{\sig,\utau}-\pi\pi^{\sT}\|_1\leq \sqrt{\eps_n}}\left(\frac{\E_{p,u}\big[\bpsi(\sig_{\bom})\bpsi(\utau_{\bom})\big]}{\E_{p,u}\big[\bpsi(\sig_{\bom}\big]\cdot\E_{p,u}\big[\bpsi(\utau_{\bom}\big]}\right)^{m_n} \cdot \P(\bsig^\star=\sig)\cdot\P(\bsig^\star=\utau)\,.
    \eeqn
    Observe that for $\sig,\utau\in [q]^V$, the restriction $\|R_{\sig,\utau}-\pi\pi^{\sT}\|_1\leq \sqrt{\eps_n}$ is equivalent to $\|X_{\sig,\utau}\|_1\leq \sqrt{n\eps_n}$. In particular, 
 $\|X_{\sig,\utau}\|_{\infty}^3/n^{3/2}\leq \sqrt{\eps_n}\|X_{\sig,\utau}\|_{\infty}$ holds under such restriction. Thus, for $\sig,\utau\in [q]^V$ such that $\|R_{\sig,\utau}-\pi\pi^{\sT}\|_1\leq \sqrt{\eps_n}$, we have by Lemma~\ref{lem:main:approx} that
    \beqn
\left(\frac{\E_{p,u}\big[\bpsi(\sig_{\bom})\bpsi(\utau_{\bom})\big]}{\E_{p,u}\big[\bpsi(\sig_{\bom})\big]\cdot\E_{p,u}\big[\bpsi(\utau_{\bom})\big]}\right)^{m_n}
\leq \exp\Bigg(\frac{m_n}{n}\binom{k}{2}\bigg(\bigg\langle \Big(\wh{\Xi}-\wh{\Phi}\otimes \bone\bone^{\sT}-\bone\bone^{\sT}\ot \wh{\Phi}\Big)X\,,\, X\bigg\rangle+C\sqrt{\eps_n}\|X\|_{\infty}^2\bigg)\Bigg),
    \eeqn
    where $C\equiv C_{k,q,\Psi}>0$ denotes a constant that only depend on $k,q,$ and the set $\Psi$. Note that since $\big\langle \bone\bone^{\sT},X\big\rangle=\sum_{i,j=1}^{q}X(i,j)=0$, the inner product above equals $\big\langle \wh{\Xi}_{\ast} X,X\big\rangle$. Moreover, since $m_n\leq dn/k+n^{3/2}$ and $n^{-1/3}\ll \sqrt{\eps_n}$ holds, the factor $\frac{m_n}{n}\binom{k}{2}$ in the RHS above can be replaced by $\frac{(k-1)d}{2}$ for large enough $n$ with the modification of the constant $C$. Therefore,
    \beqn
    \E L^2_{\ind}\big(\bG(n,m_n)\big)\leq \E\Bigg[\exp\bigg(\frac{(k-1)d}{2}\Big(\Big\langle \wh{\Xi}_{\ast}\bX_n\,,\,\bX_n\Big\rangle +C^\prime \sqrt{\eps_n}\|\bX_n\|_{\infty}^2\Big)\bigg)\one\Big\{\|\bX_n\|_1\leq \sqrt{n\eps_n}\Big\}\Bigg]\,,
    \eeqn
    for some constant $C^\prime \equiv C^\prime_{k,q,\Psi}$. Combining this with \eqref{eq:lem:sec:mo:bound:first} concludes the proof.
\end{proof}
Having Proposition~\ref{prop:sec:mo:bound} in hand, the final ingredient to prove Proposition~\ref{prop:sec:moment:LR} is the uniform integrability of $(\bZ_n)_{n\geq 1}\equiv \big(\bZ_n(d,k,\underline{\eps},C)\big)_{n\geq 1}$, where $\underline{\eps}\equiv (\eps_n)_{n\geq 1}$ and
\beqn
\bZ_n:=\exp\bigg(\frac{(k-1)d}{2}\Big(\Big\langle \wh{\Xi}_{\ast}\bX_n\,,\,\bX_n\Big\rangle +C\sqrt{\eps_n}\|\bX_n\|_{\infty}^2\Big)\bigg)\one\Big\{\|\bX_n\|_1\leq \sqrt{n\eps_n}\Big\}\,.
\eeqn
Here, we emphasize that the indicator $\one\big\{\|\bX_n\|_1\leq \sqrt{n\eps_n}\big\}$ due to the truncation $\one\big\{\big\langle\big\|R_{\sig^1,\sig^2}-\pi\pi^{\sT}\big\|\big\rangle_{G}\leq \eps_n\big\}$ in $L_{\ast}(G)$ is crucial for uniform integrability stated below.
\begin{lemma}\label{lem:UI}
    For $d<d_{\ks}$ and $\underline{\eps}=(\eps_n)_{n\geq 1}$, where $\eps_n\to 0$ as $n\to\infty$, the random variables $(\bZ_n)_{n\geq 1}$ are uniformly integrable. 
\end{lemma}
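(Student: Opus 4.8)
The plan is to prove the stronger statement that $\sup_{n\geq 1}\E[\bZ_n^{1+\delta}]<\infty$ for some $\delta>0$; uniform integrability then follows from the de la Vall\'ee--Poussin criterion. Since on the truncation event $\{\|\bX_n\|_1\leq\sqrt{n\eps_n}\}$ one has $\|\bX_n\|_{\infty}\leq\sqrt{n\eps_n}$, each $\bZ_n$ is bounded by a deterministic constant, so each $\E[\bZ_n^{1+\delta}]$ is automatically finite and it suffices to bound $\limsup_n\E[\bZ_n^{1+\delta}]$. The first step is to absorb the lower-order term into the quadratic form: using $\|\bX_n\|_{\infty}^2\leq\|\bX_n\|_{\Fr}^2=\langle\bX_n,\bX_n\rangle$, one obtains $\bZ_n^{1+\delta}\leq\exp\!\big((1+\delta)\frac{(k-1)d}{2}\langle A_n\bX_n,\bX_n\rangle\big)\one\{\|\bX_n\|_1\leq\sqrt{n\eps_n}\}$ with $A_n:=\wh{\Xi}_{\ast}+C\sqrt{\eps_n}I$, which converges to $\wh{\Xi}_{\ast}$ since $\eps_n\to0$.

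The key linear-algebra input is that $\wh{\Xi}_{\ast}$, read through the limiting covariance of $\bX_n$, has spectral radius at most $\lambda_{\ks}$. By the CLT~\eqref{eq:clt}, $\bX_n\dto Z\sim\normal(0,\Sigma)$ with $\Sigma=\diag(\pi)^{\otimes2}-(\pi\pi^{\sT})^{\otimes2}$; I would first check that $\ker\Sigma$ is spanned by the all-ones vector $\bone_{q^2}\in\R^{q^2}$ and that $\bX_n$ lies exactly in $H:=\bone_{q^2}^{\perp}=\mathrm{range}(\Sigma)$, because $\sum_{i,j}\bX_n(i,j)=0$. Writing $\Sigma_H:=\Sigma|_H$ (positive definite), the conjugated matrix $\wt{A}_n:=\Sigma_H^{1/2}\,(A_n\!\restriction_H)\,\Sigma_H^{1/2}$ is symmetric with the same nonzero spectrum as $(A_n\!\restriction_H)\Sigma_H$; then, using the identity $\wh{\Xi}_{\ast}\diag(\pi)^{\otimes2}=\Xi_{\ast}$ from~\eqref{eq:relationship:hat:Xi}, the relation $\wh{\Xi}_{\ast}(\pi\otimes\pi)=0$ (which follows from ${\sf(SYM)}$ exactly as in the proof of Lemma~\ref{lem:KS}), and the self-adjointness of $\Xi_{\ast}$ on $(\R^{q}\otimes\R^{q},\langle\cdot,\cdot\rangle_{\pi})$ (Lemma~\ref{lem:KS}), one deduces that the nonzero eigenvalues of $\wt{A}_n$ are, up to an $O(\sqrt{\eps_n})$ perturbation, eigenvalues of a matrix similar to $\Xi_{\ast}$, whence $\lambda_{\max}(\wt{A}_n)\leq\lambda_{\ks}+o(1)$. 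Since $d<d_{\ks}$ gives $(k-1)d\,\lambda_{\ks}<1$, I fix $\delta>0$ so that $t:=(1+\delta)(k-1)d$ still satisfies $t\lambda_{\ks}<1$; then $t\lambda_{\max}(\wt{A}_n)<1$ for all large $n$, and the Gaussian integral $\E[e^{\frac t2\langle\wt{A}_n\bar Z,\bar Z\rangle}]$ for $\bar Z\sim\normal(0,I_H)$ is finite and bounded uniformly in $n$ (the negative part of $\wt A_n$ only helps).

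The remaining, and most delicate, step is to transfer this Gaussian bound to $\bX_n=\sqrt n\,\bar W_n$, where $\bar W_n$ is the empirical mean of $n$ i.i.d.\ bounded centered vectors with covariance $\Sigma$. Since $\eps_n\to0$, the truncation radius $\sqrt{n\eps_n}=o(\sqrt n)$ keeps us in the moderate-deviation regime where $\bar W_n$ is Gaussian-like. The plan is: pass to the coordinates $\bY_n:=\Sigma_H^{-1/2}\bX_n$ on $H$ (so the limiting covariance becomes $I_H$ and the domination $\langle A_n\bX_n,\bX_n\rangle=\langle\wt{A}_n\bY_n,\bY_n\rangle\leq\langle\wt{A}_n^{+}\bY_n,\bY_n\rangle$ by the Euclidean positive part $\wt A_n^+$ costs nothing at the level of the top eigenvalue, which stays $\leq\lambda_{\max}(\wt{A}_n)$); apply the Hubbard--Stratonovich identity $e^{\frac t2\|v\|^2}=\E_g[e^{\sqrt t\langle g,v\rangle}]$ with $v=(\wt{A}_n^{+})^{1/2}\bY_n$, $g\sim\normal(0,I_H)$, and swap expectations; for $\|g\|\leq a\sqrt n$ ($a$ a small fixed constant) bound the inner expectation over $\bY_n$ by the second-order Taylor expansion of the multinomial moment generating function, exactly as in the proof of Lemma~\ref{lem:main:approx}, getting a Gaussian bound with spectral constant $\lambda_{\max}(\wt{A}_n^{+})\leq\lambda_{\ks}+o(1)$; and for $\|g\|>a\sqrt n$ use that on the truncation event $\|\bY_n\|\leq C\sqrt{n\eps_n}$, so the exponent is at most $C'\|g\|\sqrt{n\eps_n}$, which is swamped by the Gaussian tail $\P(\|g\|>a\sqrt n)\leq e^{-ca^2 n}$ since $\sqrt{\eps_n}\to0$, making that contribution vanish. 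Combining the two ranges yields $\limsup_n\E[\bZ_n^{1+\delta}]<\infty$.

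The main obstacle is exactly this last transfer: one must extract the \emph{sharp} exponential rate --- the one governed by $\lambda_{\ks}$, i.e.\ precisely the threshold $d<d_{\ks}$ --- uniformly in $n$ while the truncation window $\sqrt{n\eps_n}$ slowly diverges, and in particular handle the non--positive-semidefinite quadratic form $\wh{\Xi}_{\ast}$ without a crude domination (such as $\langle\wh{\Xi}_{\ast}x,x\rangle\leq\|\wh{\Xi}_{\ast}\|\,\|x\|^2$, or the Euclidean positive part of $\wh{\Xi}_{\ast}$ itself) that would inflate the constant past $\lambda_{\ks}$. The resolution is to perform both the truncation estimate and the positive-part decomposition in the coordinate frame where $\Sigma$ is the identity, so that $\wh{\Xi}_{\ast}$ and $\Sigma$ are effectively simultaneously diagonalized and dropping the negative eigenvalues is free.
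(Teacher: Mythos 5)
Your first reduction (boosting to $\sup_n\E\bZ_n^{1+\delta}<\infty$), the absorption of the $C\sqrt{\eps_n}\|\bX_n\|_\infty^2$ term, and the linear-algebra identification of the relevant spectral radius with $\la_{\ks}$ through $\wh\Xi_\ast\Sigma=\Xi_\ast$ all match the paper's reasoning in spirit. The issue is the Hubbard--Stratonovich transfer, which does not close in the regime the lemma is actually applied in.

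Concretely, after the swap you must bound the tilted multinomial generating function $\E\bigl[\exp\bigl(\sqrt t\,\langle(\wt A_n^+)^{1/2}g,\bY_n\rangle\bigr)\bigr]$, where $\bY_n=n^{-1/2}\sum_{i}\eta_i$ with $\eta_i$ bounded i.i.d.\ vectors. The cumulant expansion gives
\[
\log\E\bigl[\exp(\langle u,\bY_n\rangle)\bigr]=\tfrac12\|u\|^2+O\bigl(n^{-1/2}\|u\|^3\bigr),
\]
valid only while $n^{-1/2}\|u\|$ is small, and with $\|u\|\asymp\|g\|$. If you Taylor-expand for $\|g\|\le a\sqrt n$ as you propose, the error term in the exponent is $O(a^3 n)$, which does not vanish for any fixed $a>0$. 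The only way to suppress the error is to cap the Gaussian variable at some $K_n=o(n^{1/6})$. But in the tail region $\|g\|>K_n$ the best bound available on the truncation event is $\E[\cdot]\le\exp(C\|g\|\sqrt{n\eps_n})$, and for that tail integral to vanish you need $K_n\gtrsim\sqrt{n\eps_n}$. The two constraints are compatible only when $\sqrt{n\eps_n}=o(n^{1/6})$, i.e.\ $\eps_n=o(n^{-2/3})$. The lemma, however, must hold for \emph{any} $\eps_n\to0$, and indeed the paper's application (proof of Proposition~\ref{prop:sec:moment:LR}) explicitly sets $\eps_n$ with $n^{2/3}\eps_n\to\infty$, which is precisely the regime where your split breaks down.

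The paper avoids this by never passing through the CLT approximation of the multinomial at all. It works directly with the empirical overlap $\bR_n$ and uses Sanov's theorem, which gives the \emph{exact} exponential rate $\exp(-n\DKL(R\,\|\,\pi\pi^{\sT})+O(\log n))$ uniformly over the discrete set of attainable $R$. The spectral comparison then appears as a strict Hessian inequality $\nabla^2_R\DKL|_{R=\pi\pi^{\sT}}\succ(k-1)d\,\wh\Xi_\ast$ (equivalently $(k-1)d\la_{\ks}<1$), together with convexity of the KL divergence on a small $\ell_1$-ball, producing a quadratic lower bound with a uniform $\delta$-margin. Since KL is the Cramér rate function of the multinomial, this packages precisely the large-deviation information that your Gaussian Taylor expansion cannot reach. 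If you want to salvage a Hubbard--Stratonovich route, you would need to replace the second-order expansion of the log-MGF with a full Legendre-transform/Chernoff bound, at which point you have effectively reproduced the Sanov argument. Your comment about dropping negative eigenvalues for free in the $\Sigma$-orthonormal frame is a nice observation, but it addresses a secondary concern; the primary obstruction is the rate of validity of the Gaussian approximation to the multinomial.
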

\begin{proof}
    Denote by $\PPP_n([q]^2)$ the set of $R\equiv \big(R(i,j)\big)_{i,j\leq q}\in \R^{q\times q}$ such that $R(i,j)\in \Z_{\geq 0}/n$ for any $i,j\leq q$ and $\sum_{i,j=1}^{q}R(i,j)=1$. Further, denote by $\bR_n\equiv R_{\sig,\utau}$ the overlap matrix under $\sig,\utau\iid \pi^{\otimes n}$. Then, by Sanov's theorem  (a.k.a. Stirling's approximation), we have for any $R\in \PPP_n([q]^2)$ that
    \beq\label{eq:sanov:R}
    \P\big(\bR_n=R\big)=n^{O_q(1)}\exp\big(-n\cdot \DKL(R\,\|\,\pi\pi^{\sT})\big)\,,
    \eeq
    where $O_q(1)$ denotes a bounded constant that only depends on $q$ and $\DKL(\mu\,\|\,\nu):=\sum_{i,j\in [q]}\mu(i,j)\log\big(\frac{\nu(i,j)}{\mu(i,j)}\big)$ for $\mu,\nu\in \PPP([q]^2)$ is the KL divergence (a.k.a. relative entropy) between $\mu$ and $\nu$. By viewing $R\in \PPP_n([q]^2)$ as a $q^2$ dimensional vector, we first claim that for $d<d_{\ks}$,
    \beq\label{eq:lem:UI:claim}
\nabla^2_{R}\DKL(R\,\|\,\pi\pi^{\sT})\Big\rvert_{R=\pi\pi^{\sT}} \succ (k-1)d\cdot\wh{\Xi}_{\ast}\,,
    \eeq
    where $\nabla^2_{R}$ denotes the Hessian with respect to $R\in \R^{q^2}$. Indeed, note that a direct computation gives $ \nabla^2_{R}\DKL(R\,\|\,\pi\pi^{\sT})\big\rvert_{R=\pi\pi^{\sT}}=\diag\big((\pi_i^{-1})_{i\leq q}\big)^{\otimes 2}$, thus \eqref{eq:lem:UI:claim} is equivalent to $I_{q^2\times q^2}\succ (k-1)d\cdot \wh{\Xi}_{\ast}\diag(\pi)^{\otimes 2}$, which follows for $d<d_{\ks}$ by Lemma~\ref{lem:KS} since the maximum eigenvalue of $ \wh{\Xi}_{\ast}\diag(\pi)^{\otimes 2}$ equals $\max_{\la\in\Eig(\Xi_{\ast})}|\lambda|$ by \eqref{eq:relationship:hat:Xi}. Hence, \eqref{eq:lem:UI:claim} holds for $d<d_{\ks}$.

    Now, observe that since $R\to \DKL(R\,\|\,\pi\pi^{\sT})$ is convex with $\DKL(\pi\pi^{\sT}\,\|\,\pi\pi^{\sT})=0$, \eqref{eq:lem:UI:claim} implies that there exists small enough $\eps,\delta>0$ such that if $\|R-\pi\pi^{\sT}\|_1\leq \eps$, then
    \beq\label{eq:kl:lower}
    \DKL(R\,\|\,\pi\pi^{\sT})\geq(1+\delta)\frac{(k-1)d}{2}\Big\langle \wh{\Xi}_{\ast}(R-\pi\pi^{\sT})\,,\,R-\pi\pi^{\sT}\Big\rangle+\delta\big\|R-\pi\pi^{\sT}\big\|_2^2\,.
    \eeq
    For such $\delta>0$, we now claim that $\big(\E \bZ_n^{1+\delta}\big)_{n\geq 1}$ is bounded, which is sufficient for uniform integrability. Note that $\bX_n=\sqrt{n}(\bR_n-\pi\pi^{\sT})$ holds by definition. Thus, by \eqref{eq:sanov:R}, we can bound $\E \bZ_n^{1+\delta}$ by
    \beqn
    \sum_{\substack{R\in \PPP_n([q]^2)\\\|R-\pi\pi^{\sT}\|_1\leq \sqrt{\eps_n}}}\exp\bigg(\frac{(1+\delta)(k-1)dn}{2}\Big(\Big\langle \wh{\Xi}_{\ast}(R-\pi\pi^{\sT})\,,\,R-\pi\pi^{\sT}\Big\rangle +C\sqrt{\eps_n}\big\|R-\pi\pi^{\sT}\big\|_{\infty}^2\Big)-n\DKL\big(R\,\|\,\pi\pi^{\sT}\big)\bigg).
    \eeqn
    Observe that since $\eps_n=o_n(1)$, the above sum is restricted to $\|R-\pi\pi^{\sT}\|_1\leq \sqrt{\eps_n}<\eps$ for large enough $n$. Thus, we can use the inequality \eqref{eq:kl:lower} to further bound
    \beqn
    \begin{split}
    \E\bZ_n^{1+\delta}
    &\leq\sum_{R\in \PPP_n([q]^2)}\exp\bigg(-n\delta\big\|R-\pi\pi^{\sT}\big\|_2^2+\frac{C(1+\delta)(k-1)d}{2}n\sqrt{\eps_n}\big\|R-\pi\pi^{\sT}\big\|_{\infty}^2\bigg)\\
    &\leq \sum_{R\in \PPP_n([q]^2)}\exp\bigg(-\frac{n\delta}{2}\big\|R-\pi\pi^{\sT}\big\|_2^2\bigg)\leq C^\prime\,,
    \end{split}
    \eeqn
    where the second inequality follows for large enough $n$ since $\eps_n=o_n(1)$ and $\|R-\pi\pi^{\sT}\|_{\infty}\leq \|R-\pi\pi^{\sT}\|_2$, and the last inequality follows by Gaussian integration for some constant $C^\prime\equiv C^\prime(\delta, q)<\infty$ that only depends on $\delta, q$. Therefore, $\sup_{n\geq 1} \E \bZ_n^{1+\delta}<\infty$ holds, and uniform integrability of $(\bZ_n)_{n\geq 1}$ follows.
\end{proof}
\begin{proof}[Proof of Proposition~\ref{prop:sec:moment:LR}]
Note that $\E\big(L^{\ast}\big(\bG(n,m_n)\big)\big)^2$ increases as $\eps_n$ increases. Thus, w.l.o.g., we may assume that $n^{2/3}\eps_n\to\infty$ as $n\to\infty$. Also, recall that $\bX_n\equiv \sqrt{n}(R_{\bsig,\boldsymbol{\utau}}-\pi\pi^{\sT})$, where $\bsig,\boldsymbol{\utau}\sim \pi^{\otimes n}$. Then, $\bX_n\dto \bX_{\infty}\sim \normal\big(\diag(\pi)^{\otimes 2}-(\pi\pi^{\sT})^{\otimes 2}\big)$ holds by the central limit theorem (cf.~\eqref{eq:clt}). Recalling the random variable $\bZ_n\equiv \exp\Big(\frac{(k-1)d}{2}\Big(\Big\langle \wh{\Xi}_{\ast}\bX_n,\bX_n\Big\rangle +C\sqrt{\eps_n}\|\bX_n\|_{\infty}^2\Big)\Big)\one\Big\{\|\bX_n\|_1\leq \sqrt{n\eps_n}\Big\}$, we claim that as $n\to\infty$,
\beq\label{eq:Z:conv}
\bZ_n\dto \bZ_{\infty}:=\exp\bigg(\frac{(k-1)d}{2}\Big\langle \wh{\Xi}_{\ast}\bX_{\infty}\,,\,\bX_{\infty}\Big\rangle\bigg)\,.
\eeq
To see this, note that $\exp\Big(\frac{(k-1)d}{2}\big\langle\wh{\Xi}_{\ast}\bX_n,\bX_n\big\rangle\Big)\dto \exp\Big(\frac{(k-1)d}{2}\big\langle\wh{\Xi}_{\ast}\bX_{\infty},\bX_{\infty}\big\rangle\Big)$ holds by the continuous mapping theorem. Moreover, since $n\eps_n\to\infty$ and $\eps_n\to 0$ as $n\to\infty$, we have that
\beqn
\exp\big(C\sqrt{\eps_n}\|\bX_n\|_{\infty}^3\big)\pto 1\;\;\;\;\;\textnormal{and}\;\;\;\; \one\big\{\|\bX_n\|_1\leq \sqrt{n\eps_n}\big\}\pto 1
\eeqn
Thus, by Slutsky, \eqref{eq:Z:conv} holds. Meanwhile, $(\bZ_n)_{n\geq 1}$ is uniformly integrable by Lemma~\ref{lem:UI}, thus 
\beqn
\lim_{n\to\infty}\E\bZ_n=\E\bZ_{\infty}\,.
\eeqn
Note that $\E\bZ_{\infty}$ can be calculated explicitly as follows. Recall that if $Z\sim \normal(0,\Sigma)$ and the eigenvalues of $A\Sigma$ is bounded by $1$ in absolute value for a symmetric matrix $A$, then $\E \exp\big(Z^{\sT}A Z/2\big)=\prod_{\lambda\in \Eig(A\Sigma)}(1-\lambda)^{-1/2}$ holds, where $\Eig(\Sigma^{1/2}A\Sigma)=\Eig(A\Sigma)$ denotes the set of eignevlues of $A\Sigma$ (see e.g.~\cite[Theorem 3.2a.2]{MP92}). Also, note that
\beqn
\wh{\Xi}_{\ast}\big(\pi \pi^{\sT}\big)^{\otimes 2}=\E_p\Big[\wh{\Phi}_{\bpsi}\pi\pi^{\sT}\otimes\wh{\Phi}_{\bpsi}\pi\pi^{\sT}\Big]-\wh{\Phi}\pi\pi^{\sT}\otimes \bone\pi^{\sT}-\bone\pi^{\sT}\otimes \wh{\Phi}\pi\pi^{\sT}+\big(\bone\pi^{\sT}\big)\otimes\big(\bone\pi^{\sT}\big)=0\,,
\eeqn
where the last equality holds because ${\sf (SYM)}$ yields $\wh{\Phi}_{\psi}\pi=\bone$ for any $\psi\in \Psi$. Thus, it follows that $\wh{\Xi}_{\ast}\big(\diag(\pi)^{\otimes 2}-(\pi\pi^{\sT})^{\otimes 2}\big)=\Xi_{\ast}$ holds. Therefore, by Proposition~\ref{prop:sec:mo:bound}, we can bound
\beqn
\E\Big(L^{\ast}\big(\bG(n,m_n)\big)\Big)^2\leq \E\bZ_n=\E\bZ_{\infty}+o_n(1)=\prod_{\lambda\in \Eig(\Xi_{\ast})}\frac{1}{\sqrt{1-(k-1)d\lambda}}+o_n(1)\,,
\eeqn
which concludes the proof since non-zero elements of $\Eig(\Xi_{\ast})$ equals the non-zero elements of $\Eig_{\SS}(\Xi)$ by Lemma~\ref{lem:KS}.
\end{proof}
\section{Mutual information between the planted and the null model}
\label{sec:mutual:info}
In this section, we prove Lemma~\ref{lem:KL:mutual:info:free:energy} and Propositions~\ref{prop:derivative}, ~\ref{prop:free:energy}.  In Section~\ref{subsec:mutual:info:misc}, we prove Lemma~\ref{lem:KL:mutual:info:free:energy}. In Section~\ref{subsec:mutual:info:derivative}, we prove Proposition~\ref{prop:derivative}. In Section~\ref{subsec:mutual:info:free:energy}, we prove Proposition~\ref{prop:free:energy}.

\subsection{Proof of Lemma~\ref{lem:KL:mutual:info:free:energy}}
\label{subsec:mutual:info:misc}
By invoking the identity~\eqref{eq:proof:mutual:info:tech}, we have
\beqn
\frac{1}{n}\DKL(\bG^\star\,\|\,\bG)\equiv \frac{1}{n}\sum_{G}\P(\bG^\star=G)\log\frac{\P(\bG^\star=G)}{\P(\bG=G)}=\frac{1}{n}\E\log L(\bG^\star)\,,
\eeqn
which finishes the proof of the first equality. Turning to the mutual information, we have by definition that
\beq\label{eq:}
\begin{split}
\frac{1}{n}I(\bG^\star,\bsig^\star)
&\equiv \frac{1}{n}\sum_{G,\sig}\P(\bG^\star=G,\bsig^\star=\sig)\log \bigg(\frac{\P(\bG=G)}{\P(\bG^\star=G)}\cdot \frac{\P(\bG^\star=G\given \bsig^\star=\sig)}{\P(\bG=G)}\bigg)\\
&=-\frac{1}{n}\E\log L(\bG^\star)+\frac{1}{n}\sum_{G,\sig}\P(\bG^\star=G,\bsig^\star=\sig)\log\frac{\P(\bG^\star=G\given \bsig^\star=\sig)}{\P(\bG=G)}\,,
\end{split}
\eeq
where the last equality is due to \eqref{eq:proof:mutual:info:tech}. We now simplify the last sum above. Recall that by Definition~\ref{def:planted}, if $G=(V,F,E,(\psi_a)_{a\in F})$ is a factor graph with $m$ clauses, then 
\beq\label{eq:mutual:info:tech}
\frac{\P(\bG^\star=G\given \bsig^\star=\sig)}{\P(\bG=G)}=\frac{\psi_{G}(\sig)}{\E[\psi_{\bG(n,m)}(\sig)]}=\prod_{a\in F}\frac{\psi_a(\sig_{\delta a})}{\E_{p,u}[\bpsi(\sig_{\bom})]}\,,
\eeq
where $\bpsi\sim p$ and $\bom\sim \Unif(V^k)$. Thus, if we denote by $(\psi_{a_i},\delta a_i)_{1\leq i\leq \bm}$ the random weight functions and neighborhoods of clauses of $\bG^\star$, which are i.i.d. with distribution \eqref{eq:clause:law:planted}, then it follows that 
\beq\label{eq:mutual:info:tech:1}
\frac{1}{n}\sum_{G,\sig}\P(\bG^\star=G,\bsig^\star=\sig)\log\frac{\P(\bG^\star=G\given \bsig^\star=\sig)}{\P(\bG=G)}=\frac{1}{n}\E\Bigg[\sum_{i=1}^{\bm}\log\frac{\psi_{a_i}(\bsig^\star_{\delta a_i})}{\E_{p,u}[\bpsi(\bsig^\star_{\bom})]}\Bigg]=\frac{d}{k}\E\bigg[\log\frac{\psi_{a_1}(\bsig^\star_{\delta a_1})}{\E_{p,u}[\bpsi(\bsig^\star_{\bom})]}\bigg]\,,
\eeq
where the last equality holds since $(\psi_{a_i},\delta a_i)_{1\leq i\leq \bm}$ are i.i.d., and $\E[\bm]=dn/k$. Since $(\psi_{a_1},\delta a_1)$ is distributed according to \eqref{eq:clause:law:planted}, the RHS equals
\beq\label{eq:mutual:info:tech:2}
\frac{d}{k}\E\bigg[\log\frac{\psi_{a_1}(\bsig^\star_{\delta a_1})}{\E_{p,u}[\bpsi(\bsig^\star_{\bom})]}\bigg]=\frac{d}{k}\E \bigg[\frac{\bpsi^\prime(\bsig^\star_{\bom^\prime})}{\E_{p,u}[\bpsi(\bsig^\star_{\bom})]}\cdot
\log\frac{\bpsi^\prime(\bsig^\star_{\bom^\prime})}{\E_{p,u}[\bpsi(\bsig^\star_{\bom})]}\bigg]\,,
\eeq
where the outer expectation in the RHS is w.r.t. $\bsig^\star\sim \pi^{\otimes V}$, $\bpsi^\prime \sim p$, and $\bom^\prime\sim \Unif(V^k)$. Observe that since $\bsig^\star\sim \pi^{\otimes V}$, the total variation distance between the empirical distribution of $\bsig^\star$ and $\pi$ is at most $O(n^{-1/3})$ with probability tending to one. Thus, w.h.p., $\E_{p,u}\big[\bpsi(\bsig^\star_{\bom})\big]=\E_{p,\pi}\big[\bpsi(\bsig)\big]+O(n^{-1/3})\equiv \xi +O(n^{-1/3})$ holds. In addition, by our assumption that $\psi(\cdot)\in (0,\infty)$ and $\Psi$ is finite, there exists constants $c,C\in (0,\infty)$ such that $\psi(\cdot)\in [c,C]$ holds for any $\psi\in \Psi$. As a consequence, we have that
\beq\label{eq:mutual:info:tech:3}
\begin{split}
\frac{d}{k}\E \bigg[\frac{\bpsi^\prime(\bsig^\star_{\bom^\prime})}{\E_{p,u}[\bpsi(\bsig^\star_{\bom})]}\cdot
\log\frac{\bpsi^\prime(\bsig^\star_{\bom^\prime})}{\E_{p,u}[\bpsi(\bsig^\star_{\bom})]}\bigg]
&=\frac{d}{k}\E\bigg[\frac{\bpsi^\prime(\bsig^\star_{\bom^\prime})}{\xi}\cdot \log\frac{\bpsi^\prime(\bsig^\star_{\bom^\prime})}{\xi} \bigg]+o_n(1)\\
&=\frac{d}{k}\E_{p,\pi}\bigg[\frac{\bpsi(\bsig)}{\xi}\cdot \log\frac{\bpsi(\bsig)}{\xi}\bigg]+o_n(1)\,,
\end{split}
\eeq
where in the last equality, we again used the fact that the total variation distance between the empirical distribution of $\bsig^\star$ and $\pi$ is at most $O(n^{-1/3})$ with probability tending to one. Combining \eqref{eq:mutual:info:tech:1}, \eqref{eq:mutual:info:tech:2}, \eqref{eq:mutual:info:tech:3} with \eqref{eq:mutual:info:tech} concludes the proof of Lemma~\ref{lem:KL:mutual:info:free:energy}.
\subsection{Proof of Proposition~\ref{prop:derivative}}
\label{subsec:mutual:info:derivative}
 It is well-known that given a function $f:\N\to \R$ at most of exponential growth and $\alpha>0$, the map $d\to \E f(X_d)$ for $ X_d\sim \Poi(\alpha d)$ is differentiable w.r.t. $d$ with derivative $\frac{\partial}{\partial d}\E f(X_d)=\alpha\big(\E f(X_d+1)-\E f(X_d)\big)$. Thus, it follows that
\beq\label{eq:derivative:poisson}
\frac{1}{n}\frac{\partial}{\partial d}\E\log L(\bG^\star)=\frac{1}{k}\Big(\E \log L\big(\bG^\star(n,\bm+1)\big)-\E \log L\big(\bG^\star(n,\bm)\big)\Big)\,.
\eeq
We couple $\bG^\star(n,\bm)$ and $\bG^\star(n,\bm+1)$ to show that the RHS can be calculated as follows.
\begin{lemma}\label{lem:express:ratio:L}
    For any $d>0$ and $n,m\geq 1$, there exists a coupling between $\bG^\star(n,m)$ and $\bG^\star(n,m+1)$ such that
    \beqn
    \E\log\bigg(\frac{L\big(\bG^\star(n,m+1)\big)}{L\big(\bG^\star(n,m)\big)}\bigg)=\E_{\bG^\star(n,m),p,u}\Bigg[\bigg\langle \frac{\bpsi(\sig_{\bom})}{\E_{p,u}\big[\bpsi^\prime(\sig_{\bom^\prime})\big]}\bigg\rangle_{\bG^\star(n,m)}\log\bigg\langle \frac{\bpsi(\sig_{\bom})}{\E_{p,u}\big[\bpsi^\prime(\sig_{\bom^\prime})\big]}\bigg\rangle_{\bG^\star(n,m)}\Bigg]\,,
    \eeqn
    where the expectation $\E_{\bG^\star(n,m),p,u}$ is with respect to $\bG^\star(n,m), \bpsi \sim p, \bom \sim u:=\Unif(V^k)$, and the expectation $\E_{p,u}$ in the denominator is with respect to $\bpsi^\prime \sim p$ and $\bom^\prime \sim u$.
\end{lemma}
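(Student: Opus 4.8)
The plan is to prove the identity by coupling $\bG^\star(n,m)$ and $\bG^\star(n,m+1)$ through one extra clause and then performing a Bayes-rule manipulation of the posterior. First I would set up the coupling: sample $\bsig^\star\sim\pi^{\otimes V}$, then draw clauses $(\delta a_i,\psi_{a_i})_{1\le i\le m+1}$ independently, each with the planted clause law \eqref{eq:clause:law:planted} using $\sig=\bsig^\star$; let $\bG^\star(n,m)$ be the factor graph carried by the first $m$ clauses and $\bG^\star(n,m+1)$ the one carried by all $m+1$. By Definition~\ref{def:planted} both marginals are correct, and the two graphs differ only in the single clause $a\equiv a_{m+1}$.

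Next I would compute the likelihood-ratio increment explicitly in this coupling. Writing $G\equiv\bG^\star(n,m)$, multiplicativity \eqref{def:psi:G} of $\psi_{(\cdot)}(\sig)$ over clauses gives $\psi_{\bG^\star(n,m+1)}(\sig)=\psi_G(\sig)\,\psi_a(\sig_{\delta a})$, and mutual independence of the clauses of the null model gives $\E[\psi_{\bG(n,m+1)}(\sig)]=\E[\psi_{\bG(n,m)}(\sig)]\cdot\E_{p,u}[\bpsi(\sig_{\bom})]$. Substituting these into $L(\bG^\star(n,m+1))=\sum_{\sig}\psi_{\bG^\star(n,m+1)}(\sig)\P(\bsig^\star=\sig)/\E[\psi_{\bG(n,m+1)}(\sig)]$ and recognizing the Bayes formula $\mu_G(\sig)=L(G)^{-1}\psi_G(\sig)\P(\bsig^\star=\sig)/\E[\psi_{\bG(n,m)}(\sig)]$, everything independent of $\sig$ factors out and one is left with
\[
\frac{L(\bG^\star(n,m+1))}{L(\bG^\star(n,m))}=\Big\langle \frac{\psi_a(\sig_{\delta a})}{\E_{p,u}[\bpsi^\prime(\sig_{\bom^\prime})]}\Big\rangle_{G}=:h_G(\delta a,\psi_a).
\]
Because every $\psi\in\Psi$ is bounded away from $0$ and $\infty$, $h_G$ is uniformly bounded above and below, hence $\log h_G$ is bounded and $\E\log\big(L(\bG^\star(n,m+1))/L(\bG^\star(n,m))\big)=\E[\log h_{\bG^\star(n,m)}(\delta a_{m+1},\psi_{a_{m+1}})]$.

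The step carrying the content is identifying the conditional law of the extra clause given $\bG^\star(n,m)=G$. Conditionally on $\bsig^\star=\sig$, this clause has law $\tfrac{1}{n^k}p(\psi)\psi(\sig_{\omega})/\E_{p,u}[\bpsi(\sig_{\bom})]$; averaging over $\sig\sim\mu_G$ and using that the denominator $\E_{p,u}[\bpsi(\sig_{\bom})]=\tfrac{1}{n^k}\sum_{\omega,\psi}p(\psi)\psi(\sig_{\omega})$ is a functional of $\sig$ alone, the conditional law collapses to exactly $\tfrac{p(\psi)}{n^k}h_G(\omega,\psi)$, which is a genuine probability measure since $\sum_{\omega,\psi}\tfrac{p(\psi)}{n^k}h_G(\omega,\psi)=\big\langle \E_{p,u}[\bpsi(\sig_{\bom})]/\E_{p,u}[\bpsi^\prime(\sig_{\bom^\prime})]\big\rangle_G=1$. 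Therefore
\[
\E\big[\log h_G(\delta a_{m+1},\psi_{a_{m+1}})\,\big|\,\bG^\star(n,m)=G\big]=\sum_{\omega,\psi}\frac{p(\psi)}{n^k}\,h_G(\omega,\psi)\log h_G(\omega,\psi)=\E_{p,u}\big[h_G(\bom,\bpsi)\log h_G(\bom,\bpsi)\big],
\]
and averaging over $G=\bG^\star(n,m)$ gives the claim; combined with the Poisson-derivative identity \eqref{eq:derivative:poisson} this also yields the first formula of Proposition~\ref{prop:derivative}.

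The only real obstacle is the bookkeeping in the last step: the normalizing factor $\E_{p,u}[\bpsi(\bsig^\star_{\bom})]$ appearing in the planted clause law is a function of $\bsig^\star$, so one must verify that integrating it against the posterior $\mu_G$ recombines it \emph{exactly} with the denominator sitting inside $\langle\cdot\rangle_G$ --- producing no residual correlation term --- which is what makes the tilted clause law literally $\tfrac{p(\psi)}{n^k}h_G(\omega,\psi)$. Everything else is a direct consequence of the product structure of $\psi_G$ and of the null clause law, and the boundedness of the weight functions removes all integrability concerns (in particular it legitimizes splitting $\E\log(A/B)=\E\log A-\E\log B$).
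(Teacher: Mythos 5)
Your proposal is correct and follows essentially the same route as the paper's proof: same coupling (clauses drawn i.i.d.\ from the planted clause law given $\bsig^\star$), same identification of the likelihood-ratio increment as $h_G(\delta a,\psi_a)=\langle \psi_a(\sig_{\delta a})/\E_{p,u}[\bpsi'(\sig_{\bom'})]\rangle_G$, and same "tilted clause law" computation. The only cosmetic difference is the order of conditioning: you integrate out $\bsig^\star$ inside the conditional law of the extra clause given $G$ to obtain the probability measure $\frac{p(\psi)}{n^k}h_G(\omega,\psi)$ directly, whereas the paper first conditions on $(\bsig^\star,\bG^\star(n,m))$, writes the inner expectation over $(\delta a_{m+1},\psi_{a_{m+1}})$, and then applies the tower property over $\bsig^\star$; both yield the same expression $\E_{p,u}[h_G(\bom,\bpsi)\log h_G(\bom,\bpsi)]$.
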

\begin{proof}
    Recalling the Definition~\ref{def:planted} of the planted model $\bG^\star(n,m)\equiv \bG^\star(n,m,\bsig^\star)$, we can couple $\bG^\star(n,m)$ and $\bG^\star(n,m+1)$ by first drawing $\bsig^\star\sim \pi^{\otimes n}$ and $\bG^\star(n,m, \bsig^\star)$, and then conditional on $\bsig^\star, \bG^\star(n,m, \bsig^\star)$, adding an independent clause $a_{m+1}$ to $\bG^\star(n,m)$ with neighborhood $\delta a_{m+1}$ and the weight function $\psi_{a_{m+1}}$ from the distribution
    \beq\label{eq:law:new:clause}
    \P\Big(\delta a_{m+1}= (v_1,\ldots, v_k)\,,\,\psi_{a_{m+1}}=\psi\Bgiven \bsig^\star , \bG^\star(n,m,\bsig^\star)\Big)=\frac{1}{n^k}\cdot \frac{p(\psi)\psi(\bsigma^\star_{v_1},\ldots, \bsigma^\star_{v_k})}{\E_{p,u}[\bpsi(\bsig^\star_{\bom}]}\,.
    \eeq
    Recalling the notation $\psi_G(\sig)$ in \eqref{def:psi:G}, we can express $L\big(\bG^\star(n,m+1)\big)$ under such coupling by
    \beq\label{eq:L:one:more}
    L\big(\bG^\star(n,m+1)\big)= \sum_{\sig\in [q]^V}\frac{\psi_{\bG^\star(n,m+1)}(\sig)}{\E\big[\psi_{\bG(n,m+1)}(\sig)\big]}\P\big(\bsig^\star=\sig\big)=\sum_{\sig\in [q]^V}\frac{\psi_{a_{m+1}}(\sig_{\delta a_{m+1}})}{\E_{p,u}\big[\bpsi(\sig_{\bom})\big]}\cdot\frac{\psi_{\bG^\star(n,m)}(\sig)}{\E\big[\psi_{\bG(n,m)}(\sig)\big]}\P\big(\bsig^\star=\sig\big)\,.
    \eeq
     Meanwhile, by Bayes rule, the posterior $\mu_{G}(\sig)$ in \eqref{def:posterior}, where $G$ have $n$ variables $m$ clauses, equals
    \beqn
\mu_G(\sig)=\frac{\P\big(\bG^\star(n,m)=G\bgiven\bsig^\star=\sig\big)\P(\bsig^\star=\sig)}{\P\big(\bG^\star(n,m)=G\big)}=\frac{\frac{\psi_{G}(\sig)}{\E\big[\psi_{\bG(n,m)}(\sig)\big]}\P\big(\bsig^\star=\sig\big)}{L(G)}\,.
    \eeqn
    By taking $G=G^\star(n,m)$ and combining with \eqref{eq:L:one:more}, it follows that
    \beq\label{eq:express:ratio:L}
     \E\log\bigg(\frac{L\big(\bG^\star(n,m+1)\big)}{L\big(\bG^\star(n,m)\big)}\bigg)= \E\log\bigg(\sum_{\sig\in [q]^V}\frac{\psi_{a_{m+1}}(\sig_{\delta a_{m+1}})}{\E_{p,u}\big[\bpsi(\sig_{\bom})\big]}\mu_{\bG^\star(n,m)}(\sig)\bigg)= \E \log\bigg\langle \frac{\psi_{a_{m+1}}(\sig_{\delta a_{m+1}})}{\E_{p,u}\big[\bpsi(\sig_{\bom})\big]}\bigg\rangle_{\bG^\star(n,m)}\,,
    \eeq
    where in the last expression, we abbreviated the superscript in $\sig^{1}\sim\mu_{\bG^{\star}(n,m)}$ and the outer expectation is w.r.t. $\bG^\star(n,m)$ and $(\delta a_{m+1}, \psi_{a_{m+1}})$. By the conditional law of $(\delta a_{m+1}, \psi_{a_{m+1}})$ in \eqref{eq:law:new:clause}, we have
    \beqn
    \begin{split}
    \E\bigg[\log\bigg\langle \frac{\psi_{a_{m+1}}(\sig_{\delta a_{m+1}})}{\E_{p,u}\big[\bpsi(\sig_{\bom})\big]}\bigg\rangle_{\bG^\star(n,m)}\bbgiven \bsig^\star, \bG^\star(n,m)\bigg]
    &=\sum_{\psi\in \Psi}\sum_{\delta a \in V^k}\log\bigg\langle \frac{\psi(\sig_{\delta a})}{\E_{p,u}\big[\bpsi(\sig_{\bom})\big]}\bigg\rangle_{\bG^\star(n,m)}\frac{p(\psi)\psi(\bsig^\star_{\delta a})}{n^k\cdot \E_{p,u}[\bpsi(\bsig^\star_{\bom}]}\\
    &=\E_{p,u}\bigg[\frac{\bpsi^\prime(\bsig^\star_{\bom^\prime})}{\E_{p,u}\big[\bpsi(\bsig^\star_{\bom})\big]}\log\bigg\langle \frac{\psi(\sig_{\delta a})}{\E_{p,u}\big[\bpsi(\sig_{\bom})\big]}\bigg\rangle_{\bG^\star(n,m)}\bigg]\,,
    \end{split}
    \eeqn
    where the outer expectation $\E_{p,u}$ is taken w.r.t. $\bpsi^\prime \sim p$ and $\bom^\prime \sim \Unif(V^k)$ independent of everything else. Thus, by tower property, we have
    \beqn
    \begin{split}
    &\E\bigg[\log\bigg\langle \frac{\psi_{a_{m+1}}(\sig_{\delta a_{m+1}})}{\E_{p,u}\big[\bpsi(\sig_{\bom})\big]}\bigg\rangle_{\bG^\star(n,m)}\bbgiven \bG^\star(n,m)\bigg]\\
&=\E\Bigg[\E_{p,u}\bigg[\frac{\bpsi^\prime(\bsig^\star_{\bom^\prime})}{\E_{p,u}\big[\bpsi(\bsig^\star_{\bom})\big]}\log\bigg\langle \frac{\psi(\sig_{\delta a})}{\E_{p,u}\big[\bpsi(\sig_{\bom})\big]}\bigg\rangle_{\bG^\star(n,m)}\bigg]\BBgiven \bG^\star(n,m) \Bigg]\\
&=\E_{p,u}\Bigg[\bigg\langle \frac{\bpsi(\sig_{\bom})}{\E_{p,u}\big[\bpsi^\prime(\sig_{\bom^\prime})\big]}\bigg\rangle_{\bG^\star(n,m)}\log\bigg\langle \frac{\bpsi(\sig_{\bom})}{\E_{p,u}\big[\bpsi^\prime(\sig_{\bom^\prime})\big]}\bigg\rangle_{\bG^\star(n,m)}\Bigg]\,, 
     \end{split}
    \eeqn
    where the last equality follows since $\E_{p,u}$ and the conditional expectation over $\bG^\star$ is exchangeable and the expectation w.r.t. to the measure $\P\big(\bsig^\star=\cdot\bgiven \bG^\star(n,m)\big)$ equals the expectation over $\sig\sim \mu_{\bG^\star(n,m)}$. Therefore, taking expectation in the equality above and combining with \eqref{eq:express:ratio:L} concludes the proof.
\end{proof}
In order to prove Proposition~\ref{prop:derivative}, it remains to approximate $\E_{p,u}\big[\bpsi^\prime(\sig_{\bom^\prime})\big]$ under $\sig\sim \mu_{\bG^\star(n,m)}$. Recalling that $R_{\sig}$ denotes the empirical distribution of $\sig\in [q]^V$ (cf. \eqref{eq:def:sig:empirical}), consider the event
\beqn
\AA_{\bal}(n,m):=\bigg\{\P\Big(\|R_{\bsig^\star}-\pi\|_{\infty}\geq n^{-1/3}\Bgiven \bG^\star(n,m)\Big)\leq \exp\big(-n^{1/4}\big)\bigg\}\,.
\eeqn
Then, we have the following lemma.
\begin{lemma}\label{lem:bal:approx}
  There exists a constant $c\equiv c_q$ which only depends on $q$ such that for any $n,m\geq 1$, we have $\P\big(\bG^\star(n,m)\in \AA_{\bal}(n,m)\big)\leq \exp(-cn^{1/3})$. Moreover, there exists a constant $C\equiv C_{k,q,\Psi}$ depending only on $k,q,\Psi$ such that on the w.h.p. event $\AA_{\bal}$, for any $\psi\in \Psi$ and $\delta a \in V^{k}$, we have
  \beq\label{eq:lem:bal:approx}
  \Bigg|\bigg\langle \frac{\psi(\sig_{\delta a})}{\E_{p,u}\big[\bpsi(\sig_{\bom})\big]}\bigg\rangle_{\bG^\star(n,m)}-\xi^{-1}\cdot \Big\langle \psi(\sig_{\delta a})\Big\rangle_{\bG^\star(n,m)}\Bigg|\leq C n^{-1/3}.
  \eeq
\end{lemma}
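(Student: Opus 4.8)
The plan is to prove the two assertions of the lemma separately; both are elementary once one identifies the right objects. For the first assertion---that $\AA_{\bal}(n,m)$ holds with probability at least $1-e^{-cn^{1/3}}$ (as the phrase ``on the w.h.p. event $\AA_{\bal}$'' indicates is the intended reading)---the key observation is that in the planted model one first draws $\bsig^\star\sim\pi^{\otimes n}$ and only afterwards the clauses, so the conditional law of $\bsig^\star$ given $\bG^\star(n,m)$ is exactly the posterior $\mu_{\bG^\star(n,m)}$. Hence the conditional probability in the definition of $\AA_{\bal}(n,m)$ is precisely the posterior mass of the unbalanced configurations,
\[
\P\big(\|R_{\bsig^\star}-\pi\|_{\infty}\geq n^{-1/3}\bgiven \bG^\star(n,m)\big)=\mu_{\bG^\star(n,m)}\big(\{\sig:\|R_{\sig}-\pi\|_{\infty}\geq n^{-1/3}\}\big),
\]
a non-negative random variable whose expectation over $\bG^\star(n,m)$ equals the unconditional probability $\P(\|R_{\bsig^\star}-\pi\|_{\infty}\geq n^{-1/3})$. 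Since $\bsig^\star\sim\pi^{\otimes n}$, Hoeffding's inequality and a union bound over the $q$ coordinates give $\P(\|R_{\bsig^\star}-\pi\|_{\infty}\geq n^{-1/3})\leq 2q\exp(-2n^{1/3})$, exactly as in \eqref{eq:bal:event}. Markov's inequality applied to the conditional probability then yields $\P(\bG^\star(n,m)\notin\AA_{\bal}(n,m))\leq e^{n^{1/4}}\cdot 2q\exp(-2n^{1/3})\leq \exp(-cn^{1/3})$ for a suitable $c\equiv c_q>0$, after absorbing the finitely many small values of $n$ into the constant.

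For the approximation \eqref{eq:lem:bal:approx}, I would first record the identity $\E_{p,u}[\bpsi(\sig_{\bom})]=\E_{p,R_{\sig}}[\bpsi(\bsig)]$, where the inner expectation is over $\bsig\sim R_{\sig}^{\otimes k}$; this is just $\frac{1}{n^k}\sum_{\bom\in V^k}\psi(\sig_{\bom})=\E_{\bsig\sim R_{\sig}^{\otimes k}}[\psi(\bsig)]$. When $\|R_{\sig}-\pi\|_{\infty}\leq n^{-1/3}$ we have $\|R_{\sig}^{\otimes k}-\pi^{\otimes k}\|_{\tv}\leq k\|R_{\sig}-\pi\|_{\tv}\leq \tfrac{kq}{2}n^{-1/3}$, so, since every $\psi\in\Psi$ is bounded and $\Psi$ is finite, $|\E_{p,R_{\sig}}[\bpsi(\bsig)]-\xi|\leq C_1 n^{-1/3}$ for a constant $C_1\equiv C_1(k,q,\Psi)$. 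Strict positivity of the weight functions forces both $\xi$ and $\E_{p,u}[\bpsi(\sig_{\bom})]=\E_{p,R_{\sig}}[\bpsi(\bsig)]$ to lie in a fixed interval $[c_{\Psi},C_{\Psi}]\subset(0,\infty)$, whence $\big|\E_{p,u}[\bpsi(\sig_{\bom})]^{-1}-\xi^{-1}\big|\leq C_2 n^{-1/3}$ and therefore $\big|\psi(\sig_{\delta a})/\E_{p,u}[\bpsi(\sig_{\bom})]-\xi^{-1}\psi(\sig_{\delta a})\big|\leq C_3 n^{-1/3}$ for every such $\sig$ and every $\delta a\in V^k$.

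To pass from this pointwise bound on balanced configurations to a bound on the posterior average $\langle\cdot\rangle_{\bG^\star(n,m)}$, I would split the average according to whether $\|R_{\sig}-\pi\|_{\infty}<n^{-1/3}$ or not. On the balanced part the integrand is at most $C_3 n^{-1/3}$ in absolute value; on the unbalanced part the integrand is bounded by a constant $C_4(k,q,\Psi)$ (again using that $\psi$ and $\E_{p,u}[\bpsi(\sig_{\bom})]^{-1}$ are bounded), while the posterior mass of the unbalanced set is at most $e^{-n^{1/4}}$ by the definition of $\AA_{\bal}(n,m)$. Hence on $\AA_{\bal}(n,m)$ the left-hand side of \eqref{eq:lem:bal:approx} is at most $C_3 n^{-1/3}+C_4 e^{-n^{1/4}}\leq C n^{-1/3}$ for all $n$, after adjusting $C$. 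The only mildly delicate point in the whole argument is the identification of the conditional probability in the definition of $\AA_{\bal}$ with the posterior mass of unbalanced configurations; once that is in place everything reduces to Hoeffding, Markov, and the boundedness of the weight functions away from $0$ and $\infty$.
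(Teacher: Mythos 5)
Your proof is correct and takes essentially the same route as the paper's: part one uses the tower property, Markov's inequality, and the Hoeffding bound from \eqref{eq:bal:event}; part two splits the posterior average at the threshold $\|R_\sig-\pi\|_\infty\lessgtr n^{-1/3}$, uses the identity $\E_{p,u}[\bpsi(\sig_{\bom})]=\E_p\big[\sum_{\ui}\bpsi(\ui)\prod_s R_\sig(i_s)\big]=\xi+O(n^{-1/3})$ on the balanced part, and bounds the unbalanced part by $Ce^{-n^{1/4}}$ using the definition of $\AA_{\bal}$ together with boundedness of $\psi$ away from $0$ and $\infty$. You also correctly flag that the lemma statement (and the opening line of the paper's proof) contains a sign typo: it should read $\P\big(\bG^\star(n,m)\notin \AA_{\bal}(n,m)\big)\leq e^{-cn^{1/3}}$, as the subsequent ``w.h.p.\ event'' phrasing makes clear.
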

\begin{proof}
For the first statement, note that by Markov's inequality and tower property, we have
\beqn
\P\big(\bG^\star(n,m)\in \AA_{\bal}(n,m)\big)\leq \exp\big(n^{1/4}\big)\cdot \P\Big(\|R_{\bsig^\star}-\pi\|_{\infty}\geq n^{-1/3}\Big)\,.
\eeqn
Invoking \eqref{eq:bal:event}, $\P\big(\|R_{\bsig^\star}-\pi\|_{\infty}\geq n^{-1/3}\big)\leq q\exp(-2n^{1/3})$ holds, thus plugging this bound shows that $\P\big(\bG^\star(n,m)\in \AA_{\bal}(n,m)\big)\leq \exp(-cn^{1/3})$ holds for some $c\equiv c_q>0$.

We next prove \eqref{eq:lem:bal:approx}. To this end, assume that $\bG^\star(n,m)\in \AA_{\bal}(n,m)$ happens, and fix any $\psi\in \Psi$ and $\delta a \in V^k$. Throughout, we denote $C$ by a positive constant that depends only depends on $k,q,\Psi$ and write $f=O_{k,q,\Psi}(g)$ if there exists a constant $C$ such that $|f|\leq Cg$. Note that
\beq\label{eq:lem:bal:approx:neg}
\bigg\langle \frac{\psi(\sig_{\delta a})}{\E_{p,u}\big[\bpsi(\sig_{\bom})\big]}\one\Big\{\big\|R_{\sig}-\pi\big\|_{\infty}\geq n^{-1/3} \Big\}\bigg\rangle_{\bG^\star(n,m)}\leq C\bigg\langle\one\Big\{\big\|R_{\sig}-\pi\big\|_{\infty}\geq n^{-1/3} \Big\}\bigg\rangle_{\bG^\star(n,m)}\leq C\exp\big(-n^{1/4}\big)\,,
\eeq
where the last inequality holds since $\langle f(\sig)\rangle_G\equiv \E\big[f(\bsig^\star)\bgiven \bG^\star=G\big]$ holds by definition for $f:[q]^V\to\R$. Meanwhile, note that if $\utau\in [q]^V$ satisfies $\|R_{\utau}-\pi\|_{\infty}\leq n^{-1/3}$, then 
\beqn
\E_{p,u}\big[\bpsi(\utau)_{\bom}\big]=\E_{p}\Bigg[\sum_{\ui=(i_1,\ldots,i_k)\in [q]^k}\bpsi(\ui)\prod_{s=1}^{k}R_{\utau}(i_s)\Bigg]=\E_{p}\Bigg[\sum_{\ui=(i_1,\ldots,i_k)\in [q]^k}\bpsi(\ui)\prod_{s=1}^{k}\pi_{i_s}\Bigg]+O_{k,q,\Psi}(n^{-1/3})\,,
\eeqn
which equals $\xi+O_{k,q,\Psi}(n^{-1/3})$. Thus, it follows that
\beqn
\begin{split}
\bigg\langle \frac{\psi(\sig_{\delta a})}{\E_{p,u}\big[\bpsi(\sig_{\bom})\big]}\one\Big\{\big\|R_{\sig}-\pi\big\|_{\infty}\leq n^{-1/3} \Big\}\bigg\rangle_{\bG^\star(n,m)}
&=\bigg\langle \frac{\psi(\sig_{\delta a})}{\xi+O_{k,q,\Psi}(n^{-1/3})}\one\Big\{\big\|R_{\sig}-\pi\big\|_{\infty}\leq n^{-1/3} \Big\}\bigg\rangle_{\bG^\star(n,m)}\\
&=\xi^{-1}\cdot \Big\langle \psi(\sig_{\delta a})\Big\rangle_{\bG^\star(n,m)}+O_{k,q,\Psi}(n^{-1/3})\,,
\end{split}
\eeqn
where the last equality holds since $\big\langle\psi(\sig_{\delta a})\big\{\|R_{\sig}-\pi\|_{\infty}\geq n^{-1/3}\big\} \big\rangle_{\bG^\star(n,m)}\leq C\exp\big(-n^{1/4}\big)$ holds on the event $\AA_{\bal}(n,m)$. Therefore, combining with \eqref{eq:lem:bal:approx:neg} completes the proof of \eqref{eq:lem:bal:approx}.
\end{proof}
The proof of Proposition~\ref{prop:derivative} is straightforward from Lemma~\ref{lem:express:ratio:L} and Lemma~\ref{lem:bal:approx}.
\begin{proof}[Proof of Proposition~\ref{prop:derivative}]
    The first statement \eqref{eq:prop:derivative:first} follows immediately from \eqref{eq:derivative:poisson} and Lemma~\ref{lem:express:ratio:L}. To establish the second statement \eqref{eq:prop:derivative} from the first statement, note that Lemma~\ref{lem:bal:approx} implies the following. Given a continuous function $F:\R_{+}\to \R$, there exists a constant $C\equiv C_{F,k,q,\Psi}$, which only depends on $F,k,q,\Psi$, such that
    \beq\label{eq:approx:conseq}
    \Bigg|\E_{\bG^\star,p,u} F\Bigg(\bigg\langle \frac{\bpsi(\sig_{\bom})}{\E_{p,u}\big[\bpsi^\prime(\sig_{\bom^\prime})\big]}\bigg\rangle_{\bG^\star}\Bigg)-\E_{\bG^\star,p,u} F\bigg(\xi^{-1}\Big\langle \bpsi(\sig_{\bom})\Big\rangle_{\bG^\star}\bigg)\Bigg|\leq Cn^{-1/3}\,.
    \eeq
    Here, we note that the continuity of $F$ suffices (rather than Lipschitz continuity) since there exist constants $c_i\equiv c_{i,\Psi}>0$ such that for any $\psi\in \Psi$, $\psi(\cdot)\in [c_1,c_2]$ holds. Thus, taking $F(x)=x\log x$, the second statement \eqref{eq:prop:derivative} follows from \eqref{eq:prop:derivative:first}.
\end{proof}
\subsection{Proof of Proposition~\ref{prop:free:energy}}
\label{subsec:mutual:info:free:energy}
In order to prove Proposition~\ref{prop:free:energy} from Proposition~\ref{prop:derivative}, we establish the following lemmas. Recall that $\bG^\star\equiv \bG^\star(n,\bm)$, where $\bm\sim \Poi(dn/k)$.
\begin{lemma}\label{lem:L:deterministic}
    There exists a constant $C\equiv C_{\Psi}>0$ such that for any $m\geq 1$, $\big|\log L(G)\big|\leq Cm$ holds for any factor graph $G$ with $m$ clauses. In particular, we have $\E \big|\log L(\bG^\star)\big|\leq Cdn/k$.
\end{lemma}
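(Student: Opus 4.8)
The plan is to exploit the fact that $\Psi$ is a finite set of \emph{strictly positive} functions, which forces the likelihood ratio to be sandwiched between two explicit exponentials in the number of clauses. First I would fix constants $0<c_1\le c_2<\infty$, depending only on $\Psi$, such that $c_1\le \psi(x)\le c_2$ for every $\psi\in\Psi$ and every $x\in [q]^k$; these exist precisely because $\Psi$ is finite and each $\psi\in\Psi$ takes values in $\R_{>0}$. Recalling $\psi_G(\sig)=\prod_{a\in F}\psi_a(\sig_{\delta a})$ for a factor graph $G$ with $m=|F|$ clauses, this gives $c_1^{\,m}\le \psi_G(\sig)\le c_2^{\,m}$ for every $\sig\in[q]^V$. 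In the null model the clauses are i.i.d., so $\E\big[\psi_{\bG(n,m)}(\sig)\big]=\big(\E_{p,u}[\bpsi(\sig_{\bom})]\big)^m$, and since $\E_{p,u}[\bpsi(\sig_{\bom})]$ is an average of values in $[c_1,c_2]$ it again lies in $[c_1,c_2]$, whence $c_1^{\,m}\le \E\big[\psi_{\bG(n,m)}(\sig)\big]\le c_2^{\,m}$.

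Next I would invoke the partition-function representation recorded earlier in the excerpt,
\[
L(G)=\sum_{\sig\in[q]^V}\frac{\psi_G(\sig)}{\E\big[\psi_{\bG(n,m)}(\sig)\big]}\,\P(\bsig^\star=\sig).
\]
Combining the two displays of bounds, each summand satisfies $(c_1/c_2)^m\le \psi_G(\sig)/\E[\psi_{\bG(n,m)}(\sig)]\le (c_2/c_1)^m$, and since $\P(\bsig^\star=\cdot)$ is a probability measure on $[q]^V$, summing yields $(c_1/c_2)^m\le L(G)\le (c_2/c_1)^m$. Taking logarithms gives $\big|\log L(G)\big|\le m\log(c_2/c_1)$, so the first assertion holds with $C\equiv C_\Psi:=\log(c_2/c_1)$. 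For the ``in particular'' part, $\bG^\star\equiv\bG^\star(n,\bm)$ has $\bm\sim\Poi(dn/k)$ clauses, so the pointwise bound gives $\big|\log L(\bG^\star)\big|\le C\bm$ almost surely, and taking expectations with $\E[\bm]=dn/k$ yields $\E\big|\log L(\bG^\star)\big|\le Cdn/k$.

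I do not expect a genuine obstacle here; this is a soft bound. The only point requiring a little care is to make sure the constant depends on $\Psi$ \emph{alone} and not on $n$, $m$, $d$, or $\pi$ — which is transparent since $c_1,c_2$ are extracted directly from the finite set $\Psi$ — and to use correctly the clause-wise product structure of both $\psi_G(\sig)$ and $\E[\psi_{\bG(n,m)}(\sig)]$ coming from the independence of clauses in the null model.
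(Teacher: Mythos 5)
Your proof is correct and follows essentially the same route as the paper: bound $\psi$ and $\E_{p,u}[\bpsi(\sig_{\bom})]$ uniformly in $[c_1,c_2]$ using finiteness and strict positivity of $\Psi$, exploit the clause-wise product structure to get $L(G)\in[(c_1/c_2)^m,(c_2/c_1)^m]$, take logs, and then take expectations over $\bm\sim\Poi(dn/k)$. The only cosmetic difference is that you rederive the partition-function representation of $L(G)$ while the paper invokes it directly; the constant-extraction and the final bound are identical in substance.
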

\begin{proof}
We have for a factor graph $G=(V,F,E,(\psi_a)_{a\in F})$ that
\beqn
 L(G)\equiv\sum_{\sig\in [q]^V}\P(\bsig^\star=\sig)\prod_{a\in F}\frac{\psi_a(\sig_{\delta a})}{\E_{p,u}\big[\bpsi(\sig_{\bom})\big]}\,.
\eeqn
Note that since $\psi(\cdot), \psi\in \Psi$ is bounded away from $0$ and $\infty$, there exist constants $C_{i}\equiv C_{i,\Psi}>0,i=1,2$, such that for any $\sig\in [q]^V, \delta a\in V^k, \psi\in \Psi$, $\frac{\psi(\sig_{\delta a})}{\E_{p,u}[\bpsi(\sig_{\bom})]}\in [C_1,C_2]$ holds. Thus, if $G$ has $m$ clauses, $L(G)\in [C_1^{m},C_2^{m}]$ holds. Therefore, $\big|\log L(G)\big|\leq Cm$ for some constant $C\equiv C_{\Psi}>0$. In particular, this implies that $\E\big|\log L(\bG^\star)\big|\leq C\cdot\E\bm=Cdn/k$, which concludes the proof.
\end{proof}
\begin{lemma}\label{lem:psi:R}
For $d<d_{\ast}$, we have $\E_{\bG^\star,p,u}\big(\big\langle \bpsi(\sig_{\bom})\big\rangle_{\bG^\star}-\xi\big)^2\to 0$ as $n\to\infty$. Moreover, assuming {\sf (MIN)}, for any $\eps>0$, there exists $\delta\equiv \delta(\eps,q,\pi)>0$ and $n_0\equiv n_0(\eps,q,\pi)$ which only depends on $\eps,q,\pi$ such that the following holds: if $n\geq n_0$ and there exists an estimator $\hat{\sig}\equiv \hat{\sig}(\bG^\star)$ such that $\E\big[A(\bsig^\star,\hat{\sig})\big]\geq \frac{1}{q}+\eps$ holds, then $\E_{\bG^\star,p,u}\big(\big\langle \bpsi(\sig_{\bom})\big\rangle_{\bG^\star}-\xi\big)^2>\delta$ holds.
\end{lemma}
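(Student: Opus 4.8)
The plan is to first reduce $\E_{\bG^\star,p,u}\big(\langle\bpsi(\sig_{\bom})\rangle_{\bG^\star}-\xi\big)^2$ to the expected overlap of two posterior samples via an exact identity, and then feed in Theorem~\ref{thm:equiv} for the first assertion and Lemma~\ref{lem:nonasymptotic} together with condition {\sf (MIN)} for the second. For the identity: since $\langle f(\sig)\rangle_{G}=\E[f(\bsig^\star)\mid\bG^\star=G]$, taking $\E_{\bG^\star}$ first turns a posterior sample into an unconditional $\bsig^\star\sim\pi^{\otimes V}$; as $\bom\sim\Unif(V^k)$ has distinct coordinates with probability $1-O(n^{-1})$ and each $\psi\in\Psi$ is bounded, condition {\sf (SYM)} gives $\E_{\bG^\star,p,u}[\langle\bpsi(\sig_{\bom})\rangle_{\bG^\star}]=\E_{p,u}[\bpsi(\bsig^\star_{\bom})]=\xi+O(n^{-1})$. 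For the second moment, for fixed $G,\bpsi,\bom$ one has $\langle\bpsi(\sig_{\bom})\rangle_{G}^2=\langle\bpsi(\sig^1_{\bom})\bpsi(\sig^2_{\bom})\rangle_{G}$ with two independent replicas $\sig^1,\sig^2\sim\mu_G$ at the \emph{same} $\bom$; averaging over $\bom$ and then over $\bpsi$ and using $\tfrac1{n^k}\sum_{v_1,\dots,v_k}\prod_{s}\one\{\sigma^1_{v_s}=i_s,\sigma^2_{v_s}=j_s\}=\prod_s R_{\sig^1,\sig^2}(i_s,j_s)$ yields, exactly,
\[
\E_{p,u}\big[\langle\bpsi(\sig^1_{\bom})\bpsi(\sig^2_{\bom})\rangle_{G}\big]=\big\langle\FF(R_{\sig^1,\sig^2})\big\rangle_{G},
\]
where $\FF$ is the polynomial of \eqref{eq:def:FF}, now evaluated at an arbitrary nonnegative matrix (where it is still nonnegative and bounded, since $\E_p[\bpsi(\cdot)\bpsi(\cdot)]\ge0$ and $\sum_{i,j}R(i,j)=1$). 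Since {\sf (SYM)} gives $\FF(\pi\pi^{\sT})=\E_p[\E_{\pi}[\bpsi]^2]=\xi^2$, combining the two displays produces
\[
\E_{\bG^\star,p,u}\big(\langle\bpsi(\sig_{\bom})\rangle_{\bG^\star}-\xi\big)^2=\E\big\langle\FF(R_{\sig^1,\sig^2})-\FF(\pi\pi^{\sT})\big\rangle_{\bG^\star}+O(n^{-1}).
\]

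For the first assertion, expanding $\prod_s R(i_s,j_s)$ about $\prod_s\pi_{i_s}\pi_{j_s}$ shows $|\FF(R)-\FF(\pi\pi^{\sT})|\le C_{k,q,\Psi}\|R-\pi\pi^{\sT}\|_1$ on the simplex, so the right-hand side is at most $C_{k,q,\Psi}\,\E\langle\|R_{\sig^1,\sig^2}-\pi\pi^{\sT}\|_1\rangle_{\bG^\star}+O(n^{-1})$, which tends to $0$ for $d<d_{\ast}$ since then weak recovery is impossible and hence condition (d) of Theorem~\ref{thm:equiv} holds.

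For the second assertion, suppose $\E[A(\bsig^\star,\hat{\sig})]\ge\frac1q+\eps$ for some estimator $\hat{\sig}=\hat{\sig}(\bG^\star)$ and $n\ge n_0$. Lemma~\ref{lem:nonasymptotic} gives $\E\langle\|R_{\sig^1,\sig^2}-\pi\pi^{\sT}\|_1\rangle_{\bG^\star}\ge\eta_0=\eta_0(\eps,q,\pi)>0$; since $\|R_{\sig^1,\sig^2}-\pi\pi^{\sT}\|_1\le2$ always, a reverse Markov inequality gives $\P(\|R_{\sig^1,\sig^2}-\pi\pi^{\sT}\|_1\ge\eta_0/2)\ge\eta_0/4$ under $\E\,\mu_{\bG^\star}^{\otimes2}$. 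The deterministic input I would isolate is: by compactness of the simplex, continuity of $\FF$, and {\sf (MIN)} (so $\FF>\FF(\pi\pi^{\sT})$ on $\RR_{\pi}\setminus\{\pi\pi^{\sT}\}$), (i) for every $\epsilon'>0$ there is $\eta'>0$ so that $\FF(R)\ge\FF(\pi\pi^{\sT})-\epsilon'$ whenever $R$ is a probability matrix whose row and column sums lie within $\eta'$ of $\pi$ (else a minimizing sequence, as the tolerance shrinks, would converge to a point of $\RR_{\pi}$ with $\FF\le\FF(\pi\pi^{\sT})$); and (ii) with $t:=\eta_0/2$ there are $\eta'',\delta_0>0$ so that $\FF(R)\ge\FF(\pi\pi^{\sT})+\delta_0$ whenever, in addition to the row/column condition with tolerance $\eta''$, one has $\|R-\pi\pi^{\sT}\|_1\ge t$. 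Now a posterior sample has marginal law $\pi^{\otimes V}$, so by Hoeffding's inequality (as in \eqref{eq:bal:event}) the empirical distributions of $\sig^1$ and $\sig^2$ lie within $n^{-1/3}$ of $\pi$ except on an event of probability $\le2qe^{-2n^{1/3}}$, outside which the row and column sums of $R_{\sig^1,\sig^2}$ are within $n^{-1/3}$ of $\pi$. Splitting $\E\langle\FF(R_{\sig^1,\sig^2})-\FF(\pi\pi^{\sT})\rangle_{\bG^\star}$ over (a) the exceptional event (contribution $\ge-\xi^2\cdot2qe^{-2n^{1/3}}$, using $\FF\ge0$), (b) its complement intersected with $\{\|R_{\sig^1,\sig^2}-\pi\pi^{\sT}\|_1\ge t\}$ (contribution $\ge\delta_0(\eta_0/4-2qe^{-2n^{1/3}})$ by (ii), once $n^{-1/3}\le\eta''$), and (c) its complement intersected with $\{\|R_{\sig^1,\sig^2}-\pi\pi^{\sT}\|_1<t\}$ (contribution $\ge-\epsilon'$ by (i), once $n^{-1/3}\le\eta'$), and choosing $\epsilon'=\delta_0\eta_0/8$, the total is $\ge\delta_0\eta_0/16$ for $n$ large. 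By the identity above this forces $\E_{\bG^\star,p,u}(\langle\bpsi(\sig_{\bom})\rangle_{\bG^\star}-\xi)^2\ge\delta$ for a suitable $\delta=\delta(\eps,q,\pi)>0$ and all $n\ge n_0(\eps,q,\pi)$.

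The main obstacle is the pair of deterministic estimates (i)--(ii): one must upgrade the \emph{qualitative} uniqueness of the minimizer of $\FF$ on $\RR_{\pi}$ from {\sf (MIN)} into a \emph{uniform quantitative} lower bound robust to the fact that $R_{\sig^1,\sig^2}$ only approximately lies in $\RR_{\pi}$ (its marginals are the empirical distributions of the two posterior samples, hence merely within $n^{-1/3}$ of $\pi$), and one must ensure that the possibly negative contribution of $\FF-\FF(\pi\pi^{\sT})$ over the region where the overlap is close to trivial does not swamp the positive main term; this is exactly what dictates the $n^{-1/3}$ bookkeeping in the region split.
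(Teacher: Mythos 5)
Your proposal is correct and follows the same route as the paper's proof: expand the second moment exactly as $\E\big\langle\FF(R_{\sig^1,\sig^2})\big\rangle_{\bG^\star}$ via two replicas at the same $\bom$, invoke Theorem~\ref{thm:equiv} for the first assertion and Lemma~\ref{lem:nonasymptotic} for the second, and convert the overlap lower bound into a lower bound on $\FF$ via compactness, continuity, and {\sf (MIN)}. In fact you are more explicit than the paper at the final step: the paper's displayed inequality $\E\langle\FF(R)\rangle_{\bG^\star}-\xi^2\geq\eps'\E\langle\one\{\FF(R)\geq\xi^2+\eps'\}\rangle_{\bG^\star}$ does not follow purely pointwise, since $\FF(R)-\xi^2$ can be negative when $R$ is not exactly in $\RR_\pi$, and your three-region split together with the uniform bound (i) is precisely the bookkeeping that justifies it; this is a genuine (if small) tightening of the exposition.
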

\begin{proof}
To start with, invoking \eqref{eq:approx:conseq} for $F(x)=x$, we have that
\beq\label{eq:approx:e:psi}
\E_{\bG^\star,p,u}\big\langle \bpsi(\sig_{\bom})\big\rangle_{\bG^\star}=\xi+O_{k,q,\Psi}(n^{-1/3})\,.
\eeq
Thus, recalling the function $\FF(R)\equiv \sum_{\sig,\utau\in [q]^k}\E_p\big[\bpsi(\sig)\bpsi(\utau)\big]\prod_{s=1}^{k}R(\sigma_s,\tau_s)$, it follows that
\beq\label{eq:express:psi:zeta:squared}
\E_{\bG^\star,p,u}\big(\big\langle \bpsi(\sig_{\bom})\big\rangle_{\bG^\star}-\xi\big)^2=\E\big\langle \FF(R_{\sig^1,\sig^2})\big\rangle_{\bG^\star}-\xi^2+O_{k,q,\Psi}(n^{-1/3})\,.
\eeq
Let us now assume $d<d_{\ast}$. Then, $\E\big\langle\| R_{\sig^1,\sig^2}-\pi\pi^{\sT}\|_1\big\rangle_{\bG^\star}\to 0$ as $n\to\infty$ by Theorem~\ref{thm:equiv}. Thus, 
\beqn
\E_{\bG^\star,p,u}\big(\big\langle \bpsi(\sig_{\bom})\big\rangle_{\bG^\star}-\xi\big)^2\to \E\FF(\pi\pi^{\sT})-\xi^2=0\,.
\eeqn

To prove the second claim, assume that the condition ${\sf (MIN)}$ and fix $\eps>0$. Since $\bG^\star$ satisfies ${\sf (EXG)}$ (cf. Lemma~\ref{lem:EXG:CONV}), Lemma~\ref{lem:nonasymptotic} shows that there exists $\eta\equiv \eta(\eps,q,\pi)$ and $n_0\equiv n_0(\eps,q,\pi)$ such that if $\E\big[A(\bsig^\star,\hat{\sig})\big]\geq \frac{1}{q}+\eps$ holds for some $\hat{\sig}\equiv \hat{\sig}(\bG^\star)$, then $\E\big\langle\| R_{\sig^1,\sig^2}-\pi\pi^{\sT}\|_1\big\rangle_{\bG^\star}\geq\eta$. Note that since $\|R_{\sig,\utau}-\pi\pi^{\sT}\|_1\leq q$ for $\sig,\utau\in [q]^V$, $\E\big\langle \big\|R_{\sig^1,\sig^2}-\pi\pi^{\sT}\big\|_1\big\rangle_{\bG^\star}\geq \eta$ implies that
   \beqn
   \E\Big\langle \one\Big\{\big\|R_{\sig^1,\sig^2}-\pi\pi^{\sT}\big\|_1>\frac{\eta}{2}\Big\}\Big\rangle_{\bG^\star}\geq \frac{\eta}{2q}\,.
   \eeqn
   Meanwhile, as shown in the proof of Lemma~\ref{lem:bal:approx}, there exists $C\equiv C_q>0$ such that for $\ell=1,2$,
   \beqn
    \E\Big\langle \one\Big\{\big\|R_{\sig^{\ell}}-\pi\big\|_1>n^{-1/3}\Big\}\Big\rangle_{\bG^\star}\leq C n^{-1/3}\,,
   \eeqn
   which can be seen by conditioning on $\bm=m$ and decomposing the LHS into the events where $\bG^\star(n,m)\in \AA(n,m)$ or $\bG^\star(n,m)\notin\AA(n,m)$. Thus, there exists a constant $c\equiv c_{q}>0$ such that for large enough $n\geq n_0(\eps,q,\pi)$,
   \beq\label{eq:lem:MIN:tech}
   \E\Big\langle \one\Big\{\big\|R_{\sig^1,\sig^2}-\pi\pi^{\sT}\big\|_1>\frac{\eta}{2}~~\textnormal{and}~~\big\|R_{\sig^{\ell}}-\pi\big\|_1\leq n^{-1/3}~~\textnormal{for}~~\ell=1,2\Big\}\Big\rangle_{\bG^\star}>c\eta\,.
   \eeq
   Now, observe that by the condition ${\sf (MIN)}$, there exists $\epsilon \equiv \epsilon(\eta)>0$ such that 
   \beqn
   \min\bigg\{\FF(R):R\in [0,1]^{q}\,,\;\;\;R\bone =R^{\sT}\bone=\pi\,,\;\;\;\big\|R-\pi\pi^{\sT}\big\|\geq\frac{\eta}{2}\bigg\}>\FF(\pi\pi^{\sT})+\epsilon\,.
   \eeqn
   Thus, by continuity of $R\to \FF(R)$, there exists small enough  $\epsilon^\prime\equiv \epsilon^\prime(\eta)>0$ such that
   \beqn
    \min\bigg\{\FF(R):R\in [0,1]^{q}\,,\;\;\;\big\|R\bone-\pi\big\|_1+\big\|R^{\sT}\bone-\pi\big\|_1<\epsilon^\prime \,,\;\;\;\big\|R-\pi\pi^{\sT}\big\|\geq\frac{\eta}{2}\bigg\}>\FF(\pi\pi^{\sT})+\epsilon^\prime\,.
   \eeqn
   As a consequence, for any $\sig^1,\sig^2\in [q]^V$, we have
   \beq\label{eq:lem:MIN:tech:2}
   \big\|R_{\sig^1,\sig^2}-\pi\pi^{\sT}\big\|_1>\frac{\eta}{2}~~\textnormal{and}~~\big\|R_{\sig^{\ell}}-\pi\big\|_1\leq n^{-1/3}~~\textnormal{for}~~\ell=1,2\Longrightarrow \FF(R_{\sig^1,\sig^2})>\xi^2+\epsilon^\prime\,,
   \eeq
   where we used the fact that $\FF(\pi\pi^{\sT})=\xi^2$. Finally, recalling the estimate \eqref{eq:express:psi:zeta:squared}, we have
   \beqn
   \begin{split}
\E_{\bG^\star,p,u}\Big(\big\langle \bpsi(\sig_{\bom})\big\rangle_{\bG^\star}-\xi\Big)^2+O(n^{-1/3})&=\E\big\langle \FF(R_{\sig^1,\sig^2})\big\rangle_{\bG^\star}-\xi^2\\
&\geq \epsilon^\prime  \E\Big\langle \one\Big\{\FF(R_{\sig^1,\sig^2})\geq \xi^2+\epsilon^\prime \Big\}\Big\rangle_{\bG^\star}\\
&>c\epsilon^\prime \eta\,,
   \end{split}
   \eeqn
   where we used {\sf (MIN)} in the second inequality and the last inequality is by \eqref{eq:lem:MIN:tech} and \eqref{eq:lem:MIN:tech:2}. Therefore, taking $\delta=c\epsilon^\prime \eta$ completes the proof. 
\end{proof}
\begin{proof}[Proof of Proposition~\ref{prop:free:energy}]
    The statements $\frac{\partial}{\partial d}\E \log L(\bG^\star)\geq 0$ and $\E\log L(\bG^\star)\geq 0$ follows from Jensen's inequality. Indeed, for ease of notations, let $F_0(x)=x\log x$. Since $F_0(\cdot)$ is convex, 
    \beqn
    \E\log L(\bG^\star)=\E L(\bG)\log L(\bG)\geq F_0\big(\E L(\bG)\big)=F_0(1)=0\,,
    \eeqn
    where the first equality follows from change of measure (see also \eqref{eq:proof:mutual:info:tech}). Similarly, by Proposition~\ref{prop:derivative}, 
    \beqn
    \frac{\partial}{\partial d}\E \log L(\bG^\star)=\frac{1}{k}\E_{\bG^\star,p,u} F_0\Bigg(\bigg\langle \frac{\bpsi(\sig_{\bom})}{\E_{p,u}\big[\bpsi^\prime(\sig_{\bom^\prime})\big]}\bigg\rangle_{\bG^\star}\Bigg)\geq \frac{1}{k} F_0\Bigg(\E_{\bG^\star,p,u}\bigg\langle \frac{\bpsi(\sig_{\bom})}{\E_{p,u}\big[\bpsi^\prime(\sig_{\bom^\prime})\big]}\bigg\rangle_{\bG^\star}\Bigg)=0\,.
    \eeqn
    To prove the second statement, consider $d_0<d_{\ast}$. Note that for any $0<d\leq d_0$, we have by Proposition~\ref{prop:derivative} that
    \beq\label{eq:derivative:conv:zero}
    \frac{1}{n}\frac{\partial}{\partial d}\E \log L(\bG^\star)=\frac{1}{k}\cdot \E_{\bG^\star,p,u}F_0\Big(\xi^{-1}\big\langle \bpsi(\sig_{\bom})\big\rangle_{\bG^\star}\Big)+o_n(1)\to 0\quad\textnormal{as}\quad n\to\infty\,,
    \eeq
    where the convergence holds since $\xi^{-1}\langle \bpsi(\sig_{\bom})\rangle_{\bG^\star}\pto 0$ by Lemma~\ref{lem:psi:R} and $\big(\xi^{-1}\langle \bpsi(\sig_{\bom})\rangle_{\bG^\star}\big)_{n\geq 1}$ is bounded away from $0$ and $\infty$. Moreover, note that $\lim_{d\to 0}\E\log L(\bG^\star)=0$ holds for fixed $n\geq 1$ by Lemma~\ref{lem:L:deterministic}. Thus, by fundamental theorem of calculus, we have
    \beqn
    \frac{1}{n}\E \log L(\bG^\star)\Big\rvert_{d=d_0}=\int_{0}^{d_0} \frac{1}{n}\frac{\partial}{\partial d}\E \log L(\bG^\star)\de d\to 0\quad\textnormal{as}\quad n\to\infty\,,
    \eeqn
    where the convergence holds by \eqref{eq:derivative:conv:zero} and dominated convergence theorem, since the equality \eqref{eq:prop:derivative:first} in Proposition~\ref{prop:derivative} shows that $\frac{1}{n}\frac{\partial}{\partial d}\E \log L(\bG^\star)$ is uniformly bounded for $n\geq 1$ and $d\leq d_0$.

    Next, we prove the third statement. Assume that ${\sf (MIN)}$ holds and that $\E\big[A(\bsig^\star, \hat{\sig})\big]\geq \frac{1}{q}+\eps$ holds for some estimator $\hat{\sig}\equiv \hat{\sig}_n(\bG^\star)$ and $\eps>0$. Then, Lemma~\ref{lem:psi:R} yields that under this assumption, $\E_{\bG^\star,p,u}\big(\big\langle \bpsi(\sig_{\bom})\big\rangle_{\bG^\star}-\xi\big)^2>\delta$ holds for $n\geq n_0$ for some $n_0\equiv n_0(\eps,q,\pi)$ and $\delta\equiv \delta(\eps,q,\pi)>0$. Moreover, since $F_0(x)=x\log x$ is strongly convex with $F_0(1)=0, F_0^\prime(1)=1$, there exists a universal constant $c>0$ such that $F_0(x)\geq x-1+C(x-1)^2$ holds for $x>0$. Thus, for $n\geq n_0$,
    \beqn
    \begin{split}
   \E_{\bG^\star,p,u}F_0\Big(\xi^{-1}\big\langle \bpsi(\sig_{\bom})\big\rangle_{\bG^\star}\Big)
   &\geq \E_{\bG^\star,p,u}\big[\xi^{-1}\big\langle \bpsi(\sig_{\bom})\big\rangle_{\bG^\star}-1\big]+C\xi^{-2}\cdot \E_{\bG^\star,p,u}\big(\big\langle \bpsi(\sig_{\bom})\big\rangle_{\bG^\star}-\xi\big)^2\\
   &=C\xi^{-2}\cdot \E_{\bG^\star,p,u}\big(\big\langle \bpsi(\sig_{\bom})\big\rangle_{\bG^\star}-\xi\big)^2+O_{k,q,\Psi}(n^{-1/3})\\
   &\geq C\xi^{-2}\delta + O_{k,q,\Psi}(n^{-1/3})\,,
    \end{split}
    \eeqn
    where the equality follows from \eqref{eq:approx:e:psi}. Combining with Proposition~\ref{prop:derivative}, it follows that
    \beqn
    \frac{1}{n}\frac{\partial}{\partial d}\E \log L(\bG^\star)= k^{-1}\E_{\bG^\star,p,u}F_0\Big(\xi^{-1}\big\langle \bpsi(\sig_{\bom})\big\rangle_{\bG^\star}\Big)+O_{k,q,\Psi}(n^{-1/3}) \geq C\xi^{-2}k^{-1}\delta+O_{k,q,\Psi}(n^{-1/3})\,,
    \eeqn
    thus letting $\eta \equiv C\xi^{-2}k^{-1}\delta/2$ concludes the proof of the third statement.

    Finally, we prove the fourth statement under ${\sf (MIN)}$ condition. To this end, consider $d>d_{\ast}$. Then, there exists $d_0\in (d_{\ast},d)$, and since weak recovery is possible at $d_0$, there exists a subsequence $(n_{\ell})_{\ell \geq 1}$ and $\eps>0$ such that for any $\ell\geq 1$, $\E\big[A(\bsig^\star, \hat{\sig})\big]\geq \frac{1}{q}+\eps$ holds for some estimator $\hat{\sig}\equiv \hat{\sig}_{n_{\ell}}(\bG^\star_0)$ where $\bG^\star_0\sim \GG_{\sf plant}(n_{\ell},d_0,p,\pi)$. Observe that this implies that for any fixed $\ell\geq 1$ and $\Breve{d}\in (d_0,d)$, 
    \[
    \E\big[A(\bsig^\star, \hat{\sig})\big]\geq \frac{1}{q}+\eps\quad\textnormal{for some}\quad \hat{\sig}\equiv \hat{\sig}_{n_{\ell}}(\Breve{\bG}^\star)\quad\textnormal{where}\quad \Breve{\bG}^\star\sim \GG_{\sf plant}(n_{\ell},\Breve{d},p,\pi)
    \]
    since subsampling the clauses independently with probability $\Breve{d}/d$ from $\Breve{\bG}^\star\sim \GG_{\sf plant}(n_{\ell},\Breve{d},p,\pi)$ gives a sample drawn from $\GG_{\sf plant}(n_{\ell},d_0,p,\pi)$ by Poisson thinning. Hence, by the third statement which we established in the previous paragraph, there exists $\eta\equiv \eta(\eps)$ and $n_0\equiv n_0(\eps)$ such that for all $n_{\ell}\geq n_0$ and $\Breve{d}\in (d_0,d)$, $\frac{1}{n_{\ell}}\frac{\partial}{\partial d}\E \log L(\Breve{\bG}^\star)\geq \eta$ holds. Since we proved in the first statement that $\frac{\partial}{\partial d}\E\log L(\bG^\star)\geq 0$ holds in general, it follows that for all $\ell$ large enough so that $n_{\ell}\geq n_0$,
    \[
    \frac{1}{n}\E \log L(\bG^\star)\geq \eta(d-d_0)\quad\textnormal{where}\quad \bG^\star\sim \GG_{\sf plant}(n_{\ell},d,p,\pi)\,,
    \]
    which concludes the proof.
\end{proof}
 \section*{Acknowledgements}
 E.M. is supported by Simons-NSF collaboration on deep learning NSF DMS-2031883, Vannevar Bush Faculty Fellowship award ONR-N00014-20-1-2826, ARO MURI W911NF1910217 and a Simons Investigator Award in Mathematics (622132). A.S. is supported by NSF grants DMS-1855527, DMS-1749103, a Simons Investigator grant, and a MacArthur Fellowship.
\bibliographystyle{amsalpha}
\bibliography{all,my,sbm}

\appendix

\section{Nontriviality of the weak recovery threshold}
\label{sec:nontriviality}
In this section, we prove Proposition~\ref{prop:nontrivial:threshold}.  We begin with the case where ${\sf (SYM)}$ does not hold.   Then we must have some $\psi, s$ and $\tau,\tau'$ such that 
\[
\E_{\pi}[\psi(\bsig)\given \bsigma_s=\tau]>\E_{p,\pi}[\bpsi(\bsig)]> \E_{\pi}[\psi(\bsig)\given \bsigma_s=\tau'].
\]
For each vertex $v$ let $d_v^\psi$ be the number of clauses of type $\psi$ containing $v$. Define
\[
x_i:=\frac{dp(\psi) \sum_{\tau_1,\ldots,\tau_{k-1}}\psi(\tau_1,\ldots,\tau_{k-1},i)\prod_{j=1}^{j-1}\pi_{\tau_j}}{\E_{p,\pi}[\bpsi(\bsig)]}= \frac{dp(\psi) \E_{\pi}[\psi(\bsig)\given \bsigma_s=i]}{\E_{p,\pi}[\bpsi(\bsig)]},
\]
When $\bsigma_v=i$ we have that $d_v^\psi$ is Poisson with mean $(1+o(1))x_i$.  We take the very simple estimator
\[
\hat{\sigma}_v = \begin{cases}
\tau & d_v^\psi \geq dp(\psi)\\
\tau' & d_v^\psi \geq dp(\psi).
\end{cases}
\]
Then
\begin{align*}\label{eq:overlap:nontrivial}
\limsup_{n \to \infty} \E\left[A(\bsig^\star, \hat{\sig})\right] 
&= \frac{1}{q\pi_\tau}\P[\bsigma_v=\tau,d_v^\psi \geq dp(\psi)] + \frac{1}{q\pi_{\tau'}}\P[\bsigma_v=\tau',d_v^\psi < dp(\psi)]\\
&= \frac{1}{q}\big(\P[\hbox{Pois}(x_\tau) \geq dp(\psi)] - \P[\hbox{Pois}(x_{\tau'}) < dp(\psi)]\big)\\
&= \frac{1}{q} + \frac{1}{q}\big(\P[\hbox{Pois}(x_\tau) \geq dp(\psi)] - \P[\hbox{Pois}(x_{\tau'}) \geq dp(\psi)]\big) \geq \frac1{q}+\epsilon,
\end{align*}
since $x_\tau>x_{\tau'}$ which establishes weak recovery whenever $d>0$.

Now suppose that ${\sf (SYM)}$ holds and $d<\frac1{k-1}$.  In this case the graph is subcritical and almost all the variables are in trees of size $O(1)$.  We will consider the local weak limit of $(G,\bsig)$ under the planted model.  A vertex of type $i$ is connected to a Poisson with mean
\[
\sum_\psi \frac{dp(\psi) \E_{\pi}[\psi(\bsig)\given \bsigma_s=i]}{\E_{p,\pi}[\bpsi(\bsig)]} = d p(\psi)
\]
neighbouring clauses.  In particular, the distribution does not depend on the type $i$ and so the law of the local neighbourhood is a branching process independent of the state of the root.  Among all the trees in the graph with a fixed topology, a $\pi_i+o(1)$ will have state $i$ at the root.  Since the estimator cannot distinguish between trees with the same topology, it will be independent of $\bsig$ and so  $\E\left[A(\bsig^\star, \hat{\sig})\right] =\frac1q+o(1)$.  Hence weak recovery is impossible when $d<\frac1{k-1}$.

Finally, we show that for $d$ large that weak recovery is possible.  We define 
\[
\Psi(\bsig)=\sum_a \psi_a (\bsig_{\delta a})
\]
and take our estimator to be the maximizer
\[
\hat{\bsig} = \argmax \Psi(\hat{\bsig}).
\]
among $\hat{\sigma}$ with empirical distribution $\pi+o(1)$.  We will first show that when $d$ is large enough that the joint distribution of $(\bsig^\star,\hat{\bsig})$ is not close to the product measure.

Recall that we defined the map $\FF(R'):=\sum_{\sig, \utau\in [q]^k}\E_{p}[\bpsi(\sig)\bpsi(\utau)]\prod_{s=1}^{k}R'(\sigma_s,\tau_s)$  in \eqref{eq:def:FF}. Let $R=\pi\pi^{\sT}$ and $R_\star=\hbox{diag}(\pi_1,\ldots,\pi_q)$.  Then
\[
\FF(R_\star) - \FF(R)= \sum_{\sig, \utau\in [q]^k}\E_{p}[\bpsi(\sig)\bpsi(\utau)]\prod_{s=1}^{k}\pi(\sigma_s)\pi(\tau_s) -\sum_{\sig, \utau\in [q]^k}\E_{p}[\bpsi(\sig)\bpsi(\utau)]\prod_{s=1}^{k}\pi(\sigma_s)\pi(\tau_s)
\]
This can be expressed as $\sum_{\psi} p(\psi) \psi(X_1,\ldots,X_k)$ where $X_i$ are IID according to $\pi$ and since at least one of the $\psi$ is not constant we have that this variance is strictly positive and hence $\FF(R_\star) > \FF(R)$. 
Let $\bsig^\star$ be fixed and let $\utau\in [q]^V$ be a fixed configuration.  Define their joint empirical distribution as  
\[
W_{ij}=W_{ij}(\bsig^\star,\utau)=\frac1{n}|\{v\in V: \sigma^\star_v=i,\tau_v=j\}|.
\]
The probability that a clause of type $\psi$ connects variables $v_1\ldots,v_k$ is 
\[
\frac{d}{Z{n \choose k}}\psi(\sigma^\star_{v_1},\ldots, \sigma^\star_{v_k})
\]
for some normalizing constant $Z$ and so
\[
\E[\Psi(\utau)]= \frac{d}{Z{n \choose k}}\sum_{v_1,\ldots,v_k}\psi(\sigma^\star_{v_1},\ldots, \sigma^\star_{v_k}) \psi(\tau_{v_1},\ldots, \tau_{v_k}) = \frac{dn}{Z} \FF(W) + o(n).
\]
and
\[
\E[\Psi(\bsig^\star)]=  \frac{dn}{Z} \FF(R_\star) + o(n) .
\]
If we reveal each clause one by one, each step affects the value of $\Psi(\utau)$ by $O(1)$ so by the Azuma-Hoeffding Inequality,
\[
\P[|\Psi(\utau) - \E[\Psi(\utau) ]|\geq d\epsilon n ] \leq 2\exp(-C\epsilon^2 dn)
\]
By the continuity of $\FF$, for some $\delta>0$ sufficiently small we have that
\[
\inf_{R':\|R'-R\|\leq \delta} \FF(R_\star) - \FF(R') \geq \delta.
\]
Then
\begin{align*}
\P[\|W(\bsig^\star,\hat{\bsig})-R\|<\delta] &\leq \P\Big[|\Psi(\bsig) - \E[\Psi(\bsig) ]|\geq \frac{d\delta n}{3Z} \Big]+ \sum_{\utau:\|W(\bsig^\star,\utau)-R\|\leq \delta} \P\Big[|\Psi(\utau) - \E[\Psi(\utau) ]|\geq \frac{d\delta n}{3Z} \Big]\\
&\leq 2q^n \exp(-C(\delta/3Z)^2 dn) = o(1),
\end{align*}
for large enough $d$.  Hence, we have that with high probability $W(\bsig^\star,\hat{\bsig})$ is at least $\delta$ away from $\pi\pi^T$.  The perfomance of our estimator can be written as
\[
\limsup_{n \to \infty} \E\left[A(\bsig^\star, \hat{\sig})\right] = \limsup_{n \to \infty} \E \max_{\Gamma \in S_q}\Bigg\{
\frac{1}{q}\sum_{i=1}^{q} \frac{W_{i,\Gamma(i)}(\bsig^\star,\hat{\bsig})}{\pi_i }
\Bigg\}.
\]
Let us write
\[
\Upsilon(R')= \max_{\Gamma \in S_q}\Bigg\{
\frac{1}{q}\sum_{i=1}^{q} \frac{R'_{i,\Gamma(i)}}{\pi_i }
\Bigg\}
\]
Let
\[
\RR_\pi^\delta=\{R':\|R'-R\|\geq \delta, \sum_i R_{ij}=\pi_j, \sum_j R_{ij}=\pi_i\}
\]
Since w.h.p. we have that $W(\bsig^\star,\hat{\bsig})$ is $o(1)$ distance from $\RR_\pi^\delta$, to prove weak recovery it is enough to show that
\begin{equation}\label{e:Upsilon.bound}
\inf_{R'\in \mathcal{R}} \Upsilon(R') \geq \frac1{q}+\epsilon.
\end{equation}
Since $\mathcal{R}$ is compact it is enough to show that $\Upsilon(R') > \frac1{q}$ for all $R'\in\RR_\pi^\delta$.  First we have that for any $R'\in\RR_\pi$ that
\[
\Upsilon(R')\geq \frac1{q!}\sum_{\Gamma \in S_q} 
\frac{1}{q}\sum_{i=1}^{q} \frac{R'_{i,\Gamma(i)}}{\pi_i }
= \frac1{q^2} \sum_{i=1}^{q} \frac{\sum_{i'=1}^q R'_{i,i'}}{\pi_i } = \frac1{q}
\]
since the max is always at least the average and since every $\Gamma(i)$ is equal to $i'$ for a $\frac1{q}$ fraction of $\Gamma$.  So if $\Upsilon(R') = \frac1{q}$ then we must have
\[
\forall \ \Gamma: \ \ 
\frac{1}{q}\sum_{i=1}^{q} \frac{R'_{i,\Gamma(i)}}{\pi_i }
= \frac1{q}.
\]
In particular, switching two entries of $\Gamma$ cannot change the above expression so for any $i,i',j,j'$
\[
\frac{R_{ij}}{\pi_i} - \frac{R_{ij'}}{\pi_i} = \frac{R_{i'j}}{\pi_{i'}} - \frac{R_{i'j'}}{\pi_{i'}}.
\]
But this can only occur if all the rows of $R_{ij}/\pi_i$ are the same in which case $R'=\pi\pi^T$.  Hence,~\eqref{e:Upsilon.bound} holds and for large enough $d$ we have weak recovery.

\section{Hypothesis testing below the weak recovery threshold}
\label{sec:proof:hypothesis:estimation}
In this section, we prove Lemmas~\ref{lem:dist:L:infty}, \ref{lem:T:conv}, and \ref{lem:resampling}.

\subsection{Proof of Lemma~\ref{lem:dist:L:infty}}
We show that
\[
\boldsymbol{L}_{\infty}\equiv \prod_{\ell \geq 1}\prod_{\zeta\in S_{\ell}}\Big\{(1+\delta_{\zeta})^{X_{\zeta \infty}}e^{-\lambda_{\zeta}\delta_{\zeta}}\Big\}\,.
\]
is discrete if and only if  $d\in(0,\frac1{k-1})$.  The case where $d<\frac1{k-1}$ is straightforward as in this case
\[
\sum_{\ell \geq 1}\sum_{\zeta\in S_{\ell}} \P[X_{\zeta \infty}\geq 1] \leq \sum_{\ell \geq 1}\sum_{\zeta\in S_{\ell}} \E[X_{\zeta \infty}] < \infty
\]
and so by Borel-Cantelli, only finitely many $X_{\zeta \infty}$ are non-zero and so $\boldsymbol{L}_{\infty}$ takes values in the set given in equation~\eqref{eq:support.set} and hence is discrete.  When $d\geq \frac1{k-1}$ we have that $\boldsymbol{L}_{\infty}$ is the exponential of a weighted sum of Poisson random variables the sum of whose means is infinite  and so we can write
\[
\log (\boldsymbol{L}_{\infty}) = \lim_{T\to\infty}\int_0^T f(t) dN_t - g(T)
\]
where $f(t)$ and $g(t)$ are deterministic functions, $f(t)$ is strictly positive and $N_t$ is a standard Poisson process.  We will show that any such random variable must be continuous. Define
\[
R(K,t)=\max_{x_1<x_2<\ldots<x_k \in \R}\sum_{i=1}^K \P[\int_0^T f(t) dN_t = x_i],
\]
that is the maximum probability $\int_0^T f(t) dN_t$ puts on $k$ distinct points.  We claim that,
\begin{equation}\label{eq:atom.derivative}
\frac{d}{dt} R(K,t) \bigg|_{t=T} \leq - (R(K,T)- R(K-1,T)) + (R(K+1,T) - R(K,T)).
\end{equation}
To see this, suppose that $\mathcal{X}=\{x_1,x_2,\ldots,x_K\}$ is some maximizing set of $k$ atoms at time $T$.  Note that at time $t$ the size of the $j$-th largest atom is $R(j,t)- R(j-1,t)$.  Between time $t$ and $t+\delta t$ there is probability $\delta t$ of a new Poisson point which increases the integral by $f(t)$.  Thus we have that
\begin{align*}
\frac{d}{dt}\sum_{x\in \mathcal{X}}  \P[\int_0^T f(t) dN_t = x] \bigg|_{t=T} &= -\sum_{x\in \mathcal{X}}  (\P[\int_0^T f(t) dN_t = x] - \P[\int_0^T f(t) dN_t = x-f(t)])\\
&= -\sum_{x\in \mathcal{X}}  \P[\int_0^T f(t) dN_t = x] 
+ \sum_{y:y+f(T)\in \mathcal{X}} \P[\int_0^T f(t) dN_t = y]\\
&= -\sum_{x\in \mathcal{X}:x-f(T)\not\in \mathcal{X}}  \P[\int_0^T f(t) dN_t = x] 
+ \sum_{y\not\in \mathcal{X}:y+f(T)\in \mathcal{X}} \P[\int_0^T f(t) dN_t = y]\\
&\leq  -|\{x\in \mathcal{X}:x-f(T)\not\in \mathcal{X}\}|(R(K,T)- R(K-1,T)) \\
&\qquad + |\{y\not\in \mathcal{X}:y+f(T)\in \mathcal{X}\}|(R(K+1,T)- R(K,T))
\end{align*}
where the third equality holds by canceling out terms that appear in the first and second sums while the inequality holds by noting that every atom in $\mathcal{X}$ is one of the largest $K$ atoms while every while every atoms in $\mathcal{X}^c$ is at most the $(K+1)$-th biggest.  Note that 
\[
|\{x\in \mathcal{X}:x-f(T)\not\in \mathcal{X}\}|=|\{y\not\in \mathcal{X}:y+f(T)\in \mathcal{X}\}|
\]
and both are at least of size one, since $x_1$ is not in the former.  Hence we have equation~\eqref{eq:atom.derivative}.  Also since $R(j,t)- R(j-1,t)$ is the size of the $j$-th atoms it is decreasing in $j$ so we have in particular that
\[
\frac{d}{dt} R(K,t) \bigg|_{t=T} \leq 0.
\]
Suppose that $R(1,t)\to q>0$ as $t\to\infty$.  Then set $K=\lceil\frac1{q}\rceil$.  Then
\begin{align*}
\frac{d}{dt} \sum_{j=1}^K R(j,t) &\leq \sum_{j=1}^K - (R(j,t)- R(j-1,t)) + (R(j+1,t) - R(j,t))\\
&= - R(1,t) + (R(K+1,t) - R(K,t))\\
&\leq -q + \frac{1}{K+1}< 0.
\end{align*}
But $\sum_{j=1}^K R(j,0)=K$ we have that $\sum_{j=1}^K R(j,t)\leq K-t\big(q - (K+1)^{-1}\big)$  for all $t$ which contradicts the fact that it is non-negative.  Hence $R(1,t)\to 0$ as $t\to\infty$.  Now if $X$ and $Y$ are independent random variables,
\[
\max_{x}\P[X+Y=x] \leq \max_x \P[X=x]
\]
and so
\[
\max_x \P[\log (\boldsymbol{L}_{\infty})=x] \leq \inf_t R(1,t)=0.
\]
Hence $\boldsymbol{L}_{\infty}$ has a continuous distribution.
\subsection{Proof of Lemma~\ref{lem:T:conv}}
Fix $d<d_{\ks}$ and assume {\sf (SYM)}. We only prove the weak convergence
\beq\label{eq:goal:proof:lemma:T:conv}
T_n(\bG^\star)\dto \bL^\star_{\infty}\equiv \prod_{\ell\geq 1} \prod_{\zeta\in S_{\ell}}\Big\{(1+\delta_{\zeta})^{X^\star_{\zeta,\infty}}e^{-\la_{\zeta}\delta_{\zeta}}\Big\}\,,
\eeq
since the weak convergence for $T_n(\bG)$ can be established using the same argument. To start with, for $K\geq 1$, let us denote
\beqn
T_{n,K}(G):=\prod_{1\leq \ell \leq K}\prod_{\zeta\in S_{\ell}}\Big\{(1+\delta_{\zeta})^{X_{\zeta}(G)}e^{-\lambda_{\zeta}\delta_{\zeta}}\Big\}\,.
\eeqn
Thus, $T_n(\bG^\star)\equiv T_{n,K_n}(\bG^\star)$ holds by definition. Observe that by considering a fixed $K\geq 1$, we have by Fact~\ref{fact:cycle}-(2) that as $n\to\infty$,
\beqn
T_{n,K}(\bG^\star)\dto \bL^\star_{K}:=\prod_{\ell=1}^{K} \prod_{\zeta\in S_{\ell}}\Big\{(1+\delta_{\zeta})^{X^\star_{\zeta,\infty}}e^{-\la_{\zeta}\delta_{\zeta}}\Big\}\,.
\eeqn
Observe that since $\bL^\star_K$ converges a.s. to $\bL^\star_{\infty}$ as $K\to\infty$, we have $\bL^\star_K\dto \bL^\star_{\infty}$ holds. Thus, combining with the above weak convergence, a diagonal argument shows that there exists an arbitrarily slowly growing sequence $(K_n')_{n\geq 1}$ such that $1\ll K_n'\leq K_n$ holds and as $n\to\infty$,
\beqn
T_{n,K_n'}(\bG^\star)\dto \bL^\star_{\infty}\,.
\eeqn
Hence, in order to achieve our goal in Eq.~\eqref{eq:goal:proof:lemma:T:conv}, it suffices to prove that as $n\to\infty$,
\beq\label{eq:goal:second}
X_n:=\sum_{\ell=K_n'+1}^{K_n}\sum_{\zeta\in S_{\ell}}\Big\{\log(1+\delta_{\zeta}) X_{\zeta}(\bG^\star)-\la_{\zeta}\delta_{\zeta}\Big\}\pto 0\,,
\eeq
since $T_{n,K_n}(\bG^\star)/T_{n,K'_n}(\bG^\star)=e^{X_n}$ holds by definition. For the rest of the proof, we prove Eq.~\eqref{eq:goal:second} by showing that $\E X_n^2\to 0$ as $n\to\infty$.

To this end, we calculate the moments of $(X_{\zeta}(\bG^\star))_{\zeta\in \cup_{\ell\leq K_n}S_{\ell}}$. Since the length of $\zeta$ is $2\ell \leq K_n\ll \log n$, it is standard to approximate the the first and second moments of $X_{\zeta}(\bG^\star)$ (see e.g. \cite[Eq.~(8.7)]{CEJKK:18}) as follows. For two distinct signatures $\zeta,\zeta'\in\cup_{\ell\leq K_n}S_{\ell}$, we have
\beqn
\begin{split}
    &\E X_{\zeta}(\bG^\star)=\la^\star_{\zeta}+O_{k,q,\Psi}\bigg(\frac{\log n}{\sqrt{n}}\bigg)\,,\;\;\;\;\;\;\;\;\Var \big(X_{\zeta}(\bG^\star)\big)=\la^\star_{\zeta}+O_{k,q,\Psi}\bigg(\frac{\log n}{\sqrt{n}}\bigg)\,,\\
    &\E X_{\zeta}(\bG^\star) X_{\zeta'}(\bG^\star)=\la^\star_{\zeta}\la^\star_{\zeta'}+O_{k,q,\Psi}\bigg(\frac{\log n}{\sqrt{n}}\bigg)\,,
\end{split}
\eeqn
where the dominant error $\frac{\log n}{\sqrt{n}}$ comes from approximating the empirical distribution of $\bsig^\star$ by $\pi$. Moreover, note that the number of signatures $\zeta \in \cup_{\ell\leq K_n} S_{\ell}$ is at most $e^{O(\log\log n)}\ll n^{1/10}$ since $K_n=O(\log\log n)$. Thus, $\E X_n^2$ can be bounded by
\beq\label{eq:bound:sec:mo:X}
\begin{split}
\E X_n^2
&\leq \sum_{\ell=K_n'+1}^{K_n}\sum_{\zeta\in S_{\ell}}\Big\{\log^2(1+\delta_{\zeta})\la^\star_{\zeta}+\big( \log(1+\delta_{\zeta})\la^\star_{\zeta}-\la_{\zeta}\delta_{\zeta}\big)^2\Big\}\\
&~~~~~~~+\sum_{\ell,\ell'=K_n'+1}^{K_n}\sum_{\zeta\in S_{\ell}, \zeta'\in S_{\ell'}}\Big( \log(1+\delta_{\zeta})\la^\star_{\zeta}-\la_{\zeta}\delta_{\zeta}\Big)\Big( \log(1+\delta_{\zeta'})\la^\star_{\zeta'}-\la_{\zeta'}\delta_{\zeta'}\Big)+O_{k,q,\Psi}(n^{-1/3})\\
&\leq \sum_{\ell=K_n'+1}^{\infty}\sum_{\zeta\in S_{\ell}}\Big\{ (1+\delta_{\zeta})\log^2(1+\delta_{\zeta})\la_{\zeta}+\big((1+\delta_{\zeta})\log(1+\delta_{\zeta})-\delta_{\zeta}\big)^2\la_{\zeta}^2\Big\}\\
&~~~~~~~+\bigg(\sum_{\ell=K_n'+1}^{\infty}\sum_{\zeta\in S_{\ell}}\big((1+\delta_{\zeta})\log(1+\delta_{\zeta})-\delta_{\zeta}\big)\la_{\zeta}\bigg)^2+O_{k,q,\Psi}(n^{-1/3})\,,
\end{split}
\eeq
where the final inequality holds since $\la^\star_{\zeta}=\la_{\zeta}(1+\delta_{\zeta})$. We upper bound the RHS by using a taylor approximation w.r.t. $\delta_{\zeta}$. To do so, we first argue that $\sup_{\zeta\in S_{\ell}}|\delta_{\zeta}|\to 0$ as $\ell\to\infty$, which can be argued using the following lemma.
\begin{lemma}
\label{lem:MC:fundamental}
    Consider a triangular array of stochastic matrices $(P_{n,m})_{1\leq m\leq a_n, n\geq 1
    }\in \R^{q\times q}$, where $a_n\to \infty$ as $n\to\infty$, and it satisfies the following. For every $1\leq m\leq a_n$, $\pi$ is the stationary distribution of $P_{n,m}$, and there exists a constant $c>0$ such that the minimial element of $P_{n,m}$ is bounded below by $c$, i.e. $P_{n,m}(i,j)\geq c$ holds for $i,j\in [q]$. Then, the product $\prod_{m=1}^{a_n} P_{n,m}$ converges to $\bone\pi^{\sT}$ as $n\to\infty$.
\end{lemma}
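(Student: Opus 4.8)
The plan is to invoke the classical Dobrushin contraction (ergodicity) coefficient. For a stochastic matrix $P\in\R^{q\times q}$, write $\delta(P):=\max_{i,j\in[q]}\big\|P(i,\cdot)-P(j,\cdot)\big\|_{\tv}$ for the maximal total-variation distance between two of its rows. The first step is the elementary bound: if every entry of $P$ is at least $c>0$, then $\big\|P(i,\cdot)-P(j,\cdot)\big\|_{\tv}=1-\sum_{k}\min\{P(i,k),P(j,k)\}\le 1-qc$, so $\delta(P)\le 1-qc<1$. The second ingredient is the submultiplicativity $\delta(P_1P_2)\le\delta(P_1)\delta(P_2)$ for stochastic $P_1,P_2$, which follows from the identity $\delta(P)=\sup_{\mu\ne\nu}\big\|\mu P-\nu P\big\|_{\tv}\big/\big\|\mu-\nu\big\|_{\tv}$, the supremum ranging over probability vectors $\mu,\nu$ (the supremum being attained at point masses recovers the row-difference formula). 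Applying these two facts to $Q_n:=\prod_{m=1}^{a_n}P_{n,m}$ gives $\delta(Q_n)\le(1-qc)^{a_n}\to 0$ as $n\to\infty$, since $a_n\to\infty$.

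It remains to identify the common limiting row as $\pi$. Here I would use that $\pi$ is the stationary distribution of each $P_{n,m}$, i.e.\ $\pi^{\sT}P_{n,m}=\pi^{\sT}$, and hence $\pi^{\sT}Q_n=\pi^{\sT}$. In particular $\pi^{\sT}=\sum_{i}\pi_i\,Q_n(i,\cdot)$ is a convex combination of the rows of $Q_n$, so for every $j\in[q]$,
\[
\big\|Q_n(j,\cdot)-\pi\big\|_{\tv}=\Big\|\sum_{i}\pi_i\big(Q_n(j,\cdot)-Q_n(i,\cdot)\big)\Big\|_{\tv}\le\sum_i\pi_i\,\delta(Q_n)\le(1-qc)^{a_n}.
\]
Letting $n\to\infty$ yields $\max_{j}\big\|Q_n(j,\cdot)-\pi\big\|_{\tv}\to 0$, which is precisely the statement that $\prod_{m=1}^{a_n}P_{n,m}\to\bone\pi^{\sT}$ entrywise.

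There is no serious obstacle here; the only points requiring care are bookkeeping. First, since we deal with a triangular array rather than a fixed sequence of matrices, the conclusion ``$\prod_{m=1}^{a_n}P_{n,m}$ converges'' must be read as $Q_n-\bone\pi^{\sT}\to 0$, and the estimate above is phrased accordingly. Second, one should observe that $qc\le 1$ automatically (each row of $P_{n,m}$ sums to $1$ and has $q$ entries, each $\ge c$), so $1-qc\in[0,1)$ and the geometric decay is genuine; in the degenerate case $qc=1$ each $P_{n,m}$ already equals $\bone\pi^{\sT}$. I would include the one-line verifications of the bound $\delta(P)\le 1-qc$ and of the submultiplicativity of $\delta$ for completeness, or alternatively cite a standard reference on Dobrushin coefficients.
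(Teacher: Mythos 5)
Your proof is correct and follows essentially the same route as the paper: both arguments rest on the Dobrushin/ergodicity coefficient (the paper's $\wb{\Delta}_n$ is exactly your $\delta$ evaluated on partial products), its submultiplicativity, and the observation that stationarity of $\pi$ pins down the common limiting row. The only difference is cosmetic: you obtain the slightly sharper contraction constant $1-qc$ where the paper settles for $1-c$, and you spell out the identification of the limit (writing $\pi$ as a convex combination of the rows of $Q_n$) rather than citing it from a reference.
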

\begin{proof}
    For $n\geq 1$ and $1\leq t \leq a_n$, consider the following distances:
    \beqn
\Delta_n(t):=\sup_{i \in [q]}\Big\|\Big(\prod_{m=1}^{t}P_{n,m}\Big)(i,\cdot)-\pi\Big\|_{\tv}\,,\;\;\;\;\;\;\;\wb{\Delta}_n(t):=\sup_{i,j \in [q]}\Big\|\Big(\prod_{m=1}^{t}P_{n,m}\Big)(i,\cdot)-\Big(\prod_{m=1}^{t}P_{n,m}\Big)(j,\cdot)\Big\|_{\tv}\,.
    \eeqn
    Then, since $\pi$ is the stationary distribution for $\prod_{m=1}^{t}P_{n,m}$, $\Delta_n(t)\leq \wb{\Delta}_n(t)$ holds (see e.g. proof of \cite[Lemma 4.10]{levin2008markov}). Moreover, a standard coupling argument (see e.g. \cite[Lemma 4.11]{levin2008markov}) shows that for any $n\geq 1$ and $1\leq s\leq s+t\leq a_n$, we have
    \beqn
    \begin{split}
    &\wb{\Delta}_n(t+s)\leq \wb{\Delta}_n(t)\wb{\Delta}_n(s;t)\,,\;\;\;\;\textnormal{where}\,,\\
    &\wb{\Delta}_n(s;t):=\sup_{i,j \in [q]}\Big\|\Big(\prod_{m=t+1}^{t+s}P_{n,m}\Big)(i,\cdot)-\Big(\prod_{m=t+1}^{t+s}P_{n,m}\Big)(j,\cdot)\Big\|_{\tv}\,.
    \end{split}
    \eeqn
    Finally, note that $\wb{\Delta}_n(1;t)\leq 1-c$ holds for any $0\leq t\leq a_n-1$ since every element of $P_{n,t+1}$ is bounded below by $c>0$. Thus, by the above submultiplicative property, we have that 
    \beqn
    \Delta_n(t)\leq \wb{\Delta}_n(t) \leq (1-c)^t\,.
    \eeqn
    By letting $t=a_n$, the inequality above establishes that $\prod_{m=1}^{a_n}P_{n,m}$ converges to $\bone\pi^{\sT}$. 
\end{proof}
To use Lemma~\ref{lem:MC:fundamental}, observe that under {\sf (SYM)}, the matrix $\Phi_{\zeta,s,t}\in \R^{q\times q}$ defined in \eqref{eq:Phi:s:t} is a stochastic matrix with stationary distribution $\pi$. Moreover, since we assumed that $\Phi$ is finite whose elements are positive weight functions, the elements of $\Phi_{\psi,s,t}$ are lower bounded by a constant $c_{\Psi}>0$. Recalling that $\Phi_{\zeta}:=\prod_{i=1}^{\ell}\Phi_{\psi_i,s_i,t_i}$ for a signature $\zeta=(\psi_1,s_1,t_1,\ldots, \psi_{\ell},s_{\ell},t_{\ell})$, it follows from Lemma~\ref{lem:MC:fundamental} that
\beqn
\lim_{\ell\to\infty}\sup_{\zeta\in S_{\ell}}\big|\delta_{\zeta}\big|\equiv \lim_{\ell\to\infty}\sup_{\zeta\in S_{\ell}}\big|\tr(\Phi_{\zeta})-1\big|=0\,.
\eeqn
Hence, the constants $(\delta_{\zeta})_{\zeta\in \cup_{\ell\geq 1}S_{\ell}}$ are bounded from $-1$ and $\infty$. Therefore, by using a taylor approximation for the term $\log(1+\delta_{\zeta})$ in the RHS of \eqref{eq:bound:sec:mo:X}, we can further bound
\beqn
\E X_n^2\leq C\Bigg(\sum_{\ell=K_n'+1}^{\infty}\sum_{\zeta\in S_{\ell}}\Big\{\la_{\zeta}\delta_{\zeta}^2+\la_{\zeta}^2\delta_{\zeta}^4\Big\}+\bigg(\sum_{\ell=K_n'+1}^{\infty}\sum_{\zeta\in S_{\ell}}\la_{\zeta}\delta_{\zeta}^2\bigg)^2+n^{-1/3}\Bigg)\,,
\eeqn
where $C>0$ only depends on $k,q,\Psi$. Since for $d<d_{\ks}$, $\sum_{\ell\geq 1}\sum_{\zeta\in S_{\ell}}\la_{\zeta}\delta_{\zeta}^2<\infty$ holds by Lemma~\ref{lem:lambda:equality}, the RHS tends to $0$ as $n\to\infty$. Therefore, $X_n\pto 0$, which concludes the proof of~\eqref{eq:goal:second}. 

\subsection{Proof of Lemma~\ref{lem:resampling}}
Let $\bG^\star_{\ga}(n,m,\bsig^\star)$ be the factor graph obtained from $\ga$ resampling procedure starting from $\bG^\star(n,m,\bsig^\star)$. By definitions of the planted model and the resampling procedures, the factor graph  $\bG^\star_{\ga}(n,m,\bsig^\star)$ is distributed as follows. Conditional on $\bsig^\star=\sig$, independently draw for each clause $a\in F$ the neighborhood $\delta a$ and the weight function $\psi_a$ from the distribution
\beqn
\P\big(\delta a= (v_1,\ldots, v_k)\,,\,\psi_a=\psi\big)=\frac{p(\psi)}{n^k}\bigg( \frac{(1-\ga)\psi(\sigma_{v_1},\ldots, \sigma_{v_k})}{\E_{p,u}\big[\bpsi(\sig_{\bom})\big]}+\frac{\ga}{\xi}\bigg)\,.
\eeqn
On the other hand, let $(\bG^\star)'(n,m,\bsig^\star)\equiv (\bG^\star)'(n,m,\bsig^\star, p_{\ga},\pi)$ be the planted model with $\ga$-modified weight functions defined in \eqref{eq:modified:weights}. Then, conditional on $\bsig^\star=\sig$, the neighborhood $\delta a$ and the weight function $\psi_a$ in $(\bG^\star)'(n,m,\sig)$ are independently drawn from a slightly different distribution 
\beqn
\P\big(\delta a= (v_1,\ldots, v_k)\,,\,\psi_a=\psi\big)=\frac{p(\psi)}{n^k}\cdot \frac{(1-\ga)\psi(\sigma_{v_1},\ldots, \sigma_{v_k})+\ga \xi}{\xi}\,.
\eeqn
Hence, given $\bsig^\star=\sig$, the likelihood ratio of $\bG^\star_{\ga}\equiv \bG^\star_{\ga}(n,\bm,\bsig^\star)$ and $(\bG^\star)'\equiv (\bG^\star)'(n,\bm,\bsig^\star)$ evaluated at a factor graph $G=(V,F,E,(\psi_a)_{a\in F})$ is given by
\beq\label{eq:LR:resampled}
\frac{\P\big(\bG^\star_{\ga}=G\bgiven\bsig^\star=\sig\big)}{\P\big((\bG^\star)'=G\bgiven \bsig^\star=\sig\big)}=\prod_{a\in F}\bigg(1+\frac{\ga\big(\E_{p,u}[\bpsi(\sig_{\bom})]-\xi\big)}{\ga \xi+(1-\ga)\psi_a(\sig_{\delta a})}\bigg)\,.
\eeq
Observe that under {\sf (SYM)}, Lemma~\ref{lem:main:approx} shows that $\E_{p,u}[\bpsi(\sig_{\bom})]-\xi=O_p(n^{-1})$. More precisely, for $\sig\in [q]^V$, define the vector $R_{\sig}\equiv \big(R_{\sig}(i)\big)_{i\leq q}\in \R^q$ by the empirical distribution of $\sig$:
\beq\label{eq:def:sig:empirical}
R_{\sig}(i):=\frac{1}{n}\sum_{v\in V}\one\{\sig_v=i\}\,.
\eeq
Then, by Lemma~\ref{lem:main:approx}, for any $\sig\in [q]^V$, we have the bound
\beqn
\Big|\E_{p,u}\big[\bpsi(\sig_{\bom})\big]-\xi\Big|\leq \frac{C}{n}\big\|\sqrt{n}(R_{\sig}-\pi)\big\|_1^2\,,
\eeqn
where $C\equiv C_{k,q,\Psi}>0$. Thus, on the event $\big\{\big\|\sqrt{n}(R_{\sig}-\pi)\big\|_1\leq K\big\}$, which happens with probability $1-o_K(1)$ under $\bsig^\star\sim \pi^{\otimes n}$ by the central limit theorem, we have that $\big|\E_{p,u}\big[\bpsi(\sig_{\bom})\big]-\xi\big|=O_{k,q,\Psi}\big(\frac{K^2}{n}\big)$ holds. Therefore, using this bound in the RHS of \eqref{eq:LR:resampled} shows that on the event $\big\{\big\|\sqrt{n}(R_{\sig}-\pi)\big\|_1\leq K\big\}$, we have
\beqn
\frac{\P\big(\bG^\star_{\ga}=G\bgiven\bsig^\star=\sig\big)}{\P\big((\bG^\star)'=G\bgiven \bsig^\star=\sig\big)}= \exp\Bigg(O_{k,q,\Psi}\bigg(\frac{K^2|F|}{n}\bigg)\Bigg)\,.
\eeqn
Since the number of clauses $\bm$ is Poisson with mean $dn/k$ for both $\bG^\star_{\ga}$ and $(\bG^\star)'$, we have $|F|\leq Cn$ holds with probability tending to $1$ under both models. Therefore, we conclude that $(\bG^\star_{\ga},\bsig^\star)$ and $((\bG^\star)',\bsig^\star)$ are contiguous (see e.g. \cite[Proposition 9.47]{Janson95random}).

\section{Applications to hypergraph stochastic block models}
\label{sec:appendix:HSBM}
In this section, we prove Theorems~\ref{thm:HSBM:contiguity}, \ref{thm:optimal:power:HSBM}, and \ref{thm:optimal:cycle:test:HSBM} by applying our results for the planted factor models. Throughout, we fix $k,q\geq 2$ and $\pi\in \PPP([q])$. Also, we recall that $\GGG_n$ denotes the event consisting of factor graphs that satisfy \ref{item:GGG:a} and \ref{item:GGG:b}. We denote by $(\wbG^{\star},\bsig^\star)$ the planted model $(\bG^\star,\bsig^\star)$ conditioned on the event $\GGG_n$. Then, $\wbG^\star$ is a $k$-uniform hypergraph with no multiple edges by viewing each clause as an hyperedge.

\subsection{Proof of Theorem~\ref{thm:optimal:power:HSBM} and Theorem~\ref{thm:optimal:cycle:test:HSBM}}
\label{subsec:HSBM:power}
First, we prove that $\bGH\sim \GG^{\sf H}(n,M,\pi)$ is mutually contiguous with respect to $\wbG^{\star}$ with a particular choice of weight function defined as follows. By viewing the symmetric tensor $M$ of order $k$ as a weight function $M:[q]^k\to\R_{+}$, let $p_M$ be the prior on the weight functions which puts all of its mass on $M$
\begin{equation}\label{eq:prior:weight:HSBM}
p_M=\delta_{M}\,.
\end{equation}
That is, there is a single weight function $\Psi\equiv \{M\}$. Then, we consider the planted model
\begin{equation*}
    \bG^\star_M\equiv \bG^\star(n,\bm, \bsig^\star,p_M,\pi)\,,\;\;\;\;\textnormal{where}\;\;\;\;\;\bm\sim \Poi(dn/k)\,,
\end{equation*}
where $d$ is the average degree of the HSBM defined in \eqref{eq:degree:HSBM}. We let $\wbG^{\star}_M$ be the $k$-uniform hypergraph obtained from $\bG^\star_M$ by conditioning on the event $\GGG_n$.

\begin{lemma}\label{lem:HSBM:planted:contiguity}
  Let $M$ be a symmetric tensor of order $k$ with positive entries such that the degree condition~\eqref{eq:degree:condition} holds for $M_0= M/d$. Then, the total variation distance between $(\bG^\star_{\HSBM},\bsig^\star)$ and $(\wbG^{\star}_M,\bsig^\star)$ tends to $0$ as $n\to\infty$:
\beqn
\lim_{n\to\infty}
\tv\Big(\big(\bG^\star_{\HSBM},\bsig^\star\big), \big(\wbG^{\star}_M,\bsig^\star\big)\Big)=0\,.
\eeqn
\end{lemma}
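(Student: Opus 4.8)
The plan is to compare the two models clause-by-clause (equivalently, hyperedge-by-hyperedge) and show that conditioning on the event $\GGG_n$ makes the planted factor model $\bG^\star_M$ essentially coincide with the HSBM. First I would describe the generative processes side by side. In the HSBM $\bG^\star_{\HSBM}\sim \GG^{\sf H}(n,M,\pi)$, given $\bsig^\star=\sig$, every one of the $\binom{n}{k}$ potential hyperedges $\{v_1,\ldots,v_k\}$ of distinct vertices is included independently with probability $M(\sigma_{v_1},\ldots,\sigma_{v_k})/\binom{n}{k-1}$; by the degree condition \eqref{eq:degree:condition} for $M_0=M/d$ the expected number of hyperedges is $(1+o(1))dn/k$, and since each inclusion probability is $O(n^{-(k-1)})$, the total hyperedge count is asymptotically $\Poi(dn/k)$, while the joint law of the included hyperedge set is, up to a $1-o(1)$ factor on the relevant events, a Poissonization. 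In the planted factor model $\bG^\star_M$, given $\bsig^\star=\sig$, there are $\bm\sim\Poi(dn/k)$ clauses, each drawn i.i.d.\ with $\delta a$ ordered and chosen from $V^k$ with probability proportional to $M(\sigma_{v_1},\ldots,\sigma_{v_k})$ (the single weight function is $M$); by symmetry of $M$ under $S_k$, conditioning on $\GGG_n$ forgets the ordering and forbids repeated/degenerate clauses, so $\wbG^\star_M$ is a $k$-uniform simple hypergraph.

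The key computation is then a direct total-variation estimate between the two conditional laws given $\bsig^\star=\sig$, for $\sig$ in the high-probability ``balanced'' set $\AA_{\bal}$ (cf.\ \eqref{eq:bal:event}), where the empirical distribution $R_\sig$ is within $n^{-1/3}$ of $\pi$. On $\AA_{\bal}$, for each unordered $k$-set $S=\{v_1,\ldots,v_k\}$ of distinct vertices, the probability that $\bG^\star_M$ places a clause on $S$ (summing over the $k!$ orderings, after conditioning away coincidences) is $k!\cdot n^{-k}\cdot M(\sig_S)/\E_{p,u}[M(\sig_{\bom})]$ per clause, and with $\bm\sim\Poi(dn/k)$ clauses the number of clauses on $S$ is asymptotically $\Poi$ with mean $(1+o(1))\,\frac{d}{k}\cdot\frac{k!}{n^k}\cdot\frac{n M(\sig_S)}{k\,\E_{p,u}[M(\sig_{\bom})]}\cdot\frac{k}{d}$... more cleanly: using $\E_{p,u}[M(\sig_{\bom})]=d+O(n^{-1/3})$ on $\AA_{\bal}$ and $\binom{n}{k-1}=(1+o(1))n^{k-1}/(k-1)!$, both models put, on each $S$, an (asymptotically) independent $\Poi$ (resp.\ Bernoulli) random variable with mean $(1+o(1))M(\sig_S)/\binom{n}{k-1}$. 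The Poisson-versus-Bernoulli discrepancy contributes $O(n^{-(2(k-1))})$ per $k$-set, hence $O(n^{-(k-1)})=o(1)$ after summing over $\binom{n}{k}$ sets; the $O(n^{-1/3})$ errors in $\E_{p,u}[M(\sig_{\bom})]$ contribute $O(n^{-1/3})\cdot(\text{number of clauses})=O(n^{2/3})\cdot n^{-1}$... I would in fact control this via the chain rule for KL / the Azuma-type argument already used in Fact~\ref{fact:concentration}, bounding $\DKL\big(\law(\bG^\star_M\mid\sig)\,\|\,\law(\bG^\star_{\HSBM}\mid\sig)\big)$ on $\AA_{\bal}$ by a sum over clauses of per-clause relative entropies, each $O((\log n/\sqrt n)^2)$ or $O(n^{-1/3})^2$, times $O(n)$ clauses, giving $o(1)$, and then Pinsker. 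The probability that $\GGG_n$ fails under either conditional law is $O(1/n)$ by Fact~\ref{fact:cycle}-(3), so conditioning on $\GGG_n$ changes the law by $O(1/n)$ in total variation; combining, $\tv\big((\wbG^\star_M,\bsig^\star),(\bG^\star_{\HSBM},\bsig^\star)\big)\to 0$, with the $\bsig^\star$-coordinate matched automatically since both are built from $\bsig^\star\iid\pi$.

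The main obstacle is the bookkeeping around the \emph{Poissonization} of the hyperedge count and the passage from ordered $k$-tuples drawn from $V^k$ (the factor-model convention, which allows repeated coordinates and all $k!$ orderings) to unordered simple $k$-sets (the HSBM convention). The symmetry assumption \eqref{eq:psi:symm} on $\Psi$ is exactly what makes the ordering immaterial after conditioning on $\GGG_n$, and the degree condition \eqref{eq:degree:condition} is exactly what makes the Poisson parameter per $k$-set match $M(\sig_S)/\binom{n}{k-1}$ uniformly; the remaining work is to verify that the negligible events (degenerate tuples, repeated clauses, imbalanced $\bsig^\star$, atypical clause count) all have probability $o(1)$ and that the Poisson-vs-Bernoulli total-variation cost per coordinate is summably small. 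Once these error terms are assembled, the conclusion $\lim_{n\to\infty}\tv\big((\bG^\star_{\HSBM},\bsig^\star),(\wbG^\star_M,\bsig^\star)\big)=0$ follows by the triangle inequality.
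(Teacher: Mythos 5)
Your overall strategy is the one the paper uses: compare the conditional laws given $\bsig^\star=\sig$ potential-hyperedge-by-potential-hyperedge, using that both are product measures over $k$-sets $S$ of distinct vertices and that on the balanced event $\AA_{\bal}$ the per-$S$ Bernoulli parameters agree up to small relative error. The paper implements this by writing out the likelihood ratio $Q(G,\sig)=\P(\wbG^\star_M=G,\bsig^\star=\sig)/\P(\bG^\star_{\HSBM}=G,\bsig^\star=\sig)$ explicitly and showing $Q(\bG^\star_{\HSBM},\bsig^\star)\pto 1$ by an exact cancellation between the hyperedge and non-hyperedge products; you instead propose to bound a divergence and apply Pinsker. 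Those are genuinely two acceptable routes, but your bookkeeping contains gaps that you would need to fix.

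First, the naive TV summation does not work. The per-$S$ discrepancy in Bernoulli parameters is dominated not by the Poisson-vs-Bernoulli (``$\phi$ vs $\phi/(1+\phi)$'') term $O(n^{-2(k-1)})$ but by the empirical-distribution error: writing $\phi_\sig(\ui)=(1+h_\sig(\ui)/n)\,M(\ui)/\binom{n}{k-1}$ with $|h_{\bsig^\star}|\lesssim\log n$ w.h.p., the correction to the Bernoulli parameter is of size $O(n^{-k}\log n)$, which for every $k\geq 2$ dominates $n^{-2(k-1)}$. Summing $n^{-k}\log n$ over $\binom{n}{k}$ sites gives $O(\log n)$, not $o(1)$; and even your claimed $O(n^{-(k-1)})$ from the Poisson-vs-Bernoulli piece is $O(1)$, not $o(1)$, when $k=2$. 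You do propose the KL fix, which is the right move because for product measures KL (or squared Hellinger) is additive and the per-site contribution is $\Delta^2/p = O(n^{-(k+1)}\log^2 n)$, summing to $O(n^{-1}\log^2 n)$. But your statement of ``per-clause relative entropies $O((\log n/\sqrt n)^2)$ times $O(n)$ clauses'' mixes up realized hyperedges with potential locations: the product measure has $\binom{n}{k}=\Theta(n^k)$ independent coordinates, not $O(n)$, and the per-coordinate KL is much smaller than $(\log n/\sqrt n)^2$. As written the bound gives $O(\log^2 n)$, which does not close.

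Second, the claim that ``conditioning on $\GGG_n$ changes the law by $O(1/n)$ in total variation'' is incorrect and also not needed. $\P(\bG^\star_M\notin\GGG_n)$ is bounded away from $0$ for every $n$ (a uniform random $k$-tuple from $V^k$ has a repeated coordinate with probability $\Theta(1/n)$, and there are $\Theta(n)$ clauses, so the expected number of degenerate clauses is $\Theta(1)$; for $k=2$ multi-edges are also $\Theta(1)$). Fact~\ref{fact:cycle}-(3) only says that, for $k\geq 3$, the \emph{multi-clause} event (H2) alone has probability $O(1/n)$, not that $\GGG_n^c$ does. What actually makes the comparison go through is that the conditioning factors over $k$-sets: given $\bsig^\star$, the clause counts on disjoint ordered slots are independent Poissons, so conditioning on $\GGG_n$ produces, on each $k$-set $S$, an independent Bernoulli with parameter $\phi_\sig(S)/(1+\phi_\sig(S))$. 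That is the object you should compare directly with the HSBM's $\Ber\big(M(\sig_S)/\binom{n}{k-1}\big)$, as the paper does. The extra $\phi^2$ term from $\phi/(1+\phi)=\phi-\phi^2+\cdots$ is exactly what produces the $\one\{k=2\}M(\ui)$ correction in the paper's exponent and is the reason the $k=2$ case is not an ``$O(1/n)$'' perturbation.

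With those two fixes — track the $h_\sig/n$ correction as the dominant per-site error, and sum an additive divergence over $\binom{n}{k}$ coordinates rather than over $O(n)$ realized clauses — your argument recovers the paper's conclusion by a divergence-plus-Pinsker route rather than the paper's in-probability convergence of the likelihood ratio $Q$. Both routes work, and yours has the mild advantage of not needing to exhibit the exact cancellation by hand, at the price of slightly more delicate per-coordinate estimates.
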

\begin{proof}
Let $Q(G,\sig):=\frac{\P(\wbG^\star_M=G,\bsig^\star=\sig)}{\P(\bG^\star_{\HSBM}=G,\bsig^\star=\sig)}$ be the likelihood ratio between $(\bG^\star_{\HSBM},\bsig^\star)$ and $(\wbG^{\star}_M,\bsig^\star)$. Then, it is standard to see that
\beqn
\tv\Big(\big(\bG^\star_{\HSBM},\bsig^\star\big), \big(\wbG^{\star}_M,\bsig^\star\big)\Big)=\E\Big[\big(1-Q(\bG^\star_{\HSBM},\bsig^\star)\big)\one\big\{Q(\bG^\star_{\HSBM},\bsig^\star)\leq 1\big\}\Big]\,.
\eeqn
Since the term inside the expectation in the right hand side is bounded, if we establish that
  \beq\label{eq:Q:conv}
  Q(\bG^\star_{\HSBM},\bsig^\star)\pto 1\,,
  \eeq
  then this implies our goal. Thus, we aim to show \eqref{eq:Q:conv} for the rest of the proof.

Given a factor graph $G=\big(V,F,E,(\psi_a)_{a\in F}\big)$ and a $k$-tuple of variables $\omega \equiv (v_1,\ldots, v_k)\in V^k$, let $F_{\omega}(G)$ be the set of $a\in F$ such that $\delta a=(v_1,\ldots v_k)$ and denote its size by $f_{\omega}(G)\equiv |F_{\omega}(G)|$. For $\bG^\star_M\equiv \bG^\star(n,\bm,\bsig^\star,p_M,\pi)$, we have by Poisson thinning that conditional on $\bsig^\star$, $\big(f_{\omega}(\bG^\star_M)\big)_{\omega\in V^k}$ are independent Poisson random variables with mean
\begin{equation*}
\E \Big[f_{\omega}(\bG^\star_M)\Bgiven \bsig^\star\Big]=\frac{dn}{k}\frac{M(\bsig^\star_{\omega})}{\sum_{\omega^\prime\in V^k}M(\bsig^\star_{\omega^\prime})}\,.
\end{equation*}
Observe that $G\in \GGG_n$ if and only if $f_{\omega}(G)=0$ for any $\omega=(v_1,\ldots v_k)$ such that $v_1,\ldots v_k$ are not distinct and $f_{\omega}(G)\leq 1$ holds for any $\omega\in V^k$. Thus, conditional on $\bsig^\star=\sig$, each possible hyperedge $(v_1,\ldots v_k)$ in $\wbG^{\star}_M$, where $v_1,\ldots,v_k$ are distinct, is included independently with probability $\frac{\phi(\sigma_{v_1},\ldots,\sigma_{v_k})}{1+\phi(\sigma_{v_1},\ldots,\sigma_{v_k})}$. Here, for $\ui=(i_1,\ldots,i_k)\in [q]^k$, the function $\phi(\ui)\equiv\phi_{\sig}(\ui)$ is defined by 
\begin{equation}\label{eq:def:phi}
    \phi(\ui):=\frac{d(k-1)!  M(\ui)}{\sum_{\omega^\prime\in V^k}M(\sig_{\omega^\prime})}\cdot n=\frac{d(k-1)!M(\ui) }{\sum_{i_1^\prime,\ldots,i_k^\prime}M(i_1^\prime,\ldots,i_k^\prime)\prod_{s=1}^{k}R_{\sig}(i^\prime_s)}n^{-(k-1)}\,,
\end{equation}
where in the last equality, $R_{\sig}\equiv \big(R_{\sig}(i)\big)_{1\leq i \leq  q}$ denotes the empirical distribution of $\sig$ (cf. \eqref{eq:def:sig:empirical}). Thus, if we let $E_{\ui}\equiv E_{\ui}(G)$ denote the number of hyperedges in $G$ such that the communities of the end points of the hyperedges are given by $\ui=(i_1,\ldots,i_k)$, then we have that
\beqn
\P\big(\wbG^{\star}_M=G\bgiven \bsig^\star=\sig\big)
    =\prod_{\ui=(i_1,\ldots, i_k)\in [q]^k}\bigg(\frac{\phi(\ui)}{1+\phi(\ui)}\bigg)^{E_{\ui}}\cdot \bigg(\frac{1}{1+\phi(\ui)}\bigg)^{n^k \prod_{s=1}^{k}R_{\sig}(i_s)-E_{\ui}}\,.
\eeqn
Hence, $Q(G,\sig)$ can be expressed by
\beq\label{eq:express:Q}
\begin{split}
Q(G,\sig)
&=\frac{\P\big(\wbG^{\star}_M=G\bgiven \bsig^\star=\sig\big)}{\P\big(\bGH=G \bgiven \bsig^\star=\sig\big)}\\
&= \prod_{\ui\in [q]^k}\bigg(\frac{\phi(\ui)\cdot \binom{n}{k-1}}{\big(1+\phi(\ui)\big)\cdot 
M(\ui)}\bigg)^{E_{\ui}}\cdot \Bigg(\Big(1+\phi(\ui)\Big)\bigg(1-\frac{M(\ui)}{\binom{n}{k-1}}\bigg)\Bigg)^{-n^k \prod_{s=1}^{k}R_{\sig}(i_s)+E_{\ui}}
\end{split}
\eeq
To this end, we estimate the RHS when $G=\bG^\star_{\HSBM}$ and $\sig=\bsig^\star$. Observe that $\phi(i_1,\ldots i_k)$ can be estimated as follows. If we let $Z_{\sig}:=\sqrt{n}(R_{\sig}-\pi)$, then $Z_{\bsig^\star}$ is asymptotically normal by the central limit theorem. In particular, $Z_{\bsig^\star}$ has $O(1)$ fluctuations. Moreover, since $M_0=M/d$ satisfies the condition~\eqref{eq:degree:condition}, we have
\beqn
\sum_{\ui=(i_1,\ldots i_k)\in [q]^k}M(i_1,\ldots,i_k)\prod_{s=1}^{k}\Big(\pi_{i_s}+n^{-1/2}Z_{\sig}(i_s)\Big)=d+O\bigg(\frac{\big\|Z_{\sig}\big\|_{\infty}^2\cdot\big(\frac{Z_{\sig}}{\sqrt{n}}\vee 1\big)^{k-2}}{n}\bigg)\,.
\eeqn
Thus, by plugging in the estimate above to \eqref{eq:def:phi}, we can approximate
\begin{equation}\label{eq:connectivity:prob}
\phi_{\sig}(\ui)=\bigg(1+\frac{h_{\sig}(\ui)}{n}\bigg)\cdot\frac{M(\ui)}{\binom{n}{k-1}}\,,\;\;\;\;\textnormal{where}\;\;\;\;\;h_{\bsig^\star}(i_1,\ldots,i_k)\leq \log n\;\;\;\;\textnormal{holds w.h.p.}
\end{equation}
Moreover, note that
\beq\label{eq:number:edges}
    E_{\ui}(\bG^\star_{\HSBM})=\frac{n^k M(\ui)\prod_{s=1}^{k}\pi_{i_s}}{\binom{n}{k-1}}+o_{\P}(n^{2/3})=n\cdot (k-1)! M(\ui)\prod_{s=1}^{k}\pi_{i_s}+o_{\P}(n^{2/3})\,,
\eeq
where $Y=o_{\P}(n^{2/3})$ denotes a term that $n^{-2/3}Y\pto 0$ holds under $(\bG^\star_{\HSBM}, \bsig^\star)$. Hence, by using the estimates \eqref{eq:connectivity:prob} and \eqref{eq:number:edges}, we have the approximation
\beqn
\bigg(\frac{\phi_{\bsig^\star}(\ui)\cdot \binom{n}{k-1}}{\big(1+\phi_{\bsig^\star}(\ui)\big)\cdot 
M(\ui)}\bigg)^{E_{\ui}(\bG^\star_{\HSBM})}=\exp\Bigg((k-1)!\Big(h_{\bsig^\star}(\ui)-\one\{k=2\}M(\ui)\Big)M(\ui)\prod_{s=1}^{k}\pi_{i_s}+o_{\P}(1)\Bigg)\,.
\eeqn
Similarly, the second term of the product in \eqref{eq:express:Q} can be approximated by
\beqn
\begin{split}
&\Bigg(\Big(1+\phi_{\bsig^\star}(\ui)\Big)\bigg(1-\frac{M(\ui)}{\binom{n}{k-1}}\bigg)\Bigg)^{-n^k \prod_{s=1}^{k}R_{\bsig^\star}(i_s)+E_{\ui}(\bG^\star_{\HSBM})}\\
&=\exp\Bigg(-(k-1)!\Big(h_{\bsig^\star}(\ui)-\one\{k=2\}M(\ui)\Big)M(\ui)\prod_{s=1}^{k}R_{\bsig^\star}(i_s)+o_{\P}(1)\Bigg)\\
&=\exp\Bigg(-(k-1)!\Big(h_{\bsig^\star}(\ui)-\one\{k=2\}M(\ui)\Big)M(\ui)\prod_{s=1}^{k}\pi_{i_s}+o_{\P}(1)\Bigg)\,,
\end{split}
\eeqn
where the last approximation holds since $R_{\bsig^\star}=\pi+o_{\P}(n^{-1/3})$. Therefore, the RHS in the two displays above exactly cancels out, and combing with \eqref{eq:express:Q} yields that $Q(\bG^\star_{\HSBM},\bsig^\star)\pto 1$.

\end{proof}
Let $\wt{d}_{\ast}(p_{M_0},\pi)$ denote the weak recovery threshold w.r.t. $\wbG^\star_{d\cdot M_0}$. That is, recalling the general definition of weak recovery in Definition~\ref{def:weak:recovery:general}, let
\beqn
\wt{d}_{\ast}(p_{M_0},\pi):=\inf\Big\{d>0: \textnormal{weak recovery is possible at $d$ for $\wbG^\star_{d\cdot M_0}$}\Big\}\,.
\eeqn
As a consequence of Lemma~\ref{lem:HSBM:planted:contiguity}, we have
\begin{equation}\label{eq:threshold:same}
d_{\ast}^{\sf H}(M_0,\pi)=\wt{d}_{\ast}(p_{M_0},\pi)\,.
\end{equation}
To transfer Theorem~\ref{thm:likelihood:conv} for planted factor models to HSBM, we need to further prove that the weak recovery threshold is unchanged after conditioning on $\GGG_n$.
\begin{lemma}\label{lem:identical:threshold:conditioning}
  Let $M_0$ be a symmetric tensor of order $k$ with positive entries such that the degree condition~\eqref{eq:degree:condition} is satisfied. Then, we have $\wt{d}_{\ast}(p_{M_0},\pi)=d_{\ast}(p_{M_0},\pi)$. and $d_{\ks}(p_{M_0},\pi)=d_{\ks}^{\sf H}(M_0,\pi)$.
\end{lemma}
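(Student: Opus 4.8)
\textbf{Proof proposal for Lemma~\ref{lem:identical:threshold:conditioning}.}

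The plan is to prove the two equalities $\wt{d}_{\ast}(p_{M_0},\pi)=d_{\ast}(p_{M_0},\pi)$ and $d_{\ks}(p_{M_0},\pi)=d_{\ks}^{\sf H}(M_0,\pi)$ separately. For the first equality, the key point is that conditioning the planted factor model $\bG^\star_{M_0\cdot d}$ on the event $\GGG_n$ (which forces every clause to connect $k$ distinct variables and forbids repeated neighborhoods) does not change which inference tasks are solvable, because $\P(\bG^\star_{M_0\cdot d}\in\GGG_n)$ is bounded away from zero uniformly in $n$. Indeed, by Fact~\ref{fact:cycle}-(3), $\P(\bG^\star_{M_0\cdot d}\in\GGG_n)=\P(C(\bG^\star_{M_0\cdot d})=0)+O(1/n)$, and since $C(\bG^\star_{M_0\cdot d})$ converges in distribution to a Poisson random variable with finite mean (the sum of finitely many $\la^\star_\zeta$ over $\zeta\in S_1$ together with $S_2$ when $k=2$), we have $\liminf_n\P(\bG^\star_{M_0\cdot d}\in\GGG_n)>0$. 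I would then argue: if weak recovery is possible at $d$ for $\wbG^\star_{M_0\cdot d}$, then applying the same estimator to the unconditioned model $\bG^\star_{M_0\cdot d}$ and using that conditioning on a positive-probability event can only shift $\E[A(\bsig^\star,\hat\sig)]$ by a bounded multiple of the conditioning probability, one gets a nontrivial overlap along a subsequence; conversely, if weak recovery is possible for $\bG^\star_{M_0\cdot d}$, one can run the estimator on $\bG^\star_{M_0\cdot d}$ conditioned to lie in $\GGG_n$. More precisely, for any estimator and any event $\AA$ with $\P(\AA)\geq c>0$, one has $\E[A(\bsig^\star,\hat\sig)\mid \AA]\geq (\E[A(\bsig^\star,\hat\sig)]-\P(\AA^c)\cdot q^{-1}_{\min})/\P(\AA)$ where $q^{-1}_{\min}=\max_\Gamma$-type crude bound on $A$; carefully tracking constants gives that the $\limsup$ of the overlap is $>\tfrac1q$ for one model iff it is for the other, hence $\wt d_\ast=d_\ast$.

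For the second equality, I would unwind the two definitions of the Kesten--Stigum threshold and check they coincide for the Dirac prior $p_{M_0}=\delta_{M_0}$. On the factor-model side, Definition~\ref{def:KS:factor} gives $d_{\ks}(p_{M_0},\pi)=((k-1)\la_{\ks})^{-1}$ with $\la_{\ks}=\max_{\la\in\Eig(\Xi_\ast)}|\la|$; by Lemma~\ref{lem:KS}, $\Xi_\ast=\Xi\circ\proj_\SS$ and, since there is a single weight function, $\Xi=\Phi_{M_0}\otimes\Phi_{M_0}$ where $\Phi_{M_0}(i,j)=\xi^{-1}\E_\pi[M_0(\bsig)\mid\bsigma_1=i,\bsigma_2=j]\pi_j$. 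A direct computation identifies $\Phi_{M_0}$ with the stochastic matrix $B$ of Eq.~\eqref{eq:def:B}: using the degree condition~\eqref{eq:degree:condition} one has $\xi=1$ (since $d$ has been normalized out, $\E_\pi[M_0(\bsig)]=1$), and averaging $M_0(i_1,\dots,i_{k-2},i,j)$ over the first $k-2$ coordinates against $\pi$ gives exactly $B(i,j)=\sum_{i_1,\dots,i_{k-2}}M_0(i_1,\dots,i_{k-2},i,j)\pi_j\prod\pi_{i_s}$. By Lemma~\ref{lem:SS:perp}, the eigenvalues of $\Xi$ on $\SS^\perp$ are $\{1,\la_2,\la_2,\dots,\la_q,\la_q\}$ where $\la_i$ are the eigenvalues of $\Phi_{M_0}=B$, so the eigenvalues of $\Xi$ on $\SS$ (equivalently the nonzero eigenvalues of $\Xi_\ast$) are the products $\la_i\la_j$ with $i,j\geq 2$ (after removing the $\SS^\perp$ part). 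Hence $\la_{\ks}=\max_{i,j\geq 2}|\la_i\la_j|=|\la_2|^2$, giving $d_{\ks}(p_{M_0},\pi)=((k-1)|\la_2|^2)^{-1}$. I would need to reconcile this with $d_{\ks}^{\sf H}(M_0,\pi)=|\la_2|^{-2}$ from Eq.~\eqref{eq:def:KS:HSBM}: these differ by the factor $(k-1)$, which is consistent because the degree parameter $d$ in the HSBM counts hyperedges-per-vertex while a vertex of a $k$-uniform hyperedge has $k-1$ hyperedge-neighbors inside that edge; the relevant branching number for the broadcast process is $(k-1)d$, so the two conventions agree once one accounts for this. I would verify this bookkeeping carefully, possibly by re-deriving $\alpha_\ell=\tr(B^\ell)-1$ in Theorem~\ref{thm:optimal:power:HSBM} from $\delta_\zeta=\tr(\Phi_\zeta)-1$ and matching the Poisson means $\frac1{2\ell}((k-1)d)^\ell$ there against $\la_\zeta=\frac1{2\ell}(d/k)^\ell\prod p(\psi_i)$ summed over the $((k-1))^\ell$ signatures of length $\ell$ with the single weight function.

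The main obstacle I anticipate is the constant/normalization bookkeeping in the second equality: making sure the factors of $(k-1)$, $k$, $(k-1)!$, and the Dirac-mass normalization $\xi=1$ all cancel correctly so that Definition~\ref{def:KS:factor} specializes exactly to Eq.~\eqref{eq:def:KS:HSBM}, and in particular confirming that $\Phi_{M_0}$ really is $B$ (not $B$ transposed or rescaled) under the ${\sf (SYM)}$/degree condition. The first equality is more routine once one has the uniform lower bound on $\P(\GGG_n)$ from Fact~\ref{fact:cycle}, but one must be slightly careful that weak recovery is defined with a $\limsup$ (Definition~\ref{def:weak:recovery}) while the conditioning event has probability bounded below only along the whole sequence, so passing between the conditioned and unconditioned models preserves the $\limsup$-threshold without issue. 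I would also remark that $\GGG_n$ is a tail-trivial-type event so contiguity arguments (Le Cam) could give an alternative route, but the direct overlap-comparison argument is cleaner here.
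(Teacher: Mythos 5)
Your second paragraph contains a correct calculation of $d_{\ks}(p_{M_0},\pi)$ for the Dirac-mass prior: $\Phi_{M_0}=B$, $\Xi_\ast=(B-\bone\pi^{\sT})^{\otimes 2}$, so $\la_{\ks}=|\la_2|^2$ and $d_{\ks}(p_{M_0},\pi)=\big((k-1)|\la_2|^2\big)^{-1}$.  The paper treats the identity $d_{\ks}(p_{M_0},\pi)=d_{\ks}^{\sf H}(M_0,\pi)$ as immediate, and under that calculation it really is immediate \emph{provided} one reads $d_{\ks}^{\sf H}$ as $\big((k-1)|\la_2|^2\big)^{-1}$; this is also the threshold that makes the infinite product in Theorem~\ref{thm:optimal:power:HSBM} converge (one needs $(k-1)d|\la_2|^2<1$ for $\sum_\ell\frac{((k-1)d)^\ell}{2\ell}\alpha_\ell^2<\infty$).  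Your attempt to explain the missing $(k-1)$ as a convention mismatch does not work: the $d$ used to define $d_{\ks}(p_{M_0},\pi)$ (via $\bm\sim\Poi(dn/k)$) and the $d$ in $\GG^{\sf H}(n,dM_0,\pi)$ are literally the same average degree per vertex, so there are not two conventions to reconcile.  The right conclusion is that the $(k-1)$ is absorbed into the definition and the equality is definitional; if you insist on the literal reading of Eq.~\eqref{eq:def:KS:HSBM} you would be proving the claim false for $k\geq 3$, which should have told you to re-examine the definitions rather than to wave at a branching-number heuristic.

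The first equality is where you have a genuine gap, and it is worse than ``slightly careful with $\limsup$.''  You propose to pass between $\bG^\star_{M}$ and $\wbG^\star_{M}$ by conditioning on $\GGG_n$, using only that $\P(\GGG_n)$ is bounded below.  That suffices for one direction (an estimator achieving $\E[A\mid\GGG_n]\geq\frac1q+\eps$ gives $\E[A]\geq\frac1q+c\eps$ on the unconditioned model, because $A\geq\frac1q$ always).  But the other direction fails: $\P(\GGG_n^c)$ is bounded \emph{away from zero}, so the decomposition
\begin{equation*}
\E[A]=\P(\GGG_n)\,\E[A\mid\GGG_n]+\P(\GGG_n^c)\,\E[A\mid\GGG_n^c]
\end{equation*}
together with $\E[A]\geq\frac1q+\eps$ does not force $\E[A\mid\GGG_n]>\frac1q$: all of the excess $\eps$ could be carried by $\GGG_n^c$, since $\E[A\mid\GGG_n^c]\leq\frac1q+\eps/(1-c)$ is perfectly consistent with the crude bound $A=O(1)$.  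Your proposed inequality $\E[A\mid\AA]\geq\big(\E[A]-\P(\AA^c)\cdot C\big)/\P(\AA)$ gives a useless lower bound when $\eps$ is small and $\P(\AA^c)$ is order one.  The paper is explicit that $\bG^\star_M$ is \emph{not} contiguous with respect to $\wbG^\star_M$ for exactly this reason, and resolves it by a construction you did not anticipate: they add clauses to $\wbG^\star_M$, sampled independently of $\bsig^\star$, to obtain a modified graph $\Breve{\bG}^\star_M$ that \emph{is} mutually contiguous with $\bG^\star_M$; this requires a Poisson coupling with an $L^2$ bound on the resulting likelihood ratios.  Because the added clauses carry no information about $\bsig^\star$, any estimator on $\Breve{\bG}^\star_M$ induces a (randomized) estimator on $\wbG^\star_M$, and the threshold equivalence then follows from contiguity plus the observation $A-\frac1q\in[0,1]$.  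Without this (or some comparable device showing the bad clauses add no usable overlap), the threshold equality is not established.
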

\begin{proof}
The final assertion that $d_{\ks}(p_{M_0},\pi)=d_{\ks}^{\sf H}(M_0,\pi)$ is immediate from the definition of KS thresholds for HSBM and planted factor models stated in \eqref{eq:def:KS:HSBM} and Definition~\ref{def:KS:factor} respectively. For the rest of the proof, we aim to prove that $\wt{d}_{\ast}(p_{M_0},\pi)=d_{\ast}(p_{M_0},\pi)$ holds by showing that
\[
\limsup_n  \E\left[ \sup_{\hat{\sig}(\wbG^\star_{M})} A(\bsig^\star, \hat{\sig}) -  \frac{1}{q} \right] = 0 \quad \Leftrightarrow \quad \limsup_n  \E\left[ \sup_{\hat{\sig}(\bG^\star_{M})} A(\bsig^\star, \hat{\sig}) -  \frac{1}{q} \right] = 0.
\]
Note that $A(\bsig^\star, \hat{\sig}) -  \frac{1}{q}\in [0,1]$ and so the above equivalence would follow if $\wbG^\star_{M}$ and $\bG^\star_{M}$ were mutually contiguous since the events $\{A(\bsig^\star, \hat{\sig}) -  \frac{1}{q} \geq \epsilon\}$ would both either tend to 0 or not.  Unfortunately, while $\wbG^\star_{M}$ is contiguous with respect to $\bG^\star_{M}$ (which gives one direction of the equivalence) $\bG^\star_{M}$ is not contiguous with respect to $\wbG^\star_{M}$ because there is a constant probability of clauses that violate \ref{item:GGG:a} or \ref{item:GGG:b}.  Instead, we will add clauses to $\wbG^\star_{M}$, using no information about $\bsig^\star$ so that the resulting graph $\Breve{\bG}^\star_{M}$ is mutually contiguous with respect to $\bG^\star_{M}$.  Let
\[
\CC = \{\uj\in \Z^k : 1\leq j_1\leq j_2\leq \ldots\leq j_k \leq n\}, 
\]
be the set of increasing integer sequences of length $k$ between 1 and $n$.  For $\uj\in \CC$ let $N_{\uj}$ be the number of clauses in $\bG^\star_{M}$ with vertex set given by a permutation of $v_{j_1},v_{j_2},\ldots,v_{j_k}$  (counting multiplicity if there are repeated $j_i$).  By construction of $\bG^\star_{M}$, conditional on $\bsig^\star$, the $N_{\uj}$ are independent Poisson random variables with means $p_{\uj}$ where
\[
C_1^{-1} n^{-(k-1)} \leq p_{\uj} \leq C_1 n^{-(k-1)}
\]
for some $C_1>1$ which may depend on the model but not on $n$ or $\uj$.  We will let $\DD$ denote the set of $\uj$ with all $j_i$ distinct.  The $p_{\uj}$ satisfy
\begin{equation}\label{eq:puj.sum}
\sum_{\uj\in \CC} p_{\uj} \leq CN, \quad \sum_{\uj\in \CC\setminus \DD} p_{\uj} \leq C.
\end{equation}
Letting $\wN_{\uj}$ denote the number of clauses of $\wbG^\star_{M}$ with vertices $\uj$, the $\wN_{\uj}$ are conditionally independent with distribution Poisson with mean $p_{\uj}$ conditioned to be at most 1 if $\uj\in \DD$ and are equal to 0 if $\uj\in \CC\setminus\DD$.

Next let $N'_{\uj}$ be independent Poisson with mean $n^{-(k-1)}$ and define the transformed graph $\Breve{\bG}^\star_{M}$ by adding clauses such that
\[
\bN_{\uj} := \begin{cases}
    N'_{\uj} &\hbox{if } \wN_{\uj} \in \DD, N'_{\uj} \geq 2\\
    N'_{\uj} &\hbox{if } \wN_{\uj} \in \CC\setminus\DD\\
    \wN_{\uj} &\hbox{otherwise.}
\end{cases}
\]
Note that we do not use any information about $\bsig^\star$ to construct $\Breve{\bG}^\star_{M}$ from $\wbG^\star_{M}$.  For $\uj\in\DD$, the distribution of $\bN_{\uj}$ satisfies
\[
\frac{\P[\bN_{\uj} = \ell]}{\P[N_{\uj} = \ell]} = \begin{cases}
    \frac{\P[N'_{\uj} \leq 2]}{\P[N_{\uj} \leq 2]} &\hbox{if } \ell \leq 1\\
    \frac{n^{-(k-1)\ell e^{-n^{-(k-1)}}}}{p_{\uj}^\ell e^{-p_{\uj}}} &\hbox{if } \ell \geq 1\\
\end{cases}
\qquad  = \begin{cases}
    1+O(n^{-2(k-1)}) &\hbox{if } \ell \leq 1\\
    e^{O(\ell)} &\hbox{if } \ell \geq 2\\
\end{cases}
\]
Hence, if we write $\nu_{\uj}$ and $\bnu_{\uj}$ for the law of $N_{\uj}$ and $\bN_{\uj}$ respectively then
\[
\|\nu_{\uj} - \bnu_{\uj}\|^2_{L^2(\nu_{\uj})} = \sum_{\ell=0}^\infty \Big|\frac{\P[\bN_{\uj} = \ell]}{\P[N_{\uj} = \ell]} - 1 \Big|^2\P[N_{\uj} = \ell]\leq O(n^{-4(k-1)}) + \sum_{\ell=2}^\infty C^{2\ell} \frac{p_{\uj}^\ell e^{-p_{\uj}}}{\ell!}  = O(n^{-(k-1)}p_{\uj}),
\]
and similarly
\[
\|\nu_{\uj} - \bnu_{\uj}\|^2_{L^2(\bnu_{\uj})}  = O(n^{-(k-1)}p_{\uj}).
\]
For $\uj\in\CC\setminus\DD$,
\begin{align*}
\|\nu_{\uj} - \bnu_{\uj}\|^2_{L^2(\nu_{\uj})} 
&= \sum_{\ell=0}^\infty \Big|\frac{n^{-(k-1)\ell}e^{-n^{-(k-1)}}}{p_{\uj}^\ell e^{-p_{\uj}^\ell}} - 1 \Big|^2 \frac{p_{\uj}^\ell e^{-p_{\uj}}}{\ell!}\\
&\leq O(n^{-2(k-1)}) + \sum_{\ell=1}^\infty C^{2\ell} \frac{p_{\uj}^\ell e^{-p_{\uj}}}{\ell!}  \\
&= O(n^{-(k-1)}) = O(p_{\uj}),
\end{align*}
and similarly
\[
\|\nu_{\uj} - \bnu_{\uj}\|^2_{L^2(\bnu_{\uj})} =O(p_{\uj}).
\]
Setting $\nu$ and $\bnu$ for the law of the vectors $\{N_{\uj}\}_{\uj\in\CC}$ and $\{\bN_{\uj}\}_{\uj\in\CC}$ respectively, since both are product measures,
\begin{align*}
1+\|\nu - \bnu\|^2_{L^2(\bnu)} 
&= \prod_{\uj\in\CC}\big(1+ \|\nu_{\uj} - \bnu_{\uj}\|^2_{L^2(\bnu_{\uj})}\big)\\
&\leq \exp\bigg(Cn^{-(k-1)}\sum_{\uj\in\DD} p_{\uj} + C \sum_{\uj\in\CC\setminus \DD} p_{\uj} \bigg)\\
&\leq O(1),
\end{align*}
where the last inequality used equation~\eqref{eq:puj.sum}.  Similarly
\[
\|\wnu - \bnu\|^2_{L^2(\wnu)} = O(1).
\]
It follows that $\{N_{\uj}\}_{\uj\in\CC}$ and $\{\bN_{\uj}\}_{\uj\in\CC}$ are mutually contiguous and hence $\Breve{\bG}^\star_{M}$ is mutually contiguous with respect to $\bG^\star_{M}$. Hence we have that $\wt{d}_{\ast}(p_{M_0},\pi)=d_{\ast}(p_{M_0},\pi)$.

\end{proof}

Let $\wbG$ denote the null model $\bG\sim \GG_{\sf null}(n,d,p_{M})$ conditioned on the event $\GGG_n$. Here, we drop the subscript $M$ since the factors $(\psi_a)_{a\in F}\equiv M$ in $\wbG$ (and $\bG$) do not play any role. For a factor graph $G$, let 
\begin{equation}\label{eq:def:L:tilde}
\wt{L}(G):= \frac{\P(\wbG^\star_M=G)}{\P(\wbG=G)}=\frac{L(G)\one\{G\in \GGG_n\}}{\E[L(\bG)\given \bG\in \GGG_n]}\,,
\end{equation}
where the last equality holds by definition of $L(G)$ (see Eq.~\eqref{eq:proof:mutual:info:tech}).
Then, the following proposition plays an important role in the proof of Theorem~\ref{thm:optimal:power:HSBM}. Recall the constants $(\alpha_{\ell})_{\ell\geq 2}$ defined in Eq.~\eqref{eq:def:alpha}, which are functions of $M_0$ and $\pi$.
\begin{prop}\label{prop:L:tilde:conv}
    Let $M_0$ be a symmetric tensor of order $k$ with positive entries. Below the weak revery and KS thresholds $d<d_{\ast}(p_{M_0},\pi)\wedge d_{\ks}(p_{M_0},\pi)$, we have that as $n\to\infty$,
    \begin{equation}\label{eq:prop:L:tilde:conv}
    \wt{L}(\wbG)\dto \boldsymbol{\LL}_{\infty}
    :=\prod_{\ell=2+\one\{k=2\}}^{\infty}\frac{(1+\alpha_{\ell})^{\bX_{\ell}}}{\E(1+\alpha_{\ell})^{\bX_{\ell}}}\,,
\end{equation}
where $(\bX_{\ell})_{\ell\geq 2}$ are independent Poisson random variables with mean $\E \bX_{\ell}=\frac{1}{2\ell}\big((k-1)d\big)^{\ell}$.
\end{prop}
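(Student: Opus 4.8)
\textbf{Proof proposal for Proposition~\ref{prop:L:tilde:conv}.}

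The plan is to deduce the convergence of $\wt{L}(\wbG)$ from the convergence of the unconditioned likelihood ratio $L(\bG)\dto \boldsymbol{L}_\infty$ established in Theorem~\ref{thm:likelihood:conv}, together with the explicit form of the limit, and then to simplify the limiting infinite product using the special structure of the weight prior $p_{M_0}$. First, I would note that since $d<d_\ast(p_{M_0},\pi)\wedge d_{\ks}(p_{M_0},\pi)$, Theorem~\ref{thm:likelihood:conv} applies to the planted factor model with $p=p_{M_0}$, giving $L(\bG)\dto \boldsymbol{L}_\infty=\prod_{\ell\geq 1}\prod_{\zeta\in S_\ell}\{(1+\delta_\zeta)^{X_{\zeta,\infty}}e^{-\lambda_\zeta\delta_\zeta}\}$, jointly with $X_\zeta(\bG)\dto X_{\zeta,\infty}$. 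Because the single weight function case $\Psi=\{M_0\}$ forces $\psi_1=\cdots=\psi_\ell=M_0$ in every signature, the signatures of order $\ell$ reduce to choices of $(s_1,t_1,\ldots,s_\ell,t_\ell)$ with $s_i\neq t_i$; using the permutation-invariance $\Phi_{M_0,s,t}=\Phi_{M_0^\theta}$ and $p(M_0^\theta)=p(M_0)$ (exactly as in the proof of Lemma~\ref{lem:lambda:equality}), the contributions of all such signatures of a given order collapse, so that $\prod_{\zeta\in S_\ell}(1+\delta_\zeta)^{X_{\zeta,\infty}}$ can be rewritten in terms of a single Poisson variable $\bX_\ell$ of mean $\frac{1}{2\ell}((k-1)d)^\ell$ and the trace $\tr(\Phi_{M_0}^\ell)=1+\alpha_\ell$ (recalling $\Phi_{M_0}$ relates to $B$ and $\alpha_\ell=\tr(B^\ell)-1=\sum_{i=2}^q\lambda_i^\ell$). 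The normalization $e^{-\sum_\zeta\lambda_\zeta\delta_\zeta}$ assembles into $\prod_\ell 1/\E[(1+\alpha_\ell)^{\bX_\ell}]$ by the identity $\E(1+\alpha_\ell)^{\bX_\ell}=\exp(\alpha_\ell\E\bX_\ell)$ for Poisson $\bX_\ell$. This identifies $\boldsymbol{L}_\infty$ with the product displayed in \eqref{eq:prop:L:tilde:conv}, except possibly that the index starts at $\ell=1$ rather than $\ell=2+\one\{k=2\}$; I would account for the missing low-order terms via the event $\GGG_n$, since conditioning on $\GGG_n$ precisely removes the order-$1$ cycles (self-loops/repeated variables in a clause) and, for $k=2$, the length-$2$ cycles (double edges), which is exactly the role played by $C(G)$ in Fact~\ref{fact:cycle}-(3).

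Next, I would pass from $L(\bG)$ to $\wt{L}(\wbG)$ using the identity \eqref{eq:def:L:tilde}, $\wt{L}(G)=L(G)\one\{G\in\GGG_n\}/\E[L(\bG)\given \bG\in\GGG_n]$. The key inputs are: (i) $\P(\bG\in\GGG_n)\to \P(C(\bG)=0)+o(1)=\prod_{\zeta\in S_1\cup(\one\{k=2\}S_2)}\P(X_{\zeta,\infty}=0)>0$ by Fact~\ref{fact:cycle}, a strictly positive constant, so the conditioning is non-degenerate; (ii) the joint convergence in Theorem~\ref{thm:likelihood:conv} of $L(\bG)$ together with all cycle counts $X_\zeta(\bG)$, in particular the low-order ones $C(\bG)$. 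From (ii) and the fact that $\{\bG\in\GGG_n\}$ and $\{C(\bG)=0\}$ differ by an event of probability $O(1/n)$, the pair $(L(\bG),\one\{\bG\in\GGG_n\})$ converges jointly in distribution to $(\boldsymbol{L}_\infty,\one\{C_\infty=0\})$ where $C_\infty$ denotes the limiting low-order cycle counts. A short computation with the explicit product form of $\boldsymbol{L}_\infty$ — using that $\boldsymbol{L}_\infty$ factors as (low-order part)$\times$(high-order part, i.e. $\ell\geq 2+\one\{k=2\}$) with independent factors — shows that conditioning $\boldsymbol{L}_\infty$ on $\{C_\infty=0\}$ and renormalizing by the mean yields exactly the product over $\ell\geq 2+\one\{k=2\}$ in \eqref{eq:prop:L:tilde:conv}. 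Finally I would invoke uniform integrability of $L(\bG)$ (which follows from the bounded-second-moment statement, Proposition~\ref{prop:sec:moment:LR}, via the truncation argument, together with $\E L^\ast(\bG)\to1$) to justify that $\E[L(\bG)\one\{\bG\in\GGG_n\}]\to \E[\boldsymbol{L}_\infty\one\{C_\infty=0\}]$, so that the normalizing constant $\E[L(\bG)\given\bG\in\GGG_n]$ converges to the correct limit; combining these via Slutsky gives \eqref{eq:prop:L:tilde:conv}.

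The main obstacle I anticipate is the bookkeeping in the combinatorial reduction from $\zeta$-cycles to plain cycle counts: one must carefully check that summing $(1+\delta_\zeta)^{X_{\zeta,\infty}}$ over all signatures $\zeta\in S_\ell$ of a fixed order $\ell$ — where the signatures encode the ordered pairs $(s_i,t_i)$ of slots at each clause — genuinely reassembles into a single factor $(1+\alpha_\ell)^{\bX_\ell}$ with $\bX_\ell$ Poisson of mean $\frac{1}{2\ell}((k-1)d)^\ell$ and $1+\alpha_\ell=\tr(B^\ell)$. This requires (a) the identity $\lambda_\zeta=\frac{1}{2\ell}(d/k)^\ell$ (independent of the slot data, since $p(M_0)=1$) so that $\sum_{(s_i,t_i)}\lambda_\zeta=\frac{1}{2\ell}((k-1)d/k)^\ell\cdot k^{?}$ — one must track the factor $(k-1)$ per clause versus the $k^\ell$ from the $1/n^k$ normalization carefully — and (b) that $\sum_{(s_i,t_i)}\tr(\prod_i\Phi_{M_0,s_i,t_i})$, after using exchangeability to replace each $\Phi_{M_0,s_i,t_i}$ by $\Phi_{M_0}$ with the appropriate multiplicity, equals $((k-1)^\ell)\cdot\tr(\Phi_{M_0}^\ell)=((k-1)^\ell)\cdot\tr(B^\ell)$ — here one uses that $\Phi_{M_0}$ and $B$ have the same nontrivial spectrum, which is essentially Lemma~\ref{lem:SS:perp} specialized to the Dirac prior. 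Once this is set up correctly the Poissonization (merging independent Poissons indexed by signatures of equal order into one Poisson) and the normalization identity for $\E(1+\alpha_\ell)^{\bX_\ell}$ are routine, but getting the constants to match the stated $\frac{1}{2\ell}((k-1)d)^\ell$ is where errors are most likely to creep in, so I would do that computation explicitly and cross-check against Lemma~\ref{lem:lambda:equality} (which already records $\sum_{\zeta\in S_\ell}\lambda_\zeta\delta_\zeta^2=\frac{((k-1)d)^\ell}{2\ell}\sum_{\lambda\in\Eig_\SS(\Xi)}\lambda^\ell$ in the general case).
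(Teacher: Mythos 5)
Your proposal is correct and follows essentially the same route as the paper's proof: apply Theorem~\ref{thm:likelihood:conv} with the Dirac prior $p_{M_0}$, observe that for the singleton weight set every $\zeta\in S_\ell$ has $\delta_\zeta=\tr(B^\ell)-1=\alpha_\ell$ (since $\Phi_{M_0,s,t}=\Phi_{M_0}=B$ by symmetry of $M_0$) and $\lambda_\zeta=\frac{1}{2\ell}(d/k)^\ell$, merge the $(k(k-1))^\ell$ independent Poissons in each $S_\ell$ into a single $\bX_\ell$ of mean $\frac{1}{2\ell}((k-1)d)^\ell$, and then use Fact~\ref{fact:cycle}-(3) to convert conditioning on $\GGG_n$ into conditioning the limit on $\{\bX_1+\one\{k=2\}\bX_2=0\}$, which strips off the low-order factors and leaves the product over $\ell\geq 2+\one\{k=2\}$ after renormalization. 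The bookkeeping concerns you raise all resolve correctly as you anticipate; in particular, the count of signatures per order is exactly $(k(k-1))^\ell$, which combined with $(d/k)^\ell$ yields $((k-1)d)^\ell$. One small simplification over your uniform-integrability step: the normalization $\E[L(\bG)\mid\bG\in\GGG_n]=\P(\bG^\star\in\GGG_n)/\P(\bG\in\GGG_n)$ follows directly from the change-of-measure identity $\E[L(\bG)\one\{\bG\in\GGG_n\}]=\P(\bG^\star\in\GGG_n)$, and both probabilities converge to explicit constants by Fact~\ref{fact:cycle}, so the limit of the normalizer can be identified without appealing to $L^2$-boundedness; the paper instead pins the constant down via the automatic normalization $\E\wt{L}(\wbG)=1$.
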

\begin{proof}
The proof follows from a combination of Fact~\ref{fact:cycle}-(3) Theorem~\ref{thm:likelihood:conv}. For a signature $\zeta \in S_{\ell}$, if the prior on the weight functions $p$ puts all of its mass on $M$, then the constants $\lambda_{\zeta}$ and $\la^\star_{\zeta}$ in \eqref{def:lambda:zeta} equal
\[
\lambda_{\zeta}=\frac{1}{2\ell}\left(\frac{d}{k}\right)^{\ell}\,,\quad\quad\la^\star_{\zeta}=\frac{1}{2\ell}\tr(B^{\ell})\,,
\]
where $B$ is the matrix defined in \eqref{eq:def:B}. Thus, $\la_{\zeta},\la^\star_{\zeta}$ depends on $\zeta$ on through its order $\ell$ and
\[
\delta_{\zeta}\equiv \la^\star_{\zeta}/\la_{\zeta}-1=\alpha_{\ell}\,.
\]
Consequently, Theorem~\ref{thm:likelihood:conv} yields that for $d<d_{\ast}(p_{M_0},\pi)\wedge d_{\ks}(p_{M_0},\pi)$,
\[
L(\bG)\dto \bL_{\infty}=\prod_{\ell\geq 1}\frac{(1+\alpha_{\ell})^{\bX_{\ell}}}{\E(1+\alpha_{\ell})^{\bX_{\ell}}}\,,
\]
where this convergence holds jointly with the weak convergence of the number of cycles of given length in Fact~\ref{fact:cycle}-(1).
Meanwhile by Fact~\ref{fact:cycle}-(3), $\P(\bG\in \GGG_n)=\P(C(\bG)=0)+O(1/n)$ and $\P(\bG^\star\in \GGG_n)=\P(C(\bG^\star)=0)+O(1/n)$ hold, where $C(G)$ is the number of self-loops if $k\geq 3$ and the sum of the number of self-loops and the number of cycles of length $2$ if $k=2$. Hence, it follows that $L(\wbG)$ weakly converges to the distribution of $\bL_{\infty}$ conditional on the event $\big\{\bX_1+\one\{k=2\}\bX_2=0\big\}$. Moreover,  $\wt{L}(G)$ and $L(G)$ just differs by a multiplicative factor for $G\in \GGG_n$ by~\eqref{eq:def:L:tilde} where $\wt{L}(\wt{\bG})$ satisfies the normalization $\E \wt{L}(\wt{\bG})=1$. Therefore, the desired claim~\eqref{eq:prop:L:tilde:conv} follows.
\end{proof}
\begin{proof}[Proof of Theorem~\ref{thm:optimal:power:HSBM}]
Consider $d<d_{\ast}^{\sf H}(M_0,\pi)\wedge d_{\ks}^{\sf H}(M_0,\pi)$. Recalling \eqref{eq:threshold:same}, we have $d_{\ast}^{\sf H}(M_0,\pi)=\wt{d}_{\ast}(p_{M_0},\pi)$. Moreover, $\wt{d}_{\ast}(p_{M_0},\pi)=d_{\ast}(p_{M_0},\pi)$ and $d_{\ks}^{\sf H}(M_0,\pi)=d_{\ks}^{\sf H}(p_{M_0},\pi)$ hold by Lemma~\ref{lem:identical:threshold:conditioning}. Hence, Proposition~\ref{prop:L:tilde:conv} yields $\wt{L}(\wbG)\dto \boldsymbol{\LL}_{\infty}$. Note that $\wt{L}(G)$ and $\LL_n(G)$ are related by the chain rule
\begin{equation}\label{eq:chain:rule}
\LL_n(G)=\wt{L}(G)\cdot \frac{\P(\wbG=G)}{\P(\bG_{\ER}=G)}\cdot \frac{\P(\bG^\star_{\HSBM}=G)}{\P(\wbG^\star_M=G)}\equiv \wt{L}(G)\frac{f_1(G)}{f_2(G)}\,,
\end{equation}
where
\[
f_1(G):= \frac{\P(\wbG=G)}{\P(\bG_{\ER}=G)}\,,\quad\quad f_2(G):=\frac{\P(\bG^\star_{\HSBM}=G)}{\P(\wbG^\star_M=G)}\,.
\]
Note that by considering $\pr=\bone_{k,q}$, the all-$1$-tensor in Lemma~\ref{lem:HSBM:planted:contiguity}, we have $f_1(\wbG)\pto 1$. Clearly, $f_2(\wbG^\star_M)\pto 1$ follows from Lemma~\ref{lem:HSBM:planted:contiguity}. Note that $\wbG^\star_M$ is mutually contiguous with $\wbG$ by Proposition~\ref{prop:L:tilde:conv}, thus $f_2(\wbG)\pto 1$ also holds. Therefore, by combining the established convergences $\wt{L}(\wbG)\dto \boldsymbol{\LL}_{\infty}$, $f_i(\wbG)\pto 1$ for $i=1,2$, we have $\LL_n(\wbG)\dto \boldsymbol{\LL}_{\infty}$ by \eqref{eq:chain:rule}. Finally, the stated properties of $\boldsymbol{\LL}_{\infty}$ are a special case of Lemma~\ref{lem:dist:L:infty} and Corollary~\ref{cor:power:factor}. 
\end{proof}
\begin{proof}[Proof of Theorem~\ref{thm:optimal:cycle:test:HSBM}]
    This is a special case of Lemma~\ref{lem:T:conv} and Corollary~\ref{cor:cycle:test:factor}.
\end{proof}

\subsection{Proof of Theorem~\ref{thm:HSBM:contiguity}}
The following lemma relates the mutual information and the free energy for HSBMs, which is the analog of Lemma~\ref{lem:KL:mutual:info:free:energy} for planted factor models.
\begin{lemma}\label{lem:HSBM:mutual:info:relation}
    Let $M_0$ be a symmetric tensor of order $k$ with positive entries such that the normalization~\eqref{eq:normalization} is satisfied. Then, the normalized mutual information can be approximated by
    \beqn
    \frac{1}{n}I(\bG^\star_{\HSBM},\bsig^\star)+\frac{1}{n}\E\log \LL_n(\bG^\star_{\HSBM})=\frac{d}{k}\sum_{i_1,\ldots i_k=1}^{q}\pr(i_1,\ldots, i_k)\log\big(\pr(i_1,\ldots,i_k)\big) \prod_{s=1}^{k}\pi_{i_s}+o_n(1)\,.
    \eeqn
\end{lemma}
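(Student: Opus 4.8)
The plan is to reduce the claim to the corresponding statement for planted factor models, namely Lemma~\ref{lem:KL:mutual:info:free:energy}, via the contiguity results already established in this section. First I would set up the factor-model description: by Lemma~\ref{lem:HSBM:planted:contiguity}, the pair $(\bG^\star_{\HSBM},\bsig^\star)$ is asymptotically mutually contiguous (indeed, vanishing total variation distance) with $(\wbG^\star_M,\bsig^\star)$, where $\wbG^\star_M$ is the planted factor model $\bG^\star_M\equiv \bG^\star(n,\bm,\bsig^\star,p_M,\pi)$ conditioned on $\GGG_n$, using the single weight function $M=d\pr$ and prior $p_M=\delta_M$. Because the total variation distance between the two joint laws tends to $0$, and because $I(\cdot,\cdot)$ is a bounded functional here (the $\psi_a$ are bounded away from $0$ and $\infty$, so $\frac1n I$ is uniformly bounded by Lemma~\ref{lem:L:deterministic}-type estimates), the per-vertex mutual informations differ by $o_n(1)$; a small technical point is that one needs a quantitative continuity estimate for mutual information under small TV perturbation of the joint law with bounded log-likelihood, which follows from the standard bound $|I(X,Y)-I(X',Y')|\le \varepsilon\log|\mathcal X|+$ (bounded entropy terms) or, more cleanly here, by writing $I$ in terms of $\E\log L$ as below.

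Second, I would apply Lemma~\ref{lem:KL:mutual:info:free:energy} to the factor model with $k$, $p=p_M$, $\pi$ as given: this yields
\beqn
\frac{1}{n}I(\bG^\star_M,\bsig^\star)=-\frac{1}{n}\E\log L(\bG^\star_M)+\frac{d}{k}\cdot\E_{p_M,\pi}\bigg[\frac{\bpsi(\bsig)}{\xi}\log\Big(\frac{\bpsi(\bsig)}{\xi}\Big)\bigg]+o_n(1)\,.
\eeqn
Here $\bpsi=M=d\pr$ deterministically and $\xi=\E_{p_M,\pi}[\bpsi(\bsig)]=\sum_{i_1,\ldots,i_k}d\,\pr(i_1,\ldots,i_k)\prod_s\pi_{i_s}=d$ by the normalization~\eqref{eq:normalization}. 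Hence $\bpsi(\bsig)/\xi=\pr(\bsig)$, and the second term equals $\frac{d}{k}\sum_{i_1,\ldots,i_k}\pr(i_1,\ldots,i_k)\log(\pr(i_1,\ldots,i_k))\prod_s\pi_{i_s}$, which is exactly the target constant. It then remains to convert $\E\log L(\bG^\star_M)$ and $I(\bG^\star_M,\bsig^\star)$ into their HSBM counterparts. For the mutual information this is the contiguity/TV argument of the previous paragraph (applied to $(\bG^\star_M,\bsig^\star)$ vs. $(\wbG^\star_M,\bsig^\star)$, again vanishing TV by the argument inside Lemma~\ref{lem:identical:threshold:conditioning} showing $\wbG^\star_M$ is contiguous to $\bG^\star_M$, and vs. $(\bG^\star_{\HSBM},\bsig^\star)$ by Lemma~\ref{lem:HSBM:planted:contiguity}). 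For the log-likelihood term I would use the chain-rule identity~\eqref{eq:chain:rule} together with $f_1(\wbG)\pto 1$ and $f_2(\wbG^\star_M)\pto 1$ (established in the proof of Theorem~\ref{thm:optimal:power:HSBM}), plus uniform integrability of $\frac1n\log L(\bG^\star_M)$ and $\frac1n\log\LL_n(\bG^\star_{\HSBM})$ coming from the deterministic bound $|\log L(G)|\le Cm$ of Lemma~\ref{lem:L:deterministic} and the analogous bound for $\LL_n$ in the HSBM, so that $\frac1n\E\log\LL_n(\bG^\star_{\HSBM})=\frac1n\E\log L(\bG^\star_M)+o_n(1)$.

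The main obstacle I anticipate is making the last conversion rigorous: unlike $L$, the HSBM likelihood ratio $\LL_n$ need not satisfy a clean deterministic $O(m)$ bound uniformly (the connection probabilities $M(\ui)/\binom{n}{k-1}$ can produce awkward boundary terms, and $\bG_{\ER}$ and $\wbG$ differ), so I would instead argue at the level of expected free energies directly: use the explicit product formula for $Q(\bG^\star_{\HSBM},\bsig^\star)$ in~\eqref{eq:express:Q} to show $\frac1n\E\log Q(\bG^\star_{\HSBM},\bsig^\star)\to 0$ (the $h_{\bsig^\star}$-terms cancel in expectation as in the proof of Lemma~\ref{lem:HSBM:planted:contiguity}, and the remaining discrepancy is $O(1/n)$), then combine with $\frac1n\E\log f_1(\wbG)\to 0$ and $\frac1n\E\log f_2(\wbG^\star_M)\to 0$ proven the same way via~\eqref{eq:connectivity:prob}--\eqref{eq:number:edges}. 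Establishing these $L^1$ (not just in-probability) convergences of the normalized log-likelihoods is the real work; once that is in hand, assembling the pieces is bookkeeping. An alternative cleaner route, which I would try first, is to bypass $\E\log L$ entirely by computing $\frac1n I(\bG^\star_{\HSBM},\bsig^\star)$ through $I=H(\bG^\star_{\HSBM})-H(\bG^\star_{\HSBM}\mid\bsig^\star)$, noting $H(\bG^\star_{\HSBM}\mid\bsig^\star)$ is an explicit sum over hyperedges of binary entropies of the $M(\bsig^\star_\omega)/\binom{n}{k-1}$, which concentrates and gives a single-letter formula plus $o(n)$, while $H(\bG^\star_{\HSBM})$ relates to $\E\log\LL_n$ against the (entropy-known) Erd\H{o}s--R\'enyi hypergraph; this reduces everything to one concentration computation and the identity $\frac1n I+\frac1n\E\log\LL_n = \frac1n[H(\bG_{\ER})-H(\bG^\star_{\HSBM}\mid\bsig^\star)]+o_n(1)$, whose right-hand side is a direct calculation yielding the claimed constant.
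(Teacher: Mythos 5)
Your "alternative cleaner route" at the end is, up to a small rearrangement, the paper's actual proof. The paper starts from the exact identity
\begin{equation*}
\frac{1}{n}I(\bG^\star_{\HSBM},\bsig^\star)+\frac{1}{n}\E\log \LL_n(\bG^\star_{\HSBM})=\frac{1}{n}\sum_{G,\sig}\P(\bG^\star_{\HSBM}=G,\bsig^\star=\sig)\log\frac{\P(\bG^\star_{\HSBM}=G\given \bsig^\star=\sig)}{\P(\bG_{\ER}=G)},
\end{equation*}
and then evaluates the right-hand side directly by expanding the product of Bernoulli likelihood ratios over all $\binom{n}{k}$ potential hyperedges, using the normalization~\eqref{eq:normalization} to kill the non-edge contribution. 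Your identity $\frac1n I+\frac1n\E\log\LL_n = \frac1n[H(\bG_{\ER})-H(\bG^\star_{\HSBM}\mid\bsig^\star)]+o_n(1)$ is slightly imprecise: the exact term is $-\frac1n\E_{\bG^\star_{\HSBM}}[\log\P(\bG_{\ER}=\cdot)]$, not $\frac1n H(\bG_{\ER})$; these agree up to $o_n(1)$ because $-\log\P(\bG_{\ER}=G)$ is affine in $|E(G)|$ and the expected hyperedge counts of $\bG^\star_{\HSBM}$ and $\bG_{\ER}$ coincide (again by~\eqref{eq:normalization}), but the paper's formulation sidesteps the need to check that.

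Your primary route — transferring Lemma~\ref{lem:KL:mutual:info:free:energy} from the planted factor model to HSBM via contiguity — has a genuine gap that is not merely a "small technical point." You invoke a continuity estimate for mutual information under small TV perturbation of the joint law, but the standard bounds of this type (Fannes/Alicki–Fannes applied to $H(\bsig^\star|\bG^\star)=H(\bG^\star,\bsig^\star)-H(\bG^\star)$) carry a $\|P-Q\|_{\tv}\cdot\log|\mathcal{X}|$ error with $\mathcal{X}$ the state space of the hypergraph, and $\log|\mathcal{X}|=\Theta(n^k)\gg n$. Since Lemma~\ref{lem:HSBM:planted:contiguity} only gives vanishing TV distance with no rate, this error is not $o(n)$, and the per-vertex mutual informations of two TV-close models can in principle differ by a constant. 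Having $\frac1n I$ uniformly bounded does not resolve this — boundedness and TV closeness together do not give closeness of the expectations of conditional entropies, because the conditioning variable itself changes. The paper never attempts such a transfer of mutual information; where it does transfer quantities across models (Prop.~\ref{prop:HSBM:free:energy}), it works only at the level of the scalar free energy $\frac1n\log\LL_n$, using the deterministic Lipschitz bound from Lemma~\ref{lem:L:deterministic} and Azuma concentration, which is a much more robust object than mutual information under TV perturbation.
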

\begin{proof}
    By definition of mutual information and the likelihood ratio $\LL_n(G)$, we have the identity
    \beq\label{eq:mutual:info:identity}
    \begin{split}
    &\frac{1}{n}I(\bG^\star_{\HSBM},\bsig^\star)\\
    &=-\frac{1}{n}\E\log \LL_n(\bG^\star_{\HSBM})+\frac{1}{n}\sum_{G,\sig}\P(\bG^\star_{\HSBM}=G,\bsig^\star=\sig)\log\frac{\P(\bG^\star_{\HSBM}=G\given \bsig^\star=\sig)}{\P(\bG_{\ER}=G)}\,.
    \end{split}
    \eeq
    Note that for any $G$ and $\sig$, we can compute 
    \beqn
    \frac{\P(\bG^\star_{\HSBM}=G\given \bsig^\star=\sig)}{\P(\bG_{\ER}=G)}=\prod_{(v_1,\ldots, v_k)\in E(G)}\frac{
\frac{M(\sigma_{v_1},\ldots, \sigma_{v_k})}{\binom{n}{k-1}}}{\frac{d}{\binom{n}{k-1}}}\prod_{(v_1,\ldots, v_k)\notin E(G)}\frac{1-
\frac{M(\sigma_{v_1},\ldots, \sigma_{v_k})}{\binom{n}{k-1}}}{1-\frac{d}{\binom{n}{k-1}}}\,,
    \eeqn
    where $E(G)$ denote the set of hyperedges in $G$. Recall that $M=dM_0$, so taking logarithm and conditional expectation w.r.t. $\bG^\star_{\HSBM}$ conditional on $\bsig^\star=\sig$ in the RHS gives
    \beqn
    \begin{split}
&\sum_{G}\P(\bG^\star_{\HSBM}=G\given\bsig^\star=\sig)\log\frac{\P(\bG^\star_{\HSBM}=G\given \bsig^\star=\sig)}{\P(\bG_{\ER}=G)}\\
&=\sum_{(v_1,\ldots v_k)}\frac{dM_0(\sigma_{v_1},\ldots, \sigma_{v_k})}{\binom{n}{k-1}}\log M_0(\sigma_{v_1},\ldots, \sigma_{v_k})+\bigg(1-
\frac{dM_0(\sigma_{v_1},\ldots, \sigma_{v_k})}{\binom{n}{k-1}}\bigg)\log\Bigg(\frac{1-
\frac{dM_0(\sigma_{v_1},\ldots, \sigma_{v_k})}{\binom{n}{k-1}}}{1-\frac{d}{\binom{n}{k-1}}}\Bigg)\,,
\end{split}
    \eeqn
    where the sum is over distinct $k$ vertices $(v_1,\ldots v_k)$. Taking expectation over $\bsig^\star$ yields
    \beqn
    \begin{split}
&\sum_{G,\sig}\P(\bG^\star_{\HSBM}=G,\bsig^\star=\sig)\log\frac{\P(\bG^\star_{\HSBM}=G\given \bsig^\star=\sig)}{\P(\bG_{\ER}=G)}\\
&=\sum_{\ui=(i_1,\ldots i_k)\in[q]^{k}}\binom{n}{k}\cdot \Bigg(\frac{dM_0(\ui)}{\binom{n}{k-1}}\log M_0(\ui)+\bigg(1-
\frac{dM_0(\ui)}{\binom{n}{k-1}}\bigg)\log\Bigg(\frac{1-
\frac{dM_0(\ui)}{\binom{n}{k-1}}}{1-\frac{d}{\binom{n}{k-1}}}\Bigg)\prod_{s=1}^{k}\pi_{i_s}\\
    &=\frac{dn}{k}\Bigg(\sum_{\ui\in[q]^{k}}M_0(\ui)\log M_0(\ui)\prod_{s=1}^{k}\pi_{i_s}+\sum_{\ui\in [q]^k}\big(1-M_0(\ui)\big)\prod_{s=1}^{k}\pi_{i_s}+o_n(1)\Bigg)\\
    &=\frac{dn}{k}\sum_{\ui\in[q]^{k}}M_0(\ui)\log M_0(\ui)\prod_{s=1}^{k}\pi_{i_s}+o_n(1)\,,
    \end{split}
    \eeqn
    where we used $\sum_{\ui\in [q]^k}\big(1-M_0(\ui)\big)\prod_{s=1}^{k}\pi_{i_s}=0$ (cf.~\eqref{eq:normalization}) in the last equality. Combining this with \eqref{eq:mutual:info:identity} concludes the proof.
\end{proof}
We next prove that the free energy is bounded away from $0$ along a subsequence by transferring Proposition~\ref{prop:free:energy} for the planted factor models to HSBM.
\begin{prop}\label{prop:HSBM:free:energy}
For any $n\geq 1$ and $d>0$, we have $\E \log \LL_n(\bG^\star_{\HSBM})\geq 0$. Moreover, if $d>d_{\ast}(p_{M_0},\pi)$ and the condition~${\sf (MIN)}$ holds, then there exists $\eta>0$ such that
\beq\label{eq:prop:HSBM:free:energy}
\limsup_{n\to\infty} \frac{1}{n}\E\log \LL_n(\bG^\star_{\HSBM})\geq \eta\,.
\eeq
\begin{proof}
Letting $F_0(x)=x\log x$, we have by a change of measure that
\beqn
\E \log \LL_n(\bG^\star_{\HSBM})=\E\LL_n(\bG_{\ER})\log \LL_n(\bG_{\ER})\geq F_0(\E \LL_n(\bG_{\ER}))=0\,,
\eeqn
where the inequality follows from Jensen's inequality and the last equality holds by $\E \LL_n(\bG_{\ER})=1$.

Next, suppose that $d>d_{\ast}(p_{M_0},\pi)$ and the condition {\sf (MIN)} holds. Assume by contraction that \eqref{eq:prop:HSBM:free:energy} does not hold. Then, since $\E \log \LL_n(\bG^\star_{\HSBM})\geq 0$, we must have that
\beqn
\lim_{n\to\infty} \frac{1}{n}\E \log \LL_n(\bG^\star_{\HSBM})=0\,.
\eeqn
Now, recall the chain rule $\LL_n(G)=\wt{L}(G)f_1(G)/f_2(G)$ in \eqref{eq:chain:rule}, where we defined $f_1(G)\equiv \frac{\P(\wbG=G)}{\P(\bG_{\ER}=G)}$ and $f_2(G)\equiv \frac{\P(\bG^\star_{\HSBM}=G)}{\P(\wbG^\star_M=G)}$. Then, note that for $i=1,2$,
\[
\lim_{n\to\infty}\frac{1}{n}\E \log f_i(\bG^\star_{\HSBM})=0\,.
\]
Indeed, $i=2$ case holds since $\bG^\star_{\HSBM}$ and $\wbG^\star_M$ are mutually contiguous by Lemma~\ref{lem:HSBM:planted:contiguity} and $n^{-1}\log f_2(\cdot)$ is bounded. For $i=1$, $n^{-1}\log f_1(G)$ can be calculated directly since $\wbG$ and $\bG_{\ER}$ doesn't have planted structure, from which it can be seen that for any sparse random graph $\Breve{\bG}$ with at most linear number of edges in expectation, $n^{-1}\E\log f_1(\Breve{\bG})=o_n(1)$. Therefore, it follows that 
\[
\lim_{n\to\infty}\frac{1}{n}\E \log \wt{L}(\bG^\star_{\HSBM})=0\,.
\]
Since $\tv(\bG^\star_{\HSBM}, \wbG^\star_M)=o_n(1)$ holds by Lemma~\ref{lem:HSBM:planted:contiguity} and $n^{-1}\log \wt{L}(G)=O(1)$ holds for factor graph $G$ with at most $O(n)$ clauses, it follows that 
\[
\lim_{n\to\infty}\frac{1}{n}\E \log \wt{L}(\wbG^\star_M)=0\,.
\]
Note that by Azuma Hoeffding's inequality, $n^{-1}\log \wt{L}(\wbG^\star_M)$ and $n^{-1}\log L(\bG^\star_M)$ concentrate tightly around their expectation with $O(n^{-1/2})$ fluctuation, and that $\bG^\star_M\in \GGG_n$ holds with uniformly positive probability. Thus, $n^{-1}\E \log \wt{L}(\wbG^\star_M)=n^{-1}\E \log L(\bG^\star_M)+O(n^{-1/2})$ holds. Consequently, we have that
\[
\lim_{n\to\infty}\frac{1}{n}\E \log L(\bG^\star_M)=0\,,
\]
which contradicts Proposition~\ref{prop:free:energy}-(3).
\end{proof}
\end{prop}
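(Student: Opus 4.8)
I would prove the two assertions separately. The first, $\E\log\LL_n(\bG^\star_{\HSBM})\geq 0$, is the non-negativity of a Kullback--Leibler divergence, and I would establish it exactly as in the planted-factor setting: since $\LL_n$ is the likelihood ratio of $\GG^{\sf H}(n,dM_0,\pi)$ with respect to $\GG_{\sf ER}^{\sf H}(n,d,k)$, a change of measure gives $\E\log\LL_n(\bG^\star_{\HSBM})=\E\big[\LL_n(\bG_{\ER})\log\LL_n(\bG_{\ER})\big]$, and Jensen's inequality applied to the convex map $x\mapsto x\log x$ together with $\E\LL_n(\bG_{\ER})=1$ yields the bound $\geq 1\cdot\log 1=0$.

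For the second assertion I would argue by contradiction, transferring the statement to the planted factor model $\bG^\star_M\equiv\bG^\star(n,\bm,\bsig^\star,p_M,\pi)$ with $p_M=\delta_M$, $M=dM_0$, $\bm\sim\Poi(dn/k)$. Since $d>d_{\ast}^{\sf H}(M_0,\pi)=\wt{d}_{\ast}(p_{M_0},\pi)=d_{\ast}(p_{M_0},\pi)$ by \eqref{eq:threshold:same} and Lemma~\ref{lem:identical:threshold:conditioning}, and ${\sf (MIN)}$ holds, Proposition~\ref{prop:free:energy}-(4) gives $\limsup_n\tfrac1n\E\log L(\bG^\star_M)\geq\eta>0$. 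Now suppose the conclusion fails; since the first assertion already gives $\E\log\LL_n(\bG^\star_{\HSBM})\geq 0$, it must be that $\tfrac1n\E\log\LL_n(\bG^\star_{\HSBM})\to 0$, and the plan is to show that this propagates down to $\tfrac1n\E\log L(\bG^\star_M)\to 0$, the desired contradiction.

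The propagation would go in three steps, built on the chain rule $\LL_n(G)=\wt{L}(G)\,f_1(G)/f_2(G)$ of \eqref{eq:chain:rule}, where $f_1(G)=\P(\wbG=G)/\P(\bG_{\ER}=G)$ and $f_2(G)=\P(\bG^\star_{\HSBM}=G)/\P(\wbG^\star_M=G)$. First I would show $\tfrac1n\E\log f_i(\bG^\star_{\HSBM})\to 0$ for $i=1,2$: for $f_2$, the mutual contiguity of $\bG^\star_{\HSBM}$ and $\wbG^\star_M$ (Lemma~\ref{lem:HSBM:planted:contiguity}) gives $f_2(\bG^\star_{\HSBM})\pto 1$, which combined with the deterministic bound $|\tfrac1n\log f_2(G)|=O(1)$ over hypergraphs with $O(n)$ edges and bounded convergence yields the claim; for $f_1$, the ratio only records conditioning $\bG_{\ER}$ on the subexponentially-likely event $\GGG_n$, so $\tfrac1n\E\log f_1$ of any sparse random graph with linearly many edges in expectation is $o_n(1)$. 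Hence $\tfrac1n\E\log\wt{L}(\bG^\star_{\HSBM})\to 0$. Second, since $\tv(\bG^\star_{\HSBM},\wbG^\star_M)\to 0$ (Lemma~\ref{lem:HSBM:planted:contiguity}) and $|\tfrac1n\log\wt{L}|=O(1)$ on sparse graphs, this gives $\tfrac1n\E\log\wt{L}(\wbG^\star_M)\to 0$. Third, $\wt{L}(\wbG^\star_M)$ and $L(\bG^\star_M)$ differ only by conditioning on $\GGG_n$ and a normalizing constant; an Azuma--Hoeffding argument as in Fact~\ref{fact:concentration} (the log-likelihood is $O(1)$-Lipschitz in each clause) shows both $\tfrac1n\log\wt{L}(\wbG^\star_M)$ and $\tfrac1n\log L(\bG^\star_M)$ concentrate around their means with $O(n^{-1/2})$ fluctuations, and $\P(\bG^\star_M\in\GGG_n)$ is bounded away from $0$, so $\tfrac1n\E\log\wt{L}(\wbG^\star_M)=\tfrac1n\E\log L(\bG^\star_M)+O(n^{-1/2})$; chaining the three steps gives the contradiction.

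The main obstacle will be making the uniform-boundedness claims in the propagation rigorous — that $\tfrac1n\log f_2$ and $\tfrac1n\log\wt{L}$ are genuinely $O(1)$ uniformly over the (hyper)graphs carrying positive mass under both relevant models, which requires controlling rare graphs with atypically extreme connection probabilities — together with the passage between $\wt{L}(\wbG^\star_M)$ and $L(\bG^\star_M)$, which hinges on a uniform lower bound $\P(\bG^\star_M\in\GGG_n)\geq c>0$ and on the single-clause Lipschitz property underlying the concentration estimate. Everything else is bookkeeping once Lemmas~\ref{lem:HSBM:planted:contiguity} and~\ref{lem:identical:threshold:conditioning} and Proposition~\ref{prop:free:energy} are in hand.
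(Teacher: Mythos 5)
Your proposal is correct and takes essentially the same route as the paper: Jensen for non-negativity, then a contradiction argument propagated through the chain rule $\LL_n = \wt L\, f_1/f_2$, using Lemma~\ref{lem:HSBM:planted:contiguity} for contiguity and total-variation control, the uniform boundedness of the normalized log-ratios, the Azuma--Hoeffding concentration of the free energy, and $\P(\bG^\star_M \in \GGG_n)$ bounded away from $0$. One small point in your favor: you invoke Proposition~\ref{prop:free:energy}-(4), which is the statement that is directly contradicted, while the paper's text points to part (3) (the derivative bound from which (4) is derived); your citation is the more precise one.
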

\begin{proof}[Proof of Theorem~\ref{thm:HSBM:contiguity}]
    The first statement regarding the mutual contiguity follows immediately from Theorem~\ref{thm:optimal:power:HSBM} and Le Cam's first lemma (see e.g.~\cite[Lemma 6.4]{Vaart98} or \cite[Proposition 9.49]{Janson00random}) since $\boldsymbol{\LL}_{\infty}>0$ a.s. and $\E \boldsymbol{\LL}_{\infty}=1$. The mutual contiguity between $\bG^\star_{\HSBM}$ and $\bG_{\ER}$ implies that
    \[
    \lim_{n\to\infty} \frac{1}{n}\E \log \LL_n(\bG^\star_{\HSBM})=0\,.
    \]
    Combining with Lemma~\ref{lem:HSBM:mutual:info:relation} finishes the proof of the second statement. For the third statement, recall that $d_{\ast}^{\sf H}(M_0,\pi)=\wt{d}_{\ast}(p_{M_0},\pi)$ holds (cf. \eqref{eq:threshold:same}) and that $\wt{d}_{\ast}(p_{M_0},\pi)=d_{\ast}(p_{M_0},\pi)$ holds by Lemma~\ref{lem:identical:threshold:conditioning}. Thus, for $d>d_{\ast}^{\sf H}(M_0,\pi)$, Proposition~\ref{prop:HSBM:free:energy} yields that there exists a constant $\eta>0$ such that $n^{-1}\E \log \LL_n(\bG^\star_{\HSBM})\geq \eta>0$ holds along a subsequence $(n_{\ell})_{\ell\geq 1}$. Then, consider the event
    \[
    \AAA^\ast_n=\{G: \LL_n(G)\geq e^{\eta n/2}\}\,.
    \]
    Then, the same argument as in the proof of Theorem~\ref{thm:mutual:info} shows that $\P(\bG_{\ER}\in \AAA^\ast_{n_{\ell}})\leq e^{-\eta^\prime n}$ and $\P(\bG^\star_{\HSBM}\in \AAA^\ast_{n_{\ell}})\geq 1-e^{-\eta^\prime n}$ for some constant $\eta^\prime>0$. Finally, the fourth statement is immediate from Lemma~\ref{lem:HSBM:mutual:info:relation} and Proposition~\ref{prop:HSBM:free:energy}, which concludes the proof.
\end{proof}

\end{document}